\newtheorem{theorem}{Theorem}[chapter]
\newtheorem{lemma}[theorem]{Lemma}
\newtheorem{corollary}[theorem]{Corollary}
\newtheorem{proposition}[theorem]{Proposition}
\newtheorem{deftheorem}[theorem]{Definition and Theorem}
\newtheorem{deflemma}[theorem]{Definition and Lemma}
\newtheorem{defcorollary}[theorem]{Definition and Corollary}
\theoremstyle{definition}
\newtheorem{definition}[theorem]{Definition}
\newtheorem{example}[theorem]{Example}
\theoremstyle{remark}
\newtheorem{remark}[theorem]{Remark}
\numberwithin{section}{chapter}
\numberwithin{equation}{chapter}
\numberwithin{equation}{section}
\newcommand{\Z}{{\mathbb Z}}
\newcommand{\disc}{{\rm disc}}
\newcommand{\F}{{\mathbb F}}
\newcommand{\N}{{\mathbb N}}
\newcommand{\R}{{\mathbb R}}
\newcommand{\Q}{{\mathbb Q}}
\newcommand{\C}{{\mathbb C}}
\newcommand{\A}{{\mathbb A}}
\newcommand{\vol}{{\mathfrak v}}
\newcommand{\gen}{{\rm gen}}
\newcommand{\spn}{{\rm spn}}
\newcommand{\nullvek}{\mathbf 0}
\newcommand{\x}{{\bf x}}
\newcommand{\f}{{\bf f}}
\newcommand{\e}{{\bf e}}
\newcommand{\z}{{\bf z}}
\newcommand{\y}{{\bf y}}
\newcommand{\rk}{{\rm rk}}
\newcommand{\barx}{\bar{x}}
\newcommand{\barb}{\bar{b}}
\newcommand{\barQ}{\bar{Q}}
\newcommand{\bary}{\bar{y}}
\newcommand{\barw}{\bar{w}}
\newcommand{\barX}{\bar{X}}
\newcommand{\bart}{\bar{t}}
\newcommand{\barz}{\bar{z}}
\newcommand{\wtbp}{\widetilde{b'}}
\newcommand{\tr}{{\rm tr}}
\newcommand{\cB}{{\mathcal B}}
\newcommand{\cG}{{\mathcal G}}
\newcommand{\mfH}{{\mathfrak H}}
\newcommand{\fo}{{\mathfrak o}}
\newcommand{\mfa}{{\mathfrak a}}
\newcommand{\mfb}{{\mathfrak b}}
\newcommand{\mfc}{{\mathfrak c}}
\newcommand{\mfp}{{\mathfrak p}}
\newcommand{\End}{\operatorname*{End}}
\newcommand{\Hom}{\operatorname*{Hom}} 
\newcommand{\reteil}{\operatorname*{Re}}
\newcommand{\mcF}{{\mathcal F}}
\newcommand{\mcS}{{\mathcal S}}
\newcommand{\mcX}{{\mathcal X}}
\newcommand{\mcD}{{\mathcal D}}
\newcommand{\pr}{{\rm pr}}
\newcommand{\cha}{{\rm char}}
\newcommand{\cl}{{\rm cls}}
\newcommand{\rad}{\operatorname{rad}}
\newcommand{\sym}{\operatorname{sym}}
\newcommand{\id}{\operatorname*{Id}}
\newcommand{\ind}{\operatorname*{ind}}
\newcommand{\spin}{{\rm Spin}} 
\newcommand{\pin}{{\rm Pin}} 
\providecommand*{\bigcupdot}{%
  \mathop{%
    \vphantom{\bigcup}%
    \mathpalette\@bigcupdot{}%
  }%
}
\newcommand*{\@bigcupdot}[2]{%
  \ooalign{%
    $\m@th#1\bigcup$\cr
    \sbox0{$#1\bigcup$}%
    \dimen@=\ht0 %
    \advance\dimen@ by -\dp0 %
    \sbox0{\scalebox{2}{$\m@th#1\cdot$}}%
    \advance\dimen@ by -\ht0 %
    \dimen@=.5\dimen@
    \hidewidth\raise\dimen@\box0\hidewidth
  }%
}
\begin{document}

\frontmatter

\title{Lecture Notes on Quadratic Forms and their
  Arithmetic}


\author{Rainer Schulze-Pillot}
\address{}
\curraddr{}
\email{schulze-pillot@math.uni-sb.de}
\thanks{}

\subjclass[2010]{Primary 11E}

\keywords{Quadratic Forms}

\date{\today}


\maketitle


\setcounter{page}{5}

\tableofcontents

\chapter*{Introduction}

Although there is no need for yet another book on the arithmetic of
quadratic forms, after my retirement I couldn't resist the temptation
to  make these notes available to people who are interested in the
subject. They were compiled over the years from the  notes for my
courses on this topic and are influenced very much by the lecture
notes  ``Quadratische Formen'' from 1974 of my teacher Martin Kneser
(revised and edited as the book \cite{kneserbook} in 2002 by him and
Rudolf Scharlau). Other major influences come from the books of
Eichler \cite{eichler_qfog}, O'Meara \cite{omeara}, Cassels
\cite{cassels} (in chronological order). I thank Gabriele Nebe and
Rudolf Scharlau for sharing their respective unpublished lecture notes with me.

Perhaps I will extend these notes by a few additional sections or
chapters, e.g. by a section about representations by spinor genera or
a chapter about automorphic forms on orthogonal groups and theta
liftings. If you would like to see such an addition don't hesitate to
tell me, this would increase my motivation to go ahead with it.  

Native speakers of English will undoubtedly find many linguistic
errors. I will be grateful for any suggestions for corrections of such
errors. 

I didn't add a glossary or an index. If you would like to find a
particular word in the text your pdf-reader will be a more reliable
search tool  than an index could be. 

\mainmatter
\chapter{Basics}
\section{Quadratic Spaces and Modules}
Throughout these lecture notes $R$ will denote a commutative ring with $1\ne 0$ and
$S$ an $R$-module (usually contained in some $R$-algebra $S'$). Most
often one is interested in the case that $R=S$ is an integral domain
or even a field $F$; these assumptions make some things simpler and we
will mention such possibilities for simplifications. The $R$-torsion
elements of $S$ (i.e, the $s \in S$ such that one has $rs=0$ for some
$0\ne r \in R$) will also be called zero divisors, we say that $2$ is
not a zero divisor in $S$ if $S$ has no $2$-torsion, i.e., if $2s=0$ implies
$s=0$ for $s \in S$ . We will also mention
simplifications arising if one excludes the case that $2=0$ or
more generally requires $2$ to be a unit in $R$  or at least not a zero
divisor in $S$.

\subsection{Quadratic Spaces}
For the reader's convenience we treat the case that $R=F$ is a field
of characteristic not $2$ separately before discussing the general
situation. The reader annoyed by this redundancy may safely omit this subsection.

\begin{definition}\label{quadratic_space}
 Let $V$ be a vector space over the field  $F$. A map $Q:V\to F$ is called an $F$-valued
 {\em quadratic form} \index{quadratic!form}\index{form!quadratic} on $V$ if one has
 \begin{enumerate}
 \item $Q(ax)=a^2Q(x)$ for all $a\in F, x \in V$.
\item $b(x,y):=Q(x+y)-Q(x)-Q(y)$ defines a symmetric bilinear form on $V$.
 \end{enumerate}

The symmetric bilinear form in b) is called the {\em bilinear form
associated to $Q$}\index{associated bilinear form}\index{bilinear form!associated}. The pair $(V,Q)$
is called a {\em quadratic space}\index{quadratic!space}\index{space!quadratic}
over $F$.
\end{definition}
\begin{remark}
  We have $b(x,x)=2Q(x)$ for all $x\in V$. If $F$ is not of
  characteristic $2$ it is more usual to put  $B:=b/2$ and work with
  the bilinear form $B$ instead
  of $b$.
One  obtains then  $Q(x)=B(x,x)$, which looks more convenient. In
particular, one sees that $Q$ and $B$ determine each other uniquely.
The notation chosen above is more adequate in the general situation,
i.e. for quadratic forms over fields of characteristic $2$ or more
general over rings in which $2$ is not invertible.
In particular, this applies when one is dealing with
  integral quadratic forms and their reductions modulo the prime $2$
  (or a 
  prime ideal dividing $2$ in the number field case). 
\end{remark}
\begin{definition}
Let $V$ be a finite dimensional vector space over the field $F$ with
basis $\cB=\{v_1,\ldots,v_m\}$ and quadratic form $Q$ on $V$ with
associated bilinear forms $b, B=\frac{1}{2}b$.
\begin{enumerate}
\item The (homogeneous) {\em quadratic
    polynomial}\index{quadratic!polynomial}\index{polynomial!quadratic}
  associated to $V,Q,\cB$ is the homogeneous polynomial $P_{Q,\cB}\in
  F[X_1,\ldots,X_m]$ of degree $2$ given by
  \begin{equation*}
   P_{Q,\cB}[X_1,\ldots,X_m]=Q(\sum_{i=1}^m
   X_iv_i)=\sum_{i=1}^mQ(v_i)X_1^2+\sum_{1\le i <j\le
     m}b(v_i,v_j)X_iX_j. 
  \end{equation*}
Such a polynomial is also called a quadratic form \index{quadratic!form in $m$ variables} in $m$ variables.
\item The {\em Gram matrix}\index{Gram matrix} associated to $M,b,\cB$
  is the symmetric matrix $M_{\cB}(b)=(m_{ij}) \in M_m^{\sym}(F)$ with
  $m_{ij}=b(v_i,v_j)$ and analogously for $B$.
\end{enumerate}
\end{definition}

\begin{remark}
\begin{enumerate}
\item If $\cha(F)\ne 2$, one has $$Q(\sum_{i=1}^mx_iv_i)=P_{Q,\cB}(x_1,\ldots,x_m)={}^t\x M_{\cB,B}\x=\frac{1}{2}{}^t\x M_{\cB,b}\x$$ for
$\x=(x_1,\ldots,x_m)\in F^m$. The theory of finite dimensional
quadratic forms and spaces over such a field $F$ (and with careful formulation
also over subrings of such a field) is therefore the same as the
theory of symmetric matrices over $F$ resp.\  over such a subring, a point of view which is taken
in part of the literature on the subject, in particular in the
groundbreaking work of Carl Ludwig Siegel. The language of quadratic
spaces has been introduced in 1937 by Ernst Witt and has become the usual description since then.  
\item If $E$ is an extension field of $F$, the quadratic form $Q$
  along with its associated bilinear form $b$ extends in a unique way to the
  $E$-vector space $V\otimes_F E$ (extension of scalars); with respect
  to a basis of type $\{v_i\otimes 1\}$ the extended
  form has the same Gram matrix and the same associated homogeneous
  polynomial as the original one with respect
  to the $v_i\in V$ and will also be
  denoted by $Q$. 
\end{enumerate}
\end{remark}
\begin{deflemma}
Let $(V,Q)$ be a quadratic space over the field $F$ with $\cha(F)\ne
2$, with associated symmetric bilinear forms $b$ and $B=\frac{1}{2}b$,
let $\cB=\{v_1,\ldots,v_m\},\cB'=\{w_1,\ldots,w_m\}$ be bases of $V$
with $w_j=\sum_{i=1}^mt_{ij}v_i$, let $T=(t_{ij})\in GL_m(F)$ be the
matrix describing this change of basis.
Then
\begin{equation*}
 M_{\cB'}(b)={}^tTM_{\cB}(b)T,\quad M_{\cB'}(B)={}^tTM_{\cB}(B)T.
\end{equation*}

Symmetric matrices $A, A'$ related in this way are called {\em
  congruent} \index{congruent!matrices} over $F$ or {\em $F$-equivalent}.
\index{$F$-equivalent!matrices}
Similarly, the homogeneous quadratic polynomials
$P_{\cB,Q},P_{\cB',Q}$ are called equivalent over $F$.
\index{equivalent!quadratic
  polynomials}\index{$F$-equivalent!polynomials}

The square class $\det( M_{\cB}(b))(F^\times)^2\subseteq F$ in $F$ of the determinant of a
Gram matrix of $(V,Q)$ with respect to some basis $\cB$ of $V$ is
called the {\em determinant}\index{determinant!of quadratic space}
$\det_Q(V)=\det_b(V)$ of the quadratic space 
$(V,Q)$ (or the bilinear space $(V,b)$); one often writes $\det(V)$ if
$Q$ is understood. It is independent of the choice of basis. We write
$\det_B(V)$ for the 
square class of the determinant of a Gram matrix of
$B=\frac{1}{2}b$. Sometimes the determinant is also called the
(unsigned) discriminant \index{discriminant} of the space.
\end{deflemma}

\begin{deflemma}\label{space_radical}
Let $(V,Q)$ be a quadratic space over the field $F$ with $\cha(F)\ne
2$, with associated symmetric bilinear forms $b$ and $B=\frac{1}{2}b$.
The {\em radical} \index{radical} of $(V,Q)$ is
\begin{equation*}
  \rad(V)=\rad_b(V)=\rad_B(V)=V^\perp=\{x \in V\mid b(x,v)=0 \text{ for all } v
  \in V\}.
\end{equation*}
$\rad(V)$ is a subspace of $V$ with $Q(\rad(V))=\{0\}$, and on the quotient space 
  $V_0:=V/\rad(V)$ one can define a quadratic form $Q_0$ by
  $Q_0(x+\rad(V)):=Q(x)$, its associated  symmetric
  bilinear form $b_0$ is non degenerate and satisfies
  $b_0(x+\rad(V),y+\rad(V))=b(x,y)$ for all $x,y \in V$. 

If $U\subseteq V$ is a subspace complementary to $\rad(V)$, i.e., with $V=U\oplus \rad(V)$, the
restriction of $b$ to $U$ is non-degenerate too.
\end{deflemma}
\begin{proof}
  Exercise.
\end{proof}

\subsection{Quadratic Modules}
We return now to the general situation.
\begin{definition}
 Let $M$ be an $R$-module. A map $Q:M\to S$ is called an $S$-valued
 {\em quadratic form} \index{quadratic!form}\index{form!quadratic} on $M$ if one has
 \begin{enumerate}
 \item $Q(ax)=a^2Q(x)$ for all $a\in R, x \in M$.
\item $b(x,y):=Q(x+y)-Q(x)-Q(y)$ defines a symmetric bilinear map
  $b:M\times M \to S$, which we will also call a bilinear form on $M$
  with values in $S$.
 \end{enumerate}
If $R=S$ the phrase ``$R$-valued'' is omitted.

The symmetric bilinear form in b) is called the {\em bilinear form
associated to $Q$}\index{associated bilinear form}\index{bilinear form!associated}. The pair $(M,Q)$
is called a {\em quadratic module}\index{quadratic!module}\index{module!quadratic} over
$R$ (with values in $S$), a {\em quadratic space}\index{quadratic!space}\index{space!quadratic} in the case that $R$ is a field.
\end{definition}
\begin{remark}
  \begin{enumerate}
  \item If  $S$ has no $2$-torsion and $b(M,M)\subseteq 2S$
    holds one can write $b(x,y):=2B(x,y)$ with a unique symmetric
    bilinear form $B$; one has then $Q(x)=B(x,x)$ for all $x \in
    M$. This notation is in particular usual if $S$ is a field of
    characteristic not $2$, it is also often used for $R=\Z$ with
    $S=\Z$ or $S=\frac{1}{2}\Z$ or similarly for rings of integers in
    a number field.
\item Conversely, for any symmetric bilinear form $B$ on $M$ (with
  values in $S$) we can define a quadratic form on $M$ with associated
  symmetric bilinear form $2B$ by putting
  $Q(x):=B(x,x)$ for all $x \in M$, one has then
  $Q(x+y)=Q(x)+Q(y)+2B(x,y)$.
  This convention is used in many
  books on quadratic forms which focus on situations in which one has $R=S$
  and $2$ is  not a zero divisor, the form $b$ does then usually not
  appear at all. If $2$ is not a unit the quadratic form is then
  called even or even integral if it takes values in $2R$ and
  $B(M,M)\subseteq R$ holds. Obviously, this convention is inadequate
  if $2$ is allowed to be a zero divisor, in particular, if one wants
  to treat fields without excluding the case of characteristic $2$.
  \end{enumerate}
\end{remark}
\begin{example}\label{example1}
  \begin{enumerate}
  \item For $M=R^n$ and $R=S$ we define for $\x=(x_1,\ldots,x_n)$ by
    $Q(\x)=\sum_{i=1}^n x_i^2$ a quadratic form with
    associated bilinear form $b(\x,\y)=2\sum_{i=1}^nx_iy_i$ and
    $B(x,y)=\langle x,y\rangle=\sum_{i=1}^nx_iy_i$ equal to the
    standard bilinear form on $R^n$.
\item If $2$ is not a zero divisor in $R$ it is invertible in some
  extension ring of $R$. We can then set
  $S=\frac{1}{2}R$ and define $Q:R^n\to S$ by
  $Q(\x)=\frac{1}{2}\sum_{i=1}^n x_i^2$ to obtain a quadratic form
  with associated symmetric bilinear form given by
  $b(\x,\y)=\sum_{i=1}^nx_iy_i$.
\item For $R=S=\F_2, M=R^2$ and $Q(\x)=x_1x_2$ we have
  $b(\x,\y)=x_1y_2+x_2y_1$. The form $B$ can then not be defined. In
  fact, there exists no symmetric bilinear form $B'$ on $M$ with
  $B'(\x,\x)=Q(\x)$ for all $\x\in M$: for a symmetric bilinear form
  $B'$ with $B'((1,0),(1,0))=a,
  B'((0,1),(0,1))=c$ we obtain $B'(\x,\x)=ax_1^2+cx_2^2$, so $a=c=0$
  implies that $B'(\x,\x)$ is identically zero. More generally, if $2$ is a zero
  divisor we can not write all quadratic forms as $Q(x)=B(x,x)$ with a
  symmetric bilinear form $B$, and not all symmetric bilinear forms
  are attached to some quadratic form.
  \end{enumerate}
\end{example}
A generalization of the last example merits a separate definition, as
it gives a fundamental building block of the theory:
\begin{definition}\label{hyperbolic_module}
  Let $M$ be an $R$-module with dual module $M^*=\Hom(M,R)$, put
  $H(M)=M\oplus M^*$ (external direct sum). On $H(M)$ we define a
  quadratic form $Q$ by $Q(v+v^*)=v^*(v)$ for $v\in M, v^*\in M^*$,
  with associated symmetric bilinear form
  $b(v+v^*,w+w^*)=v^*(w)+w^*(v)$.

The quadratic $R$-module $(H(M),Q)$ is called the hyperbolic module
over $M$.

More generally, for an $R$-module $S$ we can define an $S$-valued
hyperbolic quadratic module $(H_S(M),Q_S)$ by setting $H(M)_S=M\oplus
\Hom(M,S)$ and $Q(v,\varphi)=\varphi(v)\in S$ for all $v\in M, \varphi
\in \Hom(M,S)$.  This construction is mainly of use if $S$ is a
quotient module of $R$.
\end{definition}
\begin{example}
  If $M$ above is a free module with basis $(v_1,\ldots,v_n)$ and
  $(v_1^*,\ldots,v_n^*)$ is the dual basis of $M^*$, we have
  $Q(\sum_ix_iv_i+\sum_iy_iv_i^*)=\sum_ix_iy_i$, which in the case
  $R=\R$ and $n=1$ explains the use of the word ``hyperbolic''.
\end{example}
\begin{lemma}\label{bilinearforms}
Let $Q$ be a quadratic form on the $R$-module $M$.
\begin{enumerate}
\item $Q$ is uniquely determined by its associated symmetric bilinear
  form $b$ if $2$ is not a zero divisor in $S$.
\item Let $\beta$ be any (not necessarily symmetric) $S$-valued bilinear form on
  $M$ and define $Q:M\to S$ by $Q(x):=\beta(x,x)$. Then $Q$ is a
  quadratic form whose associated symmetric bilinear form is the
  symmetrization $b(x,y)=\beta(x,y)+\beta(y,x)$ of $\beta$.
\item If $\beta_1,\beta_2$ are bilinear forms on $M$ inducing the same
  quadratic form $Q$ on $M$ as above, the form $\beta_1-\beta_2$ is
  alternating. 
\item If $M$ is finitely generated projective, every quadratic form
  $Q$ on $M$ can be written as $Q(x)=\beta(x,x)$ for some not
  necessarily symmetric bilinear form $\beta$.
\end{enumerate}
\end{lemma}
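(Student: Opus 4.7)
The plan is to handle the four parts essentially independently, since each targets a different aspect of the relationship between $Q$ and bilinear forms; the first three are short manipulations, while part (d) is the one that requires real work.

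For (a), the key observation is the identity $b(x,x)=Q(2x)-2Q(x)=4Q(x)-2Q(x)=2Q(x)$ obtained by setting $y=x$ in the definition of $b$. If $2$ is not a zero divisor in $S$, this forces $Q(x)$ to be determined by $b(x,x)$, hence by $b$. For (b), I would just verify the two axioms of a quadratic form: $Q(ax)=\beta(ax,ax)=a^2\beta(x,x)=a^2Q(x)$ by bilinearity, and expanding $\beta(x+y,x+y)-\beta(x,x)-\beta(y,y)$ directly gives $\beta(x,y)+\beta(y,x)$, which is manifestly symmetric in $x,y$. Part (c) is immediate from the definitions: $(\beta_1-\beta_2)(x,x)=Q(x)-Q(x)=0$ for all $x\in M$, which is the defining property of an alternating bilinear form.

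The main obstacle is part (d). I would first settle the free case: if $M$ has a basis $v_1,\dots,v_n$, define $\beta$ on basis vectors by $\beta(v_i,v_i)=Q(v_i)$, $\beta(v_i,v_j)=b(v_i,v_j)$ for $i<j$, and $\beta(v_i,v_j)=0$ for $i>j$, then extend bilinearly. A short induction on $n$ using the identity $Q(u+w)=Q(u)+Q(w)+b(u,w)$ yields
\begin{equation*}
Q\Bigl(\sum_{i=1}^n x_i v_i\Bigr)=\sum_{i=1}^n x_i^2 Q(v_i)+\sum_{1\le i<j\le n}x_ix_j\, b(v_i,v_j),
\end{equation*}
and the right-hand side is exactly $\beta\bigl(\sum_i x_iv_i,\sum_j x_jv_j\bigr)$ by construction.

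To pass from free to finitely generated projective, I would use the standard splitting: choose $N$ and an isomorphism $M\oplus N\cong R^n$, and let $\pi\colon R^n\to M$ denote the resulting projection (so $\pi|_M=\id_M$). Pulling $Q$ back via $\pi$ gives a quadratic form $Q'\colon R^n\to S$, $Q'(v):=Q(\pi(v))$, whose axioms follow directly from those for $Q$ since $\pi$ is $R$-linear. By the free case there is a bilinear $\beta'$ on $R^n$ with $Q'(v)=\beta'(v,v)$; restricting $\beta'$ to $M\times M$ via the inclusion $M\hookrightarrow R^n$ gives a bilinear form $\beta$ on $M$ satisfying $\beta(x,x)=\beta'(x,x)=Q'(x)=Q(\pi(x))=Q(x)$ for every $x\in M$. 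The only subtlety here is remembering that one must work with an actual splitting, not merely a surjection from a free module onto $M$, which is exactly what projectivity provides.
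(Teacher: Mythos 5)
Your proof is correct and follows essentially the same route the paper sketches: the identity $b(x,x)=2Q(x)$ for (a), direct expansion for (b) and (c), and for (d) the explicit upper-triangular $\beta$ on a basis in the free case followed by passing to a projective $M$ via a complement $N$ with $M\oplus N$ free, extending $Q$ (your $Q'=Q\circ\pi$ is precisely such an extension) and restricting the resulting bilinear form back to $M$. No gaps.
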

\begin{proof}
  Exercise. For d), the case of a finitely generated free module is
  easy: If $(v_1,\ldots,v_n)$ is a basis, one puts
  $\beta(v_i,v_i):=Q(v_i)$, $\beta(v_i,v_j)=b(v_i,v_j)$ for $i<j$,
  $\beta(v_i,v_j)=0$ for $i>j$ and extends bilinearly. For a
  projective module one supplements it first by a suitable 
  direct summand to obtain a free module, extend $Q$ to that free module and restrict the $\beta$
  obtained there to the original module.
\end{proof}
\begin{example}
If we consider the module $H(M)=M\oplus M^*$ as in definition
\ref{hyperbolic_module} and equip it with the not symmetric bilinear
form $\beta_1$ given by $\beta(v+v^*,w+w^*)=v^*(w)$, we obtain as $Q$
and $b$ the same forms as in that definition. The same result is
obtained by using $\beta_2(v+v^*,w+w^*)=w^*(v)$. The difference
$A:=\beta_1-\beta_2$ is the standard alternating form on the module $H(M)$.  
\end{example}
\begin{definition}
Let $(M,Q)$ be a quadratic module over $R$ with values in the
commutative $R$-algebra $S$ and associated symmetric bilinear form
$b$; 
assume that $M$ is a finitely generated $R$-Module with a generating
set ${\mathcal G}=\{v_1,\ldots,v_m\}$.
\begin{enumerate}
\item The (homogeneous) {\em quadratic
    polynomial}\index{quadratic!polynomial}\index{polynomial!quadratic}
  associated to $M,Q,\cG$ is the homogeneous polynomial $P_{Q,\cG}\in
  S[X_1,\ldots,X_m]$ of degree $2$ given by
  \begin{equation*}
   P_{Q,\cG}[X_1,\ldots,X_m]=Q(\sum_{i=1}^m
   X_iv_i)=\sum_{i=1}^mQ(v_i)X_1^2+\sum_{1\le i <j\le
     m}b(v_i,v_j)X_iX_j. 
  \end{equation*}
\item The {\em Gram matrix}\index{Gram matrix} associated to $M,b,\cG$
  is the symmetric matrix $M_{\cG}(b)=(m_{ij}) \in M_m^{\sym}(S)$ with $m_{ij}=b(v_i,v_j)$.
\end{enumerate}
\end{definition}
\begin{remark} 
\begin{enumerate}
\item Whenever  $B=\frac{1}{2}b$ is
defined,  one has
$P_{Q,\cG}(x_1,\ldots,x_m)={}^t\x M_{\cG,B}\x=\frac{1}{2}{}^t\x M_{\cG,b}\x$ for
$\x=(x_1,\ldots,x_m)\in R^m$.  Otherwise, there is no such direct
connection between the polynomial $P_{Q,\cG}$ and the Gram matrix.
\item Let $P\in S[X_1,\ldots,X_m]$ be a homogeneous quadratic
  polynomial and 
$M$ a  free $R$-module of rank $m$  with basis $\cB$. Then there is a
unique quadratic form $Q$ on $M$ such that $P=P_{\cB,Q}$.
\item If $R'$ is any $R$-algebra, the quadratic form $Q$ extends in a natural way to the
  $R'$-module  $M\otimes_R R'$ (extension of scalars) with values in
  $R'\otimes_RS$; 
this extension will also be
  denoted by $Q$.
With respect
  to a generating set of type  $\{v_i\otimes 1\}$ the Gram matrix of
  the extended
  form is obtained by tensoring all entries of the original  Gram
  matrix with $1$, and the  associated homogeneous
  polynomial is obtained in the same way from the original one with respect
  to the $v_i\in M$.  
\end{enumerate}
\end{remark}

\begin{lemma}\label{modules_matrixversion}
Let $(M,Q)$ be a quadratic module over the ring $R$ with
generating sets $\cG=(v_1,\ldots,v_m)$ and  $\cG'=(w_1,\ldots w_n)$
satisfying $w_j=\sum_{i=1}^mt_{ij}v_i$ for $1\le j\le n$,
put   $T=(t_{ij})\in M_{m,n}(R)$.
\begin{enumerate}
\item One has  
$$M_{\cG'}(b)= {}^t\!TM_{\cG}(b)T. $$
\item With $x_i=\sum_{j=1}^nt_{ij}y_j$ one has 
$$P_{Q,\cG}(x_1,\ldots,x_m)=P_{Q,\cG'}(y_1,\ldots,y_n).$$ 
\item If $M$ is a free module over $R$ with a finite basis
  $\cB=(v_1,\ldots,v_m)$ and $S$ is contained in an $R$-algebra $S'$,
  the square class 
  $\det(M_{\cB}(b))(R^\times)^2$ in $S'$ is independent of the choice of
    basis $\cB$ and is called the determinant (sometimes also the
    (unsigned) discriminant) of the quadratic module. 
\end{enumerate}
\end{lemma}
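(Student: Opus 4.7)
The proof falls into three essentially mechanical pieces, and I would treat (a) first since the other two reduce to it.

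For part (a) the plan is a direct double-sum computation. By bilinearity and symmetry of $b$, the $(k,l)$ entry of $M_{\cG'}(b)$ is
\begin{equation*}
b(w_k,w_l)=b\Bigl(\sum_{i=1}^m t_{ik}v_i,\sum_{j=1}^m t_{jl}v_j\Bigr)=\sum_{i,j}t_{ik}t_{jl}\,b(v_i,v_j),
\end{equation*}
and the right-hand side is exactly the $(k,l)$ entry of ${}^tT\,M_{\cG}(b)\,T$. No case distinction on the characteristic of $S$ is needed, since only bilinearity of $b$ is used.

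For part (b) the key observation is that the substitution $x_i=\sum_{j=1}^n t_{ij}y_j$ is precisely the one making $\sum_i x_iv_i=\sum_j y_jw_j$: indeed
\begin{equation*}
\sum_{j=1}^n y_jw_j=\sum_{j=1}^n y_j\sum_{i=1}^m t_{ij}v_i=\sum_{i=1}^m\Bigl(\sum_{j=1}^n t_{ij}y_j\Bigr)v_i=\sum_{i=1}^m x_iv_i.
\end{equation*}
Applying $Q$ to both sides and invoking the definition $P_{Q,\cG}(x_1,\ldots,x_m)=Q(\sum_i x_iv_i)$ and $P_{Q,\cG'}(y_1,\ldots,y_n)=Q(\sum_j y_jw_j)$ gives the claimed identity. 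So (b) is just (a) expressed on the level of polynomials via the canonical substitution.

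For part (c) I would specialize (a) to the case where both generating sets are bases of the same free module of rank $m$, so in particular $n=m$ and the change-of-basis matrix $T\in M_m(R)$ is invertible, i.e.\ $T\in GL_m(R)$. Taking determinants in (a) gives
\begin{equation*}
\det\bigl(M_{\cB'}(b)\bigr)=\det(T)^2\det\bigl(M_{\cB}(b)\bigr),
\end{equation*}
and since $\det(T)\in R^\times$ we have $\det(T)^2\in (R^\times)^2$. Thus the images of $\det(M_{\cB}(b))$ and $\det(M_{\cB'}(b))$ in $S'/(R^\times)^2$ agree, which is exactly the well-definedness of the determinant as a square class.

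The only subtle point worth flagging is part (c): one must know that $T$ is actually invertible over $R$, not just that it is a square matrix. This is the standard fact that the change-of-basis matrix between two bases of a finitely generated free $R$-module lies in $GL_m(R)$; without it, $\det(T)$ need not be a unit and the square class would not be well-defined. Once this is in place the three parts follow from essentially a single computation.
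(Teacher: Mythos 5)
Your proposal is correct and follows essentially the same route as the paper: (a) by bilinearity, (b) by observing that the substitution realizes $\sum_i x_iv_i=\sum_j y_jw_j$, and (c) by invoking that the change-of-basis matrix between two finite bases of a free module over any commutative ring is invertible, so its determinant is a unit. The subtlety you flag in (c) is precisely the point the paper makes explicit.
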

\begin{proof}
For c), since the rank of a finitely generated free module is well
defined over any commutative ring, the transformation matrix between
any two bases is an invertible square matrix and has invertible
determinant. a) and b) 
are easy exercises: for a) we put $A=(a_{ij})=M_{\cG'}(b)$ and have 
$b(w_k,w_l)=\sum_{i,j}t_{ik}a_{ij}t_{jl}$, for b) we have
$Q(\sum_ix_iv_i)=Q(\sum_{i,j}t_{ij}y_jv_i)=Q(\sum_jy_jw_j)$.    
\end{proof}
\begin{definition}
\begin{enumerate}
  \item Let $(M,Q), (M',Q')$ be quadratic $R$-modules.
An injective $R$- linear map $\phi:M\to M'$ is called {\em
  isometric}\index{isometric} or an {\em isometry}\index{isometry} or
a {\em representation of $(M, Q)$ by $(M',Q')$} if $Q'(\phi(x))=Q(x)$ holds for all
$x\in M$. If such an isometric linear map exists one says that $(M,Q)$
is {\em represented} \index{represented!for modules} by $(M',Q')$.
The quadratic modules $(M,Q)$ and $(M',Q')$ are called
{\em isometric}\index{isometric} if there  exists an isometric  linear isomorphism $\phi:M\to
M'$.
\item An isometric linear automorphism of 
 $(M,Q)$ is called an {\em orthogonal map}\index{orthogonal map} of the
 quadratic module. The set of all orthogonal maps of $(M,Q)$ is the
 {\em orthogonal group}\index{orthogonal group}
 $O(M,Q)=O_Q(M)=O_{(M,Q)}(R)=O_M(R)$ of the quadratic module. (We will
 use these variants of notation in the sequel. The last version is the
 one from algebraic group theory, it will
 be preferred when we deal with extensions of the ground ring or field.) 
\item Let $P_1\in S[X_1,\ldots,X_m], P_2\in S[X_1,\ldots,X_n]$ be
  homogenous quadratic polynomials. One says that $P_2$ is represented
  \index{represented over $R$!for quadratic polynomials}
  by $P_1$ over $R$, if there exists a matrix $T=(t_{ij})\in M_{m,n}(R)$ such
  that one has $P_1(y_1,\ldots,y_m)=P_2(x_1,\ldots,x_n)$ for all
  $\x=(x_1,\ldots,x_n)\in R^n$, where we put
  $y_i=\sum_{j=1}^mt_{ij}x_j$ for $1\le i\le n$. If each of $P_1,P_2$
  is represented over $R$ by the other one we say that they are {\em
    $R$-equivalent} or equivalent over $R$\index{equivalent!over $R$}.
\item A symmetric $n\times n$ matrix $B$ over $S$ is 
said to be {\em represented over $R$} \index{represented over $R$! for
  matrices} by the $m\times m$-matrix $A$, if one
has $B={}^t TAT$ for some $T\in M_{m,n}(R)$. The matrices $A,B$ above
are  
called {\em
    $R$-congruent}\index{$R$-congruent} or {\em $R$-equivalent} if each of
  them is represented over $R$ by the other one.
\end{enumerate}
\end{definition}
\begin{remark}
  \begin{enumerate}
\item The condition of injectivity is sometimes omitted in the
  definition of isometry or of representation.
  \item An isometry is also compatible with the bilinear forms
    $b,b'$ associated to $Q,Q'$. If $2$ is not a zero divisor in $S$
    the reverse direction is also true. 
\item  It is easily seen that $R$ equivalent matrices must have the same
 size and that $R$-equivalent polynomials must have the same number of variables. 
  \end{enumerate}
\end{remark}
\begin{proposition}
  \begin{enumerate}
  \item Let $(M,Q),(M',Q')$ be  quadratic modules with finite
    generating systems $\cG, \cG'$ and with associated symmetric
    bilinear forms $b,b'$ such that $(M',Q')$ is represented
    by $(M,Q)$. 

Then the polynomial $P_{\cG',Q'}$
    is represented by  $P_{\cG,Q}$ over $R$, and the Gram
    matrix $M_{\cG'}(b')$ is represented by $M_{\cG}(b)$.
\item Let $P_1\in S[X_1,\ldots,X_m], P_2\in S[X_1,\ldots,X_n]$ be
  homogenous quadratic polynomials such that $P_2$ is represented by $P_1$ over $R$.

Let $M,M'$ be free $R$-modules $M$ of rank $m$ and $M'$ of rank $n$
with quadratic forms $Q,Q'$ and bases $\cB,\cB'$ such that
$P_1=P_{\cB,Q}, P_2=P_{\cB',Q'}$. 

Then $(M',Q')$ is represented by $(M,Q)$.
  \end{enumerate}
\end{proposition}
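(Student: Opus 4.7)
The plan is to move in both directions between the coordinate-free formulation (an isometric $R$-linear map of quadratic modules) and the coordinate formulations (representation of polynomials and of Gram matrices). The bridge in each case is a matrix $T=(t_{ij})$ whose columns record the images of the $w_j$ written as $R$-linear combinations of the $v_i$. In part (a) I would read $T$ off from a given isometry; in part (b) I would use $T$ to define an isometry on the basis and extend linearly.

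For part (a), let $\phi : M' \to M$ be an isometry realising the representation, and write $\cG=(v_1,\ldots,v_m)$, $\cG'=(w_1,\ldots,w_n)$. Since $\cG$ generates $M$, for each $j$ I can choose $t_{ij}\in R$ with $\phi(w_j)=\sum_{i} t_{ij}v_i$ (not necessarily uniquely, but any choice will do). For $\x=(x_1,\ldots,x_n)\in R^n$ and $y_i=\sum_j t_{ij}x_j$, applying $\phi$ to $\sum_j x_j w_j$ and using $Q\circ\phi=Q'$ gives
$$P_{\cG',Q'}(x_1,\ldots,x_n) = Q'\bigl({\textstyle\sum_j x_jw_j}\bigr) = Q\bigl({\textstyle\sum_i y_iv_i}\bigr) = P_{\cG,Q}(y_1,\ldots,y_m),$$
which is the required polynomial representation. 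Because $b$ and $b'$ are determined from $Q,Q'$ by $b(x,y)=Q(x+y)-Q(x)-Q(y)$, the map $\phi$ automatically preserves the bilinear form as well, and expanding $b'(w_k,w_l)=b(\phi(w_k),\phi(w_l))=\sum_{i,j}t_{ik}b(v_i,v_j)t_{jl}$ gives the matrix identity $M_{\cG'}(b')={}^tT\,M_{\cG}(b)\,T$, exactly as in Lemma~\ref{modules_matrixversion}a).

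For part (b), conversely, given $T=(t_{ij})$ realising the polynomial representation $P_2=P_1\circ T$, I use the freeness of $M'$ to define $\phi:M'\to M$ on the basis $\cB'=(w_1,\ldots,w_n)$ by $\phi(w_j):=\sum_{i}t_{ij}v_i$ and extend $R$-linearly. Reading the computation in part (a) backwards, for $w=\sum_j x_jw_j\in M'$ one has $Q(\phi(w))=P_{\cB,Q}(Tx)=P_1(Tx)=P_2(x)=P_{\cB',Q'}(x)=Q'(w)$, so $Q\circ\phi=Q'$.

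The main obstacle is the injectivity of $\phi$ demanded by the definition of a representation of quadratic modules. A matrix $T$ that gives a polynomial representation need not have trivial kernel (for instance, if $P_2\equiv 0$ then the zero matrix works for any $P_1$), so the $\phi$ constructed in part (b) is not injective in general. I expect the intended reading is the non-injective convention explicitly flagged in the remark after the definition of isometry; otherwise one needs to add a non-degeneracy hypothesis on $(M',Q')$, under which $\phi(w)=0$ forces $w=0$ by combining $b'=\phi^*b$ with non-degeneracy of $b'$.
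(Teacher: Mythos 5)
Your argument is correct, and in fact the paper states this proposition without any proof at all, so there is nothing to compare against except the intended (and evidently routine) translation between the module-level and coordinate-level notions; your proof is exactly that translation, using the matrix $T$ recording $\phi(w_j)=\sum_i t_{ij}v_i$ in one direction and defining $\phi$ on a basis via $T$ in the other, with the Gram-matrix statement reduced to Lemma~\ref{modules_matrixversion}. Your observation about part (b) is well taken and is a real gap in the statement as printed: since the paper's definition of a representation of quadratic modules requires injectivity, the linear map built from $T$ need not qualify (your example $P_2\equiv 0$, $T=0$ makes this unambiguous), and the fix is exactly as you say — either invoke the non-injective convention flagged in the remark following the definition, or add non-degeneracy of $b'$, under which $\phi(w)=0$ forces $w\in\rad_{b'}(M')=\{0\}$.
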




We recall from linear algebra that a  bilinear form $\beta$ (with
values in $R$) on the
$R$-module $M$ induces an $R$-linear map $\tilde{\beta} :v\mapsto
\tilde{\beta}(v)=\tilde{\beta}_v \in M^*=\Hom(M,R)$ from $M$ 
to $M^*$ by setting $\tilde{\beta}_v(x):=\beta(v,x)$ for all $x \in M$. If the form
$\beta$ is symmetric it 
is {\em non degenerate} \index{non degenerate} if and only if this map is
injective, it is {\em regular} \index{regular!bilinear form}
if and only if it is an isomorphism of $R$-modules. These two notions
coincide if $R$ is a field and $M$ a finite dimensional vector space
over $R$. 
For example, for $R=\Q$ in a) of Example \ref{example1} the bilinear form
$b$ is regular, for $R=\Z$  it is non degenerate
but not regular, whereas the forms $b$ in b) and in c) of that example are
regular.
If the module is free of finite rank, the symmetric bilinear form $b$ is regular if
and only if the determinant of the Gram matrix with respect to a basis $\cB$
is invertible, and the matrix of the linear map $\tilde{\beta}:M\to M^*$ with
respect to the basis $\cB$ of $M$ and the dual basis $\cB^*$ of $M^*$
is the Gram matrix of $b$ with respect to the basis $\cB$. 
The 
bilinear form is non-degenerate if and only if the determinant of the
Gram matrix with respect to a basis is not a zero divisor (to prove
this latter fact, use that a linear map $R^n\to R^n$ is injective if
and only if its determinant is not a zero divisor; a fact of (multi-)linear
algebra not found in many places outside Bourbaki's Alg\`ebre).

Conversely, if $\lambda:M\to M^*$ is an $R$-linear map it induces a
bilinear form $\beta$ on $M$ by setting $\beta(v,x)=\lambda(v)(x)$;
this bilinear form is symmetric if and only if $\lambda$ is equal to
the pullback of its  transpose
to $M$ under the canonical mapping $\iota:M\to M^{**}$.

In the more general situation that $Q$ and $b$ take values in the
$R$-module $S$, we obtain similarly a map $\tilde{b}:M\to \Hom(M,S)$ and
call $b$ non degenerate if this map is injective. The notion of
regularity will only be used if  one has $R=S$, it doesn't make much
sense otherwise.

If $N$ is a submodule of the quadratic module $M$ we write as usual $N^\perp=\{m\in M
\mid b(m,N)=0\}$ (without mentioning $b$ in the notation) for the
orthogonal complement \index{orthogonal complement}
\index{complement!orthogonal} of $N$ in $M$ with respect to $b$. 
\begin{example}
  The hyperbolic module $H(M)$ over some $R$-module $M$ is non
  degenerate if and only if for every $\nullvek \ne x\in M$ there
  exists $f \in M^*$ with $f(x)\ne 0$; such a module is also called a
  torsionless module (which is not the same as a torsion free module).
If $M$ is finitely generated projective, it is also reflexive and one
sees that $H(M)$ is regular.

A regular quadratic module which is free is necessarily of finite
rank, since otherwise  a basis of the dual module has higher
cardinality than a basis of the module itself.

If $R=F$ is a field and not of characteristic $2$, every one
dimensional quadratic space $Fx$ with $Q(x)\ne 0$ is regular. 
\end{example}
\begin{definition}\label{defi_radical}
Let $M$ be an $R$-module with symmetric bilinear form $b$.
The {\em radical} \index{radical} of $(M,b)$ is
\begin{equation*}
  \rad(M)=\rad_b(M)=M^\perp=\{x \in M\mid b(x,m)=0 \text{ for all } m
  \in M\}.
\end{equation*}
If $(M,Q)$ is a quadratic module with associated symmetric bilinear
form we call the radical of $(M,b)$ the {\em bilinear
  radical}\index{radical!bilinear} and define the radical of the
quadratic module to be 
\begin{equation*}
  \rad_Q(M):=\{x \in \rad_b(M)\mid Q(x)=0\}.
\end{equation*}
\end{definition}
\begin{remark}
  If $(M,Q)$ is a quadratic module and $2$ is a not zero divisor one
  has $\rad_Q(M)=\rad_b(M)$.  However, if $2$ is a zero divisor, the
  restriction of the quadratic form $Q$ to $\rad_b(M)$ is not
  necessarily zero and $\rad_Q(M)$ may become a proper subset of $\rad_b(M)$. For example, for $M=R=S=\F_2$ and $Q(x)=x^2$ we have
  $\rad_b(M)=M$ and $\rad_Q(M)=\{0\}$. As another example, put
  $R=M=\Z$ and $S=\Z/6\Z$, with $Q(x)=x^2+6\Z$. Then $\rad_b(M)=3\Z$
  and $\rad_Q(M)=6\Z$. 
\end{remark}
\begin{lemma} Let $(M,Q)$ be a quadratic module with associated
  symmetric bilinear form $b$.
  \begin{enumerate}
  \item 
$\rad_b(M)$ is a submodule of $M$, and on the quotient module 
  $M_0:=M/\rad_b(M)$ one can define a non degenerate symmetric
  bilinear form $b_0$ by $b_0(x+\rad_b(M),y+\rad_b(M)):=b(x,y)$. 
\item The restriction of $Q$ to $rad_b(M)$ is $\Z$-linear
  (equivalently: additive)
  and  $\rad_Q(M)$ is a submodule of $\rad_b(M)$ which equals
  $\rad_b(M)$ if $2$ is not a zero divisor in $S$. On the quotient
  module $M_1=M/\rad_Q(M)$ one can define a quadratic form $Q_1$ by
  $Q_1(x+\rad_Q(M)):=Q(x)$. 
\item If $N_0\subseteq M$ is a submodule with $M=N_0\oplus \rad_b(M)$,
the natural map $n_0\mapsto n_0+\rad_b(M)$ from $N_0$ to $M/\rad_b(M)$
is a linear isomorphism compatible with the bilinear forms
$b\vert_{N_0}$ and $b_0$. In particular, the
restriction of $b$ to $N_0$ is non-degenerate too.
\item If $N_1\subseteq M$ is a submodule with $M=N_1\oplus \rad_Q(M)$,
the natural map $n_1\mapsto n_1+\rad_Q(M)$ from $N_1$ to $M/\rad_Q(M)$
is an isometry.
  \end{enumerate}
\end{lemma}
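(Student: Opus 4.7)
The plan is to handle the four parts in sequence, relying throughout on the fundamental identity $Q(x+y) = Q(x) + Q(y) + b(x,y)$ together with standard quotient/projection arguments, and to import the non-degeneracy map $\tilde{b}:M\to\Hom(M,S)$ discussed right before Definition \ref{defi_radical}.

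For (a), I would observe that $\rad_b(M) = \Kern(\tilde{b})$, so it is automatically a submodule. Well-definedness of $b_0$ follows by expanding $b(x+r,\, y+r')$ bilinearly and using $b(r,\cdot)=0=b(\cdot,r')$ for $r,r'\in\rad_b(M)$; symmetry and bilinearity descend from $b$; and non-degeneracy of $b_0$ is immediate from the definition of the radical (if $b(x,y)=0$ for all $y\in M$ then $x\in\rad_b(M)$, so $\bar{x}=0$).

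The crux is (b). For $r,r'\in\rad_b(M)$ one has $Q(r+r')=Q(r)+Q(r')+b(r,r')=Q(r)+Q(r')$, so $Q|_{\rad_b(M)}$ is additive, hence $\Z$-linear. Specializing to $r'=r$ gives $2Q(r)=b(r,r)=0$, so when $2$ is not a zero divisor in $S$ this forces $Q(r)=0$, i.e.\ $\rad_Q(M)=\rad_b(M)$. Additivity makes $\rad_Q(M)$ a subgroup of $\rad_b(M)$; closure under $R$-scalars follows from $Q(ar)=a^2Q(r)=0$ and $ar\in\rad_b(M)$. Well-definedness of $Q_1$ uses $Q(x+r)=Q(x)+Q(r)+b(x,r)=Q(x)$ for $r\in\rad_Q(M)\subseteq\rad_b(M)$, and the quadratic-form axioms for $Q_1$ are inherited: $Q_1(a\bar{x})=a^2Q_1(\bar{x})$ is direct, and the associated bilinear form of $Q_1$ is the map induced on $M/\rad_Q(M)$ by $b$, which is well-defined precisely because $\rad_Q(M)\subseteq\rad_b(M)$.

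For (c) and (d) the common strategy is to restrict the projection $\pi:M\to M/\rad_\bullet(M)$ (with $\bullet=b$ or $Q$) to the chosen complement $N_\bullet$. This restriction is injective because $N_\bullet\cap\rad_\bullet(M)=\{0\}$ and surjective because $M=N_\bullet+\rad_\bullet(M)$, so it is a linear isomorphism. For (c) the identity $b_0(\pi(n),\pi(n'))=b(n,n')$ makes it a morphism of bilinear modules and, combined with the non-degeneracy of $b_0$ established in (a), forces $b|_{N_0}$ to be non-degenerate. For (d) the analogous identity $Q_1(\pi(n_1))=Q(n_1)$ turns it into an isometry $(N_1, Q|_{N_1})\to(M_1,Q_1)$. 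The main potential obstacle is (b): one must carefully justify additivity of $Q|_{\rad_b(M)}$ (which is what opens up the whole argument and produces the identity $2Q(r)=0$) before comparing $\rad_Q$ and $\rad_b$; everything else is bookkeeping generalizing the proof sketched for the field case in Lemma \ref{space_radical}.
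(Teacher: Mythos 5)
Your proof is correct and complete; the paper leaves this lemma as an exercise, and your argument is the standard one it intends. The key point — that $b(r,r')=0$ on $\rad_b(M)$ gives both the additivity of $Q|_{\rad_b(M)}$ and the identity $2Q(r)=b(r,r)=0$, from which $\rad_Q(M)=\rad_b(M)$ follows when $2$ is not a zero divisor — is handled exactly right, and the quotient/complement bookkeeping in (c) and (d) is sound.
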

\begin{proof}
  Exercise.
\end{proof}
\begin{remark}
 If $R$ is a field, complementary subspaces $N_0,N_1$ in the above sense to $\rad_b(M),\rad_Q(M)$
    always exist, for an arbitrary ring $R$ this is not necessarily
    the case.  
\end{remark}

\section{Orthogonal splittings and orthogonal groups}
\begin{definition} Let $M$ be an $R$-module with ($S$-valued) symmetric bilinear form $b$.
  \begin{enumerate}
  \item If $M=N_1\oplus N_2 $ is a direct sum decomposition with
    $b(N_1,N_2)=\{0\}$, one writes $M=N_1\perp_b N_2=N_1\perp
    N_2$
    and says that $M$ is the {\em orthogonal sum}\index{orthogonal
      sum}\index{sum!orthogonal}
 (with respect to $b$)  of  $N_1$ and $N_2$.
\item For a submodule $N$ of $M$ the {\em orthogonal
    complement}\index{orthogonal
      complement}\index{complement!orthogonal} (with respect to $b$) is $N^{\perp_b}=N^\perp:=\{x \in
    M\mid b(x,N)=\{0\}\}$.
\item If $N\subseteq M$ is a submodule with $M=N\perp N^\perp$ one says
  that $N$ {\em splits off} \index{split off} in $M$ or splits $M$
  orthogonally.
\item Analogously one writes $$M=N_1\perp \dots \perp
  N_r=\perp_{i=1}^r N_i$$ if one has $M=\bigoplus_{i=1}^r N_i$ with
  $b(N_i,N_j)=\{0\}$for $i \ne j$.
\item A basis $(v_1,\ldots,v_m)$ of $M$ with $b(v_i,v_j)=0$ for $i \ne
  j$ is called on orthogonal basis of $V$ (with respect to $b$). Its
  Gram matrix with respect to this basis is then a diagonal matrix
  with entries $2Q(v_i)$, and we write $(M,Q)\cong
  [Q(v_1),\ldots,Q(v_m)]$ as a short notation for the isometry class
  of the quadratic module.
  \end{enumerate}
\end{definition}
\begin{remark}
  If $(M_1,b_1),(M_2,b_2)$ are modules with ($S$-valued) symmetric
  bilinear forms one can in an obvious way form the external
  orthogonal sum of $M_1,M_2$. We will usually not distinguish between
  external and internal orthogonal sums.
\end{remark}
\begin{lemma}
  Let $(M,Q)$ be a quadratic module and assume
  that $M=N_1\perp N_2$ for submodules  $N_1,N_2$ of $M$.
Then  $O(N_1,Q\vert_{N_1})\times O(N_2,Q\vert_{N_2})$ can be naturally
embedded into $O(M,Q)$ by mapping $(\varphi_1,\varphi_2)$
to the map $\psi=\varphi_1\perp\varphi_2$ given by
$\psi(v_1+v_2=\varphi_1(v_1)+\varphi_2(v_2)$ for all $v-1\in N_1, v_2
\in N_2$. 
In particular, $O(N_1,Q\vert_{N_1})$ is embedded into $O(M,Q)$ by
mapping $\varphi \in O(N_1,Q\vert_{N_1})$ to $\varphi \perp
\id_{N_2}$. The image of this embedding is contained in
$O(M,N_2,Q):=\{ \psi \in O(M,Q)\mid \psi \vert_{N_2}=\id_{N_2}\}$ and
equal to it if $(N_2,Q\vert_{N_2})$ is non degenerate; one writes  
just $O(M,N_2)$ if $Q$ is understood.
\end{lemma}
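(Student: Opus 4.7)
The plan is to verify that the map $\Phi : O(N_1,Q\vert_{N_1}) \times O(N_2,Q\vert_{N_2}) \to O(M,Q)$ sending $(\varphi_1,\varphi_2)$ to $\psi:=\varphi_1\perp\varphi_2$ is a well-defined injective group homomorphism, and then to analyze the image of its restriction to $O(N_1,Q\vert_{N_1}) \times \{\id_{N_2}\}$.

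First I would show $\psi$ is a well-defined $R$-linear automorphism of $M$. Since $M = N_1 \oplus N_2$, every $v \in M$ has a unique decomposition $v = v_1+v_2$, so $\psi(v) := \varphi_1(v_1) + \varphi_2(v_2)$ is $R$-linear with two-sided inverse $\varphi_1^{-1}\perp\varphi_2^{-1}$. To see that $\psi$ preserves $Q$, I would apply the identity $Q(x+y) = Q(x)+Q(y)+b(x,y)$ together with $b(N_1,N_2)=\{0\}$: since $\varphi_i(v_i) \in N_i$,
\begin{equation*}
Q(\psi(v)) = Q(\varphi_1(v_1)) + Q(\varphi_2(v_2)) + b(\varphi_1(v_1),\varphi_2(v_2)) = Q(v_1) + Q(v_2) = Q(v).
\end{equation*}
The homomorphism property $\Phi(\varphi_1\varphi_1',\varphi_2\varphi_2') = \Phi(\varphi_1,\varphi_2)\circ\Phi(\varphi_1',\varphi_2')$ is immediate from evaluation on $v_1+v_2$, and injectivity follows because $\psi\vert_{N_i}$ recovers $\varphi_i$.

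The embedding of $O(N_1,Q\vert_{N_1})$ alone arises as the restriction of $\Phi$ to $O(N_1,Q\vert_{N_1})\times\{\id_{N_2}\}$, and by construction every $\varphi\perp\id_{N_2}$ fixes $N_2$ pointwise, so the image is contained in $O(M,N_2,Q)$. The substantive part of the lemma is the converse under the non-degeneracy hypothesis: assume $\rad_{b\vert_{N_2}}(N_2)=\{0\}$, and let $\psi \in O(M,Q)$ satisfy $\psi\vert_{N_2} = \id_{N_2}$. For $v_1 \in N_1$, decompose $\psi(v_1) = u_1+u_2$ with $u_1 \in N_1, u_2 \in N_2$. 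Then for every $n_2 \in N_2$,
\begin{equation*}
b(u_2,n_2) = b(u_1+u_2,n_2) = b(\psi(v_1),\psi(n_2)) = b(v_1,n_2) = 0,
\end{equation*}
where $b(u_1,n_2)=0$ and $b(v_1,n_2)=0$ come from $b(N_1,N_2)=\{0\}$ and the middle equality uses that $\psi$ is an isometry with $\psi(n_2)=n_2$. Non-degeneracy forces $u_2=0$, hence $\psi(N_1) \subseteq N_1$.

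To conclude, $\psi\vert_{N_1}:N_1\to N_1$ is linear, isometric, and injective as a restriction of $\psi$; surjectivity follows from surjectivity of $\psi$, since given $w_1 \in N_1$ and $v = v_1+v_2$ with $\psi(v)=w_1$, the relation $\psi(v_1)+v_2 = w_1$ combined with $\psi(v_1) \in N_1$ forces $v_2 \in N_1\cap N_2 = \{0\}$ and hence $\psi(v_1)=w_1$. Thus $\psi = (\psi\vert_{N_1})\perp\id_{N_2}$ lies in the image, proving equality. The main technical obstacle is precisely the step $u_2 \in \rad_{b\vert_{N_2}}(N_2)$; without non-degeneracy an element $\psi \in O(M,N_2,Q)$ may genuinely mix $N_1$ with vectors in the radical of $b\vert_{N_2}$ of zero $Q$-value, so the image of the embedding would in general be strictly smaller than $O(M,N_2,Q)$.
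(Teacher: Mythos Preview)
Your proof is correct and follows essentially the same approach as the paper's: both arguments reduce the key step to showing $\psi(N_1)\subseteq N_1$ by decomposing $\psi(v_1)=u_1+u_2$ and using the isometry property together with $\psi\vert_{N_2}=\id_{N_2}$ to get $b(u_2,n_2)=0$ for all $n_2\in N_2$, then invoking non-degeneracy. You are more thorough than the paper in spelling out the well-definedness of the embedding and the surjectivity of $\psi\vert_{N_1}$, which the paper leaves implicit.
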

\begin{proof}
Only the characterization of the image of the embedding of
$O(N_1,Q\vert_{N_1})$ into $O(M,Q)$ is not obvious. For this assume
$(N_2,Q)$ to be non degenerate, let
$\psi \in O(M,N_2,Q)$ and write $\psi(v)=w_1+w_2$ for $v \in
N_1$. Then $0=b(v,w)=b(\psi(v), \psi(w))=b(w_1+w_2,w)=b(w_2,w)$ for all
$w \in N_2$, hence $w_2 \in N_2^\perp\cap N_2=\{\nullvek\}$ by
assumption, so we have $\psi(N_1)\subseteq N_1$ and
$\psi=\varphi\perp\id_{N_2}$ with $\varphi=\psi\vert_{N_1}\in
O(N_1,Q)$. The other inclusion is obvious.  
\end{proof}
\begin{lemma}
  Let $(M,b)$ be an $R$-module with symmetric bilinear form and assume
  that $M=N_1\perp N_2$ for submodules  $N_1,N_2$ of $M$.

Then $(M,b)$ is non degenerate resp.\ regular if and only if both
$(N_1, b\vert_{N_1})$, $(N_2,b\vert_{N_2})$ have the respective property.
\end{lemma}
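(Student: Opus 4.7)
The plan is to treat both properties uniformly via the structural $R$-linear map $\tilde b\colon M\to \Hom(M,S)$, $v\mapsto b(v,\cdot)$, and to show that under the decomposition $M=N_1\perp N_2$ this map becomes a direct sum of the corresponding structural maps for $N_1$ and $N_2$. Non-degeneracy then amounts to injectivity and regularity (only used when $R=S$) to bijectivity of $\tilde b$, and both properties are preserved and reflected by direct sums of $R$-linear maps.

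First I would establish $\rad_b(M)=\rad_b(N_1)\oplus\rad_b(N_2)$. For $x=x_1+x_2$ with $x_i\in N_i$, the hypothesis $b(N_1,N_2)=\{0\}$ gives $b(x,m_i)=b(x_i,m_i)$ for all $m_i\in N_i$. Since $M=N_1+N_2$, it follows that $x\in\rad_b(M)$ iff $x_i\in\rad_b(N_i)$ for $i=1,2$. This settles the non-degeneracy equivalence at once. For regularity, I would combine this with the canonical isomorphism $\Hom(N_1\oplus N_2,R)\cong \Hom(N_1,R)\oplus\Hom(N_2,R)$ obtained by restricting a functional to each summand. Composing $\tilde b_M$ with this identification and using again $b(N_1,N_2)=\{0\}$ gives the block-diagonal description
\[
   \tilde b_M(v_1+v_2)=\bigl(\tilde b_{N_1}(v_1),\;\tilde b_{N_2}(v_2)\bigr),
\]
so $\tilde b_M$ is literally the direct sum $\tilde b_{N_1}\oplus\tilde b_{N_2}$ and is therefore bijective iff each summand is.

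The main (mild) obstacle is just the bookkeeping: one must identify $M^*$ with $N_1^*\oplus N_2^*$ via restriction (not extension by zero) and verify that the orthogonality of the $N_i$ yields a block-diagonal rather than merely block-triangular map; both are immediate consequences of $b(N_1,N_2)=\{0\}$. Note that no hypothesis on $2$ being a zero divisor enters, since everything is phrased in terms of $b$ alone.
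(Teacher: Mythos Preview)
Your argument is correct. The paper leaves this lemma as an exercise, so there is no proof to compare against; your approach via the block-diagonal decomposition $\tilde b_M=\tilde b_{N_1}\oplus\tilde b_{N_2}$ under the canonical identification $M^*\cong N_1^*\oplus N_2^*$ is exactly the intended routine verification.
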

\begin{proof}
  Exercise.
\end{proof}
\begin{theorem}\label{orthogonalcomplements}
Let $(M,b)$ be a module with $S$-valued symmetric bilinear form $b$,
let $N\subseteq M$ be a submodule. For $v \in M$ let
$\tilde{b}^{(N)}(v)=\tilde{b}_v^{(N)}\in \Hom(M,S)$ be the linear map
given by $\tilde{b}^{(N)}(v)(x)=b(v,x)$ for $x \in N$.
\begin{enumerate}
\item $M/N^\perp \cong \tilde{b}^{(N)}(M)\subseteq N^*$.
\item If $(N,b)$ is non degenerate with
  $\tilde{b}^{(N)}(M)=\tilde{b}^{(N)}(N)$ one has $M=N\perp N^\perp$.
\item If $R=S$ and $(N,b)$ is regular one has $M=N\perp N^\perp$.
\item If $R=S=F$ is a field and $(M,b)=(V,b)$ is finite dimensional and
  non degenerate one has $\dim(N)+\dim(N^\perp)=\dim(M)$ and $(N^\perp)^\perp=N$.
\end{enumerate}
\end{theorem}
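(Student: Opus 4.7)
The plan is to treat the four parts in order, with each building on the previous.

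For part (a), I would apply the first isomorphism theorem to the $R$-linear map $\tilde{b}^{(N)}\colon M\to N^*$. By definition, the kernel is $\{v\in M\mid b(v,x)=0\text{ for all }x\in N\}=N^\perp$, so $M/N^\perp$ is $R$-isomorphic to the image $\tilde{b}^{(N)}(M)\subseteq N^*$.

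For part (b), I want to produce, for each $v\in M$, a decomposition $v=n+w$ with $n\in N$, $w\in N^\perp$. The hypothesis $\tilde{b}^{(N)}(M)=\tilde{b}^{(N)}(N)$ gives some $n\in N$ with $\tilde{b}^{(N)}(v)=\tilde{b}^{(N)}(n)$, i.e.\ $b(v-n,x)=0$ for every $x\in N$, so $w:=v-n\in N^\perp$. To conclude that the sum is direct, I observe that any element of $N\cap N^\perp$ lies in $\rad_b(N)$, and by non-degeneracy of $(N,b)$ this is $\{\nullvek\}$.

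For part (c), regularity of $(N,b)$ means that the map $\tilde{b}\colon N\to N^*$ is an $R$-isomorphism; but this is precisely the restriction of $\tilde{b}^{(N)}$ to $N$, so $\tilde{b}^{(N)}(N)=N^*\supseteq \tilde{b}^{(N)}(M)$, forcing equality. Since regular implies non-degenerate, the hypotheses of (b) are met and the splitting follows.

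Part (d) is where the argument has to do more work, and this is the main obstacle. I would first show that $\tilde{b}^{(N)}\colon M\to N^*$ is surjective, by factoring it as $M\xrightarrow{\tilde{b}} M^*\to N^*$, where the second arrow is restriction of functionals. Non-degeneracy of $(V,b)$ in finite dimension over a field makes the first map an isomorphism (it is injective by definition of non-degeneracy, and in finite dimension $\dim V=\dim V^*$); the second map is surjective because any functional on $N$ extends to one on $V$ by choosing a complement of $N$ and extending by zero. Thus $\tilde{b}^{(N)}(M)=N^*$, and combining with (a) gives $\dim(M/N^\perp)=\dim N^*=\dim N$, i.e.\ $\dim N+\dim N^\perp=\dim M$. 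Finally, $N\subseteq(N^\perp)^\perp$ is immediate, and applying the just-proved dimension formula with $N$ replaced by $N^\perp$ yields $\dim(N^\perp)^\perp=\dim M-\dim N^\perp=\dim N$, giving equality. The delicate point is really the surjectivity of restriction $M^*\to N^*$, which uses the splitting of subspaces available only in the vector space setting.
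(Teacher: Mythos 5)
Your proposal is correct and follows essentially the same route as the paper: the first isomorphism theorem for (a), the decomposition $v=n+w$ with $N\cap N^\perp=\{\nullvek\}$ by non-degeneracy for (b), reduction of (c) to (b) via regularity, and for (d) the surjectivity of $\tilde{b}^{(N)}$ (non-degeneracy plus finite dimension giving regularity of $V\to V^*$, composed with the surjective restriction $V^*\to N^*$) followed by the dimension count applied to both $N$ and $N^\perp$. Your writeup merely spells out the surjectivity of $V^*\to N^*$ in more detail than the paper does.
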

\begin{proof}
  \begin{enumerate}
\item This follows from the homomorphy theorem of linear algebra.
  \item 
 By assumption, for all $v\in M$ the the linear functional
  $\tilde{b}^{(N)}(v)$ on $N$
  can also be obtained as $\tilde{b}^{(N)}(w_1)$ for some $w_1\in N$,
  i.e., there is $w_1\in N$ with $b(w_1,x)=b(v,x)$ for
  all $x\in N$, and one has $w_2:=v-w_1\in N^\perp$. Since we also
  have $N\cap N^\perp=\{0\}$ by assumption, $M=N\perp
  N^\perp$ holds as asserted.
\item This is a special case of b).
\item In this case non degeneracy implies regularity, and the linear
  map $\tilde{b}^{(N)}:M\to N^*$ 
  surjective with kernel $N^\perp$, which shows the asserted equation
  for the dimensions. Since $(N^\perp)^\perp\supseteq N$ is obvious,
  this implies  $(N^\perp)^\perp=N$.
  \end{enumerate}
\end{proof}
\begin{example}
Let $M=R^2$ with the quadratic form $Q(x_1,x_2)=x_1x_2$, this
quadratic module is called the {\em hyperbolic plane}\index{hyperbolic
  plane}\index{plane!hyperbolic} (over $R$), it is obviously regular. If $2$ is
invertible in $R$, the submodule $N=R\bigl(
\begin{smallmatrix}
  1\\1
\end{smallmatrix}\bigr)$ is regular and can be split off with
orthogonal complement $N^\perp=R\bigl(
\begin{smallmatrix}
  1\\-1
\end{smallmatrix}\bigr)$. The submodule $N'=R\bigl(
\begin{smallmatrix}
  1\\0
\end{smallmatrix}\bigr)$ is not regular, in fact, it is its own
orthogonal complement, so one sees that the dimension formula and
$((N')^\perp)^\perp=N'$ are indeed satisfied.
\end{example}
\begin{deflemma} Let $(M,Q)$ be a quadratic module with values in $S$.
  Let $z \in M$ with $b(z,M)\subseteq Q(z) \cdot R$ and such that
  $Q(z)$ is not a zero divisor in $S$.
The {\em symmetry}\index{symmetry} or {\em
  reflection}\index{reflection} with respect to $z$ is the linear map
$M\ to M$ given by 
\begin{equation*}
  \tau_z(x)=x-\frac{b(x,z)}{Q(z)} z \text{ for all } x \in M.
\end{equation*}
One has
\begin{enumerate}
\item $\tau_z^2=\id_M$.
\item $\tau_z \in O(M,Q)$.
\item $\tau_z(z)=-z$ and $\tau_z\vert_{(Rz)^\perp}=\id_{(Rz)^\perp}$.
\item If $x \in M$ satisfies $Q(x)=Q(z)$ and $Q(x-z)\ne 0$, one has $\tau_{x-z}(x)=z,\tau_{x-z}(z)=x$.
\end{enumerate}
\end{deflemma}
\begin{proof}
We have
\begin{eqnarray*}
  \tau_z(\tau_z(x))&=&\tau_z(x)-\frac{b(\tau_z(x),z)}{Q(z)}z\\
&=&x-\frac{b(x,z)}{Q(z)}z-\frac{b(x,z)-\frac{b(x,z)b(z,z)}{Q(z)}}{Q(z)}z\\
&=&
x-\frac{b(x,z)}{Q(z)}z-(-\frac{b(x,z)}{Q(z)}z)\\
&=&x.
\end{eqnarray*}
The rest is similarly shown by direct computation.
\end{proof}
\begin{theorem}[Witt's generation Theorem]\label{SatzvonWitt}\index{Witt's
    generation Theorem}
 Let $R=S=F$ be a field, $\cha(F)\ne 2$, let $(V,Q)$ be a non
 degenerate finite dimensional quadratic space over $F$.

Then the orthogonal group $O(V)=O(V,Q)$ is generated by symmetries. 

More precisely: Each $\sigma \in O(V)$ can be written as a product of
at most $\dim(V)$ symmetries.  
\end{theorem}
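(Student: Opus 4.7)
The plan is to induct on $n = \dim V$. For the base case $n = 1$, every nonzero vector of $V$ is non-isotropic by non-degeneracy, so $O(V) = \{\pm \id\}$ with $-\id = \tau_v$ for any $0 \neq v$, and one symmetry suffices. For the inductive step with $n \geq 2$ and $\sigma \in O(V)$, I first check whether $\sigma$ fixes some non-isotropic $v$. If it does, then by Theorem \ref{orthogonalcomplements}(c) we have $V = Fv \perp (Fv)^\perp$ with $(Fv)^\perp$ non-degenerate of dimension $n-1$, the restriction $\sigma|_{(Fv)^\perp}$ is orthogonal, and by induction it is a product of at most $n-1$ symmetries of $(Fv)^\perp$; each extends to a symmetry of $V$ fixing $v$, so $\sigma$ is expressed by at most $n-1$ symmetries.

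If $\sigma$ fixes no non-isotropic vector, I choose any non-isotropic $v$ (which exists since $V$ is non-degenerate and $\cha(F) \neq 2$) and set $w = \sigma v \neq v$. Because $Q(v) = Q(w)$, the identity $Q(v-w) + Q(v+w) = 4Q(v) \neq 0$ shows at least one of $v \pm w$ is non-isotropic. In the principal sub-case $Q(v-w) \neq 0$, the preceding deflemma (part d, with $x = v$, $z = w$) yields $\tau_{v-w}(v) = w$ and $\tau_{v-w}(w) = v$, so $\tau_{v-w}\sigma(v) = \tau_{v-w}(w) = v$. Thus $\tau_{v-w}\sigma$ fixes the non-isotropic $v$ and, by the previous case, is a product of at most $n-1$ symmetries; consequently $\sigma = \tau_{v-w} \cdot (\tau_{v-w}\sigma)$ uses at most $n$ symmetries.

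The hard part will be the exceptional sub-case where $Q(v - \sigma v) = 0$ for every non-isotropic $v$. Here a direct analogue of the previous trick fails: using $\tau_{v+w}$ instead, one computes $\tau_{v+w}\sigma(v) = -v$, so $\tau_v \tau_{v+w}\sigma$ fixes $v$, but this has now spent two symmetries rather than one, yielding only the weaker bound $n+1$. To close the gap, I would observe that in this exceptional sub-case the relation $b(v, \sigma v) = 2Q(v)$ holds identically; polarizing gives $b(v, \sigma w) + b(\sigma v, w) = 2b(v, w)$ for all $v, w$, which combined with $b(\sigma v, w) = b(v, \sigma^{-1}w)$ and non-degeneracy forces $(\sigma - 1)^2 = 0$, so $(1-\sigma)V$ is totally isotropic of some dimension $d \leq n/2$. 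I would then handle this rigid situation by choosing a hyperbolic decomposition $V = H \perp H^\perp$ with $H$ of dimension $2d$ containing $(1-\sigma)V$, noting $H^\perp \subseteq ((1-\sigma)V)^\perp = V^\sigma$ so that $\sigma|_{H^\perp} = \id$, and constructing $\sigma|_H$ directly as a product of $2d \leq n$ suitably chosen symmetries. The main obstacle is precisely this exceptional sub-case: the clean ``one symmetry per dimension'' peeling argument breaks, and must be replaced by a structural analysis exploiting the nilpotent form of $\sigma - 1$.
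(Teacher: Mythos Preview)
Your argument for the generation statement is essentially the paper's: pick an anisotropic vector $x$, use $Q(x-\sigma x)+Q(x+\sigma x)=4Q(x)\ne0$, precompose $\sigma$ with $\tau_{x-\sigma x}$ (or with $\tau_x\tau_{x+\sigma x}$ in the other case) to fix $x$, and descend to $(Fx)^\perp$. The paper does only this and says so explicitly: ``We prove only the first part \ldots\ for the assertion about the number of symmetries see e.g.\ Dieudonn\'e.'' So for what the paper actually proves, your proposal is the same and is complete.

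Where you go beyond the paper is the sharp bound $\dim V$. You correctly isolate the obstruction (the case $Q(v-\sigma v)=0$ for every anisotropic $v$, where the crude trick costs two symmetries instead of one) and your deduction that $(\sigma-1)^2=0$ via $b(v,\sigma v)=2Q(v)$ is sound. Two points remain, though. First, the identity $b(v,\sigma v)=2Q(v)$ is assumed only for anisotropic $v$; extending it to all $v$ needs a one-line argument (in a non-degenerate space of dimension $\ge 2$ over a field of characteristic $\ne 2$, every vector is a sum of two anisotropic ones). Second, and more seriously: once you have $\sigma|_{H^\perp}=\id$ and $\sigma|_H$ unipotent on a hyperbolic $H$ of dimension $2d$, induction helps only when $2d<n$; the case $2d=n$ still requires the direct construction of $\sigma$ as at most $n$ symmetries that you announce but do not supply. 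That residual case is exactly the content the paper defers to Dieudonn\'e.
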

\begin{proof}
  We prove only the first part, using induction on $n=\dim(V)$, for the assertion about the number of
  symmetries see e.g.\ 
{Dieudonne}.

The case $n=1$ is trivial. Let $\dim(V)=n>1$, assume the assertion to be
proven for all non degenerate $(W, Q')$ with $\dim(W)<n$ and let
$\sigma \in O(V)$. Since $V$ is non degenerate there exists $x\in V$
with $Q(x)\ne 0$.
We have $Q(x-\sigma(x))=2Q(x)-b(x,\sigma(x)),
Q(x+\sigma(x))=2Q(x)+b(x,\sigma(x))$ and hence $Q(x+\sigma(x))\ne 0$
or $Q(x-\sigma(x))\ne 0$.  In the latter case, we put $y=x-\sigma(x)$
and have $\tau_y(\sigma(x))=x$ and therefore
$\tau_y\circ\sigma((Fx)^\perp)=(Fx)^\perp=:U$ with $\dim(U)=n-1$. By
the inductive assumption we can write
$\tau_y\circ\sigma\vert_U=\tau_{z_1}\circ\dots\circ \tau_{z_r}$ wit $r
\in \N$ and vectors $z_i\in U$ with $Q(z_i)\ne 0$. Since the $\tau_{z_i}$
satisfy $\tau_{z_i}(x)=x$, we have
$\tau_y\circ\sigma(x)=(\tau_{z_1}\circ\dots\circ \tau_{z_r})(x)$ as
well and hence $\tau_y\circ\sigma=\tau_{z_1}\circ\dots\circ
\tau_{z_r}$, which gives $\sigma=\tau_y\circ \tau_{z_1}\circ\dots\circ
\tau_{z_r}$.

In the other case we put $y=x+\sigma(x)$ and obtain in the same way as
above $\tau_y\circ\sigma\vert_U=(\tau_{z_1}\circ\dots\circ
\tau_{z_r})\vert_U$ and $\tau_y\circ\sigma(x)=-x$. From this we get $\sigma=\tau_x\circ\tau_y\circ\tau_{z_1}\circ\dots\circ
\tau_{z_r}$.
\end{proof}
\section{Hyperbolic Modules, Primitivity and Regular Embeddings}
\begin{definition}
  Let $M$ be an $R$-module.
  \begin{enumerate}
  \item A submodule $N\subseteq M$ is called a {\em primitive}
    submodule \index{primitive}\index{submodule!primitive} if $N$ is a
    direct summand in $M$, i.e., $M=N\oplus N'$ for some submodule
    $N'$ of $M$.
\item Let $b$ be an $R$-valued symmetric bilinear  form on $M$. Then a
  submodule $N\subseteq M$ is called {\em sharply primitive} or {\em
    $b$-primitive} or {\em
    regularly embedded} \index{regularly embedded}\index{sharply
    primitive} with respect to $b$ if one has
  $\tilde{b}^{(N)}(M)=N^*$, i.e., if for every $\varphi \in N^*$ there
  exists $v \in M$ with $b(v,w)=\varphi(w)$ for all $w \in N$.
  \end{enumerate}
\end{definition}
\begin{remark}\label{regularly_embedded_remark}
Obviously, a monogenic submodule $N=Rw$ is regularly embedded if and only if one
has $b(w,M)=R$.

If $M$ is regular, every primitive submodule is regularly embedded.
In particular, if $R=F=S$ is a field and $(V,Q)$ is a regular quadratic
space over $F$, every subspace is regularly embedded. If the space
$(V,Q)$ is not regular, a subspace $U$ is regularly embedded if and
only if the restriction to $U$ of the  projection $\pi$ to the regular
space $\bar{V}=V/\rad_b(V)$ is injective, i.e., $U\cap
\rad_b(V)=\{\nullvek\}$. 
We can then view it
as a subspace of ``the regular part'' of $V$ in the sense that for
each subspace $W$ of $V$ which is
complementary 
to $\rad_b(V)$ (and hence regular) we obtain a
natural embedding of $U$ into $W$. This explains the
terminology ``regularly embedded''.

To see this, assume first
that there is $\nullvek \ne u\in U\cap\rad(V)$. Then all linear forms
in the image of $\tilde{b}^{(U)}$ are zero on $u$, so
$\tilde{b}^{(U)}(V)=U^*$ can not hold. Conversely, assume $\pi\vert_U$
to be injective. Then its transpose $\bar{V}^*\to U^*$ is surjective,
and since $\bar{V}$ is regular we see that for every $f \in U^*$ there
exists $y\in V$ with $b(y,x)=g(\pi(x))=f(x)$ for all $x \in U$, where
we denote by $g$ a preimage under the transpose of $\pi\vert_U$ of
$f$. The subspace  $U$ is therefore regularly embedded in $V$. 

 Conversely,  if $N\subseteq M$ is finitely generated projective and regularly
  embedded, it is a primitive submodule of $M$. This is trivial in the
  case of vector spaces and  follows for modules from the
  fact that $\tilde{b}^{(N)}(M)=N^*$ implies that every linear functional on
  $N$ can be extended to $M$, i.e., the map $M^*\to N^*$ given by
  restriction to $N$ is surjective. For finitely generated projective
  modules this latter property is well known to be equivalent to $N$
  being a direct summand in $M$. To see this if $N$ is finitely generated free
  with basis $(v_1,\ldots, v_n)$  consider the dual basis
  $(v_1^*,\ldots,v_n^*)$ of $N^*$ and pick $\phi_1,\ldots,\phi_n \in
  M^*$ with $\phi_1\vert_N=v_i^*$.
We can then write $v\in M$ as
$(\sum_{i=1}^n\phi_i(v)v_i)+(v-(\sum_{i=1}^n\phi_i(v)v_i)$ and
denote by $P$ the set of all the $(v-(\sum_{i=1}^n\phi_i(v)v_i$ for
$v \in M$, this give as $M=N\oplus P$.
To prove it for general finitely generated projective $N$, you can e.g. consider
  the (by projectivity split) exact sequence $0\to N^\perp=(M/N)^*\to
  M^*\to N^*\to 0$,
  dualize it and use reflexivity.
\end{remark}
\begin{lemma}
  Let $N_1$ be a finitely generated projective submodule of the bilinear
  $R$-module $(M,b)$ which is regularly embedded.

 Then there exists a finitely generated projective submodule $N_2$ of $M$
 such that $\tilde{b}^{(N_1)}\vert_{N_2}N_2\to N_1^*$ is an
 isomorphism.

Moreover, $N_2$ is regularly embedded into $M$ and
$\tilde{b}^{(N_2)}\vert_{N_1}:N_1 \to N_2^*$ is an isomorphism.
\end{lemma}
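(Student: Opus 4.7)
The plan is to exploit the projectivity of $N_1$ together with the reflexivity of finitely generated projective modules. Since $N_1$ is finitely generated projective, the dual $N_1^*$ is again finitely generated projective (being a direct summand of a dual of a finite free module). By hypothesis $N_1$ is regularly embedded, i.e.\ $\tilde{b}^{(N_1)}:M\to N_1^*$ is surjective; since its target is projective, this surjection admits an $R$-linear section $s:N_1^*\to M$. I then define $N_2:=s(N_1^*)\subseteq M$. Because $s$ has a left inverse it is injective, so $s$ restricts to an isomorphism $N_1^*\to N_2$, making $N_2$ a finitely generated projective submodule of $M$; by construction $\tilde{b}^{(N_1)}\vert_{N_2}:N_2\to N_1^*$ is the inverse of this isomorphism.

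For the ``moreover'' part I would first show that $\tilde{b}^{(N_2)}\vert_{N_1}:N_1\to N_2^*$ is an isomorphism. Writing $\phi:=\tilde{b}^{(N_1)}\vert_{N_2}$ and letting $\iota:N_1\to N_1^{**}$ denote the canonical map, the symmetry of $b$ gives for $x\in N_1$, $y\in N_2$
\begin{equation*}
\tilde{b}^{(N_2)}\vert_{N_1}(x)(y)=b(x,y)=b(y,x)=\phi(y)(x)=\iota(x)(\phi(y)),
\end{equation*}
so that $\tilde{b}^{(N_2)}\vert_{N_1}=\phi^{*}\circ\iota$. Since $N_1$ is finitely generated projective it is reflexive, hence $\iota$ is an isomorphism; and $\phi$ being an isomorphism makes its transpose $\phi^{*}:N_1^{**}\to N_2^{*}$ an isomorphism too. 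The composition $\phi^{*}\circ\iota$ is thus an isomorphism, as claimed.

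The regular embedding of $N_2$ then comes essentially for free: the map $\tilde{b}^{(N_2)}\vert_{N_1}:N_1\to N_2^{*}$ is already surjective (being an isomorphism), and it factors through $\tilde{b}^{(N_2)}:M\to N_2^{*}$, so the latter is surjective, i.e.\ $\tilde{b}^{(N_2)}(M)=N_2^{*}$. I expect the only genuinely nontrivial ingredient to be the reflexivity step in the second paragraph: without using $\iota^{-1}$ to identify $N_1$ with $N_1^{**}$, the symmetry of $b$ alone would only produce a natural map $N_1\to N_2^{*}$ and would not automatically transfer the isomorphism $N_2\cong N_1^{*}$ into an isomorphism $N_1\cong N_2^{*}$.
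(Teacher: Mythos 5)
Your proof is correct and follows essentially the same route as the paper: split the surjection $\tilde{b}^{(N_1)}:M\to N_1^*$ using projectivity of $N_1^*$, take $N_2$ to be the image of the section, and obtain $\tilde{b}^{(N_2)}\vert_{N_1}$ as the transpose of $\tilde{b}^{(N_1)}\vert_{N_2}$ composed with the reflexivity isomorphism $N_1\cong N_1^{**}$. Your explicit verification of the regular embedding of $N_2$ and the identification $\tilde{b}^{(N_2)}\vert_{N_1}=\phi^*\circ\iota$ just spell out details the paper leaves as "easily checked."
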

\begin{proof}
  Since $N_1$ is projective, the exact sequence $0\to N_1^\perp\to
  M \to N_1^*\to 0$ splits, i.e., there exists a module homomorphism
  $h:N_1^*\to M$ with $b^{(N_1)}\circ h=\id_{N_1^*}$,  and we can take $N_2$ as the image of the
  splitting homomorphism $N_1^*\to M$. If $N_1$ is finitely generated
  free, we can mor directly take $N_2$ to be the linear span of any set
  of preimages of a set of basis vectors of $N_1^*$ under the map  $b^{(N_1)}$.

$N_1$, being finitely generated projective, is a reflexive module
(i.e., $(N_1^*)^*$ is naturally isomorphic to $N_1$), and
we can view the transpose (or dual map) of   $\tilde{b}^{(N_1)}\vert_{N_2}N_2\to
N_1^*$ as an isomorphism $N_1 \to N_2^*$; it is easily checked that
this transpose is nothing but $\tilde{b}^{(N_2)}\vert_{N_1}$.
\end{proof}
\begin{definition}
  Let $(M,Q)$ be a quadratic module.
  \begin{enumerate}
  \item $\nullvek\ne x \in M$ is called {\em isotropic}\index{isotropic} if $Q(x)=0$,
    {\em anisotropic}\index{isotropic} otherwise.
\item $(M,Q)$ is called isotropic if it contains an isotropic vector,
  anisotropic otherwise.
\item A nonzero submodule $N\subseteq M$ with $Q(N)=\{0\}$ is called
  {\em totally isotropic}\index{totally isotropic} or {\em singular}\index{singular}.
  \end{enumerate}
\end{definition}
\begin{example}
  In the hyperbolic $R$-module $H(M)=M\oplus M^*$ over some $R$-module
  $M$ both $M$ and $M^*$ are totally isotropic submodules (if nonzero). 
\end{example}
\begin{theorem}\label{isotropic_hyperbolic}
  Let $(M,Q)$ be a quadratic module, $N\subseteq M$ a finitely
  generated projective submodule which is totally isotropic and
  regularly embedded. Then there is a submodule $N'$ of $M$ such that
  $N\oplus N'$ with the restriction of $Q$ as quadratic form is
  isometric to the hyperbolic module over $N$.

In particular, every totally isotropic subspace of a regular quadratic
space $(V,Q)$ over a field $F$ can be supplemented to a hyperbolic
space in which it is a maximal totally isotropic subspace.  
\end{theorem}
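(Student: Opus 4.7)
The plan is to use the lemma just preceding the theorem to extract a projective ``partner'' $N_2$ for $N$, then modify $N_2$ within $N\oplus N_2$ so that it becomes totally isotropic while its pairing with $N$ is preserved. Concretely, applying that lemma with $N_1 = N$ yields a finitely generated projective submodule $N_2 \subseteq M$ such that both $\tilde{b}^{(N)}|_{N_2}\colon N_2 \to N^*$ and $\tilde{b}^{(N_2)}|_N\colon N \to N_2^*$ are isomorphisms. Since $N$ is totally isotropic the associated bilinear form vanishes on $N \times N$ (as $b(n,n') = Q(n+n')-Q(n)-Q(n') = 0$), so the injectivity of $\tilde{b}^{(N)}|_{N_2}$ forces $N \cap N_2 = \{0\}$, i.e.\ $N \oplus N_2$ is an internal direct sum inside $M$.

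The main obstacle is that $Q|_{N_2}$ is quadratic, not linear, so one cannot directly invoke the duality $N \cong N_2^*$ to cancel it. This is where Lemma \ref{bilinearforms}(d) enters: since $N_2$ is finitely generated projective, there is a (not necessarily symmetric) bilinear form $\beta\colon N_2 \times N_2 \to S$ with $Q(y)=\beta(y,y)$ for all $y\in N_2$. The induced linear map $\tilde\beta\colon N_2 \to N_2^*$, composed with the inverse of $\tilde{b}^{(N_2)}|_N$, yields an $R$-linear map $f\colon N_2 \to N$ satisfying $b(f(y), y') = -\beta(y,y')$ for all $y,y' \in N_2$.

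Set $N' := \{y + f(y)\mid y\in N_2\}$, the image of the injective $R$-linear map $y\mapsto y+f(y)$, which is an $R$-submodule of $M$ isomorphic to $N_2$. From $N\cap N_2 = \{0\}$ one gets $N\cap N' = \{0\}$, and $N'$ is totally isotropic because
\begin{equation*}
Q(y+f(y)) \;=\; Q(y) + Q(f(y)) + b(f(y),y) \;=\; \beta(y,y) + 0 - \beta(y,y) \;=\; 0.
\end{equation*}
Letting $g\colon N^* \to N_2$ be the inverse of $\tilde{b}^{(N)}|_{N_2}$, I define the $R$-module isomorphism $\phi\colon H_S(N) \to N\oplus N'$ by $\phi(x + \varphi) = x + g(\varphi) + f(g(\varphi))$. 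Using $Q|_N = 0$, the fact that $f(g(\varphi)) \in N$, and the symmetry of $b$, a direct computation gives $Q(\phi(x+\varphi)) = b(x, g(\varphi)) = \varphi(x)$, which coincides with the hyperbolic form on $H_S(N)$; thus $\phi$ is an isometry.

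For the final assertion, over a field $F$ every subspace is free (hence finitely generated projective), and by Remark \ref{regularly_embedded_remark} every subspace of a regular $(V,Q)$ is regularly embedded, so the first part applies. Maximality of $N$ as a totally isotropic subspace of $H(N) = N\oplus N^*$ is immediate: any $x+\varphi \in H(N)$ orthogonal to $N$ satisfies $\varphi(n) = b(x+\varphi, n) = 0$ for all $n\in N$, forcing $\varphi = 0$ and hence $x+\varphi \in N$.
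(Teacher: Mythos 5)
Your proof is correct and follows essentially the same route as the paper: extract the dual partner $N_2$ via the preceding lemma, choose a bilinear form $\beta$ with $Q(y)=\beta(y,y)$ on $N_2$, and correct $N_2$ by the map $f$ into the totally isotropic complement $N'$. You merely add the explicit verification of the isometry with $H_S(N)$ and of the directness of the sum, which the paper leaves implicit.
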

\begin{proof}
  We find first a finitely generated projective submodule $P$ of $M$
  such that $\tilde{b}^{(N)}\vert_P:P\to N^*$ and
  $\tilde{b}^{(P)}\vert_N:N \to P^*$ are isomorphisms. There is a (not
  necessarily symmetric) bilinear form $\beta$ on $P$ satisfying
  $Q(x)=\beta(x,x)$ for all $x \in P$, and using the isomorphism
  $\tilde{b}^{(P)}\vert_N:N \to P^*$  we can define a map $f:P
  \to N$ by requiring $b(y,f(x))=\beta(y,x)$ for all $x,y \in
  P$. Obviously, $f$ is linear and one has $Q(x-f(x))=0$ for all $x
  \in P$. The submodule $N'=\{ x-f(x)\mid x \in P\}$ is then as desired.
\end{proof}
\begin{remark}
  If $N$ is free with basis $(v_1,\ldots,v_n)$, it is easy to
  construct $N'$ explicitly: Since $N$ is regularly embedded one finds
  first $w_1,\ldots,w_n \in M$ with $b(v_i,w_j)=\delta_{ij}$ and
  denotes by $P$ the linear span of the $w_j$ (which are obviously
  linearly independent). The map $f$ above is then
  given by linear continuation of $w_j \mapsto
  Q(w_j)v_j+\sum_{i=1}^{j-1}b(w_i,w_j)v_i \in N$. The linearly
  independent vectors
  $v'_j=w_j-f(w_j)$ span then the  free and totally isotropic
  submodule $N'$ and satisfy $b(v_i,v_j')=\delta_{ij}$ so that
  $N\oplus N'$ is isometric to the hyperbolic module $H(N)$.
\end{remark}
\begin{corollary}
  Let $R=F=S$ be a field of characteristic different from $2$ and
  $(V,Q)$ a two dimensional regular quadratic space over $F$.
Then the following are equivalent:
\begin{enumerate}
\item $(V,Q)$ is a hyperbolic plane.
\item $(V,Q)$ is isotropic.
\item $\det(V,Q)=-1 \cdot (F^\times)^2$.
\end{enumerate}
\end{corollary}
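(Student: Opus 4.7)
The plan is to argue a) $\Rightarrow$ b), a) $\Rightarrow$ c), b) $\Rightarrow$ a), and c) $\Rightarrow$ b), which together establish all three equivalences. The first two implications are essentially definitional: in the canonical basis $(1,0),(0,\varepsilon)$ of $H(F)=F\oplus F^*$ with $\varepsilon(1)=1$, the vector $(1,0)$ is nonzero and isotropic, and the Gram matrix of the associated bilinear form $b$ in this basis is $\left(\begin{smallmatrix} 0 & 1 \\ 1 & 0 \end{smallmatrix}\right)$, whose determinant $-1$ represents the square class $\det(V,Q)$.

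For b) $\Rightarrow$ a) I would apply Theorem \ref{isotropic_hyperbolic} directly. Pick $\nullvek \ne v \in V$ with $Q(v)=0$. Since $V$ is regular, the one-dimensional subspace $N=Fv$ is automatically regularly embedded by Remark \ref{regularly_embedded_remark}, and it is evidently totally isotropic and free. The theorem then produces a subspace $N'$ such that $N\oplus N'$ with the restricted quadratic form is isometric to the hyperbolic module $H(N)$. Since $\dim(N\oplus N')=2=\dim V$, this forces $V=N\oplus N'$ to be a hyperbolic plane.

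The main work is in c) $\Rightarrow$ b), carried out by orthogonal diagonalization. Regularity together with $\cha(F)\ne 2$ forces $Q\not\equiv 0$, for otherwise the polarization identity would give $b\equiv 0$, contradicting regularity. Pick $v_1\in V$ with $Q(v_1)\ne 0$; then $Fv_1$ is regular, and Theorem \ref{orthogonalcomplements}(c) produces $V=Fv_1\perp (Fv_1)^\perp$. The dimension count in Theorem \ref{orthogonalcomplements}(d) makes $(Fv_1)^\perp$ one-dimensional, spanned by some $v_2$, and $Q(v_2)\ne 0$ holds because $V$ being regular forces each orthogonal summand to be regular. The Gram matrix of $b$ in $\{v_1,v_2\}$ is the diagonal matrix with entries $2Q(v_1),2Q(v_2)$, so
\begin{equation*}
\det(V,Q)=4Q(v_1)Q(v_2)\cdot(F^\times)^2=Q(v_1)Q(v_2)\cdot(F^\times)^2,
\end{equation*}
using that $4$ is a square. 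The hypothesis $\det(V,Q)=-1\cdot(F^\times)^2$ therefore gives $Q(v_1)Q(v_2)=-c^2$ for some $c\in F^\times$, whence $-Q(v_1)/Q(v_2)=(c/Q(v_2))^2$. Setting $t=c/Q(v_2)$, the vector $v_1+tv_2$ is nonzero and satisfies $Q(v_1+tv_2)=Q(v_1)+t^2Q(v_2)=0$, proving isotropy.

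No step is a serious obstacle; the only mild subtlety is the square-class bookkeeping with the factor $4=2^2$ appearing in the diagonal Gram matrix of $b$, which is absorbed harmlessly into $(F^\times)^2$ because $\cha(F)\ne 2$.
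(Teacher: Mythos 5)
Your proof is correct and follows essentially the same route as the paper: the equivalence of a) and b) via Theorem \ref{isotropic_hyperbolic}, and the link to c) via the observation that a binary form represents zero exactly when its determinant is $-1$ up to squares. The only cosmetic difference is that you diagonalize first and then solve $Q(v_1)+t^2Q(v_2)=0$, whereas the paper works in an arbitrary basis and reads the condition off the discriminant $b(v,w)^2-4Q(v)Q(w)$ of the quadratic equation in one variable; the square-class bookkeeping with the factor $4$ is handled correctly in both.
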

\begin{proof}
  The equivalence of a) and b) follows from the previous theorem. For
  the equivalence of b) and  c)
  let $(v,w)$ be a basis of $V$.
The quadratic equation $0=Q(x+cy)=Q(x)+cb(c,y)+c^2Q(y)$ is then
solvable with $c \in F$
if and only if the discriminant $-4Q(v)Q(w)+b(v,w)^2$ is a square.
\end{proof}
\section{Witt's Theorems and the Witt Group}
The following basic theorems of Witt can be generalized to quadratic
(or rather bilinear) modules over an arbitrary local ring, as has been observed by
Kneser. For the
reader's convenience we treat the (original) case of quadratic spaces
over a field ov characteristic different from $2$ separately and
return to the general case at the end of this section. \begin{theorem}[Witt's extension theorem]\label{extension_theorem}
  Let $R=F=S$ be a field with $\cha(F)\ne 2$ and $(V,Q)$ a finite dimensional quadratic
  space over $F$.

Let $U_1,U_2\subseteq V$ be regular isometric subspaces with an
isometric isomorphism
$\rho:U_1 \to U_2$.

Then $\rho$ can be extended to all of $V$, i.e., there is $\sigma \in
O(V,Q)$ with $\sigma\vert_{U_1}=\rho$.
\end{theorem}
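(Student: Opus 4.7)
The plan is to proceed by induction on $n=\dim(U_1)$, using the explicit reflections $\tau_z$ from the preceding definition--lemma as the basic tool, together with Theorem \ref{orthogonalcomplements}~c) to split off regular subspaces.

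For the base case $n=1$, write $U_1=Fx_1$, $U_2=Fx_2$ with $x_2=\rho(x_1)$ and $Q(x_1)=Q(x_2)\ne 0$ (nonzero because $U_1$ is regular). I would argue as in the proof of Witt's generation theorem \ref{SatzvonWitt}: the identity $Q(x_1-x_2)+Q(x_1+x_2)=4Q(x_1)\ne 0$ forces at least one of $Q(x_1-x_2)$, $Q(x_1+x_2)$ to be nonzero. If $Q(x_1-x_2)\ne 0$, part d) of the deflemma on reflections gives $\tau_{x_1-x_2}(x_1)=x_2$, so $\sigma=\tau_{x_1-x_2}$ works. If $Q(x_1-x_2)=0$, then $Q(x_1+x_2)\ne 0$, and $\tau_{x_1+x_2}$ sends $x_1$ to $-x_2$; composing with $\tau_{x_2}$ (well-defined since $Q(x_2)\ne 0$) yields the required $\sigma$.

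For the inductive step with $\dim(U_1)=n>1$, I would first use regularity of $U_1$ to find $x_1\in U_1$ with $Q(x_1)\ne 0$ (otherwise $b\equiv 0$ on $U_1$, contradicting non-degeneracy in characteristic $\ne 2$). Then $Fx_1$ is regular, Theorem \ref{orthogonalcomplements}~c) gives $U_1=Fx_1\perp W_1$ with $W_1$ regular of dimension $n-1$, and setting $W_2=\rho(W_1)$, $x_2=\rho(x_1)$ gives an analogous decomposition $U_2=Fx_2\perp W_2$. By the inductive hypothesis applied to $\rho|_{W_1}:W_1\to W_2$, there is $\sigma_1\in O(V,Q)$ extending $\rho|_{W_1}$. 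Now $\sigma_1^{-1}\circ\rho$ fixes $W_1$ pointwise and sends $x_1$ to a vector $x_1':=\sigma_1^{-1}(x_2)$ with $Q(x_1')=Q(x_1)$; since this map is an isometry and $W_1\perp x_1$ in $U_1$, the vector $x_1'$ lies in $W_1^\perp$.

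Since $W_1$ is regular, Theorem \ref{orthogonalcomplements}~c) gives $V=W_1\perp W_1^\perp$, and the restriction of $Q$ to $W_1^\perp$ is again non-degenerate. Applying the one-dimensional base case inside $W_1^\perp$ to the isometry $Fx_1\to Fx_1'$, we obtain $\sigma_2'\in O(W_1^\perp,Q|_{W_1^\perp})$ with $\sigma_2'(x_1)=x_1'$; extend it by $\id_{W_1}$ to an element $\sigma_2\in O(V,Q)$. Then $\sigma:=\sigma_1\circ\sigma_2$ agrees with $\rho$ on $W_1$ (via $\sigma_1$) and sends $x_1$ to $\sigma_1(x_1')=x_2=\rho(x_1)$, hence with $U_1$ spanned by $W_1$ and $x_1$ we get $\sigma|_{U_1}=\rho$. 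The main obstacle in this plan is ensuring the orthogonal decomposition $U_1=Fx_1\perp W_1$ with $W_1$ again \emph{regular}; this is precisely where Theorem \ref{orthogonalcomplements}~c) and the companion lemma on regularity of orthogonal summands are essential, and also where the characteristic $\ne 2$ hypothesis enters (via the existence of an anisotropic vector in any regular space).
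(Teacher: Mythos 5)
Your proposal is correct and follows essentially the same route as the paper's proof: induction on $\dim(U_1)$, the reflection argument $\tau_{x_1-x_2}$ or $\tau_{x_2}\circ\tau_{x_1+x_2}$ for the rank-one case, and in the inductive step first extending $\rho\vert_{W_1}$ and then correcting on $W_1^\perp$ by an isometry fixing $W_1$ pointwise. Your explicit invocation of Theorem \ref{orthogonalcomplements}~c) to justify the splittings $U_1=Fx_1\perp W_1$ and $V=W_1\perp W_1^\perp$ is a welcome clarification of a step the paper leaves implicit.
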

\begin{proof}
  We use induction on $r=\dim(U_1)=\dim(U_2)$.

For $r=1$ we have $U_i=Fx_i (i=1,2)$ with $Q(x_i)\ne 0$ and
$\rho(x_1)=x_2$. We can then put $\sigma=\tau_{x_1-x_2}$ or
$\sigma=\tau_{x_2}\circ \tau_{x_1+x_2}$ as above.

Let $r>1$ and assume the theorem to be proven for subspaces of
dimension $<r$. 
There is $x_1\in U_1$ with $Q(x_1)\ne 0$, we put $x_2=\rho(x_1)\in
U_2$ and set $W_1:=(Fx_1)^\perp\subseteq U_1,
W_2=\rho(W_1)=(Fx_2)^\perp$, so that $U_1=Fx_1\perp W_1, U_2=Fx_2\perp
W_2$ holds.

By the inductive assumption there exists $\sigma_1 \in O(V)$ with
$\sigma\vert_{W_1}=\rho\vert_{W_1}$, so that we have
$\sigma_1^{-1}\circ \rho\vert_{W_1}=\id_{W_1}$. The vector  
$y_1:=(\sigma_1^{-1}\circ \rho)(x_1)$ is in $W_1^\perp$ since it satisfies
$b(y_1,w_1)=b(\rho(x_1),\rho(w_1))=0$ for all $w_1\in W_1$, and we can
find as in the case of $r=1$ above a map $\sigma_2 \in O(V)$ with
$\sigma_2(x_1)=y_1$, and since $\sigma_2$ is a product of symmetries with
respect to vectors in $W_1^\perp$ it satisfies
$\sigma_2\vert_{W_1}=\id_{W_1}$.

Setting $\sigma=\sigma\circ\sigma_2$  we see that we have
$\sigma\vert_{W_1}=\rho\vert_{W_1}$ as well as
$\sigma(x_1)=\rho(x_1)$, so that $\sigma$ is as desired.
\end{proof}
\begin{corollary}[Witt's cancellation theorem]\index{Witt's
    cancellation theorem}
 Let $F$ be a field of characteristic different from $2$ and $(V,Q)$ a
 quadratic space over $F$ with orthogonal splittings 
 \begin{equation*}
   V=U_1\perp W_1=U_2\perp W_2,
 \end{equation*}
where $U_1,U_2$ are regular isometric subspaces of $V$.

Then $W_1$ is isometric to $W_2$.
\end{corollary}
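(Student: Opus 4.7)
The plan is to deduce the cancellation theorem directly from Witt's extension theorem (Theorem \ref{extension_theorem}) together with the splitting property of regular subspaces from Theorem \ref{orthogonalcomplements}.

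First I would observe that because $U_1$ is regular, Theorem \ref{orthogonalcomplements} gives $V = U_1 \perp U_1^{\perp}$, and combined with the hypothesis $V = U_1 \perp W_1$ together with $W_1 \subseteq U_1^{\perp}$ and a dimension count, this forces $W_1 = U_1^{\perp}$. By the same argument $W_2 = U_2^{\perp}$. So the task reduces to showing that $U_1^{\perp}$ and $U_2^{\perp}$ are isometric.

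Next I would invoke the extension theorem: the given isometric isomorphism $\rho : U_1 \to U_2$ extends to some $\sigma \in O(V, Q)$ with $\sigma|_{U_1} = \rho$. Since $\sigma$ is an orthogonal map of $V$, it preserves the bilinear form $b$ globally, and therefore sends orthogonal complements to orthogonal complements of the images. Concretely, for $v \in U_1^{\perp}$ and any $u \in U_1$ one has $b(\sigma(v), \rho(u)) = b(\sigma(v), \sigma(u)) = b(v, u) = 0$, and since $\rho$ is surjective onto $U_2$ this shows $\sigma(U_1^{\perp}) \subseteq U_2^{\perp}$; applying the same argument to $\sigma^{-1}$ (which extends $\rho^{-1}$) gives the reverse inclusion. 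Hence $\sigma$ restricts to an $F$-linear isomorphism $U_1^{\perp} \to U_2^{\perp}$, and this restriction preserves $Q$ because $\sigma$ does. Thus $W_1 = U_1^{\perp} \cong U_2^{\perp} = W_2$ as quadratic spaces.

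There is essentially no obstacle here because the extension theorem has already done the main work; the only thing worth checking carefully is the identification $W_i = U_i^{\perp}$, which relies precisely on the regularity assumption on the $U_i$ (without regularity, $W_i$ need only be some complement of $U_i$ inside $U_i^{\perp}$, and the argument would fail). Note also that the theorem as stated does not require $V$ itself to be regular, but regularity of $U_1$ and $U_2$ is used in two places: to split them off orthogonally, and to make the extension theorem applicable.
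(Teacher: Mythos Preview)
Your proof is correct and follows essentially the same route as the paper's own argument: extend the given isometry $\rho:U_1\to U_2$ to some $\sigma\in O(V)$ via Theorem~\ref{extension_theorem}, identify $W_i=U_i^\perp$ using regularity of the $U_i$, and conclude $\sigma(W_1)=\sigma(U_1^\perp)=U_2^\perp=W_2$. The paper states this in two sentences; your version just spells out the verification of $\sigma(U_1^\perp)=U_2^\perp$ more explicitly.
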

\begin{proof}
  By the previous theorem there is $\sigma \in O(V)$ with
  $\sigma(U_1)=U_2$. Since by regularity of $U_1,U_2$ we have
  $U_i^\perp =W_i$ for $i=1,2$, one sees that
  $\sigma(W_1)=\sigma(U_1^\perp)=U_2^\perp=W_2$ must hold so that
  $W_1,W_2$ are isometric.
\end{proof}
\begin{corollary}
  Let $F$ be a field of characteristic different from $2$ and $(V,Q)$
  a regular 
 quadratic space over $F$ with isometric and regularly embedded subspaces $U_1,U_2$.

Then there exists $\sigma \in O(V)$ with $\sigma(U_1)=U_2$.
\end{corollary}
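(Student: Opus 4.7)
The plan is to enlarge each $U_i$ to a regular subspace $V_i$ of $V$, extend $\rho$ to an isometric isomorphism $V_1 \to V_2$, and then apply Witt's extension theorem (Theorem~\ref{extension_theorem}). The obstruction that prevents a direct application of that theorem is that $U_1, U_2$ need not themselves be regular: their bilinear radicals $R_i := \rad_b(U_i)$ may be nonzero.

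First I reduce to a regular/totally isotropic decomposition. Over the field $F$ I choose a complement $W_1$ of $R_1$ in $U_1$; by Lemma~\ref{space_radical} the restriction of $b$ to $W_1$ is non degenerate, and since $W_1$ is finite dimensional over $F$ this means regular. Because $\rho$ preserves $b$, it maps $R_1$ bijectively onto $R_2 := \rho(R_1) = \rad_b(U_2)$, and on setting $W_2 := \rho(W_1)$ I obtain orthogonal decompositions $U_i = W_i \perp R_i$ in which $\rho|_{W_1}\colon W_1 \to W_2$ and $\rho|_{R_1}\colon R_1 \to R_2$ are isometries and $W_2$ is regular. Since $V$ and $W_i$ are regular, Theorem~\ref{orthogonalcomplements}(c) yields $V = W_i \perp W_i^\perp$ with $W_i^\perp$ regular. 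The radical $R_i$ lies in $W_i^\perp$ and is totally isotropic there, because for $x \in R_i$ one has $2Q(x) = b(x,x) = 0$ and hence $Q(x) = 0$ by $\cha(F)\ne 2$.

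Now I apply Theorem~\ref{isotropic_hyperbolic} inside the regular space $W_i^\perp$ to the totally isotropic subspace $R_i$, obtaining $R_i' \subseteq W_i^\perp$ with $H_i := R_i \oplus R_i'$ isometric to the hyperbolic module $H(R_i)$. Then $V_i := W_i \perp H_i$ is a regular subspace of $V$ containing $U_i = W_i \perp R_i$. The isometry $\rho|_{R_1}\colon R_1 \to R_2$ induces via its transpose an isomorphism $R_2^* \xrightarrow{\sim} R_1^*$, and therefore lifts functorially to an isometry $H(R_1) \to H(R_2)$ that restricts to $\rho|_{R_1}$ on $R_1$; transporting via the isometries $H_i \cong H(R_i)$ yields an isometric isomorphism $H_1 \to H_2$ extending $\rho|_{R_1}$. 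Combining it orthogonally with $\rho|_{W_1}\colon W_1 \to W_2$ produces an isometric isomorphism $\tilde{\rho}\colon V_1 \to V_2$ whose restriction to $U_1 = W_1 \perp R_1$ is $\rho$. Since $V_1$ and $V_2$ are regular, Witt's extension theorem provides $\sigma \in O(V)$ with $\sigma|_{V_1} = \tilde{\rho}$; in particular $\sigma(U_1) = \tilde{\rho}(U_1) = U_2$.

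The step I expect to require the most care is the construction of the isometry $H_1 \to H_2$ extending $\rho|_{R_1}$: one must verify that the hyperbolic structure furnished by Theorem~\ref{isotropic_hyperbolic} is natural enough that the prescribed map on $R_i$ does extend to an isometry of the ambient $H_i$. This is a direct functoriality check, most transparent via the explicit basis description in the remark following Theorem~\ref{isotropic_hyperbolic}: given dual-type generators $(v_j, v_j')$ on each side with $b(v_i, v_j') = \delta_{ij}$, one pushes $\rho|_{R_1}$ across the induced identification of dual bases. Everything else is routine bookkeeping.
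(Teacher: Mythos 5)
Your proof is correct and follows essentially the same route as the paper: split off the regular part $U_i^{(0)}=W_i$, supplement the radical to a hyperbolic subspace of its orthogonal complement via Theorem~\ref{isotropic_hyperbolic}, and extend by Witt's extension theorem. You are in fact slightly more careful than the paper in checking that the isometry between the two hyperbolic pieces can be chosen to restrict to $\rho$ on the radicals.
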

\begin{proof}
  For $i=1,2$ we write $U_i=U_i^{(0)}+\rad(U_i)$ with regular
  subspaces $U_i^{(0)}$. 

By applying Theorem
  \ref{isotropic_hyperbolic} to the (regular) orthogonal complement
  $V_1\subseteq V$ of $U_1^{(0)}$ in $V$ we obtain a hyperbolic subspace
  $W_1\subseteq V$ which is orthogonal to $U_1^{(0)}$ and contains
  $\rad(U_1)$ as a maximal totally isotropic subspace. In the same way
  find  a hyperbolic space $W_2$ orthogonal to $U_2^{(0)}$ in which
  $\rad(U_2)$ is a maximal totally isotropic subspace.

 In particular,
  $W_1,W_2$ are hyperbolic spaces of the same dimension and hence
  isometric, with an isometry that sends $\rad(U_1)$ to
  $\rad(U_2)$, which implies that there is an isometry from the regular
  quadratic space $\tilde{U}_1:=U_1^{(0)}\perp W_1$ to $U_2^{(0)}\perp
  W_2$ mapping $U_1$ onto $U_2$. By Theorem \ref{extension_theorem}
  this isometry can be extended to all of $V$.
\end{proof}
With a little more work Theorem \ref{extension_theorem} can be proven
in a more general situation:

\begin{theorem}\label{extension_theorem_localring}
  Let $R$ be a local ring and $(M,Q)$ be a quadratic module over $R$
  (with values in $R$). Let $N_1,N_2$ be 
  regularly embedded isometric
  submodules which are free of finite rank with an isometry
  $\sigma:N_1\to N_2$.

Then $\sigma$ can be
extended to an element of the orthogonal group $O(M,Q)$.

Moreover, if $X$ is a submodule of $M$ with
$\tilde{b}^{(N_i)}(X)=N_i^*$ for $i=1,2$ and such that
$\sigma(v)\equiv v \bmod X$ for all $v \in N_1$ holds, the extension
$\tilde{\sigma}$ can be chosen such that it is trivial on $X^\perp$
and satisfies $\tilde{\sigma}(v)\equiv v \bmod X$ for all $v \in M$.

In particular, any isometry between regularly embedded subspaces of a
finite dimensional quadratic space $(V,Q)$ over a field $F$ (of any characteristic) can be
extended to an element of the orthogonal group $O(V,Q)$ of the space
and Witt's cancellation theorem holds for finite dimensional quadratic spaces over an
arbitrary field.
\end{theorem}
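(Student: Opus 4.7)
The plan is to prove the theorem by induction on the common rank $r$ of $N_1$ and $N_2$, imitating the scheme of Theorem~\ref{extension_theorem} but with two new difficulties: the residue field of $R$ may have characteristic $2$, so reflections are not always available, and the relative clause involving the submodule $X$ must be tracked throughout.

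For the base case $r=1$ I would write $N_i=Rv_i$; the monogenic regular embedding gives $b(v_i,M)=R$, and $Q(v_1)=Q(v_2)$. Following the field-case proof of Theorem~\ref{extension_theorem}, I would try the reflections $\tau_{v_1-v_2}$ and $\tau_{v_2}\circ\tau_{v_1+v_2}$; since $v_1,v_2$ are unimodular, $b(v_1\pm v_2,M)\subseteq R$, so the defining conditions of the earlier Definition-Lemma reduce to requiring that one of $Q(v_1\pm v_2)$ be a unit. The identities
\begin{equation*}
 Q(v_1-v_2)+Q(v_1+v_2)=4Q(v_1),\qquad Q(v_1+v_2)-Q(v_1-v_2)=2b(v_1,v_2),
\end{equation*}
together with the locality of $R$, place us in this reflective situation whenever $2\in R^\times$. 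When $2$ lies in the maximal ideal, all of $Q(v_1\pm v_2)$ may be non-units, and I would replace reflections by an Eichler-type isometry built from an auxiliary isotropic vector; if no such vector exists in $M$, I would first enlarge $M$ by a hyperbolic plane via Theorem~\ref{isotropic_hyperbolic}, perform the construction there, and restrict.

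For the inductive step $r>1$, pick $v_1\in N_1$ with $Q(v_1)\in R^\times$ and split $N_1=Rv_1\perp W_1$. Apply the inductive hypothesis to the pair $(W_1,\sigma(W_1))$ to obtain $\sigma_1\in O(M,Q)$, observe as in the field proof that $y_1:=\sigma_1^{-1}(\sigma(v_1))\in W_1^\perp$, apply the base case inside $W_1^\perp$ to get a $\sigma_2$ mapping $v_1$ to $y_1$ while fixing $W_1$, and compose. If no $v_1\in N_1$ has $Q(v_1)$ a unit (a totally degenerate situation), first supplement $N_1,N_2$ by hyperbolic partners via Theorem~\ref{isotropic_hyperbolic} to enlarge them into regularly embedded submodules containing unimodular vectors, and reduce to the previous case.

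The relative clause is maintained throughout by insisting that every reflection or Eichler isometry produced has its defining vectors inside $X$: the congruence $\sigma(v)\equiv v\bmod X$ on $N_1$ automatically puts the axes $v_1-\sigma(v_1)$ into $X$, while the largeness hypothesis $\tilde{b}^{(N_i)}(X)=N_i^*$ provides enough room in $X$ to choose the auxiliary dual partners and hyperbolic counterparts there as well. Any isometry of the form $\psi=\mathrm{id}+(\text{linear map into }X)$ automatically fixes $X^\perp$ pointwise and satisfies $\psi(v)\equiv v\bmod X$ for all $v\in M$, so the composed extension inherits both required properties. The ``in particular'' consequence for fields follows by taking $X=V$: every subspace is free, regular embedding is equivalent to $U\cap\rad_b(V)=\{\nullvek\}$, and Witt's cancellation for arbitrary characteristic then follows exactly as in the earlier corollary. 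The main obstacle is the residue-characteristic-$2$ base case: producing the isometry $v_1\mapsto v_2$ without reflections while simultaneously respecting the $X$ and $X^\perp$ constraints forces a delicate choice of Eichler axes inside $X$ and, if needed, a preliminary hyperbolic enlargement of $M$.
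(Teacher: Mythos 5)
There is a genuine gap, and it is not confined to the residue-characteristic-$2$ case you flag as the ``main obstacle.'' Your rank-one base case claims that when $2\in R^\times$ the identities $Q(v_1-v_2)+Q(v_1+v_2)=4Q(v_1)$ and locality force one of $Q(v_1\pm v_2)$ to be a unit. That is only true when $Q(v_1)$ itself is a unit, which the hypotheses do not give you: $N_1=Rv_1$ is merely regularly embedded ($b(v_1,M)=R$), not regular, so $Q(v_1)$ may lie in the maximal ideal or be $0$ (take $v_1=e_1$, $v_2=e_2$ in $H\perp H$ over $\Z_p$, $p$ odd: both reflections $\tau_{v_1\pm v_2}$ are undefined although $2$ is a unit). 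The actual difficulty in rank one is therefore present for every residue characteristic, and the paper's proof resolves it by a residue-field argument showing (under auxiliary hypotheses on $\bar{Q}(\bar X)$, resp.\ $\bar{Q}(\bar X^\perp)$ when $k\cong\F_2$) that one can choose $y\in X$ with $Q(y)$, $b(w_1,y)$ and $b(w_2,y)$ all units, after which $\tilde\sigma=\tau_z\tau_y$ with $z=w_2-\tau_y(w_1)$ works. Your proposal has no substitute for this step; the appeal to an unspecified ``Eichler-type isometry built from an auxiliary isotropic vector'' is precisely the content that needs to be supplied, and the constraint that its defining vectors lie in $X$ is where the work is.

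The inductive step has a second structural problem. Splitting $N_1=Rv_1\perp W_1$ with $Q(v_1)\in R^\times$ requires $Rv_1$ to split off orthogonally in $N_1$; but the Gram entry of $Rv_1$ is $b(v_1,v_1)=2Q(v_1)$, which is a non-unit whenever $2\in P$, so $Rv_1$ is not regular as a bilinear module and Theorem~\ref{orthogonalcomplements} does not apply — exactly in the dyadic case the whole theorem is designed to cover. Moreover such a $v_1$ need not exist at all (e.g.\ $N_1$ totally isotropic), and ``supplementing $N_1,N_2$ by hyperbolic partners'' begs the question, since extending $\sigma$ compatibly to the enlarged submodules is an instance of the statement being proved. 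The paper avoids orthogonal splitting entirely: it chooses a dual system $x_1,\dots,x_r\in X$ with $b(w_i,x_j)=\delta_{ij}$, applies the inductive hypothesis to $\sigma$ restricted to $\sum_{i=1}^{r-1}Rw_i$ with the same $X$, and then runs the rank-one case with the smaller module $\tilde X=Rx_r\oplus(X\cap N_1^\perp)$. Finally, your use of Theorem~\ref{isotropic_hyperbolic} to ``enlarge $M$ by a hyperbolic plane'' is a misapplication — that theorem locates a hyperbolic plane \emph{inside} $M$ over a given isotropic regularly embedded submodule; what the exceptional cases require is the external adjunction $M'=M\perp H$, $X'=X\perp R(e+f)$, together with the $X'^\perp$-triviality clause to force the extension to fix $e$ and $e+f$ and hence restrict back to $M$. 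Without that mechanism your ``perform the construction there and restrict'' does not produce a map of $M$.
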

\begin{proof}  We follow the proof from \cite{kneserbook}.

Let $P$ denote the maximal ideal of $R$ and let $k=R/P$ be the
residue field. By $\bar{Y}=Y/PY$ we denote the reduction of a module
$Y$ modulo $P$, by $\bar{Q}, \bar{b}$ the reductions of the quadratic
and the bilinear form.

Following \cite{kneserbook} we adapt the idea of the proof of Theorem \ref{extension_theorem}
suitably to the present more general situation. It will turn out that
this is easier if we assume that in the case $k\not\cong \F_2$ one has
$\bar{Q}(\bar{X})\neq \{0\}$ whereas for  $k\cong \F_2$ one has
$\bar{Q}(\bar{X}^\perp)\neq\{0\}$ (with the orthogonal complement
taken inside $\barX$); we will dispose of the two
exceptional cases in the end.

We proceed by
induction, starting with $N_i=Rw_i$ being of rank $1$,
$\sigma(w_1)=w_2=w_1+x$ with $x \in X$.

If $Q(x) \in R^\times $ holds we have
$\tau_x(w_1)=w_2$ for the reflection $\tau_x$, and we are done with $\tilde{\sigma}:=\tau_x$.

Otherwise we have $Q(x) \in P$ and $\tau_x(w_1)=w_2$ implies
\begin{equation*}
Q(x)=-b(w_1,x)=b(w_2,x)\in P, 
\end{equation*}
hence
\begin{equation*}
  \barQ(\barx)=0=\barb(\barw_1,\barx)=\barb(\barw_2,\barx)=0.
\end{equation*}
We write $\bar{X}_i=\{\bar{x}\in
\bar{X}\mid \bar{b}(\bar{w}_i,\bar{x})=0\}$ and proceed to show that
$\bar{Q}(\bar{X}\setminus \bar{X}_1\cup \bar{X}_2)= \{0\}$ would
contradict our assumptions. Indeed, if that was the case  we had
\begin{equation*}
\bart^2\barQ(\bary)+\bart\barb(\bary,\barz)=\barQ(\bart\bary+\barz)=0
\end{equation*}
for all $\bart\in k, \bary\in \barX_1\cap \barX_2, \barz\in
\barX\setminus (\barX_1\cup \barX_2)$. If we have $k\not\cong \F_2$
with $\barQ(\barX)\ne\{0{}$
this implies $\barQ(\bary)=0=\barb(\bary,\barz)$. 
Since we have $\barx\in \barX_1\cap\barX_2$ we see in particular
$\barb(\barx, \barX\setminus (\barX_1\cup \barX_2)=\{0\}$.
Moreover, an
easy exercise in linear algebra shows that $\bar{X}\setminus
\bar{X}_1\cup \bar{X}_2$ generates $\barX$ in this case, and we see
$\barb(\barx,\barX)=\{0\}$,  and $\barw_2=\barw_1+x$ implies
$\barX_1=\barX_2$.
But then $\barX_1\cap\barX_2=\barX_1\cup\barX_2=\barX_1$ and we obtain
$\barQ(\barX)=\{0\}$, which contradicts our assumptions. If, on the other
hand, we have $\barQ(\barX^\perp)\ne \{0\}$, we notice first that we
have
\begin{equation*}
  \barQ(\bary)=\barQ(\bary+\barz)=0
\end{equation*}
for all
\begin{equation*}
  \bary\in \barX^\perp\cap \barX_1\cap \barX_2, \barz \in
  \barX^\perp\cap(\barX\setminus(\barX_1\cup \barX_2)).
\end{equation*}

Obviously, we have $\barX^\perp\cap\barX_1=\barX^\perp\cap\barX_2$,
which implies that $\barX^\perp$ is the union of
$\barX^\perp\cap\barX_1\cap\barX_2$ and $\barX\setminus(\barX_1\cup
\barX_2)$, and we get $\barQ(\barX^\perp)=\{0\}$ contrary to our
assumptions.

Summing up this discussion, we have established that
$\barQ(\barX\setminus (\barX_1\cup \barX_2))\neq \{0\}$ holds. We may
therefore choose $y\in X$ with $Q(y)\in R^\times, b(w_1,y)\in
R^\times, b(w_2,y)\in R^\times$. Writing $w_2=\tau_y(w_1)+z$ we have
\begin{eqnarray*}
  z&=&b(w_1,y)Q(y)^{-1}y+x\\
  Q(z)&=&b(w_1,y)b(w_2,y)Q(y)^{-1}+Q(x),
\end{eqnarray*}
hence $Q(z)\in R^\times$ and $\tau_z\tau_y(w_1)=w_2$, so that $\tilde{\sigma}:=\tau_z\tau_y$
is as desired and the assertion for $N_i$ of rank $1$ is proven.

Let now $r>1$. By the assumptions of the theorem we can choose a basis
$(w_1,\dots,w_r)$ of $N_1$ and vectors $x_1,\dots,x_r\in X$ with $b(w_i,x_j)=\delta_{ij}$ and have
$X=\sum_{i=1}^rRx_i \oplus (X\cap N_1^\perp)$.
With the inductive step and our additional assumptions on $X$ in mind we make this choice as follows:
We take  a vector $x\in X$ with $Q(x)\not \in P$ and
   $\bar{x}\in \barX^\perp$ in the case $k=\F_2$ and choose $x_r \in
   X\setminus (N_1^\perp\cap X)$ in such a way that one has $\barx \in
   k\barx_r+\overline{N_1^\perp\cap X}.$ The vector $\barx_r$ can be
   extended to a basis $\barx_1,\dots,\barx_r$ of $\barX$ modulo
   $N_1^\perp\cap X$. We choose representatives $x_i\in X$ of the
   $\barx_r$ and let $(w_1,\dots,w_r)$ be the basis dual to
   $(x_1,\dots,x_r)$ of $N_1$.

By the
inductive assumption we find  a product  $\rho$ of 
reflections in vectors of $x\in X$ with $Q(x)\in R^\times$ that 
satisfies 
$\rho \vert \sum_{i=1}^{r-1}w_i=\sigma \vert \sum_{i=1}^{r-1}w_i$.
Replacing $\sigma$ by $\rho^{-1}sigma$ we may assume that one has
 $\sigma(w_i)=w_i$ for $1\le i \le r-1$, which implies
 $b(\sigma(x)-x,w_i)=0$ for $1\le 1\le r-1$.
and $\sigma(x)-x \in
Rx_r\oplus \oplus (X\cap N_1^\perp)=:\tilde{X}$. 
We consider now $Rf_r:=\tilde{F}$ instead of $F$ and and $\tilde{X}$ instead of $X$. 
It is clear that the conditions of the theorem are satisfied in this
situation, and by our choice of the $x_i,w_i$ the additional
conditions
 $Q(\tilde{X})\not\in P$ in the case $k\not\cong \F_2$ and
 $\barQ(\overline{\tilde{X}})\neq \{0\}$ in the case $k=\F_2$ are
 satisfied too. 

We may therefore 
apply the case of rank 1 to this situation and obtain a reflection
$\tau_y\in O(M,Q)$ satisfying $\tau_y\vert N_1=\sigma$ which is
congruent to the identity modulo $X$ and is trivial on $X^\perp$ and
finish the induction.

To complete our proof we have to consider the situations where our
additional assumptions are violated. We take then a hyperbolic plane
$H=Re+Rf$ (so $Q(e)=Q(f)=0,b(e,f)=1$) and set $M':=M\perp H, N_1'=N_1
\perp Re, N_2'=N_2+Re, X'=X\perp R(e+f), \sigma'=\sigma\perp
\id_{Re}$. We have $Q(e+f)=1$ and in addition $e+f \in {X'}^\perp$ in
the case $k=\F_2$, so our additional assumptions are satisfied in this
situation and we obtain an extension $\rho$ of $\sigma'$ which fixes
$e$ and 
the vector $e-f\in {X'}^\perp$, hence also  $e+f$. We can therefor
write $\rho=\tilde{\sigma}\perp \id_{R(e+f)}$ and obtain the desired
extension $\tilde{\sigma}$ of $\sigma$. 
\end{proof}
\begin{remark}
If one of the two additional conditions stated in the beginning of the
proof is satisfied, the proof above shows that the extension of $\sigma$ can be chosen to be a
product of reflections $\tau_y$ in vectors $y\in X$ with $Q(y)\in R^\times$.   
\end{remark}

\begin{corollary}
  Let $(V,Q)$ be a finite dimensional quadratic space over the field
  $F$. Then all maximal totally isotropic regularly embedded subspaces
  of $V$ have the same dimension. This dimension is also equal to the
  maximal number of hyperbolic planes that can be split off
  orthogonally in $V$, and one can write $V$ as $V=\perp_{i=1}^r H
  \perp \rad_b(V)\perp U$, where $r$ is the Witt index, $H$ denotes a
  hyperbolic plane, and $U$ is a regular  anisotropic quadratic space.
\end{corollary}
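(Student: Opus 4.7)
The plan is to combine the hyperbolic supplementation of Theorem \ref{isotropic_hyperbolic} with Witt's extension Theorem \ref{extension_theorem_localring}. I would begin by showing that any two maximal regularly embedded totally isotropic subspaces $N_1,N_2\subseteq V$ have the same dimension. Assuming without loss of generality that $\dim N_1\le \dim N_2$, pick a subspace $N_2'\subseteq N_2$ with $\dim N_2'=\dim N_1$; it is still totally isotropic and, since over a field linear functionals extend, still regularly embedded. Any linear isomorphism $\rho\colon N_1\to N_2'$ is automatically an isometry because $Q$ and $b$ vanish identically on each side. Witt's extension theorem extends $\rho$ to some $\tilde\rho\in O(V,Q)$; then $\tilde\rho^{-1}(N_2)\supseteq N_1$ is still totally isotropic and regularly embedded, so maximality of $N_1$ forces $\dim N_1=\dim N_2$. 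Call this common value $r$.

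Next I would fix a maximal such $N$ of dimension $r$. Theorem \ref{isotropic_hyperbolic} supplies a subspace $N'\subseteq V$ with $N\oplus N'\cong H(N)$; over a field this hyperbolic module is the orthogonal sum $\perp_{i=1}^r H$ of $r$ hyperbolic planes, and is regular. By Theorem \ref{orthogonalcomplements}(c) we get $V=(\perp_{i=1}^r H)\perp W$ with $W=(\perp_{i=1}^r H)^\perp$. A direct check using the orthogonal splitting and the regularity of $\perp_{i=1}^r H$ gives $\rad_b(W)=\rad_b(V)$. Applying Lemma \ref{space_radical} to $W$ then splits it further as $W=U\perp \rad_b(V)$ with $U$ regular, yielding the decomposition $V=(\perp_{i=1}^r H)\perp \rad_b(V)\perp U$.

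The main obstacle is to show that $U$ is anisotropic; otherwise the decomposition would not reflect the Witt index correctly. Suppose for contradiction that there is a nonzero isotropic $u\in U$. Since $U$ is regular, there is $v\in U$ with $b(v,u)\ne 0$. Set $\tilde N:=N+Fu$; because $u\in U\subseteq W\subseteq N^\perp$, this is a direct sum and $\tilde N$ is totally isotropic. To see $\tilde N$ is regularly embedded, take $\phi\in \tilde N^*$: by regular embedding of $N$ there is $w_1\in V$ realizing $\phi\vert_N$, and since $v\in N^\perp$ with $b(v,u)\ne 0$, we may adjust $w_1$ within the coset $w_1+N^\perp$ by an appropriate scalar multiple of $v$ so as to additionally achieve $b(w_1,u)=\phi(u)$. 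This produces a regularly embedded totally isotropic subspace of dimension $r+1$, contradicting the maximality of $N$; hence $U$ is anisotropic.

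Finally, the decomposition exhibits $r$ orthogonally split hyperbolic planes. Conversely, if $V=H_1\perp\cdots\perp H_s\perp V''$ with each $H_i$ a hyperbolic plane with hyperbolic basis $(e_i,f_i)$, then $e_1,\ldots,e_s$ span a totally isotropic subspace whose regular embedding is witnessed by the dual partners $f_j$ (which give $b(e_i,f_j)=\delta_{ij}$), so $s\le r$. Thus $r$ equals the maximal number of hyperbolic planes that can be split off orthogonally, and the three invariants coincide.
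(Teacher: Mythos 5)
Your proposal is correct and follows essentially the same route as the paper: Witt's extension theorem (applied to an isometry from the smaller maximal totally isotropic subspace onto a subspace of the larger one, then using maximality) gives the equality of dimensions, and Theorem \ref{isotropic_hyperbolic} supplies the hyperbolic supplementation for the decomposition. The only difference is presentational — the paper first reduces to the regular case via Remark \ref{regularly_embedded_remark} and then declares the remaining assertions immediate, whereas you work directly with regularly embedded subspaces and carefully supply the details (the identification $\rad_b(W)=\rad_b(V)$, the anisotropy of $U$ via extending $N$ by an isotropic vector of $U$, and the converse bound $s\le r$), all of which are sound.
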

\begin{proof}
  By remark \ref{regularly_embedded_remark} we have to treat only the
  case that $(V,Q)$ is regular. If $U_1,U_2$ are maximal totally
  isotropic spaces with $\dim(U_1)\le \dim(U_2)$, we let $W_1$ be a
  subspace of $U_2$ of dimension $\dim(U_1)$. This space is obviously
  isometric to $U_1$, one can therefore find $\sigma \in O(V)$ with
  $\sigma(U_1)=W_1$. But then $\sigma^{-1}(U_2)\supseteq U_1$ is a
  totally isotropic subspace containing $U_1$, so by maximality equal
  to $U_1$ and we see $\dim(U_1)=\dim(U_2)$. The rest of the assertion
  follows immediately.
\end{proof}
\begin{definition}[Witt index]\label{Witt_index_def}\index{Witt index}
  Let $(V,Q)$ be a finite dimensional quadratic space over the field
  $F$. Then the dimension of a maximal totally isotropic regularly
  embedded subspace is called the {\em Witt index} of $(V,Q)$.
\end{definition}
\begin{theorem}[Witt decomposition]\index{Witt decomposition}\label{Witt_decomposition}
  Let $F$ be a field and $(V,Q)$ a  finite dimensional quadratic space
  over $F$ of Witt index $r$. 

Then $V$ has an orthogonal decomposition 
\begin{equation*}
  V=V_{\rm an}\perp H_r\perp \rad_b(V),
\end{equation*}
where $V_{\rm an}$ is anisotropic regular and  $H_r$ is a hyperbolic space
of dimension $2r$.

The isometry class of the space $V_{\rm an}$ is uniquely determined. 

If $\cha(F)\ne 2$, the restriction of $Q$ to $\rad_b(V)$ is
identically zero.
\end{theorem}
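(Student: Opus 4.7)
My plan is to handle existence, uniqueness, and the characteristic assertion separately; the real content, beyond what was already shown in the preceding corollary, is the uniqueness of the isometry class of $V_{\rm an}$, which I will deduce from Witt's cancellation theorem as supplied by Theorem \ref{extension_theorem_localring}.

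For existence, I would first split off the radical. Since $F$ is a field, there is a subspace $V_1$ with $V = V_1 \oplus \rad_b(V)$, and because $\rad_b(V)$ is orthogonal to everything the sum is automatically orthogonal: $V = V_1 \perp \rad_b(V)$. The restriction of $b$ to $V_1$ is non-degenerate, and in finite dimension over a field this means $V_1$ is regular. Any regularly embedded totally isotropic subspace of $V$ meets $\rad_b(V)$ only in $\{\nullvek\}$ and therefore maps isomorphically into $V_1$, so the Witt index of $V_1$ equals the Witt index $r$ of $V$. Applying the preceding corollary to the regular space $V_1$ yields $V_1 = V_{\rm an} \perp H_r$, where $H_r$ is the orthogonal sum of $r$ hyperbolic planes (hence isometric to the hyperbolic module $H(F^r)$ of dimension $2r$) and $V_{\rm an}$ is regular anisotropic. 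Reassembling gives the desired decomposition.

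For uniqueness, suppose two such decompositions are given,
\begin{equation*}
V = V_{\rm an} \perp H_r \perp \rad_b(V) = V_{\rm an}' \perp H_r' \perp \rad_b(V).
\end{equation*}
Both $V_{\rm an} \perp H_r$ and $V_{\rm an}' \perp H_r'$ are complements of $\rad_b(V)$ in $V$, so via the natural projection $V \to V/\rad_b(V)$ each is isometric to the induced non-degenerate quadratic space on the quotient; hence they are isometric to one another. Since $H_r$ and $H_r'$ are hyperbolic modules over $F$-vector spaces of the same dimension, any choice of a basis and its dual exhibits them as isometric. Witt's cancellation for finite-dimensional quadratic spaces over an arbitrary field, which is the concluding assertion of Theorem \ref{extension_theorem_localring}, applies to the regular subspaces $H_r \cong H_r'$ sitting inside the regular spaces $V_{\rm an} \perp H_r$ and $V_{\rm an}' \perp H_r'$ and yields $V_{\rm an} \cong V_{\rm an}'$.

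The final claim is immediate: if $\cha(F) \ne 2$ and $x \in \rad_b(V)$, then $2Q(x) = b(x,x) = 0$ forces $Q(x) = 0$. The only real point of friction is the uniqueness step, and it dissolves as soon as one observes that the two ``regular parts'' are isometric via the quotient map and that Witt cancellation is now available over arbitrary fields from the preceding section.
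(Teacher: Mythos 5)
Your proof is correct and follows essentially the same route as the paper's: reduce to the regular case by splitting off the radical, split off a maximal hyperbolic subspace using the preceding corollary, and apply Witt cancellation to get uniqueness of the anisotropic kernel. You are in fact slightly more careful than the paper in explicitly invoking Theorem \ref{extension_theorem_localring} for cancellation, which is the version needed when $\cha(F)=2$.
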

\begin{proof}
  Replacing $V$ by a (regular) subspace compIementary to $\rad_b(V)$
  and using that this space is isometric to the quotient space
  $V/\rad_b(V)$ we see that it  suffices to assume that $(V,Q)$ is regular. In that
  case we split off orthogonally a maximal $2r$-dimensional hyperbolic
  subspace whose orthogonal complement $V_{\rm an}$ is necessarily
  anisotropic. If we have another such splitting $V=V'_{\rm an}\perp
  H'_r$, the hyperbolic spaces $H_r,H'_r$ of dimension $2r$ are
  isometric, and by the Witt cancellation theorem the anisotropic
  spaces $V_{\rm an},V'_{\rm an}$ are isometric as well.
\end{proof}
\begin{definition} The up to isometry unique subspace $V_{\rm an}$
  with $Q$ restricted to it in
  the decomposition of the theorem (as well as its isometry class) is
  called the anisotropic kernel of $(V,Q)$. 
  \end{definition}
  \begin{lemma}\label{minusmodule}
    Let $(M,Q)$ be a finitely generated projective regular quadratic module. Then the (external)
    orthogonal sum $(M,Q)\perp (M,-Q)$ is isometric to the hyperbolic
    module $H(M)$ over $M$. 
  \end{lemma}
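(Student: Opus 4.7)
The plan is to exhibit $M$ as a totally isotropic, regularly embedded submodule of $V := (M,Q) \perp (M,-Q)$ via the diagonal, and then invoke Theorem \ref{isotropic_hyperbolic} to identify $V$ with $H(M)$.

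I would write $V$ as $M \oplus M$ with quadratic form $Q_V(u,v) = Q(u)-Q(v)$ and associated bilinear form $b_V((u_1,v_1),(u_2,v_2)) = b(u_1,u_2) - b(v_1,v_2)$, and set $\Delta := \{(x,x) \mid x \in M\}$. Then $\Delta \cong M$ is finitely generated projective, and $Q_V(x,x) = 0$ makes $\Delta$ totally isotropic. Regularity of the embedding $\Delta \hookrightarrow V$ follows immediately from regularity of $(M,Q)$: given $\varphi \in \Delta^*$, transfer it to $\psi \in M^*$ via the isomorphism $\Delta \cong M$, find the unique $w \in M$ with $\tilde{b}(w) = \psi$, and observe $b_V((w,0),(x,x)) = b(w,x) = \psi(x)$. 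Theorem \ref{isotropic_hyperbolic} then produces a submodule $N' \subseteq V$ such that $\Delta \oplus N'$ with the restricted form is isometric to $H(\Delta) \cong H(M)$; the construction given in its proof moreover identifies $\Delta$ with the distinguished copy of $M$ inside $H(M)$.

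The main obstacle is to show that $V$ equals $\Delta \oplus N'$ rather than properly containing it. The naive route of writing down an explicit isometry $M \oplus M \to M \oplus M^*$ seems to run aground: the natural candidates all require dividing by $2$, yet the lemma is stated without any such hypothesis. I would therefore argue indirectly by comparing two computations of the orthogonal complement of $\Delta$ in $V$. Since $M$ is finitely generated projective, $H(M)$ is regular (as noted in the earlier Example on hyperbolic modules), and hence so is $\Delta \oplus N'$; Theorem \ref{orthogonalcomplements}(c) then yields an orthogonal splitting $V = (\Delta \oplus N') \perp W$ for some submodule $W$. Regularity of $(M,Q)$ gives
\begin{equation*}
\Delta^{\perp_V} = \{(u,v)\in V \mid b(u-v,x)=0 \text{ for all } x\in M\} = \Delta,
\end{equation*}
whereas the fact that in $H(M) = M \oplus M^*$ the distinguished copy $M \oplus 0$ is its own orthogonal complement shows $\Delta^{\perp_{\Delta \oplus N'}} = \Delta$, so that $\Delta^{\perp_V} = \Delta \perp W$. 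Comparing these two expressions forces $W = 0$, and consequently $V = \Delta \oplus N' \cong H(M)$, as required.
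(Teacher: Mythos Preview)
Your proof is correct and takes a genuinely different route from the paper's. The paper simply writes down an explicit pair of mutually inverse isometries: pick (via Lemma~\ref{bilinearforms}(d)) a not necessarily symmetric bilinear form $\beta$ on $M$ with $\beta(x,x)=Q(x)$, and set
\[
\phi((x,y)) = \bigl(x+y,\ \tilde\beta(x)+\tilde\beta(y)-\tilde b(y)\bigr),\qquad
\psi((z,z^*)) = (z-y,\, y)\ \text{ where }\ \tilde b(y)=\tilde\beta(z)-z^*.
\]
A direct calculation gives $Q_H(\phi(x,y)) = Q(x)-Q(y)$ and $\phi\circ\psi=\id$. So your belief that an explicit isometry ``requires dividing by $2$'' is not quite right: the non-symmetric $\beta$ is precisely the device that sidesteps this.

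Your approach hides the same device inside Theorem~\ref{isotropic_hyperbolic}, whose proof also invokes such a $\beta$ to produce the totally isotropic complement $N'$. What you gain is a clean conceptual picture (the diagonal is totally isotropic and regularly embedded, hence sits in a hyperbolic submodule by the structural theorem), at the cost of the extra orthogonal-complement step to force $W=0$. The paper's version is shorter and entirely self-contained; yours has the virtue of exhibiting the lemma as essentially a corollary of Theorem~\ref{isotropic_hyperbolic} together with the observation that $\Delta^{\perp_V}=\Delta$.
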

  \begin{proof}
    We denote by $\beta$ a bilinear form on $M$ with $\beta(x,x)=Q(x)$
    for all $x \in M$ and define maps $\phi:(M,Q)\perp (M,-Q) \to
    H(M), \psi:H(M)\to (M,Q)\perp (M,-Q)$ by
    \begin{eqnarray*}
      \phi((x,y))&=&(x+y,\tilde{\beta}(x)+\tilde{\beta}(y)-\tilde{b}(y))\\
\psi((z,z^*)&=&((x,y)) \text{ where }
                \tilde{b}(y)=\tilde{\beta}(z)-z^*,\quad x=z-y;
    \end{eqnarray*}
notice that the regularity of $(M,Q)$ implies that the vector $y$ in the
definition of $\psi$ exists and is uniquely determined.

A direct calculation shows that $\phi,\psi$ are mutually inverse isometries.
  \end{proof}
  \begin{deftheorem}
    Let $R$ be a ring, denote by $\tilde{W}(R)$ the set of isometry
    classes of finitely
    generated projective regular quadratic modules $(M,Q)$ over $R$.

Denote by $\sim$ the relation on $\tilde{W}(R)$ given by
$(M_1,Q_1)\sim (M_2,Q_2)$ (where we do not distinguish in the
notation between the quadratic module and its isometry class) if and only if there exist finitely
generated projective hyperbolic modules $H_{1},H_{2}$  such that $(M_1,Q_1)\perp H_{1}$ is isometric
to $(M_2,Q_2)\perp H_{2}$.

Then the relation $\sim$ is an equivalence relation which is
compatible with forming orthogonal sums, 
and the set $W(R)$ of equivalence classes with addition defined by
taking orthogonal sums of representatives is an abelian group, the
{\em Witt group}\index{Witt group} of $R$. The neutral element of the
group is the class of hyperbolic spaces, the inverse to the class of
$(M,Q)$ is the class of $(M,-Q)$.

If $R=F$ is a field, each class in the Witt group is represented by the
isometry class of a uniquely determined regular anisotropic quadratic space.
  \end{deftheorem}
  \begin{proof}
  It is obvious that $\sim$ is an equivalence relation and that it is
  compatible with forming orthogonal sums, so that one obtains an
  addition  on the set $W(R)$ of equivalence classes induced by taking
  orthogonal sums of quadratic   modules. It is also obvious that this
  addition is associative and commutative and that the class of
  hyperbolic modules is a neutral element for this addition. Finally,
  Lemma \ref{minusmodule} shows that each class has an additive
  inverse in $W(R)$.

If $R=F$ is a field, the Witt decomposition theorem shows that each
class in $W(R)$ has a representative which is anisotropic, and the
Witt cancellation theorem gives the uniqueness of the isometry class
of an anisotropic representative. 
  \end{proof}
\section{Orthogonal Bases}
In this section we let $F$ be a field.
\begin{theorem}
 Let $(V,Q)$ be a finite dimensional regular quadratic space over $F$.
 \begin{enumerate}
 \item If $\cha(F)\ne 2$, the space $V$ has an orthogonal basis.
\item If $\cha(F)=2$, the space $F$ has a decomposition into an
  orthogonal sum of (regular) $2$-dimensional subspaces. In
  particular, $\dim(V)$ is even.
 \end{enumerate}
\end{theorem}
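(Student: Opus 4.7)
The plan is to prove both parts by induction on $n=\dim V$, splitting off at each step a small regular subspace whose existence is forced by regularity of the ambient space $V$. In each case the splitting-off step is Theorem \ref{orthogonalcomplements}(c) combined with the lemma saying that $M=N_1\perp N_2$ is regular iff both summands are, which immediately hands back a strictly smaller regular quadratic space to feed into the induction.

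For part (a), where $\cha(F)\neq 2$, I would first argue that $V$ contains a vector $v$ with $Q(v)\neq 0$. Indeed, if $Q\equiv 0$ on $V$ then the polarization $b(x,y)=Q(x+y)-Q(x)-Q(y)=0$ for all $x,y$, which contradicts regularity as soon as $\dim V\geq 1$. The one-dimensional subspace $Fv$ then has Gram matrix $(b(v,v))=(2Q(v))$, invertible since $2\neq 0$ and $Q(v)\neq 0$, so $Fv$ is regular. Theorem \ref{orthogonalcomplements}(c) yields $V=Fv\perp (Fv)^\perp$ with $(Fv)^\perp$ regular, and the inductive hypothesis supplies an orthogonal basis of $(Fv)^\perp$ to which $v$ is adjoined.

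For part (b), where $\cha(F)=2$, the key observation is that $b(x,x)=2Q(x)=0$ for every $x\in V$, so the Gram matrix of $b$ in any basis is symmetric with zero diagonal. Consequently no one-dimensional subspace $Fv$ can be regular, and the induction must proceed by splitting off two-dimensional pieces. Given any nonzero $v\in V$, regularity provides $w'\in V$ with $b(v,w')\neq 0$; after rescaling I can assume $b(v,w)=1$. The pair $(v,w)$ is linearly independent (since $b(v,v)=0$ while $b(v,w)=1$), so $W:=Fv+Fw$ is genuinely two-dimensional, and its Gram matrix
\[
\begin{pmatrix} 0 & 1 \\ 1 & 0 \end{pmatrix}
\]
has determinant $1$, making $W$ regular. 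Once again Theorem \ref{orthogonalcomplements}(c) gives $V=W\perp W^\perp$ with $W^\perp$ regular, and induction completes the proof; the evenness of $\dim V$ falls out for free since the induction only ever removes two-dimensional summands.

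The only real obstacle in either part is producing the anisotropic line (resp.\ the hyperbolic-type plane) at each inductive step, and this is a direct consequence of the regularity hypothesis combined with the polarization identity in characteristic different from $2$ (resp.\ the vanishing of $b(x,x)$ in characteristic $2$). Everything after that is formal, using previously established results about regular orthogonal splittings.
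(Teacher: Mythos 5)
Your proof is correct and follows essentially the same route as the paper: induction via splitting off a regular line $Fv$ with $Q(v)\neq 0$ when $\cha(F)\neq 2$, and a regular plane $Fv+Fw$ with $b(v,w)=1$ (whose Gram matrix has zero diagonal since $b(x,x)=2Q(x)=0$) when $\cha(F)=2$, using Theorem \ref{orthogonalcomplements}(c) in both cases. The only difference is that you spell out the polarization argument for the existence of an anisotropic vector, which the paper takes for granted.
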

\begin{proof}
  The regular space $V$ contains a vector $x$ with $Q(x)\ne 0$. If
  $\cha(F)\ne 2$, the one dimensional space $Fx$ is regular and can be
  split off orthogonally, so the assertion follows by induction.
If $\cha(F)=2$, by regularity there is a vector $y\in V$ with
$b(x,y)=1$, and $b(x,x)=2Q(x)=0$ implies that $x,y$ are linearly
independent. The Gram matrix of the two dimensional  subspace
$W=Fx+Fy\subseteq V$ with respect to
the basis $(x,y)$ has then determinant $-(F^\times)^2$, so this space
is regular and can be split off orthogonally, and the assertion
follows again by induction on the dimension of $V$.
\end{proof}
\begin{remark}
  \begin{enumerate}
  \item If $\cha(F)\ne 2$, an arbitrary basis of the space $V$ can be
    transformed into an orthogonal basis by the well known
    Gram-Schmidt algorithm of linear algebra, with slight
    modifications to admit isotropic vectors in the original  basis.
\item If $\cha(F)=2$, an odd dimensional quadratic space $V=U\perp Fx$
  is called {\em half regular}\index{half regular space} if $U$ is
  regular and $Q(x) \ne 0$ holds. Such a space has one dimensional
  bilinear radical $\rad_b(V)$ and trivial $Q$-radical
  $\rad_Q(V)$. It follows that all isotropic vectors are regularly
  embedded. These spaces can also (see Kneser's book \cite{kneserbook}) be characterized by the non
  vanishing of the so called half determinant:  Consider for odd $n$ 
the  symmetric matrix
  $A_X=(a_{ij})$ over the ring $\Z[\{X_{ij}\mid 1 \le i\le j\le n\}]$
  with even diagonal elements $a_{ii}=2X_{ii}$ and
  $a_{ij}=a_{ji}=X_{ij}$ for $i < j$. Then it is an exercise in linear
  algebra   to prove that $\det(A_X)$ as a polynomial over $\Z$ in the
  variables $X_{ii}$ for $1\le i\le n$ and $X_{ij}$ for $1\le i<j\le
  n$ has even coefficients, so that $\frac{1}{2}\det(A_X)$ is still an
  integral polynomial. The half determinant of a finitely generated free
quadratic module of odd dimension over the ring $R$ with basis
$(v_1,\ldots,v_n)$  is
then the square class of the value of the polynomial
$\frac{1}{2}\det(A_X)$ upon insertion of the $Q(v_i)$ for $X_{ii}$ and
the $b(v_i,v_j)$ for $X_{ij}$ for $i<j$. The half regular spaces over
the field $F$ of characteristic $2$ are then the spaces whose half
determinant (with respect to any basis) does not vanish.  
  \end{enumerate}
\end{remark}

\section{Lattices and their duals}
\begin{definition}\label{lattice_definition}
  Let $R$ be an integral domain with field of fractions $F$, let
  $V$ be a vector space over $F$ of finite
  dimension $n$.

An $R$-submodule $\Lambda$ of $V$ is called an $R$-lattice in $V$ if there is a
basis  $(v_1,\ldots,v_n)$ of $ V$ with $\Lambda \subseteq
\oplus_{j=1}^n Rv_j$. It is called a $R$-lattice on $V$ if the subspace of
$V$ generated by $\Lambda$ over $F$ is equal to $V$, it is called a
lattice of rank $r\le n$ in $V$, if that subspace has dimension $r$
over $F$.
\end{definition}
\begin{remark}
Lattices are usually considered over a ground ring $R$ which is
noetherian, which implies that they are finitely generated
$R$-modules. Over a general integral domain this condition is
sometimes added to the definition.
\end{remark}
\begin{lemma}\label{lattice_compare}
Let $R,F, V$ be as above, let $\Lambda$ be an $R$-lattice on $V$.

An $R$-submodule $M$ of $V$ is an $R$-lattice in $V$ if and only if there exists
$0\ne a \in R$ with $aM\subseteq \Lambda$.   

In particular, for two $R$-lattices $\Lambda_1, \Lambda_2$ on $V$
there exist nonzero $a,b \in F$ with $a\Lambda_2 \subseteq \Lambda_1
\subseteq b\Lambda_2$.
\end{lemma}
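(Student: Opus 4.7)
The plan is to prove the equivalence in both directions by elementary manipulation of bases and by clearing denominators, then derive the ``in particular'' statement as an immediate consequence of applying the equivalence twice, with $\Lambda_1$ and $\Lambda_2$ playing the roles of $M$ and $\Lambda$ in turn.

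For the easier direction ($\Leftarrow$), I would assume $aM \subseteq \Lambda$ for some nonzero $a\in R$. Since $\Lambda$ is a lattice in $V$, by Definition \ref{lattice_definition} there is a basis $(v_1,\ldots,v_n)$ of $V$ with $\Lambda\subseteq \oplus_{j=1}^nRv_j$. Then $aM\subseteq \oplus_{j=1}^nRv_j$, so $M\subseteq \oplus_{j=1}^nR(a^{-1}v_j)$ inside $V$. Since $a\neq 0$ and $(v_j)$ is a basis, $(a^{-1}v_j)$ is also a basis of $V$, so $M$ is a lattice in $V$.

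For the harder direction ($\Rightarrow$), suppose $M$ is a lattice in $V$, so $M\subseteq \oplus_{j=1}^nRw_j$ for some basis $(w_1,\ldots,w_n)$ of $V$. Here I use the hypothesis that $\Lambda$ is a lattice \emph{on} $V$, i.e.\ $F\Lambda=V$: each $w_j$ can be written as an $F$-linear combination of finitely many elements of $\Lambda$, say $w_j=\sum_k c_{jk}\lambda_{jk}$ with $c_{jk}\in F$, $\lambda_{jk}\in\Lambda$. Writing each $c_{jk}=p_{jk}/q_{jk}$ with $p_{jk},q_{jk}\in R$ and $q_{jk}\neq 0$, and letting $a\in R\setminus\{0\}$ be the product of the finitely many denominators $q_{jk}$ over all $j,k$, I obtain $aw_j\in\Lambda$ for each $j$. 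It follows that
\begin{equation*}
aM\subseteq a\bigoplus_{j=1}^nRw_j=\bigoplus_{j=1}^nR(aw_j)\subseteq \Lambda,
\end{equation*}
as required.

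For the final assertion about two lattices $\Lambda_1,\Lambda_2$ on $V$, I apply the just-proved equivalence twice. Taking $M=\Lambda_2$ and $\Lambda=\Lambda_1$ yields some $0\neq a\in R$ with $a\Lambda_2\subseteq\Lambda_1$. Swapping roles ($M=\Lambda_1$, $\Lambda=\Lambda_2$) yields some $0\neq a'\in R$ with $a'\Lambda_1\subseteq \Lambda_2$, equivalently $\Lambda_1\subseteq b\Lambda_2$ with $b=(a')^{-1}\in F^{\times}$. The main (and only) obstacle is the forward direction, and it is really just a matter of recognizing that ``$\Lambda$ on $V$'' supplies an $F$-spanning set from which an $F$-basis can be extracted and denominators cleared; the noetherian hypothesis mentioned in the remark is not needed because only the finitely many basis vectors $w_1,\ldots,w_n$ need to be pulled into $\Lambda$.
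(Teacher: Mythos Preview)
Your proof is correct and follows essentially the same approach as the paper's: both directions clear denominators after expressing one basis in terms of vectors from the other lattice, and the ``in particular'' is derived by applying the equivalence twice. The only cosmetic difference is that the paper, in the $(\Rightarrow)$ direction, first fixes $n$ linearly independent vectors $w_1,\ldots,w_n\in\Lambda$ and expresses the $v_j$ in terms of these, whereas you allow arbitrary finite $F$-linear combinations of elements of $\Lambda$; both are fine.
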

\begin{proof}
If $M$ is a lattice, let $(v_1,\ldots v_n)$ be a basis of $V$ with
$M\subseteq \oplus_{j=1}^n Rv_j$, let $w_1,\ldots, w_n$ be $n$
linearly independent vectors in $\Lambda$ and write
$v_j=\sum_{i=1}^nc_{ij}w_i$ with $c_{ij}\in F$. If we let $a$ be the
product of the denominators of the $c_{ij}$ when these are written as
fractions of elements in $R$, we have $av_j \in \Lambda$ for all $j$
and hence $aM\subseteq \Lambda$.

If conversely $M$ is an $R$-submodule of $V$ and $0\ne a \in R$ with $aM
\subseteq \Lambda$, let $\Lambda \subseteq \oplus_{j=1}^nRv_j$ for a
suitable basis $(v_1,\ldots,v_n)$ of $V$. Then the $w_j=a^{-1}v_j$ form a
basis of $V$  with $M\subseteq\oplus_{j=1}^nRw_j$.
\end{proof}
\begin{lemma}
  \begin{enumerate}
  \item $R$-submodules of $R$-lattices in $V$ are $R$-lattices in $V$.
\item If $U$ is a subspace of $V$, an $R$-submodule of $U$ is a
  lattice in $U$ if and only if it is a lattice in $V$.
  \end{enumerate}
\end{lemma}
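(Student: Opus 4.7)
The plan is straightforward. Part (a) is immediate from the definition: if $\Lambda$ is an $R$-lattice in $V$, there is a basis $(v_1,\ldots,v_n)$ of $V$ with $\Lambda\subseteq\bigoplus_{j=1}^n Rv_j$, and any $R$-submodule $M\subseteq\Lambda$ satisfies the same containment, so $M$ is itself an $R$-lattice in $V$ with no further work.

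For part (b), I will treat the two directions separately. If $M$ is a lattice in $U$, I fix a basis $(u_1,\ldots,u_r)$ of $U$ with $M\subseteq\bigoplus_{j=1}^r Ru_j$ and extend it to a basis $(u_1,\ldots,u_n)$ of $V$ (using the standard basis-extension theorem of linear algebra). Then $M\subseteq\bigoplus_{j=1}^r Ru_j\subseteq\bigoplus_{j=1}^n Ru_j$, so $M$ is a lattice in $V$.

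The interesting direction is the converse. Suppose $M\subseteq U$ and $M$ is a lattice in $V$; I need to produce a basis of $U$ whose $R$-span contains $M$. The plan is to fix any basis $(u_1,\ldots,u_r)$ of $U$, extend it to a basis $(u_1,\ldots,u_n)$ of $V$, and consider the lattice $\Lambda:=\bigoplus_{j=1}^n Ru_j$, which clearly spans $V$ over $F$. By Lemma \ref{lattice_compare} there is $0\ne a\in R$ with $aM\subseteq\Lambda$. For $m\in M$ I write $am=\sum_{j=1}^n c_j u_j$ with $c_j\in R$; because $am\in U$ and $u_{r+1},\ldots,u_n$ are linearly independent from $U$ over $F$, I can conclude $c_j=0$ for $j>r$. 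Hence $aM\subseteq\bigoplus_{j=1}^r Ru_j$, and since $a\in F^\times$ the family $(a^{-1}u_1,\ldots,a^{-1}u_r)$ is still an $F$-basis of $U$, with $M\subseteq\bigoplus_{j=1}^r R(a^{-1}u_j)$, as required.

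The only real obstacle is the rescaling in this converse: because $a\in R$ need not be a unit, the chosen basis $(u_1,\ldots,u_r)$ of $U$ will generally not have $M$ in its $R$-span, and one must replace it by the $F$-basis $(a^{-1}u_j)_{j=1}^r$ to match the definition. Everything else reduces to unwinding definitions and invoking Lemma \ref{lattice_compare}.
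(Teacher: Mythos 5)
Your proof is correct, and since the paper simply declares this lemma ``Obvious,'' your argument is exactly the standard unwinding of definitions that is intended: part (a) and the forward direction of (b) are containments plus basis extension, and the converse of (b) correctly invokes Lemma \ref{lattice_compare} and handles the rescaling by passing to the basis $(a^{-1}u_j)$ of $U$. Nothing further is needed.
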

\begin{proof}
 Obvious.
\end{proof}
\begin{definition}\label{dual_lattice_definition}
Let $R,F$ be as above and let $(V,Q)$ be a finite dimensional non degenerate quadratic
space over $F$ with associated symmetric bilinear form $b$, let
$\Lambda$ be an $R$-lattice on $V$. 
 
The dual  of $\Lambda$ is
\begin{equation*}
  \Lambda^\#:=\{v\in V \mid b(v,\Lambda)\subseteq R\}.
\end{equation*}
 \end{definition}
 \begin{remark}
The dual $\Lambda^\#$ of $\Lambda$ is the image of $\Lambda^*=\Hom_R(\Lambda,R)
\subseteq V^*$  under the isomorphism $\tilde{b}^{-1}:V^*\to V$. 
 \end{remark}
 \begin{lemma}
  Let the notations be as above.
  \begin{enumerate}
  \item $\Lambda^\#$ is a lattice on $V$.
\item $\Lambda\subseteq \Lambda^\#$ if and only if
$b(\Lambda,\Lambda)\subseteq R$.
\item If $b(\Lambda,\Lambda)\subseteq R$ holds, the factor module
  $\Lambda^\#/\Lambda$ is a torsion module. 
\item If $M$ is another $R$-lattice on $V$, one has $(M\cap
  \Lambda)^\#=M^\#+\Lambda^\#$ and $(M+\Lambda)^\#=M^\#\cap \Lambda^\#$. 
  \end{enumerate}
 \end{lemma}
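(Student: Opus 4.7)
The plan is to treat the four parts in order. The key tool throughout is the isomorphism $\tilde{b}:V\to V^*$ coming from non-degeneracy, which lets one transfer statements about $\Lambda^\#$ into statements about $\Lambda^*=\Hom_R(\Lambda,R)\subseteq V^*$.

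For (a), I will sandwich $\Lambda^\#$ between two visible lattices. Since $F\Lambda=V$, I pick $w_1,\dots,w_n\in\Lambda$ forming an $F$-basis of $V$, and let $(w_i^*)$ be the corresponding $b$-dual basis, which exists by non-degeneracy. If $v=\sum c_iw_i^*\in\Lambda^\#$, then $c_j=b(v,w_j)\in R$, giving $\Lambda^\#\subseteq\bigoplus Rw_i^*$. On the other hand, by the definition of a lattice there is a basis $(v_j)$ of $V$ with $\Lambda\subseteq\bigoplus Rv_j$, and $b$-duality gives $\bigoplus Rv_j^*\subseteq\Lambda^\#$, so $F\Lambda^\#=V$. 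Together, $\Lambda^\#$ is an $R$-lattice on $V$.

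Parts (b) and (c) are easy consequences. For (b), the condition $\Lambda\subseteq\Lambda^\#$ unfolds directly to $b(\Lambda,\Lambda)\subseteq R$. For (c), since both $\Lambda$ and $\Lambda^\#$ are lattices on $V$ by (a), Lemma \ref{lattice_compare} supplies $0\ne a\in R$ with $a\Lambda^\#\subseteq\Lambda$, so $a$ annihilates every element of $\Lambda^\#/\Lambda$. The identity $(M+\Lambda)^\#=M^\#\cap\Lambda^\#$ in (d) is equally immediate: $b(v,M+\Lambda)\subseteq R$ is equivalent to $b$ pairing $v$ integrally with both $M$ and $\Lambda$.

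The main obstacle is the first identity of (d), $(M\cap\Lambda)^\#=M^\#+\Lambda^\#$. The inclusion $M^\#+\Lambda^\#\subseteq(M\cap\Lambda)^\#$ is trivial, since $M\cap\Lambda$ sits inside both $M$ and $\Lambda$. For the reverse inclusion, my plan is to apply the already-proved second identity to the lattices $M^\#$ and $\Lambda^\#$, obtaining
\[
(M^\#+\Lambda^\#)^\#=M^{\#\#}\cap\Lambda^{\#\#};
\]
invoking the reflexivity $L^{\#\#}=L$ for finitely generated projective lattices (standard over a Dedekind domain, in particular over a PID) this becomes $M\cap\Lambda$, and taking duals once more yields $(M\cap\Lambda)^\#=(M^\#+\Lambda^\#)^{\#\#}=M^\#+\Lambda^\#$. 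The genuine work is thus establishing reflexivity $L^{\#\#}=L$: over a completely general integral domain this can fail, and either the statement must be restricted to the setting where it holds, or one can argue directly over a PID by choosing a common elementary-divisor basis for $M$ and $\Lambda$ and computing both sides explicitly.
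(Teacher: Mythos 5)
Your parts (a)--(c) follow the paper's own route: the same sandwich $\bigoplus Rv_j^\#\subseteq\Lambda^\#\subseteq\bigoplus Rw_i^\#$ built from $b$-dual bases for (a), and for (c) the paper clears denominators of the matrix expressing the $w_j^\#$ in terms of the $w_i$, which is exactly the content of Lemma \ref{lattice_compare} that you cite instead. The real divergence is in (d): the paper dismisses both identities as obvious, whereas you correctly observe that only $(M+\Lambda)^\#=M^\#\cap\Lambda^\#$ and the inclusion $M^\#+\Lambda^\#\subseteq(M\cap\Lambda)^\#$ are formal, and you supply the missing direction by dualizing the sum identity and invoking $L^{\#\#}=L$. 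That biduality step is legitimate but is precisely the nontrivial input: in the paper it only appears later, for free lattices (in the stray part of this proof computing $(\Lambda^\#)^\#=\Lambda$ from $\Lambda=\bigoplus Rv_j$) and, in the subsequent lemma, under an explicit reflectivity hypothesis. Your closing caveat --- that over an arbitrary integral domain reflexivity, and hence the first identity of (d), can fail, so one should either restrict to the PID/Dedekind (or reflective) setting or argue by a common elementary-divisor basis --- is warranted and in fact more careful than the source; your argument is complete once that hypothesis is in force.
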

 \begin{proof}
  For a), let  $(w_1,\ldots,w_n)$ be a basis of $V$ contained in
  $\Lambda$, let $(w_1^\#,\ldots, w_n^\#)$ be the dual basis of  $V$
  with respect to $b$, i.e, $b(w_i,w_j^\#)=\delta_{ij}$. Then one has
  $\Lambda^\#\subseteq \oplus_{j=1}^nRw_j^\#$, so $\Lambda^\#$ is a
  lattice in $V$. If $(v_1,\ldots,v_n)$ is a basis of $V$ with
  $\Lambda \subseteq \oplus_{j=1}^nRv_j$ and dual basis
  $(v_1^\#,\ldots,v_n^\#)$ of $V$ with respect to $b$, the $v_j^\#$ are
  in $\Lambda^\#$ and generate $V$ over $F$, so $\Lambda$ is a lattice
  on $V$.

Assertions b) and d) are obvious. If we have $\Lambda \subseteq
\Lambda^\#$, we let $(w_1,\ldots,w_n)$ and $(w_1^\#,\ldots, w_n^\#)$
be as in the proof of a). Let $c$ be such that $cA \in M_n(R)$, where
$A$ is the matrix obtained by expressing the $w_j^\#$  as linear
combinations of the basis vectors $w_i$. Then we have
$c\Lambda^\#\subseteq \Lambda$ and obtain c).

For e), we have $\Lambda=\oplus_{j=1}^nRv_j$ for a basis of $V$ and
hence $\Lambda^\#=\oplus_{j=1}^nRv_j^\#$, where again the $v_j^\#$
form the dual basis of $V$ with respect to $b$. From this we see that
$(\Lambda^\#)^\#=\Lambda$ holds.
 \end{proof}
 \begin{definition}\label{relative_determinant}\label{R-index}
  Let $\Lambda_1,\Lambda_2$ be two free $R$-lattices on the finite
  dimensional vector space $V$ of dimension $n$ over $F$. 

The 
{\em $R$-index} $\ind_R(\Lambda_1/\Lambda_2)=(\Lambda_1:\Lambda_2)_R$ is the
class $\det(T)R^\times$, where $T=(t_{ij})\in  M_n(F)$
is any matrix expressing an $R$- basis $(w_1,\ldots,w_n)$ of
$\Lambda_2$ by the vectors $(v_1,\ldots,v_n)$
of a basis of $\Lambda_1$, i.e, with $w_j=\sum_{i=1}^n t_{ij}v_i$ for
$1 \le j \le n$.

We will also denote any  such $\det(T)$ by
$\ind_R(\Lambda_1/\Lambda_2)$ or $(\Lambda_1:\Lambda_2)_R$.
 \end{definition}
 \begin{remark}
 If $\Lambda_2 \subseteq \Lambda_1$ is a sublattice of $\Lambda_1$,
 the group index $(\Lambda_1:\Lambda_2)$ is equal to the order of the
 factor ring 
 $R/\ind_R(\Lambda_1/\Lambda_2) R$. In particular for $R=\Z$ it is equal
 to the absolute value of $ \ind_R(\Lambda_1/\Lambda_2) $.   
 \end{remark}
 \begin{lemma}\label{determinantrelation_lattices}
Let $\Lambda_1,\Lambda_2,V$ be as in the definition above and let $b$
be a non degenerate symmetric bilinear form on $V$.

Then 
\begin{equation*}
  {\det}_b(\Lambda_2)=((\Lambda_1:\Lambda_2)_R)^2 {\det}_b(\Lambda_2).
\end{equation*}
In particular, if one has $\Lambda_1\subseteq \Lambda_2$, the lattices
have equal determinant if and only if they are equal.
    \end{lemma}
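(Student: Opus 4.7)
The plan is to reduce the statement to the change-of-basis formula for Gram matrices already established in Lemma \ref{modules_matrixversion}. Fix $R$-bases $\cB_1=(v_1,\ldots,v_n)$ of $\Lambda_1$ and $\cB_2=(w_1,\ldots,w_n)$ of $\Lambda_2$. Since both are $R$-lattices on $V$, both are also $F$-bases of $V$, so there is a unique $T=(t_{ij})\in \GL_n(F)$ with $w_j=\sum_i t_{ij}v_i$, and by definition $(\Lambda_1:\Lambda_2)_R=\det(T)R^\times$. (Note: the right-hand side of the displayed identity evidently should read $\det_b(\Lambda_1)$, not $\det_b(\Lambda_2)$.)

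For the identity, I would apply Lemma \ref{modules_matrixversion}(a) (which is purely formal and does not require the $w_j$ to lie in $\Lambda_1$) to obtain
\begin{equation*}
M_{\cB_2}(b) = {}^tT\, M_{\cB_1}(b)\, T.
\end{equation*}
Taking determinants gives $\det M_{\cB_2}(b)=\det(T)^2\det M_{\cB_1}(b)$ as an equality in $F^\times$. Passing to classes modulo $(R^\times)^2$, the left side becomes ${\det}_b(\Lambda_2)$, the right side becomes $\det(T)^2(R^\times)^2\cdot {\det}_b(\Lambda_1)=((\Lambda_1:\Lambda_2)_R)^2{\det}_b(\Lambda_1)$, which is the claimed formula; well-definedness of the square of the index class follows from the fact that replacing $T$ by $TU$ with $U\in\GL_n(R)$ multiplies $\det(T)$ by an element of $R^\times$.

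For the \textbf{in particular} assertion, suppose $\Lambda_1\subseteq\Lambda_2$ and ${\det}_b(\Lambda_1)={\det}_b(\Lambda_2)$. The inclusion means that each $v_i$ has an expansion $v_i=\sum_j s_{ji}w_j$ with $s_{ji}\in R$; a direct comparison with $w_j=\sum_i t_{ij}v_i$ yields $S=T^{-1}\in M_n(R)$, in particular $\det(T)^{-1}=\det(S)\in R$. On the other hand, the determinant identity gives $\det(T)^2\in (R^\times)^2$, i.e.\ $\det(T)^2=u^2$ for some $u\in R^\times$; since $R$ is an integral domain this forces $\det(T)=\pm u\in R^\times$, and hence $\det(S)\in R^\times$. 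Combined with $S\in M_n(R)$ this gives $S\in\GL_n(R)$, so the $R$-module $\bigoplus Rv_i=\Lambda_1$ coincides with $\bigoplus Rw_j=\Lambda_2$.

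The only mildly delicate point is bookkeeping between genuine determinants in $F^\times$ and square classes modulo $(R^\times)^2$; everything else is a direct application of Lemma \ref{modules_matrixversion} together with the elementary observation that an integer matrix with invertible determinant is invertible over $R$.
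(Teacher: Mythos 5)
Your proof is correct, and it supplies exactly the argument the paper dismisses as ``Obvious'': the change-of-basis identity $M_{\cB_2}(b)={}^tT\,M_{\cB_1}(b)\,T$, determinants, and for the ``in particular'' part the observation that $T^{-1}\in M_n(R)$ together with $\det(T)\in R^\times$ forces $T\in\GL_n(R)$. You are also right that the displayed formula in the statement contains a typo and should read ${\det}_b(\Lambda_2)=((\Lambda_1:\Lambda_2)_R)^2\,{\det}_b(\Lambda_1)$; your handling of the passage from equalities in $F^\times$ to square classes modulo $(R^\times)^2$, including the step $\det(T)^2\in(R^\times)^2\Rightarrow\det(T)\in R^\times$ in an integral domain, is sound.
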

    \begin{proof}
      Obvious.
    \end{proof}
\begin{lemma}
  Let $\Lambda,\Lambda_1,\Lambda_2$ be  reflective $R$- lattices on the regular finite dimensional quadratic
  space $(V,Q)$ over $F$ with associated symmetric bilinear form $b$.
  \begin{enumerate}
  \item $(\Lambda^\#)^\#=\Lambda$.
\item One has 
$\Lambda_1 \subseteq \Lambda_2$ if and only if one has $\Lambda_2^\#\subseteq \Lambda_1^\#$.
\item If $\Lambda$ is free of finite rank, one has \begin{equation*}
\det(\Lambda)R^\times=(\Lambda^\#:\Lambda)_R,\quad \det(\Lambda^\#)=(\det(\Lambda))^{-1} 
\end{equation*}

\end{enumerate}
\end{lemma}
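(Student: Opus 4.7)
My plan is to handle the three parts in order, since (b) will depend on (a), and (c) will essentially be a computation with a basis and its dual basis.

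For (a), the inclusion $\Lambda \subseteq (\Lambda^\#)^\#$ is automatic and uses only the symmetry of $b$: for any $x \in \Lambda$ and any $y \in \Lambda^\#$ one has $b(x,y) = b(y,x) \in R$, so $x \in (\Lambda^\#)^\#$. The substantive direction is the reverse inclusion. Via the $F$-isomorphism $\tilde b \colon V \to V^*$ induced by non-degeneracy of $b$, the remark following Definition \ref{dual_lattice_definition} identifies $\Lambda^\#$ with $\Lambda^* = \Hom_R(\Lambda,R)$; iterating identifies $(\Lambda^\#)^\#$ with the $R$-module double dual $\Lambda^{**}$. Reflexivity of $\Lambda$ means precisely that the canonical map $\Lambda \to \Lambda^{**}$ is an isomorphism, and a diagram chase shows that this canonical map agrees with the tautological inclusion $\Lambda \hookrightarrow (\Lambda^\#)^\#$, giving equality.

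For (b), one direction is immediate: if $\Lambda_1 \subseteq \Lambda_2$ and $v \in \Lambda_2^\#$, then $b(v,\Lambda_1) \subseteq b(v,\Lambda_2) \subseteq R$, so $v \in \Lambda_1^\#$. For the converse, apply this same direction to the inclusion $\Lambda_2^\# \subseteq \Lambda_1^\#$ to get $(\Lambda_1^\#)^\# \subseteq (\Lambda_2^\#)^\#$, and then invoke (a) for $\Lambda_1, \Lambda_2$ (both reflective) to identify the two sides with $\Lambda_1, \Lambda_2$ respectively.

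For (c), choose an $R$-basis $\cB = (v_1,\dots,v_n)$ of $\Lambda$ and let $(v_1^\#,\dots,v_n^\#) \in V$ be the dual basis with respect to $b$, so $b(v_i, v_j^\#) = \delta_{ij}$. The proof of the previous lemma (part a, with $w_i = v_i$) shows that $\Lambda^\# = \oplus_{j=1}^n R v_j^\#$, so $(v_i^\#)$ is an $R$-basis of $\Lambda^\#$. Writing $v_j = \sum_i t_{ij} v_i^\#$ and pairing both sides with $v_k$ gives $b(v_k, v_j) = t_{kj}$, so the transition matrix $T$ expressing the basis of $\Lambda$ in terms of the basis of $\Lambda^\#$ is exactly the Gram matrix $M_\cB(b)$. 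By Definition \ref{R-index} this yields $(\Lambda^\#:\Lambda)_R = \det(T) R^\times = \det(\Lambda) R^\times$. Conversely, $v_j^\# = \sum_i (T^{-1})_{ij} v_i$, and a direct computation gives $b(v_i^\#, v_j^\#) = (T^{-1})_{ij}$, so $M_{\cB^\#}(b) = T^{-1}$ and hence $\det(\Lambda^\#) = \det(T)^{-1} (F^\times)^2 = \det(\Lambda)^{-1}$.

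The main obstacle is really confined to (a): everything else is either a tautology (one direction of (b)), a formal consequence of (a) (the other direction of (b)), or a straightforward Gram-matrix calculation (all of (c)). The subtlety in (a) lies in verifying that reflexivity of $\Lambda$ as an $R$-module, taken via the abstract double dual $\Lambda^{**}$, really corresponds under $\tilde b$ to the bilinear double dual $(\Lambda^\#)^\#$; once that identification is made, the rest of the argument is formal.
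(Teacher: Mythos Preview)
Your proof is correct and follows essentially the same approach as the paper: part (a) via reflexivity and the identification $\Lambda^\# \cong \Lambda^*$ under $\tilde b$, part (b) as a formal consequence of (a), and part (c) by recognizing that the transition matrix from the dual basis $(v_j^\#)$ to $(v_j)$ is the Gram matrix $M_\cB(b)$. Your treatment is in fact slightly tidier than the paper's---you avoid the unnecessary preliminary scaling to force $\Lambda \subseteq \Lambda^\#$ in (c), and you explicitly flag the diagram chase in (a) that the paper leaves implicit.
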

\begin{proof} 
a) is easily seen to follow from the reflectivity of $\Lambda$ and the
fact that $\tilde{b}:\Lambda \to \Lambda^*$ is an isomorphism of
modules.

In b) the direction from left to right is obvious, the reverse
direction then follows from a).

For the computation of the
determinants in c) we may assume, multiplying $\Lambda$ by a suitable $a\in
R$, that $\Lambda \subseteq \Lambda^\#$ holds. 
Furthermore, we see that $b(v_i,v_k)=b(\sum_{j=1}^mb(v_i,v_j) v_j^\#,v_k)$ holds
for all $i,j,k$, which implies $v_i=\sum_{j=1}^mb(v_i,v_j)v_j^\#$ for
all $i$. In other words, the Gram matrix of $b$ with respect to the
basis of $\Lambda$ consisting of the $v_i$ is the matrix 
obtained upon expressing the $v_i$ as linear combinations of the
$v_j^\#$. By the definition of  $(\Lambda^\#:\Lambda)_R$ we see that 
$\det(\Lambda)=(\Lambda^\#:\Lambda)_R(R^\times)^2$ is true. For the
determinant of $\Lambda^\#$ exchange the roles of $\Lambda,
\Lambda^\#$ in the above argument.

\end{proof}
\section{Lattices and orthogonal decompositions}
 \begin{lemma}\label{ortho_decompositions}
  Let $(V,Q)$ be a finite dimensional regular quadratic space over $F$
  with an orthogonal decomposition $V=U_1\perp U_2$, let $\Lambda$ be
  a lattice of full rank on $V$ with $b(\Lambda,\Lambda)\subseteq R$
  (equivalently, $\Lambda\subseteq \Lambda^\#$).
Let $L_i=\Lambda \cap U_i (i=1,2)$ and denote by $\pi_i$ the
orthogonal projection $\pi_i:V\to U_i$.
Then one has
\begin{enumerate}
\item $U_i\cap \Lambda^\# =(\pi_i(\Lambda))^\#$ for $i=1,2$ (where the dual on the
  right hand side is taken inside $U_i$). 
\item $\pi_1(\Lambda)/L_1 \cong \pi_2(\Lambda)/L_2$ as $R$-modules.
\end{enumerate}
 \end{lemma}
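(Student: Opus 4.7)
The plan is to prove the two parts separately; part (a) is a direct double-inclusion argument using orthogonality, while part (b) follows cleanly from two applications of the standard isomorphism theorems for modules applied to the projection maps restricted to $\Lambda$.

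For (a), I would show both inclusions explicitly. Take $v \in U_i \cap \Lambda^\#$. For any $\lambda \in \Lambda$, write $\lambda = \pi_1(\lambda) + \pi_2(\lambda)$; since $v \in U_i$ and $U_1 \perp U_2$, the term with $\pi_{3-i}(\lambda)$ vanishes under $b(v,\cdot)$, so $b(v, \pi_i(\lambda)) = b(v,\lambda) \in R$, which shows $v$ lies in the dual of $\pi_i(\Lambda)$ taken inside $U_i$. Conversely, if $v \in U_i$ satisfies $b(v, \pi_i(\Lambda)) \subseteq R$, the same orthogonality gives $b(v,\lambda) = b(v,\pi_i(\lambda)) \in R$ for every $\lambda \in \Lambda$, so $v \in \Lambda^\#$.

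For (b), the key observation is that the restriction $\pi_i|_\Lambda : \Lambda \to \pi_i(\Lambda)$ is surjective with kernel $\Lambda \cap U_{3-i} = L_{3-i}$, since orthogonality makes the decomposition $v = \pi_1(v) + \pi_2(v)$ unique. I would then compute the preimage of $L_i \subseteq \pi_i(\Lambda)$ under this map: for $\lambda \in \Lambda$ one has $\pi_i(\lambda) \in L_i = \Lambda \cap U_i$ if and only if $\pi_i(\lambda) \in \Lambda$, in which case $\pi_{3-i}(\lambda) = \lambda - \pi_i(\lambda) \in \Lambda \cap U_{3-i} = L_{3-i}$, so $\lambda \in L_1 + L_2$; the reverse inclusion is immediate. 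Applying the third isomorphism theorem to $\pi_i|_\Lambda$ gives
\begin{equation*}
  \pi_i(\Lambda)/L_i \;\cong\; \Lambda/(L_1 + L_2)
\end{equation*}
for both $i = 1$ and $i = 2$, and the result follows by transitivity.

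There is no real obstacle here: once one notes the hidden point that $\ker(\pi_i|_\Lambda) = L_{3-i}$ (which uses only that $\Lambda$ is an $R$-submodule of $V = U_1 \oplus U_2$, not that $b(\Lambda,\Lambda) \subseteq R$), the isomorphism theorems do the work. The hypothesis $b(\Lambda,\Lambda) \subseteq R$ is not needed for (b); it only enters conceptually in (a) to make $L_i \subseteq \Lambda \subseteq \Lambda^\#$ meaningful. The argument works verbatim over any integral domain $R$, and no dimension count or finite generation hypothesis is required.
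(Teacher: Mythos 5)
Your proof is correct and follows essentially the same route as the paper: part (a) is the identical orthogonality computation, and in part (b) you construct the same isomorphism the paper does, merely packaging it as $\pi_1(\Lambda)/L_1\cong\Lambda/(L_1+L_2)\cong\pi_2(\Lambda)/L_2$ via the isomorphism theorems instead of defining the map $\pi_1(x)\mapsto\pi_2(x)+L_2$ directly and checking well-definedness. Your side remarks (that the kernel computation needs only that $\Lambda$ is a submodule of $U_1\oplus U_2$, and that $b(\Lambda,\Lambda)\subseteq R$ is not used in (b)) are also accurate.
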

 \begin{proof}
    \begin{enumerate}
  \item Let $u \in U_1$ be given, write $x\in\Lambda$ as
    $x=\pi_1(x)+\pi_2(x)$.

Then $u \in \Lambda^\#$ is equivalent to $b(u,\pi_1(x))=b(u, x)\in R$
for all $x \in \Lambda$, hence to $b(u,\pi_1(\Lambda))\subseteq R$,
hence to $u \in (\pi_1(\Lambda))^\#$.
\item For  $u_1=\pi_1(x) \in \pi_1(\Lambda)$ there exists $u_2 \in
  \pi_2(\Lambda)\subseteq U_2$ with $u_1+u_2 \in \Lambda$ (e.g. $\pi_2(x)$), and the coset
  $u_2+L_2$ is independent of the choice of $u_2$ since $u_1+u_2' \in
  \Lambda$ implies $u_2-u_2'\in U_2\cap \Lambda=L_2$. The $R$-linear map
  from $\pi_1(\Lambda)$ to $\pi_2(\Lambda)/L_2$ given by $u_1 \to u_2$ as
  above has kernel $\{\pi_1(x)\mid x\in \Lambda, \pi_2(x)\in
  \Lambda\}=L_1$ and is obviously surjective.
\end{enumerate}
 \end{proof}
 \begin{definition}
   \begin{enumerate}
   \item Let the notations be as above. The lattice $\Lambda$ is
     called unimodular if $\Lambda=\Lambda^\#$, it is called even
     unimodular if in addition $Q(\Lambda)\subseteq  R$ (equivalently,
     $b(x,x)\in 2R$ for all $x \in \Lambda$) holds.
\item The lattice $R$ is called $I$-modular for an ideal $I\subseteq
  R$ if one has $\Lambda=I\Lambda^\#$, it is called even
     $I$-modular if in addition $Q(\Lambda)\subseteq  I$ holds.
It is called (even) modular if it is (even) $I$-modular for some ideal
$I\subseteq R$. 
   \end{enumerate}
 \end{definition}
 \begin{remark}
 The lattice $\Lambda$ is even unimodular if and only if $(\Lambda,Q)$
 is a regular quadratic $R$-module  
 \end{remark}
 \begin{lemma}
 Let $\Lambda$ be an $I$-modular  $R$-lattice on $V$ which is a free
 $R$-module.
 \begin{enumerate}
 \item $\tilde{b}^{(\Lambda)}(\Lambda)=\Hom_R(\Lambda,I)$.
\item $\Lambda=\{v\in V\mid b(v,\Lambda)\subseteq I\}$.
 \end{enumerate}
 \end{lemma}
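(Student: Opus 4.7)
The plan is to exploit the dual basis that makes the computation explicit, leveraging the freeness of $\Lambda$. Let $(v_1,\dots,v_n)$ be an $R$-basis of $\Lambda$. Since $\Lambda$ has full rank on $V$ (it is an $I$-modular lattice on $V$, so we are in the setting of Definition \ref{dual_lattice_definition}), this is also an $F$-basis of $V$. Let $(v_1^\#,\dots,v_n^\#)$ be the basis of $V$ dual to $(v_1,\dots,v_n)$ with respect to the non-degenerate form $b$, that is, $b(v_i,v_j^\#)=\delta_{ij}$. The preceding lemma on dual lattices then gives $\Lambda^\#=\bigoplus_{j=1}^n R v_j^\#$, and using $I\cdot\bigl(\bigoplus_j Rv_j^\#\bigr)=\bigoplus_j I v_j^\#$, the $I$-modularity assumption $\Lambda=I\Lambda^\#$ reads
\begin{equation*}
  \Lambda=\bigoplus_{j=1}^n I v_j^\#.
\end{equation*}
This is the key identification from which both statements fall out.

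For part (a), the inclusion $\tilde{b}^{(\Lambda)}(\Lambda)\subseteq \Hom_R(\Lambda,I)$ is immediate: every $v\in\Lambda=I\Lambda^\#$ can be written as $v=\sum_k i_k w_k$ with $i_k\in I$, $w_k\in\Lambda^\#$, and then for any $x\in\Lambda$ we have $b(v,x)=\sum_k i_k b(w_k,x)\in I\cdot R=I$ since $b(\Lambda^\#,\Lambda)\subseteq R$ by definition of $\Lambda^\#$. For the reverse inclusion, given $\varphi\in\Hom_R(\Lambda,I)$, I would set $v:=\sum_{j=1}^n \varphi(v_j)v_j^\#$. Each coefficient $\varphi(v_j)$ lies in $I$, so $v\in \bigoplus_j Iv_j^\#=\Lambda$; and by the dual basis property, $b(v,v_i)=\varphi(v_i)$ for every $i$, so $\tilde{b}^{(\Lambda)}(v)=\varphi$ on all of $\Lambda$.

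For part (b), one inclusion is part (a) applied just to the level of the bilinear pairing: if $v\in\Lambda$ then $b(v,\Lambda)\subseteq I$. For the converse, take $v\in V$ with $b(v,\Lambda)\subseteq I$. Expand $v=\sum_j c_j v_j^\#$ with $c_j\in F$; the dual basis relation yields $c_i=b(v,v_i)\in I$. Hence $v\in\bigoplus_j I v_j^\#=\Lambda$.

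The only step that requires any care is the identification $\Lambda^\#=\bigoplus_j Rv_j^\#$, and in particular the fact that $\Lambda^\#$ is itself free on the dual basis; I would invoke the preceding lemma on dual lattices to dispatch this. Everything else is a direct matching of coordinates against the dual basis, so there is no substantive obstacle.
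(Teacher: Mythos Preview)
Your argument is correct. For part (a) you are essentially unpacking what the paper dismisses as ``obvious'': the paper's one-word proof relies on the same identification $\Lambda=I\Lambda^\#=\bigoplus_j I v_j^\#$ that you make explicit. For part (b) there is a small but genuine difference in route: the paper derives (b) from (a) by observing that $b(v,\Lambda)\subseteq I$ means $\tilde b^{(\Lambda)}(v)\in\Hom_R(\Lambda,I)=\tilde b^{(\Lambda)}(\Lambda)$, and then invokes injectivity of $\tilde b$ on $V$ (non-degeneracy) to conclude $v\in\Lambda$; you instead redo the dual-basis coordinate computation directly, which makes your (b) independent of (a). The paper's deduction is slightly cleaner in that it reuses (a) and avoids repeating the coordinate argument, while your version has the virtue of being entirely self-contained.
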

 \begin{proof}
   a) is obvious. For b) let $v$ be in the set on the right hand
   side. Then $\tilde{b}^{(\Lambda)}(v) \in \Hom_R(\Lambda,I)$, so by
   a) and since $\tilde{b}$ is injective we have $v \in \Lambda$. The
   other inclusion is trivial. 
 \end{proof}
 \begin{lemma}\label{modular_split}
Let $R$ be an integral domain with field of fractions $F$, let $(V,Q)$
be a regular quadratic space over $F$, let $\Lambda$ be an $R$-lattice
on $V$   and let $K\subseteq \Lambda$ be an $I$-modular  sublattice of $\Lambda$ for
some ideal $I\subseteq R$, i.e., $K$ generates over $F$ a regular
subspace $U$ of $V$ and is an $I$-modular lattice on $U$.

Then $K$ splits off orthogonally in $\Lambda$ if and only if one has
$b(K,\Lambda)\subseteq I$.
\end{lemma}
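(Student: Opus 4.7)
The plan is to prove both implications separately, with the forward direction being essentially immediate and the reverse direction relying on the preceding lemma characterizing $I$-modular lattices.

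For the \emph{forward direction}, suppose $\Lambda = K \perp L$ with $L = K^\perp \cap \Lambda$. Then for any $x \in K$ and $\lambda = k + l \in \Lambda$ (with $k \in K$, $l \in L$), orthogonality gives $b(x,\lambda) = b(x,k)$, so $b(K,\Lambda) = b(K,K)$. By the preceding lemma applied to the $I$-modular lattice $K$ on $U$, we have $\tilde{b}^{(K)}(K) = \Hom_R(K,I)$, which forces $b(K,K) \subseteq I$. Hence $b(K,\Lambda) \subseteq I$.

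For the \emph{reverse direction}, assume $b(K,\Lambda) \subseteq I$ and fix $\lambda \in \Lambda$. The map $\varphi_\lambda : K \to R$ defined by $\varphi_\lambda(x) = b(\lambda,x)$ actually takes values in $I$ by assumption, so $\varphi_\lambda \in \Hom_R(K,I)$. Since $K$ is $I$-modular on $U$, part a) of the preceding lemma yields some $k \in K$ with $\tilde{b}^{(K)}(k) = \varphi_\lambda$, that is, $b(k,x) = b(\lambda,x)$ for all $x \in K$. Setting $\lambda' := \lambda - k$, we obtain $\lambda' \in K^\perp$ and $\lambda' \in \Lambda$ (since $k \in K \subseteq \Lambda$), giving the decomposition $\lambda = k + \lambda' \in K + (K^\perp \cap \Lambda)$.

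It remains to show the sum is direct. Suppose $y \in K \cap K^\perp$. Then $y \in K \subseteq U$ and $b(y,K) = 0$. Since $K$ generates $U$ over $F$ and $b$ is $F$-bilinear, $b(y,U) = 0$, so $y$ belongs to the radical of $(U, b\vert_U)$; but $U$ is regular by hypothesis, forcing $y = 0$. Thus $\Lambda = K \perp (K^\perp \cap \Lambda)$, proving that $K$ splits off orthogonally in $\Lambda$.

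The only subtlety I anticipate is making sure the previous lemma is applied to $K$ as a lattice on the regular subspace $U$ (so that the dual $K^\#$ is computed inside $U$ and the lemma's conclusions are available); the regularity of $U$ is also precisely what kills the would-be intersection $K \cap K^\perp$. Everything else reduces to bookkeeping.
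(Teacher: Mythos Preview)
Your proof is correct and follows essentially the same approach as the paper. The only difference is packaging: where you explicitly construct the decomposition $\lambda = k + \lambda'$ and verify $K \cap K^\perp = 0$ by hand, the paper observes that $\tilde{b}^{(K)}(\Lambda) = \tilde{b}^{(K)}(K) = \Hom_R(K,I)$ and then invokes Theorem~\ref{orthogonalcomplements}~b) (whose proof is precisely the argument you wrote out).
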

\begin{proof}
Since $K$ is $I$-modular, we have $\tilde{b}^{(K)}(K)=\Hom_R(K,I)$,
the assumption $b(K,\Lambda)\subseteq I$ gives
$\tilde{b}^{(K)}(\Lambda)\subseteq \Hom_R(K,I)$, hence
$\tilde{b}^{(K)}(\Lambda)=\tilde{b}^{(K)}(K)$, and 
Theorem \ref{orthogonalcomplements} implies that $K$ splits off orthogonally.

The other direction is trivial.  
\end{proof}

 \begin{remark}
 The results of this section remain valid if we omit the condition
 that $R$ is an integral domain and replace $F$ with the total ring of
 fractions of $R$ and the vector space $V$ by a free module over
 $F$. The quadratic module $(V,Q)$ is then required to be regular.  
 \end{remark}


\chapter{Special Ground Rings}
Obviously the theory of quadratic modules over a fixed ground ring
depends very much on the ring. Our main concern being the arithmetic
of quadratic forms we are particularly interested in the case where
the ground ring is a number field or a number ring, i.e., the ring of integers in such a
field. For these, important information is contained in the study of
the real and complex embeddings of the field and of the reduction modulo
prime ideals of the ring of integers, hence in the study of finite
fields. Since many interesting properties of forms over the rational
integers or over number rings
can be studied in the more general context of principal ideal domains
respectively  of Dedekind domains, we
will also study the basic properties of forms over these in this chapter.
\section{Real and Complex Numbers}
\begin{theorem}
  $$W(\C)\cong \Z/2\Z.$$
\end{theorem}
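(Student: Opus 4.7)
The plan is to exploit that $\C$ is algebraically closed, so $(\C^\times)^2=\C^\times$, and reduce every regular quadratic space to a sum of copies of the one-dimensional form $[1]$; then observe that $[1]\perp[1]$ is already hyperbolic, which collapses the Witt group to two elements.

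First I would invoke the orthogonal basis theorem of the previous section: since $\cha(\C)=0\ne 2$, any finite dimensional regular quadratic space $(V,Q)$ over $\C$ has an orthogonal basis, so $(V,Q)\cong[a_1,\ldots,a_n]$ with $a_i\in\C^\times$. Because every $a_i$ is a square in $\C$, writing $a_i=c_i^2$ and rescaling the $i$-th basis vector by $c_i^{-1}$ yields $(V,Q)\cong[1,1,\ldots,1]$. Thus, up to isometry, the only anisotropic regular spaces over $\C$ are the zero space and the one-dimensional space $[1]$, once we know $[1,1]$ is already isotropic.

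Next I would verify that $[1,1]$ is a hyperbolic plane: the vector $(1,i)\in\C^2$ satisfies $Q((1,i))=1+i^2=0$, so $[1,1]$ is an isotropic regular two-dimensional space and therefore a hyperbolic plane by the corollary following Theorem \ref{isotropic_hyperbolic} (equivalently, $\det[1,1]=1=-1\cdot(\C^\times)^2$). Combined with the Witt decomposition theorem (Theorem \ref{Witt_decomposition}), every regular quadratic space over $\C$ is isometric to $H_r$ if $\dim V=2r$ is even, and to $[1]\perp H_r$ if $\dim V=2r+1$ is odd, so the anisotropic kernel is either $0$ or $[1]$.

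Finally, these two anisotropic kernels give exactly two distinct Witt classes: the class of hyperbolic modules (the neutral element) and the class of $[1]$. They are distinct because an anisotropic representative of a Witt class is unique up to isometry (last statement of the Definition--Theorem defining $W(R)$), and $[1]\not\cong 0$. The group law gives $[1]+[1]=[[1,1]]=0$ since $[1,1]$ is hyperbolic, so the class of $[1]$ has order $2$ and hence $W(\C)\cong\Z/2\Z$. No step is really an obstacle here; the only thing to be careful about is not to confuse ``every class has an anisotropic representative'' with ``every regular space is anisotropic'': the crux is precisely that the two-dimensional form $[1,1]$ is isotropic over $\C$.
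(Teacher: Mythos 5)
Your proposal is correct and rests on exactly the same key fact as the paper's (one-line) proof: since every element of $\C^\times$ is a square, every two-dimensional regular space is hyperbolic, so the only possible anisotropic kernels are $0$ and $[1]$. You have simply spelled out the diagonalization, rescaling, and Witt-decomposition steps that the paper leaves implicit.
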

\begin{proof}
Since all elements of $\C$ are squares, every $2$-dimensional regular
quadratic space is hyperbolic.  
\end{proof}
\begin{remark}
  Obviously, the same results holds for all {\em quadratically closed}
  fields $F$, i.e., fields admitting no quadratic extension.
\end{remark}
\begin{theorem}[Sylvester]
$$W(\R)\cong \Z.$$  
\end{theorem}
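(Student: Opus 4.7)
The plan is to construct an explicit isomorphism $W(\R)\to\Z$ given by the signature, using the orthogonal basis theorem from the previous section together with the Witt decomposition.

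First I would fix a representative. Given a regular quadratic space $(V,Q)$ over $\R$ of dimension $n$, Theorem on orthogonal bases (the $\cha(F)\ne 2$ case) gives an orthogonal basis $(v_1,\ldots,v_n)$ with $Q(v_i)\neq 0$ for all $i$. Since every positive real is a square, replacing $v_i$ by $|Q(v_i)|^{-1/2}v_i$ normalizes the diagonal entries to $\pm 1$, so $(V,Q)\cong [\,\underbrace{1,\ldots,1}_{p},\underbrace{-1,\ldots,-1}_{q}\,]$ for some $p,q\ge 0$ with $p+q=n$.

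Next I would identify the anisotropic kernel. If both $p\ge 1$ and $q\ge 1$, picking a basis vector $v$ with $Q(v)=1$ and a basis vector $w$ with $Q(w)=-1$ gives $Q(v+w)=0$, so the space contains an isotropic vector and $[1,-1]$ is a hyperbolic plane (one can also invoke the earlier corollary characterizing hyperbolic planes by determinant $-1\cdot(F^\times)^2$). Iterating, $[1,\ldots,1,-1,\ldots,-1]$ splits off $\min(p,q)$ hyperbolic planes, leaving the anisotropic kernel $[1,\ldots,1]$ of rank $p-q$ if $p\ge q$, or $[-1,\ldots,-1]$ of rank $q-p$ otherwise. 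Conversely $[1,\ldots,1]$ is positive definite and $[-1,\ldots,-1]$ negative definite, so each is anisotropic.

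Now I would define the signature map $\sigma:W(\R)\to \Z$ by sending the class of $(V,Q)$ to $p-q$, where $(p,q)$ is the type of any diagonalization as above. Well-definedness on isometry classes (Sylvester's law of inertia) follows from the uniqueness of the anisotropic kernel given by Theorem \ref{Witt_decomposition}: the rank and definiteness of $V_{\rm an}$ are isometry invariants, and $p-q$ is recovered as $+\dim V_{\rm an}$ or $-\dim V_{\rm an}$ according to its sign. Compatibility with the Witt equivalence relation is immediate since adding a hyperbolic module adds $(1,1)$ to the signature type, leaving $p-q$ unchanged. Additivity $\sigma([V]+[V'])=\sigma([V])+\sigma([V'])$ is obvious from orthogonal concatenation of diagonalizations.

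Finally I would check bijectivity. Surjectivity: $\sigma([\,\underbrace{1,\ldots,1}_{k}\,])=k$ for $k\ge 0$ and $\sigma([\,\underbrace{-1,\ldots,-1}_{k}\,])=-k$, hitting all of $\Z$. Injectivity: if $\sigma([V])=0$ then $p=q$, so $V$ is an orthogonal sum of hyperbolic planes, hence $[V]=0$ in $W(\R)$. The only genuine content is the anisotropy of definite forms and the hyperbolic splitting of $[1,-1]$; no step should cause real difficulty since the field-level structure theory has already been established in the excerpt, and the main ``obstacle'' is simply organizing the signature invariant cleanly from the Witt decomposition.
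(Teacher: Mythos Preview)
Your proposal is correct and follows essentially the same approach as the paper: diagonalize to $\pm 1$ entries using that positive reals are squares, then use the signature $p-q$ as the isomorphism. The paper's proof is extremely terse (two sentences stating the diagonalization and that the space is hyperbolic iff $r_+-r_-=0$), while you have carefully spelled out well-definedness via the Witt decomposition theorem, the homomorphism property, and bijectivity---but the underlying idea is identical.
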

\begin{proof}
A regular quadratic space 
 $(V,Q)$ has an orthogonal basis consisting of  $r_+$
  vectors $x_j$ with  $Q(x_j)=1$ and $r_-$ vectors $y_j$ with
  $Q(y_j)=-1$; it is hyperbolic if and only if 
  $s=r_+-r_-$ is zero.
\end{proof}
\section{Finite Fields}
Let $F=\F_q$ be the finite field of characteristic $p$ with $q=p^s$
elements.
\begin{theorem}[Chevalley-Warning]
Let $f\in F[X_1,\ldots,X_n]$ be a homogeneous polynomial of degree
$d>n$. 

Then $f$ has a nontrivial zero in $F$.  
\end{theorem}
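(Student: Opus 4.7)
The statement as written (with $d>n$) is not the standard Chevalley–Warning hypothesis, and indeed is false as stated: for instance $X_1^2\in\F_2[X_1]$ has $d=2>1=n$ but only the trivial zero. I read this as a typographical slip for $d<n$, which is the classical hypothesis guaranteeing a nontrivial zero of a homogeneous form on $\F_q^n$, and I plan the proof accordingly.

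The plan is to count $N=\#\{x\in F^n\mid f(x)=0\}$ modulo $p$ and show $p\mid N$. Since $f$ is homogeneous, $\nullvek$ is always a zero, so divisibility by $p\ge 2$ forces the existence of at least one additional, nontrivial zero. The counting device is the identity $a^{q-1}=1$ for $a\in F^\times$ and $0^{q-1}=0$, which gives the congruence
\begin{equation*}
  N \;\equiv\; \sum_{x\in F^n}\bigl(1-f(x)^{q-1}\bigr) \pmod{p}.
\end{equation*}
The first piece contributes $q^n\equiv 0\pmod p$, so it suffices to prove that $\sum_{x\in F^n}f(x)^{q-1}\equiv 0\pmod p$.

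The heart of the argument is the elementary identity $\sum_{t\in F}t^a=-1$ when $a$ is a positive multiple of $q-1$, and $\sum_{t\in F}t^a=0$ otherwise (including $a=0$, where $\sum 1=q\equiv 0$). By multilinearity this forces each monomial sum
\begin{equation*}
  \sum_{x\in F^n} x_1^{a_1}\cdots x_n^{a_n}
\end{equation*}
to vanish unless every exponent $a_i$ is a positive multiple of $q-1$, i.e.\ $a_i\ge q-1$ for all $i$, which requires $a_1+\cdots+a_n\ge n(q-1)$. Now $f(x)^{q-1}$ expands into monomials of total degree exactly $d(q-1)$, and under the hypothesis $d<n$ one has $d(q-1)<n(q-1)$. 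Hence no monomial in the expansion satisfies the vanishing obstruction, every monomial sum is zero, and $\sum_x f(x)^{q-1}\equiv 0\pmod p$ as required.

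The main conceptual obstacle is exactly the direction of the inequality: the entire argument hinges on the strict inequality $d(q-1)<n(q-1)$ used at the last step, and so the hypothesis must read $d<n$. There is no technical obstacle beyond bookkeeping the power-sum identity. I would finish by noting that $N\equiv 0\pmod p$ together with $f(\nullvek)=0$ gives $N\ge p\ge 2$, so a nontrivial zero exists.
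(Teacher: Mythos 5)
Your proof is correct and follows essentially the same route as the paper's: the indicator $1-f^{q-1}$, summation over $F^n$, and the vanishing of monomial power sums when some exponent is not a positive multiple of $q-1$, which the degree bound $d(q-1)<n(q-1)$ guarantees. You are also right that the hypothesis $d>n$ in the statement is a typo for $d<n$; the paper's own proof explicitly uses that the degree of $f$ is less than $n$.
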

\begin{proof}
  With $g:=1-f^{q-1}$ one has (since $a^{q-1}=1$ for all $a \ne 0$ in $F$) 
  \begin{equation*}
    \sum_{\x\in F^n}g(\x)=\sum_{\{\x \in F^n\mid f(\x)=0\}}1_F=N_f \cdot 1_F,
  \end{equation*}
where $N_f$ is the number of zeroes of $f$ in $F^n$.

On the other hand, let $m=X_1^{e_1}\dots X_n^{e_n}$ be a monomial
occurring in $g$. Since the degree of $f$ is less than $n$, all
monomials occurring in $g$ have degree less than $n(q-1)$, so at least
one of the $e_i$ satisfies $e_i<q-1$, w.l.o.g we assume $e_1<q-1$.
The contribution of $m$ to $ \sum_{\x\in F^n}g(\x)$ is then
\begin{equation*}
  \sum_{\x \in F^n}m(\x)=\sum_{x_1\in
                            F}x_1^{e_1}\sum_{(x_2,\ldots,x_n)}x_2^{e_2}\dots
                            x_n^{e_n}
\end{equation*}
with $\sum_{x_1\in F}x_1^{e_1}=q\cdot 1_F=0$ if $e_1=0$ and 
\begin{equation*}
  \sum_{x_1\in F}x_1^{e_1}=\sum_{j=0}^{q-2}(\alpha^j)^{e_1}=\frac{(\alpha^{e_1})^{q-1}-1}{\alpha^{e_1}-1}=0
\end{equation*}
for $0<e_1<q-1$, where $\alpha$  is a generator of the (cyclic) multiplicative group $F^\times$
of $F$ and where $\alpha^{e_1}\ne 1$ because of $0<e_1<q-1$.

So all monomials contribute $0$ to $\sum_{\x\in F^n}g(\x)$, and we
must have $p \mid N_f$. Since $f$, being homogeneous, has at least the
trivial zero $\nullvek$, there must be at least $p-1$ nontrivial zeroes.
\end{proof}
\begin{corollary}\label{isotropy_finitefields}
  Every  quadratic space $(V,Q)$ of dimension $m\ge 3$ over the
  finite field $F$ is isotropic.
\end{corollary}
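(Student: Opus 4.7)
The plan is to apply the Chevalley-Warning theorem directly. Fix any basis $\cB = (v_1, \ldots, v_m)$ of $V$ and form the associated homogeneous quadratic polynomial $P_{Q,\cB} \in F[X_1, \ldots, X_m]$, which has degree $d = 2$ in $n = m$ variables. Since $m \ge 3 > 2$, we are in the regime $d < n$ under which the proof of the preceding Chevalley-Warning theorem actually produces a nontrivial common zero (the proof uses ``the degree of $f$ is less than $n$'' at the crucial step).

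Let $(x_1, \ldots, x_m) \in F^m \setminus \{\nullvek\}$ be such a nontrivial zero of $P_{Q,\cB}$, and set
\begin{equation*}
  x := \sum_{i=1}^m x_i v_i \in V.
\end{equation*}
Then $x \ne \nullvek$ since the $x_i$ are not all zero and the $v_i$ are a basis, and $Q(x) = P_{Q,\cB}(x_1, \ldots, x_m) = 0$. Hence $x$ is an isotropic vector, so $(V,Q)$ is isotropic by definition.

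There is essentially no obstacle to overcome: the corollary is a one-line consequence of Chevalley-Warning applied to the polynomial representing $Q$ in any chosen basis. The only thing to be careful about is that one uses the theorem in the form actually established in the proof given above, namely the case where the degree is strictly smaller than the number of variables (so that $\deg Q = 2 < 3 \le m$ suffices).
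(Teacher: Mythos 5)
Your proof is correct and follows exactly the route the paper takes: apply Chevalley--Warning to the associated homogeneous quadratic polynomial in $m\ge 3$ variables of degree $2$. You were also right to note that the hypothesis in the stated Chevalley--Warning theorem should read $d<n$ rather than $d>n$ (a typo in the statement; the proof itself uses $d<n$), which is the form you correctly invoke.
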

\begin{proof}
  This is an obvious consequence of the Chevalley-Warning theorem
  since the number of variables in the associated homogeneous
  quadratic 
  polynomial (with respect to any basis) over $F$ is at least $3$.
\end{proof}
\begin{definition}
  A quadratic module $(M,Q)$ over $R$ with values in $R$ is called
  universal if $Q(M)=R$ is true.
\end{definition}
\begin{example}
  The hyperbolic module $H(M)$ over a non zero finitely generated free
  module $M$ is universal since for any $z^*\in M^*$ with $1 \in z^*(M)$ the
  values $z^*(z)$ for the $z\in M$ run through all of $R$.
\end{example}
\begin{corollary}\label{universality_finitefields}
  Every $2$-dimensional regular quadratic space over a finite field
  $F$ is universal.
\end{corollary}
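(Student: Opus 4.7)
The plan is to reduce to Corollary \ref{isotropy_finitefields}, which guarantees isotropy (and hence nontrivial zeros of any 3-variable quadratic polynomial) over a finite field. Fix $a \in F$. The case $a = 0$ is trivial since $Q(0) = 0$, so assume $a \ne 0$.

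First I would form the external orthogonal sum $W := V \perp Fw$ with $Q_W(w) := -a$; this is a 3-dimensional quadratic space over $F$, regardless of characteristic and regardless of whether the 1-dimensional extension is itself regular (which it fails to be when $\cha(F) = 2$). Crucially, the cited Corollary \ref{isotropy_finitefields} does not assume regularity, only dimension $\ge 3$, so we get a nonzero $v + tw \in W$ with
\begin{equation*}
  0 = Q_W(v + tw) = Q(v) + t^2 Q(w) = Q(v) - t^2 a.
\end{equation*}

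Now I split into two cases. If $t \ne 0$, then $Q(t^{-1} v) = t^{-2} Q(v) = a$, and we are done. If $t = 0$, then $v \ne 0$ and $Q(v) = 0$, so $(V, Q)$ is itself isotropic. Since $V$ is regular, the one-dimensional subspace $Fv$ is primitive and thus regularly embedded (by Remark \ref{regularly_embedded_remark}), and it is totally isotropic. Applying Theorem \ref{isotropic_hyperbolic} to $Fv \subseteq V$ produces a complementary submodule $N'$ such that $Fv \oplus N'$ is isometric to the hyperbolic module $H(Fv)$; comparing dimensions forces $V = Fv \oplus N'$, so $V$ is the hyperbolic plane. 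The example following the definition of universality then gives $Q(V) = F$, and in particular $a \in Q(V)$.

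The only subtle step is verifying that the second case works uniformly in all characteristics; the temptation is to invoke the char $\ne 2$ corollary identifying isotropic regular 2-dimensional spaces with hyperbolic planes, but Theorem \ref{isotropic_hyperbolic} gives the same conclusion without that restriction, so no characteristic-2 complication arises.
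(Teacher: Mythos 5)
Your proof is correct and follows essentially the same route as the paper: adjoin a one-dimensional space of value $-a$, invoke Corollary \ref{isotropy_finitefields} on the resulting three-dimensional space, and fall back on hyperbolicity of $V$ when the isotropic vector lies in $V$ itself. Your extra care in deriving hyperbolicity from Theorem \ref{isotropic_hyperbolic} rather than the characteristic-$\ne 2$ corollary is a sound refinement of the same argument, not a different one.
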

\begin{proof}
If 
$(V,Q)$ is isotropic, it is hyperbolic and hence
universal. Otherwise let $0\ne a \in F$ and consider the space $W:=V\perp
Fy$ (external orthogonal sum), where $Fy$ is a $1$-dimensional
quadratic space with $Q(y)=-a$. Then $W$ is isotropic, since it has
dimension $3$, and an isotropic vector $z\in W$ must be of the form
$z=x+cy$ with $c \ne 0$ since we assumed $(V,Q)$ to be anisotropic. We
have therefore $Q(x)=c^2a$, hence $Q(\frac{x}{c})=a$ as desired.   
\end{proof}
\begin{lemma}
  Let $q$ be odd.

Then two regular quadratic spaces over $F$ are isometric if and only
if they have the same dimension and the same determinant. 

In particular, in each dimension there are precisely two isometry
classes of regular quadratic spaces over $F=\F_q$.
\end{lemma}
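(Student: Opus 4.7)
The plan is to prove the two directions separately, with the interesting content lying in the ``if'' direction, and to deduce the ``in particular'' clause from a square-class count.

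One direction is immediate from the definitions: an isometry carries one basis to another and the two Gram matrices are then congruent, so the determinant (viewed as a square class) and the dimension are isometry invariants. For the converse I would diagonalize, which is legitimate because $q$ odd means $\cha(F)\ne 2$ and we may apply the theorem on orthogonal bases. Thus $V\cong [a_1,\ldots,a_n]$ and $V'\cong [b_1,\ldots,b_n]$ with $\det(V)=a_1\cdots a_n (F^\times)^2$, $\det(V')=b_1\cdots b_n (F^\times)^2$, and the hypothesis reads $a_1\cdots a_n\equiv b_1\cdots b_n \pmod{(F^\times)^2}$.

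I would then induct on $n$. For $n=1$ one has $a_1\equiv b_1 \pmod{(F^\times)^2}$, say $b_1=c^2 a_1$, and $x\mapsto c^{-1}x'$ gives the isometry. For $n\ge 2$ I use universality: by Corollary \ref{universality_finitefields} the regular $2$-dimensional space $[a_1,a_2]$ is universal, so it represents $b_1$. Splitting off a vector of value $b_1$ and using Witt cancellation (Theorem \ref{extension_theorem_localring}, which applies in the form over a field of any characteristic; here characteristic is odd so one can equally invoke the classical cancellation theorem) yields $[a_1,a_2]\cong [b_1,c]$ for some $c\in F^\times$. Comparing determinants forces $c\equiv a_1 a_2 b_1^{-1}\pmod{(F^\times)^2}$. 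Substituting, $V\cong [b_1,c,a_3,\ldots,a_n]$, so by Witt cancellation again it suffices to prove $[c,a_3,\ldots,a_n]\cong [b_2,\ldots,b_n]$; these have dimension $n-1$, and their determinants are $ca_3\cdots a_n(F^\times)^2 =(a_1\cdots a_n/b_1)(F^\times)^2$ and $b_2\cdots b_n(F^\times)^2=(b_1\cdots b_n/b_1)(F^\times)^2$ respectively, which agree by the standing hypothesis. The inductive hypothesis then finishes the job.

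For the final sentence I would argue as follows. Since $q$ is odd, $F^\times$ is cyclic of even order $q-1$, so $[F^\times:(F^\times)^2]=2$; pick a non-square $\epsilon\in F^\times$. By the just-proved classification, in each dimension $n$ there are at most two isometry classes, parametrized by the two square classes in $F^\times$. They are realized by $[1,\ldots,1]$ and $[1,\ldots,1,\epsilon]$, whose determinants represent the two distinct square classes, so these are not isometric and there are exactly two isometry classes.

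The only step that requires real care is the inductive step: one must be sure that universality lets one realize $b_1$ inside $[a_1,a_2]$ (which is why I split off a two-dimensional piece rather than a one-dimensional one), and one must keep track of square classes carefully when computing the determinant of the orthogonal complement. The rest is formal.
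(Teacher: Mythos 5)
Your proof is correct. It rests on the same two ingredients as the paper's proof, namely the Chevalley--Warning consequences (isotropy of ternary regular spaces, hence universality of binary regular spaces) and Witt's theorems, but the induction is organized differently, and in fact your route ends up not needing Witt cancellation at all: once $[a_1,a_2]$ represents $b_1$ by an anisotropic vector $w$, the regular line $Fw$ splits off by the theorem on orthogonal complements, giving $[a_1,a_2]\cong[b_1,c]$ directly, and the final step $V\cong[b_1]\perp[c,a_3,\ldots,a_n]\cong[b_1]\perp[b_2,\ldots,b_n]$ only requires taking the orthogonal sum of isometries --- so both of your appeals to cancellation are superfluous (harmless, but worth noticing). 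The paper instead reduces every regular space of dimension $\ge 3$ to the binary case by splitting off hyperbolic planes, and settles dimension $2$ by the trick of adjoining $[-1]$: the ternary space $V\perp[-1]$ is isotropic, hence contains a hyperbolic plane $H\cong[1,-1]$, and cancelling $[-1]$ normalizes $V\cong[1,d]$ with $d\in\det(V)$; here Witt cancellation is used essentially. What each approach buys: yours is the uniform chain-type argument that transfers verbatim to any field over which regular binary forms are universal, while the paper's produces canonical representatives $[1,\ldots,1,d]$ in each dimension as a by-product. Your treatment of the ``in particular'' clause via $[F^\times:(F^\times)^2]=2$ is exactly what is needed and is more explicit than the paper's.
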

\begin{proof}
  The assertion is trivial in dimension $1$. In dimension $2$ let a
  regular space 
  $(V,Q)$ be given and put $W:=V\perp Fy$ with a $1$-dimensional space
  $Fy$ with $Q(y)=-1$. Then $W$ is isotropic and splits as $W=H\perp
  Fz$ with a hyperbolic plane $H$, and we have
  $-2\det(V)=\det(W)=-2Q(z)(F^\times)^2$.
Moreover, since $\cha(F)\ne 2$ we can split $H$ as $H=Fx_1\perp Fx_2$
with $Q(x_1)=-1, Q(x_2)=1$. By the Witt cancellation theorem we obtain 
$V \cong Fx_2\perp Fz$ with $Q(x_2)=1, Q(z)\in \det(V)$, the isometry
class of which depends
only on $\det(V)$. The assertion for arbitrary dimension follows by
induction since all regular spaces of dimension $\ge 3$ are isotropic
and split off a hyperbolic plane.
\end{proof}
\begin{theorem}
  The Witt group of $F$ for $\cha(F) = 2$ is isomorphic to $\Z/2\Z$.

For $\cha(F)\ne 2$ we have
\begin{equation*}
  W(F)\cong
  \begin{cases}
    \Z/4\Z & q\equiv -1 \bmod 4\\
\Z/2\Z \times \Z/2\Z & q\equiv  1 \bmod 4. 
  \end{cases}
\end{equation*}
\end{theorem}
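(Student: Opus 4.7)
The plan is to represent every element of $W(F)$ by its unique anisotropic kernel via Witt's decomposition (Theorem \ref{Witt_decomposition}), enumerate the possible anisotropic regular spaces over $F$, and then compute the group law. Corollary \ref{isotropy_finitefields} immediately forces every anisotropic regular space to have dimension at most~$2$, which keeps the enumeration small.

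Case $\cha(F)=2$. The theorem on orthogonal bases above shows that every regular space over $F$ has even dimension, so the candidates for an anisotropic kernel are only the zero space and a $2$-dimensional regular anisotropic space. Uniqueness of the latter up to isometry is the only nontrivial point: a $2$-dimensional regular space containing a nonzero isotropic vector $x$ is necessarily hyperbolic (choose $y$ with $b(x,y)=1$ and replace it by $y+Q(y)x$ to obtain a hyperbolic basis), so an anisotropic $2$-dimensional class exists and is unique, realised for instance by the norm form of $\F_{q^2}/\F_q$, with the uniqueness verified either by a direct change-of-basis argument or via the Arf invariant (whose image in $\F_q/\wp(\F_q)$ has size $2$). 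Hence $|W(F)|=2$ and $W(F)\cong \Z/2\Z$.

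Case $\cha(F)\ne 2$. Fix a non-square $s\in F^\times$. By the preceding classification lemma (a regular quadratic space over $F$ is determined up to isometry by its dimension and determinant square class, with exactly two classes per dimension), the anisotropic regular spaces are the zero space, the two one-dimensional spaces $\alpha:=[1]$ and $\beta:=[s]$, and exactly one two-dimensional anisotropic class $\gamma$, namely the one whose determinant square class differs from $-(F^\times)^2$ (the square class occupied by the hyperbolic plane). Thus $|W(F)|=4$, with elements $0,\alpha,\beta,\gamma$.

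To finish I would determine the additions. The class $[1,1]$ has determinant $(F^\times)^2$ and is therefore hyperbolic iff $-1\in(F^\times)^2$ iff $q\equiv 1\bmod 4$; the class $[1,s]$ has determinant $s(F^\times)^2$ and is hyperbolic iff $s\equiv -1\bmod (F^\times)^2$ iff $q\equiv -1\bmod 4$. When $q\equiv 1\bmod 4$ this gives $\alpha+\alpha=\beta+\beta=0$ and $\alpha+\beta=\gamma$, so $W(F)$ is the Klein four group $\Z/2\Z\times\Z/2\Z$; when $q\equiv -1\bmod 4$ we get $\alpha+\beta=0$ and $\alpha+\alpha=\gamma\ne 0$, so $\alpha$ has order $4$ and $W(F)\cong \Z/4\Z$. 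The main obstacle is really just the bookkeeping of when $-1$ is a square in $\F_q$; every other step is a direct application of Witt decomposition, Chevalley--Warning, and the already-proven dimension/determinant classification.
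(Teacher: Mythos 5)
Your proposal is correct and follows essentially the same route as the paper: count anisotropic kernels via Witt decomposition, Chevalley--Warning, and the dimension/determinant classification to get $|W(F)|=4$ for odd $q$ (resp.\ $2$ in characteristic $2$), then decide the order of the one-dimensional classes by whether $-1$ is a square. The only step you defer — uniqueness of the binary anisotropic class in characteristic $2$ — is handled in the paper by exactly the change-of-basis/Arf-invariant argument you name (normalizing to $Q(xv+w)=x^2+x+a$ with $a\notin\{x^2+x\}$), so nothing essential is missing.
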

\begin{proof}
  Let $q$ be odd. Since all anisotropic spaces are of dimension $1$ or
  $2$ we see that $W(F)$ has order $4$. If $q\equiv -1 \bmod 4$, the
Witt classes of the   $1$-dimensional regular spaces have order $4$
since $-a^2$ is not a square for $a \ne 0$, otherwise they have
  order $2$, and they generate the group.

Assume now $\cha(F)=2$, let $(V,Q)$ be an anisotropic regular
quadratic space of dimension $2$. Since $(V,Q)$ is universal and
regular we have a basis $(v,w)$ of $V$ with $Q(v)=1=b(v,w), Q(w)=a \ne
0$. Moreover, since $(V,Q)$ is anisotropic we have $0\ne
Q(xv+w)=x^2+x+a$ for all $x \in F$, hence $a$ is not in the additive subgroup $S:=\{x^2+x\mid
x \in F\}$ of $F$ of index $2$. Another binary anisotropic space
$(V',Q')$ is
then of the same type with basis $(v',w')$ and an $a'\not\in S$. But
then there exists $x \in F$ with $a=x^2+x+a'$, and the linear map
sending $v$ to $v'$ and  $w$ to $xv'+w'$ is an isometry from $(V,Q)$
onto $(V',Q')$. Since every class in the Witt group of $F$ is
represented by some $2$-dimensional space, the assertion is proven in
this case too. 
\end{proof}

\section{Dedekind domains}\label{dedekindsection}
A Dedekind domain $R$ is a noetherian integrally closed integral domain in
which every non zero prime ideal is maximal. In particular, all
principal ideal domains are Dedekind domains, and the ring
$\fo_F$ of all over $\Z$ integral elements of an algebraic number
field $F$ is a Dedekind domain. 
In this section  $R$ is a Dedekind domain with field of quotients
$F$. 

The theory of lattices over Dedekind
domains is very similar to the theory over a principal ideal domain
since all localizations are principal ideal domains. We therefore treat both
cases together in this section.

We summarize first some basic facts from \cite{chevalley,curtisreiner,bourbaki}.

\medskip
A finitely generated module $M$ over the Dedekind domain $R$ is projective if and only if
it is torsion free, if $R$ is a principal ideal domain, it is free.
In
particular, all $R$ - submodules of a finite dimensional vector space
$V$ over $F$ are projective, free in the PID-case.
A submodule $N$ of the finitely generated projective module $M$ is a direct summand if and
only if $M/N$ is torsion free.
The localizations $M_P$ of the finitely generated projective module
$M$ with respect to the
maximal ideals $P$ of $R$ are  free $R_P$-modules of rank $r$
independent of $P$, the number $r$ (possibly $\infty$) is  called
the rank of $M$. If $M$ is a finitely 
generated projective module over $R$ there are linearly independent vectors
$x_1,\ldots,x_r\in M$ and a fractional ideal $\mfa$ of $R$ such that
$M=Rx_1\oplus \dots Rx_{r-1}\oplus \mfa x_r$, where $r$ is the rank of
$M$. The class of $\mfa$ modulo principal fractional ideals is uniquely
determined and is called the Steinitz class of $M$. If one has
$M=\mfa_1y_1\oplus \dots \oplus \mfa_r y_r$ with linearly independent
vectors $y_i \in M$ and fractional ideals $\mfa_i$ of $R$ the Steinitz
class of $M$ is equal to the class modulo principal fractional ideals of
$\prod_{i=1}^r \mfa_i$. Two finitely
generated projective modules over $R$ are isomorphic as $R$-modules if
and only if they have the same rank and the same Steinitz class.
If $V$ is a finite dimensional vector space over $F$ the $R$-lattices
in $V$ are precisely the finitely generated projective $R$-modules
contained in $V$, such a module is a lattice on $V$ if and only if its
rank equals the dimension of $V$.

Let $\Lambda_1,\Lambda_2$ be lattices on the finite dimensional vector
space $V$ over $F$. Then there exist a basis $(v_1,\ldots,v_n)$ of $V$
and fractional ideals $\mfa_1,\ldots,\mfa_n,\mfb_1,\ldots,\mfb_n$ with
$\mfb_1\supseteq \mfb_2\supseteq \dots\supseteq \mfb_n$ and 
\begin{equation*}
 \Lambda_1=\bigoplus_{i=1}^n \mfa_i v_i,\quad
 \Lambda_2=\bigoplus_{i=1}^n \mfb_i \mfa_i v_i. 
\end{equation*}
The $\mfb_i$ are unique, they are called the invariant factors of
$\Lambda_2$ in $\Lambda_1$. We will denote by ${\mathfrak
  {ind}}_R(\Lambda_1/\Lambda_2)$ the product of these invariant factors and
call it the {\em $R$-index ideal}  in analogy to the case of free
modules; if both lattices are free it is the ideal generated by
$\ind_R(\Lambda_1/\Lambda_2)=(\Lambda_1:\Lambda_2)_R$.

\begin{lemma}\label{lattices_localglobal_dedekind}
Let $\Lambda_1,\Lambda_2$ be lattices on the finite dimensional vector
space $V$ over $F$. 

For a prime ideal ${\mathfrak p}$ of $R$ let $R_{\mfp}:=\{\frac{a}{b} \in F\mid
  a,b \in R, b \not \in \mfp\}\subseteq F$ denote  the localization of $R$ at
the prime ideal $\mfp \subseteq R$ and denote by $(\Lambda_i)_{\mfp}$ the
$R_{\mfp}$ module generated by $\Lambda_i$ in $V$ (called the
localization of $\Lambda_i$ at $\mfp$).     

\begin{enumerate}
\item One has $(\Lambda_1)_{\mfp}=(\Lambda_2)_{\mfp}$ for almost all
prime ideals $\mfp$ of $R$ and $\Lambda_1=\Lambda_2$ if and only if
$(\Lambda_1)_{\mfp}=(\Lambda_2)_{\mfp}$ holds for all
prime ideals $\mfp$ of $R$. 
\item
If conversely $\Lambda$ is an $R$-lattice on $V$ and one is given
finitely many prime ideals $\mfp_i\subseteq R$ ($1 \le i \le n)$ and
$R_{\mfp_i}$-lattices $L_{i}$ on $V$ for these $\mfp_i$, there is a unique
$R$-lattice $\Lambda'$ on $V$ such that $\Lambda'_{\mfp_i}=L_i$ for
$1\le i\le n$ and $\Lambda'_{\mfp}=\Lambda_{\mfp}$ for all prime ideals
$\mfp$ different from the $(\mfp_i)$. 
\end{enumerate}
\end{lemma}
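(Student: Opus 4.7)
My plan is to handle the two parts in turn, with part (a) supplying the essential tool for the uniqueness in part (b), and a CRT-type construction doing the heavy lifting for existence in (b).

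For part (a), I would start from the commensurability of $\Lambda_1$ and $\Lambda_2$ provided by Lemma \ref{lattice_compare}: there exist $a,b\in R\setminus\{0\}$ with $a\Lambda_1\subseteq \Lambda_2$ and $b\Lambda_2\subseteq \Lambda_1$. For any prime $\mfp$ not dividing $ab$, both $a$ and $b$ are units in $R_{\mfp}$, so multiplication by $a$ (resp.\ $b$) is an automorphism of the relevant localization, giving $(\Lambda_1)_{\mfp}=(\Lambda_2)_{\mfp}$ for all but finitely many $\mfp$. The ``$\Rightarrow$'' direction of the second statement is trivial. For ``$\Leftarrow$'', I would use the standard local-to-global principle for finitely generated torsion-free modules over a Dedekind domain, namely $\Lambda_i=\bigcap_{\mfp}(\Lambda_i)_{\mfp}$ with the intersection taken inside $V$; if all localizations coincide, then so do the intersections.

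For part (b), existence: I define
\begin{equation*}
   \Lambda':=\Bigl(\bigcap_{i=1}^n L_i\Bigr)\cap \Bigl(\bigcap_{\mfp\notin\{\mfp_1,\dots,\mfp_n\}}\Lambda_{\mfp}\Bigr)\subseteq V.
\end{equation*}
To see $\Lambda'$ is a lattice on $V$, I need two-sided commensurability with $\Lambda$. For each $i$ the lattices $L_i$ and $\Lambda_{\mfp_i}$ are $R_{\mfp_i}$-commensurable, so there is an integer $k_i$ with $\mfp_i^{k_i}\Lambda_{\mfp_i}\subseteq L_i$ and $\mfp_i^{k_i}L_i\subseteq \Lambda_{\mfp_i}$. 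By weak approximation (equivalently the Chinese remainder theorem applied to the finitely many $\mfp_i$) I can pick $c\in R\setminus\{0\}$ with $v_{\mfp_i}(c)\ge k_i$ for every $i$. Then $c\Lambda\subseteq L_i$ for all $i$ and $c\Lambda\subseteq \Lambda_{\mfp}$ for the remaining primes, so $c\Lambda\subseteq \Lambda'$; in particular $\Lambda'$ spans $V$. Symmetrically, if $v\in \Lambda'$, then $cv\in \Lambda_{\mfp_i}$ for each $i$ and $cv\in \Lambda_{\mfp}$ for every other prime, so by part (a) applied to the lattices $cv+\Lambda$ and $\Lambda$ we conclude $c\Lambda'\subseteq \Lambda$, which together with the previous inclusion shows $\Lambda'$ is a lattice on $V$ (using Lemma \ref{lattice_compare}).

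The main technical step, and the place I expect to do the most work, is verifying $\Lambda'_{\mfp_i}=L_i$ and $\Lambda'_{\mfp}=\Lambda_{\mfp}$ for $\mfp\notin\{\mfp_i\}$, since localization does not in general commute with infinite intersections. The inclusion $\Lambda'_{\mfp_i}\subseteq L_i$ is immediate from $\Lambda'\subseteq L_i$. For the reverse, given $v\in L_i$ I seek $s\in R\setminus \mfp_i$ with $sv\in \Lambda'$; this requires that $sv\in L_j$ for $j\neq i$ and $sv\in \Lambda_{\mfp}$ for all other primes. Since $L_j$ is an $R_{\mfp_j}$-lattice on $V$ there is $m_j\in\N$ with $\mfp_j^{m_j}v\subseteq L_j$, and there is a nonzero $t_v\in R$ with $t_vv\in \Lambda$, whose divisors supply the finitely many further conditions. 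I then invoke the Chinese remainder theorem in $R$ to produce $s$ simultaneously avoiding $\mfp_i$ and satisfying all the required valuation lower bounds at the finite set of relevant primes. An analogous CRT argument handles $\mfp\notin\{\mfp_i\}$. Uniqueness then follows from part (a), since any two candidates have identical localizations at every prime.
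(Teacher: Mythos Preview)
Your argument for part (a) matches the paper's exactly. For part (b), however, you take a genuinely different route. The paper builds $\Lambda'$ \emph{from below}: it chooses a generating set $(v_1,\dots,v_m)$ of $\Lambda$, uses the Chinese remainder theorem on the \emph{coefficients} of local bases of the $L_i$ to produce vectors $w_k\in V$ that simultaneously generate each $L_i$ over $R_{\mfp_i}$, sets $M=\sum_k R w_k$, and then takes $\Lambda'=\mfc\Lambda+M$ for a suitable ideal $\mfc$ supported on the $\mfp_i$. Your construction is \emph{from above}: you write down the only possible candidate as the intersection of all the prescribed localizations and then verify, by a CRT argument on \emph{elements}, that this intersection really has the correct localization at each prime.

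Both arguments are correct. Your intersection definition is more conceptual and makes uniqueness immediate, at the cost of the extra work of showing that localization interacts correctly with the infinite intersection; your CRT step handling this is fine, though the sentence ``by part (a) applied to the lattices $cv+\Lambda$ and $\Lambda$'' should simply read ``since $cv\in\bigcap_{\mfp}\Lambda_{\mfp}=\Lambda$''. The paper's approach trades that verification for an explicit finite construction, and has the incidental advantage that the lattice $\Lambda'$ visibly contains a sublattice $M$ whose generators are close to prescribed local generators---useful when one later wants to approximate local data globally. The paper's proof tacitly assumes $\Lambda$ is free (it speaks of a basis), which over a general Dedekind domain requires a small additional remark; your approach avoids this.
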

\begin{proof}
\begin{enumerate}
\item  By the general theory of lattices there are $a,b \in R\setminus\{0\}$
 such that one has $a\Lambda_2 \subseteq \Lambda_1 \subseteq b
 \Lambda_2$. For all maximal ideals $\mfp$ of $R$  with $ab \not\in \mfp$ (and hence for
 almost all $\mfp$) one has then
 $(\Lambda_1)_{\mfp}=(\Lambda_2)_{\mfp}$. The second part of the
 assertion is clear.
\item
Let the $\mfp_i, L_i$
be given, let 
$(v_1,\ldots,v_m)$ be a basis of $\Lambda$.
Dividing $\Lambda$ by a suitable element of $R$ (divisible by high
enough powers of the $\mfp_i$) if necessary  we can assume without loss of generality
$L_i\subseteq \Lambda_{\mfp_i}$ for $1 \le i \le n$. 
For $1\le i \le n$ one has
$c_{jk}^{(i)}\in R_{\mfp_i}$ such that the  $w_k^{(i)}:=\sum_{j=1}^mc_{jk}^{(i)}v_j$
for $1\le k \le m$ form a basis of the $R_{\mfp_i}$-module $L_i$, and
the $c_{jk}^{(i)}$ can even be chosen to be in $R$ since their
denominators are units in $R_{\mfp_i}$.

By the chinese remainder theorem we can then find $c_{jk}\in R$ which
are congruent to the $c_{jk}^{(i)}$ modulo $\mfp_i^{r}$ for all $i$ and
for an integer $r$ which is large enough to imply $w_k^{(i)}-w_k\in
\mfp_iL_i$ for $1\le i \le n$, where we set   $w_k:=\sum_{j=1}^mc_{jk}v_j$
for $1\le k \le m$. The matrix over $R_{\mfp_i}$ expressing the $w_k$ as linear
combinations of the $w_l^{(i)}$ has therefore 
determinant in $R_{\mfp_i}^\times$, i.e., the $w_k$ form a basis of the $R_{\mfp_i}$-module
$L_i$ for $1 \le i \le r$. Let $M\subseteq \Lambda$ be the $R$-lattice
generated by the $w_k$.  We can find an ideal $\mfc \subseteq R$ which
is a product of powers of the  $\mfp_i$ such that
$\mfc\Lambda_{\mfp_i}\subseteq L_i=M_{\mfp_i}$ for $1 \le i \le n$ and have
$\Lambda_{\mfp}=\mfc\Lambda_{\mfp}$  for all $\mfp$ different from all 
the $\mfp_i$. The Lattice $\Lambda':=\mfc\Lambda+M$ is then as desired. 
\end{enumerate}
\end{proof}
\begin{remark}
Instead of the localizations $R_\mfp,\Lambda_\mfp$ we could also use the $v$-adic
completions $R_v,\Lambda_v$ for the non archimedean valuations
$v=v_\mfp$ associated to the maximal ideals $\mfp$ of $R$, the chinese
remainder theorem is then replaced by the strong approximation theorem
in the proof.   
\end{remark}
\begin{definition}
  Let $V$ be a  finite dimensional vector space over $F$ with non degenerate
  symmetric bilinear form $b$, let $\Lambda$ be an $R$-lattice on
  $V$. The fractional $R$-ideal generated by 
  the determinants of Gram matrices with respect to $b$ of free sublattices  of $\Lambda$ 
  of full rank is called the volume ideal ${\mathfrak v_b\Lambda}={\mathfrak v\Lambda}$ of $\Lambda$.
\end{definition}
\begin{remark}
 For any maximal ideal $\mfp$ of $R$ the $\mfp$-part of the volume
  ideal ${\mathfrak v}\Lambda$ is given as the power of $\mfp$ in the
 determinant of the free local lattice $\Lambda_\mfp$.  
\end{remark}

\begin{lemma}
  Let $\Lambda$ be an $R$- lattice on the finite dimensional 
  vector space $V$ over $F$. A submodule $M$ of $\Lambda$ which generates over $F$
  the subspace $W=FM$ of $V$ is a primitive submodule of $\Lambda$ if
  and only if one has 
$M=W\cap\Lambda$. 

In particular, a vector $x \in \Lambda$ can be
extended to an $R$-basis of $\Lambda$ if and only if it is a primitive
vector in the sense that $cx\in \Lambda$ for some $c \in F$ implies $c
\in R$.
\end{lemma}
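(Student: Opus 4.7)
The plan is to reduce the statement to the structural fact, recalled at the start of Section \ref{dedekindsection}, that a submodule of a finitely generated projective module over a Dedekind domain is a direct summand if and only if the quotient is torsion free. With this in hand, the main equivalence becomes a matter of translating ``$\Lambda/M$ torsion free'' into ``$M=W\cap\Lambda$''.

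First I would prove the auxiliary equivalence
\begin{equation*}
  \Lambda/M \text{ is torsion free} \iff M = W\cap\Lambda.
\end{equation*}
The inclusion $M\subseteq W\cap\Lambda$ is automatic from $M\subseteq\Lambda$ and $M\subseteq FM=W$. For the forward direction, take $x\in W\cap\Lambda$. Since $W=FM$ and $M$ is finitely generated over the noetherian ring $R$ (as a submodule of the lattice $\Lambda$), clearing denominators of the coefficients expressing $x$ as an $F$-combination of generators of $M$ produces a nonzero $c\in R$ with $cx\in M$. Thus $x+M$ is torsion in $\Lambda/M$, and the torsion-free hypothesis forces $x\in M$. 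For the reverse direction, suppose $M=W\cap\Lambda$ and let $x\in\Lambda$ satisfy $cx\in M$ for some nonzero $c\in R$. Then $x=c^{-1}(cx)\in FM=W$, so $x\in W\cap\Lambda=M$, showing $\Lambda/M$ has no nonzero torsion.

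Next I would invoke the structural fact recalled above: since $\Lambda$ is a lattice over a Dedekind domain, it is finitely generated projective, hence a submodule $M$ is a direct summand (i.e.\ primitive) if and only if $\Lambda/M$ is torsion free. Combined with the auxiliary equivalence this yields the claim.

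For the ``in particular'' part, I apply the main statement to $M=Rx$, which spans $W=Fx$ over $F$. The condition $M=W\cap\Lambda$ reads precisely: for $c\in F$ with $cx\in\Lambda$ one has $cx\in Rx$, i.e.\ $c\in R$. It remains to identify primitivity of $Rx$ with the property of being extendable to a basis of $\Lambda$, under the standing assumption that $\Lambda$ admits an $R$-basis (so is free). If $x$ is part of a basis $(x,x_2,\ldots,x_n)$ then $\Lambda=Rx\oplus(Rx_2\oplus\cdots\oplus Rx_n)$ is a direct-sum decomposition. Conversely, if $\Lambda=Rx\oplus N$ with $\Lambda$ free of rank $n$, then by the multiplicativity of Steinitz classes under direct sums (recalled at the beginning of this section), the Steinitz class of $N$ equals that of $\Lambda$ divided by that of $Rx$, both trivial, hence $N$ is free of rank $n-1$; any basis of $N$ together with $x$ is then a basis of $\Lambda$. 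No step is a real obstacle: the substantive inputs are the torsion-free criterion for direct summands and the Steinitz-class bookkeeping, both quoted from the preamble.
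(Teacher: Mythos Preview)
Your proof is correct and follows exactly the paper's approach: the paper's entire proof is the single sentence ``The condition of the lemma is equivalent to $\Lambda/M$ being torsion free,'' which is precisely your auxiliary equivalence combined with the direct-summand criterion recalled in the section preamble. You have simply supplied the details the paper omits, including a clean treatment of the ``in particular'' clause via Steinitz classes that the paper leaves entirely to the reader.
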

\begin{proof}
The condition of the lemma is equivalent to $\Lambda/M$ being torsion free.
  
\end{proof}
\begin{theorem}\label{duals_and_indices}
  Let $(V,Q)$ be a finite dimensional regular quadratic space over $F$
  with an orthogonal decomposition $V=U_1\perp U_2$, let $\Lambda$ be
  a lattice of full rank on $V$ with $b(\Lambda,\Lambda)\subseteq R$
  (equivalently, $\Lambda\subseteq \Lambda^\#$).
Let $L_i=\Lambda \cap U_i (i=1,2)$ and denote by $\pi_i$ the
orthogonal projection $\pi_i:V\to U_i$.
Then one has
\begin{enumerate}
\item
  \begin{equation*}
    \Lambda^\#/(L_1\perp U_2\cap \Lambda^\#)\cong L_1^\#/L_1.
  \end{equation*}
\item $vol(\pi_2(\Lambda))\cdot \vol(L_1)=\vol(\Lambda)$, \\
$  \vol(L_1)\vol(L_2)=\vol(\Lambda) (\pi_2(\Lambda):L_2)_R^2=\vol(\Lambda) (\pi_1(\Lambda):L_1)_R^2$.
\item $\vol(\Lambda) \vol(L_1)=\vol(L_2)(L_1^\#:\pi_1(\Lambda))_R^2$.\\
In particular,  if $U_1=Fx$ has dimension $1$ with $L_1=Rx$, one has with $b(x,\Lambda)=:aR$ 
  \begin{eqnarray*}
\vol(\Lambda) &=&b(x,x)\vol(\pi_2(\Lambda))\\
 b(x,x)\vol(\Lambda)&=&a^2\vol(L_2). 
\end{eqnarray*}
\item If $\Lambda$ is unimodular, one has 
  \begin{eqnarray*}
    L_i^\#&=& \pi_i(\Lambda) \quad (i=1,2)\\
L_1^\#/L_1 &\cong & L_2^\#/L_2 \text{ as $R$-modules }\\
\vol(L_1)&=&\vol(L_2).
  \end{eqnarray*}
\end{enumerate}
\end{theorem}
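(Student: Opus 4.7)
The crux is part (a); the other parts are volume-ideal bookkeeping built on it together with Lemma \ref{ortho_decompositions}. For (a) I will exhibit an explicit $R$-linear map $\phi\colon \Lambda^\#\to L_1^\#/L_1$ by $\phi(v)=\pi_1(v)+L_1$. Well-definedness uses only that $b(\pi_1(v),y)=b(v,y)\in R$ for every $y\in L_1\subseteq \Lambda$, so $\pi_1(v)\in L_1^\#$. The kernel is $L_1\perp(U_2\cap\Lambda^\#)$: one inclusion is clear, and conversely $\pi_1(v)\in L_1\subseteq \Lambda^\#$ forces $\pi_2(v)=v-\pi_1(v)\in U_2\cap\Lambda^\#$. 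The real content is surjectivity: given $u\in L_1^\#$ I must produce $w\in U_2$ with $u+w\in \Lambda^\#$. I define $\ell\colon \pi_2(\Lambda)\to F/R$ by $\ell(\pi_2(x))=-b(u,\pi_1(x))+R$; this is well-defined because changing $x$ by an element of $L_2\subseteq U_2$ leaves $\pi_1(x)$ fixed, while adding an element of $L_1$ changes $b(u,\pi_1(x))$ by a value in $b(u,L_1)\subseteq R$. Since $\pi_2(\Lambda)$ is a finitely generated torsion-free, hence projective, $R$-module, $\ell$ lifts to $\tilde\ell\colon \pi_2(\Lambda)\to F$; extending $F$-linearly to $U_2$ and invoking regularity of $b|_{U_2}$ yields $w\in U_2$ with $b(w,\cdot)=\tilde\ell$ on $U_2$. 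Then $b(u+w,x)=b(u,\pi_1(x))+\tilde\ell(\pi_2(x))\in R$ for every $x\in \Lambda$, so $u+w\in\Lambda^\#$ and $\pi_1(u+w)=u$.

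For (b) I will use the sandwich $L_1\perp L_2\subseteq \Lambda\subseteq \pi_1(\Lambda)\perp\pi_2(\Lambda)$. Lemma \ref{ortho_decompositions}(b) identifies $\Lambda/(L_1\perp L_2)\cong \pi_i(\Lambda)/L_i$ for $i=1,2$, so by the Dedekind analogue of Lemma \ref{determinantrelation_lattices} one has $\vol(L_1\perp L_2)=\mathfrak{ind}_R(\Lambda/(L_1\perp L_2))^2\vol(\Lambda)=(\pi_i(\Lambda):L_i)_R^2\vol(\Lambda)$. Block-diagonality of the Gram matrix of $L_1\perp L_2$ gives $\vol(L_1\perp L_2)=\vol(L_1)\vol(L_2)$, which is the second equality of (b); substituting $\vol(L_2)=(\pi_2(\Lambda):L_2)_R^2\vol(\pi_2(\Lambda))$ yields the first.

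For (c), the key observation is the chain $L_1\subseteq \pi_1(\Lambda)\subseteq L_1^\#$, where the right-hand inclusion follows from $b(\pi_1(x),L_1)=b(x,L_1)\subseteq R$ for $x\in\Lambda$. Writing $\vol(L_1)=\mathfrak{ind}_R(L_1^\#/L_1)=(L_1^\#:\pi_1(\Lambda))_R(\pi_1(\Lambda):L_1)_R$ multiplicatively, squaring, and substituting into (b) gives the asserted formula. The one-dimensional specialization is a direct calculation: with $L_1=Rx$ one has $L_1^\#=b(x,x)^{-1}L_1$, while $\pi_1(\Lambda)=b(x,x)^{-1}b(x,\Lambda)x=a b(x,x)^{-1}L_1$, whence $(L_1^\#:\pi_1(\Lambda))_R=aR$ and $\vol(L_1)=b(x,x)R$. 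For (d), $\Lambda=\Lambda^\#$ combined with Lemma \ref{ortho_decompositions}(a) gives $L_i=U_i\cap\Lambda=U_i\cap\Lambda^\#=(\pi_i(\Lambda))^\#$; reflexivity of $\pi_i(\Lambda)$ then yields $L_i^\#=\pi_i(\Lambda)$, the module isomorphism is immediate from Lemma \ref{ortho_decompositions}(b), and $\vol(L_1)=\vol(L_2)$ falls out of (b) together with $\vol(\Lambda)=R$ and the duality $\vol(L_2^\#)=\vol(L_2)^{-1}$.

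The principal difficulty is the surjectivity of $\phi$ in (a): $L_1^\#$ is defined using only the pairing of elements of $U_1$ with $L_1$, whereas membership in $\Lambda^\#$ requires $R$-integrality against all of $\Lambda$, so realizing a given $u\in L_1^\#$ as the $U_1$-component of an element of $\Lambda^\#$ amounts to extending an $F/R$-valued partial functional. It is precisely at this step that the Dedekind hypothesis is used, via the projectivity of $\pi_2(\Lambda)$, combined with the regularity of $b|_{U_2}$.
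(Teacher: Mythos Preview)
Your proof is correct, but for part (a) it takes a considerably longer route than the paper's. The paper begins by localizing (so $R$ is a PID), and for (a) observes that Lemma~\ref{ortho_decompositions}(a) applied to $\Lambda^\#$ (together with $(\Lambda^\#)^\#=\Lambda$) gives $L_i=U_i\cap\Lambda=(\pi_i(\Lambda^\#))^\#$, whence by dualizing $L_i^\#=\pi_i(\Lambda^\#)$. Surjectivity of $\pi_1\colon\Lambda^\#\to L_1^\#$ is then immediate; no lifting of $F/R$-valued functionals or projectivity argument is needed. This single identity $\pi_i(\Lambda^\#)=L_i^\#$ is the structural heart of the theorem and also dispatches (d) in one stroke. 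Your approach via $\mathrm{Ext}^1(\pi_2(\Lambda),R)=0$ is sound and has the merit of working directly over the Dedekind domain without localizing, but it obscures this cleaner duality picture. For (b) the paper extends a basis of $L_1$ to one of $\Lambda$ (primitivity over a PID) and uses a unipotent change of basis to compare $\Lambda$ with $L_1\perp\pi_2(\Lambda)$ directly; your computation via the index $(\Lambda:L_1\perp L_2)_R$ and Lemma~\ref{ortho_decompositions}(b) is an equally valid alternative. Parts (c) and (d) are handled essentially as you do.
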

\begin{proof}
Since all parts of the assertion are true if and only if they are true
for all localisations it is sufficient to prove them for principal
ideal domains, replacing $\vol$ by $\det$.  
  \begin{enumerate}
\item  By assertion a) of Lemma \ref{ortho_decompositions} we have
  $U_i\cap \Lambda^\#=(\pi_i(\Lambda))^\#$, and from
  $(\Lambda^\#)^\#=\Lambda$ one sees that  $L_i=U_i\cap
  \Lambda=(\pi_i(\Lambda^\#))^\#$ holds. Dualizing both sides of this
  equality we obtain   
$L_i^\#=\pi_i(\Lambda^\#)$
  for $i=1,2$. We obtain from this a surjective $R$-linear map
  $\Lambda^\# \to L_1^\#/L_1$, whose kernel is $\{x \in
  \Lambda^\#\mid \pi_1(x) \in L_1\}$. The latter set is nothing but
  $L_1 \perp (U_2\cap\Lambda^\#)$ as asserted.
\item  Let $(v_1,\ldots,v_n)$ be a basis of $\Lambda$ for which
  $(v_1,\ldots,v_r)$ is a basis of $L_1$. The linear map $\phi:V \to
  V$ with $\phi(v_i)=v_i$ for $1 \le i \le r$ and
  $\phi(v_i)=\pi_2(v_i)$ for $i>r$ has determinant $1$ and image $L_1
  \perp \pi_2(\Lambda)$,
which implies the first part of the assertion. The second part follows
from $\det(L_2)=\det(\pi_2(\Lambda))(\pi_2(\Lambda):L_2)_R^2$ and
the first part.
\item From b) we obtain 
\begin{eqnarray*}
\det(\Lambda)\det(L_1)&=&\det(L_1)^2\det(L_2)(\pi_1(\Lambda):L_1)_R^{-2}\\
&=&\det(L_2)(d_{L_1^\#/L_1})^2 (\pi_1(\Lambda):L_1)_R^{-2}\\
&=&\det(L_2)(L_1^\#:\pi_1(\Lambda))_R^2.
\end{eqnarray*}
For $\dim(U_1)=1$ we have $(\pi_1(\Lambda):L_1)_R=b(x,x)a^{-1}R^\times$ (with
$b(x,\Lambda)=aR$) and obtain the second formula. 
The first formula for this case follows from the
first formula in b).
\item The assertions are a), b) of Lemma \ref{ortho_decompositions}
  and  c) with $\Lambda^\#=\Lambda$ inserted. 
  \end{enumerate}
\end{proof}

\begin{definition}
  Let $(V,Q)$ be a finite dimensional regular quadratic space over
  $F$.

An $R$-lattice $\Lambda$ of full rank on $V$ is called maximal if
$Q(\Lambda)\subseteq R$ holds and if $\Lambda$ is maximal with respect
to this property.

For a fractional $R$-ideal $I\subseteq F$ an $I$-maximal lattice is
defined analogously. 
\end{definition}
\begin{lemma}\label{maximal_lattice}
  Let $I$ be a fractional $R$-ideal in $F$.
  Every $R$-lattice $\Lambda$ with $Q(\Lambda)\subseteq I$ on the
  finite dimensional regular quadratic space 
  $(V,Q)$ over $F$ is contained in an $I$- maximal lattice.
\end{lemma}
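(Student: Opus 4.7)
The plan is a standard Zorn/Noetherian argument, the only subtlety being that one has to pin down a single ambient lattice in which every candidate enlargement of $\Lambda$ must sit. Set
\begin{equation*}
  \mathcal{L}:=\{M\mid M\text{ is an }R\text{-lattice on }V,\ \Lambda\subseteq M,\ Q(M)\subseteq I\}.
\end{equation*}
We have $\Lambda\in\mathcal{L}$, so $\mathcal{L}$ is nonempty, and an element of $\mathcal{L}$ is $I$-maximal precisely when it is a maximal element of $\mathcal{L}$ with respect to inclusion.

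The first step is to produce a single lattice of full rank on $V$ which contains every element of $\mathcal{L}$. For $M\in\mathcal{L}$ the identity $b(x,y)=Q(x+y)-Q(x)-Q(y)$ immediately gives $b(M,M)\subseteq I$, and in particular $b(M,\Lambda)\subseteq I$. Hence every $M\in\mathcal{L}$ is contained in
\begin{equation*}
  \Lambda^{\#,I}:=\{v\in V\mid b(v,\Lambda)\subseteq I\}.
\end{equation*}
Since $(V,Q)$ is regular and $\Lambda$ is of full rank, $\Lambda^{\#,I}$ is sandwiched between two scalar multiples of the usual dual $\Lambda^{\#}$: if $c,d\in R\setminus\{0\}$ satisfy $cR\subseteq I\subseteq d^{-1}R$, then $c\Lambda^{\#}\subseteq\Lambda^{\#,I}\subseteq d^{-1}\Lambda^{\#}$, so by Lemma \ref{lattice_compare} $\Lambda^{\#,I}$ is itself an $R$-lattice on $V$.

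The second step is then immediate from the Dedekind (in particular, noetherian) hypothesis on $R$. Any lattice on $V$ is a finitely generated $R$-module, so $\Lambda^{\#,I}$ is noetherian as an $R$-module. The set $\mathcal{L}$ is a collection of $R$-submodules of $\Lambda^{\#,I}$ (each of full rank, since it contains $\Lambda$), so the ascending chain condition forces any ascending chain in $\mathcal{L}$ to stabilize; alternatively, the union of any chain in $\mathcal{L}$ is a submodule of $\Lambda^{\#,I}$, hence finitely generated, hence a lattice on $V$, and it still satisfies $Q(\bigcup M_\alpha)\subseteq I$ because $Q$ is applied to single elements. By Zorn's Lemma, $\mathcal{L}$ has a maximal element $\Lambda'$.

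The third (and trivial) step is to verify that $\Lambda'$ is $I$-maximal in the sense of the definition: if $\Lambda''\supseteq\Lambda'$ is any $R$-lattice on $V$ with $Q(\Lambda'')\subseteq I$, then $\Lambda''\supseteq\Lambda'\supseteq\Lambda$ shows $\Lambda''\in\mathcal{L}$, whence $\Lambda''=\Lambda'$ by maximality. The only point requiring any thought is the bounding step, i.e., the identification of $\Lambda^{\#,I}$ as a lattice; once that is done the rest is formal.
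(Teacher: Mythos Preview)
Your proof is correct and follows essentially the same approach as the paper: bound every candidate overlattice inside a single noetherian module and invoke the ascending chain condition. The paper writes the bounding lattice as $I\Lambda^{\#}$ rather than your $\Lambda^{\#,I}=\{v\in V\mid b(v,\Lambda)\subseteq I\}$, but over a Dedekind domain these coincide, and your verification that this object is again a lattice (via sandwiching between $c\Lambda^{\#}$ and $d^{-1}\Lambda^{\#}$) is a welcome bit of extra care that the paper leaves implicit.
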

\begin{proof} If $\Lambda$ is not $I$-maximal it has a strict overlattice
  $\Lambda_1$ with $Q(\Lambda_1)\subseteq I$, and proceeding in the same
  way as long as possible we obtain an ascending  chain of lattices
  $L_i$ with $Q(\Lambda_i)\subseteq I$.
 
  From $Q(\Lambda)\subseteq I$ we see that $\Lambda \subseteq
  I\Lambda^\#$, and in the same way all the $\Lambda_i$ above
  are contained in
  $I\Lambda^\#$.
Since $I\Lambda^\#$ is noetherian any ascending
  chain of sublattices of it becomes stationary after finitely many
  steps, and we obtain the assertion.
\end{proof}
\begin{theorem}\label{hyperbolic_maximal}
  Let $(V,Q)$ be an $m$-dimensional isotropic regular quadratic space over
  $F$ and $\Lambda$ a maximal lattice on $V$. Then $\Lambda=H\perp
  \Lambda_1$, where $H$ is a regular hyperbolic $R$-lattice of rank
  $2r\le m$ and $\Lambda_1$ is a maximal lattice on an anisotropic
  subspace of $V$.
\end{theorem}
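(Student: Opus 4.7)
The plan is to induct on $\dim V$, with the essential content being the construction of a single hyperbolic summand $H \cong H(R)$ inside the maximal lattice $\Lambda$; once such a summand is split off, the orthogonal complement is automatically a maximal lattice on the orthogonal complement of $H$ in $V$, and the induction closes. The base case is $V$ anisotropic, where we take $r = 0$ and $\Lambda_1 = \Lambda$.

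For the main construction, start by picking a primitive isotropic $x \in \Lambda$: there exists a nonzero isotropic $z \in V$, and after scaling we may assume $z \in \Lambda$ and that $x := z$ is primitive (so $cx \in \Lambda$ implies $c \in R$). Set $\mathfrak{a} := b(x,\Lambda)$, which is a fractional $R$-ideal contained in $R$ since $Q(\Lambda) \subseteq R$ implies (by polarization) $b(\Lambda,\Lambda) \subseteq R$. The crucial input from maximality is that $\mathfrak{a} = R$: for any $\alpha \in \mathfrak{a}^{-1}$ and $\lambda \in \Lambda$,
\begin{equation*}
  Q(\lambda + \alpha x) = Q(\lambda) + \alpha\, b(\lambda,x) + \alpha^2 Q(x) = Q(\lambda) + \alpha\, b(\lambda,x) \in R,
\end{equation*}
so $\Lambda + \mathfrak{a}^{-1} x$ still has its values of $Q$ in $R$, whence by maximality $\mathfrak{a}^{-1} x \subseteq \Lambda$ and primitivity of $x$ forces $\mathfrak{a}^{-1} \subseteq R$, i.e.\ $\mathfrak{a} = R$. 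Choose $y_0 \in \Lambda$ with $b(x,y_0) = 1$ and replace it by $y := y_0 - Q(y_0)\, x \in \Lambda$; one checks directly $Q(y) = 0$ and $b(x,y) = 1$.

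Now $H := Rx \oplus Ry \subseteq \Lambda$ is a free rank $2$ sublattice with Gram matrix $\bigl(\begin{smallmatrix}0 & 1\\ 1 & 0\end{smallmatrix}\bigr)$, so $H$ is unimodular (i.e.\ $R$-modular) on the regular hyperbolic subspace $U := Fx + Fy \subseteq V$. Since $b(H,\Lambda) \subseteq b(\Lambda,\Lambda) \subseteq R$, Lemma \ref{modular_split} applies with $I = R$ and yields $\Lambda = H \perp \Lambda_1$ with $\Lambda_1 := H^\perp \cap \Lambda$, a lattice on the regular space $U^\perp$. Maximality of $\Lambda_1$ on $U^\perp$ is immediate: any overlattice $\Lambda_1' \supseteq \Lambda_1$ in $U^\perp$ with $Q(\Lambda_1') \subseteq R$ would give $H \perp \Lambda_1' \supseteq \Lambda$ with $Q$-values in $R$, forcing $\Lambda_1' = \Lambda_1$ by maximality of $\Lambda$. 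If $U^\perp$ is anisotropic the induction terminates; otherwise apply the statement to $(U^\perp, \Lambda_1)$ to obtain $\Lambda_1 = H' \perp \Lambda_2$ with $H'$ hyperbolic of rank $2(r-1)$ and $\Lambda_2$ maximal on an anisotropic subspace, and combine.

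The principal obstacle is the identity $b(x,\Lambda) = R$; without maximality one only knows $b(x,\Lambda) \subseteq R$, and the argument above is precisely where the hypothesis is used in an essential way. Everything else is then routine: the replacement $y_0 \leadsto y$ uses $b(x,x) = 2Q(x) = 0$ so it works in every characteristic, and the splitting statement is exactly what Lemma \ref{modular_split} delivers for the unimodular $H$.
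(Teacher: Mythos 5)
Your argument is correct, and essentially the same as the paper's, whenever $R$ is a principal ideal domain (in particular in all the local applications), but the theorem sits in the Dedekind-domain section and in that generality there is a genuine gap at the very first step: the normalization ``after scaling we may assume $x:=z$ is primitive''. Over a Dedekind domain $R$ the module $Fz\cap\Lambda$ is a rank-one projective module, i.e.\ of the form $\mathfrak{b}z$ for a fractional ideal $\mathfrak{b}$, and a vector $x$ with $Fx\cap\Lambda=Rx$ exists in the direction of $z$ only if $\mathfrak{b}$ is principal. This can fail for every isotropic direction: take $V=F^2$ with $Q(x_1,x_2)=x_1x_2$ and $\Lambda=\mathfrak{b}e_1\oplus\mathfrak{b}^{-1}e_2$ with $\mathfrak{b}$ non-principal. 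This $\Lambda$ is unimodular, hence maximal; the only isotropic lines are $Fe_1,Fe_2$, and both meet $\Lambda$ in a non-free module, so $b(x,\Lambda)\subsetneq R$ for every isotropic $x\in\Lambda$ and $\Lambda$ contains no free hyperbolic plane $Rx\oplus Ry$ at all. Your construction therefore cannot be carried out, and the conclusion it would yield (a free hyperbolic summand) is strictly stronger than what the theorem asserts: the $H$ of the statement is a hyperbolic module $H(N)=N\oplus N^*$ over a projective rank-$r$ module $N$ that need not be free, as the remark following the theorem (replacing $H$ by $M\oplus IM^*$ in the $I$-maximal case) already signals.

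The paper avoids this by not normalizing $x$: it sets $N:=Fx\cap\Lambda^\#=\mathfrak{a}^{-1}x$, shows by the same maximality computation as yours that $N\subseteq\Lambda$, observes that $N$ is then a totally isotropic, regularly embedded, projective submodule, and invokes Theorem \ref{isotropic_hyperbolic} to complete $N$ to a regular hyperbolic module $H(N)$ inside $\Lambda$, which splits off by regularity. Everything else in your write-up is sound and matches the paper's intent: the identity $Q(\lambda+\alpha x)=Q(\lambda)+\alpha\,b(\lambda,x)$ is exactly where maximality enters in the paper too, the correction $y=y_0-Q(y_0)x$ is fine in every characteristic, the splitting via Lemma \ref{modular_split} is legitimate, and your explicit verification that $\Lambda_1$ is again maximal is a detail the paper leaves implicit. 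To repair the proof in full generality you need only replace ``primitive vector'' by the rank-one module $Fx\cap\Lambda^\#$ and route the construction of the hyperbolic complement through Theorem \ref{isotropic_hyperbolic} rather than through an explicit dual vector $y$.
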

\begin{proof}
  Let $x$ be an isotropic vector of $V$,
we have ${\mathfrak a}x^{-1} =Fx\cap
\Lambda^\#$ for some ideal ${\mathfrak
  a}\subseteq R$.
  The lattice $L:=\Lambda+{\mathfrak a}^{-1}x$
  satisfies then $Q(L)\subseteq R$, by maximality of $\Lambda$ we have
  $L=\Lambda$ so that ${\mathfrak a}^{-1}x \subseteq \Lambda$. Since
  we have ${\mathfrak a}^{-1}x=Fx \cap\Lambda^\#$ we see that
  ${\mathfrak a}^{-1}x$ is a regularly embedded totally
  isotropic submodule of $\Lambda$, so by Theorem
  \ref{isotropic_hyperbolic} there is a regular hyperbolic
  sublattice of rank $2$ of $\Lambda$ containing ${\mathfrak a}^{-1}x$ which can be split off
  orthogonally in $\Lambda$. The assertion follows by induction on $m$. 
\end{proof}
\begin{remark}
An analogous result is valid for an $I$-maximal lattice, where
$I$ is a fractional $R$-ideal. The hyperbolic module $H$ above is then
replaced by a module isometric to $M\oplus IM^\ast$.
\end{remark}

\chapter{Reduction Theory of Positive Definite Quadratic Forms}
In this chapter and the next one we deal with geometric and
computational properties of spaces of
real valued quadratic forms over the
rational integers. Reduction theory is concerned with the task of selecting
special forms in an equivalence class or equivalently special bases of
a given quadratic module. The computational aim is to obtain Gram matrices
with small entries in order to facilitate computations, the geometric
aim is the study of the geometric properties of the action of the
group $GL_n(\Z)$ on the space, in particular the construction of
fundamental domains for this action. We deal with 
positive definite forms in this chapter and with indefinite forms in
the next chapter.
Our presentation in this chapter follows for the most part \cite{cassels,vdw}. The
general study of the properties of quadratic forms over number rings
will be continued after this interlude.
\section{Minkowski reduced forms and successive minima}
A lattice   in $\R^n$ of rank  $r$   is a $\Z$-module  $L=\Z f_1+\ldots +\Z
f_r\subseteq \R^n$ , where the $f_i$   are $\R$-linearly independent
vectors in $\R^n$. Let $Q$ be a positive definite quadratic form on
$\R^n$ with associated bilinear form $b$, let $B=b/2$ as usual.

\begin{definition}\label{def_minkowski_reduziert} Let $L\subseteq
  \R^n$ be a  $\Z$-lattice of full rank  $n$ with basis $f_1,\ldots, f_n$.\\
Put put $L_0: = \{0\}$, for $1 \leq i \leq n$ put $L_i: = L_i(\{f_j\}): = \Z f_1+
\cdots + \Z f_i$. 

The basis $\cB=(f_1,\ldots,f_n)$ of $L$  is called
Minkowski reduced with respect to $Q$ (or $B$) if one has for  $1 \leq i \leq n$: 

For all $x \in L$ for which 
 $L'_i(x) := L_{i-1} + \Z x$
is primitive in $L$  one has 
 $Q(x) \geq Q(f_i)$.

Its Gram matrix $M_{\cB}(B)$ is then called a Minkowski reduced matrix
and the quadratic polynomial $P(X_1,\dots,X_n)=\sum_{i,j}m_{ij}X_iX_j$
with $M_{\cB}(B)=(m_{ij})$ a Minkowski reduced quadratic form.
\end{definition}
\begin{remark}
  \begin{enumerate}
\item An equivalent formulation is: A basis  $f_1,\ldots,f_n $ of $L$
is Minkowski reduced if $Q(f_i)$ is minimal among the $Q(x)$ for
which $x+L_{i-1}$ is primitive in $L/L_{i-1}$ for all $i$. 
\item The positive definite matrix $A\in M_n^{\sym}(\R)$ is Minkowski
  reduced if and only if the standard basis $(\e_1,\dots,\e_n)$ of
  $\Z^n$ is a reduced basis of the lattice $\Z^n$ with the quadratic
  form $Q_A$ given by $Q_A(\x)={}^t\x A \x$.
  \end{enumerate}
\end{remark}
\begin{lemma}
  \begin{enumerate}
  \item The positive definite matrix $A\in M_n^{\sym}(\R)$ is Minkowski
reduced if and only  for $1\le i \le n$ one has $a_{ii}\le {}^t\x A \x$ for all $\x\in
\Z^n$ with $\gcd(x_i,\ldots,x_n)=1$. 
\item   A Minkowski reduced matrix $A=(a_{ij})$ satisfies
\begin{align*}
  0 < a_{11}  \leq  a_{22} \leq \cdots \leq a_{nn},\\
  |2a_{ij}|  \leq  a_{ii} \quad \text{for all }1 \leq i < j \leq n.
 \end{align*}
 \end{enumerate}
\end{lemma}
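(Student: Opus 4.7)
The plan is to first translate the Minkowski condition into the matrix language of (a), and then derive (b) by applying (a) to carefully chosen integer vectors.

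For (a), I would fix a basis $\cB = (f_1,\ldots,f_n)$ of a lattice $L$ having $A$ as its Gram matrix with respect to $B$, so that $a_{ii} = Q(f_i)$ and $Q(\sum_k x_k f_k) = {}^t\x A \x$. The only point requiring an argument is the claim that $L_i'(x) = L_{i-1} + \Z x$ is primitive in $L$ (more precisely, that $x+L_{i-1}$ is a nonzero primitive element of $L/L_{i-1}$) if and only if $\gcd(x_i,x_{i+1},\ldots,x_n)=1$, where $x=\sum_k x_k f_k$. This follows because $L/L_{i-1}$ is free on the images of $f_i,\ldots,f_n$, the image of $x$ has coordinate vector $(x_i,\ldots,x_n)$, and a nonzero cyclic submodule of a free $\Z$-module is a direct summand exactly when its generator has coprime coordinates. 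With this translation in hand, the Minkowski condition at index $i$ becomes exactly $a_{ii} \le {}^t\x A \x$ for all $\x\in\Z^n$ with $\gcd(x_i,\ldots,x_n)=1$, and both directions of the iff are immediate.

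For (b), positivity $a_{11}>0$ is immediate from positive-definiteness of $A$. The chain $a_{ii}\le a_{i+1,i+1}$ follows by applying (a) at index $i$ with test vector $\x = e_{i+1}$: the relevant coordinates $x_i,x_{i+1},\ldots,x_n$ read $0,1,0,\ldots,0$, so $\gcd=1$ and ${}^t\x A\x = a_{i+1,i+1}$, yielding the inequality.

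The off-diagonal bound $|2a_{ij}|\le a_{ii}$ for $i<j$ is the step that requires a moment of thought: the natural first attempt of applying (a) at index $i$ with $\x = e_j \pm e_i$ yields only the weaker bound $|2a_{ij}|\le a_{jj}$. The right move is instead to apply (a) at the larger index $j$, with test vector $\x = e_i + \epsilon e_j$ for $\epsilon\in\{+1,-1\}$. Then the coordinates $x_j,x_{j+1},\ldots,x_n$ are $\epsilon,0,\ldots,0$, still of gcd $1$, so (a) gives $a_{jj} \le {}^t\x A\x = a_{ii}+a_{jj}+2\epsilon a_{ij}$, i.e.\ $-a_{ii}\le 2\epsilon a_{ij}$; letting $\epsilon$ range over $\pm 1$ produces the two-sided bound $|2a_{ij}|\le a_{ii}$. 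The only real obstacle in the whole proof is recognising that the correct index to use for this last step is the larger one $j$, not $i$.
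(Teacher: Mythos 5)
Your proof is correct and follows essentially the same route as the paper: translate primitivity of $L_{i-1}+\Z x$ into the coprimality condition $\gcd(x_i,\ldots,x_n)=1$ via the free quotient $L/L_{i-1}$, then test with $\x=\e_j$ and $\x=\e_i\pm\e_j$. Your explicit observation that the off-diagonal bound comes from applying the reduction condition at the \emph{larger} index $j$ (and with both signs) is exactly the point the paper's one-line proof leaves implicit, and you have it right.
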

\begin{proof}
The first assertion follows from the fact that 
$\Z\e_1+ \dots +\Z\e_{i-1}+\Z\x$ is a primitive sublattice of $\Z^n$
if and only if one has $\gcd(x_i,\ldots,x_n)=1$. The second assertion
follows from taking $\x=\e_j $ respectively $\x=\e_i+\e_j$ for $i<j$.
\end{proof}
For the rest of this chapter we consider the standard scalar product
$\langle\quad ,\quad \rangle=:b(\quad,\quad)$ on the space $\R^n$.
As usual we put $Q(\x)=b(x,x)/2=B(x,x)$.

Instead of   ${\det}_b(L)$ as in earlier chapters  we will usually
consider ${\det}_B(L)=2^{-n}{\det}_b(L)$.

Restricting attention to $Q$ instead of an arbitrary positive definite
quadratic form on $\R^n$ doesn't lose generality
since all positive definite quadratic forms on $\R^n$ are equivalent
over $\R$ to $Q$.
The lattice $L$ is called integral if all the $b(f_i,f_j)=\langle
f_i,f_j\rangle$ are in $\Z$. If one has in addition $Q(L)\subseteq \Z$
the lattice is called even integral.

Since in this case the classes modulo squares of units in $\Z$ consist
of one element only we may treat ${\det}_B(L)$ as a real number
instead of a square class when this is convenient.

\begin{theorem} Every lattice has a Minkowski reduced
  basis. Equivalently, for every Minkowski reduced positive definite
  symmetric matrix $A$ there exists an integrally equivalent Minkowski
  reduced matrix. 
\end{theorem}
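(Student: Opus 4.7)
The plan is to construct a Minkowski reduced basis by a greedy induction. Starting from $L_0 = \{0\}$, suppose I have chosen $f_1, \ldots, f_{i-1}$ so that $L_{i-1}$ is a primitive sublattice of $L$ of rank $i-1$. I then let $f_i$ be any element of
\[S_i := \{x \in L : L_{i-1} + \Z x \text{ is primitive in } L\}\]
that minimizes $Q$, and set $L_i := L_{i-1} + \Z f_i$. By construction $f_i \in S_i$, so $L_i$ is again primitive in $L$ of rank $i$, and the induction continues.

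Two facts need verification at each step. First, $S_i$ is nonempty for $1 \le i \le n$: since $L_{i-1}$ is primitive by the inductive hypothesis, the quotient $L/L_{i-1}$ is torsion-free of rank $n - i + 1 \ge 1$, hence free over $\Z$, and any lift $x$ of a basis vector of $L/L_{i-1}$ makes $L_{i-1} + \Z x$ a direct summand of $L$. Second, the infimum of $Q$ on $S_i$ is attained: positive definiteness of $Q$ combined with discreteness of $L$ in $\R^n$ makes $\{x \in L : Q(x) \le R\}$ finite for every $R > 0$, so choosing any $x_0 \in S_i$ and minimizing over the finite set $S_i \cap \{x : Q(x) \le Q(x_0)\}$ produces the required $f_i$.

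After $n$ steps, $L_n$ is a primitive sublattice of $L$ of full rank $n$, hence $L_n = L$. The chain $L_0 \subsetneq L_1 \subsetneq \cdots \subsetneq L_n = L$ of primitive sublattices with successive rank differences $1$ forces $(f_1, \ldots, f_n)$ to be a $\Z$-basis of $L$, and this basis is Minkowski reduced exactly by the choice of each $f_i$. The matrix version follows immediately by applying the lattice result to $L = \Z^n$ equipped with $Q(\x) = \tfrac{1}{2}\,{}^t\!\x A \x$: the transition matrix from the standard basis to a Minkowski reduced basis lies in $GL_n(\Z)$ and conjugates $A$ into an integrally equivalent Minkowski reduced matrix.

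The main obstacle I expect is the bookkeeping around primitivity, namely confirming (i) that $S_i$ is nonempty so that the induction does not get stuck before step $n$, and (ii) that the minimizer $f_i$ keeps $L_i$ primitive in $L$, so at the end $L_n$ actually equals $L$ rather than a proper finite-index sublattice. The compactness/finiteness argument that guarantees the minimum is attained is essentially automatic from positive definiteness of $Q$, and the observation that primitivity of $L_{i-1}+\Z x$ inside $L$ is equivalent to the image $x + L_{i-1}$ being a primitive vector of the free module $L/L_{i-1}$ is what keeps the induction on track.
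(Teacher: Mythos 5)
Your greedy construction is correct and is exactly the standard argument that the paper dismisses with ``Obvious'': at each step the set of admissible $x$ is nonempty because $L/L_{i-1}$ is free of positive rank, the minimum is attained by discreteness and positive definiteness, and the final full-rank primitive sublattice must be all of $L$. The only point worth making explicit is that $S_i$ must be read as excluding $x$ with $x+L_{i-1}$ torsion in $L/L_{i-1}$ (so that the rank actually increases and the minimum is positive), but this convention is already implicit in the paper's own definition of Minkowski reduction.
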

\begin{proof}
Obvious.
\end{proof}
\begin{remark}
We will see in Section \ref{geometry_reduced} that for fixed dimension
$n$ already  a
finite set of the reduction conditions $a_{ii}\le {}^t{\bf x}A{\bf x}$ suffices
to characterize Minkowski reduced matrices.
Such finite sets of reduction conditions are explicitly known for $n\le 7$
(see \cite{Tammela} for the case $n=7$). 
Nevertheless, for  $\rk(L)=n>4$ it is in general computationally difficult to determine
a Minkowski reduced basis of a given lattice effectively.
\end{remark}
For the cases
$n\le 4$ Minkowski has shown in \cite{minkowski_quaternaire} that
already the vectors ${\bf x}$ with components $x_i\in \{0,\pm 1\}$
suffice, see also Lemma 12.1.2 of \cite{cassels}, whose proof we follow:

\begin{example}
For  $2 \le n \le 4$ a matrix  $Y \in {\mathcal P}_n$ is Minkowski reduced
if and only if 
 \begin{enumerate}
 \item $0 \leq y_{11} \leq y_{22} \leq \cdots \leq y_{nn}$
 \item  $P_Y({\bf x}):={}^t{\bf x} Y{\bf x}$ satisfies $P_Y({\bf x}) \geq y_{jj}$ for all ${\bf x} \in \Z^n$
with  $x_i \in \{0,\pm 1\}$, $x_j = 1$, $x_i = 0$ for $i > j$.
 \end{enumerate}
\end{example}
\begin{proof}
It is obvious that  a reduced matrix satisfies the conditions a), b).
Assume conversely that $Y$ satisfies a) and b). We have to show that
any $\x \in \Z^n$
with $\gcd(x_r,\dots,x_n)=1$ for some $1\le r \le n$ satisfies
$P_Y(\x) \ge y_{rr}$, in fact we claim that (under our assumption
$n \le 4$) this even holds for all $\x$ with $x_r \ne 0$. 
Since deleting the $j$-th row and column from $Y$ and the $j$th
coordinate $x_j$ from $\x$ for all $j$ with
$x_j=0$ and multiplying the $j$-th row and column of $Y$ and the entry
$x_j$ of $\x$ by $-1$ for
all $j$ with $x_j<0$ doesn't change the validity of our claim we may
assume that $x_i>0$ for $1\le i \le n$ and have to show $P_Y(\x)\ge y_{nn}$.
We proceed by induction on $S:=\sum_{i=1}^nx_i$, the case $S=1$ being
a trivial consequence of b) and $n=1$ being trivial as well. Let now $S>1$ and set $c:=\min(x_i)>0$, let
$k$ be maximal with $x_k=c$. We set $z_i=x_i-c$ for $i\ ne k$ and
$z_k=x_k$ and let $\z=(z_1,\ldots,z_n)$. We have $z_n>0$ and
$\sum_{i=1}^nz_i<\sum_{i=1}^nx_i$, so by the inductive assumption we
have $P_Y(\z)\ge y_{nn}$. We are done if we can show $P_Y(\x)\ge P_Y(\z)$.
Indeed we have, using $y_{ij}=y_{ji}$ for all $i,j$, $x_i-z_i=c$ for $i\ne k$, and $x_k-z_k=0$:
\begin{equation*}\begin{split}
P_Y(\x)-P_Y(\z)&=\sum_jy_{kj}(cx_j-cz_j)+\sum_{i\ne
                                          k}y_{ik}(x_ic-z_ic)\\  &\quad+\sum_{i\ne
                                          k}\sum_{j\ne
                                          k}y_{ij}(x_ix_j-z_iz_j)\\
                                        &=2\sum_{j\ne
                                          k}c^2y_{kj}+\sum_{i\ne
                                          k}\sum_{j\ne
                                          k}y_{ij}(c^2+cz_i+cz_j)\\
                                        &=c^2(P_Y(1,\dots,1)-y_{kk})+2c\sum_{i\ne
                                          k}z_i\sum_{j\ne k}y_{ij}\\
                                        &\ge 2c\sum_{i\ne
                                          k}z_i\bigl((2-\frac{n}{2})y_{ii}+\frac{1}{2}\sum_{
                                          j\ne
                                          k, j\ne
                                          i}(y_{ii}+2y_{ij})\bigr)\\
                                            &\ge 0.
\end{split}
\end{equation*}
               
 \end{proof}

\begin{definition}
With notations a before
 $$\mu_1 := \min\{Q(x)~|~x\in L \setminus \{0\}\}$$
is called the minimum of the lattice $L$ and a vector $X$ satisfying
$Q(x)=\mu_1$ is called a minimum vector.\\
For $2 \leq j \leq n$ the $j$-th successive minimum $\mu_j$ of $L$ is
the smallest $a\in \R$ for which there are linearly independent
vectors  $x_1,\ldots,x_j \in L$  satisfying  $Q(x_1) \leq \cdots \leq
Q(x_j) \leq a$.

for $1\le j \le n$ linearly independent vectors  $x_1,\ldots,x_j \in L$ satisfying 
 $Q(x_i) = \mu_i$ for $1\le i \le j$ 
are called successive minimum vectors. 
\end{definition}
\begin{remark}
  \begin{enumerate}
  \item For each $C>0$ the set  $\{x \in \R^n~|~Q(x) \leq C\}$  is
    compact and can hence contain only finitely many points of the
    discrete set $L$. The minimum $\mu_1$ and the successive minima
    can therefore indeed be defined as minima instead of infima.
\item Let $x_1,\ldots,x_j$ be successive minimum vectors of $L$ and
  let $y_1,\ldots,y_{j+1}$ be linearly independent vectors with
  $Q(y_i)\le \mu_{j+1}$ for $1\le i\le j+1$.
Then at least one of the $y_i$ is not in the linear span of the $x_i$,
hence
\begin{equation*}
  \min\{Q(x)\mid \{x_1,\ldots,x_j,x\} \text{ is linearly
    independent}\}\le \mu_{j+1}
\end{equation*} 
holds. Since the reverse inequality is obvious, we can
also define the successive minima recursively, defining $\mu_{j+1}$ by
fixing already given successive minimum
vectors $x_1,\ldots,x_j$ and requiring the equation above to hold for
all $x$ which are linearly independent from the given vectors.
\item The successive minima too are in general difficult to determine effectively.
  \end{enumerate}
\end{remark}
\begin{theorem}\label{hermites_theorem_definite}
There exists a constant (called {\em Hermite's constant})
$\gamma_n$ depending only on $n$ so that each positive definite
$\Z$-lattice of rank $n$ with associated symmetric bilinear forms
$b,B=b/2$ satisfies 
 $$\mu_1 \cdots \mu_n \leq \gamma_n^n {\det}_B(\Lambda).$$
In particular one has 
$$\mu_1 \le \gamma_n ({\det}_B(\Lambda)^{1/n}.$$
\end{theorem}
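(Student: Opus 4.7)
The plan is to derive both inequalities from Minkowski's convex body theorems applied to the ball $K_c = \{x \in \R^n : Q(x) \le c\}$. Since $Q(x) = \|x\|^2/2$, this $K_c$ is the Euclidean ball of radius $\sqrt{2c}$ and hence has volume $V_n(2c)^{n/2}$, where $V_n$ denotes the Euclidean volume of the unit ball in $\R^n$. The covolume of $\Lambda$ equals $|\det M|$ for any basis matrix $M=(f_1\mid\cdots\mid f_n)$; using ${\det}_b(\Lambda)=\det({}^t\!M M)$ and ${\det}_B(\Lambda)=2^{-n}{\det}_b(\Lambda)$ this gives $\operatorname{vol}(\R^n/\Lambda)=2^{n/2}\sqrt{{\det}_B(\Lambda)}$.

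For the bound on $\mu_1$, I would first establish Minkowski's first convex body theorem: every centrally symmetric convex body of volume strictly larger than $2^n \operatorname{vol}(\R^n/\Lambda)$ contains a nonzero lattice point. The proof by Blichfeldt's argument (reducing $\tfrac{1}{2}C$ modulo $\Lambda$, forcing a coincidence on volume grounds, then subtracting two preimages) is short and elementary. Applying this to $K_{\mu_1-\varepsilon}$, which by the very definition of $\mu_1$ contains no nonzero lattice vector, and letting $\varepsilon\to 0^+$, one obtains $V_n(2\mu_1)^{n/2}\le 2^n\cdot 2^{n/2}\sqrt{{\det}_B(\Lambda)}$, i.e.\ $\mu_1\le\gamma_n({\det}_B(\Lambda))^{1/n}$ with $\gamma_n := 4/V_n^{2/n}$.

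For the full product inequality, I would invoke Minkowski's second theorem on successive minima: for a centrally symmetric convex body $K$ and associated successive minima $\lambda_1\le\cdots\le\lambda_n$ of $\Lambda$ (where $\lambda_i$ is the least $\lambda>0$ for which $\lambda K$ contains $i$ linearly independent lattice vectors), one has $\lambda_1\cdots\lambda_n\cdot\operatorname{vol}(K)\le 2^n\operatorname{vol}(\R^n/\Lambda)$. Applied to $K=K_1$, the homogeneity $\lambda K_1=K_{\lambda^2}$ identifies $\lambda_i^2=\mu_i$, and substitution of $\operatorname{vol}(K_1)=V_n 2^{n/2}$ together with the covolume formula gives $\sqrt{\mu_1\cdots\mu_n}\cdot V_n 2^{n/2}\le 2^n\cdot 2^{n/2}\sqrt{{\det}_B(\Lambda)}$, which squares to the first inequality of the theorem with the same $\gamma_n$. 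The ``in particular'' clause follows from $\mu_1^n\le\mu_1\cdots\mu_n$.

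The main obstacle is Minkowski's second theorem, whose proof is substantially more delicate than that of the first: the naive volume-doubling Blichfeldt argument no longer suffices, and one typically argues by a careful deformation of $K$ using the linearly independent vectors attaining the successive minima (Estermann's cone decomposition, or Minkowski's original linear-transformation trick). If one is content with the mere \emph{existence} of the constant $\gamma_n$ as asserted in Theorem~\ref{hermites_theorem_definite}, one can bypass the second theorem entirely by iterating the first theorem on successive orthogonal complements of minimum vectors, which yields a bound of the form $\mu_1\cdots\mu_n\le c_n{\det}_B(\Lambda)$ with a cruder but still purely $n$-dependent constant, sufficient for the statement as given.
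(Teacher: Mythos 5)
Your argument is correct, but it runs along a genuinely different track from the paper's. The paper proves the bound on $\mu_1$ by Hermite's inductive projection argument: take a minimum vector $e_1$, project $\Lambda$ onto $e_1^\perp$ to get a lattice $L'$ with ${\det}_B(L')={\det}_B(\Lambda)/\mu_1$, lift a short vector of $L'$ back to $\Lambda$ at the cost of an additive $\mu_1/4$, and solve the resulting recursion to get the explicit constant $\gamma_n=(4/3)^{(n-1)/2}$. It then deduces the product inequality \emph{from} the $\mu_1$ inequality by an auxiliary-form trick: rescale $Q$ along the Gram--Schmidt directions of a system of successive minimum vectors so that $Q'(f_i')=Q(f_i')/\mu_i$, check that the new form has minimum $\ge 1$ on $\Lambda$, and apply the $\mu_1$ bound to $Q'$, whose determinant is ${\det}_B(\Lambda)/(\mu_1\cdots\mu_n)$. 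Your route through Minkowski's convex body theorems buys a much better constant ($4V_n^{-2/n}\sim cn$ versus the exponentially growing $(4/3)^{(n-1)/2}$), at the price of importing Minkowski's second theorem, which, as you note, is considerably harder than anything in the paper's self-contained argument. Your volume bookkeeping ($\operatorname{vol}(\R^n/\Lambda)=2^{n/2}\sqrt{{\det}_B(\Lambda)}$, $\lambda_i^2=\mu_i$) is correct.

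One remark on your final paragraph: the fallback of ``iterating the first theorem on successive orthogonal complements'' is the only under-specified point, since the first minimum of the $i$-fold projected lattice is not directly comparable to $\mu_{i+1}(\Lambda)$ without a lifting argument controlling the length lost when pulling a short projected vector back into $\Lambda$. The paper's rescaling trick is exactly the clean way to close this gap: it shows that \emph{any} constant valid for the first minimum is automatically valid for the product of all $n$ minima, so you could combine your Minkowski-I bound for $\mu_1$ with that trick and dispense with Minkowski's second theorem entirely while keeping the better constant.
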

\begin{proof} 
We show first by induction on $n$ that one can find a constant
$\gamma_n$ such that $\mu_1(\Lambda) \leq \gamma_n({\det}_B(\Lambda)^{1/n}$ 
is true for all positive definite lattices $\Lambda$ of rank $n$, the
case $n=1$ being trivial with $\gamma_1=1$.

Let then $n>1$, let $e_1 \in \Lambda$ with $Q(e_1) = \mu_1$, let $L'$
be the orthogonal projection of $\Lambda$ onto    $e_1^{\perp}$.
By b) of Theorem \ref{duals_and_indices} we have
${\det}_B(L')=\frac{{\det}_B(\Lambda)}{\mu_1}$, notice that this part
of the proof of that theorem does not need the bilinear form to take
values in $\Q$.

By the inductive assumption we find $x'\in L'$ satisfying 
 $$Q(x') \leq \gamma_{n-1} (\frac{{\det}_BL}{\mu_1})^{\frac{1}{n-1}}.$$
e can then find $\alpha \in \R$ with  $0 \leq |\alpha| \leq \frac{1}{2}$ and
$x := \alpha e_1+x' \in \Lambda$, hence
 $$\mu_1 \leq Q(x) \leq \frac{\mu_1}{4} + \gamma_{n-1}(\frac{{\det}_B(\Lambda)}{\mu_1})
  ^{\frac{1}{n-1}}.$$
From this we obtain
 $$\begin{array}{rll}
  \frac{3\mu_1}{4} & \leq & \gamma_{n-1}(\frac{{\det}_BL}{\mu_1})^{\frac{1}{n-1}}
   \vspace{0.2cm}\\
  (\frac{3}{4})^{n-1} \mu_1^n & \leq & \gamma_{n-1}^{n-1} {\det}_B(\Lambda)
   \vspace{0.2cm}\\
  \mu_1^n & \leq & (\frac{4}{3} \gamma_{n-1})^{n-1}{\det}_B(\Lambda)
 \end{array}$$
From $\gamma_1 = 1$ we obtain recursively that 
 $$\gamma_n: = (\frac{4}{3})^{\frac{n-1}{2}}$$
is as desired:
Inserting  $\gamma_{n-1} = (\frac{4}{3})^{\frac{n-2}{2}}$ in the
equation above we get 
 $$\begin{array}{rll}
  \mu_1^n & \leq & (\frac{4}{3})^{n-1}(\frac{4}{3})^{\frac{(n-2)(n-1)}{2}}{\det}_B(\Lambda)
   \vspace{0.2cm}\\
  & = & (\frac{4}{3})^{(n-1)(1+\frac{n-2}{2})}{\det}_B(\Lambda)
   \vspace{0.2cm}\\
  & = & (\frac{4}{3})^{\frac{(n-1)}{2}n}{\det}_B(\Lambda)
   \end{array}$$
and hence
  $$\mu_1 \leq  (\frac{4}{3})^{\frac{n-1}{2}}({\det}_BL)^{\frac{1}{n}}.$$
To show the first inequality of the theorem for any $\gamma_n$
satisfying the inequality for $\mu_1$ we let  $f_1,\ldots,f_n\in \Lambda$ 
be successive minimum vectors and denote by $\{f'_i\}$ the orthogonal
basis of $V=\R^n$ obtained from the $f_i$ by the Gram-Schmidt
orthogonalization procedure, i.\ e., 
 $$\begin{array}{lll}
  f'_1 & = & f_1 \vspace{0.2cm}\\
  f'_2 & = & f_2 - {\displaystyle \frac{B(f_1,f_2)}{B(f_1,f_1)}} f_1
 \end{array}$$
und recursively
 $$f'_j = f_j-\sum_{i=1}^{j-1} \frac{B(f_j,f'_i)}{B(f'_i,f'_i)} f'_i.$$
 In particular one has
 $$\R f'_1+\cdots + \R f'_i = \R f_1+ \cdots + \R f_i,$$
and  the Gram matrix of $B$ with respect to $f'_1,\ldots, f'_n$ has
the same determinant as that taken with respect to
$f_1,\ldots f_n$.\\
We define then a quadratic form $Q'$ on $V$ with associated symmetric
bilinear forms $b', B'=b'/2$ by requiring the 
$f_i'$ to be pairwise orthogonal with respect to $B'$ and setting 
 $$Q'(f'_i) = \frac{Q(f'_i)}{\mu_i}.$$
The determinants of $\bigoplus_{i=1}^n\Z f_i$ with respect to $B$ and
to 
$B'$ differ by the same factor as  ${\det}_B(\Lambda)$ and
${\det}_{B'}(\Lambda)$, and we see
 $${\det}_{B'}(\Lambda) = \frac{{\det}_B(\Lambda)}{\mu_1\cdots\mu_n}.$$
Let now  $y \in \Lambda$, $y \not= 0$ and  $j$ minimal, such that 
$y = \sum_{i=1}^{j} y_if'_i \in \R f'_1+\cdots + \R f'_j$ is true.
Then
 $$\begin{array}{lll}
  Q'(y) & = & \sum_{i=1}^{j} y_i^2
   \frac{Q(f'_i)}{\mu_i}
   \vspace{0.2cm}\\
  & \geq & \mu_j^{-1} \sum_{i=1}^{j} y_i^2 Q(f'_i)
   \vspace{0.2cm}\\
  & = & \mu_j^{-1} Q(y) \geq 1,
 \end{array}$$
since $y$ is linearly independent of the  $f_1,\ldots,f_{j-1}$. 

This implies
 $$1 \leq {\textstyle \min_{Q'}(L)} \leq \gamma_n
 \left(\frac{{\det}_B(\Lambda)}{\mu_1\cdots \mu_n}\right)^
  {\frac{1}{n}},$$
and finally  $\mu_1 \cdots \mu_n \leq \gamma_n^n {\det}_B(\Lambda)$. 
\end{proof}

\begin{lemma}\label{gramschmidt_koordinaten}  Let $(v_1,\ldots,v_n)$
  be a basis of the $\Z$-lattice $\Lambda\subseteq \R^n$ with 
  Gram-Schmidt orthogonalization $(v'_1,\ldots,v'_n)$, let $1\le j \le
  n$ and  
  $L_j=\oplus_{i=1}^j\Z v_i$, let  $z \in \R L_j$. 

Then there is 
 $y \in L_j$ such that one has 
$z-y = \sum_{i=1}^{j} \alpha_iv'_i$ with  $|\alpha_i| 
\leq \frac{1}{2}$ (for $1 \leq i \leq j$).
\end{lemma}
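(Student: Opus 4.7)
The plan is a direct induction on $j$, exploiting the fact that the change-of-basis matrix from $(v_1,\ldots,v_j)$ to $(v'_1,\ldots,v'_j)$ is upper triangular with ones on the diagonal. The intuition is greedy rounding, processed from the highest index downward: at each stage the integer coefficient of $v_j$ is uniquely responsible for the $v'_j$-coordinate, so we can pick it to bring that coordinate into $[-\tfrac12,\tfrac12]$, then recurse.

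The base case $j=0$ is trivial with $y=0$. For the inductive step, write $z=\sum_{i=1}^{j}\beta_i v'_i$, which is possible because $\R L_j=\R v'_1\oplus\cdots\oplus\R v'_j$ by construction of the Gram--Schmidt procedure. Since $v_j=v'_j+\sum_{i<j}\mu_{ji}v'_i$ for some $\mu_{ji}\in\R$, I would choose $y_j\in\Z$ with $|\beta_j-y_j|\le\tfrac12$ and set $\alpha_j:=\beta_j-y_j$. Then
\[
z-y_jv_j=\alpha_j v'_j+\sum_{i=1}^{j-1}(\beta_i-y_j\mu_{ji})v'_i,
\]
so the remainder $w:=\sum_{i=1}^{j-1}(\beta_i-y_j\mu_{ji})v'_i$ lies in $\R L_{j-1}$.

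Applying the inductive hypothesis to $w$ yields $y'\in L_{j-1}$ such that $w-y'=\sum_{i=1}^{j-1}\alpha_i v'_i$ with $|\alpha_i|\le\tfrac12$ for $1\le i\le j-1$. Setting $y:=y_jv_j+y'\in L_j$ gives
\[
z-y=\alpha_jv'_j+(w-y')=\sum_{i=1}^{j}\alpha_i v'_i
\]
with $|\alpha_i|\le\tfrac12$ throughout, as required.

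There is essentially no obstacle here; the only point to verify carefully is that the triangular shape of the Gram--Schmidt transformation guarantees that rounding the top coefficient does not disturb the already-finished lower coefficients (which is why the induction proceeds top-down rather than bottom-up). Everything else is bookkeeping.
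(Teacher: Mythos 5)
Your proof is correct and follows essentially the same route as the paper's: round the top Gram--Schmidt coordinate by subtracting an integer multiple of $v_j$ (exploiting that $v_j - v'_j \in \R L_{j-1}$), observe the remainder lies in $\R L_{j-1}$, and recurse. No further comment is needed.
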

\begin{proof}
Write  $z = \sum_{i=1}^{j} \beta_iv'_i$
and choose $\alpha'_j \in \Z$
with  $|\beta_j-\alpha'_j| \leq \frac{1}{2}$.
We have then $z -\alpha_jv_j = (\beta_j-\alpha_j)v'_j +z'$ with 
$z' \in \R L_{j-1}$.

We can then find  $y' \in L_{j-1}$ with  $z'-y' = \sum_{i=1}^{j-1} \alpha_iv'_i$,
$|\alpha_i| \leq \frac{1}{2}$.
Setting  $y = y'+\alpha_jv_j$ one sees
$z-y=(\beta_j-\alpha'_j)v'_j+z'-y'$, and with 
$\alpha_j:=\beta_j-\alpha'_j$ the vector  $z-y$ is as desired.
\end{proof}

\begin{theorem}\label{comparison_reduction_constants} Let ${\mathcal B} = \{v_1,\ldots,v_n\}$ be a Minkowski reduced
basis of $\Lambda$, let $\{v'_1,\ldots,v'_n\}$ denote the 
Gram-Schmidt orthogonalization of ${\mathcal B}$ and $\mu_i$ the successive
minima of $\Lambda$.  Then one has
 \begin{enumerate}
 \item $Q(v'_j) \leq Q(v_j)$
\item $\mu_j \leq Q(v_j)$ 
\item  There exist constants   $c_1(j)$ independent of $\Lambda$ and $Q$ 
such that  $Q(v_j) \leq c_1(j)\mu_j$ for  $1\le j\le n$.
 \item There exist constants  $c_2(n)$ independent  of $\Lambda$ und $Q$ 
   with  $\mu_j \leq c_2(n) Q(v'_j)$ for $1\le j\le n$.   
\end{enumerate}
\end{theorem}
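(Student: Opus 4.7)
Parts (a) and (b) are immediate. For (a), the Gram--Schmidt relation $v_j = v'_j + \sum_{i<j}\lambda_{ji}v'_i$ together with pairwise orthogonality of the $v'_i$ yields $Q(v_j) = Q(v'_j) + \sum_{i<j}\lambda_{ji}^2 Q(v'_i) \ge Q(v'_j)$. For (b), the earlier Lemma guarantees that a Minkowski reduced basis satisfies $Q(v_1)\le\cdots\le Q(v_n)$, so the linearly independent tuple $(v_1,\ldots,v_j)$ has $\max_i Q(v_i)=Q(v_j)$, which certifies $\mu_j \le Q(v_j)$ directly from the definition of the $j$-th successive minimum.

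The main work is (c), which I would prove by induction on $j$, trivially for $j=1$ since $Q(v_1)=\mu_1$. For the step, take successive minimum vectors $x_1,\ldots,x_j$ with $Q(x_i)=\mu_i$; since they are linearly independent and $\dim\lin(v_1,\ldots,v_{j-1})=j-1$, there is a smallest index $k\le j$ with $x_k\notin\lin(v_1,\ldots,v_{j-1})$. In the free $\Z$-module $\Lambda/L_{j-1}$ write $\bar x_k = d\bar y$ with $\bar y$ primitive, and choose any lift $y\in\Lambda$; then $L_{j-1}+\Z y$ is primitive in $\Lambda$, so Minkowski reducedness gives $Q(v_j)\le Q(y)$. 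Using the orthogonal decomposition $\R^n = \R L_{j-1} \oplus (\R L_{j-1})^\perp$ with projections $\pi',\pi''$, the component $\pi''(y)=\pi''(x_k)/d$ is forced and has $Q$-norm at most $Q(x_k)=\mu_k\le\mu_j$, while the lift may be adjusted by any element of $L_{j-1}$ without changing $\pi''(y)$. Choosing this adjustment according to Lemma \ref{gramschmidt_koordinaten} so that $\pi'(y)$ has Gram--Schmidt coordinates in the $v'_i$ basis of absolute value at most $\tfrac12$, one obtains
\begin{equation*}
Q(v_j) \le Q(y) = Q(\pi''(y))+Q(\pi'(y)) \le \mu_j + \tfrac14 \sum_{i<j} Q(v'_i).
\end{equation*}
The inductive hypothesis together with (a) gives $Q(v'_i)\le Q(v_i)\le c_1(i)\mu_i\le c_1(i)\mu_j$ for $i<j$, so the recursion $c_1(j)=1+\tfrac14\sum_{i<j}c_1(i)$ works.

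For (d), I combine the two estimates on $\prod_i\mu_i$: Theorem \ref{hermites_theorem_definite} yields $\prod_i\mu_i\le\gamma_n^n\det_B(\Lambda)=\gamma_n^n\prod_i Q(v'_i)$, while (a) and (c) together give $\prod_{i\ne j}Q(v'_i)\le\prod_{i\ne j}c_1(i)\mu_i$. Dividing these isolates $Q(v'_j)\ge\gamma_n^{-n}\mu_j/\prod_{i\ne j}c_1(i)$, so $c_2(n):=\gamma_n^n\max_{1\le j\le n}\prod_{i\ne j}c_1(i)$ suffices. The main obstacle is the construction in (c) of a primitive-completing vector $y$ with controlled $Q$-norm; this is precisely where Lemma \ref{gramschmidt_koordinaten} and the orthogonal splitting $\R L_{j-1}\oplus(\R L_{j-1})^\perp$ play the decisive role.
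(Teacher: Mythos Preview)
Your proof is correct and follows the same overall architecture as the paper: (a) and (b) are trivial, (c) is by induction using a primitive-completing vector built from a successive minimum vector outside $\R L_{j-1}$, and (d) combines Hermite's inequality with (a) and (c) exactly as the paper does.

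The one genuine difference is the estimation inside step (c). The paper expresses its primitive vector $v_j^\ast$ in the \emph{original} basis $v_1,\ldots,v_{j-1}$, arranges integer coefficients $|t_i|\le s/2$, and then applies the triangle inequality for $\sqrt{Q}$, obtaining the recursion $\sqrt{c_1(j)} = 1 + \tfrac12\sum_{i<j}\sqrt{c_1(i)}$. You instead split orthogonally along $\R L_{j-1}\oplus(\R L_{j-1})^\perp$, invoke Lemma \ref{gramschmidt_koordinaten} to control the Gram--Schmidt coordinates of the $\R L_{j-1}$-component, and use Pythagoras rather than the triangle inequality. This yields the cleaner recursion $c_1(j) = 1 + \tfrac14\sum_{i<j}c_1(i)$ and in fact smaller constants. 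Your technique is precisely the one the paper deploys in the \emph{Corollary} immediately following the theorem (where, additionally separating the case $d=1$ from $d\ge 2$, it sharpens the bound on $Q(\pi''(y))$ to $\tfrac14\mu_j$ and obtains $c_1(j)=1$ for $j\le 4$). So your argument for (c) is a correct variant that anticipates the refinement used in the Corollary, at the cost of not matching the paper's specific constants in the theorem itself.
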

\begin{proof}
 \begin{enumerate}
 \item $v'_j = v_j-p_{j-1}(v_j)$, where $p_i$ is the orthogonal
   projection onto the space spanned by $v'_1,\ldots,v'_i$i.\ e.,
 $$\begin{array}{lll}
  Q(v_j) & = & Q(v'_j)+Q(p_{j-1}(v_j))\vspace{0.2cm}\\
  & \geq & Q(v'_j)
 \end{array}.$$
\item Obvious. 
\item We use induction on $j$, the start $j=1$ being trivial.
 Let linearly independent vectors $w_1,\ldots,w_n$ with $\mu_i =
 Q(w_i)$ be given. 
Let $1 \leq k \leq j$ be such that 
$\{v_1,\ldots,v_{j-1},w_k\}$ are linearly independent and 
choose $v_j^{\ast} \in \Lambda$ such that
$v_1,\ldots,v_{j-1},v_j^{\ast}$ form a 
basis of $(\R L_{j-1}+\R w_k) \cap \Lambda = (L_{j-1}+\R w_k)\cap \Lambda$.
That is possible since 
$L_{j-1}$ is a primitive sublattice of this lattice.

In particular, $L_{j-1}+ \Z v_j^{\ast}$ is then a primitive
sublattice of  $\Lambda$, and we have
$Q(v_j) \leq Q(v_j^{\ast})$.

\parindent=0pt
We change $v_j^*$ by a vector  of $L_{j-1}$ or multiply it by $-1$ if
necessary to achieve 
 $$w_k = \sum_{i=1}^{j-1} t_iv_i+s v_j^{\ast},\: 0<s \in \Z,\:
  t_i \in \Z$$ with  $|t_i| \leq \frac{s}{2}$.

Using $\sqrt{Q(x+y)} \leq \sqrt{Q(x)}+\sqrt{Q(y)}$ we see then
 $$\sqrt{Q(v_j)} \leq \sqrt{Q(v_j^{\ast})} \leq \frac{1}{s} \sqrt{\mu_k}
 + \frac{1}{2} \sum_{i=1}^{j-1} \sqrt{Q(v_i)}
\leq \sqrt{\mu_j}+\frac{1}{2} \sum_{i=1}^{j-1} \sqrt{c_1(i)} \sqrt{\mu_j},$$
and notice that $\sqrt{Q(v_i)} \leq \sqrt{c_1(i)}\sqrt{\mu_i} \leq
\sqrt{c_1(i)}\sqrt{\mu_j}$) holds by the inductive assumption.

$c_1(j)^{\frac{1}{2}} = 1+\frac{1}{2} \sum_{i=1}^{j-1}
c_1(i)^{\frac{1}{2}}$ is then as desired.
 \item  We have $\prod_{i=1}^{n} Q(v'_i)={\det}_B(\Lambda)$ and 
$\prod_{i=1}^{n} \mu_i \leq \gamma_n^n {\det}_B(\Lambda)$.  This implies
 \begin{eqnarray*}
  \gamma_n^n & \geq & \prod_{i=1}^{n} \frac{\mu_i}{Q(v'_i)}\\
  & \geq & \frac{\mu_j}{Q(v'_j)} \prod_{i\not= j} \mu_i\frac{
    c_1(i)^{-1}}{\mu_i}\\
&=&  \frac{\mu_j}{Q(v'_j)}\prod_{i\not= j} 
     c_1(i)^{-1},
\end{eqnarray*}
and we have 
  $$\mu_j \leq Q(v'_j) \underbrace{\gamma_n^n \max_j
   \{\prod_{i\not=j} c_1(i)\}}_{=c_2(n)}.
  $$
\end{enumerate}
\end{proof}

\begin{corollary} Let ${\mathcal B} = \{v_1,\ldots,v_n\}$ be a Minkowski reduced
basis of the lattice $\Lambda$, denote by $\mu_j$ the successive
minima of $\Lambda$.

Then for $1\le j\le 4$ one has $Q(v_j)=\mu_j$, for $j\ge 5$ one has 
$Q(v_j)\le  \bigl(\frac{5}{4}\bigr)^{j-4} \mu_j$.

For $1\le j \le 3$ any choice of  first $j$ minimum vectors can be extended to a
Minkowski reduced basis.
\end{corollary}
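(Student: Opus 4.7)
My plan is to split the corollary into its three independent pieces: the exact equality $Q(v_j)=\mu_j$ for $j\le 4$, the bound $Q(v_j)\le (5/4)^{j-4}\mu_j$ for $j\ge 5$, and the extension statement. The lower bound $\mu_j\le Q(v_j)$ is immediate from Theorem~\ref{comparison_reduction_constants}(b), so all the work goes into the upper bounds and the extension claim.

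For $1\le j\le 4$ I will argue by induction on $j$. The case $j=1$ is trivial. For $2\le j\le 4$, I take successive minimum vectors $w_1,\ldots,w_j$ and pick a $w_k$ linearly independent of $v_1,\ldots,v_{j-1}$ (which exists since the $w_i$ have rank $j$). Reusing the construction in the proof of Theorem~\ref{comparison_reduction_constants}(c), I write $w_k=\sum_{i<j}t_iv_i+sv_j^*$ with integer $s\ge 1$, $v_j^*\in\Lambda$ chosen so that $L_{j-1}+\Z v_j^*$ is primitive in $\Lambda$, and $|t_i|\le s/2$. When $s=1$, integrality and $|t_i|\le 1/2$ force $t_i=0$, so $w_k=v_j^*$, and the Minkowski minimality condition immediately yields $Q(v_j)\le Q(w_k)=\mu_k\le\mu_j$. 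The hard part will be to exclude the case $s\ge 2$ for every admissible $w_k$ when $j\le 4$; this reduces to the classical geometric fact that in rank at most $4$ the first $j$ successive minima can be chosen so as to generate a primitive sublattice of $\Lambda$, a property that fails from $j=5$ onward (the standard counterexample coming from short-vector configurations in root lattices such as $D_5$).

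For $j\ge 5$ I will use again the key square-root inequality
\[\sqrt{Q(v_j)}\le \tfrac{1}{s}\sqrt{\mu_k}+\tfrac{1}{2}\sum_{i<j}\sqrt{Q(v_i)}\]
extracted from the proof of Theorem~\ref{comparison_reduction_constants}(c), now under the hypothesis $s\ge 2$ (if $s=1$ is available then $Q(v_j)\le\mu_j$ by the argument above and there is nothing to prove). Inserting the base values $Q(v_i)=\mu_i$ for $i\le 4$, the inductive hypothesis $Q(v_i)\le (5/4)^{i-4}\mu_i$ for $5\le i<j$, and the monotonicity $\mu_i\le\mu_j$, I will solve the resulting recursion and verify that it is bounded by $(5/4)^{j-4}$. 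Extracting exactly the factor $5/4$ per step, rather than a weaker geometric rate, will be the trickiest numerical point; it will require supplementing $|t_i|\le s/2$ with the sharper inner-product estimates $|2a_{i\ell}|\le a_{ii}$ furnished by the Minkowski conditions.

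For the final assertion, given successive minimum vectors $w_1,\ldots,w_j$ with $j\le 3$, I will invoke the classical primitivity theorem (which holds for $j\le 3$ but breaks down at $j=4$, as the $D_4$ example shows) to conclude that $\Z w_1+\cdots+\Z w_j$ is primitive in $\Lambda$. I then extend to a full basis by the Minkowski procedure: inductively for each $i>j$, choose $v_i\in\Lambda$ minimizing $Q$ subject to $L_{i-1}+\Z v_i$ being primitive in $\Lambda$. The resulting tuple $(w_1,\ldots,w_j,v_{j+1},\ldots,v_n)$ will be Minkowski reduced, because at each step $i\le j$ the condition $Q(w_i)\le Q(x)$ for every primitive-extension $x$ is automatic: $Q(w_i)=\mu_i$ (by the first part) is a lower bound for $Q(x)$ over all $x$ linearly independent of $L_{i-1}$, and primitive extensions form a subset of these.
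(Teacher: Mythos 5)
There is a genuine gap, and it sits exactly where you flag the ``hard part''. The missing idea in the paper's argument is an \emph{orthogonal splitting of $v_j^\ast$}, not an exclusion of $s\ge 2$. Write $v_j^\ast=z_1+z_2$ with $z_1\in\R L_{j-1}$ and $z_2\perp L_{j-1}$. From $w_k=(\sum_{i<j}t_iv_i+sz_1)+sz_2$ and Pythagoras one gets $Q(z_2)\le \mu_k/s^2\le\frac14\mu_j$ when $s\ge 2$, and after adjusting $v_j^\ast$ modulo $L_{j-1}$ by Lemma~\ref{gramschmidt_koordinaten} one gets $Q(z_1)\le\frac14\sum_{i<j}Q(v_i')\le\frac14\sum_{i<j}Q(v_i)$. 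Adding these gives $Q(v_j)\le Q(v_j^\ast)\le\frac14\mu_j\bigl(1+\sum_{i<j}c_1(i)\bigr)$, which yields $c_1(j)=1$ for $j\le 4$ \emph{without} excluding $s\ge 2$ (the exclusion you propose is in fact impossible for $j=4$: the $D_4$ configuration in the paper's own example shows that four successive minimum vectors can generate an index-$2$ sublattice, so $s=2$ genuinely occurs; only for $j\le 3$ does $s\ge2$ lead to the strict inequality $Q(v_j)\le\frac{j}{4}\mu_j<\mu_j$ and hence to a contradiction, which is what proves the extension statement — no external ``classical primitivity theorem'' is needed or available as a black box here).

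The same gap kills your treatment of $j\ge 5$. The square-root inequality $\sqrt{Q(v_j)}\le\frac1s\sqrt{\mu_k}+\frac12\sum_{i<j}\sqrt{Q(v_i)}$ produces the recursion $c_1(j)^{1/2}=1+\frac12\sum_{i<j}c_1(i)^{1/2}$, which already gives $c_1(5)=9$ instead of $\frac54$; no amount of sharpening via $|2a_{i\ell}|\le a_{ii}$ rescues a subadditive estimate in square roots. The additive estimate $Q(v_j^\ast)=Q(z_1)+Q(z_2)$ from the orthogonal splitting is what produces the recursion $c_1(j)=\frac14\bigl(1+\sum_{i<j}c_1(i)\bigr)$, whose solution with $c_1(1)=\dots=c_1(4)=1$ is exactly $(\frac54)^{j-4}$ for $j\ge4$. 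Your skeleton for the extension step (greedy continuation, minimality automatic for $i\le j$) is fine once the primitivity of $L_{j-1}+\Z w_k$ for $j\le3$ has been established by the argument above.
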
 
\begin{proof}
Proceeding as in the proof of the theorem we 
want to show 
that one can choose $c_1(j)=1$ for
$1\le j \le 4$ and $c_1(j)=\bigl(\frac{5}{4}\bigr)^{j-4}$ for
$j>4$. 

We obtain first 
$w_k = \sum_{i=1}^{j-1} t_iv_i+s v_j^{\ast},\: 0<s \in \Z,\:
  t_i \in \Z.$\\
If one has here $s=1$, the lattice  $L_{j-1}+\Z w_k$ is primitive
in $\Lambda$ and we have 
$Q(v_j) \leq Q(w_k) = \mu_k \leq \mu_j$. 

Otherwise we decompose 
$v_j^{\ast} = z_1 +z_2$ with  $z_1 \in \R L_{j-1}$,
$z_2 \perp L_{j-1}$  and have 
 $$w_k = \left( \sum_{i=1}^{j-1} t_iv_i + sz_1\right) 
  + sz_2.$$
One has then
 $$Q(w_k) \geq s^2Q(z_2),$$
hence
 $$Q(z_2) \leq \frac{1}{4} \mu_k \leq \frac{1}{4} \mu_j.$$

By the previous lemma we can change $v_j^{\ast}$ modulo $L_{j-1}$  in
such a way that
$z_1 = \sum_{i=1}^{j-1} \alpha_iv'_i$ with
$|\alpha_i| \leq \frac{1}{2}$ holds and see  $Q(z_1) \leq \frac{1}{4}
\sum_{i=1}^{j-1} Q(v'_i)$.\\

Putting things together we get 
 $$\begin{array}{lll}
  Q(v_j) \leq Q(v_j^{\ast}) & \leq & \frac{1}{4} \mu_j + \frac{1}{4} 
   \sum_{i=1}^{j-1} Q(v'_i)\vspace{0.2cm}\\
  & \leq & \frac{1}{4} \mu_j + \frac{1}{4} \sum_{i=1}^{j-1} Q(v_i)
   \vspace{0.2cm}\\
  & \leq & \frac{1}{4} \mu_j + \frac{1}{4} \sum_{i=1}^{j-1} c_1(i)\mu_i
   \vspace{0.2cm}\\
  & \leq & \frac{1}{4} \mu_j (1+\sum_{i=1}^{j-1} c_1(i))
 \end{array}$$
and hence 
 $$Q(v_j) \leq \mu_j \cdot
{\max\{1,\frac{1}{4}+\frac{1}{4}
 \sum_{i=1}^{j-1}c_1(i)\}}.
$$
With $c_1(1)=1$ we see in particular that  one can choose $$c_1(2) =
c_1(3) = c_1(4) = 1.$$

Using  induction on $j$ we see finally
$c_1(j) = (\frac{5}{4})^{j-4}$ for  $j \geq 4$.

For $1\le j \le 3$ the argument above shows that   $s>1$
would imply $Q(v_j)<\mu_j$, a contradiction which shows the second
part of the assertion. 
\end{proof}
\begin{example}
  \begin{enumerate}
  \item Let $\Lambda \subseteq \R^5$ be the lattice with basis\\
    $v_1=(1,0,0,0,0)$,$v_2=(0,1,0,0,0)$,$v_3=(0,0,1,0,0)$,\\
    $v_4=(0,0,0,1,0),v_5=(\frac{1}{2},\frac{1}{2},\frac{1}{2},\frac{1}{2},\frac{1}{2})$. 

Then it is easily seen that this basis is Minkowski reduced and that
all five successive minima are equal to $1$, so we have
$Q(v_5)=\frac{5}{4}>1=\mu_5$.
\item Let $\Lambda \subseteq \R^4$ be the lattice with basis
  $(1,-1,0,0),(0,1,-1,0)$,\\$(0,0,1,-1),(0,0,1,1)$ (the $D_4$-lattice in
  the root lattice terminology). The vectors
  $(1,-1,0,0),(1,1,0,0)$,
$(0,0,1,-1),(0,0,1,1)$ are successive minimum
  vectors but do not generate the lattice. 
   \end{enumerate}
\end{example}
\section{Siegel domains}
We recall from linear algebra:
\begin{lemma}
Let $\cB=\{v_1,\ldots,v_n\}$ be a basis of the vector space $V$ and
$\{v'_1,\ldots,v'_n\}$ the 
Gram-Schmidt orthogonalization. Then the matrix $C=(c_{ij})$ given by 
$v_j = \sum_{i=1}^{n} c_{ij} v'_j$ is upper triangular with diagonal
entries $1$  and for the Gram matrix $M_{\cB}$ of the symmetric
  bilinear form $B$ with respect  to $\cB$ one has 
$$M_{\mathcal B} = C^t 
 \begin{pmatrix}
  h_1 & & & & \\
  & \cdot & & &\\
  & & \cdot & &  \\
  & & & \cdot&\\
   & & & & h_n
 \end{pmatrix} C $$
with $h_i = Q(v'_i)$ and 
 $$Q(\sum_{i=1}^{n} x_iv_i) = \frac{1}{2} \sum_{i=1}^{n} h_i \xi_i^2,
  \quad \xi_i = \sum_{j=1}^{n} c_{ij}x_j.$$
The matrix $C$ and the  $h_i$ with these properties are uniquely determined.
\end{lemma}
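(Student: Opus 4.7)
The plan is to unpack directly what the Gram-Schmidt recursion tells us about the change-of-basis matrix between $\{v_j\}$ and $\{v'_j\}$, then read off the matrix identity and the quadratic formula by a one-line computation, and finally extract uniqueness from the uniqueness of the Gram-Schmidt procedure itself.

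First, I would recall that the Gram-Schmidt process produces $v'_j = v_j - \sum_{i<j}\frac{B(v_j,v'_i)}{B(v'_i,v'_i)}v'_i$, so that $v_j = v'_j + \sum_{i<j}\lambda_{ij}v'_i$ for suitable scalars $\lambda_{ij}$ (with the convention $h_i = Q(v'_i) = B(v'_i,v'_i)$). Reading off the coefficients $c_{ij}$ in $v_j = \sum_i c_{ij}v'_i$ gives $c_{jj}=1$, $c_{ij}=\lambda_{ij}$ for $i<j$, and $c_{ij}=0$ for $i>j$, which is precisely the statement that $C$ is upper triangular with ones on the diagonal.

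Next I would compute the Gram matrix directly. Orthogonality of the $v'_i$ with respect to $B$ gives
\begin{equation*}
B(v_j,v_k)=B\Bigl(\sum_i c_{ij}v'_i,\sum_\ell c_{\ell k}v'_\ell\Bigr)=\sum_i c_{ij}c_{ik}h_i,
\end{equation*}
which is exactly the $(j,k)$-entry of ${}^t\!C\,\mathrm{diag}(h_1,\ldots,h_n)\,C$, proving the displayed matrix identity. Substituting $x=\sum_j x_jv_j$ into $Q(x) = B(x,x)$ and using this factorization yields $Q(x) = \tfrac{1}{2}{}^t\x M_{\cB}\x = \tfrac{1}{2}\sum_i h_i\xi_i^2$ with $\xi_i = \sum_j c_{ij}x_j$, as claimed.

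For uniqueness, suppose $M_{\cB} = {}^t\tilde{C}\tilde{D}\tilde{C}$ with $\tilde{C}$ upper unitriangular and $\tilde{D} = \mathrm{diag}(\tilde h_1,\ldots,\tilde h_n)$. Defining new vectors $\tilde v'_i$ via ${v}_j = \sum_i \tilde c_{ij}\tilde v'_i$ makes the $\tilde v'_i$ pairwise $B$-orthogonal with $B(\tilde v'_i,\tilde v'_i) = \tilde h_i$, and the unitriangular shape forces $\tilde v'_j - v_j \in \mathrm{span}(v_1,\ldots,v_{j-1})$ while $\tilde v'_j \perp v_i$ for $i<j$; by induction on $j$ these two conditions characterize $v'_j$ uniquely, so $\tilde v'_j = v'_j$, hence $\tilde c_{ij}=c_{ij}$ and $\tilde h_j = h_j$. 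The only place that might require a little care is checking this inductive characterization, but it is essentially the standard uniqueness statement for the LDU decomposition of a symmetric matrix with nonvanishing leading principal minors, so no real obstacle is expected.
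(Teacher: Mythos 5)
Your proof is correct; note that the paper offers no proof at all here (the lemma is prefaced by ``We recall from linear algebra'' and left to the reader), so there is nothing to compare against, and your argument is the standard one: read the unitriangular shape of $C$ off the Gram--Schmidt recursion, expand $B(v_j,v_k)$ in the orthogonal basis to get $M_{\cB}={}^tC\,\mathrm{diag}(h_1,\ldots,h_n)\,C$, and reduce uniqueness to the characterization of $v'_j$ as the unique vector congruent to $v_j$ modulo $\mathrm{span}(v_1,\ldots,v_{j-1})$ and orthogonal to that span (equivalently, uniqueness of the LDU factorization, which is available since positive definiteness makes all leading principal minors nonzero). One small wrinkle you have inherited from the statement rather than resolved: with the chapter's convention $Q(x)=B(x,x)$ and $M_{\cB}$ the Gram matrix of $B$, one gets $Q\bigl(\sum_i x_iv_i\bigr)={}^t\x M_{\cB}\x=\sum_i h_i\xi_i^2$ with $h_i=Q(v'_i)$, so the factor $\tfrac{1}{2}$ in the displayed formula (which you reproduce by writing $Q(x)=\tfrac12{}^t\x M_{\cB}\x$ immediately after asserting $Q(x)=B(x,x)$) is inconsistent; it would be correct only if $M_{\cB}$ were the Gram matrix of $b=2B$, and you should either drop the $\tfrac12$ or switch $h_i$ to $b(v'_i,v'_i)$.
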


\begin{definition} Let $\delta >1$, $\epsilon >0$ be given.
The Siegel domain ${\mathcal S}_n(\delta,\epsilon)={\mathcal
  S}(\delta,\epsilon)$ is the set of all positive definite symmetric
$(n\times n)$-matrices  $M$ for which the decomposition 
$$M= C^t 
 \begin{pmatrix}
  h_1 & & & & \\
  & \cdot & & &\\
  & & \cdot & &  \\
  & & & \cdot&\\
   & & & & h_n
 \end{pmatrix} C $$
satisfies
 $$\begin{array}{rll}
  0 < h_j & \leq & \delta h_{j+1},\\
  |c_{ij}| & \leq & \epsilon \quad \forall i,j.
 \end{array}$$
\end{definition}
\begin{theorem} \label{reduziert_siegelbereich} There exist $\delta = \delta(n)$, $\epsilon = \epsilon(n)$,
such that all Minkowski reduced bases  ${\mathcal B} = 
\{v_1,\ldots,v_n\}$
satisfy
 $$M=M_{\mathcal B} \in {\mathcal S}(\delta(n),\epsilon(n)).$$
\end{theorem}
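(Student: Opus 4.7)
The plan is to apply the Gram--Schmidt lemma just recalled to the Minkowski reduced basis $\cB=(v_1,\ldots,v_n)$ of $\Lambda$; this gives the factorisation $M_\cB = C^t D C$ with $D = {\rm diag}(h_1,\ldots,h_n)$, $h_i = Q(v_i')$, and $c_{ij} = B(v_j,v_i')/h_i$ for $i\le j$. We must verify the two numerical conditions defining membership in $\mathcal{S}(\delta(n),\epsilon(n))$.

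For the ratio condition $h_j\le \delta h_{j+1}$, I would chain the estimates of Theorem \ref{comparison_reduction_constants}. Parts a) and c) give $h_j = Q(v_j') \le Q(v_j) \le c_1(j)\mu_j$, while part d) gives $\mu_{j+1} \le c_2(n) Q(v'_{j+1}) = c_2(n) h_{j+1}$. Combined with the trivial monotonicity $\mu_j \le \mu_{j+1}$ of successive minima, this yields $h_j \le c_1(j) c_2(n) h_{j+1}$, so
\begin{equation*}
\delta(n):=c_2(n)\max_{1\le j<n} c_1(j)
\end{equation*}
is as required.

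For the off-diagonal bound $|c_{ij}|\le \epsilon$, write $v_i = v_i'+\sum_{k<i}c_{ki}v_k'$, take the $B$-pairing with $v_j$ for $j\ge i$, and use pairwise orthogonality of the $v_k'$ to obtain
\begin{equation*}
c_{ij}h_i \;=\; B(v_j,v_i)\,-\,\sum_{k<i}c_{ki}c_{kj}h_k.
\end{equation*}
The Minkowski reduction inequality $|2B(v_i,v_j)|\le Q(v_i)$ from the characterisation lemma, combined with $h_k\le Q(v_k)\le Q(v_i)$ (using part a) of Theorem \ref{comparison_reduction_constants} and the monotonicity $Q(v_1)\le\cdots\le Q(v_n)$ of the diagonal of a Minkowski reduced Gram matrix), yields
\begin{equation*}
|c_{ij}|\;\le\; \frac{Q(v_i)}{h_i}\Bigl(\tfrac{1}{2}+\sum_{k<i}|c_{ki}||c_{kj}|\Bigr).
\end{equation*}
The factor $Q(v_i)/h_i$ is itself bounded by $c_1(i)c_2(n)$, again by Theorem \ref{comparison_reduction_constants}. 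Induction on $i$ then produces constants $\kappa_i(n)$, depending only on $n$ and not on $\Lambda$ or on $j\ge i$, with $|c_{ij}|\le \kappa_i(n)$: start from $\kappa_1(n)=1/2$ and set recursively
\begin{equation*}
\kappa_i(n)\;=\;c_1(i)c_2(n)\Bigl(\tfrac{1}{2}+\sum_{k<i}\kappa_k(n)^2\Bigr).
\end{equation*}
Defining $\epsilon(n):=\max_{1\le i\le n}\kappa_i(n)$ finishes the proof.

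The only non-trivial step is the recursive bound on $|c_{ij}|$; the ratio condition on the $h_j$ is a direct chain of inequalities from Theorem \ref{comparison_reduction_constants}. The pivotal trick in the $c_{ij}$ step is to uniformly replace each $h_k$ for $k<i$ by $Q(v_i)$ via the monotonicity $Q(v_1)\le\cdots\le Q(v_n)$, so that a single factor $Q(v_i)/h_i$ can be pulled out and controlled by Theorem \ref{comparison_reduction_constants}.
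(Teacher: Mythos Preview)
Your proof is correct and follows essentially the same approach as the paper: the same Gram--Schmidt identity $c_{ij}h_i = m_{ij} - \sum_{k<i}c_{ki}c_{kj}h_k$, the same appeal to Theorem~\ref{comparison_reduction_constants} to control $m_{ii}/h_i$, and the same inductive scheme on $i$ to bound the $|c_{ij}|$. The only cosmetic differences are that the paper establishes $h_j/h_{j+1}\le c_1(j{+}1)c_2(n)$ via the monotonicity $m_{jj}\le m_{j+1,j+1}$ (you use $\mu_j\le\mu_{j+1}$ instead), and that the paper bounds $h_k/h_i$ by chaining the already-proven ratio condition to get $h_k/h_i\le c_3(n)^{i-k}$, whereas you use $h_k\le Q(v_k)\le Q(v_i)$ directly and pull out a single factor $Q(v_i)/h_i$ --- your route is arguably slightly cleaner here.
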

\begin{proof} With  $Q(v_j)=m_{jj}$ and  $Q(v_J')=h_j$ we have 
$$h_j \leq m_{jj} \le  m_{j+1,j+1} \leq  
c_1(j+1)c_2(n) h_{j+1}. \quad\quad (*)$$
For $i<j$ we see moreover
 $$m_{ij} = h_i c_{ij} +\sum_{k<i} h_k c_{ki} c_{kj},$$
hence 
 $$|c_{ij}| \leq \frac{|m_{ij}|}{h_i} + \sum_{k<i} \frac{h_k}{h_i}
  |c_{ki} c_{kj}|$$ for $i<j$.

We have  $|m_{ij}| \leq \frac{1}{2} m_{ii} \leq \frac{1}{2} c_1(i) c_2(n)
h_i$ and $\frac{h_k}{h_i} \leq c_3(n)^{i-k}$ for $k < i$ with
$c_3(n) = c_2(n) \max_{j \leq n} c_1(j)$. With $H_k = \max_{\ell>k} |c_{k\ell}|$
it follows that 
 $$|c_{ij}| \leq \frac{1}{2} c_3(n) + \sum_{k<i} c_3(n)^{i-k}H_k^2$$
 holds.
Since the matrix $M_{\mathcal B}$ is reduced,  $|c_{1j}| = |\frac{m_{1j}}{m_{11}}|
\leq \frac{1}{2}$, hence $H_1 \leq \frac{1}{2}$. By induction 
the $H_i$ are bounded by a constant depending only on $n$, which
proves the assertion.
\end{proof}
\begin{lemma} There exists a constant $C_1=C_1(n,\delta,\epsilon)$
  depending only on $n, \delta,\epsilon$ such that all  $M \in S(\delta,\epsilon)$ satisfy:
For $1\le j \le n$ all ratios of the 
$h_j,m_{jj},\mu_j$ are bounded by $C_1$, where the $\mu_j$ are the
successive minima of $\Z^n$ with the quadratic form $Q=Q_M$ given by
$2Q(\x)={}^t \x M \x$.
\end{lemma}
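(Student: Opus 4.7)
The plan is to anchor everything to $h_j$ and prove the two chains $h_j\asymp m_{jj}$ and $h_j\asymp \mu_j$ separately; the six pairwise ratios will then all be bounded by a constant depending only on $n,\delta,\epsilon$, as required.

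First I would expand $m_{jj}$ using the decomposition $M=C^t\,\mathrm{diag}(h_1,\ldots,h_n)\,C$ with $C$ unit upper triangular, which gives
\begin{equation*}
m_{jj}=h_j+\sum_{k<j}h_k c_{kj}^2.
\end{equation*}
Iterating $h_k\le \delta h_{k+1}$ yields $h_k\le \delta^{j-k}h_j\le \delta^n h_j$ for $k\le j$, and every $|c_{kj}|\le \epsilon$, so immediately
\begin{equation*}
h_j\le m_{jj}\le (1+n\epsilon^2\delta^n)\,h_j.
\end{equation*}
For the upper bound on $\mu_j$ I would test against the standard basis of $\Z^n$: since $2Q(\e_i)=m_{ii}$ and $\e_1,\dots,\e_j$ are linearly independent, $\mu_j\le \tfrac12\max_{i\le j}m_{ii}$, and combining the previous estimate with $h_i\le \delta^n h_j$ bounds this by a constant multiple of $m_{jj}$.

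The lower bound $\mu_j\ge c\,h_j$ will be the main obstacle. The crucial observation is the formula $2Q(\x)=\sum_i h_i \xi_i^2$ with $\xi_i=x_i+\sum_{k>i}c_{ik}x_k$. If $\x\in\Z^n$ has last nonzero coordinate $i_1$, then $\xi_{i_1}=x_{i_1}\in\Z\setminus\{0\}$, whence $2Q(\x)\ge h_{i_1}$. Given any $j$ linearly independent lattice vectors $\x^{(1)},\ldots,\x^{(j)}$ realizing the first $j$ successive minima, they cannot all lie in the $(j-1)$-dimensional subspace spanned by $\e_1,\dots,\e_{j-1}$, so there exists $k_0\le j$ for which $\x^{(k_0)}$ has last nonzero coordinate $i_1\ge j$. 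Then $\mu_j\ge Q(\x^{(k_0)})\ge h_{i_1}/2$, and iterating the Siegel condition upward gives $h_{i_1}\ge h_j/\delta^{i_1-j}\ge h_j/\delta^{n}$, so $\mu_j\ge h_j/(2\delta^n)$.

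Putting the three inequalities together closes the loop: each of $h_j,m_{jj},\mu_j$ is bounded above and below by a constant (depending only on $n,\delta,\epsilon$) times any other, so one may take $C_1$ to be, say, the product of the explicit constants appearing in the three steps. I expect the delicate step to be the lower bound on $\mu_j$, because it is the only one requiring an argument about arbitrary lattice vectors rather than just the coordinate basis; the ``last nonzero coordinate'' trick, combined with the fact that $h_i$ can only decrease by a factor of $\delta$ per step when moving upward, is what makes it work.
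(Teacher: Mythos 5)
Your proof is correct and follows essentially the same route as the paper: the expansion $m_{jj}=h_j+\sum_{k<j}h_kc_{kj}^2$ for the first comparison, the standard basis vectors for the upper bound on $\mu_j$, and the last-nonzero-coordinate observation $\xi_{i_1}=x_{i_1}\in\Z\setminus\{0\}$ (applied to one of the successive minimum vectors not lying in the span of $\e_1,\ldots,\e_{j-1}$) for the lower bound. Your handling of the lower bound via the index $k_0$ is in fact slightly more carefully worded than the paper's.
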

\begin{proof} $m_{jj} = h_j + \sum_{i<j} h_ic_{ij}^2$ implies
$m_{jj} \leq c_4 h_j$ with a constant $c_4 = c_4(n,\delta,
\epsilon)$, in the opposite direction $m_{jj}\ge h_j$ is trivial. We
see moreover
 $$\begin{array}{lll}
  \mu_j \leq \max_{i \leq j} m_{ii} & \leq & c_4 \max_{i\leq j} h_i\\
  & \leq & c_5 h_j\:\mbox{ (with a constant 
   $c_5 = c_5(\delta, \epsilon, n)$)}.\end{array}$$ 
Let finally  $v_1,\ldots,v_n$ be the standard basis vectors of 
$\Lambda=\Z^n$ and $v'_1,\ldots,v'_n$ their
Gram-Schmidt orthogonalization with respect to  $Q$. If ${v}\in \Lambda$
is linearly independent of $v_1,\ldots,v_{j-1}$ we write 
$v=\sum_{i=1}^n x_iv_i=\sum_{i=1}^n\xi_iv'_i$ with $x_i
\in \Z$, $x_i \ne 0$ for at least one  $i\ge j$. If $i_0$ is maximal
with $x_{i_0}\ne 0$ one has  $\xi_{i_0}=x_{i_0}\in \Z$, hence 
$Q(v)\ge Q(v'_{i_0})=h_{i_0}$ with $i_0\ge j$. This estimate also
holds for the $j$-th minimum vector, and we see 
 $\mu_j \geq \min(h_j,\ldots,h_n) \geq c_7 h_j$ with a constant 
$c_7 = c_7(\delta,n)$.
\end{proof}

\begin{theorem} \label{endlichkeit_siegelbereich} Let $n\in \N$ and
  $\delta,\eta>0$ be given.
There exists a constant $C(n,\delta,\eta)$ such that one has 
$$|t_{ij}| \leq C(n,\delta,\eta)$$
for all
$M_1,M_2 = T^tM_1T$ in $S_n(\delta,\eta)$  with  $T \in GL_n(\Z)$.

In particular, let $M_1$ und $M_2 = T^tM_1T$  be equivalent Minkowski reduced
matrices with $T =(t_{ij})\in GL_n(\Z)$ and let $\delta$ and $\epsilon$ be as
in Theorem $|t_{ij}| \leq
C(n,\delta,\epsilon)$.
\end{theorem}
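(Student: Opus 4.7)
The plan is to exploit the fact that $Q_{M_1}$ and $Q_{M_2}$ are $\mathbb{Z}$-equivalent as quadratic forms on $\mathbb{Z}^n$ (since $M_2=T^tM_1T$ with $T\in GL_n(\mathbb{Z})$), so they share the same successive minima $\mu_1\le\cdots\le\mu_n$. By the preceding lemma, applied to both $M_1$ and $M_2\in S_n(\delta,\eta)$, the diagonal entries $m_{jj}^{(k)}$ and the Gram--Schmidt coefficients $h_j^{(k)}$ are each comparable to $\mu_j$, with all ratios bounded by a constant $C_0=C_0(n,\delta,\eta)$. In particular $h_j^{(1)}\sim h_j^{(2)}\sim \mu_j$.

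The main computation proceeds column by column in the Gram--Schmidt coordinates of $M_1$. Write $M_1=C_1^tD_1C_1$ with $D_1=\mathrm{diag}(h_1^{(1)},\ldots,h_n^{(1)})$, $C_1$ upper triangular unipotent, $|c_{ik}^{(1)}|\le\eta$; for the $j$-th column $t^{(j)}$ of $T$, set $\xi_i^{(j)}:=\sum_k c_{ik}^{(1)}t_{kj}$. Then
\begin{equation*}
\sum_i h_i^{(1)}(\xi_i^{(j)})^2 \;=\; (t^{(j)})^tM_1t^{(j)} \;=\; (M_2)_{jj} \;\le\; C_0h_j^{(2)} \;\le\; C_0^2\,h_j^{(1)},
\end{equation*}
so $(\xi_i^{(j)})^2\le C_0^2\,h_j^{(1)}/h_i^{(1)}$. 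For $i\ge j$ the Siegel bound $h_j^{(1)}\le\delta\,h_{j+1}^{(1)}\le\cdots\le\delta^{i-j}h_i^{(1)}$ makes this ratio at most $\delta^{i-j}$, so $|\xi_i^{(j)}|$ is bounded by a constant. Back-substitution in the triangular system $\xi_i^{(j)}=t_{ij}+\sum_{k>i}c_{ik}^{(1)}t_{kj}$, starting from $i=n$, then yields $|t_{ij}|\le K_1(n,\delta,\eta)$ for every $i\ge j$; that is, the lower triangular part of $T$ is controlled.

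The hard part is the strictly upper-triangular entries $t_{ij}$ with $i<j$, for which the ratio $h_j^{(1)}/h_i^{(1)}$ may be arbitrarily large and the bound above is useless. Here I would use the Siegel condition on $M_2$, which has not yet been invoked. Performing Gram--Schmidt on the columns $N_k:=C_1t^{(k)}$ of $C_1T$ with respect to the $D_1$-weighted inner product $\langle x,y\rangle_{D_1}=x^tD_1y$ exactly reproduces the decomposition $M_2=C_2^tD_2C_2$; the bound $|(C_2)_{ij}|\le\eta$ therefore reads
\begin{equation*}
\bigl|\langle N_j,N_i'\rangle_{D_1}\bigr| \;\le\; \eta\,h_i^{(2)} \;\le\; \eta C_0\,h_i^{(1)}
\end{equation*}
for $i<j$, where $N_i'$ is the $D_1$-orthogonalization of $N_1,\ldots,N_i$. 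Expressed in the $\xi$-coordinates this becomes a system of linear equations in $\xi_k^{(j)}$ whose coefficients involve the already-bounded quantities $\xi_k^{(i)}$ for $k\ge i$ and earlier columns. Processing $j$ in increasing order, and for each $j$ the index $i$ from $j-1$ down to $1$, one solves recursively for the remaining $\xi_k^{(j)}$ with $k<j$, each bounded by a constant depending only on $n,\delta,\eta$. Back-substitution in $C_1$ then bounds $t_{ij}$ for $i<j$. This recursive bookkeeping is the principal technical obstacle; once carried out, one collects the constants into a single $C(n,\delta,\eta)$.

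For the final sentence: by Theorem~\ref{reduziert_siegelbereich} every Minkowski reduced positive definite symmetric matrix lies in $S_n(\delta(n),\epsilon(n))$ for universal $\delta(n),\epsilon(n)$, so the bound just proved applies with $\delta=\delta(n)$, $\eta=\epsilon(n)$ and yields $|t_{ij}|\le C(n,\delta,\epsilon)$ whenever $M_1$ and $M_2=T^tM_1T$ are both Minkowski reduced.
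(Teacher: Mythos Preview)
Your steps 1--3 are correct: the shared successive minima give $h_j^{(1)}\sim h_j^{(2)}\sim\mu_j$, and the bound $(\xi_i^{(j)})^2\le C_0^2\,h_j^{(1)}/h_i^{(1)}\le C_0^2\delta^{i-j}$ for $i\ge j$ together with triangular back-substitution does control the lower-triangular entries of $T$.

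Step 4, however, is a genuine gap rather than deferred bookkeeping. The linear system you set up for the unknowns $\xi_k^{(j)}$ with $k<j$ need not be well-conditioned. Already for $n=2$ with $C_1=I$ and $T=\bigl(\begin{smallmatrix}a&b\\c&d\end{smallmatrix}\bigr)$, your single equation is $h_1 ab + h_2 cd = (C_2)_{12}\,h_1^{(2)}$; solving for $b$ means dividing by $h_1 a$ while the term $h_2 cd$ on the right may exceed $h_1$ by a factor of order $h_2/h_1$, which is exactly the unbounded ratio you are trying to avoid. What actually saves the situation is a dichotomy you never invoke: either $c=0$, whence $a=\pm1$ and $(C_2)_{12}=\pm b$ is bounded directly by $\eta$; or the \emph{integer} $c$ satisfies $|c|\ge1$, whence the step-3 inequality $c^2 h_2\le C_0^2 h_1$ forces $h_2/h_1$ to be bounded, after which even the crude estimate on $\xi_1^{(2)}$ suffices. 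In higher rank this integrality-driven case analysis becomes the actual content of the argument, and your recursion does not supply it.

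The paper sidesteps the whole issue by passing through a common set of successive minimum vectors $w_1,\dots,w_n$. Lemma~\ref{coefficients_successiveminima} (the lemma immediately preceding the theorem, which you did not use) bounds the \emph{integer} coordinate matrices $A=(\alpha_{ji})$ and $\tilde A=(\widetilde{\alpha}_{ji})$ of the $w_i$ in the two bases. Because these are integer matrices with bounded entries, their determinants are bounded nonzero integers, so their inverses are bounded as well; then $T=A\tilde A^{-1}$ is bounded. The case analysis that your step~4 is missing is precisely what is carried out once and for all inside the proof of Lemma~\ref{coefficients_successiveminima}.
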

For the proof of the Theorem we need an auxiliary lemma:

\begin{lemma}\label{coefficients_successiveminima} Let  $n,\delta,\epsilon$ be given. There exists
$C' = C'(n,\delta,\epsilon)$ with the following property:

If  ${\mathcal B} =
\{v_1,\ldots,v_n\}$ is a basis of $V$ with Gram matrix $M=M_{\mathcal B} \in S(\delta,\epsilon)$ 
and if 
$w_i = \sum_{j=1}^{n} \alpha_{ji} v_j$  are successive minimum vectors
of $\Z^n$ with respect to $Q=Q_M$, one has 
$|\alpha_{ji}| \leq C'(n,\delta,\epsilon)$ for all $i,j$.
\end{lemma}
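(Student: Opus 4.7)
Plan: I begin by using the decomposition $M = C^t H C$ with $H = \mathrm{diag}(h_1,\ldots,h_n)$ and $C$ upper triangular unipotent with $|c_{lj}| \le \epsilon$, together with the associated Gram--Schmidt basis $(v'_k)$ satisfying $Q(v'_k)=h_k$. By the previous lemma, the three quantities $\mu_j$, $h_j$, $m_{jj}$ are pairwise comparable up to constants depending only on $n,\delta,\epsilon$; in particular $h_k \ge (c_7/c_5) h_i$ whenever $k \ge i$, so all such ratios $h_i/h_k$ are bounded.

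Write $w_i = \sum_k \beta_{ki} v'_k$ and substitute $v_j = \sum_{k \le j} c_{kj} v'_k$ to obtain the matrix identity $B = CA$, equivalently $A = C^{-1}B$, so that
\[
\alpha_{ji} = \sum_{k \ge j}(C^{-1})_{jk}\beta_{ki}.
\]
Here $C^{-1}$ is again upper triangular unipotent with entries bounded by a constant $c_6(n,\epsilon)$ (standard estimate for inverses of triangular matrices with bounded entries), so it suffices to bound $|\beta_{ki}|$ for $k \ge j$. From $\sum_k \beta_{ki}^2 h_k = Q(w_i) = \mu_i$ one has $|\beta_{ki}| \le \sqrt{\mu_i/h_k}$, and for $k \ge i$, combining this with $\mu_i \le c_5 h_i$ and $h_k \ge (c_7/c_5)h_i$ gives $|\beta_{ki}|$ bounded by a constant depending only on $n,\delta,\epsilon$.

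The delicate case is $j \le k < i$, where $h_k$ may be much smaller than $h_i$ and the naive estimate is useless. Here I would exploit that $w_i$ is the shortest lattice vector not lying in $W_{i-1} := \mathrm{span}(w_1,\ldots,w_{i-1})$: the resulting reduction inequality $Q(w_i+v) \ge Q(w_i)$ for every $v \in \Lambda \cap W_{i-1}$ yields $|B(w_i,v)| \le Q(v)/2$ and forces the orthogonal projection $\pi_{i-1}(w_i)$ onto $W_{i-1}$ to lie in the Voronoi cell of $\Lambda \cap W_{i-1}$. Since $w_1,\ldots,w_{i-1}$ generate a sublattice whose Gram--Schmidt norms are comparable to $h_1,\ldots,h_{i-1}$ (again by the previous lemma), a standard covering-radius estimate yields $Q(\pi_{i-1}(w_i)) \le C_1 h_{i-1}$ for some $C_1(n,\delta,\epsilon)$.

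I finish by induction on $i$: the inductive hypothesis bounds the integer coefficients $\alpha_{jl}$ for $l < i$, and the decomposition $w_i = \pi_{i-1}(w_i) + w_i^{\perp}$ allows me to replace $w_i$ by $w_i - \sum_{l<i} n_l w_l$ for suitable integers $n_l$ so that the remaining vector has $(v_j)$-coordinates controlled via the direct bound on $w_i^{\perp}$; an integer rounding argument then transfers the bound to $\alpha_{ji}$ itself. The principal obstacle is precisely this final conversion, translating the real-valued Voronoi estimate on $\pi_{i-1}(w_i)$ into a bound on the integer entries $\alpha_{ji}$ for $j < i$: it requires the careful interplay between the triangular structure of $(v_j)$ relative to $(v'_k)$ (via $C$) and the successive-minimum reduction of $w_i$ relative to $(w_1,\ldots,w_{i-1})$.
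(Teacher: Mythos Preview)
Your reduction to the Gram--Schmidt coefficients $\beta_{ki}$ via $A=C^{-1}B$ is correct, as is the direct bound for $k\ge i$. The gap is in the range $k<i$. Projecting onto $W_{i-1}=\mathrm{span}(w_1,\ldots,w_{i-1})$ is the wrong move: this subspace has no controlled relation to the Gram--Schmidt flag of the $v_j$, so a bound on $Q(\pi_{i-1}(w_i))$ does not bound the individual $\beta_{ki}$. Your proposed induction then requires bounding the integers $n_l$, i.e.\ the coordinates of $\pi_{i-1}(w_i)$ in the $(w_l)$-basis, which in turn needs the Gram matrix of $(w_1,\ldots,w_{i-1})$ to sit in some Siegel domain --- essentially what you are trying to prove. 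There are also smaller issues: the inequality $Q(w_i+v)\ge Q(w_i)$ can fail when successive minima coincide, and $\Lambda\cap W_{i-1}$ may strictly contain $\sum_{l<i}\Z w_l$, so your covering-radius step is not immediate either. The ``principal obstacle'' you flag is not a technicality but a genuine mismatch of coordinate systems.

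The paper avoids all this by modifying $w_i$ by $L_k=\sum_{l\le k}\Z v_l$ instead, which \emph{is} aligned with the flag. For fixed $(i,k)$ it splits into two cases. If some $v_l$ with $l\le k$ has $Q(v_l)\ge\mu_i$, then $\beta_{ki}^2\le\mu_i/h_k$ is bounded via $\mu_i\le Q(v_l)\le c\,h_l\le c\,\delta^{k-l}h_k$. Otherwise all of $v_1,\ldots,v_k$ have norm $<\mu_i$; then any $w_i^*\in w_i+L_k$ still satisfies $Q(w_i^*)\ge\mu_i$ by the defining property of successive minima. Choosing $w_i^*$ via Lemma~\ref{gramschmidt_koordinaten} so that its first $k$ Gram--Schmidt coordinates are at most $\tfrac12$ in absolute value, and noting that the coordinates beyond $k$ are unchanged, the inequality $Q(w_i)\le Q(w_i^*)$ yields $\beta_{ki}^2 h_k\le\tfrac14\sum_{l\le k}h_l\le\tfrac14(1+\delta+\cdots+\delta^{k-1})h_k$. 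Either way $\beta_{ki}$ is bounded, and your own $A=C^{-1}B$ step finishes the job.
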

\begin{proof}
We denote by $\{v'_1,\ldots,v'_n\}$ the  Gram-Schmidt
orthogonalization of ${\mathcal B}$, put  $w_i = \sum \beta_{ji} v'_j$
and fix $i,j$.

If we have 
 $Q(v_k) < \mu_i$ for all $ k \leq j$,
we must have $j<i$ and $v_1,\ldots,v_j,w_i$ must be linearly independent. Putting 
 $$w_i^{\ast} = w_i+\sum_{\ell =1}^{j} t_{\ell} v_{\ell} \in
 w_i+L_j$$ with arbitrary $t_{\ell} \in \Z$, 
one must have 
$Q(w_i^{\ast}) \geq \mu_i = Q(w_i)$ since otherwise  $w_i$ were a 
linear combination of the linearly independent vectors
$v_1,\ldots,v_j,w_i^*$ which have length strictly smaller than
$\mu_i$, which contradicts the defining property of successive minimum
vectors.

By Lemma
\ref{gramschmidt_koordinaten} we can choose such a vector $w_i^{*}$
satisfying 
 $$w_i^{\ast} = \sum_{\ell} \beta_{\ell i}^{\ast} v'_{\ell} \quad \mbox{ with }
 |\beta_{\ell i}^{\ast}| \leq \frac{1}{2} \mbox{ for }
 1 \leq \ell \leq j.$$
One has then 
$\beta_{\ell i} = \beta_{\ell i}^{\ast}$ for  $\ell > j$ since 
$w_i-w_i^{\ast}\in L_j$ holds and have therefore the same projection
onto $L_j^{\perp}$.

From this we get 
 $$\begin{array}{lll}
  \sum_{\ell \leq j} h_{\ell} \beta_{\ell i}^2 & = & Q(w_i) -\sum_
   {\ell > j} h_{\ell} \beta_{\ell i}^2
    \vspace{0.2cm}\\
  & \leq & Q(w_i^{\ast}) - \sum_{\ell > j} h_{\ell} \beta_{\ell i}^2
    \vspace{0.2cm}\\
  & = & \sum_{\ell \leq j} h_{\ell} \beta_{\ell i}^{\ast 2}
   \vspace{0.2cm}\\
  & \leq & \frac{1}{4} \sum_{\ell \leq j} h_{\ell} \leq \frac{1}{4}
   (\delta^{j-1}+\cdots + 1) h_j,
 \end{array}$$
and hence  $\beta_{ji}^2 \leq \frac{1}{4} (\delta^{j-1}+\cdots + 1)$.

\medskip
If on the other hand there exists $1\le k\le j$ satisfying
$\mu_i \leq Q(v_k)$, one has 
 $$\begin{array}{lll}
  \mu_i \leq Q(v_k) & \leq & c_8Q(v'_k) \leq \delta^{j-k} c_8h_j\\
  & & \hspace*{0.8cm} \| \\
 & & \hspace*{0.8cm} h_k\\
 & \leq & \delta^{n-1} c_{8} h_j,
 \end{array}$$
and hence  $\mu_i = \sum \beta_{\ell i}^2 h_\ell \leq
\delta^{n-1}c_{8}h_j$.
From this we see 
$\beta_{j i}^2h_j \leq \delta^{n-1} c_{8}h_j$, $\beta_{j i}^2\leq
\delta^{n-1} c_{8}$ so that the 
$\beta_{j i}$ are bounded.
Since we can express the coordinates with respect to the $v_i$ by
those with respect to the $v_i'$ with coefficients bounded by the
constants of the Siegel domain considered, the assertion follows.
\end{proof}
We can now prove the theorem:
\begin{proof}[Proof of Theorem \ref{endlichkeit_siegelbereich}]
Let $\cB=(v_1,\ldots,v_n),
\tilde{\cB}=(\tilde{v_1},\ldots,\tilde{v_n})$ be two bases of the
lattice 
$\Lambda=\Z^n$ with Gram matrices $M_1,M_2$ in ${\mathcal
  S}(\delta,\epsilon)$ and let  $w_i = \sum_{j=1}^{n}
  \alpha_{ji} v_j = \sum_{j=1}^{n}
  \widetilde{\alpha_{ji}}\tilde{v_j}$ be successive minimum vectors of 
  $\Lambda$.

By the lemma the $\widetilde{\alpha_{ji}}$ and the $\alpha_{ji}$ are
bounded.

Then the coefficients of the $\tilde{v_j}$ with respect to the  $w_i$
and therefore also the coefficients of the $\tilde{v_j}$ with respect
to the $v_i$ are bounded, and the assertion is proved.
\end{proof}
\begin{corollary} There exists a constant 
$C(n)$ such that each equivalence class of positive definite symmetric
matrices contains at most $C(n)$ Minkowski reduced matrices.
\end{corollary}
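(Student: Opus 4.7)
The plan is to combine the two preceding theorems directly. Fix an equivalence class of positive definite symmetric matrices and let $M_1$ be one of the Minkowski reduced matrices in it (if none exists the bound is trivial). Any other Minkowski reduced matrix $M_2$ in the same class can be written as $M_2 = {}^tT M_1 T$ for some $T \in GL_n(\Z)$, and by Theorem \ref{reduziert_siegelbereich} both $M_1$ and $M_2$ lie in the Siegel domain $\mathcal S(\delta(n),\epsilon(n))$ for the constants $\delta(n),\epsilon(n)$ provided by that theorem.

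Next I would invoke Theorem \ref{endlichkeit_siegelbereich} with $\delta=\delta(n)$ and $\eta=\epsilon(n)$: this yields a constant $C^\ast(n):=C(n,\delta(n),\epsilon(n))$, depending only on $n$, with $|t_{ij}|\le C^\ast(n)$ for every entry of $T$. Hence $T$ is an integer matrix in the finite box $\{T=(t_{ij})\in M_n(\Z)\mid |t_{ij}|\le C^\ast(n)\}$, which contains at most $(2\lfloor C^\ast(n)\rfloor+1)^{n^2}$ elements.

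Since every Minkowski reduced $M_2$ equivalent to $M_1$ arises from at least one such $T$, the map $T\mapsto {}^tTM_1T$ surjects the finite set above onto the set of Minkowski reduced representatives of the equivalence class. Setting
\begin{equation*}
  C(n):=(2\lfloor C^\ast(n)\rfloor+1)^{n^2}
\end{equation*}
therefore gives the required uniform bound. The only non-trivial ingredients are Theorems \ref{reduziert_siegelbereich} and \ref{endlichkeit_siegelbereich}; no further obstacle arises, and the argument is purely a counting of admissible transformation matrices.
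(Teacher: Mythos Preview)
Your argument is correct and is precisely the intended one: the paper's proof reads simply ``Obvious,'' relying on the fact (already stated explicitly in Theorem~\ref{endlichkeit_siegelbereich}) that the transformation matrix $T$ between two Minkowski reduced matrices has entries bounded by a constant depending only on $n$. Your explicit count $(2\lfloor C^\ast(n)\rfloor+1)^{n^2}$ just spells out the final step.
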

\begin{proof}
Obvious.
\end{proof}
\section{The geometry of the space of positive definite matrices}\label{geometry_reduced}
We will now study the geometry of the space of positive definite real
symmetric matrices.

\begin{lemma} Let ${\mathcal P}_n = \{Y \in M_n^{\rm sym}(\R)~|~
Y>0\}$. Then ${\rm GL}_n(\R)$ acts on  ${\mathcal P}_n$ by
 $$g. Y:= {^t}g^{-1}Yg^{-1}=:Y[g^-1].$$
The stabilizer of $1_n$ is $O_n(\R)$, and elements of
${\mathcal P}_n$ belong to the same  ${\rm GL}_n(\Z)$-orbit if and
only if they are integrally equivalent.
\end{lemma}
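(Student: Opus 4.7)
The plan is to verify three separate claims by direct computation, since the lemma asserts (i) that the formula defines a group action on $\mathcal{P}_n$, (ii) that the stabilizer of $1_n$ equals $O_n(\mathbb{R})$, and (iii) that orbits under $\mathrm{GL}_n(\mathbb{Z})$ coincide with integral equivalence classes. None of these should present a serious obstacle; it is essentially bookkeeping about how transposition interacts with inversion.

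For (i), first I would check that $g.Y \in \mathcal{P}_n$ whenever $Y \in \mathcal{P}_n$ and $g \in \mathrm{GL}_n(\mathbb{R})$. Symmetry is immediate from ${}^t({}^tg^{-1} Y g^{-1}) = {}^tg^{-1}\, {}^tY\, g^{-1} = {}^tg^{-1} Y g^{-1}$, and positive definiteness follows from ${}^tx({}^tg^{-1} Y g^{-1})x = {}^t(g^{-1}x)\, Y\, (g^{-1}x) > 0$ for $x \neq 0$, since $g^{-1}x \neq 0$. Then the action axioms: $1_n.Y = Y$ is obvious, and
\begin{equation*}
(gh).Y = {}^t(gh)^{-1} Y (gh)^{-1} = {}^tg^{-1}\,{}^th^{-1} Y h^{-1} g^{-1} = g.(h.Y),
\end{equation*}
as required for a left action.

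For (ii), I compute $g.1_n = {}^tg^{-1} g^{-1} = (g\, {}^tg)^{-1}$, so $g.1_n = 1_n$ is equivalent to $g\, {}^tg = 1_n$, i.e.\ to $g \in O_n(\mathbb{R})$.

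For (iii), the definition of integral equivalence from the earlier Definition and Lemma says that symmetric matrices $Y_1, Y_2$ are integrally equivalent precisely if $Y_2 = {}^tT Y_1 T$ for some $T \in \mathrm{GL}_n(\mathbb{Z})$. But the condition that $Y_1, Y_2$ lie in the same $\mathrm{GL}_n(\mathbb{Z})$-orbit reads $Y_2 = g.Y_1 = {}^tg^{-1} Y_1 g^{-1}$ for some $g \in \mathrm{GL}_n(\mathbb{Z})$. Setting $T := g^{-1}$ (which lies in $\mathrm{GL}_n(\mathbb{Z})$ iff $g$ does) translates one condition into the other, so the two notions coincide. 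The only step requiring any care is observing that the map $g \mapsto g^{-1}$ is a bijection of $\mathrm{GL}_n(\mathbb{Z})$ with itself, which is clear. This completes the proof; as noted, the statement is essentially a tautological reformulation of the earlier matrix-theoretic definition in terms of a group action.
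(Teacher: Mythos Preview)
Your proof is correct and complete. The paper states this lemma without proof, treating it as routine; your direct verification of the three claims is exactly the expected argument.
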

\begin{remark} For $Y = \bigl(
  \begin{smallmatrix}
    a&b\\b&c
  \end{smallmatrix}\bigr)
\in {\mathcal P}_2$  let
  $z$ be the unique zero of  $az^2+2bz+c=0$ in the upper half plane
$H$, we have then
$\reteil(z) = \frac{-b}{a}$ and $\vert z \vert= c/a$.

For  $\bigl(
  \begin{smallmatrix}
    \alpha&\beta\\ \gamma&\delta
  \end{smallmatrix}\bigr)
\in {\rm GL}_2(\R)$
one has 
\begin{align*}
(\gamma z+\delta)^2&
  \left(a\left(\frac{\alpha z +\beta}{\gamma z+\delta}\right)^2+2b\left(
   \frac{\alpha z+\beta}{\gamma z+\delta}\right)+c\right)\\
&=  a(\alpha z+\beta)^2+2b(\alpha z+\beta)(\gamma z+\delta)+
   c(\gamma z+\delta)^2\\
  &= z^2(a\alpha^2+2b\alpha \gamma + c\gamma^2)+
    z(2a \alpha\beta+2b(\alpha \delta +\gamma \beta)+2c\gamma \delta)\\
&\quad\quad+
     a\beta^2 + 2b\beta \delta +c\delta^2.
 \end{align*}
Hence, if $\frac{\alpha z+\beta}{\gamma z+\delta}$ is the point in $H$
corresponding to 
$\bigl(
  \begin{smallmatrix}
    a&b\\b&c
  \end{smallmatrix}\bigr)$, we see that $z$ is the point corresponding to 
  $$
  \begin{pmatrix}
\alpha&\beta\\ \gamma&\delta
  \end{pmatrix}
\begin{pmatrix}
    a&b\\c&d
  \end{pmatrix}
\begin{pmatrix}
\alpha&\beta\\ \gamma&\delta
  \end{pmatrix}.$$

That is, if we let 
${\rm SL}_2(\R)$ act on $H$ via
$\bigl(
  \begin{smallmatrix}
    \alpha&\beta\\ \gamma&\delta
  \end{smallmatrix}\bigr).z = \frac{\alpha z+\beta}
{\gamma z+\delta}$ by fractional linear transformations, the map
 $
 \Phi:\:  {\mathcal P} \longrightarrow H $ with
 $Y \longmapsto z$ as above
is compatible with the 
${\rm SL}_2(\R)$-actions:
$\Phi(g . Y) = g . \Phi(Y)$.
\end{remark}
\begin{theorem} ${\mathcal P}_n$ is a convex open set in
$M_n^{\rm sym}(\R) = \R^{n(\frac{n+1}{2})}$. The closure
$\overline{\mathcal P}_n$ of ${\mathcal P}_n$ in $M_n^{\rm sym}(\Bbb
R)$ is the set of positive semidefinite matrices in $M_n^{\rm sym}(\R)$.
\end{theorem}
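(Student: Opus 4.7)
The plan is to verify the three claims independently: openness, convexity, and identification of the closure with the positive semidefinite cone. Each is standard real-analytic, so the main content is simply organizing the continuity arguments cleanly.

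For openness, I would use the fact that $Y \mapsto {}^t x Y x$ is continuous in $Y$ for each fixed $x$, and that the unit sphere $S^{n-1}\subseteq \R^n$ is compact. Then the function
\[
m(Y) := \min_{x \in S^{n-1}} {}^t x Y x
\]
is continuous in $Y$ (minimum of a continuous function of $(x,Y)$ over a compact set). Since $Y$ is positive definite precisely when $m(Y) > 0$, the set $\mathcal{P}_n = m^{-1}((0,\infty))$ is open. (Alternatively one could invoke Sylvester's criterion: the leading principal minors are polynomials in the matrix entries, so simultaneous positivity is an open condition.)

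For convexity, let $Y_1, Y_2 \in \mathcal{P}_n$ and $t \in [0,1]$. For any $\nullvek \ne x \in \R^n$ we have
\[
{}^t x (tY_1 + (1-t)Y_2) x = t\cdot {}^t x Y_1 x + (1-t)\cdot {}^t x Y_2 x > 0,
\]
since each term is nonnegative and both are strictly positive when $t \in (0,1)$ (and the boundary cases $t=0,1$ are trivial). Hence $tY_1 + (1-t)Y_2 \in \mathcal{P}_n$.

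For the description of $\overline{\mathcal{P}_n}$, I would prove both inclusions. If $Y_k \to Y$ in $M_n^{\sym}(\R)$ with $Y_k \in \mathcal{P}_n$, then for every $x \in \R^n$ we have ${}^t x Y_k x > 0$, hence in the limit ${}^t x Y x \ge 0$, so $Y$ is positive semidefinite. Conversely, if $Y$ is positive semidefinite, then for every $\epsilon > 0$ the matrix $Y + \epsilon \cdot 1_n$ satisfies ${}^t x (Y + \epsilon 1_n) x = {}^t x Y x + \epsilon \|x\|^2 > 0$ for all $\nullvek \ne x$, hence lies in $\mathcal{P}_n$. Letting $\epsilon \to 0$ exhibits $Y$ as a limit of elements of $\mathcal{P}_n$, so $Y \in \overline{\mathcal{P}_n}$. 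There is no real obstacle here; the only thing to be careful about is to make the limiting argument in the openness step use compactness of $S^{n-1}$ rather than the open unit ball, so that the minimum is actually attained.
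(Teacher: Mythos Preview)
Your proof is correct and matches the paper's approach; the paper dismisses openness and convexity as obvious and gives detail only for the closure, where it approximates a semidefinite $Y_1$ by $\lambda Y + (1-\lambda)Y_1$ for a fixed positive definite $Y$ as $\lambda \to 0$, which is exactly your $Y + \epsilon \cdot 1_n$ argument with $Y = 1_n$ after reparametrization.
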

\begin{proof} Obviously, ${\mathcal P}_n$ is convex and open
and the closure of ${\mathcal P}_n$ in
$M_n^{\rm sym}(\R)$ is contained in the set of  positive
semidefinite matrices. On the other hand, if 
$Y$ is positive definite and $Y_1$ is positive semidefinite, 
$\lambda Y+(1-\lambda)Y_1$is positive definite for all
$0 < \lambda \leq 1$, so that 
$Y_1$ is in the closure of ${\mathcal P}_n$.
\end{proof}
 
\begin{definition} A Minkowski reduced basis ${\mathcal B}$
and the  Gram matrix $M_{\mathcal B} \in {\mathcal P}_n$ associated to
it are called strictly reduced if the inequalities  $Q(x) \geq Q(v_i)$
in Definition \ref{def_minkowski_reduziert}  
are satisfied with strict inequality  for all admissible  $x \not= \pm v_i$.
\end{definition}

\begin{theorem} Let  $1 \leq j \leq n$,
 $${\mathcal W}_j := \{{\bf x} = (x_1,\ldots,x_n) \in \Bbb
 Z^n\setminus\{\pm {\bf e}_j\}\mid {\rm gcd}
  (x_j,\ldots,x_n) =1\}$$
be the set of reduction conditions. Let the set of
satisfiable reduction conditions be defined as the 
 set  ${\mathcal W}_j^{\ast}$ of all
${\bf x} \in {\mathcal W}_j$ for which there exists a Minkowski
reduced matrix
$Y$ satisfying  ${^t{\bf x}}Y{\bf x} = y_{jj}$.
Then
 \begin{enumerate}
 \item The set  ${\mathcal R}_n^0$ of strictly reduced matrices 
in ${\mathcal P}_n$ is equal to
 $$ \{Y \in M_n^{\rm sym}(\R)~|~y_{11} > 0,
 y_{jj}<{}^t \x Y\x \quad\text{ for all } 1\le j \le n, \x \in {\mathcal W}_j^*\}.$$

 \item For the set  ${\mathcal R}_n$ of Minkowski reduced matrices in
   ${\mathcal P}_n$  one has 
 $${\mathcal R}_n = \{Y \in M_n^{\rm sym}(\R)~|~y_{11}>0,
 y_{jj}\le {}^t\x Y \x \quad \text{ for all } 1\le j\le n, \x \in {\mathcal W}_j^*\}.$$
 \item ${\mathcal W}_j^{\ast}$ is finite.
 \end{enumerate}
\end{theorem}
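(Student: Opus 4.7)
The plan is to establish (c) first---since the finiteness of $\mathcal{W}_j^*$ gives $\mathcal{R}_n$ a finite polyhedral description---and then to derive (a) and (b) together via a convexity argument that uses only (c) and the definitions.

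For part (c), fix $j$, let $\x \in \mathcal{W}_j^*$ and choose a Minkowski reduced $Y$ with ${}^t\x Y \x = y_{jj}$. Writing $v = \sum x_i v_i$ in the basis associated with $Y$, one has $Q(v) = Q(v_j) \le c_1(j)\mu_j$ by Theorem~\ref{comparison_reduction_constants}, and $v$ is primitive modulo $L_{j-1}$; by Theorem~\ref{reduziert_siegelbereich}, $Y$ lies in the fixed Siegel domain $\mathcal{S}(\delta(n),\epsilon(n))$. I would first replace $v$ by its coset representative $\tilde v$ modulo $L_{j-1}$ with Gram--Schmidt coordinates $|\tilde \xi_i| \le 1/2$ for $i < j$, obtained by applying Lemma~\ref{gramschmidt_koordinaten} to the projection of $v$ onto $\R L_{j-1}$. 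Minkowski reducedness of $Y$ forces $Q(\tilde v) = Q(v) = Q(v_j)$, since $\tilde v$ is again admissible for the $j$-th reduction condition and has $Q(\tilde v) \le Q(v)$. The Siegel inequalities $h_i \ge h_j/\delta^{i-j}$ for $i \ge j$, combined with $h_j$ being comparable to $\mu_j$ for Minkowski reduced $Y$, give a uniform bound on $|\tilde \xi_i|$ for $i \ge j$, and the upper triangular change of coordinates with off-diagonal entries bounded by $\epsilon(n)$ then bounds each integer coordinate $\tilde x_i$ by a constant depending only on $n$.

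The main obstacle lies in the second half of (c): writing $\x = \tilde \x + \bu$ with $\bu \in L_{j-1}$, the equation $Q(v) = Q(\tilde v)$ becomes $\bu^T Y_{<j} \bu = -2 \sum u_i c_i$, where $c_i = B(\tilde v, v_i)$ satisfies $|c_i| \le y_{ii}/2$ from the minimality of $\tilde v$ in the cosets $\tilde v + \Z v_i$ (here $Y_{<j}$ denotes the upper-left $(j-1)\times(j-1)$ block of $Y$). Equivalently $\bu$ lies on the ellipsoid $(\bu - \bu^*)^T Y_{<j}(\bu - \bu^*) = \bc^T Y_{<j}^{-1} \bc$ centered at $\bu^* = Y_{<j}^{-1} \bc$. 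The task is to show that both $\|\bu^*\|$ and the number of integer lattice points on this ellipsoid are bounded in terms of $n$ only; the crucial input is the Cholesky structure of the Minkowski reduced $Y_{<j}$ inherited from the Siegel domain, which, together with $|c_i| \le y_{ii}/2$ and the Siegel relations between the diagonal entries $h_i$, allows componentwise estimation of $\bu^*$ after changing to Gram--Schmidt coordinates.

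For (b), the inclusion $\mathcal{R}_n \subseteq \tilde{\mathcal{R}}_n$ (with $\tilde{\mathcal{R}}_n$ denoting the right-hand side) is immediate since $\mathcal{W}_j^* \subseteq \mathcal{W}_j$. For the reverse, suppose $Y \in \tilde{\mathcal{R}}_n \setminus \mathcal{R}_n$; pick $Y_0 \in \mathcal{R}_n^0$ (e.g.\ the identity matrix) and consider the affine segment $Y_t = (1-t)Y_0 + tY$ for $t \in [0,1]$. Convexity of $\tilde{\mathcal{R}}_n$ keeps $Y_t$ inside it, and closedness of $\mathcal{R}_n$ together with $Y_1 \notin \mathcal{R}_n$ yields a first exit $t^* \in [0,1)$, at which some inequality is tight: there exist $j$ and $\x \in \mathcal{W}_j$ with ${}^t\x Y_{t^*} \x = (Y_{t^*})_{jj}$, so $\x \in \mathcal{W}_j^*$ by definition. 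The affine function $f(t) = {}^t\x Y_t \x - (Y_t)_{jj}$ then satisfies $f(0) > 0$ (as $Y_0 \in \mathcal{R}_n^0$), $f(1) \ge 0$ (since $Y \in \tilde{\mathcal{R}}_n$ and $\x \in \mathcal{W}_j^*$), and $f(t^*) = 0$ with $t^* \in (0,1)$, contradicting $f(t^*) = (1-t^*)f(0) + t^* f(1) > 0$. For (a), the trivial direction is analogous; conversely, if $Y$ satisfies the strict $\mathcal{W}_j^*$ inequalities then $Y \in \mathcal{R}_n$ by (b), and for every $\x \in \mathcal{W}_j \setminus \mathcal{W}_j^*$ the equality ${}^t\x Y \x = y_{jj}$ would by definition force $\x \in \mathcal{W}_j^*$, so the inequality at $Y$ must be strict, placing $Y \in \mathcal{R}_n^0$.
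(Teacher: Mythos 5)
Your part (c) is where the real content of the theorem lies, and it is not complete. You reduce the problem to bounding the $L_{j-1}$-component $\bu$ of $\x$, set up the ellipsoid it must lie on, and then announce "the task is to show that \dots" --- but that task is exactly the hard estimate, and it is never carried out. Moreover the quantity you propose to control (the \emph{number} of lattice points on the ellipsoid for the given $Y$) is not the right one: since $\mathcal W_j^*$ is a union over \emph{all} Minkowski reduced $Y$, what is needed is that the integer solutions themselves have entries bounded by a constant depending only on $n$, uniformly in $Y$. There is also a false intermediate step: $Q(\tilde v)\le Q(v)$ does not follow from $|\tilde\xi_i|\le\frac12$; reducing the $L_{j-1}$-part of $v$ to have small Gram--Schmidt coordinates can \emph{increase} $Q$ when the $h_i$ for $i<j$ have very different sizes (e.g.\ $j=3$, $\xi=(0.1,\,0.6)$, $c_{12}=-0.4$, $h_1\gg h_2$ gives $\tilde\xi=(0.5,\,-0.4)$ and a larger value). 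What you do get for free is the reverse inequality, because $v$ already minimizes $Q$ on its coset $v+L_{j-1}$; it is precisely this coset-minimality, giving $|B(v,v_i)|\le y_{ii}/2$ for $i<j$, together with an induction on the Gram--Schmidt coordinates using $h_l/h_i\le\delta^{i-l}$ for $l<i$, that would close your argument. The paper sidesteps all of this: since $\x$ attains the minimum in the $j$-th reduction condition, the vectors $\e_1,\dots,\e_{j-1},\x$ extend to a Minkowski reduced basis, i.e.\ there is $T\in GL_n(\Z)$ with $j$-th column $\x$ such that ${}^tTYT$ is again reduced, and Theorem~\ref{endlichkeit_siegelbereich} bounds $T$, hence $\x$, at once.

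Part (b) has a second gap. At the "first exit" $t^*$ you assert that some inequality becomes tight, i.e.\ that there are $j$ and $\x\in\mathcal W_j$ with ${}^t\x Y_{t^*}\x=(Y_{t^*})_{jj}$. Since for each $t>t^*$ a priori different (and infinitely many) conditions could be the violated ones, this requires an argument --- local finiteness of the reduction conditions near the positive definite matrix $Y_{t^*}$, or the paper's device of working with the closure of $\mathcal R_n^0$, where tightness is automatic. You also do not address the possibility that the segment leaves $\mathcal P_n$ before any reduction condition fails: $Y$ is only assumed to lie in $M_n^{\rm sym}(\R)$ with $y_{11}>0$, so $Y_{t^*}$ could be merely semidefinite ($\mathcal R_n$ is closed in $\mathcal P_n$, not in $M_n^{\rm sym}(\R)$); the paper disposes of this case by showing that a degenerate matrix satisfying all reduction conditions must have vanishing $(1,1)$-entry, contradicting $y_{11}^{(0)},y_{11}^{(1)}>0$. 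Your deduction of (a) from (b) --- equality in a condition at a reduced $Y$ forces that condition into $\mathcal W_j^*$ --- is correct and is a nice shortcut compared to the paper's route, but it rests on (b) and (c), neither of which is yet proved.
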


\begin{proof} 
For a  vector $\x$ in  ${\mathcal W}_j^{\ast}$ there exists a Minkowski
reduced matrix $Y$ and a $T \in {\rm GL}_n(\Z)$ having $\x$ as its
$j$-th column such that ${^tT}YT = :Y'$ is again Minkowski reduced. By
Theorem \ref{endlichkeit_siegelbereich} $T$ and hence $\x$ has then
bounded entries, which shows c).

For a), we want to show for a matrix  $Y_0 \in M_n^{\rm \sym}(\R), Y_0
\not\in {\mathcal R}_n^0$ that at least one of the satisfiable
reduction conditions does not hold with strict inequality for $Y$, i.\
e., that for some $j$ there exists  ${\bf x} \in {\mathcal W}_j^*$ with
${^t{\bf x}} Y{\bf x} \leq y_{jj}$.

For $Y_1 \in {\mathcal R}_n^0$ and  $0 \leq \lambda \leq 1$ put 
 $$(y_{ij}^{(\lambda)}) = Y_{\lambda} = (1-\lambda)Y_0+\lambda Y_1
 \mbox{ and } w_{\lambda}({\bf x}) = {^t{\bf x}}Y_{\lambda}{\bf x}
 \mbox{ for } {\bf x} \in \Z^n.$$

Let $\mu = {\rm sup}\{\lambda \in [0,1]~|~Y_{\lambda}\not\in {\mathcal R}_n^0\}$.
Then  $Y_{\mu} \in \overline{\mathcal R}_n^0$ and   $Y_{\mu} \not\in
{\mathcal R}_n^0$ since the set of strictly reduced matrices is open.

Since $Y_\mu$ is in the closure of ${\mathcal R}_n^0$ it satisfies all
reduction conditions.
If $Y_{\mu}$ is positive definite, it is hence reduced but not
strictly reduced, hence there exist 
$j$ and ${\bf x} \in {\mathcal W}_j^{\ast}$ with ${^t{\bf x}}Y_{\mu}{\bf x} 
= y_{jj}^{(\mu)}$.
One has ${^t{\bf x}} Y_1{\bf x} >y_{jj}^{(1)}$ since 
$Y_1 \in {\mathcal R}_n^0$ , and we see that 
${^t{\bf x}} Y_0{\bf x} \leq y_{jj}^{(0)}$ must hold. Thus, 
 $Y_0$ is not in the set described in a). 

If $Y_{\mu}$ is semidefinite but not definite, one has
$y_{11}^{(\mu)}=0$ since the radical of the quadratic form has to
intersect $\Z^n$. We see then as above  $y_{11}^{(0)} \leq 0$, so that
$Y_0$ is not in the set described in a)
in this case too. 

On the other hand,  ${\mathcal R}_n^0$ is obviously contained in this
set.

\medskip
For b),  ${\mathcal R}_n$ is obviously contained in the set $R'$ on the right
hand side and the closure of ${\mathcal R}_n^0$ contains $R'$. Since
$R'\subseteq  {\mathcal P}_n$ by definition and  $\overline{\mathcal
  R}_n^0 \cap {\mathcal P}_n\subseteq  {\mathcal R}_n$ holds, we see
that  ${\mathcal R}_n=R'$ as asserted.
\end{proof}

\begin{corollary} ${\mathcal R}_n^0$ is a convex open subset of 
${\mathcal P}_n$. More precisely, ${\mathcal R}_n^0$ is a convex open
cone bounded by finitely many hyperplanes.
\end{corollary}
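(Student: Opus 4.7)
The plan is to read everything off directly from part (a) of the preceding theorem, which gives
\begin{equation*}
\mathcal{R}_n^0 = \{Y \in M_n^{\sym}(\R) \mid y_{11} > 0,\ {}^t\x Y \x > y_{jj} \text{ for all } 1\le j \le n,\ \x \in \mathcal{W}_j^*\}.
\end{equation*}
The key observation is that each of these defining conditions is a \emph{strict homogeneous linear inequality} in the entries of $Y$: the expression ${}^t\x Y \x - y_{jj} = \sum_{i,k} x_i x_k y_{ik} - y_{jj}$ is a linear form $\ell_{j,\x}(Y)$ in the $\binom{n+1}{2}$ coordinates of $Y \in M_n^{\sym}(\R)$, and similarly $y_{11}$ is such a linear form. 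So $\mathcal{R}_n^0$ is the intersection of the open half-space $\{y_{11} > 0\}$ with the open half-spaces $\{\ell_{j,\x} > 0\}$ for $1 \le j \le n$ and $\x \in \mathcal{W}_j^*$.

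By part (c) of the theorem, each $\mathcal{W}_j^*$ is finite, so $\mathcal{R}_n^0$ is an intersection of \emph{finitely many} open half-spaces through the origin. Such an intersection is automatically open (intersection of finitely many open sets) and convex (each half-space is convex, and intersections of convex sets are convex). Moreover, each defining linear form $\ell$ is homogeneous of degree $1$, so $\ell(\lambda Y) = \lambda \ell(Y) > 0$ whenever $\lambda > 0$ and $\ell(Y) > 0$; hence $\mathcal{R}_n^0$ is closed under positive scaling, i.e. it is a cone. The bounding hyperplanes are precisely the finitely many hyperplanes $\{y_{11} = 0\}$ and $\{\ell_{j,\x} = 0\}$ for $j$ and $\x \in \mathcal{W}_j^*$, all of which pass through the origin.

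There is really no hard step here; everything is mechanical once the explicit description of $\mathcal{R}_n^0$ from the theorem is invoked. The only point worth a sentence of care is that strictness of the inequalities (giving openness) and homogeneity (giving the cone property) depend on using \emph{strict} reducedness (part (a) of the theorem) rather than ordinary Minkowski reducedness (part (b)); the latter only yields a closed convex cone $\mathcal{R}_n \subseteq \overline{\mathcal{P}_n}$, which is why the corollary is phrased for $\mathcal{R}_n^0$.
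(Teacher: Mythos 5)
Your argument is correct and is precisely what the paper compresses into its one-line proof ``This follows from the theorem'': the description of ${\mathcal R}_n^0$ in part a) together with the finiteness of the ${\mathcal W}_j^*$ from part c) exhibits ${\mathcal R}_n^0$ as a finite intersection of open half-spaces cut out by homogeneous linear forms, hence a convex open cone bounded by finitely many hyperplanes through the origin. (Only your closing aside is slightly off: ${\mathcal R}_n$ as described in part b) still carries the strict condition $y_{11}>0$ and lies in ${\mathcal P}_n$, so it is not literally a closed cone in $\overline{{\mathcal P}}_n$; but this does not affect the proof of the corollary.)
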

\begin{proof} This follows from the theorem.
\end{proof}
\begin{theorem} The set ${\mathcal P}_n$  has a decomposition ${\mathcal P}_n = \bigcup_{T\in {\rm GL}_n(\Z)}
T({\mathcal R}_n)$ (where we put  $T({\mathcal R}_n): = \{T.Y=Y[T^{-1}] ~|~ Y \in
{\mathcal R}_n\}$), and one has
 \begin{enumerate}
 \item If  $T_1,T_2 \in GL_n(\Z)$ satisfy $T_1({\mathcal R}_n^0)\cap T_2({\mathcal R}_n) \not= \emptyset$,
the matrix $T_2^{-1}T_1$ is diagonal with entries  $\pm 1$.
 \item If  $T({\mathcal R}_n) = {\mathcal R}_n$ holds for some $T\in
   GL_n(\Z)$ the matrix  $T$ is diagonal with entries  $\pm 1$.
 \end{enumerate}
\end{theorem}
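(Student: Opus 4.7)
The plan is to handle the covering directly via the existence theorem for Minkowski reduced bases, then prove (a) by an inductive argument that extracts the needed rigidity from strict reducedness, and finally deduce (b) as an immediate corollary of (a).

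\textbf{Covering.} Given $Y \in \mathcal{P}_n$, equip $\Lambda = \Z^n$ with the positive definite quadratic form $Q_Y(\x) = \tfrac{1}{2}{}^t\x Y \x$ and invoke the existence of a Minkowski reduced basis: there is $S \in GL_n(\Z)$ whose columns form such a basis for $Q_Y$. Then $Y' := {}^tSYS \in \mathcal{R}_n$ and $Y = ({}^tS)^{-1} Y' S^{-1} = S.Y' \in S(\mathcal{R}_n)$.

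\textbf{Proof of (a).} Suppose $T_1.Y_1 = T_2.Y_2$ with $Y_1 \in \mathcal{R}_n^0$, $Y_2 \in \mathcal{R}_n$, and set $U := T_2^{-1}T_1 \in GL_n(\Z)$. A direct computation using $g.Y = {}^tg^{-1}Yg^{-1}$ yields $Y_1 = {}^tU\, Y_2\, U$. I reinterpret this geometrically: let $B$ be the symmetric bilinear form on $\Lambda = \Z^n$ whose Gram matrix in the standard basis $(\e_1,\ldots,\e_n)$ is $Y_2$, and write $\bu_j$ for the $j$-th column of $U$. The Gram matrix of $B$ in the basis $(\bu_1,\ldots,\bu_n)$ is then ${}^tUY_2U = Y_1$, so both $(\e_i)$ and $(\bu_j)$ are Minkowski reduced bases of $(\Lambda, B)$, with $(\bu_j)$ strictly reduced. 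I prove $\bu_j = \pm \e_j$ by induction on $j$: assuming $\bu_i = \pm \e_i$ for $i<j$, the sublattices $L_{j-1}(\{\bu_i\})$ and $L_{j-1}(\{\e_i\}) =: L_{j-1}$ coincide, and both $L_{j-1}+\Z\e_j$ and $L_{j-1}+\Z\bu_j$ are primitive in $\Lambda$ (each completes to a $\Z$-basis). Testing $\bu_j$ against the reducedness of $(\e_i)$ yields ${}^t\bu_j Y_2 \bu_j \ge (Y_2)_{jj}$; testing $\e_j$ against the strict reducedness of $(\bu_i)$ yields ${}^t\e_j Y_2 \e_j > {}^t\bu_j Y_2 \bu_j$ unless $\e_j = \pm\bu_j$. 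The strict case contradicts the previous inequality, so $\e_j = \pm\bu_j$, completing the induction. Hence $U = T_2^{-1}T_1$ is diagonal with entries $\pm 1$.

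\textbf{Proof of (b) and main obstacle.} The set $\mathcal{R}_n^0$ is nonempty (the diagonal matrix with diagonal entries $1,2,\ldots,n$ is strictly reduced, since for any $\x \in \mathcal{W}_j\setminus\{\pm\e_j\}$ one checks case-by-case that $\sum_i i x_i^2 > j$). Pick any $Y_1 \in \mathcal{R}_n^0$. If $T(\mathcal{R}_n)=\mathcal{R}_n$, then $Y_2 := T^{-1}.Y_1$ lies in $T^{-1}(\mathcal{R}_n) = \mathcal{R}_n$, and the identity $I.Y_1 = Y_1 = T.Y_2$ together with part (a) applied to $(T_1,T_2)=(I,T)$ forces $T$ to be diagonal with entries $\pm 1$. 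The main obstacle throughout is the geometric setup underlying (a): one must recognize that the algebraic relation $Y_1 = {}^tUY_2U$ encodes two Minkowski reduced Gram matrices of the \emph{same} bilinear form on $\Z^n$, expressed in two different $\Z$-bases related by $U$. Once this translation is made, the induction is driven by feeding each basis vector as a test vector into the reduction inequality of the other basis, and strict reducedness supplies exactly the rigidity needed to pin down $U$ column by column.
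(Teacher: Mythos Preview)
Your proof is correct and follows essentially the same approach as the paper: both identify the situation as two Minkowski reduced bases of the same quadratic lattice and use strict reducedness to force the change-of-basis matrix to be diagonal with entries $\pm 1$, arguing inductively on the basis vectors. Your version is simply more detailed, spelling out the induction that the paper compresses into the phrase ``in the same way'' and making the nonemptiness of $\mathcal{R}_n^0$ explicit for part (b).
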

\begin{proof}
 \begin{enumerate}
 \item Let $Y$ be strictly reduced and  $T\langle Y\rangle = Y_1$
   reduced, let 
$({\bf e}_1,\ldots,{\bf e}_n)$ be the standard basis of  $\R^n$.

then
${\bf e}_1$ is a minimum vector for  $Y_1$, hence  $T^{-1}{\bf e}_1$ a
minimum vector for $Y$, and since $Y$ is strictly reduced we must have 
$T^{-1}{\bf e}_1 = \pm {\bf e}_1$. In the same way we see $T^{-1}{\bf e}_j
= \pm {\bf e}_j$ for $1 \leq j \leq n$.
 \item follows from  a).
 \end{enumerate}
\end{proof}

\begin{theorem} $\Sigma = \{T \in {\rm GL}_n(\Z)~|~
T({\mathcal R}_n) \cap {\mathcal R}_n \not= \emptyset\}$ is a finite set.
\end{theorem}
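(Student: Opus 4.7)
The plan is to combine the two main structural results about reduced bases and Siegel domains: Theorem \ref{reduziert_siegelbereich}, which embeds $\mathcal{R}_n$ into a fixed Siegel domain $\mathcal{S}_n(\delta(n),\epsilon(n))$, and Theorem \ref{endlichkeit_siegelbereich}, which bounds the entries of any integer matrix conjugating one element of a Siegel domain into another.

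First, I would take an arbitrary $T \in \Sigma$. By definition there exists $Y \in \mathcal{R}_n$ with $T(Y) = {}^t T^{-1} Y T^{-1} \in \mathcal{R}_n$ as well. Setting $Y_1 := Y$ and $Y_2 := T(Y)$, both matrices are Minkowski reduced, so by Theorem \ref{reduziert_siegelbereich} they both lie in the single Siegel domain $\mathcal{S}_n(\delta(n),\epsilon(n))$, with universal constants depending only on $n$.

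Next, write $S := T^{-1} \in GL_n(\Z)$, so that $Y_2 = {}^t S Y_1 S$. Theorem \ref{endlichkeit_siegelbereich} applies and yields $|s_{ij}| \leq C\bigl(n,\delta(n),\epsilon(n)\bigr) =: C(n)$ for all entries of $S$. Thus the inverse $T^{-1}$ of every element $T \in \Sigma$ lies in the finite set
\begin{equation*}
  \{ S \in GL_n(\Z) \mid |s_{ij}| \leq C(n) \text{ for all } i,j \},
\end{equation*}
which has cardinality at most $(2\lfloor C(n)\rfloor + 1)^{n^2}$. Since inversion is a bijection on $GL_n(\Z)$, the set $\Sigma$ itself is finite.

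There is no real obstacle here: the work has already been done in establishing Theorems \ref{reduziert_siegelbereich} and \ref{endlichkeit_siegelbereich}. The only thing to be careful about is the direction of the action $T.Y = Y[T^{-1}] = {}^tT^{-1} Y T^{-1}$, so that the matrix to which Theorem \ref{endlichkeit_siegelbereich} is applied is $T^{-1}$ rather than $T$; this is inconsequential because bounding the entries of $T^{-1} \in GL_n(\Z)$ automatically bounds $T$ to a finite set.
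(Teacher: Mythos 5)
Your proof is correct and follows essentially the same route as the paper, whose own proof is a one-line reference to the Siegel domain results; you have simply spelled out the combination of Theorem \ref{reduziert_siegelbereich} (reduced matrices lie in a fixed Siegel domain) with Theorem \ref{endlichkeit_siegelbereich} (bounded entries for integral matrices relating two elements of a Siegel domain). Your care about the direction of the action, i.e.\ that the bound applies to $T^{-1}$ rather than $T$, is a correct and harmless refinement.
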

\begin{proof} This follows from Theorem
  \ref{reduziert_siegelbereich}.  
\end{proof}
\begin{remark}
 We have shown that the set ${\mathcal R}_n$ of Minkowski reduced
 matrices is a fundamental domain for the action of $GL_n(\Z)$ on
 ${\mathcal P}_n$ in the sense that the translaes $T({\mathcal R}_n)$
 cover ${\mathcal P}_n$ and meet only in boundary points. 
\end{remark}
\begin{theorem} The translates $T({\mathcal R}_n)$ of the set ${\mathcal R}_n$ of Minkowski reduced
 matrices form a locally finite  covering of ${\mathcal P}_n$, i.\ e.,
 for any compact set $K \subseteq
{\mathcal P}_n$ there are only finitely many translates $T({\mathcal
  R}_n)$ with $T({\mathcal R}_n) \cap K \not= \emptyset$.
\end{theorem}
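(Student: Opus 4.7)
The plan is to translate the local finiteness statement into a bound on the integer entries of the matrices $T$ that can appear, using compactness of $K$ together with the comparison between Minkowski reduced bases and successive minima already established.

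Fix a compact set $K\subseteq {\mathcal P}_n$. Since the smallest eigenvalue of a positive definite symmetric matrix is a continuous, strictly positive function on ${\mathcal P}_n$, there is a constant $c>0$ such that ${}^t\x Y\x\ge c\|\x\|^2$ for all $Y\in K$ and $\x\in\R^n$. Similarly, the diagonal entries $y_{ii}$ of $Y\in K$ are uniformly bounded above by a constant $C$. These are the two analytic ingredients I will use.

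Now suppose $T\in {\rm GL}_n(\Z)$ satisfies $T({\mathcal R}_n)\cap K\neq \emptyset$, and pick $Y\in T({\mathcal R}_n)\cap K$. Writing $Y_0:={}^tTYT\in {\mathcal R}_n$ means precisely that the columns $t_1,\dots,t_n$ of $T$ form a Minkowski reduced basis of the lattice $(\Z^n,Q_Y)$, where $Q_Y(\x)=\tfrac{1}{2}\,{}^t\x Y\x$. Denote by $\mu_1\le\dots\le\mu_n$ the successive minima of $(\Z^n,Q_Y)$. The standard basis vectors $\e_1,\dots,\e_n$ are linearly independent with $Q_Y(\e_i)=\tfrac{1}{2}y_{ii}\le C/2$, so $\mu_j\le C/2$ for all $j$. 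By Theorem \ref{comparison_reduction_constants} (c), there are constants $c_1(j)$ depending only on $n$ with
\begin{equation*}
  Q_Y(t_j)\le c_1(j)\,\mu_j\le c_1(j)\,C/2
\end{equation*}
for $1\le j\le n$. Combining with the uniform coercivity on $K$ gives
\begin{equation*}
  \tfrac{c}{2}\|t_j\|^2 \le Q_Y(t_j)\le c_1(j)\,C/2,
\end{equation*}
so $\|t_j\|$ is bounded by a constant $C'=C'(n,K)$ independent of $T$.

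Thus every $T\in {\rm GL}_n(\Z)$ with $T({\mathcal R}_n)\cap K\neq \emptyset$ has all entries bounded in absolute value by $C'$. Since there are only finitely many integer matrices satisfying such a bound, the set of such $T$ is finite, which is the desired local finiteness. The only point that requires care is step two—verifying uniform positive definiteness on $K$—but this is a standard compactness argument since the smallest eigenvalue is continuous and strictly positive on ${\mathcal P}_n$; all the other ingredients are direct applications of Theorem \ref{comparison_reduction_constants} and the definition of the successive minima.
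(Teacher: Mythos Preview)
Your proof is correct, and it takes a genuinely different route from the paper's. The paper argues via Siegel domains: it picks $\delta',\epsilon'$ so that ${\mathcal R}_n$ lies in the interior $S_0$ of ${\mathcal S}_n(\delta',\epsilon')$, uses compactness of $K$ to extract a finite subcover by translates $T_jS_0$, and then invokes Theorem~\ref{endlichkeit_siegelbereich} to say that each $T_jS_0$ meets only finitely many $TS_0$. Your argument bypasses Siegel domains entirely: you use compactness of $K$ to get uniform bounds on the smallest eigenvalue and on the diagonal entries, then combine the successive-minima bound from Theorem~\ref{comparison_reduction_constants}(c) with the observation that the columns of $T$ form a Minkowski reduced basis for $(\Z^n,Q_Y)$ to bound $\|t_j\|$ directly. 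Your approach is more elementary and more explicit---it yields an effective bound on the entries of $T$ in terms of the eigenvalue and diagonal bounds on $K$---while the paper's approach packages the work into the already-proven finiteness theorem for Siegel domains, which is cleaner if one regards that theorem as a black box but ultimately rests on similar successive-minima estimates.
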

\begin{proof}
We choose $\delta,\epsilon > 0$ with ${\mathcal R}_n
\subseteq {\mathcal S}_n(\delta,\epsilon)$.  For $\delta' > \delta$,
$\epsilon' > \epsilon$ the set  ${\mathcal R}_n$ lies then in the
interior 
$S_0$ of ${\mathcal S}_n(\delta',\epsilon')$. The covering of  $K$ by
the open sets  $TS_0$ for $T \in {\rm GL}_n(\Z)$) has then a
finite subcover by certain  $T_jS_0, 1 \leq j \leq r$.
Each of these  $T_j S_0$ meets only finitely many   $TS_0$ by Theorem
\ref{reduziert_siegelbereich}, which implies the assertion.
\end{proof}
\begin{theorem} Let  $\Sigma_1$ denote the set of all $T \in {\rm GL}_n(\Z)$,
for which $T{\mathcal R}_n \cap {\mathcal R}_n$ has codimension $1$ in
${\mathcal R}_n$, hence dimension $\frac{n(n+1)}{2}-1$. 

Then $\Sigma_1$ generates ${\rm GL}_n(\Z)$.
\end{theorem}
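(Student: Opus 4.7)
The plan is the standard ``neighbor transformations generate'' argument for a fundamental domain. Fix a point $Y_0$ in the interior $\mathcal{R}_n^0$, let $T \in \mathrm{GL}_n(\Z)$ be arbitrary, and consider the straight-line path $\gamma(t) = (1-t)Y_0 + t \cdot (T.Y_0)$ for $t \in [0,1]$. By convexity of $\mathcal{P}_n$, the path stays in $\mathcal{P}_n$; it begins in $\mathcal{R}_n^0$ and ends in $T(\mathcal{R}_n^0)$.

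First I would invoke the locally finite covering theorem: since $\gamma([0,1])$ is compact, only finitely many translates $S\mathcal{R}_n$ meet it. The boundary of each $\mathcal{R}_n$-translate is a finite union of affine hyperplanes (coming from the finitely many satisfiable reduction conditions $y_{jj} = {}^t\x Y \x$ for $\x \in \mathcal{W}_j^*$), and pairwise intersections of such hyperplanes form a finite union of codimension-$\geq 2$ affine subsets meeting $\gamma$. Since $\mathcal{R}_n^0$ and $T(\mathcal{R}_n^0)$ are open, we may perturb $\gamma$ slightly inside $\mathcal{P}_n$, keeping the endpoints in their respective open cones, so that the perturbed path avoids all codimension-$\geq 2$ strata and crosses only codimension-$1$ walls, each transversally at a single interior point.

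Next I would list the translates crossed by the perturbed path in the order in which they occur: $S_0 \mathcal{R}_n, S_1\mathcal{R}_n, \ldots, S_k\mathcal{R}_n$ with $S_0 = I$ and $S_k = T$. At each transition the path passes from $S_j \mathcal{R}_n$ to $S_{j+1} \mathcal{R}_n$ through a relatively open piece of their common wall; this wall has codimension $1$ in $\mathcal{P}_n$, hence in either translate. Translating by $S_j^{-1}$, we see that $(S_j^{-1}S_{j+1})\mathcal{R}_n \cap \mathcal{R}_n$ has codimension $1$, so $U_j := S_j^{-1}S_{j+1} \in \Sigma_1$. The telescoping product
\begin{equation*}
  T = S_k = S_0 \cdot (S_0^{-1}S_1)(S_1^{-1}S_2)\cdots(S_{k-1}^{-1}S_k) = U_0 U_1 \cdots U_{k-1}
\end{equation*}
then exhibits $T$ as a product of elements of $\Sigma_1$.

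The main obstacle is the general-position step: one must argue that the union of codimension-$\geq 2$ strata of the boundaries of the (finitely many relevant) translates forms a genuine codimension-$\geq 2$ set, so that a generic small perturbation avoids it while keeping endpoints in the open cones. This uses the explicit description of $\mathcal{R}_n$ as cut out by finitely many linear inequalities (so its boundary is a finite union of hyperplanes, and the loci where two such walls meet have codimension $2$), combined with local finiteness to bound the total number of walls the path can encounter. Once this is in place, the transversality ensures consecutive translates genuinely share a codimension-$1$ face and the argument closes.
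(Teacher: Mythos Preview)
Your proposal is correct and follows essentially the same approach as the paper: draw a line segment from $Y_0 \in \mathcal{R}_n^0$ to a point in $T\mathcal{R}_n$, invoke local finiteness to reduce to finitely many translates, avoid the codimension-$\ge 2$ incidences by a general-position argument, and then telescope through the chain of adjacent translates. The only cosmetic difference is that the paper achieves general position by varying the endpoint $Y_1$ within a closed ball $K \subset T\mathcal{R}_n$ (so the segment remains straight and convexity gives interval intersections directly), whereas you perturb the path itself; both devices serve the same purpose.
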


\begin{proof} Let  $T \in {\rm GL}_n(\Z)$, let $Y_0 \in {\mathcal R}_n^0$
be a strictly reduced matrix. Let $K \subset T{\mathcal R}_n$ be a
closed ball, let $L := \{(1-\lambda)Y_0+\lambda Y_1~|~0 \leq \lambda
\leq 1,\: Y_1 \in K\}$. 
Since  $L$ is compact the previous theorem implies that there are
only finitely many $U \in {\rm GL}_n(\Z)$ with
$U{\mathcal R}_n \cap L \not= \emptyset$.

Let $D$ be the union of all $U_1{\mathcal R}_n \cap U_2{\mathcal R}_n$
with 
$U_1,U_2 \in {\rm GL}_n(\Z)$, $U_1{\mathcal R}_n \cap L \not= \emptyset
\not= U_2{\mathcal R}_n \cap L$ for which  $U_1{\mathcal R}_n \cap
U_2{\mathcal R}_n$ has codimension $\geq 2$ in ${\mathcal P}_n$.

Then $D$ too  has codimension $\geq 2$ in ${\mathcal P}_n$, there is
$Y_1 \in K$such that $L_1:= \{(1-\lambda)Y_0 + \lambda Y_1~|~
0 \leq \lambda \leq 1\}\cap D=\emptyset $.

The line $L_1$ meets only finitely many $T_j{\mathcal R}_n$ ($T_j \in 
{\rm GL}_n(\Z)$), each of these intersections is an interval,
since the  $T_j({\mathcal R}_n)$ are convex sets. We write $L_1 = (T_1({\mathcal R}_n) \cap L_1)
\cup \cdots \cup (T_r({\mathcal R}_n) \cap L_1)$ and number the $T_j$
in such a way that  the intervals $T_j({\mathcal R}_n)$ and
$T_{j+1}({\mathcal R}_n)$ intersect and 
$T_1 = {\rm Id},T_r = T$.

We have then $T_j^{-1} T_{j+1} \in \Sigma_1$, hence  $T_{j+1} = T_jU_j$ with
$U_j \in \Sigma_1$. Putting things together we obtain $T = U_1 \cdots
U_r$ with $U_i\in \Sigma_1$ as asserted. 
\end{proof}
\section{Extreme forms and sphere packings}
\begin{definition} A matrix $Y \in {\mathcal R}_n$ with $y_{11} =1$
is called {\em extreme} if  the determinant has a local minimum  in
$Y$  as function on the hypersurface $\{Y' \in {\mathcal R}_n~|~y'_{11} =1\}$
hat. It is called {\em absolutely extreme} if it is extreme and the
minimum is an absolute minimum.
\end{definition}
\begin{remark} 
  \begin{enumerate}
  \item If $Y$ is an extreme matrix, the quadratic form $Q_Y$ given by
    $Q_Y(\x)=\frac{1}{2}{}^t\x Y\x$ is called an extreme form and
    lattice with this quadratic form is called an extreme lattice. The
    same term is often used for all matrices or forms proportional to
    $Y$ resp.\ $Q_Y$.
\item Let $L: = \bigoplus_{i=1}^n \Z v_i$ be a lattice on
  $\R^n$ with Gram matrix 
$Y$ with respect to the basis $v_1,\ldots,v_n$ and the standard scalar
product and put spheres of radius $\frac{1}{2}$around the lattice
points. The spheres form then a (non overlapping) lattice sphere
packing in $\R^n$. A fundamental parallelotope of $L$ has then volume
${\det}(Y)$, and the part of it that is covered by the spheres has
volume  $(\frac{1}{2})^n
\frac{V_n}{\sqrt{\det Y}}$,where $V_n$ is the volume of the
$n$-dimensional unit sphere. A lattice whose Gram matrix is absolutely
extreme gives therefore the densest possible lattice sphere packing in $\R^n$.
  \end{enumerate}
\end{remark}

\begin{theorem}\label{existenz_extremform}
There exists an absolutely extreme $Y \in {\mathcal R}_n$.
\end{theorem}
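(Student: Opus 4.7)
The plan is to realize an absolutely extreme matrix as a minimizer of $\det$ on the slice
\[
\mathcal{R}_n^{(1)} := \{Y \in \mathcal{R}_n \mid y_{11}=1\};
\]
any such minimizer is, by definition, absolutely extreme. The strategy is a compactness argument: I will show that for each $C>0$ the sublevel set $K_C := \{Y \in \mathcal{R}_n^{(1)} \mid \det(Y) \leq C\}$ is compact in $M_n^{\sym}(\R)$, and that $\det$ is bounded away from $0$ on $\mathcal{R}_n^{(1)}$. Choosing $C$ slightly larger than any fixed value of $\det$ on $\mathcal{R}_n^{(1)}$ then yields a nonempty compact set on which $\det$ attains its infimum.

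For the boundedness of $K_C$, I appeal to Theorem \ref{reduziert_siegelbereich}, which gives $\mathcal{R}_n \subseteq \mathcal{S}_n(\delta(n), \epsilon(n))$, together with the lemma comparing $h_j$, $y_{jj}$, $\mu_j$ on the Siegel domain: there is a constant $c = c(n)$ with $y_{jj} \leq c\, h_j$ for every Minkowski reduced $Y$. Therefore
\[
\prod_{j=1}^n y_{jj} \;\leq\; c^n \prod_{j=1}^n h_j \;=\; c^n \det(Y) \;\leq\; c^n C,
\]
and combined with the Minkowski chain $1 = y_{11} \leq y_{22} \leq \cdots \leq y_{nn}$ this forces $y_{nn}^{\,n-1} \leq c^n C$, so $y_{nn}$ is bounded. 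The reduction inequalities $|2 y_{ij}| \leq y_{ii} \leq y_{nn}$ then bound every off-diagonal entry, so $K_C$ is bounded in $M_n^{\sym}(\R)$.

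For the closedness of $K_C$ in $M_n^{\sym}(\R)$, the defining inequalities of $\mathcal{R}_n$, the equation $y_{11}=1$, and the inequality $\det(Y) \leq C$ are all closed conditions; the only way for a limit of matrices in $K_C$ to escape $\mathcal{P}_n$ would be to become positive semidefinite but singular. But on $\mathcal{R}_n^{(1)}$, Hermite's inequality (Theorem \ref{hermites_theorem_definite}) gives $\mu_1 = y_{11} = 1$ and hence $\det(Y) \geq \gamma_n^{-n} > 0$, so every limit of $K_C$ still has strictly positive determinant and lies in $\mathcal{P}_n$. Thus $K_C$ is closed and bounded in $M_n^{\sym}(\R)$, hence compact by Heine-Borel.

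To conclude, pick any $Y_0 \in \mathcal{R}_n^{(1)}$ (for instance a rescaled Minkowski reduced matrix). Then $K_{\det(Y_0)}$ is nonempty and compact, and contains every $Y \in \mathcal{R}_n^{(1)}$ at which $\det$ can possibly be smaller than $\det(Y_0)$; by continuity, $\det$ attains its minimum on $K_{\det(Y_0)}$, and this minimum is automatically the absolute infimum over $\mathcal{R}_n^{(1)}$. Any minimizer is the desired absolutely extreme matrix. The main obstacle is the intrinsic non-compactness of $\mathcal{R}_n^{(1)}$: it is unbounded in the $y_{nn}$-direction and has the positive semidefinite boundary as an accumulation set. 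The key trick is the dual use of the Siegel-domain/Hermite machinery---once to confine $y_{nn}$ through the inequality $\det(Y) \leq C$, and once to keep $\det$ uniformly bounded away from $0$---so that a single sublevel set is genuinely compact.
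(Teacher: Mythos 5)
Your proof is correct and follows essentially the same route as the paper: both fix a sublevel set of $\det$ on the slice $y_{11}=1$, bound the diagonal via $\prod_j y_{jj}\le c^n\det(Y)$ and the Minkowski chain, and then let continuity of $\det$ produce the minimizer. Your additional use of Hermite's inequality to rule out escape to the singular boundary is a welcome bit of care that the paper's proof leaves implicit.
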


\begin{proof} We fix a matrix $Y_0 \in {\mathcal R}_n$ with $D_0 = \det(Y_0),
y_{11}^{(0)} = 1$ and put 
${\mathcal R}_n^{(1)} = \{Y \in {\mathcal R}_n~|~y_{11} =1\}$. 

The set 
$M = \{Y \in {\mathcal R}_n^{(1)}~|~\det Y \leq D_0\}$ is compact:
For $Y \in M$ we have  $y_{11} \cdots y_{nn} \leq C \cdot D_0$ with a
constant 
$C$, and therefore
 $$y_{nn} \leq \frac{C \cdot D_0}{y_{22} \cdots y_{n-1,n-1}} \
  \leq C\cdot D_0.$$
All $y_{ij}$ are then bounded by  $C \cdot D_0$.

Since $\det$ is a continuous function it has to assume a minimum in $M$.
\end{proof}

\begin{lemma}[Concavity of the determinant] Let $Y_1\ne Y_2
\in {\mathcal P}_n$ with  $\det Y_1 = \det Y_2 = 1$, let  $0 < \lambda
< 1$.

Then
$\det((1-\lambda)Y_1+\lambda Y_2) > 1$.
\end{lemma}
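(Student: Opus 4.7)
The plan is to reduce the statement to a one-variable inequality via simultaneous diagonalization, then conclude by the weighted AM-GM inequality.

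First I would use the standard fact from linear algebra that two real symmetric matrices, one of which is positive definite, can be simultaneously diagonalized by congruence: there exists $T \in GL_n(\mathbb{R})$ such that ${}^tT Y_1 T = I_n$ and ${}^tT Y_2 T = D = \mathrm{diag}(d_1,\ldots,d_n)$ with $d_i > 0$. Applying ${}^tT(\,\cdot\,)T$ to $(1-\lambda)Y_1 + \lambda Y_2$ and taking determinants gives
\begin{equation*}
\det(T)^2 \det\bigl((1-\lambda)Y_1+\lambda Y_2\bigr) = \prod_{i=1}^n\bigl((1-\lambda)+\lambda d_i\bigr).
\end{equation*}
The normalizations $\det Y_1 = \det Y_2 = 1$ translate into $\det(T)^{-2}=1$ and $\prod_i d_i = 1$, so the identity simplifies to
\begin{equation*}
\det\bigl((1-\lambda)Y_1+\lambda Y_2\bigr) = \prod_{i=1}^n\bigl((1-\lambda)+\lambda d_i\bigr).
\end{equation*}

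Next I would apply the weighted AM-GM inequality to each factor: for $d_i>0$ and $0<\lambda<1$,
\begin{equation*}
(1-\lambda)\cdot 1 + \lambda\cdot d_i \;\ge\; 1^{1-\lambda}\, d_i^{\lambda} = d_i^{\lambda},
\end{equation*}
with equality if and only if $d_i = 1$. Multiplying these inequalities over $i$ and using $\prod_i d_i = 1$ gives
\begin{equation*}
\prod_{i=1}^n\bigl((1-\lambda)+\lambda d_i\bigr) \;\ge\; \prod_{i=1}^n d_i^{\lambda} = \Bigl(\prod_{i=1}^n d_i\Bigr)^{\lambda} = 1.
\end{equation*}

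Finally, I would ensure the strict inequality. The hypothesis $Y_1\ne Y_2$ implies $D\ne I_n$, so at least one $d_{i_0}\ne 1$; the corresponding factor in the product is strictly greater than $d_{i_0}^{\lambda}$, while all the other factors remain at least $d_i^{\lambda}$, so the product is strictly greater than $\prod_i d_i^{\lambda} = 1$, giving the conclusion. I do not expect any real obstacle here: the only point needing a small amount of care is justifying the strict inequality (coming directly from the strict convexity of $-\log t$ on $(0,\infty)$, equivalently the equality case in weighted AM-GM), and the well-known simultaneous diagonalization lemma for a pair of symmetric matrices with one positive definite is used as a black box.
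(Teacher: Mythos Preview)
Your proof is correct. Both you and the paper begin with the same reduction: simultaneously diagonalize $Y_1,Y_2$ by congruence and track the $\det(T)^2$ factor, reducing the claim to a product inequality for positive reals with product $1$. From there the arguments diverge. The paper sets $D(\lambda)=\prod_i\bigl(a_i+\lambda(b_i-a_i)\bigr)$ and computes
\[
\frac{d^2}{d\lambda^2}\log D(\lambda)=-\sum_i\Bigl(\frac{b_i-a_i}{a_i+\lambda(b_i-a_i)}\Bigr)^2<0,
\]
so $\log D$ is strictly concave on $[0,1]$ with $D(0)=D(1)=1$, forcing $D(\lambda)>1$. You instead apply weighted AM--GM to each factor $(1-\lambda)+\lambda d_i\ge d_i^{\lambda}$ and multiply. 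Your route is slightly more elementary (no calculus) and handles the strict inequality cleanly via the equality case of AM--GM; the paper's calculus route yields a little more, namely log-concavity of the determinant along the whole segment, which is then reused in the subsequent corollary deriving the arithmetic--geometric mean inequality.
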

\begin{proof} $Y_1$ and $Y_2$ can be simultaneously diagonalized
by $Y_j \longmapsto {^tT} Y_j T$ for some  $T \in {\rm GL}_n(\R)$,
i.e., we may assume $Y_1,Y_2$ to be diagonal matrices with entries
$a_i$ resp.\ $b_i$.

If we put 
\begin{equation*}
  D(\lambda):=\det((1-\lambda)Y_1+\lambda Y_2) = \prod_{i=1}^{n} 
  (a_i+\lambda(b_i-a_i)),
\end{equation*}
we have $D(0) = D(1) = 1$ and  
\begin{equation*}
\frac{d^2}{d\lambda^2}\log D(\lambda) = -\sum_{i=1}
^{n} \left(\frac{b_i-a}{a_i+\lambda(b_i-a_i)}\right)^2 < 0
\end{equation*}  for $\lambda \in (0,1)$.

This implies $\log D(\lambda) > 0$ in $(0,1)$, hence $D(\lambda) > 1$ in $(0,1)$.
\end{proof}
\begin{corollary}[Arithmetic geometric mean inequality]
For $1 \leq j \leq n$ let  $\alpha_j,\beta_j,\gamma_j > 0$,
$(\alpha_1,\ldots,\alpha_n)$ and $(\beta_1,\ldots,\beta_n)$ not
proportional, the $\gamma_j$ not all equal. We have then 
 \begin{enumerate}
 \item $\prod_{i=1}^{n}(\alpha_i+\beta_i)^{\frac{1}{n}} >
\prod_{i=1}^{n} (\alpha_i)^{\frac{1}{n}} + \prod_{i=1}^{n} (\beta_i)^
{\frac{1}{n}}$
 \item $\frac{1}{n} \sum_{i=1}^{n} \gamma_i > (\prod_{i=1}^{n} \gamma_i)^
{\frac{1}{n}}$
 \item  The hypersurface  $\det Y = 1$ lies strictly above the tangent
   hyperplane at any point $Y_0$.
\end{enumerate}
\end{corollary}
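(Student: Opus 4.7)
The plan is to derive all three inequalities from the preceding concavity lemma, which, as inspection of its proof reveals, actually establishes strict concavity of $\log\det$ along \emph{every} line segment in $\mathcal{P}_n$ joining two distinct points (not just those normalised to determinant~$1$): simultaneous diagonalisation of $Y_1,Y_2$ changes $\det$ only by a positive multiplicative constant, which leaves the second derivative of $\log D(\lambda)$ computed in the lemma unchanged. For~(a) I would set $s := \prod_{i=1}^n \alpha_i^{1/n}$, $t := \prod_{i=1}^n \beta_i^{1/n}$ and consider the diagonal matrices $Y_1 := \operatorname{diag}(\alpha_i/s)$ and $Y_2 := \operatorname{diag}(\beta_i/t)$, both of determinant $1$, which are distinct because $(\alpha_i)$ and $(\beta_i)$ are not proportional. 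Applying the concavity lemma at $\lambda := t/(s+t) \in (0,1)$ gives
\begin{equation*}
1 < \det\bigl((1-\lambda)Y_1 + \lambda Y_2\bigr) = \prod_{i=1}^n \frac{\alpha_i + \beta_i}{s+t},
\end{equation*}
and taking $n$-th roots yields (a).

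For (c), I would invoke the standard calculus fact that a differentiable strictly concave function on an interval lies strictly below each of its tangent lines, applied to $\lambda \mapsto \log\det((1-\lambda)Y_0 + \lambda Y)$ on $[0,1]$. Together with the directional-derivative formula $\tfrac{d}{d\lambda}\log\det((1-\lambda)Y_0+\lambda Y)\bigl|_{\lambda=0} = \tr\bigl(Y_0^{-1}(Y-Y_0)\bigr)$, this gives the strict inequality
\begin{equation*}
\log\det Y < \log\det Y_0 + \tr\bigl(Y_0^{-1}(Y-Y_0)\bigr) \qquad \text{for every } Y \in \mathcal{P}_n \setminus \{Y_0\}.
\end{equation*}
Restricting to $Y,Y_0$ on the hypersurface $\{\det = 1\}$ yields $\tr\bigl(Y_0^{-1}(Y-Y_0)\bigr) > 0$, which is exactly the statement that the hypersurface lies strictly above its tangent hyperplane $\{Y \in M_n^{\sym}(\mathbb{R}) : \tr(Y_0^{-1}(Y-Y_0)) = 0\}$ at $Y_0$, proving (c).

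For (b), I would apply (c) with $Y_0 := 1_n$ and $Y := \operatorname{diag}(\gamma_i/g)$, where $g := (\prod_i \gamma_i)^{1/n}$; both matrices have determinant $1$ and are distinct because not all $\gamma_i$ are equal. The tangent-plane inequality then reads $\tr(Y) = \tfrac{1}{g}\sum_i \gamma_i > n$, which rearranges to (b). The only point that goes beyond the bare statement of the concavity lemma is the upgrade from equal-determinant endpoints to arbitrary distinct points of $\mathcal{P}_n$, needed to obtain the tangent-line-above inequality in (c); but this is immediate from inspection of the lemma's proof, so no real new obstacle arises, and everything else reduces to a short computation.
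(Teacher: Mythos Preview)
Your proof is correct. Part (a) is essentially identical to the paper's argument. For (b) and (c), however, you invert the paper's logical order: the paper proves (b) from (a) via a limiting argument (applying (a) with $\beta_i=1$, $\alpha_i=t\gamma_i$ and comparing the linear coefficients in $t$ as $t\to 0$), and then deduces (c) from (b) by simultaneously diagonalising $Y_0,Y_1$ and applying the AM--GM inequality to the ratios $b_i/a_i$. You instead go straight from the concavity lemma to (c), using the general tangent-line inequality for strictly concave functions together with the derivative formula $\frac{d}{d\lambda}\log\det\bigl((1-\lambda)Y_0+\lambda Y\bigr)\big|_{\lambda=0}=\tr\bigl(Y_0^{-1}(Y-Y_0)\bigr)$, and then obtain (b) as the specialisation $Y_0=1_n$. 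Your route is arguably cleaner: the paper's limit argument for (b) only immediately yields a non-strict inequality on the linear coefficients and needs an extra word to get strictness, whereas your derivation of (b) from (c) gives strictness for free. The only additional input you need is the observation that the lemma's proof actually shows strict concavity of $\log\det$ along \emph{any} segment in $\mathcal{P}_n$, which you justify correctly.
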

\begin{proof} for a) let $A = (\prod_{i=1}^{n}\alpha_i)^{\frac{1}{n}}$,
$B = (\prod_{i=1}^{n} \beta_i)^{\frac{1}{n}}$ and let  $t\in [0,1]$ be
such that 
$\frac{B}{A} = \frac{t}{1-t}$. 
There is $c > 0$ with  $B = ct, A = c(1-t)$, we put 
 $a_i = \frac{\alpha_i}{c(1-t)}$ and  $b_i = \frac{\beta_i}{ct}$.

Then $\prod_{i=1}^{n} a_i = \prod_{i=1}^{n} b_i = 1$,
$(a_1,\ldots,a_n) \not= (b_1,\ldots,b_n)$, and by the previous lemma
we have 
\begin{eqnarray*}
 \left(\prod_{i=1}^{n} (\alpha_i+\beta_i)\right)^{\frac{1}{n}} & = &
  c \cdot \prod_{i=1}^{n} ((1-t)a_i+tb_i)^{\frac{1}{n}}\\
&=&c=A+B\\
&=& \left(\prod_{i=1}^{n}\alpha_i\right)^{\frac{1}{n}}
  + \prod_{i=1}^{n}(\beta_i)^{\frac{1}{n}}.
\end{eqnarray*}

For b) we consider a) with  $(\beta_1,\ldots,\beta_n) = (1,\ldots,1)$,
$\alpha_i = t\gamma_i$. We have then for all $t$
 $$\prod_{i=1}^{n}(1+t\gamma_i) > \left(1+t\prod_{i=1}^{n} (\gamma_i)
  ^{\frac{1}{n}}\right)^n.$$
We view this as polynomial in $t$ and see for $t \to 0$ by comparing
the terms at $t$ that 
 $$\gamma_1+ \cdots + \gamma_n > n\prod_{i=1}^{n} (\gamma_i)^{\frac{1}{n}}.$$
For c) we consider $Y_0,Y_1$ with  $\det Y_0 = \det Y_1=1$,
$Y_0 \not= Y_1$. 
As above we may assume that $Y_0,Y_1$ are diagonal matrices with
entries $a_i,b_i$ respectively.

The gradient of $\det$ in $Y_0$ is the diagonal matrix with entries
$\frac{1}{a_i}$ which has scalar product $n$ with $Y_0$, so that the
tangent hyperplane at $Y_0$ is given as
$\{X = (x_{ij})~|~ \sum_{i=1}^{n} \frac{x_{ii}}{a_i}=n\}$.

By b) we have 
 $$\frac{1}{n}\sum_{i=1}^{n} \frac{b_i}{a_i} > \left(\prod_{i=1}^{n}
  \frac{b_i}{a_i}\right)^{\frac{1}{n}} =1,$$
hence
 $$\sum_{i=1}^{n} \frac{b_i}{a_i} > n,$$
so that $Y_1$ lies above the tangent hyperplane.
\end{proof}
\begin{theorem} An extreme matrix  $Y \in {\mathcal R}_n^1$ is a
  corner of  ${\mathcal R}_n^1$.
\end{theorem}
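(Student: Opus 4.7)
The plan is to argue by contradiction, exploiting the strict log-concavity of the determinant along line segments in ${\mathcal P}_n$. Suppose $Y\in {\mathcal R}_n^1$ is extreme but is not a corner. Since ${\mathcal R}_n$ is cut out of $M_n^{\rm sym}(\R)$ by the linear inequalities $y_{11}>0$ and $y_{jj}\le {}^t\x Y\x$ (for $\x\in {\mathcal W}_j^{\ast}$, $1\le j\le n$), it is convex, and so is its cross--section ${\mathcal R}_n^1={\mathcal R}_n\cap\{y_{11}=1\}$. Failing to be a corner (extreme point) of this convex set means that $Y$ lies in the relative interior of some line segment contained in ${\mathcal R}_n^1$: there exist distinct $Y_1,Y_2\in{\mathcal R}_n^1$ and $\lambda_0\in(0,1)$ with $Y=(1-\lambda_0)Y_1+\lambda_0 Y_2$, and by convexity the whole segment
\[
Y(\lambda):=(1-\lambda)Y_1+\lambda Y_2,\qquad \lambda\in[0,1],
\]
is contained in ${\mathcal R}_n^1$.

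Let $D(\lambda):=\det Y(\lambda)$. Inspecting the computation in the proof of the concavity lemma (simultaneously diagonalize $Y_1$ and $Y_2$, with resulting diagonal entries $a_i,b_i$) one obtains
\[
\frac{d^2}{d\lambda^2}\log D(\lambda)=-\sum_{i=1}^{n}\left(\frac{b_i-a_i}{a_i+\lambda(b_i-a_i)}\right)^{2}<0,
\]
a formula which needs no normalization on $\det Y_1$ or $\det Y_2$ but only $Y_1\ne Y_2$. Hence $\log D$, and therefore $D$ itself, is strictly concave on $(0,1)$. Such a function cannot attain a local minimum at any interior point of $(0,1)$ (for such a point $s$ and any $s_1<s<s_2$ nearby one would have $D(s)\le\min(D(s_1),D(s_2))$, contradicting strict concavity). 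So $\lambda=\lambda_0$ is not a local minimum of $D$, and in every neighbourhood of $\lambda_0$ there is $\lambda'$ with $D(\lambda')<D(\lambda_0)=\det Y$, the corresponding matrix $Y(\lambda')\in{\mathcal R}_n^1$ lying arbitrarily close to $Y$.

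This contradicts the assumption that $Y$ is extreme, i.e.\ a local minimum of $\det$ restricted to ${\mathcal R}_n^1$, and forces $Y$ to be a corner of ${\mathcal R}_n^1$. The only technical point to verify is that the strict log-concavity of the determinant along a segment applies to arbitrary distinct $Y_1,Y_2\in{\mathcal P}_n$ and not only to the normalized case $\det Y_1=\det Y_2=1$ under which the concavity lemma is stated; but the second--derivative computation in its proof establishes this unrestricted form directly, so no further work is required.
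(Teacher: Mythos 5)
Your proof is correct and follows essentially the same route as the paper: assume $Y$ is interior to a segment $[Y_1,Y_2]\subseteq{\mathcal R}_n^1$, simultaneously diagonalize, and use strict concavity of the determinant along the segment to produce nearby points of smaller determinant. The only (cosmetic) difference is that you work with $\log\det$ and move the parameter $\lambda$ toward $\lambda_0$, whereas the paper applies the strict concavity of $\det^{1/n}$ at $\lambda_0$ itself and instead shrinks the segment so that $Y_1,Y_2$ lie arbitrarily close to $Y$; your observation that the second-derivative computation needs no normalization of $\det Y_1,\det Y_2$ is accurate.
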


\begin{proof} If $Y$ were not a corner one could find $Y_1 \not= Y_2$
  in ${\mathcal R}_n^1$ such that  $Y = (1-\lambda)Y_1+\lambda Y_2$
  holds for some 
$0 < \lambda < 1$, and we can choose $Y_1,Y_2$ as close to $Y$ as we want.

We may again assume $Y_1,Y_2$ to be diagonal matrices with entries
$(a_1,\ldots,a_n)$ resp.\  $(b_1,\ldots,b_n)$  and have 
\begin{eqnarray*}
  (\det Y)^{\frac{1}{n}} &=& \prod_{i=1}^{n} ((1-\lambda)a_i+\lambda b_i)^
  {\frac{1}{n}}\\
 & > & (1-\lambda)\prod_{i=1}^{n} a_i^{\frac{1}{n}} +
  \lambda \prod_{i=1}^{n} b_i^{\frac{1}{n}}\\
  & = & (1-\lambda)(\det Y_1)^{\frac{1}{n}} + \lambda(\det Y_2)^{\frac{1}{n}}.
 \end{eqnarray*}
One sees that one of $\det(Y_1),\det(Y_2)$ must be smaller than 
$\det Y$, in contradiction to the assumption that $Y$ is extreme.
\end{proof}
\begin{corollary} An extreme matrix (which is known to exist by
  Theorem \ref{existenz_extremform}) has rational coefficients.
\end{corollary}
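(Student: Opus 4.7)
The plan is to combine the previous theorem, which says that an extreme matrix $Y$ is a corner (vertex) of ${\mathcal R}_n^1$, with the fact that ${\mathcal R}_n^1$ is a polyhedron defined by finitely many linear inequalities with \emph{integer} coefficients. A vertex of such a rational polyhedron must have rational coordinates, because it is uniquely determined by turning enough of the defining inequalities into equalities and solving the resulting rational linear system.

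More concretely, first I would recall from the characterization of ${\mathcal R}_n$ given earlier that
\begin{equation*}
{\mathcal R}_n = \{Y \in M_n^{\sym}(\R) \mid y_{11} > 0,\ y_{jj} \le {}^t\x Y\x \text{ for all } 1 \le j \le n,\ \x \in {\mathcal W}_j^*\},
\end{equation*}
where each ${\mathcal W}_j^*$ is finite and consists of vectors $\x \in \Z^n$. Each inequality $y_{jj} \le {}^t\x Y \x$ is a linear inequality in the entries $(y_{ij})_{i\le j}$ with integer coefficients. Thus ${\mathcal R}_n$ is a (rational, polyhedral) convex cone in the $\tfrac{n(n+1)}{2}$-dimensional space $M_n^{\sym}(\R)$, and ${\mathcal R}_n^1 = {\mathcal R}_n \cap \{y_{11} = 1\}$ is a rational polyhedron of dimension $\tfrac{n(n+1)}{2}-1$.

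Next I would invoke the previous theorem: an extreme matrix $Y \in {\mathcal R}_n^1$ is a corner of ${\mathcal R}_n^1$. By a corner I understand a vertex, i.e.\ an extreme point of this convex set. By a standard fact of linear programming, a point $Y$ is a vertex of ${\mathcal R}_n^1$ precisely when the active constraints at $Y$ (those inequalities that hold with equality, together with $y_{11}=1$) have a coefficient matrix of rank $\tfrac{n(n+1)}{2}$, so that $Y$ is the unique solution of the corresponding linear system. Since every coefficient in that system is an integer, the unique solution lies in $\Q^{n(n+1)/2}$.

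The main point — really the only step requiring anything beyond linear algebra — is the observation that the defining inequalities of ${\mathcal R}_n$ have integer coefficients, which is immediate from the explicit description above. There is no genuine obstacle; once the previous theorem (extreme $\Rightarrow$ vertex) and the rationality of the inequalities are in hand, the conclusion is standard. I would close by remarking that in particular the entries of any extreme Gram matrix lie in $\Q$, so after rescaling one may assume an extreme lattice has an integral Gram matrix.
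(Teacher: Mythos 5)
Your proof is correct and follows essentially the same route as the paper: both arguments rest on the fact that the finitely many hyperplanes bounding ${\mathcal R}_n$ have integer coefficients, so a corner is the unique solution of a rational linear system. The only cosmetic difference is that the paper works with the homogeneous cone (obtaining $Y$ as a rational multiple, then normalizing via $y_{11}=1$), while you work directly in the affine slice ${\mathcal R}_n^1$.
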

\begin{proof} Being a corner of ${\mathcal R}_n^1$ means to lie in a
  one-dimensional intersection of the finitely many hyperplanes
  bounding   the set  ${\mathcal R}_n$ of reduced matrices. Since all
  these hyperplanes are given by homogeneous linear equations with coefficients in
  $\Z$ we see that an extreme matrix $Y$ is a multiple of a rational
  matrix. Since $y_{11}=1$, the matrix $Y$ itself has to be rational.
\end{proof} 
\begin{corollary} Let $Y \in {\mathcal R}_n^1$ be extreme, let 
$\{{\bf x}_1,\ldots,{\bf x}_N\mid {^t{\bf x}_j} Y{\bf x}_j
=1\}\subseteq  \Z^n$  be the set of all
minimum vectors.

Then $Y$ is the only solution in $M_n^{\sym}(\R^n)$ of the system of equations ${^t{\bf x}_j} Y
{\bf x}_j = 1$ for $1 \leq j \leq n$. In particular, the minimum
vectors ${\bf x}_1,\ldots,
{\bf x}_N$ span the space  $\Bbb Q^n$.
\end{corollary}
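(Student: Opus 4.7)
The plan is to argue by contradiction, combining a perturbation argument with the strict concavity of the determinant. Suppose that a second symmetric solution $Y' \neq Y$ exists, and set $Z := Y' - Y \neq 0$; this is a nonzero symmetric matrix with ${}^t\x_j Z \x_j = 0$ for every minimum vector $\x_j$. I would consider the one-parameter family $Y_t := Y + tZ$ and first check that for $|t|$ sufficiently small, $Y_t$ remains in ${\mathcal P}_n$ (openness), with $\mu(Y_t) = 1$ and the set of minimum vectors still exactly $\{\x_1,\ldots,\x_N\}$. The identity ${}^t\x_j Y_t \x_j = 1 + t\cdot 0 = 1$ keeps each $\x_j$ at value $1$; a standard finiteness argument (finitely many integer vectors $\x$ satisfy ${}^t\x Y \x < 2$, and all other integer vectors are pushed away uniformly by positive definiteness) then rules out any new minimum vectors.

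The central step invokes the strict concavity of $\det^{1/n}$ on ${\mathcal P}_n$ from the Corollary of the previous section, applied to the distinct matrices $Y_t, Y_{-t}$ (for $t \neq 0$) with average $Y$:
\begin{equation*}
\det(Y_t)^{1/n} + \det(Y_{-t})^{1/n} < 2\det(Y)^{1/n}.
\end{equation*}
Consequently at least one of $\det(Y_{\pm t})$ is strictly less than $\det(Y)$; call the corresponding matrix $Y_s$, so that $Y_s \in {\mathcal P}_n$ satisfies $\mu(Y_s) = 1$ and $\det(Y_s) < \det(Y)$. Choosing $g \in GL_n(\Z)$ so that $W := g.Y_s$ is reduced, one obtains $W \in {\mathcal R}_n$ with $W_{11} = \mu(W) = 1$, hence $W \in {\mathcal R}_n^1$, and $\det(W) < \det(Y)$. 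As $s \to 0$, the local finiteness of the covering of ${\mathcal P}_n$ by $GL_n(\Z)$-translates of ${\mathcal R}_n$ confines $g$ to a finite set, and passing to a subsequence on which $g$ is constant yields $W \to g.Y \in {\mathcal R}_n^1$.

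The aim is then to derive $\det(W) \geq \det(g.Y) = \det(Y)$ for $W$ near $g.Y$ in ${\mathcal R}_n^1$, contradicting $\det(W) < \det(Y)$. The hard part is that extremality as stated in the paper is formally only a local-minimum statement for $\det$ at $Y$ inside ${\mathcal R}_n^1$, whereas the translate $g.Y$ may differ from $Y$. I would bridge this by observing that extremality at $Y$ is equivalent to the Hermite invariant $\mu(\cdot)/\det(\cdot)^{1/n}$ being locally maximal at $Y$ on all of ${\mathcal P}_n$, a manifestly $GL_n(\Z)$-invariant condition; this transfers extremality from $Y$ to every equivalent reduced matrix $g.Y$ and makes the contradiction rigorous.

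For the ``in particular'' statement, uniqueness of $Y$ as a solution of the affine system ${}^t\x_j Y' \x_j = 1$ is equivalent to injectivity of the linear map $Z \mapsto (\tr(Z \cdot \x_j\,{}^t\x_j))_j$ on $M_n^{\sym}(\R)$, i.e.\ to the rank-one symmetric matrices $\{\x_j\,{}^t\x_j\}$ spanning $M_n^{\sym}(\R)$. If the $\x_j$ failed to span $\R^n$, lying in a proper subspace $V$, the outer products $\x_j\,{}^t\x_j$ would all lie in the proper subspace of symmetric matrices supported in $V \otimes V$, contradicting that spanning; so the $\x_j$ span $\R^n$, and being integral they span $\Q^n$.
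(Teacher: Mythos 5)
Your first half is sound and is in fact the classical ``extreme implies perfect'' computation: for $Z=Y'-Y\ne 0$ with ${}^t\x_j Z\x_j=0$, the matrices $Y_{\pm t}=Y\pm tZ$ keep minimum $1$ with unchanged minimum vectors for small $t$, and strict concavity of $\det^{1/n}$ forces $\det(Y_s)<\det(Y)$ for one sign of $s$. (One small point: strictness requires $Y_t$ and $Y_{-t}$ to be non-proportional, not merely distinct; this does hold here, since ${}^t\x_jZ\x_j=0\ne {}^t\x_jY\x_j$ shows $Z$ is not a multiple of $Y$.) The ``in particular'' part is also correct and matches the paper's rank argument.

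The genuine gap is exactly the step you flag, and the proposed bridge does not close it. You need $\det$ to be locally minimal on ${\mathcal R}_n^1$ at the translate $g.Y$, i.e.\ you need the paper's notion of extremality to be $GL_n(\Z)$-invariant. You propose to obtain this from the ``equivalence'' of extremality with local maximality of the Hermite invariant $\mu(\cdot)/\det(\cdot)^{1/n}$ on all of ${\mathcal P}_n$. Only one direction of that equivalence is an observation: local maximality on ${\mathcal P}_n$ restricts to local minimality of $\det$ on ${\mathcal R}_n^1$, where $\mu\equiv y_{11}=1$. The direction you actually need --- that the paper's local condition on ${\mathcal R}_n^1$ implies the global local-maximality on ${\mathcal P}_n$ --- requires precisely the reduction-plus-local-finiteness argument you are in the middle of: a sequence in ${\mathcal P}_n$ approaching $Y$ with larger Hermite invariant reduces, after passing to a subsequence, to a sequence in ${\mathcal R}_n^1$ converging to some $g.Y$ which in general is different from $Y$ (an extreme matrix is a corner of ${\mathcal R}_n^1$, hence lies on the boundary of the cone ${\mathcal R}_n$, so nearby points fall into neighboring translates $T({\mathcal R}_n)$). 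So the bridge presupposes the very invariance it is meant to supply; as stated the argument is circular. The paper takes a shorter route that avoids comparing determinants at different translates altogether: it uses the preceding theorem that an extreme matrix is a corner of ${\mathcal R}_n^1$, and derives from a solution set of dimension $\ge 1$ a nondegenerate segment through $Y$ consisting of reduced matrices with $y_{11}=1$, contradicting the corner property directly. To repair your proof you should either switch to that route, or supply an independent proof that extremality in the paper's sense transfers from $Y$ to every reduced matrix $g.Y$ integrally equivalent to it.
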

\begin{proof}  It the set of solutions of this system of linear
  equations was at least one dimensional, we could again find positive
  definite reduced matrices 
$Y_1$ und $Y_2$ such that $Y$ is on the line connecting them and such
that both of them satisfy all $N$ 
equations ${^t{\bf x}_j} Y_i {\bf x}_j=1$.

If $Y_1$ and $Y_2$ are sufficiently close to  $Y$ the forms $Q_{Y_i}$
  given by 
$2Q_{Y_i}(\x)={^t{\bf x}} Y_i {\bf x}$ assume their minimum on one of
the 
 ${\bf x}_j$ for $Y$, in particular they have minimum $1$ and are in
 ${\mathcal R}_n^1$,  which contradicts the fact that $Y$ is a corner
 of ${\mathcal R}_n^1$.

If finally the ${\bf x}_j$ spanned a genuine subspace of $\Bbb Q^n$,
the system 
${^t{\bf x}_j}Y{\bf x}_j=1$ for $1 \le j \le N$ had rank smaller than
$\frac{n(n+1)}{2}$.
\end{proof}
\section{The LLL algorithm}
\begin{definition}\label{hkz_reduziert}  Let ${\mathcal B} =
  (v_1,\ldots,v_n)$ be a basis of the
 $\Z$-lattice $L$ and  let $L_i:=\Z v_1+\dots+\Z v_{i}$ for $1\le
 i \le n$, $L_0:=\{\nullvek\}$. 

For $x \in V$ denote by $\tilde{p}_i(x)$ 
the orthogonal projection of 
$x$ onto $(\R L_i)^{\perp}$  (with $\tilde{p}_0=\id$) and by 
$\{\tilde{p}_{j-1}v_j =: v'_j\}$ 
the  Gram-Schmidt orthogonalization of ${\mathcal B}$, write 
$v'_j = v_j-\sum_{i=1}^{j-1} c_{ij}v'_i$.

The basis ${\mathcal B}$ is called  Hermite-Korkine-Zolotarev reduced 
(HKZ-reduced), if the following conditions are satisfied:
 \begin{enumerate}
 \item $|c_{ij}| \leq \frac{1}{2}$ for all  $i,j$.
 \item $Q(\tilde{p}_{j-1}v_j)$ is minimal in 
$Q(\tilde{p}_{j-1}(L))\setminus\{0\}$ for 
$1 \leq j \leq n$.
\end{enumerate}
A symmetric matrix $A$ is called HKZ-reduced if it is the Gram matrix
of an HKZ-reduced basis of a lattice.
\end{definition}
To see that HKZ-reduced bases exist we need the following lemma:
\begin{lemma}
 Let $R$ be an integral domain with field of quotients $F$, let
 $\Lambda$ be a lattice of full rank on the $n$-dimensional vector
 space $V$ over $F$.
Let $V=U_1\oplus \dots \oplus U_r$ be a direct sum decomposition and
put $W'_j=U_1\oplus\dots\oplus U_j, W_j=U_{j+1}\oplus\dots\oplus U_r$
for $0\le j <r$, denote by $p_j$ the
projection of $V$ onto $W_j$ with respect to the decomposition
$V=W'_j\oplus W_j$. 

For $1\le j \le r$ let $L_j$ be a lattice on $U_j$ such that
$p_j(L_{j+1})\subseteq W_j$ is a primitive sublattice of
$p_j(\Lambda)\subseteq W_j$ for $0\le j <r$, in particular, $L_1$ is a
primitive sublattice of $U_1$.

Then $\Lambda=L_1\oplus \dots\oplus L_r$ holds.

In particular, if $r=n$ and $L_j=Rv_j$ is free of rank $1$, the vectors $v_1,\ldots,v_n$
form a basis of $\Lambda$.
\end{lemma}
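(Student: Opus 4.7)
My plan is to proceed by induction on $r$, using the primitivity of $L_1$ in $\Lambda$ to peel off the first summand and then invoking the induction hypothesis on $\Lambda_1 := p_1(\Lambda)$. I will work under the natural assumption (explicit in the intended application to HKZ-reduced bases) that each $L_j$ is already contained in $\Lambda$; without this the conclusion would hold only up to an abstract $R$-module isomorphism rather than as an internal direct sum in $V$.

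For the base case $r=1$, $V=U_1$ and $L_1$ is a full-rank lattice on $V$ which is a direct summand of $\Lambda$; writing $\Lambda=L_1\oplus N$ and tensoring with $F$ gives $V = U_1\oplus FN$, forcing $N=0$ and $L_1=\Lambda$. For $r\ge 2$ the key identity is $L_1=\Lambda\cap U_1$: starting from $\Lambda=L_1\oplus N$, a short denominator-clearing argument produces the vector space decomposition $V=U_1\oplus FN$, and the algebraic projection $V\to U_1$ along this decomposition restricts on $\Lambda$ to the projection onto $L_1$, so any $x\in\Lambda\cap U_1$ lies in $L_1$. In particular $p_1|_\Lambda$ has kernel exactly $L_1$.

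Next I would apply the inductive hypothesis to $\Lambda_1=p_1(\Lambda)$, a full-rank lattice on $W_1=U_2\oplus\dots\oplus U_r$, with sublattices $L_2,\ldots,L_r$. This is legitimate because $L_j\subseteq U_j\subseteq W_1$ gives $L_j=p_1(L_j)\subseteq\Lambda_1$, and the factorization $p_j=p_j\circ p_1$ (valid for $j\ge 1$) ensures that $p_j(\Lambda_1)=p_j(\Lambda)$, so the given primitivity conditions for $j\ge 1$ transport verbatim into the corresponding conditions on $\Lambda_1$. The inductive hypothesis then yields $\Lambda_1=L_2\oplus\dots\oplus L_r$ as an internal direct sum in $W_1$.

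Finally, to conclude $\Lambda=L_1\oplus L_2\oplus\dots\oplus L_r$: for any $x\in\Lambda$, write $p_1(x)=x_2+\dots+x_r$ with $x_j\in L_j$; then $x-\sum_{j\ge 2}x_j\in\ker(p_1)\cap\Lambda=U_1\cap\Lambda=L_1$, proving the sum equals $\Lambda$. Directness follows by applying $p_1$ to any relation $\sum_j\ell_j=0$ with $\ell_j\in L_j$: this kills $\ell_1\in U_1$, the inductive directness in $\Lambda_1$ forces $\ell_j=0$ for $j\ge 2$, and consequently $\ell_1=0$. The main subtlety lies in establishing $L_1=\Lambda\cap U_1$ from the primitivity hypothesis and keeping track of the implicit condition $L_j\subseteq\Lambda$; the remainder is routine bookkeeping with projections.
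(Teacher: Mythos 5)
Your proof is correct and follows essentially the same route as the paper's: induction on $r$ applied to the projected lattice $p_1(\Lambda)$, combined with the identification $\ker(p_1)\cap\Lambda=U_1\cap\Lambda=L_1$ coming from the primitivity of $L_1$. You are also right to make explicit the hypothesis $L_j\subseteq\Lambda$, which the paper's argument uses implicitly (for instance when concluding $v-v'\in\Lambda$) and which is genuinely needed for the conclusion to hold as an internal direct sum.
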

\begin{proof}
 We prove this by induction on $r$, beginning with $r=2$.
Let $v\in \Lambda, v=u+p_1(v)$ with $u_1 \in U_1, p_1(v)\in U_2 \cap
p_1(\Lambda)=p_1(L_2)$, hence $p_1(v)=p_1(v')$ with $v'\in L_2$. We
have then $v-v'\in \ker(p_1)\cap\Lambda=U_1\cap\Lambda=L_1$, hence
$v\in L_1\oplus L_2$.

Let now $r>2$ and assume that the assertion is true for decompositions
into $r'<r$ subspaces.
  
Applying this inductive assumption to the lattice $p_1(\Lambda)$ and
the sublattices $p_1(L_j)$ for $2\le j \le r$ we see  that
one has $p_1(\Lambda)=p_1(L_2)\oplus\dots\oplus  p_1(L_r)=p_1(L_2\oplus\dots\oplus
L_r)$.

But then the case $r=2$ with the decomposition $V=U_1 \oplus
(U_2\oplus\dots\oplus U_r)$ shows that indeed $\Lambda=L_1\oplus  (L_2\oplus\dots\oplus
L_r)=L_1\oplus\dots\oplus L_r$ is true.
\end{proof}
\begin{theorem}
  
Let $\Lambda$ be a positive definite $\Z$-lattice. Then $\Lambda$ has
an $HKZ$-reduced basis.
\end{theorem}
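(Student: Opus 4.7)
The plan is to construct $v_1,\ldots,v_n$ inductively, maintaining at each stage that $L_j=\Z v_1+\cdots+\Z v_j$ is a primitive sublattice of $\Lambda$ and that conditions (a) and (b) of Definition \ref{hkz_reduziert} hold for all indices $\le j$. For $j=1$ we take $v_1$ to be a nonzero vector of $\Lambda$ of minimal $Q$-value; such a vector exists because $\Lambda$ is discrete and $\{x\in\R^n\mid Q(x)\le C\}$ is compact, and it is automatically primitive in $\Lambda$.

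For the inductive step, suppose $v_1,\ldots,v_{j-1}$ have been built. First I would verify that $\tilde{p}_{j-1}(\Lambda)$ is a discrete lattice of rank $n-j+1$ in $(\R L_{j-1})^\perp$. Since $L_{j-1}$ is primitive, it can be extended to a $\Z$-basis $v_1,\ldots,v_{j-1},u_j,\ldots,u_n$ of $\Lambda$, and then $\tilde{p}_{j-1}(\Lambda)=\Z\tilde{p}_{j-1}(u_j)+\cdots+\Z\tilde{p}_{j-1}(u_n)$ is freely generated by $n-j+1$ $\R$-linearly independent vectors. Choose $w_j\in\tilde{p}_{j-1}(\Lambda)\setminus\{0\}$ minimizing $Q$, which again exists by compactness; it is primitive in $\tilde{p}_{j-1}(\Lambda)$. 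Pick any preimage $\tilde{v}_j\in\Lambda$ with $\tilde{p}_{j-1}(\tilde{v}_j)=w_j$, so $\tilde{v}_j$ is determined modulo $\Lambda\cap\R L_{j-1}=L_{j-1}$.

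Next I check that $L_j^{(0)}:=L_{j-1}+\Z\tilde{v}_j$ is primitive in $\Lambda$: if $mx\in L_j^{(0)}$ with $x\in\Lambda$ and $m\ge 1$, write $mx=y+a\tilde{v}_j$ with $y\in L_{j-1}$; projecting yields $m\tilde{p}_{j-1}(x)=aw_j$, and primitivity of $w_j$ in $\tilde{p}_{j-1}(\Lambda)$ forces $m\mid a$, after which primitivity of $L_{j-1}$ gives $x\in L_j^{(0)}$. It remains to enforce condition (a). Writing $\tilde{v}_j=v'_j+\sum_{i=1}^{j-1}c_{ij}v'_i$, I size-reduce sequentially for $i=j-1,j-2,\ldots,1$: replace $\tilde{v}_j$ by $\tilde{v}_j-\lfloor c_{ij}\rceil v_i$, which alters the coefficients $c_{kj}$ only for $k\le i$ and forces $|c_{ij}|\le\tfrac12$ without changing $\tilde{p}_{j-1}(\tilde{v}_j)$ or the lattice $L_j^{(0)}$. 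The resulting vector is the desired $v_j$.

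Finally, at $j=n$ the lattice $L_n$ is a primitive sublattice of $\Lambda$ of full rank $n$, hence equals $\Lambda$, and $(v_1,\ldots,v_n)$ is by construction a basis satisfying both HKZ conditions. The main technical points are the discreteness of the successive projections $\tilde{p}_{j-1}(\Lambda)$ and the preservation of primitivity of $L_j$; both hinge on keeping $L_{j-1}$ primitive throughout and on the fact that a shortest vector in a discrete lattice is primitive. Once these are in hand, the size-reduction step is a routine ordered triangular adjustment.
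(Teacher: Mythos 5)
Your proposal is correct and follows essentially the same route as the paper: pick a shortest nonzero vector in each successive projection $\tilde{p}_{j-1}(\Lambda)$ (automatically primitive there), lift it, and then size-reduce modulo $L_{j-1}$ using the Gram--Schmidt coordinate lemma. The only cosmetic difference is that you verify directly, by induction on the primitivity of $L_j$, that the chosen vectors form a basis of $\Lambda$, whereas the paper delegates exactly this point to the lemma on direct sum decompositions proved immediately before the theorem.
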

\begin{proof}
 With notations as in Definition \ref{hkz_reduziert} we let
 $v_1,\ldots,v_n$ be vectors in $\Lambda$ for which $Q(\tilde{p}_{j-1}(v_j))$
 is minimal in $Q(\tilde{p}_{j-1}(\Lambda))\setminus \{0\}$ and  hence
 primitive.
By the Lemma the $v_j$ form a basis of $\Lambda$. Subtracting a
vector $y \in L_{j-1}=\oplus_{i=1}^{j-1}\Z v_i$ from $v_j$ doesn't
change $v'_j=\tilde{p}_{j-1}(v_j)$, and by Lemma
\ref{gramschmidt_koordinaten} we can choose the vector $y$ in such a
way that the modified $v_j$ satisfy $v'_j = v_j-\sum_{i=1}^{j-1}
c_{ij}v'_i$ with $\vert c_{ij}\vert \le \frac{1}{2}$ for $1\le i \le j-1$.
\end{proof}
\begin{theorem} The Gram matrix of a  Hermite-Korkine-Zolotarev
reduced basis is in ${\mathcal S}_n(\frac{4}{3},\frac{1}{2})$.
\end{theorem}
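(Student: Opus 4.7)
The plan is to unpack the Gram matrix $M_\cB$ using the Gram--Schmidt decomposition lemma preceding the definition of the Siegel domain, so that the $c_{ij}$ and $h_j=Q(v'_j)$ of that lemma coincide exactly with those appearing in Definition \ref{hkz_reduziert}. The coefficient bound $|c_{ij}|\le \tfrac{1}{2}$ in the definition of $\cS_n(\tfrac{4}{3},\tfrac{1}{2})$ is then immediate from condition (a) of the HKZ definition. The real content is the ratio estimate $h_j\le \tfrac{4}{3}h_{j+1}$, which I would get from a single well-chosen test vector.

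Concretely, I would apply the minimality property (b) to the vector $w_j:=\tilde p_{j-1}(v_{j+1})\in \tilde p_{j-1}(\Lambda)$. It is nonzero because $v_1,\dots,v_{j+1}$ is part of a basis of $\Lambda$, so $v_{j+1}\notin \R L_{j-1}$. Writing
\begin{equation*}
 v_{j+1} = v'_{j+1} + \sum_{i=1}^{j} c_{i,j+1}\,v'_i
\end{equation*}
and using that $\tilde p_{j-1}$ annihilates $v'_1,\dots,v'_{j-1}$ and fixes $v'_j,v'_{j+1}$, I obtain $w_j = v'_{j+1}+c_{j,j+1}v'_j$. Since $v'_j\perp v'_{j+1}$, this gives
\begin{equation*}
  Q(w_j) = h_{j+1} + c_{j,j+1}^{2}\,h_j.
\end{equation*}

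By the HKZ minimality in (b), $h_j\le Q(w_j)$, and by (a) $c_{j,j+1}^{2}\le \tfrac{1}{4}$; therefore $h_j\le h_{j+1}+\tfrac{1}{4}h_j$, i.e.\ $\tfrac{3}{4}h_j\le h_{j+1}$, which is precisely $h_j\le\tfrac{4}{3}h_{j+1}$. Combined with the coefficient bound, this places $M_\cB$ in $\cS_n(\tfrac{4}{3},\tfrac{1}{2})$.

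There is no real obstacle here; the only points that require care are the bookkeeping identification of the $c_{ij},h_j$ in the two definitions and the verification that $w_j$ is genuinely a nonzero element of the projected lattice $\tilde p_{j-1}(\Lambda)$, which is where the primitivity-type condition (linear independence of the basis) enters.
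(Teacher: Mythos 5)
Your argument is correct and is essentially identical to the paper's proof: both apply the HKZ minimality condition to $\tilde p_{j-1}(v_{j+1})=v'_{j+1}+c_{j,j+1}v'_j$ and use orthogonality plus $|c_{j,j+1}|\le\frac12$ to get $h_j\le h_{j+1}+\frac14 h_j$. Your explicit check that this vector is a nonzero element of $\tilde p_{j-1}(\Lambda)$ is a small point the paper leaves implicit, but the route is the same.
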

\begin{proof} 
By 
$\tilde{p}_j(v_{j+1})=v_{j+1}-c_{j,j+1}v'_j-\sum_{i=1}^{j-1}c_{i,j+1}v'_i$
we have  $\tilde{p}_j(v_{j+1})+c_{j,j+1}\tilde{p}_{j-1}v_j=v_{j+1}-y$
with $y \in L_{j-1}$.

Thus
\begin{eqnarray*}
   Q(\tilde{p}_{j-1}v_j) & \leq & Q(\tilde{p}_{j-1}v_{j+1})\\
   & = & Q(\tilde{p}_jv_{j+1}+c_{j,j+1}\tilde{p}_{j-1}v_j)\\
   & \leq & Q(\tilde{p}_jv_{j+1})+\frac{1}{4} Q(\tilde{p}_{j-1}v_j),
\end{eqnarray*}
which gives $\frac{3}{4} Q(\tilde{p}_{j-1}v_j) \leq Q(\tilde{p}_jv_{j+1})$ or
$h_j \leq \frac{4}{3} h_{j+1}$.
The bound on the $c_{ij}$ is part of the definition of HKZ-reduced.
\end{proof}
A slight weakening of the conditions for being HKZ-reduced has the
advantage of being algorithmically easier to use, it  gives the
conditions of Lenstra, Lenstra and Lovasz for their concept of 
reduction \cite{lll}.
\begin{definition} Let $\frac{1}{4} < \alpha \leq 1$ be given. 
The basis ${\mathcal B} = (v_1,\ldots,v_n)$ of the
  lattice 
$\Lambda$ is called 
$\alpha$-LLL reduced if 
 \begin{enumerate}
 \item $|c_{ij}| \leq \frac{1}{2}$ for all  $i,j$.
\item  $$ \alpha Q(\tilde{p}_{j-1}v_j) \leq Q(\tilde{p}_{j-1}v_{j+1}) \quad
  \text{ for } 1 \leq j \leq n-1.$$
\end{enumerate}
A symmetric matrix $A$ is called $\alpha$-LLL reduced if it is the Gram matrix
of an $\alpha$-LLL reduced basis of a lattice.
\end{definition}
\begin{theorem} The Gram matrix of an  $\alpha$-LLL reduced basis
is  in the Siegel domain ${\mathcal
  S}_n(\frac{1}{\alpha-\frac{1}{4}},\frac{1}{2})$.
Moreover, an $\alpha$-LLL reduced basis satisfies
 \begin{enumerate}
 \item $Q(v_1) \leq (\alpha-\frac{1}{4})^{1-n} \mu_1$.
 \item $Q(v_1) \cdots Q(v_n) \leq (\alpha-\frac{1}{4})^{-n(n-1)/2}
{\det}_B(\Lambda)$.
 \item $Q(v_1) \leq (\alpha-\frac{1}{4})^{\frac{1-n}{2}}
({\det}_B(\Lambda))^{\frac{1}{n}}$.
\end{enumerate}
\end{theorem}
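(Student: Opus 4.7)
Write $\beta := \alpha - \tfrac14$ and abbreviate $h_j := Q(\tilde p_{j-1}v_j) = Q(v'_j)$. Since the $v'_i$ are pairwise $B$-orthogonal and
\[
\tilde p_{j-1}v_{j+1} = \tilde p_j v_{j+1} + c_{j,j+1}v'_j,
\]
one computes $Q(\tilde p_{j-1}v_{j+1}) = h_{j+1} + c_{j,j+1}^2 h_j$. The LLL condition b) together with $|c_{j,j+1}|\le\tfrac12$ then gives
\[
\beta h_j = (\alpha-\tfrac14)h_j \;\le\; h_{j+1}, \qquad 1\le j\le n-1.
\]
This, together with the bound $|c_{ij}|\le\tfrac12$, is precisely what it takes to conclude that the Gram matrix lies in $\mathcal S_n(\tfrac{1}{\alpha-1/4},\tfrac12)$.

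For statement (a), note that $Q(v_1)=h_1$. For an arbitrary nonzero $v\in\Lambda$, expand $v=\sum x_iv_i=\sum\xi_iv'_i$ and let $i_0$ be the largest index with $x_{i_0}\ne 0$; then $\xi_{i_0}=x_{i_0}\in\Z\setminus\{0\}$, whence $Q(v)\ge h_{i_0}$. Iterating $h_{j+1}\ge\beta h_j$ gives $h_{i_0}\ge\beta^{i_0-1}h_1\ge\beta^{n-1}h_1$ (using $0<\beta\le\tfrac34\le 1$), so $\mu_1\ge\beta^{n-1}Q(v_1)$, which rearranges to (a).

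For (b) the plan is to prove by induction on $j$ the sharper bound
\[
Q(v_j)\;\le\;\beta^{-(j-1)}h_j.
\]
Since $Q(v_j)=h_j+\sum_{i<j}c_{ij}^2 h_i$ and $h_i\le\beta^{-(j-i)}h_j$, this amounts to the elementary inequality
\[
1+\tfrac14\sum_{k=1}^{j-1}\beta^{-k}\;\le\;\beta^{-(j-1)}.
\]
I would verify it by setting $s_j := \beta^{-(j-1)} - 1 - \tfrac14(\beta^{-1}+\dots+\beta^{-(j-1)})$, noting $s_1=0$ and computing $s_{j+1}-s_j=\beta^{-j}(\tfrac34-\beta)\ge 0$ since $\beta\le\tfrac34$. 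This is the only substantive technical point in the whole proof. Once the pointwise bound is established, multiplying over $j=1,\dots,n$ and using $\det_B(\Lambda)=\prod_j h_j$ (the transition matrix from the $v_j$ to the $v'_j$ is unitriangular) yields
\[
\prod_{j=1}^n Q(v_j)\;\le\;\beta^{-\sum_{j=1}^n(j-1)}\prod_j h_j\;=\;(\alpha-\tfrac14)^{-n(n-1)/2}\det_B(\Lambda).
\]

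Finally for (c), applying $h_j\ge\beta^{j-1}h_1$ to $\det_B(\Lambda)=\prod_j h_j$ gives $\det_B(\Lambda)\ge\beta^{n(n-1)/2}h_1^n$, and taking $n$-th roots yields $Q(v_1)=h_1\le(\alpha-\tfrac14)^{(1-n)/2}(\det_B(\Lambda))^{1/n}$, as required.
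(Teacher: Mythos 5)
Your proposal is correct and follows essentially the same route as the paper: the identity $Q(\tilde p_{j-1}v_{j+1})=h_{j+1}+c_{j,j+1}^2h_j$ combined with $|c_{j,j+1}|\le\frac12$ to get $(\alpha-\frac14)h_j\le h_{j+1}$, the "largest nonzero coordinate" argument for (a), the pointwise bound $Q(v_j)\le(\alpha-\frac14)^{-(j-1)}h_j$ for (b), and $\det_B(\Lambda)=\prod_j h_j$ for (c). The only cosmetic difference is that you verify the elementary inequality $1+\frac14\sum_{k=1}^{j-1}\beta^{-k}\le\beta^{-(j-1)}$ by telescoping rather than by summing the geometric series as the paper does; both hinge on $\beta=\alpha-\frac14\le\frac34$.
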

\begin{proof} From
  \begin{eqnarray*}    
  Q(\tilde{p}_{j-1}v_j) & \leq & \alpha^{-1}Q(\tilde{p}_{j-1}v_{j+1})\\
      & = & \alpha^{-1}Q(\tilde{p}_jv_{j+1}+c_{j,j+1}\tilde{p}_{j-1}v_j)\\
  & \leq & \alpha^{-1}(Q(\tilde{p}_jv_{j+1})+\frac{1}{4} Q(\tilde{p}_{j-1}v_j))
\end{eqnarray*}
 we obtain  
 \begin{eqnarray*}
  Q(\tilde{p}_{j-1}v_j)(1-\frac{\alpha^{-1}}{4}) &\leq&
   \alpha^{-1}Q(\tilde{p}_jv_{j+1}),\\
   Q(\tilde{p}_{j-1}v_j) &\leq& \frac{1}{\alpha-\frac{1}{4}}
   Q(\tilde{p}_jv_{j+1}),
 \end{eqnarray*}
which says that the Gram matrix is indeed in
${\mathcal S}_n((\alpha-\frac{1}{4})^{-1},\frac{1}{2})$.
We see further 
 $$Q(\tilde{p}_{j-1}v_j) \geq (\frac{1}{\alpha-\frac{1}{4}})^{-j+1}Q(v_1),$$
hence
 $$\min_j Q(v'_j) \geq (\alpha-\frac{1}{4})^{n-1}Q(v_1)$$
or
$ Q(v_1)  \leq  (\alpha-\frac{1}{4})^{1-n} \min_j Q(v'_j)$.

Let now $\nullvek\ne x \in L$ be any non zero vector.
Let  $j$ be such that  $x=\sum_{i=1}^j a_iv_i=a_jv_j+y_1$ holds with 
$a_j \ne 0, y_1 \in L_{j-1}$.
We have then  $x=a_jv'_j+y_2$ with  $y_2 \in \R L_{j-1}$, hence 
$Q(x)=a_j^2Q(v'_j)+Q(y_2) \ge Q(v'_j)$. 
This implies $\min_j Q(v'_j)\le \mu_1$, and the assertion in a) is proven.

For b) and c) notice that 
 $$Q(v_1) \leq  (\alpha-\frac{1}{4})^{1-j}Q(\tilde{p}_{j-1}v_j)$$
implies 
 \begin{eqnarray*}
 Q(v_1)^n & \leq & (\alpha-\frac{1}{4})^{-\sum_{i=0}^{n-1}i} 
   \prod_{j=1}^{n} Q(v'_j)\\
   & = & (\alpha-\frac{1}{4})^{-{\frac{n(n-1)}{2}}} {\det}_B(\Lambda).
  \end{eqnarray*}
We have therefore
\begin{equation*}
  Q(v_1)  \leq  (\alpha-\frac{1}{4})^{-\frac{n-1}{2}}
   {\det}_B(\Lambda)^{\frac{1}{n}}.
\end{equation*}
Moreover, 
\begin{eqnarray*}
    Q(v_j) & \leq & Q(\tilde{p}_{j-1}v_j)+\frac{1}{4} \sum_{i=1}^{j-1}
   Q(\tilde{p}_{i-1}v_i)\\
  & \leq & Q(\tilde{p}_{j-1}v_j) (1+\frac{1}{4} 
   \sum_{i=1}^{j-1}(\alpha-\frac{1}{4})^
   {-j+i}),
\end{eqnarray*}
which implies
\begin{eqnarray*}
    Q(v_j) & \leq & Q(v'_j) (1+\frac{1}{4} \frac{(\alpha-\frac{1}{4})^{1-j}
   -1}{1-(\alpha-\frac{1}{4})})\\
  & = & Q(v'_j)(1+\frac{(\alpha-\frac{1}{4})^{1-j}-1}{5-4\alpha})\\
  & \leq & Q(v'_j)(\alpha-\frac{1}{4})^{1-j}.
\end{eqnarray*}
From this we get
 $$\prod_{i=1}^{n} Q(v_i) \leq \det(M_{\mathcal B}) \cdot (\alpha-
  \frac{1}{4})^{\frac{-n(n-1)}{2}}$$
as asserted.
\end{proof}
\begin{remark} If one just says LLL-reduced without specifying an
  $\alpha$, one usually means the case 
  $\alpha = \frac{3}{4}$. In that case the estimate for the length of
  the first basis vector becomes
$Q(v_1) \leq 2^{n-1}\mu_1$.
\end{remark}
\begin{proposition} Let  $v_1,\ldots,v_n$ be  a basis of the lattice
  $\Lambda$ with integral Gram matrix.
Then for $\frac{1}{4}< \alpha < 1$ the following algorithm produces an
 $\alpha$-LLL reduced
basis $w_1,\ldots, w_n$ in a number of steps which is polynomial in 
$n$ and in $\max_{1\leq i\leq n}\log Q(v_i)$. 
 \begin{itemize}
 \item[I)] $w_i = v_i$ \quad for $i=1,\ldots,n$
 \item[II)] For $i=1,\ldots,n$ write
 $$w_i-\tilde{p}_{i-1} w_i = \sum_{j=1}^{i-1} \alpha_{ji} w_j,$$
where $\tilde{p}_j$ denotes the  orthogonal projection onto $(\R w_1+\cdots + \R w_j)
 ^{\perp}$.

Let $\tilde{\alpha}_{ji}$ be the closest integer to  $\alpha_{ji}$, 
replace $w_i$ by $w_i-\sum_{j=1}^{i-1} \tilde{\alpha}_{ji} w_j$.

(After this step $w_i-\tilde{p}_{i-1}w_i = 
\sum_{j=1}^{i-1} \alpha_{ji}w_j$
with $|\alpha_{ji}| \leq \frac{1}{2}$ for all $i,j$.)
 \item[III)] If $Q(\tilde{p}_{j-1}w_j) \leq 
\alpha^{-1}Q(\tilde{p}_{j-1}w_{j+1})$
holds for all  $j$, terminate, output the $w_i$ as an $\alpha$-LLL
reduced basis.

Otherwise, let $k$ be the first
index with  $Q(\tilde{p}_{k-1}w_{k+1})< \alpha Q(\tilde{p}_{k-1}w_k)$.

Interchange  $w_k$ and $w_{k+1}$ and return to the beginning of step
II).

(We have then  for $i \leq k$  $|\alpha_{ji}|\leq \frac{1}{2}$ for
$j \leq i-1$, and for $i=k+1$ one has  $|\alpha_{j,k+1}| \leq \frac{1}{2}$
for all $j < k$. If one performs the transformation 
$w_{k+1} \longmapsto w_{k+1}-\sum_{j=1}^{k} \tilde{\alpha}_{j,k+1} w_j$ 
the vector  $\tilde{p}_{k-1}w_{k+1}$ changes and it is possible that
the comparison of
 $Q(\tilde{p}_{j-1}w_j)$ with  
$\alpha^{-1}Q(\tilde{p}_{j-1}w_{j+1})$ in
step  III) may stop at  $j=k$ again.)
 \end{itemize}
\end{proposition}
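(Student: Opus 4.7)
The plan is to separate the statement into two tasks: proving correctness (the algorithm, when it terminates, outputs an $\alpha$-LLL reduced basis), and proving termination in polynomially many iterations. Correctness of the output is immediate from the design: Step II enforces condition a) of the definition of $\alpha$-LLL reduced, namely $|c_{ij}|\le\tfrac12$, by subtracting integer multiples of $w_1,\dots,w_{i-1}$ from $w_i$ (one checks by descending induction on $j<i$ that this can be done without spoiling already-established bounds, because subtracting $\tilde\alpha_{ji}w_j$ only changes the coefficients $\alpha_{\ell i}$ with $\ell<j$, leaving $\alpha_{\ell i}$ for $\ell\ge j$ unaffected). The loop in Step III exits only when $Q(\tilde p_{j-1}w_j)\le\alpha^{-1}Q(\tilde p_{j-1}w_{j+1})$ for every $j$, which is exactly condition b). So everything hinges on bounding the number of passes through the loop.

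The key device is the potential
\begin{equation*}
D \;:=\; \prod_{i=1}^{n} d_i, \qquad d_i \;:=\; \det\bigl(b(w_j,w_k)\bigr)_{1\le j,k\le i} \;=\; \prod_{j=1}^{i} b(v'_j,v'_j),
\end{equation*}
where $d_i$ is the Gram determinant of the sublattice $L_i=\mathbb Zw_1+\dots+\mathbb Zw_i$. Two properties are crucial. First, Step II does not alter any $v'_j$ (subtracting vectors from $L_{j-1}$ off $w_j$ leaves $\tilde p_{j-1}w_j$ unchanged), hence leaves every $d_i$, and therefore $D$, unchanged. Second, a swap of $w_k$ and $w_{k+1}$ in Step III leaves $L_i$ (as a set) unchanged for every $i\ne k$, so it alters only the single factor $d_k$. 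A direct computation shows that after the swap the new value $d_k^{\text{new}}$ equals $d_{k-1}$ times the new $Q(\tilde p_{k-1}w_k)=Q(\tilde p_{k-1}w_{k+1})$, which by the hypothesis triggering the swap is strictly less than $\alpha$ times the old $Q(\tilde p_{k-1}w_k)$; hence $d_k^{\text{new}}<\alpha\, d_k^{\text{old}}$ and consequently
\begin{equation*}
D^{\text{new}} \;<\; \alpha\, D^{\text{old}}.
\end{equation*}

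Now I invoke integrality: because $b$ is integer valued on $\Lambda$, each sublattice $L_i$ has an integral Gram matrix, so $d_i\in\mathbb Z_{\ge 1}$, and therefore $D\ge 1$ throughout the execution. On the other hand, by Hadamard's inequality $d_i\le\prod_{j=1}^{i}Q(w_j)\cdot 2^i\le (2M)^{i}$ where $M=\max_j Q(v_j)$, so the initial $D$ is bounded by $(2M)^{n(n+1)/2}$. Since each swap multiplies $D$ by a factor $<\alpha<1$ and $D\ge 1$, the number of swaps is at most $\log_{1/\alpha}D_{\text{init}}=O\bigl(n^2\log M\bigr)$. Each pass through Step II between two swaps performs at most $O(n^2)$ arithmetic operations, so the total number of arithmetic steps is polynomial in $n$ and $\log M$, as claimed.

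The main obstacle will be the final book-keeping: the above counts arithmetic operations, but for a genuine polynomial-time guarantee one must also bound the bit-length of the rationals occurring in the running $w_i$ and in the $\alpha_{ji}$. The standard way out is to observe that every intermediate $\alpha_{ji}$ can be written with denominator dividing $d_{j-1}d_j$ and that after Step II one has $|\alpha_{ji}|\le\tfrac12$, while the $w_i$ stay in $\Lambda$; since $d_i\le(2M)^n$ at all times, the bit-lengths remain polynomial. I would present this as a short lemma after the potential-function argument, since it is the only point where the naive count could conceivably fail.
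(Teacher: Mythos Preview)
Your proposal is correct and follows essentially the same route as the paper: the same potential $D=\prod_k D_k$ with $D_k$ the Gram determinant of $\mathbb Zw_1+\dots+\mathbb Zw_k$, the same observations that Step~II leaves $D$ fixed while each swap multiplies $D$ by a factor $<\alpha$, and the same lower bound $D\ge 1$ from integrality to cap the number of swaps at $O(n^2\log M)$. The paper likewise defers the bit-size analysis to the original LLL reference rather than proving it in place.
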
 
\begin{proof}
We put 
 $$\begin{array}{lll}
  D(w_1,\ldots,w_n) & = & \prod_{i=1}^{n} Q(\tilde{p}_{i-1}w_i)^{n-i+1}
   \vspace{0.2cm}\\
  & = & \prod_{k=1}^{n} \det((B(w_i,w_j))_{i,j=1}^k) = \prod_{k=1}^{n} D_k
 \end{array}$$
Step II of the algorithm doesn't change $D$. 

From $D_j = \prod_{i=1}^{j} Q(\tilde{p}_{i-1}w_i)$ and $Q(\tilde{p}_{j-1}w_j) >
\alpha^{-1}Q(\tilde{p}_{j-1}w_{j+1})$ we see that 
step III  replaces $D_j$  by  $ D'_j <
\alpha D_j$.

Each application of step II therefore reduces 
$D(w_1,\ldots,w_n)$ at least by the factor  $\alpha<1$. Since 
$D(w_1,\ldots,w_n) \geq 1$ holds, step III can be applied at most 
$\log_{\alpha}D$ times, where we put  $D: = D(v_1,\ldots,v_n)$.

On the other hand, we have 
\begin{eqnarray*}
   D_j(v_1,\ldots,v_n) & = & \prod_{i=1}^{n} Q(v'_j)\\
  & \leq & \prod_{i=1}^{j} Q(v_j),
\end{eqnarray*}
which implies
 $$D(v_1,\ldots,v_n) \leq ((\max_{i} Q(v_i))^{\frac{n(n+1)}{2}}.$$
Step III is therefore applied at most 
 $$C \cdot \frac{n(n+1)}{2} \log(\max_{i} 2Q(v_i))$$
times.
Since Step II consists of
 $\sim n^2$ operations, we see that the total number of operations is
 indeed polynomial in $n$ and in the $Q(v_i)$.  The running time of
 each of these operations depends on the size of the $Q(v_j'),Q(w_j)$ occurring. 
A computation of the orthogonal projections occurring shows that all
the coefficients $\alpha_{ji}$ and hence all the  
$Q(v_j'),Q(w_j)$ can be bounded in the same way.
 \end{proof}

We consider the following application:

\begin{theorem} Let $\alpha_1,\ldots,\alpha_n \in \Bbb Q$ and 
$0 < \epsilon < 1$ be given. 
Then one can find 
 $p_1,\ldots,p_n$, $q \in \Z$ satisfying 
 $$|p_i-q\alpha_i| \leq \epsilon \quad \mbox{and} \quad 
 0 < q \leq 2^{\frac{n(n+1)}{4}} \epsilon^{-n}$$
in a number of steps which is polynomial in $\frac{1}{\epsilon}$ and
the logarithms of the $\vert \alpha_i\vert$.
\end{theorem}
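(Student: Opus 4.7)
The plan is to reduce simultaneous Diophantine approximation to finding a short nonzero vector in a carefully chosen lattice and then to invoke the LLL algorithm of the preceding proposition. Set $\delta := 2^{-n(n+1)/4}\epsilon^{n+1}$ (or a rational number of the same order of magnitude, to avoid irrationalities when $n(n+1)/4\notin\Z$) and consider the full-rank lattice $\Lambda\subseteq\R^{n+1}$ with basis
\[
v_i = \e_i\quad(1\le i\le n),\qquad v_{n+1} = (\alpha_1,\ldots,\alpha_n,\delta)^t.
\]
Its basis matrix is upper triangular with diagonal $(1,\ldots,1,\delta)$, so ${\det}_B(\Lambda) = 2^{-(n+1)}\delta^2$. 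A general lattice vector has the form
\[
w = q\,v_{n+1}-\sum_{i=1}^n p_i\e_i = (q\alpha_1-p_1,\,\ldots,\,q\alpha_n-p_n,\,q\delta)^t
\]
with uniquely determined $p_i,q\in\Z$, so short nonzero vectors in $\Lambda$ correspond exactly to good simultaneous approximations together with a small multiplier $q$.

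I would first clear the denominators of the $\alpha_i$ and of $\delta$ to obtain an integrally equivalent lattice with integral Gram matrix whose entries are bounded polynomially in the input data, then run the LLL algorithm with parameter $\alpha=3/4$. By the preceding proposition this yields in time polynomial in $n$, $\log\max_i|\alpha_i|$ and $\log(1/\epsilon)$ a reduced basis $w_1,\ldots,w_{n+1}$. The bound of the theorem on LLL-reduced bases combined with ${\det}_B(\Lambda) = 2^{-(n+1)}\delta^2$ and the specific value of $\delta$ gives
\[
\|w_1\|^2 = 2Q(w_1) \le 2^{n/2}\delta^{2/(n+1)} = \epsilon^2,
\]
hence $\|w_1\|\le\epsilon$.

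I would finally read off the integers $p_1,\ldots,p_n,q$ representing $w_1$ in the basis above. The bound $\|w_1\|\le\epsilon$ yields coordinate-wise $|q\alpha_i-p_i|\le\epsilon$ for every $i$ together with $|q|\delta\le\epsilon$, hence $|q|\le\epsilon/\delta = 2^{n(n+1)/4}\epsilon^{-n}$. The case $q=0$ is ruled out because then $w_1\in\Z^{n+1}\setminus\{0\}$ would have Euclidean length at least $1>\epsilon$; replacing $w_1$ by $-w_1$ (and the $p_i$ by their opposites) if necessary one arranges $q>0$. The only delicate point is the balancing act in the choice of $\delta$: it must be small enough for the approximation bound $\|w_1\|\le\epsilon$ to hold, yet large enough to keep $|q|=|(w_1)_{n+1}|/\delta$ below $2^{n(n+1)/4}\epsilon^{-n}$, and $\delta=2^{-n(n+1)/4}\epsilon^{n+1}$ is precisely the value that makes the two inequalities balance.
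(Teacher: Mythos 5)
Your proposal is correct and follows essentially the same route as the paper: the same $(n+1)$-dimensional lattice with basis $\e_1,\ldots,\e_n$ and $(\alpha_1,\ldots,\alpha_n,\delta)^t$ where your $\delta$ equals the paper's $\epsilon/Q$ with $Q=2^{n(n+1)/4}\epsilon^{-n}$, followed by an application of the LLL bound $Q(w_1)\le 2^{n/2}({\det}_B\Lambda)^{1/(n+1)}$. You are in fact somewhat more careful than the paper, which glosses over the rationality of $\delta$, the exclusion of $q=0$, the normalization of $q>0$, and the exact powers of $2$ coming from the convention $B=b/2$.
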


\begin{proof}
consider the matrix
 $$Y = \begin{pmatrix}
  1 & 0 & \cdots & 0 & \alpha_1\\
  & \ddots & & & \vdots\\
  0 &  & & 1 & \alpha_n\\
  & & & & \epsilon/ Q
 \end{pmatrix} $$
for some  $Q > 0$
and let 
 $\Lambda$ be the lattice spanned by the columns  $v_1,\ldots,v_{n+1}$
of $Y$.
 
For a vector  ${w} = \sum_{i=1}^{n+1}p_iv_i$ with 
$\sqrt{Q(w)} \leq \epsilon$ one has 
$\sqrt{Q(w)} \geq |p_i-q\alpha_i|$ ($i = 1,\ldots,n$) 
with  $q=-p_{n+1}$ and
$\epsilon \geq \sqrt{Q(w)} \geq \epsilon q/Q$, hence $q \leq Q$.

Using  $Q=2^{\frac{n(n+1)}{4}} \epsilon^{-n}$ we know by the
proposition that we can find 
$w \in \Lambda$ satisfying 
 $$\begin{array}{lll}
  Q(w) & \leq & 2^{\frac{(n-\alpha)}{2}} \det Y^{\frac{2}{n}}
  \vspace{0.2cm}\\
 & \leq & 2^{\frac{n}{2}} (\frac{\epsilon}{Q})^{\frac{2}{n+1}}
 \end{array}$$
in polynomial time.
We have then $\sqrt{Q(w)} \leq \epsilon$, hence  $q \leq 2^{\frac{n(n+1)}{4}}
\epsilon^{-n}$, and the inequalities $|p_i-q\alpha_i| \leq \epsilon $
are satisfied.
\end{proof}


\chapter{Reduction Theory of Indefinite Quadratic Forms}
The reduction theory of indefinite quadratic forms is more complicated
than that of definite forms because the finiteness arguments used in
the latter case are not valid here. Hermite's idea was to play the
problem back to the study of associated definite quadratic forms.
We formulate most of the theory in terms of matrices. As usual we
write $A[T]:=^t{}T A T$ for $A\in M_n^{\sym}(\R), T \in M(n\times r,
\R)$, in particular for $T=\x \in \R^n=M(n\times 1, \R)$. We also
write $Q_A(\x)=\frac{1}{2}A[\x]$.

\section{The space of majorants}
\begin{definition}
 Let $A\in M_n^{\sym}(\R)$ be a non singular symmetric matrix. A positive
 definite matrix $P\in M_n^{\sym}(\R)$ is called a {\em (Hermite) majorant} of
 $A$ if 
 \begin{equation*}
A[\x]={\vert} ^t{}\x A \x \vert \le ^t{}\x P \x=P[\x]  \text{ for all } \x \in \R^n.
 \end{equation*}
Let the positive definite matrices in $M_n^{\sym}(\R)$ be partially
ordered by
\begin{equation*}
  P_1\le P_2 \text{ if and only if }P_1[\x]\le P_2[\x] \text{ for all
  } \x \in \R^n.
\end{equation*}
Then a majorant $P$ of $A$ is called a {\em minimal majorant} if and only if
it is minimal with respect to this ordering among the majorants of $A$.

The set of minimal majorants of $A\in M_n^{\sym}(\R)$ is denoted by
$\mfH(A)$ and is called the {\em space of (minimal) majorants} of $A$.  
\end{definition}
\begin{lemma}
  With notations as above the following statements are equivalent:
  \begin{enumerate}
  \item $P\in \mfH(A)$.
\item There is a decomposition $\R^n=V_1\oplus V_2$ of $\R^n$ into
  subspaces $V_1,V_2$ such that $V_1,V_2$ are orthogonal to each other
  with respect to both $Q_A,Q_P$ (respectively their associated
  symmetric bilinear forms $b_A, b_P$) and such that 
  \begin{eqnarray*}
    Q_A\vert_{V_1}&=&Q_P\vert_{V_1}\\
Q_A\vert_{V_2}&=&-Q_P\vert_{V_2}.
  \end{eqnarray*}
\item There exists $T\in GL_n(\R)$ satisfying
  \begin{equation*}
    A[T]=E_{a,b}, P[T]=E_n,
  \end{equation*}
where $(a,b)$ is the signature of $A$ and $E_{a,b}$ denotes the
diagonal matrix with $a$ entries $+1$ and $b$ entries $-1$.
\item $(AP^{-1})^2=E_n$.
  \end{enumerate}
\end{lemma}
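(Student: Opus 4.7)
My plan is to reduce every statement to a property of the single symmetric matrix $B := P^{-1/2} A P^{-1/2}$, where $P^{1/2}$ is the positive symmetric square root. The substitution $\y = P^{1/2} \x$ is a linear isomorphism of $\R^n$ that turns the majorant inequality $|A[\x]| \le P[\x]$ into $|\y^t B \y| \le \y^t \y$, so by the spectral theorem $P$ majorizes $A$ exactly when every eigenvalue of $B$ lies in $[-1,1]$.

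The equivalences among (b), (c), (d) are then essentially formal. From (c) I would compute $AP^{-1} = (T^t)^{-1} E_{a,b} T^t$, whose square is $E_n$, giving (d); conversely, (d) rearranges to $AP^{-1}A = P$, hence $B^2 = P^{-1/2}(AP^{-1}A)P^{-1/2} = E_n$, so $B$ is a symmetric involution, which I orthogonally diagonalize as $U^t B U = E_{a',b'}$ and take $T := P^{-1/2} U$. Sylvester's law of inertia forces $(a',b') = (a,b)$, recovering (c). The equivalence (b) $\Leftrightarrow$ (c) comes from reading the first $a$ and last $b$ columns of $T$ as $P$-orthonormal bases of $V_1, V_2$, and conversely assembling $T$ from $P$-orthonormal bases of the pieces.

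The heart of the argument is (a) $\Leftrightarrow$ (d). Writing the spectral decomposition $B = \sum_i \lambda_i u_i u_i^t$ with $u_i$ orthonormal and $|\lambda_i| \le 1$, condition (d) asserts $|\lambda_i| = 1$ for every $i$. If some $|\lambda_{i_0}| < 1$, I would pick $t \in (|\lambda_{i_0}|, 1)$ and set
\[
B' := \sum_{i \ne i_0} u_i u_i^t + t\, u_{i_0} u_{i_0}^t, \qquad P' := P^{1/2} B' P^{1/2}.
\]
Expanding $\y = \sum_i c_i u_i$ gives $|B[\y]| \le \sum_i |\lambda_i| c_i^2 \le B'[\y] \le \y^t \y$, so $P'$ is a majorant of $A$ with $P' \le P$ and $P'[\x] < P[\x]$ at $\x = P^{-1/2} u_{i_0}$, contradicting minimality. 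Conversely, assume all $|\lambda_i| = 1$ and let $P' \le P$ be any majorant. Setting $\tilde B' := U^t P^{-1/2} P' P^{-1/2} U$, I get $\tilde B' \le E_n$, and evaluating the majorant inequality at $\y = U\e_i$ forces $\tilde B'_{ii} \ge |\lambda_i| = 1$. Hence $\operatorname{tr}(\tilde B') \ge n$ while every eigenvalue of $\tilde B'$ is at most $1$, which pins every eigenvalue at $1$. Thus $\tilde B' = E_n$ and $P' = P$, so $P$ is minimal.

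The main obstacle will be this minimality step: in the forward direction one must engineer the strictly smaller majorant using the spectral decomposition of $B$ (rather than any ad hoc perturbation), and the reverse direction requires trapping a competing majorant $P'$ that need not share eigenvectors with $B$. The key trick in both cases is to transport everything uniformly to the $P$-orthonormal frame in which $B$ diagonalizes and to exploit the trace/eigenvalue constraint there; once this is set up, every other assertion follows mechanically from the square-root substitution.
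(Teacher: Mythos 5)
Your proof is correct and follows essentially the same route as the paper: simultaneous diagonalization of $A$ and $P$ (which you realize via $B=P^{-1/2}AP^{-1/2}$ and the spectral theorem, reducing to $P=E_n$, $A$ diagonal), together with the same direct matrix computation for (c) $\Leftrightarrow$ (d). The only difference is one of rigor, in your favor: the paper dismisses the minimality equivalence as ``clear,'' whereas you supply the necessary perturbation construction for the forward direction and the trace argument that traps an arbitrary competing majorant $P'\le P$ (which need not be co-diagonalizable with $A$) in the reverse direction.
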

\begin{proof}
By linear algebra we can diagonalize $A$ and $P$ simultaneously by a
suitable change of basis $A \mapsto A[T], P\mapsto P[T]$, we may
therefore assume 
\begin{eqnarray*}
 Q_M(\x)&=&\sum_{i=1}1^a x_i^2-\sum_{j=1}^bx_{i+a}^2\\ 
Q_P(\x)&=&\sum_{i=1}^n c_ix_i^2
\end{eqnarray*}
with $c_i \in \R, c_i>0$. It is then clear that $P$ is a minimal
majorant of $A$ if and only if all $c_i$ are $1$, it is equally clear
that this condition is equivalent to the validity of both b) and c).

If c) is satisfied with $T\in GL_n(\R)$ we have 
\begin{eqnarray*}
A P^{-1}AP^{-1}&=&(^t{}T^{-1}E_{a,b}T^{-1})(T
                   ^t{}T)(^t{}T^{-1}E_{a,b}T^{-1})(T ^t{}T)\\
&=&^t{}T^{-1}E_n ^t{}T\\
&=&E_n,
\end{eqnarray*}
hence d). If conversely d) holds it is also true for $A[T], P[T]$ in
place of $A,T$, and we can again assume without loss of
generality that $A$ and $P$ are diagonal as above. But then the
condition d) implies that all the $c_i^2$ and hence the $c_i$ are $1$,
so that c) is satisfied.
\end{proof}
\begin{corollary}
  \begin{enumerate}
  \item  There is a natural bijection between $\mfH(A)$ and the set of
 subspaces $V_1$ of $\R^n$ which are maximal positive definite with
 respect to $Q_A$. It is given by associating to $P\in \mfH(A) $ the
 radical of the quadratic form $Q_A-Q_P$ and to $V_1$ the matrix $P$
 associated to the quadratic form $Q_P(\x)=Q_A(p_1\x)-Q_A(p_2\x)$,
 where $p_1,p_2$ are the orthogonal projections onto $V_1$ and onto
 the orthogonal complement $V_2$ of $V_1$ with respect to $Q_A$
 respectively. 
\item The orthogonal group $O_{Q_A}(\R)$ acts transitively on
  $\mfH(A)$, the stabilizer of $P\in \mfH(A)$ is conjugate to
  $O_{a}(\R)\times O_b(\R)$, where $O_m(\R)$ denotes the orthogonal
  group of the standard scalar product on $\R^m$. 
  \end{enumerate}
\end{corollary}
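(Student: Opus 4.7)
The plan is to reduce part (a) to the characterizations of minimal majorants in the preceding lemma, and part (b) to Witt's extension theorem (Theorem \ref{extension_theorem}) combined with the normal form of condition (c) of that lemma.

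For (a), starting from $P\in\mfH(A)$, condition (b) of the previous lemma supplies a splitting $\R^n=V_1\oplus V_2$ which is orthogonal for both $b_A$ and $b_P$ and satisfies $Q_A|_{V_1}=Q_P|_{V_1}$, $Q_A|_{V_2}=-Q_P|_{V_2}$. Since $P$ is positive definite, $V_1$ is positive definite and $V_2$ is negative definite for $Q_A$, while condition (c) pins down $\dim V_1=a$, so $V_1$ is maximal positive definite. To identify $V_1$ with $\rad(Q_A-Q_P)$ I would compute $(b_A-b_P)(v,w)$ for $v$ in either summand and $w=w_1+w_2$: for $v\in V_1$ the cross terms vanish by biorthogonality and the $V_1$-components cancel because the restrictions coincide, giving $0$; for $0\ne v\in V_2$ one obtains $-2b_P(v,w_2)$, which is nonzero for suitable $w_2$ since $b_P$ is positive definite on $V_2$. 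Thus $\rad(Q_A-Q_P)=V_1$. Conversely, given a maximal positive definite subspace $V_1$, set $V_2=V_1^{\perp}$ with respect to $b_A$; Theorem \ref{orthogonalcomplements} applies because $V_1$, being positive definite, is regular, and maximality of $V_1$ forces $V_2$ to be negative definite. Defining $Q_P(\x):=Q_A(p_1\x)-Q_A(p_2\x)$ then yields a positive definite form verifying condition (b) of the previous lemma, so the resulting $P$ lies in $\mfH(A)$, and the two assignments are mutually inverse by construction and the radical computation above.

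For (b), the $O_{Q_A}(\R)$-action on $\mfH(A)$ transports under the bijection of (a) to $\sigma\cdot V_1:=\sigma(V_1)$ on the set of maximal positive definite subspaces, since isometries of $Q_A$ preserve $b_A$-orthogonality and positive definiteness. Given two such subspaces $V_1^{(1)},V_1^{(2)}$, both are regular of dimension $a$ and both isometric to the standard positive definite space of rank $a$; Witt's extension theorem therefore produces $\sigma\in O_{Q_A}(\R)$ mapping one to the other, proving transitivity. For the stabilizer of $P\in\mfH(A)$, an isometry $\sigma\in O_{Q_A}(\R)$ fixes $P$ if and only if it preserves the pair $(V_1,V_2)$; choosing $T\in GL_n(\R)$ as in condition (c) of the previous lemma simultaneously diagonalises $A[T]=E_{a,b}$ and $P[T]=E_n$, and conjugation by $T^{-1}$ identifies $\Stab(P)$ with the subgroup of $GL_n(\R)$ preserving both $E_{a,b}$ and $E_n$, which is exactly the block-diagonal group $O_a(\R)\times O_b(\R)$.

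The main obstacle I anticipate is the bookkeeping in the stabilizer step: one must be careful whether the action is by $P\mapsto P[\sigma]$ or its inverse, and which conjugation is used to land the stabilizer literally inside $O_a(\R)\times O_b(\R)$. Since the statement only claims conjugacy within $GL_n(\R)$, the simultaneous diagonalisation provided by the preceding lemma should suffice to finish the argument cleanly.
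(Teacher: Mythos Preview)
Your proposal is correct and follows essentially the same route as the paper: part (a) is deduced from conditions (b) and (c) of the preceding lemma (the paper simply says it ``follows directly from the lemma and its proof''), and part (b) uses Witt's extension theorem for transitivity together with the simultaneous normal form $A[T]=E_{a,b}$, $P[T]=E_n$ to identify the stabilizer. Your write-up is considerably more explicit than the paper's two-sentence proof, but the underlying argument is the same.
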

\begin{proof}
  The first assertion follows directly from the lemma and its proof.
  For the second assertion we use Witt's extension theorem (Theorem
  \ref{extension_theorem}) to find for given totally positive spaces
  $V_1, V_1'$ a $\phi \in O_{Q_A}(\R)$ with
  $\phi(V_1)=V_1'$. Considering the splitting $\R^n=V_1\oplus V_2$
  associated to $P\in \mfH(A)$ we may choose bases of $V_1,V_2$ with
  respect to which $Q_M$  has matrix $E_a$ resp.\ $-E_b$, in the
  basis of $V$ obtained from these the stabilizer of $P$ has the required shape. 
\end{proof}
\begin{remark}
 The stabilizer in the corollary
 is obviously a compact subgroup of the orthogonal group
 $O_{Q_A}(\R)$ of the real quadratic form of signature $(a,b)$. In
 fact it is maximal compact, and every compact subgroup $K$ is contained
 in a 
 conjugate of it. To see this, denote by $\langle,\rangle$   a
  scalar product on $V=\R^n$ which is invariant under the action of
  $K$, such a scalar product can be obtained from the standard scalar
  product by integration over $K$. Since the symmetric bilinear form
  $b_A$ associated to $Q_A$ is non degenerate, there exists a unique 
  isomorphism $f:V \to V$ satisfying $\langle f(v),w \rangle=b_A(v,w)$
  for all $v,w \in V$.
In view of 
\begin{equation*}
  \langle f(v),w \rangle =b_A(v,w)=b_A(w,v)=\langle f(w),v \rangle
\end{equation*}
the map $f$ is self adjoint with respect to $\langle,\rangle$ so that
$V$ is the orthogonal (with respect to $\langle,\rangle$) sum of the
eigenspaces $V_\lambda$  of $f$.
 Moreover,  we
have for $k\in K$ and $v,w \in V$
\begin{equation*}
  \langle kf(v),w\rangle=\langle f(v),k^{-1}w\rangle=b_A(v,
  k^{-1}w)= b_A(kv,w),
\end{equation*}
so that $kf(v)=f(kv)$ for all $k \in K,v\in V$. In particular, $k$
leaves the eigenspaces $V_\lambda$ invariant. 

On $V_\lambda$  we have $2Q_A(v)=b_A(v,v)=\langle f(v),v
\rangle=\lambda \langle v,v\rangle$, which implies that $Q$ is
positive definite on the sum $V_+$ of the $V_\lambda$ for $\lambda>0$,
negative definite on the sum $V_-$ of the $V_\lambda$ with
$\lambda<0$.
It follows that $K$ is contained in the group $O_{V_+}(\R)\times
  O_{V_-}(\R)$ as asserted. 

The space $mfH(A)$ of minimal majorants of $A$ can therefore be
viewed as a homogeneous space $K\backslash O_{Q_A}(\R)$ with a maximal
compact subgroup $K$. 
\end{remark}
\section{Hermite reduction of indefinite forms}
\begin{definition}
With notations as before the matrix $A$ is called Hermite reduced if
the space $\mfH(A)$ of its minimal majorants contains a Minkowski
reduced matrix. 
\end{definition}
\begin{lemma}
  Every non degenerate real symmetric matrix $A$ is integrally
  equivalent to a Hermite reduced matrix.
\end{lemma}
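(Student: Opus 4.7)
The plan is to exploit the equivariance of the majorant construction under real linear changes of basis and then reduce to the already-established existence of a Minkowski reduced representative in each $GL_n(\mathbb{Z})$-equivalence class of positive definite symmetric matrices.

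First I would verify the key transformation identity: for any $T \in GL_n(\mathbb{R})$,
\begin{equation*}
  \mathfrak{H}(A[T]) = \{P[T] : P \in \mathfrak{H}(A)\}.
\end{equation*}
The inclusion $\supseteq$ is immediate from the substitution $\mathbf{x} = T\mathbf{y}$ in the defining inequality $|A[\mathbf{x}]| \le P[\mathbf{x}]$, which turns it into $|A[T][\mathbf{y}]| \le P[T][\mathbf{y}]$. Minimality is preserved because the partial order $P_1 \le P_2$ is likewise invariant under $P_i \mapsto P_i[T]$, so if $P'$ is a majorant of $A[T]$ with $P' \le P[T]$, then $P'[T^{-1}]$ is a majorant of $A$ below $P$, forcing $P' = P[T]$. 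The reverse inclusion follows by applying the same reasoning with $T^{-1}$.

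Next I would pick any minimal majorant $P \in \mathfrak{H}(A)$; its existence is guaranteed by the corollary of the previous lemma (equivalently, choose any maximal positive definite subspace for $Q_A$ and form the associated $P$, or equivalently choose $T_0 \in GL_n(\mathbb{R})$ with $A[T_0] = E_{a,b}$ and set $P = {}^tT_0^{-1}T_0^{-1}$). This $P$ is a positive definite real symmetric matrix, so by the existence theorem for Minkowski reduced bases (every positive definite symmetric matrix is $GL_n(\mathbb{Z})$-equivalent to a Minkowski reduced one), there exists $T \in GL_n(\mathbb{Z})$ such that $P[T]$ is Minkowski reduced.

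Combining these two steps, $P[T]$ lies in $\mathfrak{H}(A[T])$ by the transformation identity and is Minkowski reduced by construction, so $A[T]$ is Hermite reduced. Since $T \in GL_n(\mathbb{Z})$, the matrices $A$ and $A[T]$ are integrally equivalent, which is the assertion. There is essentially no obstacle here: the entire content lies in the observation that ``Hermite reduced'' is defined in a way that is naturally compatible with real changes of basis, so the problem reduces mechanically to Minkowski reduction of any one chosen majorant.
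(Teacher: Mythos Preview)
Your proof is correct and follows exactly the same approach as the paper: pick a minimal majorant $P$ of $A$, choose $T\in GL_n(\Z)$ so that $P[T]$ is Minkowski reduced, and conclude that $A[T]$ is Hermite reduced. The paper's proof is two sentences and leaves the equivariance $\mathfrak{H}(A[T])=\{P[T]:P\in\mathfrak{H}(A)\}$ implicit, whereas you spell it out; otherwise the arguments are identical.
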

\begin{proof}
  Let $P$ be a minimal majorant of $A$ and choose $T\in GL_n(\Z)$ so
  that $P[T]$ is Minkowski reduced. Then $A[T]$ is Hermite reduced.
\end{proof}
We want to prove a finiteness result for integral Hermite reduced matrices. For
this we need first an auxiliary lemma.
\begin{lemma}
Let $\delta>0, \epsilon>0, n\in \N$ be given. Then there exists
$\epsilon_1$ depending on $n,\epsilon$ such that for any positive
definite symmetric matrix $P$ in the Siegel domain
$\mcS_n(\delta,\epsilon)$ one has $JP^{-1}J \in
\mcS_n(\delta,\epsilon_1)$, where 
$J=\biggl(\begin{smallmatrix}
  0&\dots&0&1\\
&&\iddots& \\
&\iddots&&\\
1&0&\dots&0
\end{smallmatrix}\biggr)$.
\end{lemma}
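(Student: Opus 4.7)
The plan is to exploit the essentially unique Siegel-type decomposition $P = {}^tCDC$ and track how it transforms under the involution $P \mapsto JP^{-1}J$.

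First I would write $P = {}^tCDC$ with $D=\operatorname{diag}(h_1,\dots,h_n)$ satisfying $h_j \le \delta h_{j+1}$ and $C=(c_{ij})$ upper unitriangular with $|c_{ij}| \le \epsilon$. Inverting gives
\begin{equation*}
P^{-1} = C^{-1} D^{-1}\, {}^t(C^{-1}),
\end{equation*}
and since $J^2 = I_n$ and ${}^tJ = J$,
\begin{equation*}
JP^{-1}J \;=\; (JC^{-1}J)\,(JD^{-1}J)\, {}^t(JC^{-1}J).
\end{equation*}
Setting $C' := {}^t(JC^{-1}J)$ and $D' := JD^{-1}J$, this is exactly of the form ${}^tC'\,D'\,C'$ required for a Siegel-domain decomposition, and everything reduces to checking the two conditions on $D'$ and $C'$.

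Next I would handle the diagonal part. The matrix $D'$ is diagonal with entries $h'_j = h_{n-j+1}^{-1}$. The chain condition is then immediate:
\begin{equation*}
\frac{h'_j}{h'_{j+1}} = \frac{h_{n-j}}{h_{n-j+1}} \le \delta,
\end{equation*}
so $0 < h'_j \le \delta h'_{j+1}$ with the same constant $\delta$ as for $P$.

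The remaining step is to control $C'$. Since $C = I_n + N$ with $N$ strictly upper triangular and $|N_{ij}| \le \epsilon$, nilpotency gives $C^{-1} = \sum_{k=0}^{n-1}(-1)^k N^k$, so $C^{-1}$ is upper unitriangular with entries bounded by a constant $\epsilon_0(n,\epsilon) := \sum_{k=1}^{n-1}(n\epsilon)^k$ (or a similar explicit bound). Conjugation by $J$ reverses the order of rows and columns, turning the upper unitriangular $C^{-1}$ into a lower unitriangular matrix with the same entry bound; transposing then yields $C' = {}^t(JC^{-1}J)$ upper unitriangular with $|c'_{ij}| \le \epsilon_0(n,\epsilon)$. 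Taking $\epsilon_1 := \epsilon_0(n,\epsilon)$ gives $JP^{-1}J \in \mcS_n(\delta,\epsilon_1)$.

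The only mild obstacle is the entry-bound for $C^{-1}$, but this is a routine nilpotent-expansion estimate; the key structural observation—that conjugation by $J$ commutes with transposition and reverses the triangularity of the unitriangular Gram–Schmidt factor while preserving the chain condition on the diagonal factor (only with $h_j$ replaced by $h_{n-j+1}^{-1}$, whose successive-ratio inequalities coincide with the original ones)—is what makes the statement work with the same $\delta$.
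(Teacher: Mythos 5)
Your proposal is correct and follows essentially the same route as the paper: decompose $P={}^tCHC$, invert, conjugate by $J$, observe that the reversed-and-inverted diagonal entries satisfy the same chain condition with the same $\delta$, and bound the entries of the new unitriangular factor $J\,{}^tC^{-1}J$ by a constant depending only on $n$ and $\epsilon$. Your explicit nilpotent-expansion bound for $C^{-1}$ just fills in a step the paper leaves as an assertion.
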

\begin{proof}
 We write $P=H[C]$ where the entries $h_j$ of the diagonal matrix $H$
 satisfy $h_j \le \delta h_{j+1}$ and the entries $c_{ij}$ of the
 upper triangular matrix $T$ are bounded in absolute value by
 $\epsilon$.
with $C_1:=^t{}C^{-1}[J], H_1=H^{-1}[J]$ we have
$P^{-1}[J]=H_1[C_1]$. The coefficients of the upper triangular matrix
$C_1$ are then bounded in absolute value by a suitable $\epsilon_1>0$,
and the diagonal entries $h'_1=h_n^{-1},\ldots,h'_n=h_1^{-1}$ of
$H_1$ satisfy $h'_i\leq \delta h'_{i+1}$.
\end{proof}
\begin{theorem}
For fixed  $n \in \N, 0\ne d\in \Z$  there exist only finitely many integral
 Hermite reduced matrices $A \in M_n^{\sym}(\Z)$ with $\det(A)=d$.

Moreover, for $n \in \N$ fixed 
there exists a constant $c(n) \in \R$ such that for
any integral Hermite reduced $A=(a_{ij}) \in M_n^{\sym}(\Z)$ of
determinant $d$ with anisotropic $Q_A$ one has $\vert
a_{ij}\vert \leq c(n) d$.
\end{theorem}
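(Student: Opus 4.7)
The plan is to exploit that any minimal majorant $P \in \mfH(A)$ satisfies $\det P = |d|$, and to combine this with the entrywise constraints forced by the majorization $A[\x] \leq P[\x]$. The identity $(AP^{-1})^2 = E_n$ from the preceding lemma gives $\det(AP^{-1}) = \pm 1$, and positivity of $\det P$ then forces $\det P = |\det A| = |d|$. Together with the Minkowski reducedness of $P$, Theorems \ref{hermites_theorem_definite} and \ref{comparison_reduction_constants} yield $0 < p_{11} \leq \dots \leq p_{nn}$ with $\prod_{i=1}^n p_{ii} \leq C(n)|d|$. Majorization at $\x = \e_i$ gives $|a_{ii}| \leq p_{ii}$, and at $\x = \e_i \pm \e_j$, subtracting the two resulting inequalities gives $|4 a_{ij}| \leq 2(p_{ii}+p_{jj})$, hence $|a_{ij}| \leq (p_{ii}+p_{jj})/2 \leq p_{nn}$. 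Thus every entry of $A$ is bounded by $p_{nn}$, and the task reduces to bounding $p_{nn}$.

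For the quantitative bound in the anisotropic case, I would lower-bound $p_{11}$. When $Q_A$ is anisotropic over $\Q$, every nonzero $\x \in \Z^n$ gives $A[\x] \in \Z \setminus \{0\}$, hence $A[\x] \geq 1$; majorization then forces $P[\x] \geq 1$ for every nonzero $\x \in \Z^n$. Since $p_{11}$ equals the first successive minimum of the Minkowski reduced $P$, this forces $p_{11} \geq 1$, whence $p_{ii} \geq 1$ for all $i$ by the Minkowski ordering. Substituting into $\prod p_{ii} \leq C(n)|d|$ gives $p_{nn} \leq C(n)|d|$, and the bound $|a_{ij}| \leq p_{nn}$ yields $|a_{ij}| \leq c(n)|d|$ with $c(n)=C(n)$, as asserted.

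The unconditional first assertion requires more care, since in the rationally isotropic case $p_{11}$ need not admit a positive lower bound and $p_{nn}$ can grow unboundedly. I would argue by induction on $n$: if $A$ admits a primitive integer isotropic vector, the integer analogue of Theorem \ref{isotropic_hyperbolic} combined with Theorem \ref{hyperbolic_maximal} permits splitting off, after a bounded change of basis, an integer sublattice of rank $2$ contained in a hyperbolic plane; the orthogonal complement has rank $n-2$ and determinant dividing a controlled multiple of $d$, so the inductive hypothesis yields finitely many possibilities for it. The anisotropic base case follows from the previous paragraph together with the finiteness of integer matrices of bounded entries. The main obstacle is precisely this induction step: a primitive integer isotropic vector does not in general extend to an integer hyperbolic basis, so one must carefully track the invariant factors of the splitting and verify that the Hermite reducedness of $A$ restricts the finitely many admissible combinations of anisotropic kernel and hyperbolic part.
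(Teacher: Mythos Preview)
Your treatment of the anisotropic case is correct and essentially identical to the paper's: majorization gives $P[\x]\ge |A[\x]|\ge 1$ on $\Z^n\setminus\{0\}$, so all successive minima of $P$ are bounded below, and since $\det P=|d|$ Hermite's inequality bounds them above; Minkowski reducedness then bounds the $p_{ij}$ and majorization at $\e_i,\e_i\pm\e_j$ bounds the $a_{ij}$.

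The isotropic case, however, is left with the gap you yourself identify, and it is a real one. Hermite reducedness is not preserved under integral equivalence, so after any change of basis that brings an isotropic vector into standard position you have lost precisely the hypothesis you need; and the class of lattices between $M\perp K$ and $M^\#\perp K^\#$ is finite only up to isometry, not as a set of actual matrices, so your sketch proves finiteness of equivalence classes (a later theorem in the paper) rather than finiteness of Hermite reduced representatives. You would need to show that within each class only boundedly many Hermite reduced matrices occur, which is essentially the content of Theorem~\ref{endlichkeit_siegelbereich}---but applying that requires control on the minimal majorant $P$ itself, bringing you back to the original difficulty that $p_{nn}$ is not bounded in the isotropic case.

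The paper avoids induction entirely and instead exploits the duality hidden in the majorant equation $(AP^{-1})^2=E_n$, rewritten as $P=(P^{-1}[J])[T]$ with $T=JA$. The auxiliary lemma shows $P^{-1}[J]$ again lies in a Siegel domain $\mcS_n(\delta,\epsilon_1)$. One then applies Lemma~\ref{coefficients_successiveminima} twice: first to the successive minimum vectors $\f^{(i)}$ of $P$ (bounded coordinates relative to the standard basis), then to the vectors $T\f^{(i)}$, which are successive minimum vectors of the sublattice $T\Z^n$ of index $|d|$ with respect to $P^{-1}[J]$ (bounded coordinates relative to the basis $T\e_i$, hence coordinates bounded by $c(n,d)$ relative to the $\e_i$). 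Writing $F$ for the matrix with columns $\f^{(i)}$ and $F'=TF$, one recovers $A=JT=JF'F^{-1}$ with both $F'$ and $F^{-1}$ bounded. This argument uses the majorant structure directly and never needs to split the space.
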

\begin{proof}
  We set $d=\det(A)$ and consider first the case that $Q_A$ is anisotropic. We have then
  $\vert Q_A(\x)\vert \ge 1$ for all $\x \in \Z^n\setminus \{0\}$ and
  hence $Q_P(\x)\ge 1$ for all $\x \in \Z^n\setminus\{0\}$ for any
minimal   majorant $P$ of $A$. The successive minima $\mu_j(P)$ of
such a $P$ satisfy
  then 
  \begin{equation*}
   \mu_1(P)\dots \mu_n(P)\le \gamma_n^n \det(P) =\gamma_n^n d, 
  \end{equation*} by Theorem\ref{hermites_theorem_definite}, where
  $\gamma_n$ is Hermite's constant.
Since we have $\mu_j(P)\ge 1$ for all $j$ this implies $\mu_j(P)\le
\gamma_n^n d$ for all $j$. If $P$ is Minkowski reduced the
coefficients $p_{ij}$ of $P$ are bounded in absolute value by $c'(n)d$
for a suitable constant $c'(n)$ by Theorem
\ref{comparison_reduction_constants}.
From this we get 
\begin{eqnarray*}
  \vert a_{ij}\vert&=&\vert Q_A(\e_i+\e_j)-Q_A(\e_i)-Q_A(\e_j)\vert\\
&\le & Q_P(\e_i+\e_j)+Q_P(\e_i)+Q_P(\e_j)\\
\le c(n)d
\end{eqnarray*} with a suitable constant $c(n)$.

In the case that $Q_A$ is isotropic we consider again a Minkowski
reduced minimal majorant $P$ of the integral Hermite reduced matrix
$A$. Let ${\f}^{(1)},\ldots,\f^{(n)}\in \Z^n$ be successive minimum
vectors of $Q_P$ with components $f_j^{(i)}$, let $\delta,\epsilon>0$
be such that $P\in \mcS_n(\delta, \epsilon)$, where $\delta, \epsilon$
depend only on $n$ by Theorem \ref{reduziert_siegelbereich}. By Lemma
\ref{coefficients_successiveminima} there exists then a constant
$c'(n)$ with $\vert f_j^{(i)}\vert \le c'(n)$ for all $i,j$.  By the lemma
there exists $\epsilon_1=\epsilon_1(n)>0$ depending only on $n$ with $P^{-1}[J]
\in \mcS_n(\delta,\epsilon_1)$, and we have $P=A
P^{-1}A=(P^{-1}[J])[T]$, where $T=JA\in M_n^{\sym}(\Z)$ with $\vert
\det(T)\vert=\vert d\vert$. The vectors $T\f^{(i)}$ are therefore
successive minimum vectors of the lattice $T\Z^n$ with respect to
$P^{-1}[J]$, with the lattice $T\Z^n$ having index $\vert d\vert$ in
$\Z^n$. By Lemma \ref{coefficients_successiveminima} the coefficients
of the  $T\f^{(i)}$ in terms of the  basis of $T\Z^n$ consisting of
the $T\e_i$ are then bounded in absolute value by a constant depending
 only on $n$, and 
since $T\Z^n$ has index $\vert d \vert $ in $\Z^n$ the coefficients of
the matrix $F'=TF$ with columns $T\f^{(i)}$ are bounded in absolute
value by some $c'(n,d)$. But then  the coefficients of
$T=F'F^{-1}$ and hence the coefficients of $A=JT$ are bounded in
absolute value by a suitable $c(n,d)$ too.
\end{proof}
\section{Compactness results for anisotropic forms}
For anisotropic $A$ we can also prove two further boundedness results.
\begin{theorem}
  Let $A\in M_n^{\sym}(\Z)$ with anisotropic $Q_A$.
There exists a constant $c(A)$ such that for all $S\in =_{Q_A}(\R)$
there exists $T\in  O_{Q_A}(\R;\Z^n):=GL_n(\Z)\cap O_{Q_A}(\R)$ such that all
coefficients of $ST$ are bounded in absolute value by $c(A)$.

In other words, there exists a compact fundamental domain for the
action of $O_{Q_A}(\R;\Z^n)$ on $O_{Q_A}(\R)$ by translations.
\end{theorem}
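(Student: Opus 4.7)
The plan is to combine the theory of majorants with the finiteness result for integer Hermite-reduced anisotropic matrices of given determinant just established. Recall that $O_{Q_A}(\R)$ acts transitively on $\mfH(A)$ by $P\mapsto P[g]$ with compact stabilizer isomorphic to $O_a(\R)\times O_b(\R)$; thus producing a compact fundamental domain in $O_{Q_A}(\R)$ for right translation by $O_{Q_A}(\R;\Z^n)$ reduces to exhibiting a bounded set of orbit representatives for the action of $O_{Q_A}(\R;\Z^n)$ on $\mfH(A)$.

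Fix any $P_0\in\mfH(A)$ and, given $S\in O_{Q_A}(\R)$, set $P:=P_0[S]\in\mfH(A)$. By Minkowski's reduction theory choose $U\in GL_n(\Z)$ so that $P[U]=P_0[SU]$ is Minkowski reduced. The defining condition $(AP^{-1})^2=E_n$ of minimal majorants is covariant under any change of basis, so $P[U]$ is a minimal majorant of $A[U]$. Hence $A[U]$ is an integer Hermite-reduced anisotropic matrix of determinant $\det(A)$, and by the previous theorem it lies in a finite set $\{A_1,\ldots,A_r\}$ depending only on $A$.

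For each $i$ fix once and for all a matrix $V_i\in GL_n(\Z)$ with $A[V_i]=A_i$ (omitting any $A_i$ not in the $GL_n(\Z)$-orbit of $A$). If $A[U]=A_i$, then $A[UV_i^{-1}]=A_i[V_i^{-1}]=A$, so $T:=UV_i^{-1}\in O_{Q_A}(\R;\Z^n)$ and $ST=(SU)V_i^{-1}$. Because $A[U]$ is integer anisotropic, $P[U][\x]\ge|A[U][\x]|\ge 1$ for every $\x\in\Z^n\setminus\{0\}$, so all successive minima of the positive definite lattice $(\Z^n,Q_{P[U]})$ are bounded below by $\frac{1}{2}$. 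Together with the identity $\det(P[U])=\det(P_0)=|\det(A)|$, Theorem \ref{hermites_theorem_definite} bounds the successive minima above, and then Theorem \ref{comparison_reduction_constants} forces all entries of the Minkowski-reduced matrix $P[U]=P_0[SU]$ to be bounded by a constant $c'(A)$.

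Writing $P_0={}^tLL$ with a fixed $L\in GL_n(\R)$, the identity $P_0[SU]={}^t(LSU)(LSU)$ shows that the columns of $LSU$ have bounded Euclidean length, so $LSU$ and hence $SU=L^{-1}(LSU)$ have bounded operator norm. Since $V_i^{-1}$ ranges over the finite set $\{V_1^{-1},\ldots,V_r^{-1}\}$, we conclude that $ST=(SU)V_i^{-1}$ has all entries bounded by some $c(A)$ depending only on $A$. The decisive step is the invocation of the finiteness of the possible $A[U]$; this rests crucially on the anisotropy hypothesis, without which the successive minima of $P[U]$ cease to be bounded below and the Hermite-Minkowski argument collapses.
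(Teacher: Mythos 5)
Your proof is correct and follows essentially the same route as the paper: Minkowski-reduce the minimal majorant $P_0[SU]$, invoke the finiteness of integral Hermite-reduced matrices equivalent to $A$ to produce $T=UV_i^{-1}\in O_{Q_A}(\R;\Z^n)$, and use anisotropy (minima bounded below by $1$) plus Hermite's inequality to bound the entries of the reduced majorant and hence of $SU$. The only cosmetic difference is the last step, where the paper bounds $V=(S_jU_j)^{-1}SU$ against a fixed finite list of reference majorants $P_j$ while you factor $P_0={}^tLL$ and bound $SU$ directly; both rest on the same positivity argument.
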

\begin{proof}
By the previous theorem there are only finitely many Hermite reduced
integral matrices $A=A_1[U_1],A_2=A[U_2], \ldots, A_r=M[U_r]$ (with
$U_i \in GL_n(\Z)$) which are integrally equivalent to $A$. for each
of these we choose Minkowski  reduced minimal majorants $P_i \in
\mfH(A_i)$ and have $P_i[U_i^{-1}] \in \mfH(A)$. Since $O_{Q_A}(\R)$
acts transitively on $\mfH(A)$ we obtain $S_i \in O_{Q_A}(\R)$ with
$P_i[U_i^{-1}]=P[S_i]$ which implies $P_i=P[S_iU_i]$ for $1 \le i \le r$. 

Let now $S\in O_{Q_A}(\R)$ be given and choose $U \in GL_n(\Z)$ such
that $P[SU]=P[S][U]$ is a Minkowski reduced majorant of $A[S][U]=A[U]$. Then
$A[U]$ is Hermite reduced
and integrally equivalent to $A$, hence equal to 
$A_j=A[U_j]$ for some $j$, which gives $U U_j^{-1}=:T \in
O_{Q_A}(\R;\Z^n)$ . Moreover, as in the proof of the anisotropic case
in the previous theorem we can bound the coefficients of $P[S][U]$ by
$c_1(n)d$ for a suitable constant $c_1(n)$ depending only on $n$. 

With $V:=(S_jU_j)^{-1}SU$ we have $P[SU]=P_j[V]$.  Since the
coefficients of $P[SU]$ are bounded and $P_j$ is positive definite,
the coefficients of $V$ are bounded by some constant $c'(n,d)$. Since
there are only finitely many possibilities (not depending on $S$) for
the $S_j, U_j$, this 
implies that  the coefficients of $ST=S_jU_jVU_j^{-1}$ are bounded by some
constant depending on $A$ but not on $S$, which proves the assertion. 
\end{proof}
\begin{remark}
  In the isotropic case Siegel proved that there is a fundamental
  domain of finite volume with respect to the Haar measure on the
  orthogonal group $O_{Q_A}(\R)$. A compact fundamental domain does not exist
  in that case.
\end{remark}
\begin{theorem}
  Let $A\in M_n^{\sym}(\Z)$ with $Q_A$ anisotropic.

 There exists a constant $c_1(A)$ such that for any 
  $\nullvek \ne \x\in \R^n$ there exists $T\in O_{Q_A}(\R;\Z^n)$
 with $\|T\x\|^2 \le c_1(A)Q_A(\x)$.

In other words, each nonzero $O_{Q_A}(\R;\Z^n)$-orbit in $\R^n$ contains vectors of
bounded length.
\end{theorem}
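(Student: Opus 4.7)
The plan is to combine the compactness theorem just proven (for the action of $O_{Q_A}(\R;\Z^n)$ on $O_{Q_A}(\R)$ by translations) with Witt's extension theorem (Theorem \ref{extension_theorem}), using homogeneity to reduce to the unit level sets of $Q_A$. Since both sides of the proposed inequality are homogeneous of degree two in $\x$, the scaling $\x\mapsto \x/\sqrt{|Q_A(\x)|}$ reduces the task to showing that there is a constant $c_1(A)$ such that for every $\x\in\R^n$ with $|Q_A(\x)|=1$ some $T\in O_{Q_A}(\R;\Z^n)$ satisfies $\|T\x\|^2\le c_1(A)$. The stated inequality is vacuous when $Q_A(\x)=0$, so I read it as $\|T\x\|^2\le c_1(A)|Q_A(\x)|$ and implicitly restrict to $Q_A(\x)\neq 0$; this is the only case compatible with the compactness setup, since for real isotropic $\x$ the orbit need not contain short vectors.

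For each sign $\varepsilon\in\{\pm 1\}$ actually represented by $Q_A$ on $\R^n$, fix once and for all a reference vector $\x_\varepsilon$ with $Q_A(\x_\varepsilon)=\varepsilon$. Given $\x$ with $Q_A(\x)=\varepsilon$, the one-dimensional subspaces $\R\x$ and $\R\x_\varepsilon$ of $(\R^n,Q_A)$ are both regular (they have nonzero determinant) and isometric, hence Theorem \ref{extension_theorem} produces $S\in O_{Q_A}(\R)$ with $S\x_\varepsilon=\x$.

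Now apply the preceding theorem to $S^{-1}\in O_{Q_A}(\R)$: there exists $T'\in O_{Q_A}(\R;\Z^n)$ such that all entries of $S^{-1}T'$ are bounded in absolute value by the constant $c(A)$ of that theorem. The identity $g^{-1}=A^{-1}\,{}^t\!g\,A$, valid for every $g\in O_{Q_A}(\R)$, shows that the inverse $(S^{-1}T')^{-1}=T'^{-1}S$ also has entries bounded by a constant depending only on $c(A)$ and on the entries of $A$ and $A^{-1}$. Setting $T:=T'^{-1}\in O_{Q_A}(\R;\Z^n)$, it follows that $T\x=TS\x_\varepsilon$ has Euclidean norm bounded by some $C(A)$ independent of $\x$, where $C(A)$ depends on $A$ only through $c(A)$, the entries of $A$ and $A^{-1}$, and the two (or fewer) fixed vectors $\x_\varepsilon$. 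Taking $c_1(A):=C(A)^2$ and rescaling gives the asserted bound for general $\x$ with $Q_A(\x)\ne 0$.

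The main obstacle is essentially cosmetic: the preceding theorem is phrased in terms of controlling the right translate $ST$, whereas here one needs a bound on the left translate $T\x$ applied to a fixed vector. The switch is effected by the inversion identity $g^{-1}=A^{-1}\,{}^t\!g\,A$ on the orthogonal group, which converts bounded right-coset representatives into bounded left-coset representatives with a controlled loss depending only on $A$. Witt's extension theorem supplies the transitivity of $O_{Q_A}(\R)$ on each (nonzero) level set of $Q_A$ and hence allows one to compare an arbitrary $\x$ to a finite family of reference vectors.
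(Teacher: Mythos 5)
Your proof is correct and follows essentially the same route as the paper's: reduce to a reference vector of the same $Q_A$-value via Witt's extension theorem, then invoke the preceding compactness theorem to replace the resulting real isometry by one with integrally-equivalent bounded entries. The only differences are presentational — you make explicit the passage from bounded right translates $S^{-1}T'$ to bounded left translates $T'^{-1}S$ via $g^{-1}=A^{-1}\,{}^t g\,A$, the homogeneity rescaling, and the restriction to $Q_A(\x)\ne 0$, all of which the paper's proof leaves implicit.
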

\begin{proof}
  We choose a vector $\y\in \R^n$ with $Q_A(\x)=Q_A(\y)$ and such that
  the coefficients of $\y$ are bounded by a constant multiple of
  $\vert Q_A(\x)\vert^{1/2}$ (e.g., by multiplying a fixed vector of $Q_A$-value
  in $\{\pm 1\}$ by $\vert Q_A(\x)\vert^{1/2}$ ). 

By Witt's
  extension 
  theorem (Theorem \ref{extension_theorem}) there exists $S\in
  O_{Q_A}(\R)$ with $S\x=\y$, and by the previous theorem we
  find a constant $c(A)$ and $T\in O_{Q_A}(\R;\Z^n)$ such  that the
  coefficients of $TS^{-1}$ are bounded in absolute value by
  $c(A)$. Then
$T\x= TS^{-1}\y$  is as asserted.
\end{proof}

\chapter{Quadratic Lattices over Discrete Valuation Rings}
As is well known from algebraic number theory, an important tool for
the study of number rings is the study of their completions with
respect to prime ideals, i.e., the study of local fiels and their
valuation rings. Our principal interest in this chapter therefore is the theory of
quadratic forms or modules over local fields and their valuation rings.
Since many of the results are valid in the more general context of local
rings and their completions, we will work in this more general context
whenever this doesn't complicate the exposition and will indicate
differences in results and proofs where these occur. Since
over a local ring all finitely generated projective modules are free
we will restrict our attention to free modules.

\section{Local rings, discrete valuation rings, local fields}
We recall some definitions and results from commutative algebra:
\begin{definition}
A discrete valuation ring $R$  is an integral domain satisfying the
following equivalent conditions:
\begin{enumerate}
\item $R$ is a principal ideal domain with precisely one  
class of prime elements modulo units.
\item There is a surjective map $v$ from the nonzero elements of the  field of fractions $F$
  of $R$ to $\Z$ , called a discrete (additive) valuation, such that 
  \begin{enumerate}
  \item $v(ab)=v(a)+v(b)$ for all $a,b \in F^\times$. 
\item $v(a+b)\ge\min(v(a),v(b))$ for all $a,b \in F^\times$ with $a+b \ne 0$.
\item $R=\{a\in F^\times\mid v(a) \ge 0\}\cup \{0\}$.
  \end{enumerate}
\item $R$ is a local ring with a unique non zero prime ideal $P$.
\end{enumerate}
The field $k:=R/P$ is called the residue field of $R$, the field of
fractions $F$ is called a discretely valued field.

The discrete valuation ring and its field of fractions are called
complete if they are complete metric spaces with respect to the metric
derived from the absolute value given by $\vert a \vert_v=c^{-v(a)}$
for $a\ne 0$, $\vert 0\vert_v=0$, where $0<c<1$ is arbitrary.
\end{definition}
\begin{definition}
 A valuation ring is an integral domain $R$ with field of fractions
 $F$ such that for all $0 \ne a \in F$ one has $a \in R$ or $a^{-1}\in R$.
\end{definition}
\begin{theorem}
Let $R$ be a valuation ring with field of fractions $F$.
  \begin{enumerate}
  \item Every finitely generated ideal in $R$ is
    principal, every finitely generated $R$-submodule of $F$ is free
    of rank $1$.
\item The ideals are totally ordered by inclusion, the same is true
  for the $R$-submodules of $F$.
\item A valuation ring is a discrete valuation ring if and only if it
  is noetherian.
\item There is a totally ordered abelian group $G$ and a surjective
  map $v$ from $F^\times$ 
  to $G$ , called an (additive) valuation, such that 
  \begin{enumerate}
  \item $v(ab)=v(a)+v(b)$ for all $a,b \in F^\times$. 
\item $v(a+b)\ge\min(v(a),v(b))$ for all $a,b \in F^\times$ with $a+b \ne 0$.
\item $R=\{a\in F^\times\mid v(a) \ge 0\}\cup \{0\}$.
  \end{enumerate}
  \end{enumerate}
\end{theorem}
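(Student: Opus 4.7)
The plan is to treat the four parts in order, since later parts lean on earlier ones. Throughout, for $a,b \in F^\times$, the valuation-ring hypothesis gives the key dichotomy: at least one of $a/b$ or $b/a$ lies in $R$.

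For part (a), I would first prove that any two-generator ideal $(a,b)$ with $a,b \ne 0$ is principal: if $a/b \in R$ then $a = (a/b)b \in (b)$ and $(a,b) = (b)$; otherwise $b/a \in R$ and $(a,b) = (a)$. Induction on the number of generators then shows every finitely generated ideal of $R$ is principal. For a finitely generated $R$-submodule $M = Rx_1 + \cdots + Rx_n \subseteq F$, I would pick a common denominator $c \in R\setminus\{0\}$ such that $cx_i \in R$ for all $i$; then $cM = (cx_1,\ldots,cx_n)$ is a finitely generated ideal, hence principal, say $cM = Rd$, so $M = R(d/c)$ is free of rank $1$ (nonzero since otherwise $M = 0$).

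For part (b), given two ideals $I,J$ with $I \not\subseteq J$, pick $a \in I \setminus J$; for any $b \in J$, if $b/a \in R$ then $b = (b/a)a \in I$, while if $a/b \in R$ (and $b \ne 0$) then $a = (a/b)b \in J$, contradicting $a \notin J$. Hence $b \in I$ for every $b \in J$, i.e., $J \subseteq I$. The same argument (replacing ideals by $R$-submodules of $F$, with $a,b$ nonzero elements) works verbatim. For part (c), the DVR $\Rightarrow$ noetherian direction is immediate since DVRs are PIDs. Conversely, if $R$ is a noetherian valuation ring, then by (a) it is a PID. Let $\mathfrak p$ be any nonzero prime ideal, $\mathfrak p = (\pi)$, and let $\mathfrak q = (\rho)$ be another nonzero prime; by (b), without loss $\mathfrak p \subseteq \mathfrak q$, so $\pi = r\rho$ for some $r \in R$, and primality of $\mathfrak p$ forces $r \in \mathfrak p$ or $\rho \in \mathfrak p$. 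The first option gives $\pi = r\rho \in \mathfrak p \cdot \mathfrak q$ and, iterating, $\pi \in \bigcap_n \mathfrak p^n = 0$ by Krull's intersection theorem (applicable since $R$ is noetherian local with maximal ideal $\mathfrak p$), a contradiction; thus $\rho \in \mathfrak p$ and $\mathfrak p = \mathfrak q$. Hence $R$ has a unique nonzero prime, equivalently a unique class of prime elements modulo units, so condition (i) of the DVR definition holds.

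For part (d), I would construct $G := F^\times / R^\times$ with group operation inherited from multiplication in $F^\times$ (written additively), the map $v(a) := a R^\times$, and the order $v(a) \le v(b) :\!\iff b/a \in R$. The dichotomy shows this order is total, and $R^\times$-invariance shows it is well defined. Axioms (i) and (iii) are immediate from the construction (noting $v(a) \ge 0 = v(1)$ iff $a/1 \in R$). For (ii), given $a,b \in F^\times$ with $a+b \ne 0$, we may assume $v(a) \le v(b)$, i.e., $b/a \in R$; then $(a+b)/a = 1 + b/a \in R$, so $v(a+b) \ge v(a) = \min(v(a),v(b))$. Compatibility of the order with the group operation follows from the fact that $b/a \in R$ implies $(bc)/(ac) = b/a \in R$ for every $c \in F^\times$. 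The main obstacle among the four parts is the uniqueness of the nonzero prime in (c); everything else reduces, almost mechanically, to the basic $a/b$-or-$b/a$ dichotomy, whereas (c) requires the additional noetherian input via Krull's intersection theorem.
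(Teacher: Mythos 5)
The paper states this theorem without proof (it appears under ``We recall some definitions and results from commutative algebra''), so there is no in-text argument to compare against; your proposal supplies the standard proof, and parts (a), (b), and (d) are correct and complete as written. The one step that needs repair is the uniqueness argument in (c). From $\pi = r\rho$ with $r \in \mathfrak p = (\pi)$ you claim that ``iterating'' gives $\pi \in \bigcap_n \mathfrak p^n$; but the inclusion you have is $\mathfrak p \subseteq \mathfrak q$, not the reverse, so $\pi \in \mathfrak p\,\mathfrak q$ does not land in $\mathfrak p^2$, and the iteration as literally described does not go through. It is easily fixed: writing $r = s\pi$ gives $\pi = s\rho\,\pi$, hence $\pi \in \mathfrak q^n$ for all $n$ and $\pi \in \bigcap_n \mathfrak q^n = 0$ by the Krull intersection theorem for a proper ideal in a noetherian domain --- or, more simply, cancel $\pi$ in the domain $R$ to get $1 = s\rho$, so $\rho$ is a unit, contradicting that $\mathfrak q$ is a proper ideal; no appeal to Krull is needed at all. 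Two smaller points: your uniqueness argument shows there is \emph{at most} one nonzero prime, and existence of a prime element (hence ``precisely one class of prime elements modulo units'') additionally requires that $R$ is not a field --- a degenerate case the theorem statement itself glosses over, since a field is a noetherian valuation ring under the paper's definition but not a DVR; and in (d) you should record (as you implicitly do) that antisymmetry of the order holds because $b/a \in R$ and $a/b \in R$ force $b/a \in R^\times$, which is exactly why the order descends to $G = F^\times/R^\times$.
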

\begin{theorem}
  Let $R$ be a complete discrete valuation ring with field of
  quotients $F$ and maximal ideal $P$ and additive valuation $v$.

Then the following are equivalent:
\begin{enumerate}
\item $F$ is a local field (i.e., it is locally compact with respect
  to the metric induced by $v$).
\item $R$ is compact with respect to the metric induced by $v$.
\item $R/P$ is finite.
\end{enumerate}
\end{theorem}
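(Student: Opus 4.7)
The plan is to prove the equivalences in a cycle $(a)\Rightarrow(b)\Rightarrow(c)\Rightarrow(a)$, exploiting the fact that $R$ is open and closed in $F$, that the ideals $P^n$ form a fundamental system of neighborhoods of $0$ which are simultaneously additive subgroups, and that multiplication by a uniformizer $\pi$ (any element with $v(\pi)=1$) is a homeomorphism of $F$ mapping $P^n$ onto $P^{n+1}$.

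For $(a)\Rightarrow(b)$, I would pick a compact neighborhood $K$ of $0\in F$. Since the sets $P^n$ form a neighborhood basis of $0$, some $P^n$ is contained in $K$; being closed in $F$ and hence in $K$, this $P^n$ is itself compact. Multiplication by $\pi^{-n}$ is a homeomorphism sending $P^n$ onto $R$, so $R$ is compact. For $(b)\Rightarrow(c)$, the cosets $\{a+P\mid a\in R\}$ form an open cover of $R$ (each is open because $P$ is open, being a neighborhood of $0$), and compactness forces finiteness of $R/P$.

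The hard step is $(c)\Rightarrow(a)$, where I need to pass from the finiteness of the residue field together with completeness to compactness of $R$. I would prove first by induction on $n$ that $R/P^n$ is finite: the short exact sequence $0\to P^{n}/P^{n+1}\to R/P^{n+1}\to R/P^n\to 0$ reduces the step to showing that the natural map $R/P\to P^n/P^{n+1}$ given by multiplication by $\pi^n$ is a bijection, which follows from $v(\pi)=1$ and the fact that $\pi$ is a non-zero-divisor. Then $R$ is totally bounded in its metric, since for any $\varepsilon>0$ a choice of $n$ with $c^n<\varepsilon$ covers $R$ by the finitely many (closed) balls $a_i+P^n$ representing the classes of $R/P^n$. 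Combined with completeness of $R$ (which is closed in the complete space $F$), total boundedness yields that $R$ is compact. Finally, $F=\bigcup_{n\ge 0}\pi^{-n}R$ is locally compact, since each $x\in F$ lies in the interior of some compact set $\pi^{-n}R$.

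The main obstacle is the induction step showing $|R/P^n|=|R/P|^n$, which is where the structure of $R$ as a discrete valuation ring (rather than just any local ring) is genuinely used; the rest is routine metric-space topology once one has noted that $R$ and each $P^n$ are both open and closed additive subgroups of $F$.
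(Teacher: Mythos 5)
Your proof is correct. Note that the paper itself gives no proof of this theorem: it appears in the list of recalled standard facts from commutative algebra at the start of the chapter on discrete valuation rings, so there is no argument in the text to compare against. Your cyclic argument $(a)\Rightarrow(b)\Rightarrow(c)\Rightarrow(a)$ is the standard one, and all steps check out — in particular the key counting step $|R/P^{n+1}|=|R/P^n|\cdot|P^n/P^{n+1}|$ with $P^n/P^{n+1}\cong R/P$ via multiplication by $\pi^n$, and the passage from total boundedness plus completeness to compactness of $R$.
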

\begin{remark}
  Whenever convenient we will extend the valuation $v$ to all of $F$
  by setting $v(0)=\infty$ and write  $v(b)<v(0)$ for all $b \ne 0$ accordingly.
\end{remark}
\begin{theorem}
 Let $R$ be a discrete valuation ring with field of quotients $F$ and
 additive valuation $v$.

Then there is an up to valuation preserving isomorphism unique
complete discrete valuation ring $\hat{R}\supseteq R$ (the completion
of $R$) with field of
quotients $\hat{F}\supseteq F$ (the completion of $F$ with respect to
$v$) and valuation $\hat{v}$ extending $v$ such that $R$ resp. $F$ is
dense in  $\hat{R}$ resp. $\hat{F}$ with respect to the metric induced
by  $\hat{v}$.

One has $R/P\cong \hat{R}/\hat{P}$, where $\hat{P}\supseteq P$ is the maximal
ideal of $\hat{R}$, in particular, $\hat{F}$ is a local field if and
only if $R/P$ is finite.
\end{theorem}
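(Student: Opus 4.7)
The plan is to construct $\hat F$ as the completion of the metric space $(F, d_v)$ where $d_v(a,b) = c^{-v(a-b)}$, verify that the field operations and valuation extend continuously, and then identify $\hat R$ inside $\hat F$. Concretely, I would let $\hat F$ be the set of equivalence classes of Cauchy sequences in $F$ modulo null sequences (those converging to $0$). Addition and multiplication are defined componentwise; the nontrivial check is that the product of two Cauchy sequences is Cauchy, which uses the ultrametric inequality $v(a+b)\ge \min(v(a),v(b))$ to show that $|a_n|_v$ is eventually constant for any non-null Cauchy sequence $(a_n)$, so that multiplicative inverses exist. This last point (stability of the valuation along a Cauchy sequence) is also what lets me define $\hat v([a_n]) := \lim_n v(a_n)$, which is the unique continuous extension of $v$ and takes values in $\Z$.

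Next I would set $\hat R = \{x \in \hat F \mid \hat v(x) \ge 0\} \cup \{0\}$ and $\hat P = \{x \in \hat F \mid \hat v(x) > 0\}$. The embedding $F \hookrightarrow \hat F$ (via constant sequences) is an isometry, $F$ is dense by construction, and any uniformizer $\pi$ of $R$ remains a uniformizer of $\hat R$ since $\hat v(\pi)=v(\pi)=1$; this shows $\hat R$ is a discrete valuation ring with the valuation $\hat v$ extending $v$. Completeness of $\hat F$ is the standard fact that the completion of any metric space is complete, and uniqueness up to a valuation-preserving isomorphism follows because any such complete $(\tilde F, \tilde v)$ containing $F$ densely must receive and emit a unique isometric (hence valuation-preserving) field homomorphism from/to $\hat F$ by the universal property of metric completions, extending the identity on $F$.

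For the residue field assertion, the natural map $R \to \hat R/\hat P$ has kernel $R \cap \hat P = P$, so it induces an injection $R/P \hookrightarrow \hat R/\hat P$. To see surjectivity, pick $\hat a \in \hat R$ and approximate it by some $a \in F$ with $\hat v(\hat a - a) \ge 1$; then $a \in R$ (since $v(a) = \hat v(a) \ge 0$) and $a \equiv \hat a \pmod{\hat P}$. Hence $R/P \cong \hat R/\hat P$.

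Finally, for the local field claim: if $R/P$ is finite of cardinality $q$, then every element of $\hat R$ has a unique representation as a convergent $\pi$-adic series $\sum_{i\ge 0} a_i \pi^i$ with $a_i$ running over a fixed set of coset representatives of $P$ in $R$; this identifies $\hat R$ with the compact product $\prod_{i \ge 0}(R/P)$ via a homeomorphism, so $\hat R$ is compact and $\hat F = \bigcup_{n\ge 0} \pi^{-n}\hat R$ is locally compact. Conversely, if $\hat F$ is locally compact then $\hat R$, being a compact neighborhood of $0$ (it is closed and bounded, equal to the closed unit ball), is compact; the open cover of $\hat R$ by cosets of $\hat P$ must then be finite, giving $|\hat R/\hat P| < \infty$ and hence $|R/P| < \infty$. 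The main obstacle, and the only point that really needs care, is the verification that Cauchy sequences have eventually constant valuation so that $\hat v$ is well-defined with values in $\Z$ and $\hat R$ is again a discrete (as opposed to merely nondiscretely-valued) valuation ring; everything else is routine metric-space bookkeeping.
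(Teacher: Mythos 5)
The paper states this theorem without proof, as one of several results recalled from commutative algebra at the start of the chapter on discrete valuation rings, so there is no argument of the author's to compare yours against. Your construction is the standard one (Cauchy sequences modulo null sequences), and it is correct. You also correctly isolate the one point that is specific to the ultrametric setting rather than generic metric-space bookkeeping: for a non-null Cauchy sequence $(a_n)$ the values $v(a_n)$ are eventually constant, because $\vert a_m-a_{n_0}\vert_v<\vert a_{n_0}\vert_v$ forces $\vert a_m\vert_v=\vert a_{n_0}\vert_v$; this is what makes $\hat v$ well defined with values in $\Z$, keeps $\pi$ a uniformizer, and yields the existence of inverses. The residue-field argument (kernel $R\cap\hat P=P$, surjectivity by approximating $\hat a$ to within $\hat P$ by an element of $F$, which then automatically lies in $R$) and the compactness argument for the local-field equivalence are both standard and correctly executed; note only that the forward direction of the last equivalence partially reproves the paper's preceding theorem characterizing local fields among complete discretely valued fields, which you could simply cite instead. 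The only steps left implicit — that the isometry furnished by the universal property of metric completions is a ring homomorphism, and that $\hat v$ remains surjective onto $\Z$ — follow immediately from density and continuity of the operations, so I see no gap.
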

\begin{example}
For any prime number  $p$ the ring $\Z_{(p)}=\{\frac{a}{b}\in \Q\mid
a,b\in \Z, p\nmid b\}$ is a discrete valuation ring with maximal ideal
$P=p\Z_{(p)}$. Its completion is the ring $\Z_p$ of $p$-adic integers,
with field of quotients $\Q_p$, the field of $p$-adic
rationals. Similarly, for a number field $K$ with ring of integers
$\fo_K$, the localization of $\fo_K$ at a prime ideal $P$ is a
discrete valuation ring, and the completion of $K$ with respect to the
corresponding valuation is denoted by $K_P$.

It is well known that all non archimedean local fields (i.e, local
fields different form $\R,\C$) arise in this way either from an
algebraic number field or an algebraic function field, i.e., a
separable extension of the field of rational functions over a finite field. 
\end{example}
A key property of complete local rings is the validity of Hensel's
lemma, i.e., the fact that zeros or factorisations of polynomials
modulo sufficiently high powers of the maximal ideal can be lifted to
genuine zeroes respectively factorisations:
\begin{theorem}[Hensel's Lemma]\label{hensellemma}
Let $R$ be a local ring with maximal ideal $P$ and residue field $k=R/P$.
\begin{enumerate}
\item Let
$f \in R[X]$ and $a \in R$ be such that $f(a) \in f'(a)^2P$,
where
$f'$ is the formal derivative of $f$.

Then for all $n \in \N$ there is $a_n \in R$ with
$a_{n+1}-a_n \in f'(a)P$ and $f(a_{n+1})\in f(a)P^n$.

If $R$ 
complete with respect to the $P$-adic topology, there is $b \in R$ with
$f(b)=0$ and 
$b-a \in f'(a)P$, 
in particular  $b \in a+P$.
\item Let $f \in R[X]$ be a monic polynomial of degree $n$ such that the reduction
  $\bar{f}\in k[X]$ of $f$ modulo $P$ has a factorization $\bar{f}=\tilde{g} \tilde{h}$ into coprime
  polynomials $\tilde{g}, \tilde{h} \in k[X]$ of degrees $r,
  n-r$, assume $R$ to be noetherian and complete with respect to the $P$-adic
  topology and put $k=R/P$.
Then there are polynomials $g,h \in R[X]$ of degrees $r,n-r$
  with $\bar{g}=\tilde{g}, \bar{h}=\tilde{h}, f=gh$.
\end{enumerate}
 \end{theorem}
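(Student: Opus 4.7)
My plan is to prove the two parts separately by Newton-style iterations and, in the complete case, pass to the limit.

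For part (a), the idea is to use $f'(a)$ as a fixed denominator throughout rather than $f'(a_n)$, which sidesteps any invertibility question in a general local ring. By hypothesis, fix $p \in P$ with $f(a) = f'(a)^2 p$. Set $a_1 := a$, and inductively, given $a_n$ with $a_n - a \in f'(a) P$ and $f(a_n) = f(a)\, q_n$ for some $q_n \in P^{n-1}$, put $h_n := f'(a)\, p\, q_n \in f'(a) P^n$ and $a_{n+1} := a_n - h_n$; note $f'(a) h_n = f(a_n)$. Applying the Taylor expansion $f(a_n + X) = f(a_n) + f'(a_n) X + X^2 g_n(X)$ in $R[X]$ together with the congruence $f'(a_n) \equiv f'(a) \pmod{f'(a) P}$ (which follows from $a_n - a \in f'(a) P$ by applying Taylor to $f'$), I obtain
\[
 f(a_{n+1}) \;=\; - f(a_n)\,\delta_n \;+\; h_n^2\, g_n(-h_n)
\]
for some $\delta_n \in P$. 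The first summand lies in $P \cdot f(a) P^{n-1} = f(a) P^n$, and the second, since $h_n^2 = f(a)\, p\, q_n^2 \in f(a) P^{2n-1}$, also lies in $f(a) P^n$ for $n \geq 1$. Simultaneously $a_{n+1} - a_n = -h_n \in f'(a) P^n \subseteq f'(a) P$, closing the induction. In the complete case the sequence $(a_n)$ is Cauchy in the $P$-adic topology and converges to some $b \in R$ with $b - a_N \in f'(a) P^N$ for every $N$; in particular $b - a \in f'(a) P$, and continuity of $f$ combined with $\bigcap_n P^n = 0$ (standard for complete local rings) forces $f(b) = 0$.

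For part (b), I would lift the factorization modulo increasing powers of $P$. Start with any monic lifts $g_1, h_1 \in R[X]$ of $\tilde g, \tilde h$ of degrees $r, n-r$. Inductively, suppose monic $g_m, h_m$ of the prescribed degrees have been constructed with $c_m := f - g_m h_m \in P^m R[X]$; I seek $u_m, v_m \in P^m R[X]$ with $\deg u_m < r$ and $\deg v_m < n - r$ such that $g_{m+1} := g_m + u_m$, $h_{m+1} := h_m + v_m$ satisfy $f - g_{m+1} h_{m+1} \in P^{m+1} R[X]$. Expanding, the cross term $u_m v_m \in P^{2m} R[X] \subseteq P^{m+1} R[X]$, so the condition reduces modulo $P^{m+1}$ to $u_m h_m + v_m g_m \equiv c_m$, which in turn reduces, via $g_m \equiv \tilde g,\ h_m \equiv \tilde h \pmod{P}$ and $u_m, v_m \in P^m R[X]$, to the same equation with $g_m, h_m$ replaced by $\tilde g, \tilde h$. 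Working coefficientwise against a $k$-basis of the $k$-vector space $P^m/P^{m+1}$, the task becomes, for each coefficient $\bar c \in k[X]$, to solve $\bar u\, \tilde h + \bar v\, \tilde g = \bar c$ in $k[X]$ with $\deg \bar u < r$ and $\deg \bar v < n - r$. Coprimality of $\tilde g, \tilde h$ gives a B\'ezout solution, which after Euclidean division of $\bar u$ by $\tilde g$ satisfies the first degree bound; the bound on $\deg \bar v$ then follows from the a priori estimate $\deg c_m < n$ (since $c_m$ is a difference of two monic polynomials of degree $n$). Lifting coefficient-by-coefficient to $R[X]$ produces $u_m, v_m$, and $P$-adic completeness of $R$ then ensures $(g_m), (h_m)$ converge coefficientwise to monic polynomials $g, h$ of degrees $r, n - r$ with $f = gh$ and the correct reductions modulo $P$; the noetherian hypothesis enters to guarantee that the $P$-adic topology on $R$ is Hausdorff.

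The main technical obstacle lies in part (b): ensuring at every stage that $g_m, h_m$ remain monic of the prescribed degrees (automatic once $u_m, v_m$ have strictly smaller degrees) and that the local equation in $(P^m/P^{m+1})[X]$ admits a solution obeying those degree bounds. Both points are controlled by coprimality of $\tilde g$ and $\tilde h$ in $k[X]$ via the Euclidean algorithm, combined with the bound $\deg c_m < n$ inherited from $f$ and $g_m h_m$ both being monic of degree $n$.
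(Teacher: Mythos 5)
Your proof is correct and follows essentially the same Newton-iteration strategy as the paper for part (a); the only difference is that you keep the fixed ``denominator'' $f'(a)$ throughout, whereas the paper updates the step to $a_{n+1}=a_n-f'(a_n)y_n$ and carries along the extra invariant $f'(a_n)R=f'(a)R$ --- both variants work. For part (b) the paper gives no details (it merely says the usual argument for discrete valuation rings carries over), and your lifting via coprimality/B\'ezout in $k[X]$ with the degree bounds $\deg u_m<r$, $\deg v_m<n-r$ enforced by Euclidean division against the monic $\tilde g$ and the estimate $\deg c_m<n$ is exactly that standard argument, correctly executed.
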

 \begin{proof}
   The usual proof by induction on $n$ of a) for (complete) discrete valuation rings carries
   over to the present more general situation:
We write $f(a)=(f'(a))^2y$ with $y \in P$ and put $a_1:=a$. Let $n \ge
1$ and assume that $a_n$ with the asserted properties has been found,
assume in addition that  $f'(a_n)R=f'(a)R$  holds, in particular we
have $f(a_n) \in (f'(a_n))^2P$.
   
We put $a_{n+1}=a_n-f'(a_n)y \in a_n+P$ and obtain
\begin{eqnarray*}
  f(a_{n+1})&=&f(a_n)-f'(a_n) f'(a_n)y+f(a_n)yz_1\\
&=&f(a_n)yz_1\\
&\in &f(a_n)P
\end{eqnarray*}
with $z_1 \in R$ by looking at the Taylor expansion of $F$ around
$a_n$.

Similarly, we have $f'(a_{n+1})=f'(a_n)+f'(a_n)yz_2\in f'(a_n)(1+P)$
with $z_2 \in R$, so that  $f'(a_{n+1})R=f'(a_n)R$ and $f(a_{n+1})\in
(f'(a_{n+1}))^2P$ hold and $a_{n+1}$ is as required. If $R$ is
complete, the sequence of the $a_n$ converges to a limit $b$, and
since $\cap_{n=1}^\infty P^n=\{0\}$ holds for a complete local ring
(using the definition of \cite{atiyah, eisenbud}),
one has 
$f(b)=0$.

In a similar way, the usual proof of b) for discrete valuation rings
is easily modified for the present situation.
 \end{proof}
 \begin{remark}
   \begin{enumerate}
   \item
   If $R$ is a discrete valuation ring with valuation $v$, the
   condition in a) above becomes  $v(f(a))>2v(f'(a))$, and the
   assertions for $a_n$ resp. for $b$  become $v(a_n-a)\ge v(f(a))-v(f'(a))$ and $v(f(a_n))\ge
v(f(a)+(n-1)(v(f(a))-2v(f'(a)))\ge v(f(a))+n-1$ respectively
$v(b-a)\ge v(f(a))-v(f'(a))$.
\item A different proof for complete local rings can be found in \cite{bourbaki,eisenbud}.
   \end{enumerate}
 \end{remark}

\section{Orthogonal decomposition of lattices over valuation rings}
\begin{theorem}
Let $R$ be a valuation ring with field of fractions $F$, let $(V,Q)$
be a regular quadratic space over $F$ and let $\Lambda$ be a free 
$R$-lattice on $V$ with $b(\Lambda,\Lambda)=:I\subseteq R$.

Then $\Lambda$ contains an $I$-modular sublattice $K$ which splits off
orthogonally in $\Lambda$, the orthogonal complement $K^\perp$ is free.   
\end{theorem}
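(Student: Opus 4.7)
The plan is to build $K$ as the sublattice of $\Lambda$ spanned by one or two well-chosen basis vectors, arranged so that the relevant block of the Gram matrix is visibly a unit multiple of $\pi$, or $\pi$ times an element of $GL_2(R)$, where $\pi$ generates $I$. Once $K$ is in hand, Lemma \ref{modular_split} will supply the orthogonal splitting essentially for free.

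First I would fix an $R$-basis $(v_1,\dots,v_n)$ of $\Lambda$ and set $m_{ij}:=b(v_i,v_j)$. Then $I=b(\Lambda,\Lambda)$ is the finitely generated (hence, since $R$ is a valuation ring, principal) ideal of $R$ generated by the $m_{ij}$; write $I=\pi R$. Because the ideals of a valuation ring are totally ordered by inclusion, the maximum among the principal ideals $m_{ij}R$ is attained, so some single entry $m_{ij}$ already generates $I$.

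If some diagonal entry $m_{kk}$ generates $I$, take $K:=Rv_k$; then $Fv_k$ is regular, $K^\#=m_{kk}^{-1}Rv_k$, and $IK^\#=(m_{kk}R)(m_{kk}^{-1}Rv_k)=Rv_k=K$, so $K$ is $I$-modular. Otherwise pick $i\ne j$ with $m_{ij}$ generating $I$; then $m_{ii},m_{jj}\in\pi P$ where $P$ denotes the maximal ideal of $R$. Put $K:=Rv_i\oplus Rv_j$, with Gram matrix $M=\pi M'$ for
\[
  M'=\begin{pmatrix}m_{ii}/\pi & m_{ij}/\pi\\ m_{ij}/\pi & m_{jj}/\pi\end{pmatrix}.
\]
Then $\det M'=(m_{ii}/\pi)(m_{jj}/\pi)-(m_{ij}/\pi)^2$ is a unit times $1+\varepsilon$ for some $\varepsilon\in P$, hence lies in $R^\times$ since $1+P\subseteq R^\times$ in any local ring. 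Thus $M'\in GL_2(R)$, so $FK$ is regular and, reading the matrix of $\tilde b^{(K)}$ off in the basis $(v_i,v_j)$, $\tilde b^{(K)}(K)=\Hom_R(K,I)$; equivalently $K=IK^\#$.

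With $K$ in place, $b(K,\Lambda)\subseteq b(\Lambda,\Lambda)=I$ holds trivially, so Lemma \ref{modular_split} yields $\Lambda=K\perp K^\perp$. Finally, $K^\perp$ is a direct summand of the finitely generated free module $\Lambda$, hence a finitely generated projective module over the local ring $R$, and therefore free. The main obstacle is the $2\times 2$ case: verifying that $K$ is $I$-modular via the determinant computation. This is short but is the one place where the valuation/local structure of $R$ is really used --- through total ordering of ideals (to force a single $m_{ij}$ to generate $I$) and through $1+P\subseteq R^\times$ (to conclude $\det M'\in R^\times$). Everything else is formal.
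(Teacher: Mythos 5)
Your proposal is correct and follows essentially the same route as the paper: use that a finitely generated ideal of a valuation ring is principal to find a single Gram entry generating $I$, extract a rank-one or rank-two $I$-modular sublattice $K$ accordingly, and invoke Lemma \ref{modular_split} (the hypothesis $b(K,\Lambda)\subseteq I$ being automatic). The only cosmetic differences are that you verify $I$-modularity of the binary $K$ by an explicit determinant computation and deduce freeness of $K^\perp$ from ``finitely generated projective over a local ring is free'' rather than the paper's stably-free argument; both are fine.
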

\begin{proof}
The ideal $I\subseteq R$ is finitely generated (by the $b(v_i,v_j)$
where the $v_i,v_j$ run through  a basis of $\Lambda$), hence
principal, say $I=aR$ with $a \in R, a \ne 0$.

If there exists $v \in \Lambda$ with $b(v,v)R=I$, the one dimensional
lattice $K=Rv$ is as desired. Otherwise, we can find linearly
independent $v,w \in \Lambda$
with $b(v,w)=a$ and $b(v,v)\in aP, b(w,w)\in aP$. We put $K=Rv+Rw$ and
see that $K^\#=a^{-1}K$, i.e., $K$ is $I$-modular.

In both cases, Lemma \ref{modular_split} shows that $K$ splits off
orthogonally in $\Lambda$.   In the special case that $R$ is a
discrete valuation ring, the freeness of $K^\perp$ follows immediately
from the main theorem on modules over principal ideal domains.  
For a general valuation ring, since $K$ and $\Lambda$ are free, its complement
$K^\perp$ is stably free, which for modules over  a valuation ring (or
more generally a Bezout ring) implies free.
\end{proof}
\begin{corollary}
Any lattice $\Lambda$ as in the theorem has a decomposition into an
orthogonal sum of modular lattices of rank $1$ or $2$. If $2$ is a
unit in $R$ it has an orthogonal basis.    
\end{corollary}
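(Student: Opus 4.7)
The plan is to establish the first claim by induction on the rank of $\Lambda$, with the preceding theorem providing the inductive step, and then to deal with the case $2 \in R^\times$ by showing that the rank‑$2$ alternative in the theorem never actually occurs.

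For the induction, the base case of rank $0$ is trivial (the empty orthogonal sum). For the inductive step, apply the preceding theorem to $\Lambda$ to obtain a splitting $\Lambda = K \perp K^\perp$ with $K$ a modular sublattice of rank $1$ or $2$ and $K^\perp$ a free orthogonal complement. Since $K$ is modular, $FK$ is a regular subspace of $V$, so its orthogonal complement $FK^\perp$ is also regular. The sublattice $K^\perp$ is a free $R$-lattice on this regular space with $b(K^\perp,K^\perp) \subseteq b(\Lambda,\Lambda) \subseteq R$, so the inductive hypothesis gives the sought decomposition of $K^\perp$; combining with $K$ yields the claimed orthogonal decomposition of $\Lambda$ into pieces of rank $1$ or $2$.

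For the second statement, I would argue that when $2 \in R^\times$ the theorem's construction always lands in the rank‑$1$ case. The polarisation identity $2b(v,w) = b(v+w,v+w) - b(v,v) - b(w,w)$ together with the invertibility of $2$ shows that every value $b(v,w)$ lies in the ideal generated by values of the form $b(u,u)$. Taking a basis $(v_i)$ of $\Lambda$ and the vectors $v_i + v_j$, one sees that the finitely many values $b(v_i,v_i)$ and $b(v_i+v_j, v_i+v_j)$ already generate $I = b(\Lambda,\Lambda)$. The ideals of a valuation ring are totally ordered by inclusion, so among these finitely many principal ideal generators one already equals $I$: there exists $v \in \Lambda$ with $b(v,v)R = I$. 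Then $K = Rv$ is a rank‑$1$ $I$-modular summand splitting off orthogonally in $\Lambda$, and iterating the induction with only rank‑$1$ summands produces an orthogonal basis of $\Lambda$.

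The only conceptually nontrivial step is the last one, namely the reduction of the rank‑$2$ case to the rank‑$1$ case under $2 \in R^\times$; this rests solely on combining polarisation with the total ordering of ideals in a valuation ring, and no further ingredients are needed. The rest is a routine rank induction on top of the theorem already proved.
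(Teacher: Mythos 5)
Your proof is correct and follows essentially the same route as the paper, whose proof is simply the remark that the corollary ``follows by induction from the proof of the theorem.'' Your polarisation argument combined with the total ordering of ideals in a valuation ring is exactly the detail left implicit there: it shows that when $2\in R^\times$ the alternative ``no $v$ with $b(v,v)R=I$'' in the theorem's proof cannot occur, so only rank-$1$ modular summands are ever split off.
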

\begin{proof}
This follows by induction from the proof of the theorem.  
\end{proof}
\begin{defcorollary}[Jordan decomposition]\label{jordan_decomposition}
Let $\Lambda$ be as in the theorem with $b(\Lambda,\Lambda)=I\subseteq
R$.

Then there are nonzero principal ideals $J_1=I\supsetneq J_2\supsetneq \dots \supsetneq
J_r$ in $R$   and a decomposition $\Lambda=K_1\perp \dots \perp K_r$
into an orthogonal sum of $J_i$-modular lattices $K_i$.

Such a decomposition is called a Jordan decomposition.
\end{defcorollary}
\begin{proof}
  Again the assertion follows from the theorem by induction.
\end{proof}
\begin{remark}\label{jordan_remark}
  \begin{enumerate}
  \item Our proof is practically the same as the usual proof (see
    e. g. \cite{omeara}) for discrete valuation rings. It seems not to
    be possible to generalize it further to an arbitrary local ring, at
    least not easily.
\item The fact that this decomposition is possible for arbitrary
  valuation rings has been noticed by Zemel \cite{zemel}. 
\item If $2$ is not a unit in $R$ we will call the lattice $\Lambda$ totally even if it has a
  Jordan decomposition into even modular lattices. Equivalently, $Q(x)
  \in b(x,\Lambda)$ holds for all $x \in \Lambda$.
  \item The name ``Jordan decomposition'' for an orthogonal
    decomposition as above has probably first beeen used by O'Meara. It is
    unclear whether there is a historical reason for this or whether
    he just used this name in analogy to the Jordan normal form of an
    endomorphism and the corresponding decomposition of the underlying
    vector space into invariant subspaces in linear algebra. 
  \end{enumerate}
\end{remark}
\begin{lemma}
 Let $\Lambda$ as above have a Jordan decomposition as above with
 ideals $J_i=c_iR$.  

Then one has $\Lambda^\#=c_1^{-1}K_1\perp\dots\perp c_r^{-1}K_r$ and
$c_r\Lambda^\#\subseteq \Lambda$. 
\end{lemma}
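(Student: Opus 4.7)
The plan is to first observe that under an orthogonal decomposition, the dual lattice decomposes componentwise, and then reduce the computation to the case of a single modular lattice, for which the description of the dual is immediate from the definition of $J$-modularity.

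First I would verify that for any orthogonal decomposition $\Lambda = K_1 \perp \cdots \perp K_r$ of a lattice on the regular space $(V,Q)$, one has $\Lambda^\# = K_1^\# \perp \cdots \perp K_r^\#$, where on the right hand side each $K_i^\#$ is computed inside the subspace $V_i := FK_i$. This follows directly from the definition $\Lambda^\# = \{v \in V \mid b(v,\Lambda) \subseteq R\}$: decomposing $v = v_1 + \cdots + v_r$ along the orthogonal splitting $V = V_1 \perp \cdots \perp V_r$, the condition $b(v,K_j) \subseteq R$ reduces, by orthogonality, to $b(v_j,K_j) \subseteq R$, i.e.\ $v_j \in K_j^\#$.

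Next I would treat the case of a single $J$-modular lattice $K$ with $J = cR$. By definition of $J$-modular, one has $K = J K^\# = c K^\#$, and since $c$ is not a zero divisor (as $(V,Q)$ is regular and $K$ spans a regular subspace), this gives $K^\# = c^{-1} K$ inside $FK$. Combining this with the first step yields
\begin{equation*}
 \Lambda^\# = c_1^{-1} K_1 \perp \cdots \perp c_r^{-1} K_r,
\end{equation*}
which is the first claim.

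For the inclusion $c_r \Lambda^\# \subseteq \Lambda$, I would multiply the preceding equality by $c_r$ to obtain
\begin{equation*}
 c_r \Lambda^\# = (c_r c_1^{-1}) K_1 \perp \cdots \perp (c_r c_{r-1}^{-1}) K_{r-1} \perp K_r.
\end{equation*}
The chain $J_1 \supsetneq J_2 \supsetneq \cdots \supsetneq J_r$ means $c_r R \subseteq c_i R$ for all $i \le r$, hence $c_r c_i^{-1} \in R$, so each summand $(c_r c_i^{-1}) K_i$ lies in $K_i \subseteq \Lambda$, giving the desired inclusion. No serious obstacle is expected; the only point requiring mild care is ensuring that the duals of the $K_i$ are correctly interpreted inside the orthogonally complementary subspaces $V_i$ rather than inside all of $V$, which is automatic from the orthogonality of the splitting.
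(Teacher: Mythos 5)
Your proof is correct. The paper states this lemma without proof, and your argument — decomposing the dual along the orthogonal splitting, using $K_i = c_iK_i^\#$ to get $K_i^\# = c_i^{-1}K_i$, and then using $c_rR \subseteq c_iR$ for the final inclusion — is exactly the standard argument the author evidently considered routine enough to omit.
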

\section{Hensel's lemma and lifting of representations}
For quadratic forms and their isometries we have the following
specialized version of Hensel's Lemma.
 \begin{theorem}[Kneser's Hensel Lemma for quadratic forms]\label{hensellemma_kneser}
 Let $R$ be a local integral domain with maximal ideal $P$ and field
 of fractions $F$, let $(V,Q)$ and $(W,Q')$ be  finite dimensional quadratic spaces
 over $F$ with associated symmetric bilinear forms $b,b'$ and let
 $L,M$ be free submodules of $V,W$ respectively. 
Let $I\subseteq R$ be an ideal with $I Q'(M)\subseteq P$.

Let $f:L\to W$ be an $R$-linear map with $f(L)\subseteq M^\#:=\{z \in
W\mid b'(z,M)\subseteq R\}$ and write $\wtbp_f$ for the linear
map from $W$ to $\Hom_R(L,F)$ with $\wtbp_f(z)(x)=b'(f(x),z)$ for all
$x \in L$.

Assume that one has $Q'(f(x))\equiv Q(x)\bmod I$
for all $x \in L$
and $L^*=\wtbp_f(M)+PL^*$.

Then there is an $R$- linear map $f':L\to W$ with $f'(x) \equiv f(x) \bmod
I M$ and $Q'(f'(x))\equiv Q(x) \bmod IP$ for 
all $x \in L$. Moreover, $f'$ also satisfies the condition above,
i.e., we have  $L^*=\wtbp_{f'}(M)+PL^*$, so that we can iterate the
above procedure and improve the congruence for $Q'(f'(x))$ as much as desired.

If $R$ is  complete with respect to the $P$-adic topology there exists
an $R$-linear isometric map $\phi:(L,Q)\to (W,Q')$ satisfying
$\phi(x)\equiv f(x) \bmod IM$ for all $x \in L$. 
 \end{theorem}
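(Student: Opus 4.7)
The plan is a Newton-style iteration: construct an $R$-linear correction $g\colon L\to IM$ so that $f':=f+g$ satisfies the improved congruence, verify that the two standing hypotheses survive, and in the complete case pass to a limit.

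The first move will be to upgrade the hypothesis $L^{*}=\wtbp_f(M)+PL^{*}$ to full surjectivity $\wtbp_f(M)=L^{*}$ via Nakayama's lemma ($L^{*}$ is finitely generated since $L$ is free of finite rank, and $R$ is local). Fixing an $R$-basis $(e_1,\dots,e_n)$ of $L$ with dual basis $(e_1^{*},\dots,e_n^{*})$ of $L^{*}$, one picks $m_1,\dots,m_n\in M$ with $b'(f(e_j),m_i)=\delta_{ij}$ and sets $v_i:=f(e_i)$. The scalars $\delta_i:=Q(e_i)-Q'(v_i)$ and $\gamma_{ij}:=b(e_i,e_j)-b'(v_i,v_j)$ ($i<j$) all lie in $I$, the latter obtained by polarising the congruence $Q'\circ f\equiv Q\bmod I$ at $e_i+e_j$.

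I would then define the correction by $u_i:=\delta_i m_i+\sum_{j>i}\gamma_{ij} m_j\in IM$ and extend $R$-linearly to $g\colon e_i\mapsto u_i$. Expanding
\begin{equation*}
  Q'(f'(x))-Q(x)=\bigl(Q'(f(x))-Q(x)\bigr)+b'(f(x),g(x))+Q'(g(x))
\end{equation*}
and using $b'(v_i,m_j)=\delta_{ij}$, a direct computation shows that $b'(f(x),g(x))$ has coefficient $\delta_i$ at $x_i^{2}$ and $\gamma_{ij}$ at $x_ix_j$ ($i<j$), exactly cancelling the first bracketed term. The remaining $Q'(g(x))$ lies in $I^{2}Q'(M)+I^{2}b'(M,M)$; the hypothesis $IQ'(M)\subseteq P$, combined with the polarisation $b'(m,m')=Q'(m+m')-Q'(m)-Q'(m')$ which forces $Ib'(M,M)\subseteq IQ'(M)\subseteq P$, places both summands in $IP$. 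Hence $Q'\circ f'\equiv Q\bmod IP$ and $f'\equiv f\bmod IM$. Checking that $f'$ still meets the hypotheses is then routine: $IM\subseteq M^{\#}$ (from $Ib'(M,M)\subseteq R$) yields $f'(L)\subseteq M^{\#}$, and for $z\in M$ the difference $\wtbp_{f'}(z)-\wtbp_f(z)$ takes values in $Ib'(M,M)\subseteq P$, so $\wtbp_{f'}(M)+PL^{*}=L^{*}$. The replacement hypothesis $(IP)Q'(M)\subseteq P^{2}\subseteq P$ is automatic, so the procedure iterates with $I$ becoming $IP,IP^{2},\ldots$

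The iteration produces $f_0,f_1,\ldots$ with $f_{n+1}(x)-f_n(x)\in IP^{n}M$ and $Q'(f_n(x))\equiv Q(x)\bmod IP^{n+1}$. In the complete case, $M$ is $P$-adically complete (being free of finite rank over the complete $R$), so the differences $f_n(x)-f_0(x)\in IM$ form a Cauchy sequence in $M$ and converge, giving $\phi(x):=\lim f_n(x)$. This $\phi$ is $R$-linear as a pointwise limit, satisfies $\phi\equiv f\bmod IM$, and $P$-adic continuity of $b'(f_0(x),\cdot)\colon M\to R$ (thanks to $f_0(x)\in M^{\#}$) together with continuity of $Q'$ on the coset $f_0(x)+M$ lets us pass to the limit in the approximate isometry relations to conclude $Q'(\phi(x))=Q(x)$. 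The hard part is the quadratic bookkeeping in the single correction step: ensuring $Q'(g)\in IP$, and not merely $I^{2}$, is precisely where the somewhat unusual hypothesis $IQ'(M)\subseteq P$ (rather than just $Q'(M)\subseteq R$) is essential, while the upper-triangular shape of the $u_i$ is what keeps the correction coefficients themselves in $I$.
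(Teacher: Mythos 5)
Your proof is correct and follows essentially the same Newton-iteration argument as the paper: construct a correction $g\colon L\to IM$ killing the defect quadratic form (your triangular $u_i$ is exactly the basis version of the paper's choice of a bilinear form $\beta$ with $\beta(x,x)=Q'(f(x))-Q(x)$), note $Q'(g(x))\in IP$ via $IQ'(M)\subseteq P$, check that the hypotheses persist, and pass to the limit in the complete case. The only cosmetic difference is your Nakayama upgrade to exact surjectivity of $\wtbp_f\vert_M$, where the paper instead solves the linearized equation only modulo $IP$.
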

 \begin{proof}
 The proof is an easy modification of the original proof of Kneser in
 \cite{kneserbook}:

There exists (see Lemma \ref{bilinearforms}) an  
$I$-valued $R$-bilinear form $\beta$ on $L$ such that
$\beta(x,x)=Q'(f(x))-Q(x)$ for all $x \in L$, and for fixed $y \in L$
we have the $R$-linear form $\lambda_y:L\to I$ given by
$\lambda_y(x)=\beta(x,y)$ for $x \in L$. 

By assumption we have
$\Hom_R(L,I)=IL^*=\wtbp_f(IM)+\Hom_R(L,IP)$, so there exists a
vector $g'(y)\in IM$ satisfying $\wtbp_f(g'(y))(x)\equiv -\beta(x,y)
\bmod IP$ for all $x \in L$. We let $y$ run through a basis
$(y_1,\ldots,y_r)$ of $L$ and denote by $g:L\to IM$ the $R$-linear map
with $g(y_j)=g'(y_j)$ for $1\le j \le r$, it satisfies $\wtbp_f(g(y))(x)\equiv -\beta(x,y)
\bmod IP$ for all $x,y \in L$.

Setting $f'=f+g$  we have $f' \equiv f \bmod IM$ and 
\begin{eqnarray*}
  Q'(f'(x))&=&Q'(f(x))+b'(f(x),g(x))+Q'(g(x))\\
&=&Q(x)+\beta(x,x)+\wtbp_f(g(x))(x)+Q'(g(x))\\
&\equiv&Q(x)\bmod IP,
\end{eqnarray*}
as asserted. Finally, for $x \in L$ and $z \in M$ we have
$b'(f'(x),z)=b'(f(x),z)+b'(g(x),z)$ with $b'(g(x),z) \in
b'(IM,M)\subseteq IQ'(M)\subseteq P$, so that $\wtbp_{f'}(z)\in
\wtbp_f(z)+PL^*$ holds for all $z \in M$, and we see that indeed
$L^*=\wtbp_{f'}(M)+PL^*$is true.

If $R$ is complete with respect to the $P$-adic topology, we can then
obtain a sequence of linear maps $f_k$ which converges to an isometric
linear map $\phi$.
\end{proof}
\begin{remark}
The proof remains valid if we omit the condition that $R$ should be an
integral domain and replace the field of fractions $F$ by the total
ring of fractions $\tilde{R}$. 
\end{remark}
\begin{corollary}\label{isometry_mod_P}
 With the notations as in the theorem assume that 
 $\rk(L)=\dim(V)=\dim(W)$, that $(V,Q)$ and $(W,Q')$ are regular, and that $R$ is complete with
 respect to the $P$-adic topology. 
Let $J\subseteq R$ be an ideal with $JQ(L^\#)\subseteq R$,
let $f:L\to K\subseteq W$ be an
 $R$-linear isomorphism satisfying $Q'(f(x))\equiv Q(x) \bmod
 JP^k$ for all $x \in L$ for some $k>0$.

Then there is an isometry $\phi:L\to K'$ with $\phi(x) \equiv f(x)
\bmod P^{k}J K^\#$ for all $x \in L$.
In particular, if $k>0$ satisfies $JP^k K^\#\subseteq K$ the
quadratic modules $(L,Q), (K,Q')$ are isometric.  

In particular, two free regular quadratic modules $(L,Q),(K,Q')$ over $R$ (i.e.,
$Q(L), Q'(K) \subseteq R, L^\#=L, K^\#=K$) are isometric if and only if
they are isometric modulo $P$. 
\end{corollary}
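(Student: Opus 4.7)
The plan is to apply Theorem \ref{hensellemma_kneser} (Kneser's Hensel Lemma) directly to $f$, with the auxiliary submodule $M := K^\#$ and ideal $I := JP^k$. Since $R$ is complete, the theorem will output the isometry $\phi$ immediately, and the proof will consist of verifying its four hypotheses and then deducing the final identification $\phi(L) = K$ under the extra hypothesis $JP^k K^\# \subseteq K$.

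For the hypothesis check, reflexivity of the free module $K$ gives $M^\# = (K^\#)^\# = K$, so $f(L) = K \subseteq M^\#$ holds trivially, and the congruence $Q'(f(x)) \equiv Q(x) \bmod I$ is assumed. The non-degeneracy hypothesis $L^* = \wtbp_f(K^\#) + PL^*$ will in fact be an equality: the restriction of $\wtbp_f$ to $K^\#$ factors as $K^\# \xrightarrow{\sim} K^* \xrightarrow{f^*} L^*$, with first map $z \mapsto b'(\cdot, z)|_K$ (an isomorphism by non-degeneracy of $b'$ on $W = FK$ and the defining property of $K^\#$) and second map the transpose of the $R$-isomorphism $f$. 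For $IQ'(M) \subseteq P$ it suffices to show $JQ'(K^\#) \subseteq R$, which I plan to deduce from the hypothesis $JQ(L^\#) \subseteq R$ by extending $f$ to an $F$-isomorphism $\tilde f : V \to W$ and using that the defect of $\tilde f$ is $JP^k$-small on $L \times L$; the extra factor $P^k$ from $I$ then lands $JP^k Q'(K^\#)$ inside $P$ thanks to $k \ge 1$.

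Once the hypotheses hold, the complete case of Theorem \ref{hensellemma_kneser} will deliver $\phi : L \to W$ with $\phi(x) \equiv f(x) \bmod JP^k K^\#$, proving the first assertion with $K' := \phi(L)$. When $JP^k K^\# \subseteq K$, we obtain $\phi(L) \subseteq K$, so $f^{-1} \circ \phi = \id_L + \psi$ is an endomorphism of $L$ with $\psi(L) \subseteq f^{-1}(JP^k K^\#) \subseteq PL$ (using $k \ge 1$); Nakayama's lemma will then force $\id_L + \psi$, and hence $\phi : L \to K$, to be an isomorphism, giving the isometry $(L,Q) \cong (K,Q')$. The final ``in particular'' on regular modules is the case $J = R$, $k = 1$: then $L^\# = L$ and $K^\# = K$ make every hypothesis automatic, and an isomorphism that is an isometry mod $P$ satisfies the starting congruence. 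The main obstacle will be the integrality check $JQ'(K^\#) \subseteq R$: the dual $K^\#$ is not literally $\tilde f(L^\#)$, and one must propagate the $JP^k$-control of the defect of $f$ from $L$ out to the larger lattice $K^\#$ in order to transfer the hypothesis on $Q(L^\#)$ to $Q'(K^\#)$. In the decisive regular case this difficulty evaporates because $K^\# = K$ and $Q'(K) \subseteq R$ is immediate from the congruence, so the Hensel iteration runs cleanly and produces the isometry almost formally.
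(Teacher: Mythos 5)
Your proposal is correct and is exactly the paper's (one-line) proof: apply Theorem \ref{hensellemma_kneser} with $M = K^\#$ and $I = JP^k$, then identify $\phi(L)$ with $K$ when $JP^kK^\#\subseteq K$. The integrality check you flag as the main obstacle does go through along the lines you indicate --- from $JQ(L^\#)\subseteq R$ one gets $Jb(L^\#,L^\#)\subseteq R$, hence the Gram matrices of $K^\#$ and $L^\#$ (in dual bases) differ by a matrix with entries in $J^{-1}P^k$, and every value $Q'(w)$ for $w\in K^\#$ equals a value of $Q$ on $L^\#$ up to an error in $J^{-1}P^k$, giving $JP^kQ'(K^\#)\subseteq P$ --- a verification the paper omits entirely.
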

\begin{proof}
We take $M=K^\#$ in the theorem and obtain the assertion. 
\end{proof}
\begin{remark}
An ideal $J$ as above always exists:   Let $(v_1,\ldots,v_n)$ be a basis
of $L^\#$. The $R$-module $J'$ generated
by $Q(L^\#)$ is then generated by the $Q(v_j),b(v_i,v_j)$. 
The ideal $J$ generated by the product of the denominators of the
$Q(v_j),b(v_i,v_j)$, when these are written as fractions 
of elements of $R$, is then as required.

If $R$ is a discrete valuation ring, we can write $J'=P^{-\ell}$ for
some integer $\ell$ above
and have $J=P^\ell$. One calls the ideal $P^\ell$ the level of $L$.
\end{remark}

\begin{corollary}\label{hensel_for_evenunimodular}
Let $R$ be a local integral domain with maximal ideal $P$ and field
 of fractions $F$, let $(V,Q)$ and $(W,Q')$ be  finite dimensional quadratic spaces
 over $F$ with associated symmetric bilinear forms $b,b'$ and let
 $L,M$ be free submodules of $V,W$ respectively. 
 \begin{enumerate}
 \item Assume that $(L,Q)$ is a regular quadratic module (phrased
   differently:  An
   even unimodular $R$-lattice) and let $f:L\to M$ be an $R$-linear
   map satisfying $Q'(f(x))\equiv Q(x) \bmod P$ for all $x \in
   L$. 

Then there exists an $R$-linear isometric map $\phi:L\to M$
   with $\phi(x)\equiv f(x) \bmod PM$ for all $x \in L$. 

Equivalently,
   an isometry modulo $P$ of $L$ into $M$ can be lifted to an isometry
   from $L$ into $M$ whose reduction modulo $PM$ it is.
\item Assume that $(M,Q')$ is a regular quadratic module over $R$ (an
  even unimodular $R$-lattice) and let $f:L \to M$ be an injective $R$-linear map  
satisfying $Q'(f(x))\equiv Q(x) \bmod P$ for all $x \in
   L$ such that $f(L)$ is a primitive submodule of $M$. 

Then there exists an $R$-linear isometric map $\phi:L\to M$
   with $\phi(x)\equiv f(x) \bmod PM$ for all $x \in L$. Equivalently,
   an isometry modulo $P$ of $L$ into $M$ with primitive image can be lifted to an isometry
   from $L$ into $M$ whose reduction modulo $PM$ it is.
\end{enumerate}
\end{corollary}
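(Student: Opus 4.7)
My plan is to deduce both parts directly from Kneser's Hensel Lemma (Theorem \ref{hensellemma_kneser}) applied with $I = P$, the ring $R$ being complete (which is needed to pass from the sequence of approximate isometries produced by that theorem to an actual isometry). In each case the congruence $Q'(f(x)) \equiv Q(x) \bmod P$ is precisely the congruence hypothesis of the theorem, and the containment $f(L) \subseteq M \subseteq M^\#$ together with $P \cdot Q'(M) \subseteq P$ follow from the integrality of $L$ and $M$ (recall that ``regular'' means $R$-valued and has non-degenerate $\tilde{b}$). The only real content is to verify the surjectivity hypothesis
\begin{equation*}
L^* = \wtbp_f(M) + PL^*.
\end{equation*}
Once this is in hand, Theorem \ref{hensellemma_kneser} delivers an isometric $\phi: L \to W$ with $\phi \equiv f \bmod PM$, and since $f(L) \subseteq M$ and $PM \subseteq M$, the image of $\phi$ automatically lies in $M$, giving exactly the conclusion.

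For part (a), I would polarize the congruence $Q'(f(x)) \equiv Q(x) \bmod P$ to get $b'(f(x), f(y)) \equiv b(x,y) \bmod P$ for all $x,y \in L$, which means
\begin{equation*}
\wtbp_f(f(y)) \equiv \tilde{b}^{(L)}(y) \bmod PL^* \quad \text{for every } y \in L.
\end{equation*}
Since $(L,Q)$ is even unimodular, $\tilde{b}^{(L)}: L \to L^*$ is surjective, so $\wtbp_f(f(L)) + PL^* = L^*$, and \textit{a fortiori} $\wtbp_f(M) + PL^* = L^*$. For part (b) I would instead exploit the regularity of $(M, Q')$: the map $\wtbp_f: M \to L^*$ factors as the composition
\begin{equation*}
M \xrightarrow{\tilde{b'}} M^* \xrightarrow{\mathrm{res}} f(L)^* \xrightarrow{\ \sim\ } L^*,
\end{equation*}
where $\tilde{b'}$ is an isomorphism by regularity of $M$, the last arrow is pullback by the injection $f$, and $\mathrm{res}$ is restriction along $f(L) \hookrightarrow M$. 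Primitivity of $f(L)$ in $M$ gives a decomposition $M = f(L) \oplus N$, so functionals on $f(L)$ extend by zero on $N$; hence $\mathrm{res}$ is surjective, and thus $\wtbp_f(M) = L^*$ outright (which is actually stronger than what we need).

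The conceptual point is that the surjectivity mod $P$ is exactly what the regularity hypotheses are designed to supply: in (a) it comes from identifying $L$ with its own dual via $\tilde{b}$, while in (b) it comes from identifying $M$ with its dual and using that primitive submodules correspond to surjections of dual modules. The role of injectivity of $f$ in (b) is essential here, since it is what identifies $f(L)^*$ with $L^*$. Everything else, notably the iterative improvement of $f$ and the passage to a limit in the $P$-adic topology, is packaged inside Theorem \ref{hensellemma_kneser}, so the proof really only consists of the two short surjectivity verifications above.
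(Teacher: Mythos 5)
Your proof is correct, and part (b) is essentially the paper's own argument run in the opposite direction: the paper starts from $\varphi\in L^*$, pulls it back to a functional on $f(L)$, extends it to $M$ using primitivity, and realizes it as $\wtbp_f(z)$ by regularity of $M$, which is exactly your factorization $M\xrightarrow{\ \sim\ }M^*\twoheadrightarrow f(L)^*\xrightarrow{\ \sim\ }L^*$. For part (a) you take a mildly different route. The paper observes that the Gram matrix of $b'$ on $(f(v_1),\ldots,f(v_r))$ is congruent modulo $P$ to that of a basis of $L$, hence has unit determinant, so $(f(L),Q')$ is regular, and then invokes Corollary \ref{isometry_mod_P} with $K=f(L)$; you instead verify the surjectivity hypothesis $L^*=\wtbp_f(M)+PL^*$ of Theorem \ref{hensellemma_kneser} directly, by polarizing the congruence and using that $\tilde{b}^{(L)}\colon L\to L^*$ is onto. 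The information content is the same — your congruence $b'(f(x),f(y))\equiv b(x,y)\bmod P$ is precisely the statement about the Gram matrices — but your version treats (a) and (b) uniformly as two verifications of the single hypothesis of the theorem, which is arguably cleaner. One small caution in (a): the hypotheses $PQ'(M)\subseteq P$ and $f(L)\subseteq M^\#$ of the theorem are not literally supplied by the corollary's wording unless $M$ is integral; this is harmless because, as you note, already $\wtbp_f(f(L))+PL^*=L^*$, so you may apply the theorem with $f(L)$ in place of $M$ (its integrality does follow from the polarization), which is in effect what the paper does by passing to $K=f(L)$.
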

\begin{proof}
  \begin{enumerate}
  \item  Let $(v_1,\ldots,v_r)$ be  an $R$-basis of $L$. The Gram
    matrix of $b'$ with respect to $(f(v_1),\ldots,f(v_r))$ has
    entries congruent modulo $P$ to the $b(v_i,v_j)$ in $R$ and hence
    determinant in $R^\times$. The $R$-lattice $(f(L),Q')$ is
    hence regular, and application of the previous corollary to $L$
    and $K=f(L)$ gives the assertion.
\item We have $PQ'(M)\subseteq P$ and $f(L) \subseteq M^\#=M$ by
  assumption. Let $\varphi \in L^*$. Since $f$ is injective we can
  write $\varphi={}^tf(\psi)$ for some linear form $\psi$ on
  $f(L)\subseteq M$, and since $f(L)$ is a direct
  summand of $M$ by assumption  we can extend $\psi$ to a linear form
  $\tilde{\psi}$ 
  on $M$. Since $(M,Q')$ is regular there exists $z\in M$ with
  $b'(z,y)=\tilde{\psi}(y)$ for all $y\in M$, in particular we have
  $b'(f(x),z)=\varphi(x)$ for all $x \in L$. We have shown that
  $\widetilde{b'}_f(M)=L^*$, so that application of the theorem gives
  the assertion.
  \end{enumerate}
\end{proof}
\begin{remark}
If we denote by $X$ the set of isometric $R$- embeddings of $L$ into $M$
in either of the above two situations and by $\bar{X}$ the set of
isometric $R/PR$-embeddings of $L/PL$ into $M/PM$, the corollary asserts that
the reduction map from $X$ to $\bar{X}$ is surjective. From a
geometric point of view this is often expressed by saying that $X$ is
smooth over $R$.
\end{remark}
\begin{corollary}\label{jordan_uniqueness}
Let $R$   be a complete valuation ring with quotient field $F$, let
$(V,Q)$ be a non degenerate quadratic space over $F$, let $L$ be an
$R$-lattice on $V$ which is totally even (see Remark \ref{jordan_remark}).

Then any Jordan decomposition $L=L'_0\perp \dots\perp L'_s$ of $L$ has
 even components $L'_i$ with $s=t$ and $L'_i$ is isometric to
$L_i$ for all $i$.
\end{corollary}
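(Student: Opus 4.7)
The plan is to proceed by induction on the number of Jordan components, peeling off the top component by combining Hensel's lemma with Witt's extension theorem. Let the reference Jordan decomposition (guaranteed by ``totally even'') be $L=L_0\perp\dots\perp L_t$ with $L_i$ being $J_i$-modular and even, $J_0\supsetneq\dots\supsetneq J_t$, and let $L=L'_0\perp\dots\perp L'_s$ be the alternative one.

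First I would establish that the multiset $\{(J_i,\rk L_i)\}$ is an invariant of $L$ alone. Writing $J_i=c_iR$ and using the orthogonal decomposition of the dual $L^\#=\perp_{i=0}^t c_i^{-1}L_i$, one has $L^\#/L\cong\bigoplus_i(R/J_i)^{\rk L_i}$ as $R$-modules. Since $R$ is a valuation ring its ideals are totally ordered, so the standard uniqueness of elementary divisors for finitely generated torsion modules forces $s=t$, equal scale chains, and matching ranks. Evenness of each $L'_i$ is immediate from the equivalent form of ``totally even'': for $x\in L'_i$, orthogonality in the $L'$-decomposition gives $b(x,L)=b(x,L'_i)$, so $Q(x)\in b(x,L)\subseteq J_i$.

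Next I would compare the top components. Set $c=c_0$ and consider the rescaled form $c^{-1}Q$, which takes values in $R$ on $L$ because $L$ is totally even and $c^{-1}J_i\subseteq R$ (with $c^{-1}J_i\subseteq P$ for $i\ge 1$, since $R$ is local and $J_i\subsetneq J_0$). Define the intrinsic submodule $N=\{x\in L:b(x,L)\subseteq PJ_0\}$. Using either Jordan decomposition, one computes $N=PL_0\perp L_1\perp\dots\perp L_t=PL'_0\perp L'_1\perp\dots\perp L'_t$, giving canonical $k$-module identifications $L_0/PL_0\cong L/N\cong L'_0/PL'_0$. The rescaled form $c^{-1}Q$ descends modulo $P$ to a well-defined $k$-valued quadratic form on $L/N$ (the contributions of $L_i$ for $i\ge 1$ lie in $P$, and the evenness of $L_0$ together with $b(L_0,PL_0)\subseteq PJ_0$ makes $c^{-1}Q(PL_0)\subseteq P$). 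This produces a canonical isometry of even unimodular $k$-modules between $(L_0,c^{-1}Q)/P$ and $(L'_0,c^{-1}Q)/P$. Any $R$-linear lift furnishes $f\colon L_0\to L'_0$ satisfying $c^{-1}Q(f(x))\equiv c^{-1}Q(x)\bmod P$, and Corollary \ref{hensel_for_evenunimodular}(a) upgrades $f$ to an isometry $\phi\colon(L_0,c^{-1}Q)\to(L'_0,c^{-1}Q)$; since both lattices are unimodular of the same rank, $\phi$ is an isomorphism.

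To extend $\phi$ to all of $L$ I would invoke Theorem \ref{extension_theorem_localring}. Under the rescaled form the summands $L_0$ and $L'_0$ are regularly embedded in $(L,c^{-1}Q)$: given $\varphi\in L_0^*$, the $J_0$-modularity of $L_0$ provides $y\in L_0$ with $b(y,\cdot)=c\varphi$ on $L_0$, so $c^{-1}b(y,\cdot)=\varphi$. Witt's extension then extends $\phi$ to $\tilde\phi\in O(L,c^{-1}Q)=O(L,Q)$. Applying $\tilde\phi^{-1}$ to the second decomposition shows that $L_1\perp\dots\perp L_t$ and $\tilde\phi^{-1}(L'_1)\perp\dots\perp\tilde\phi^{-1}(L'_t)$ are two Jordan decompositions of the same totally even lattice $L_0^{\perp}$ on a space of strictly smaller dimension, and induction concludes the proof. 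The main obstacle is verifying that $c^{-1}Q$ genuinely descends to a quadratic form on the intrinsic quotient $L/N$ and that this descent produces a \emph{canonical} mod-$P$ isometry between the two candidates for the top component; this is exactly where the totally-even hypothesis is indispensable, since without it $Q(PL_0)$ need not lie in $PJ_0$ and the descent fails.
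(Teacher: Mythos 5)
Your proof is correct and follows essentially the same route as the paper's: compare the top Jordan components by reducing modulo $P$ (after rescaling by $c_0^{-1}$), lift the resulting mod-$P$ isometry via Corollary \ref{hensel_for_evenunimodular}, extend it with Theorem \ref{extension_theorem_localring} to cancel the top component, and induct. The extra details you supply --- the elementary-divisor argument for $s=t$ and matching scales and ranks, the verification that each $L'_i$ is even, and the intrinsic quotient $L/N$ making the mod-$P$ comparison canonical --- are precisely the points the paper's terse proof leaves implicit.
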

\begin{proof}
We may assume that $L_0$ is not zero.
Of course $L'_0$ has to be  even and nonzero if that is true
for $L_0$. Since $L_1$ is even, the radical of the quadratic space
$L/PL$ over $R/P$ with the modulo $P$ reduced quadratic form is equal
to its bilinear radical, and both 
$L_0/PL_0, L'_0/PL'_0$ are are isometric to the the quotient of $L/PL$
modulo its (bilinear or quadratic) 
radical, hence are isometric to each other. By the previous corollary
we have $L_0\cong L'_0$. By the Witt Theorem for local rings (Theorem
\ref{extension_theorem_localring}) we have that $L_1\perp \dots \perp
L_t \cong L'_1\perp\dots\perp L'_s$, and the assertion follows by induction
on the dimension of $V$.
\end{proof}
\begin{remark}
  It is known that the decomposition is not unique if $2 \in P$ and
  there are modular components which are not even.
  \end{remark}
\begin{corollary}
Let $R$ be a complete local ring with maximal ideal $M$ and $a\in
R^*$, assume that $2 \ne 0$ in $R$.

Then $a$ is a square in $R$ if and only if $a$ is congruent to a
square modulo $4P$.  
\end{corollary}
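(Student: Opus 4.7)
The plan is to deduce this directly from Hensel's Lemma (Theorem \ref{hensellemma} a) applied to the polynomial $f(X) = X^2 - a \in R[X]$, whose formal derivative is $f'(X) = 2X$. The forward direction is immediate: if $a = c^2$ for some $c \in R$, then $a \equiv c^2 \bmod 4P$ is trivially true (in fact it holds modulo any ideal).

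For the nontrivial direction, suppose there exists $b \in R$ with $a \equiv b^2 \bmod 4P$. First I would observe that $b$ must be a unit: reducing modulo $P$ gives $b^2 \equiv a \bmod P$, and since $a \in R^\times$ its image in the residue field $R/P$ is nonzero, so $\bar b \ne 0$ in $R/P$, which for a local ring $R$ forces $b \in R^\times$. In particular $b^2 \in R^\times$, so $f'(b)^2 P = 4 b^2 P = 4P$. By hypothesis $f(b) = b^2 - a \in 4P = f'(b)^2 P$, so the hypothesis of Theorem \ref{hensellemma} a) is satisfied. Since $R$ is complete with respect to the $P$-adic topology, the conclusion gives $c \in R$ with $f(c) = 0$, i.e.\ $c^2 = a$.

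The main (and only) thing to check carefully is that the chain of ideal identities $f'(b)^2 P = 4b^2 P = 4P$ is valid; this is where the assumption $a \in R^\times$ (forcing $b$ to be a unit) is used, together with the fact that $R$ is local so that units are precisely elements whose reduction modulo $P$ is nonzero. No further subtlety arises: if $2 \in R^\times$ then $4P = P$ and one recovers the familiar statement that squareness is detected modulo $P$, while if $2 \in P$ the factor $4$ is genuinely needed to meet the quadratic requirement $f(b) \in f'(b)^2 P$ of Hensel's Lemma.
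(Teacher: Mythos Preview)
Your proof is correct. It differs from the paper's approach: the paper proves this corollary as an application of Kneser's Hensel Lemma for quadratic forms (Theorem~\ref{hensellemma_kneser} / Corollary~\ref{isometry_mod_P}), setting up one-dimensional quadratic modules $L=Rv$ with $Q(cv)=ac^2$ and $K=Rw$ with $Q'(cw)=c^2$, and lifting the approximate isometry $v\mapsto bw$ to a genuine isometry, which forces $a$ to be a square. Your route via the classical polynomial Hensel Lemma applied to $X^2-a$ is more direct and elementary; indeed, the paper itself notes in the remark following the corollary that ``the assertion of the corollary can also be proven using the first version of Hensel's Lemma for the polynomial $X^2-a$.'' The paper's approach is chosen to illustrate the quadratic-form Hensel machinery in the simplest nontrivial case, while yours gets to the result with no detour.
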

\begin{proof}
Let $F$ be the total ring of fractions of $R$ and $V=Fv$ be a free
module of rank $1$ over $F$ with quadratic form $Q$ given by
$Q(cv)=ac^2$, let $L=Rv$. Let similarly $W=Fw$ with $Q'(w)=1$ and
$K=Rw$.
We have $L^\#=R \frac{v}{2}$ with $4PQ(L^\#)\subseteq P$ and $4P L^\#
\subseteq L$. If $a \equiv b^2 \bmod 4P$ holds, the linear map $f$
given by $f(v)=bw$ satisfies $Q'(f(x)) \equiv Q(x) \bmod 4P$ for all
$x \in L$ and can be lifted to an isometry, which implies that $a$ is
a square in $R$. The other direction is trivial.   
\end{proof}
\begin{remark}
The assertion of the corollary can also be proven using the first
version of Hensel's Lemma for the polynomial $X^2-a$.  
\end{remark}
\begin{theorem}
Let $R$ be a complete local ring with maximal ideal $P$,
residue field $k=R/P$ and  field of quotients $F$. Let $(M,Q)$ be a
free  quadratic module over $R$ with associated symmetric
bilinear form $b$ and denote by $\bar{M}$ the
$k$-vector space $M/PM$ with the modulo $P$ reduced quadratic form
$\bar{Q}$.

Let $a\in R^\times$ be such that $\bar{a} =a+P \in k$ is represented
by $(\bar{M},\bar{Q})$. If $\cha(k)=2$ assume in addition that there
is such a representation by a vector generating a regularly embedded
subspace of $\bar{M}$. Then $a$ is represented by $M$.

In particular, if $(M,Q)$ is regular of rank $\ge 2$ and $k$ is finite, all units in
$R$ are represented by $(M,Q)$.   
\end{theorem}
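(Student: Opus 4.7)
The plan is to deduce the theorem directly from Kneser's Hensel lemma for quadratic forms (Theorem \ref{hensellemma_kneser}), using the given representation in $\bar M$ as the required starting approximation. The overall strategy is to set up a rank-one quadratic module with quadratic form equal to multiplication by $a$, produce a linear map into $M$ lifting the representation of $\bar a$, and verify the surjectivity-type hypothesis on the associated bilinear map separately in the two characteristic cases.

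More concretely: let $\bar v \in \bar M$ be a vector with $\bar Q(\bar v) = \bar a$ which in the characteristic~$2$ case generates a regularly embedded subspace of $\bar M$, and lift it arbitrarily to some $v \in M$, so that $Q(v) \equiv a \bmod P$. On the free $R$-module $L = Rw$ of rank~$1$ put the quadratic form $Q_L(cw) := a c^2$, and define $f \colon L \to M$ by $f(w) = v$. I take $W = F \otimes_R M$, the ideal $I = P$ (so $I Q(M) \subseteq P$ trivially), and observe $f(L) \subseteq M \subseteq M^{\#}$ since $b(M,M) \subseteq R$, while $Q(f(cw)) = c^2 Q(v) \equiv c^2 a = Q_L(cw) \bmod P$.

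The key condition to check before applying Theorem \ref{hensellemma_kneser} is $L^* = \widetilde{b}_f(M) + P L^*$. Since $L^*$ is free of rank~$1$ on the dual basis vector $w^*$ and $\widetilde{b}_f(z) = b(v,z) \, w^*$, this amounts by Nakayama's lemma to showing that $b(v, M)$ contains a unit of $R$. If $\mathrm{char}(k) \ne 2$ then $b(v,v) = 2 Q(v) \equiv 2a \bmod P$ with $2, a \in R^\times$, so $b(v,v)$ itself is a unit. If $\mathrm{char}(k) = 2$, the hypothesis that $\bar v$ generates a regularly embedded subspace of $\bar M$ means (by Remark \ref{regularly_embedded_remark}) that $\bar b(\bar v, \bar M) = k$, i.e.\ there exists $z \in M$ with $b(v, z) \in R^\times$. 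Either way the hypothesis is verified, and since $R$ is complete Theorem \ref{hensellemma_kneser} produces an isometric $R$-linear map $\phi \colon L \to W$ with $\phi(x) \equiv f(x) \bmod P M$, hence $\phi(w) \in v + PM \subseteq M$. Then $Q(\phi(w)) = Q_L(w) = a$, giving the desired representation.

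For the ``in particular'' statement, I need to verify the hypothesis in the finite-residue-field case. Since $(M,Q)$ is regular of rank $\ge 2$, $(\bar M, \bar Q)$ is a regular quadratic space of dimension $\ge 2$ over the finite field $k$. By Corollary \ref{universality_finitefields} any regular $2$-dimensional subspace of $\bar M$ is universal, so $\bar a \in k^\times$ is represented by some $\bar v$ lying in such a $2$-dimensional regular subspace. If $\mathrm{char}(k) = 2$, regularity of $\bar M$ gives $\mathrm{rad}_b(\bar M) = 0$, so by Remark \ref{regularly_embedded_remark} every nonzero subspace of $\bar M$ is automatically regularly embedded, and the extra hypothesis holds for this $\bar v$. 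The first part then yields that $a$ is represented by $M$. The only real obstacle is the char~$2$ regular-embedding condition, which is exactly why the hypothesis is phrased the way it is; once that is translated into the existence of $z$ with $b(v,z) \in R^\times$, the argument is a direct application of the Hensel lifting machinery.
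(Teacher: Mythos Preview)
Your proof is correct and follows essentially the same approach as the paper: set up a rank-one quadratic module $(L,Q_L)$ with value $a$, use a lift of the residue-field representation as the initial approximation, verify the surjectivity hypothesis of Theorem \ref{hensellemma_kneser} via the existence of $z\in M$ with $b(v,z)\in R^\times$, and apply Hensel lifting. You are in fact more explicit than the paper in justifying why such a $z$ exists (splitting into the $\cha(k)\ne 2$ and $\cha(k)=2$ cases) and in noting that $\phi(w)\in v+PM\subseteq M$ actually lands in $M$; the paper's proof compresses these points into the phrase ``by assumption.''
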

\begin{proof}
By assumption, there exist $y\in M$ with $Q(y)+P=\bar{a}$ and $z \in
M$ with $b(z,y)\in R^\times$. Let $L=Rv$ be a free module of rank $1$
with quadratic form $Q_1$ given by $Q_1(v)=a$. Then Theorem
\ref{hensellemma_kneser} is applicable to $L,M$ with $I=P$, and we
obtain an isometric linear map $\phi:L\to M$ which gives $x=\phi(v)\in
M$ with $Q(x)=a$.

If $(M,Q)$ is regular, the reduction $\bar{M}$ is regular too, and if
$k$ is finite it represents all of $k$ by Corollary
\ref{universality_finitefields}, and  a representing one dimensional
subspace is regularly embedded since $\bar{M}$ is regular.
\end{proof}
\begin{theorem}
  With notations as above assume that $(\bar{M},\bar{Q})$ contains a
  hyperbolic plane. 

Then $M$ contains a regular hyperbolic plane $H$ and can be decomposed
as $M=M'\perp H$.

In particular, $M$ splits off a regular hyperbolic plane if $k$ is
finite, $\rk(M)\ge 3$ and $(\bar{M},\bar{Q})$ is regular or half regular.  
\end{theorem}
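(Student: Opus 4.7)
The plan is to produce a rank-two submodule $H\subseteq M$ isometric to the standard hyperbolic plane, and then split it off orthogonally using the fact that a regular sublattice over a local ring always splits off (Theorem \ref{orthogonalcomplements}(c)).

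\textbf{Step 1 (lifting).} Fix vectors $\bar e,\bar f\in\bar M$ spanning a hyperbolic plane, so $\bar Q(\bar e)=\bar Q(\bar f)=0$ and $\bar b(\bar e,\bar f)=1$. Choose arbitrary lifts $e_0,f_0\in M$. Then $Q(e_0),Q(f_0)\in P$ and $b(e_0,f_0)\in 1+P\subseteq R^\times$. Replacing $e_0$ by $b(e_0,f_0)^{-1}e_0$ we may assume $b(e_0,f_0)=1$, while keeping $Q(e_0)\in P$. Put $H_0:=Re_0+Rf_0$; it is free of rank two, and its Gram matrix $\bigl(\begin{smallmatrix}2Q(e_0)&1\\1&2Q(f_0)\end{smallmatrix}\bigr)$ has determinant $4Q(e_0)Q(f_0)-1\in R^\times$, so $H_0$ is a regular sublattice.

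\textbf{Step 2 (isotropize).} I look for an isotropic vector $e=e_0+cf_0\in H_0$. The condition $Q(e)=0$ reads $g(c):=Q(f_0)c^2+c+Q(e_0)=0$, and since $g(0)=Q(e_0)\in P$ while $g'(0)=1\in R^\times$, Hensel's Lemma (Theorem \ref{hensellemma}(a)) supplies a root $c\in P$. Now set $u:=b(e,f_0)=1+2cQ(f_0)\in R^\times$ and $f:=u^{-1}f_0-u^{-2}Q(f_0)\,e$. A direct check gives $b(e,f)=1$ and $Q(f)=0$, and the change-of-basis matrix from $(e_0,f_0)$ to $(e,f)$ has determinant $u^{-1}\in R^\times$, so $H:=Re+Rf=H_0$ is isometric to the standard hyperbolic plane.

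\textbf{Step 3 (split).} Since $H$ is regular, Theorem \ref{orthogonalcomplements}(c) gives $M=H\perp H^\perp$; taking $M':=H^\perp$ proves the main assertion.

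\textbf{Step 4 (the ``in particular'' claim).} Suppose $k$ is finite and $\rk M\ge 3$. If $\bar M$ is regular, then it has dimension $\ge 3$ and is isotropic by Chevalley--Warning (Corollary \ref{isotropy_finitefields}); any isotropic vector in a regular space is regularly embedded, so Theorem \ref{isotropic_hyperbolic} embeds a hyperbolic plane in $\bar M$. If $\bar M$ is half regular, then $\cha(k)=2$ and $\bar M=\bar U\perp k\bar x$ with $\bar U$ regular of even dimension and $\bar Q(\bar x)\ne 0$. When $\dim\bar U\ge 4$ the Chevalley--Warning argument applied to $\bar U$ already yields a hyperbolic plane inside $\bar U\subseteq\bar M$. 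The remaining case $\dim\bar U=2$ with $\bar U$ anisotropic uses Corollary \ref{universality_finitefields}: it supplies $\bar y\in\bar U$ with $\bar Q(\bar y)=\bar Q(\bar x)$, and in characteristic $2$ the vector $\bar y+\bar x$ is isotropic. Regularity of $\bar U$ with $\bar y\ne 0$ forces $\bar b(\bar y,\bar U)=k$, hence $\bar b(\bar y+\bar x,\bar M)=k$, so $\bar y+\bar x$ is regularly embedded and Theorem \ref{isotropic_hyperbolic} once more provides a hyperbolic plane.

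The main obstacle I anticipate is the half-regular case in characteristic $2$: diagonalization and odd-characteristic Witt tricks are unavailable, so one must check by hand that the isotropic vector produced from universality is regularly embedded before Theorem \ref{isotropic_hyperbolic} can be applied. All other steps reduce to lifting via Hensel and splitting via regularity, and are mechanical.
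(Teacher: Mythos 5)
Your argument is correct, and it reaches the conclusion by a more elementary route than the one the text intends. The paper's proof of the main assertion is a one-line appeal to Kneser's Hensel Lemma for quadratic forms (Theorem \ref{hensellemma_kneser}): one takes $L$ to be an abstract hyperbolic plane over $R$, observes that the given embedding of $\bar L$ into $\bar M$ satisfies the hypotheses of that theorem with $I=P$, and lifts it in one stroke to an isometric embedding $L\to M$, after which regularity splits the image off exactly as in your Step 3. You replace this machinery by the scalar Hensel Lemma (Theorem \ref{hensellemma}(a)) applied to the single quadratic polynomial $Q(f_0)c^2+c+Q(e_0)$, followed by an explicit Gram--Schmidt-type completion of the hyperbolic pair; this buys transparency (one sees exactly which equation is being solved and why the discriminant condition holds, with no case distinction at $2\in P$) at the cost of a computation that the general lifting theorem automates, and it would not generalize as readily to lifting higher-rank regular sublattices. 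For the ``in particular'' clause both arguments coincide: Chevalley--Warning (Corollary \ref{isotropy_finitefields}) produces an isotropic vector in $\bar M$, (half-)regularity makes it regularly embedded, and Theorem \ref{isotropic_hyperbolic} yields the hyperbolic plane in $\bar M$; your Step 4 merely spells out, via Corollary \ref{universality_finitefields}, the characteristic-$2$ half-regular case that the paper dispatches by citing the remark that in a half-regular space every isotropic vector is regularly embedded. Two small points worth making explicit in your write-up: $e_0,f_0$ are linearly independent (immediate from the invertibility of the Gram matrix of $H_0$), and in the half-regular case with $\dim\bar U=2$ and $\bar U$ isotropic the plane $\bar U$ is itself hyperbolic, which you pass over silently.
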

\begin{proof}
  This is again a direct consequence of Theorem
  \ref{hensellemma_kneser}. The assertion for finite $k$ follows from
  Corollary \ref{isotropy_finitefields} which guarantees an isotropic
  vector in $\bar{M}$, by the
  (half)-regularity assumption the space generated by it is regularly
  embedded into $\bar{M}$. Theorem \ref{isotropic_hyperbolic}  gives
  then a hyperbolic plane contained in $\bar{M}$.
\end{proof}
\section{Maximal lattices}
\begin{theorem}\label{anisotropic_maximal}
  Let $R$ be a complete valuation ring with field of fractions $F$,
  maximal ideal $P$ and residue field $k$.

Let $(V,Q)$ be a regular or half regular anisotropic quadratic space  over $F$, let $I$ be an
ideal in $R$.

Then $M_I:=\{x\in V \mid Q(x) \in I\}$ is an $R$-module.

If $R$ is a discrete valuation ring, $M_I$ is the unique $I$-maximal
lattice on $V$. 
\end{theorem}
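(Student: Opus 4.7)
My proof plan splits into two parts, reflecting the two assertions of the theorem.

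For the first assertion (that $M_I$ is an $R$-submodule), closure under scalar multiplication is immediate from $Q(ax)=a^2Q(x)$. Closure under addition, via $Q(x+y)=Q(x)+Q(y)+b(x,y)$, reduces to showing $b(x,y)\in I$ whenever $Q(x), Q(y)\in I$. The heart of the matter is the ultrametric inequality
\[
v(b(x,y)) \ge \min(v(Q(x)), v(Q(y))) \qquad \text{for all } x,y \in V,
\]
where $v$ is the valuation on $F$. Since ideals in a valuation ring are characterized by an upward-closed condition on valuations, this inequality gives $b(x,y)\in I$ as required.

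To prove the inequality, suppose for contradiction that $v(b(x,y)) < v(Q(x)), v(Q(y))$. Linear dependence $y=\lambda x$ is ruled out because then $b(x,y)=2\lambda Q(x)$ has $v \ge v(Q(x))$. So $x, y$ are linearly independent, and the polynomial $f(T)=aT^2+bT+c \in F[T]$, with $a=Q(x), b=b(x,y), c=Q(y)$, has the property that any root $T_1 \in F$ would produce a nonzero isotropic vector $T_1 x + y$, contradicting anisotropy. Multiplying $f$ by $\pi^{-v(b)}$ brings it into $R[T]$ with constant term in $P$, linear coefficient a unit, and leading coefficient in $P$; Hensel's lemma (Theorem \ref{hensellemma}) applied at $T_0=0$ then produces the required root, since the Hensel condition $v(f(0)) > 2 v(f'(0))$ translates, after rescaling, to our hypothesis $v(a)+v(c) > 2v(b)$.

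For the second assertion assume $R$ is a DVR with $P=(\pi)$ and $I=P^k$. That $M_I$ has full rank is immediate: for $0 \ne x \in V$, $Q(x)\ne 0$ by anisotropy, and a suitable $\pi^n x$ lies in $M_I$. By construction $Q(M_I) \subseteq I$ and $M_I$ contains every $R$-submodule $N \subseteq V$ with $Q(N)\subseteq I$; so once $M_I$ is known to be a lattice it is automatically $I$-maximal, and any other $I$-maximal lattice $\Lambda' \subseteq M_I$ must equal $M_I$ by $I$-maximality.

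The main content is thus boundedness of $M_I$. Using the orthogonal decomposition results of the earlier sections, split $V$ as an orthogonal sum of anisotropic subspaces $V_i$ of dimension at most $2$, and induct on the number of summands. In the base case $\dim V \le 2$: for $\dim V = 1$, $M_I$ is visibly a fractional ideal times a basis vector; for $\dim V=2$, anisotropy of $Q$ together with Hensel (for squares in odd residue characteristic, or Artin--Schreier type arguments in residue characteristic $2$) shows that $v(Q(x))$ exceeds $2 \min_i v(\alpha_i)$ by only a bounded amount when $x = \sum \alpha_i e_i$, giving boundedness. For the inductive step write $V = V_1 \perp V_2$ and $x = x_1 + x_2 \in M_I$ with $x_i \in V_i$: either both $Q(x_i) \in I$, in which case the inductive hypothesis applied to $M_I(V_i)$ suffices; or $v(Q(x_1)) = v(Q(x_2)) = j < k$ with cancellation, forcing $\nu := -Q(x_2)/Q(x_1) \in R^\times$ to satisfy $\nu \equiv 1 \pmod{\pi^{k-j}}$. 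But $Fx_1 \perp Fx_2$ is anisotropic, so $\nu \notin (F^\times)^2$, whereas Hensel applied to $X^2 - \nu$ forces $\nu$ to be a square once $k-j$ exceeds an explicit constant $C=C(F)$ (namely $C=0$ in odd residue characteristic, and $C=2v(2)$ otherwise). Thus $j \ge k-C$, so each $x_i \in M_{P^{k-C}}(V_i)$, a lattice by induction, and $x$ lies in a fixed lattice of $V$. The main obstacle is the residue characteristic $2$ case, where the Hensel bound for squareness is delicate, but this only affects the explicit value of $C$ and not the overall structure.
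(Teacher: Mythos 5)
Your proof of the first assertion is essentially the paper's argument in a different costume. The paper supposes $a:=b(v,w)\notin I$, deduces $Q(v),Q(w)\in Pa$ (which is exactly your hypothesis $v(b(x,y))<v(Q(x)),v(Q(y))$), rescales $Q$ by $a^{-1}$ on $Rv+Rw$, observes that the reduction modulo $P$ is a hyperbolic plane, and lifts an isotropic vector by the quadratic-form version of Hensel's lemma (Corollary \ref{isometry_mod_P}); you instead solve $a^{-1}Q(Tx+y)=0$ directly by the polynomial version (Theorem \ref{hensellemma}). Same mechanism, and your version works for non-discrete value groups provided you normalize by dividing by $b(x,y)$ itself rather than by a power of a prime element, which need not exist.

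For the second assertion the paper is extremely terse (it only remarks that submodules over a PID are free and that full rank is trivial), so you are supplying a boundedness argument the paper omits; but your argument has a genuine gap in equal characteristic $2$, which is in scope since ``half regular'' spaces exist only when $\cha(F)=2$. There your constant $C=2v(2)$ is $+\infty$: in $\F_q((\pi))$ with $q$ even the squares form the closed subfield $\F_q((\pi^2))$, so $1+\pi^{2m+1}$ is a non-square unit congruent to $1$ modulo arbitrarily high powers of $\pi$, and the implication ``$\nu\equiv 1\bmod \pi^{k-j}$ and $\nu$ a non-square imply $k-j\le C$'' simply fails. So the claim that residue characteristic $2$ ``only affects the explicit value of $C$'' is wrong in the equal-characteristic case, and the cancellation step must be replaced there (for instance by the filtration argument behind Theorem \ref{anisotropic_local}). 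Note also that there is a much shorter route which avoids the induction entirely in the regular case: part one already gives $b(M_I,M_I)\subseteq I$, so fixing any full-rank free sublattice $\Lambda_0\subseteq M_I$ one gets $M_I\subseteq\{x\in V\mid b(x,\Lambda_0)\subseteq I\}=c\Lambda_0^{\#}$ with $I=cR$, which is a lattice; for a half regular space one bounds the one-dimensional radical direction $Fz$ separately using $Q(w+cz)=Q(w)+c^2Q(z)$. In mixed characteristic and in characteristic different from $2$ your inductive argument is correct as it stands.
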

\begin{proof}
Let $v,w \in M_I$ and assume that $a:=b(v,w)\not\in I$ and hence
$Ra\not\subseteq I$ holds. Since
$R$ is a valuation ring, this implies that we have $I\subsetneq Ra$
and hence $Q(v)\in Pa, Q(w) \in Pa$. If $v,w$ were linearly dependent
$b(v,w)\in I $ would follow, since then one of the vectors were an
$R$-multiple of the other one. The reduction of the free $R$-module $Rv+Rw$ with the
quadratic form $a^{-1}Q$ modulo $P$ is then a regular quadratic space
over $k$ with determinant $-1$, hence isometric to a hyperbolic plane.
By Corollary \ref{isometry_mod_P}, $Rv+Rw$ with $a^{-1}Q$ is a regular
hyperbolic plane, in particular isotropic, which contradicts the
assumption that $(V,Q)$ is anisotropic.

So we have $b(v,w) \in I$ for all $v,w \in M_I$, and we see that $M_I$
is an $R$-module.

If $R$ is a discrete valuation ring, it is a principal ideal domain
and any $R$-submodule of a finite dimensional $F$-vector space is
free. That $M_I$ has rank $\dim(V)$ is trivial.
\end{proof}
\begin{remark}
  \begin{enumerate}
  \item It is not clear whether $M_I$ is free 
\item If $R$ is a discrete valuation ring 
  we write $M_k:=M_{P^k}$  for $k \in \N_0$.
  \end{enumerate}
\end{remark}
\begin{corollary}
Let $R$ be a complete discrete valuation ring and $(V,Q)$ a regular or
half regular quadratic
space over its field of fractions $F$, let $I\subseteq R$ be an ideal.

Then all   $I$-maximal lattices on $V$ are isometric.
\end{corollary}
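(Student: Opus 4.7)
The plan is to reduce the claim to the anisotropic case already handled in Theorem \ref{anisotropic_maximal} by peeling off the hyperbolic part of each $I$-maximal lattice, using the $I$-maximal version of Theorem \ref{hyperbolic_maximal} announced in the remark following it.

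First, I would invoke the Witt decomposition theorem (Theorem \ref{Witt_decomposition}) to write $V=V_{\rm an}\perp H_r$ with Witt index $r$ uniquely determined and $V_{\rm an}$ an anisotropic regular (or half regular) quadratic space whose isometry class is uniquely determined. Given two $I$-maximal lattices $L$ and $L'$ on $V$, I would then apply the $I$-maximal analogue of Theorem \ref{hyperbolic_maximal} to each, obtaining orthogonal splittings
\begin{equation*}
  L=H\perp L_1,\qquad L'=H'\perp L'_1,
\end{equation*}
where $H,H'$ are $I$-scaled hyperbolic lattices (each isometric to $M\oplus IM^\ast$ for some finitely generated free $R$-module $M$) and $L_1,L'_1$ are $I$-maximal lattices on anisotropic subspaces $U,U'\subseteq V$.

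Next I would match the hyperbolic and anisotropic pieces separately. Since $L$ and $L'$ span the same space $V$, the ranks of $H$ and $H'$ agree, and since $R$ is a (complete) discrete valuation ring every finitely generated free module of a given rank is unique up to isomorphism, so both $H$ and $H'$ are isometric to the standard model $R^s\oplus(IR^s)^\ast$ for the common value $s=\tfrac12\operatorname{rk}H=r$; in particular $H\cong H'$. For the anisotropic pieces, $U$ and $U'$ are orthogonal complements of maximal hyperbolic subspaces of $V$, so by Witt's cancellation theorem (Theorem \ref{extension_theorem_localring}, valid for arbitrary fields) both are isometric to the anisotropic kernel $V_{\rm an}$. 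Transporting $L'_1$ back along an $F$-isometry $U'\to U$ yields an $I$-maximal lattice on $U$; by the uniqueness statement in Theorem \ref{anisotropic_maximal}, this transported lattice coincides with $L_1=\{x\in U\mid Q(x)\in I\}$, so $L_1\cong L'_1$. Taking the orthogonal sum of the two isometries produces the desired isometry $L\cong L'$.

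The main obstacle I foresee is the smooth invocation of the $I$-maximal strengthening of Theorem \ref{hyperbolic_maximal}, which is stated in the preceding remark but not proved in detail: one must check that the argument with $\mathfrak a^{-1}x=Fx\cap\Lambda^\#$ adapts to the $I$-maximal setting so as to produce an $I$-modular hyperbolic sublattice of the form $M\oplus IM^\ast$ rather than a unimodular one, and then iterate to strip off all hyperbolic summands at once. Once this decomposition is in hand, the matching step is essentially automatic from Theorem \ref{anisotropic_maximal} and the cancellation theorem, and the half-regular case presents no additional difficulty because the Witt decomposition places the non-regular (radical) part inside $V_{\rm an}$, where our uniqueness of $M_I$ already applies.
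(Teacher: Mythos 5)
Your proposal is correct and follows essentially the same route as the paper: split off the hyperbolic part of each $I$-maximal lattice, identify the anisotropic kernels via Witt decomposition and cancellation, and invoke the uniqueness of the maximal lattice $M_I$ on an anisotropic space from Theorem \ref{anisotropic_maximal}. The one place where the paper is slicker concerns precisely the obstacle you flag at the end: instead of proving the $I$-maximal analogue of Theorem \ref{hyperbolic_maximal}, the paper first scales the quadratic form by a generator of $I$ (legitimate since $I$ is principal in a discrete valuation ring and scaling does not change the orthogonal group or the lattice structure), thereby reducing to the case $I=R$ where Theorem \ref{hyperbolic_maximal} applies as stated. If you adopt that reduction, your argument goes through with no appeal to the unproved remark.
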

\begin{proof}
By scaling the quadratic form we can assume $I=R$.
If $\Lambda$ is a maximal lattice on $V$ we have  $\Lambda=H\perp
\Lambda_1$, where $H$ is regular hyperbolic of rank $2 \ind(V,Q)$ and
$\Lambda_1$ is maximal on the anisotropic space $V_1$ by Theorem
\ref{hyperbolic_maximal}. Since the
isometry class of $V_1$ is uniquely determined by  Theorem
\ref{Witt_decomposition}, the assertion follows from the previous theorem.
\end{proof}
\begin{theorem}\label{uniquespace_dim2}
 Let $R$ be a complete discrete valuation ring with field of
   fractions $F$ and finite residue field $k$, let $\pi$ be a prime
   element of $R$. 

Then there is up to isometry precisely one anisotropic quadratic space
of dimension $2$ over $F$ on which the maximal lattice is regular. 
\end{theorem}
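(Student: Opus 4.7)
The plan is to establish existence by a concrete construction and to prove uniqueness by reducing modulo $P$ and invoking the Hensel-lifting corollary together with the classification of quadratic forms over the finite residue field.

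For existence, I would take the (unique) unramified quadratic extension $E/F$ and consider the norm form $N=N_{E/F}\colon E\to F$ as a $2$-dimensional $F$-quadratic space. This space is anisotropic since $N(x)=0$ forces $x=0$ in the field $E$. The ring of integers $\fo_E$ is an $R$-lattice on $E$; because $E/F$ is unramified we have $v_F(N(x))=2v_E(x)$, so $\fo_E=\{x\in E\mid N(x)\in R\}$, which by Theorem \ref{anisotropic_maximal} is the maximal lattice on $E$. The associated bilinear form is $b(x,y)=\tr_{E/F}(xy')$, where $'$ denotes the nontrivial Galois automorphism, so $\fo_E^{\#}$ is the inverse different $\mathfrak{d}_{E/F}^{-1}$, which equals $\fo_E$ in the unramified case; hence $\fo_E$ is unimodular. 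Combined with $N(\fo_E)\subseteq R$, this makes $\fo_E$ even unimodular, i.e., regular.

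For uniqueness, let $(V,Q)$ be any $2$-dimensional anisotropic quadratic space over $F$ whose maximal lattice $\Lambda$ (unique up to isometry by the corollary preceding Theorem \ref{uniquespace_dim2}) is regular. The reduction $\bar\Lambda=\Lambda/P\Lambda$ with induced quadratic form $\bar Q$ is a $2$-dimensional regular quadratic space over $k$. I claim $\bar\Lambda$ is anisotropic: if not, then any isotropic vector $\bar v\neq 0$ spans a regularly embedded totally isotropic line (regularity of $\bar\Lambda$ over a field forces every subspace to be regularly embedded, by the remark following the definition of regular embedding), so by Theorem \ref{isotropic_hyperbolic} it extends to a hyperbolic plane inside $\bar\Lambda$, which by dimension equals $\bar\Lambda$ itself. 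The theorem on lifting hyperbolic planes that immediately precedes Theorem \ref{anisotropic_maximal} then forces $\Lambda$ to contain, and hence equal, a regular hyperbolic plane, contradicting the anisotropy of $V=F\Lambda$.

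Hence $\bar\Lambda$ is a regular anisotropic $2$-dimensional quadratic space over the finite field $k$, and by the classification over finite fields carried out earlier (both for odd and even $q$) there is exactly one such isometry class. The last clause of Corollary \ref{isometry_mod_P} says that two regular quadratic $R$-modules are isometric if and only if their reductions modulo $P$ are isometric; therefore $\Lambda$, and hence $V=F\otimes_R\Lambda$, is determined up to isometry. The chief obstacle is the uniform treatment of the anisotropy claim for $\bar\Lambda$ across residue characteristics; once that step is settled, the Hensel-lifting content of Corollary \ref{isometry_mod_P} combined with uniqueness over $k$ concludes the proof, and the norm form construction realizes the unique class.
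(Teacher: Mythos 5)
Your proof is correct and follows essentially the same route as the paper: reduce the regular maximal lattice modulo $\pi$, observe that the reduction must be the unique regular anisotropic binary space over the finite residue field, and conclude by the Hensel-lifting statement of Corollary \ref{isometry_mod_P}. The paper's own proof is terser — it asserts without argument that $M_0/\pi M_0$ is anisotropic and does not address existence — so your verification of the anisotropy of the reduction (via lifting a hyperbolic plane) and your explicit realization of the space as the norm form of the unramified quadratic extension are welcome supplements rather than departures.
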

\begin{proof}
  For each such space  (with $M_k$ as above) the quadratic space
  $M_0/\pi M_0$ is the unique anisotropic regular quadratic space of
  dimension $2$ over $k$. This determines the isometry class of $(M_0,Q)$
  and hence that of  $(V,Q)$.
\end{proof}
\begin{theorem}\label{anisotropic_local}
   Let $R$ be a complete discrete valuation ring with field of
   fractions $F$ and finite residue field $k$, let $\pi$ be a prime
   element of $R$.

Then every regular or half regular anisotropic quadratic space $(V,Q)$
over $F$ has dimension $\le 4$, and up to isometry there exists precisely one anisotropic regular
quadratic space of dimension $4$ over $F$. This space has determinant
$1$ and isuniversal.
\end{theorem}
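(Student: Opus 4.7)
The plan is to prove the three assertions in turn: the dimension bound $\dim V\le 4$, the isometric uniqueness of an anisotropic regular $4$-dimensional space together with the computation $\det V=1$, and the universality of that space. Throughout I will use the unique maximal $R$-lattice $\Lambda$ on $V$ provided by Theorem \ref{anisotropic_maximal}.

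For the dimension bound, I write a Jordan decomposition $\Lambda=K_1\perp\cdots\perp K_r$ (Corollary \ref{jordan_decomposition}) with $K_i$ being $P^{n_i}$-modular and $n_1<\cdots<n_r$. For each $i$, rescaling the quadratic form by $\pi^{-n_i}$ makes $K_i$ unimodular, and its reduction modulo $P$ is a (half) regular quadratic space $\bar K_i$ over the finite residue field $k$. If $\bar K_i$ were isotropic, then an isotropic vector of $\bar K_i$ (whose span is regularly embedded by the half regularity of the reduction) could be lifted by Corollary \ref{hensel_for_evenunimodular} to an isotropic vector of $K_i$, contradicting the anisotropy of $V$. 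Hence each $\bar K_i$ is anisotropic over $k$, so by Corollary \ref{isotropy_finitefields} has dimension at most $2$, giving $\rk K_i\le 2$.

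The key remaining step, which I expect to be the main obstacle, is to show that at most two distinct scales $n_i$ occur. The maximality of $\Lambda$ forces this: if there were components at scales $n$ and $n+k$ with $k\ge 2$, then for a suitable vector $v$ in the higher-scale component the enlarged lattice $\Lambda+R\pi^{-1}v$ can be shown still to satisfy $Q(\Lambda+R\pi^{-1}v)\subseteq R$, contradicting maximality; writing out $Q(w+a\pi^{-1}v)=Q(w)+a\pi^{-1}b(w,v)+a^2\pi^{-2}Q(v)$ and using $b(w,v)\in P^{n_r}$, $Q(v)\in P^{n_r}$ with $n_r\ge 2$ makes every term lie in $R$. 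Combined with $\rk K_i\le 2$ at each scale, this yields $\dim V\le 4$. When $\dim V=4$ one has $\Lambda=K_0\perp K_1$ with $K_0$ unimodular of rank $2$ and $K_1\cong \pi K_0'$ for a unimodular $K_0'$ of rank $2$; both reductions $\bar K_0,\bar K_0'$ are the unique anisotropic binary quadratic space over $k$, whose determinant is a non-square unit, so $\det V=\det K_0\cdot\pi^2\det K_0'$ lies in the square class of $\pi^2\cdot(\text{non-square})^2=(F^\times)^2$, i.e.\ $\det V=1$. For uniqueness, Theorem \ref{uniquespace_dim2} says there is a unique anisotropic binary space over $F$ admitting a regular maximal lattice, so $FK_0$ and $FK_0'$ are both isometric to that unique space $U$, and $V\cong U\perp \pi U$ is determined up to isometry.

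Finally, for universality, given any $a\in F^\times$, the regular space $W=V\perp\langle -a\rangle$ has dimension $5$, so by the dimension bound just proved $W$ is isotropic. Any isotropic vector has the form $(v,c)$ with $Q(v)=ac^2$; since $V$ is anisotropic, $c\ne 0$, whence $a=Q(c^{-1}v)$ is represented by $V$, as desired.
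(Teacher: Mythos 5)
Your overall architecture (maximal lattice, Jordan decomposition, rank bound on components via reduction mod $P$, scale bound via maximality, then $V\cong U\perp \pi U$, and universality from the dimension bound) is sound and complete in the non-dyadic case, and your universality argument at the end is exactly right. But two of your intermediate claims are false when $2$ is not a unit in $R$, so the proof has a genuine gap precisely in the dyadic (and characteristic $2$) cases, which the theorem explicitly covers.

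First, the claim that each Jordan component has rank $\le 2$. Your argument rescales $b$ by $\pi^{-n_i}$ and reduces mod $P$; but a $P^{n_i}$-modular component need not be \emph{even} after rescaling, i.e.\ $\pi^{-n_i}Q$ need not take values in $R$, so there is no induced $k$-valued quadratic form to which Corollary \ref{isotropy_finitefields} could be applied. Concretely, over $R=\Z_2$ the lattice $\Z_2^3$ with $Q(x,y,z)=x^2+y^2+z^2$ is anisotropic and maximal, and it is a \emph{single} $2\Z_2$-modular Jordan component of rank $3$: your intermediate claim is simply false there, and your method gives no way to exclude, say, a rank-$5$ modular anisotropic lattice over $\Z_2$. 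Second, the scale bound. You use $Q(v)\in P^{n_r}$ for $v$ in the top component, but $Q(v)=\tfrac12 b(v,v)\in \tfrac12 P^{n_r}$, which is weaker by $v(2)$; the enlargement $\Lambda+R\pi^{-1}v$ then need not be integral. Again concretely, $Q(x,y)=x^2+2y^2$ over $\Z_2$ is anisotropic and maximal with Jordan scales $P$ and $P^2$, contradicting your assertion that a top scale $\ge 2$ violates maximality. Since the dimension-$4$ uniqueness and the determinant computation also lean on the components being even unimodular of rank $2$ at scales $0$ and $1$, the gap propagates to those parts as well.

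The paper sidesteps all of this by never invoking the Jordan decomposition: it works with the canonical filtration $M_0\supseteq M_1\supseteq M_2=\pi M_0$, where $M_j=\{x\in V\mid Q(x)\in P^j\}$ (these are lattices by Theorem \ref{anisotropic_maximal}). Because the filtration is defined directly by the values of $Q$, the forms $Q$ and $\pi^{-1}Q$ induce honest $k$-valued anisotropic quadratic forms on $M_0/M_1$ and $M_1/M_2$ in every residue characteristic, and $\dim_F V=\dim_k(M_0/M_1)+\dim_k(M_1/M_2)\le 2+2$ follows uniformly; the dimension-$4$ analysis is then carried out with a regular preimage of $M_0/M_1$ rather than with Jordan components. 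If you want to salvage your approach, you would need to restrict the Jordan-theoretic steps to the non-dyadic case and handle dyadic $R$ (and $\cha F=2$) by a separate argument of this filtration type.
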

\begin{proof}
With notations as in the previous theorem  the  quadratic forms $Q$ respectively $\pi^{-1}Q$ 
induce $k$-valued quadratic forms  on the $k$ vector spaces $M_0/M_1$ and $M_1/M_2$ 
which
are anisotropic. Hence these $k$-vector spaces are of dimension $\le 2$ by Corollary
\ref{isotropy_finitefields}.
Since one has
\begin{equation*}
  \dim_F(V)=\dim_k(M_0/M_2)=\dim_k(M_0/M_1)+\dim_k(M_1/M_2), 
\end{equation*}
the first  part of the assertion follows.

If $V$ has dimension $4$, we see that both $M_0/M_1$ and $M_1/M_2$
have dimension $2$ over $k$. We let $L\subseteq M_0$ be a regular preimage of rank
$2$ of $M_0/M_1$ and see that $L$ splits $M_0$, so one has $M_0=L
\perp L'$ with a sublattice $L'\subseteq M_1$ of rank $2$. Since $L$
is anisotropic modulo $P$ we have $M_1\cap L=\pi L, M_1=L'\perp \pi
L$. This implies that $M_1/M_2$ and $L'/\pi L'$ with the quadratic
forms induced by $\pi^{-1} Q$ are regular $2$- dimensional and
isometric over $k$, which determines the isometry class of $(L',
\pi^{-1}Q)$ and hence that of $(M_0,Q)$ and of $(V,Q)$. The existence
of a space as described also follows.
\end{proof}

\chapter{Quadratic Forms over Global Fields and their Integers}\label{globalchapter}
We are now ready to see how the properties of quadratic forms over
number fields and number rings (ore slightly more generally global
fields and rings of integers in these) can be studied with the help of the
local theory developed in the previous chapter, in particular we will
prove the local-global principle of Minkowski and Hasse.

In this chapter $F$ will be a global field, i.e., an algebraic number
field or an algebraic function field, with $\cha(F)\ne 2$. We write
$\Sigma_F$ for the set of places of $F$ 
(i.e., the set of equivalence classes of non trivial valuations) and
identify $\Sigma_F$ with a set of representatives for these
equivalence classes, using $\vert a\vert_v$ to denote the $v$-value of
$a\in F$. For $v\in \Sigma_F$ we denote by $F_v$ the
completion of $F$ with respect to the valuation $v$ and use $\vert
\cdot\vert_v$ for the extension of the valuation to $F_v$ as well.
The
group of ideles of $F$ is denoted by $J_F$, with $J_F^2$ denoting the
subgroup of squares of ideles. $F^\times$ is identified with the set
of principal ideles. The ring of adeles is $\A_F$ and $F$ is
identified with its image in $\A_F$ under the diagonal embedding.

\smallskip
If $F$ is a number field we let $R$ be 
its ring of integers. More generally, for some
finite set $T$ of places of $F$ containing all archimedean
places we consider the ring 
$R=R_T:=\{a\in F^\times\mid \vert a\vert_v \le 1\text{ for all }
v\not\in T\}$ and call it a ring of integers in  
$F$ or the $T$-integers in $F$. If $F$ is a global function field we
consider similarly $R_T$ as above for any finite non empty set of
places of $F$ and call these rings again rings of integers in $F$. 
We denote then by $R_v\subseteq F_v$ the completion of $R$ with
respect to $v$ for $v\not \in T$ (equivalently: The closure of $R$ in
$F_v$) and set $R_v=F_v$ for $v \in T$.
 
The most important special case will
of course be $F=\Q$ with $R=\Z$ or $R=\Z[\frac{1}{a}]$ for some
nonzero $a\in \Z$.

\section{The local-global-principle of Minkowski and Hasse}
\begin{theorem}[Strong Minkowski-Hasse Theorem]
Let $(V,Q)$ be a regular quadratic space of dimension $n$ over $F$ and assume that the
completions $V_v=F_v\otimes V$ are isotropic with respect to the
natural extension of $Q$ to $V_v$ for all non trivial valuations $v$
of $F$.

Then $(V,Q)$ is isotropic. 
\end{theorem}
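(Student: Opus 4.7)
The plan is to proceed by induction on $n=\dim V$, following the classical Hasse--Minkowski strategy, reducing the higher-dimensional cases to the low-dimensional ones where the deep arithmetic input (reciprocity) is really used.

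The base cases $n=1$ and $n=2$ come first. In dimension $1$ the hypothesis cannot be satisfied (a regular $1$-dimensional form is anisotropic over every $F_v$), so there is nothing to prove. In dimension $2$, by Corollary in the Finite Fields chapter characterizing hyperbolic planes, the space $(V,Q)$ is isotropic if and only if $-\det_Q(V)$ is a square in $F^\times$. The hypothesis gives that $-\det_Q(V)$ is a square in every completion $F_v$, and I would invoke the global square theorem (an element of $F^\times$ which is a square at every place is a square in $F^\times$); this in turn is a standard idelic/cohomological consequence of the fact that $F^\times(J_F)^2 = J_F$ is contradicted otherwise, or equivalently of Hilbert reciprocity applied to a single symbol.

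For $n=3$, I would diagonalize $V\cong [a_1,a_2,a_3]$ and reformulate isotropy in terms of Hilbert symbols: $(V,Q)$ is isotropic over $F_v$ iff a certain product of Hilbert symbols $(\alpha,\beta)_v$ equals $+1$. The hypothesis says this is so at every place. The key step is Hilbert reciprocity, $\prod_v (\alpha,\beta)_v=1$, which combined with the local characterization pushes through to produce a global isotropic vector; this is the single hardest step of the whole proof and uses genuine arithmetic input (quadratic reciprocity in disguise). For $n=4$, I would split $V=V_1\perp V_2$ with $\dim V_i=2$, set $d_i=\det(V_i)$, and show using the $n=3$ case applied to the ternary spaces $V_i\perp\langle -c\rangle$ that one can find a common value $c\in F^\times$ represented by both $V_1$ and $V_2$; the local representability everywhere comes from the hypothesis together with the structure of local quadratic spaces established in the previous chapter (in particular Theorem~\ref{anisotropic_local} bounding the dimension of anisotropic spaces locally). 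One then concludes $V$ is isotropic.

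For $n\ge 5$ the induction becomes easier. I would diagonalize $V=\langle a\rangle\perp V'$ with $\dim V'=n-1\ge 4$, and seek $c\in F^\times$ representing $-a$ by $V'$, which by induction reduces to showing that $V'\perp\langle -c\rangle$ is locally isotropic everywhere. At the finitely many bad (archimedean and $2$-adic, or where $V$ fails to split off a hyperbolic plane) places one arranges suitable local conditions on $c$ by hand using the local theory, and at all other places the anisotropic kernel has dimension $\le 4$, so $V'_v$ already represents every element of $F_v^\times$. A strong approximation / weak approximation argument produces a global $c$ with the prescribed local behavior, and the inductive hypothesis finishes. The main obstacle in the entire argument is unquestionably the $n=3$ case, which is where Hilbert reciprocity enters; everything above $n=4$ is a bookkeeping exercise in approximation, and everything at or below $n=2$ is the global square theorem in disguise.
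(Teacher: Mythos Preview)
Your overall architecture matches the paper's: induction on $n$, global square theorem for $n=2$, class field theory input at $n=3$, a separate argument at $n=4$, and weak approximation for $n\ge 5$.

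There is a genuine gap at $n=3$. You write that the hypothesis gives $(\alpha,\beta)_v=+1$ at every place, and then invoke Hilbert reciprocity $\prod_v(\alpha,\beta)_v=1$ to conclude. But if every local symbol is already $+1$, the product formula is automatically satisfied and carries no information; it cannot by itself force the global quaternion algebra $(\alpha,\beta)_F$ to split. What is actually needed is the injectivity of $\mathrm{Br}(F)[2]\to\prod_v \mathrm{Br}(F_v)[2]$, equivalently the Hasse norm theorem for quadratic extensions: if $c$ is a local norm from $F_v(\sqrt{d})$ for every $v$, then $c$ is a global norm from $F(\sqrt{d})$. This is precisely what the paper invokes as its second black box, writing $V\cong[1,-d,-c]$ and concluding that $c=a_1^2-da_2^2$ is solvable in $F$. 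Hilbert reciprocity is a strictly weaker statement and does not suffice here. (Over $\Q$ one can instead run a Legendre-type descent using quadratic reciprocity and Dirichlet, which may be what you have in mind, but that is a different argument and does not generalize cleanly to arbitrary $F$.)

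Your $n=4$ argument also differs from the paper's. The paper does not split $V$ into two binary pieces and search for a common represented value; instead it branches on whether $\det(V)$ is a square. If so, any isotropic $V_v$ has Witt index $2$, so every ternary subspace is locally isotropic and the $n=3$ case applies. If not, one extends scalars to $E=F(\sqrt{\det V})$, where $V_E$ now has square determinant, obtains an isotropic vector $z+\sqrt{d}\,y$ over $E$, and reads off either a global isotropic vector or a hyperbolic plane inside $V$ over $F$. Your common-value approach can be made to work, but you should be aware that it requires the same approximation machinery you reserve for $n\ge 5$, and you must check that the set of locally represented values is open and that the constraint is nontrivial at only finitely many places; the paper's quadratic-extension trick sidesteps this bookkeeping entirely.
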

\begin{proof}
 We will use without proof three results of algebraic number theory,
 the first two of which
are special cases of theorems from global class field theory. The last
one is actually valid for all fields,
for $F=\Q$ it is an easy consequence of the chinese remainder theorem.

 \begin{itemize}
 \item Let $a \in F$ be a square in all completions $F_v$. Then $a$ is
   a square in $F$.
\item Let $E/F$ be a quadratic extension of $F$ and assume that $a\in
  F$ is a norm in all extensions $E_w/F_v$ for valuations $v$ of $F$
  with extension $w$ to $E$.
\parindent=0pt

Then $a$ is a norm of some $b\in E$.
\item (Weak approximation theorem) Let $T$ be a finite set of
  inequivalent non trivial valuations of $F$ and let $a_v\in F_v$ be given for all
  $v \in T$. Then for any $\epsilon>0$ there exists $a \in F$ with
  $\vert a-a_v\vert_v<\epsilon$ for all $v \in T$.
 \end{itemize}

We can assume that $V$ represents $1$, so that for $n=2$ there is an orthogonal
basis $(x_1,x_2)$ of $V$ with $Q(b_1w_1+b_2w_2)=b_1^2-db_2^2$ for some
$d \in F^\times$. If $(V_v,Q)$ is isotropic for all $v$ we see that
$d$ is a square in all $F_v$, hence in $F$, so $(V,Q)$ is hyperbolic
and hence isotropic.

For $n=3$ we choose again an orthogonal  basis $(x_1,x_2,x_3)$ of $V$
with $Q(x_1)=1, Q(x_2)=-d, Q(x_3)=-c$. If $b$ is a square, $(V,Q)$ is
hyperbolic and we are done. Otherwise we put $U=F x_1+Fx_2, W=Fx_3$
Since $(V_v,Q)$ is isotropic for all $v$, the equation $a_1^2
-da_2^2=c$ is solvable in $F_v$ for all $v$, so that $c$ is a norm in
all local extensions $E_w/F_v$ with $E=F(\sqrt{d})$. Then $c$ is a norm
in $E/F$, so $c=a_1^2-da_2^2$ is solvable in $F$, so $(V,Q)$ is
isotropic.

For $n = 4$ assume first that $\det(V,Q)$ is a square. Over $F_v$
the space $(V_v,Q)$ is isotropic, hence spits off a hyperbolic plane,
and since the determinant is a square, the complement in this
splitting is a hyperbolic
plane as well. But then $V_v$ contains a two dimensional totally
isotropic subspace, so for any $3$-dimensional $U\subseteq V$ all
completions $U_v$ are isotropic. Since the ternary case of the theorem
is already established, we are done in this case.

Assume now that $n=4$ and that $\det(V,Q)$ is not a square, let
$E=F(\sqrt{d})$. The vector space $V_E:=E\otimes V$ (with the natural
extension of $Q$ to it) over $E$ has then also the property that all
its completions are isotropic. Since its determinant is a square, it
is isotropic. An isotropic vector of $V_E$ can be written as
$z+\sqrt{d}y$ with $z,y \in V$, and $Q(z+\sqrt{d}y)=0$ implies
$b(z,y)=0, Q(z)=-dQ(y)$. If $Q(z)=Q(y)=0$, the space $(V,Q)$ is
isotropic, otherwise we have a basis $(x_1=z,x_2=y,x_3,x_4)$ of $V$
and put $U=Fz+Fy, W=Fx_3+Fx_4$. Then we see that
$d=\det(V,Q)=\det(U,Q)\det(W,Q)=-d \det(W,Q)$, so $-\det(W,Q)$ is a
square and $(W,Q)$ is a hyperbolic plane. So $(V,Q)$ is isotropic in
that case as well.

We now start induction on $n=\dim(V)$.
Let $n>4$ and assume the assertion has been proven for
$\dim(V)<n$. Write $V=U\perp W$, where $U=Fx_1+Fx_2$ has dimension
$2$. If $W_v$ is isotropic for all non trivial valuations $v$ of $F$,
we are done by the inductive assumption. Otherwise, the set $T$ of
$v\in \Sigma_F$ for which $(W_v,Q)$ is anisotropic is finite, since a
maximal lattice on $W$ is unimodular at almost all $v$ and of rank
$\ge 3$. Since $(V_v,Q)$ is isotropic for all $v$, we can find $c_v
\in F_v$ with $c_v =Q(a_vu_1+b_v u_2)\in Q(U_v), -c_v \in Q(W_v)$ for all $v \in T$. By
Hensel's lemma and its corollary for squares in complete local rings,
any $c \in F$ with $\vert c-c_v\vert_v$ small enough will be a
multiple of $c_v$ by a square
in $F_v$.  By the weak approximation theorem we can find $a,b \in F$
which are arbitrarily close to $a_v,b_v$ respectively at all $v \in T$
and make $\vert Q(au_1+bu_2)-a_v\vert_v$ as small as we want for all
$v \in T$. This gives us a vector $x \in U$ with $Q(x)=c \in
c_v(F_v^\times)^2$ for all $v \in T$, so the space $Fx+W$ is isotropic
at all $v \in T$ and hence at all $v \in \Sigma_F$. By the inductive
assumption it is isotropic, so $(V,Q)$ is isotropic.
\end{proof}

As an immediate consequence we obtain
\begin{theorem}[Weak Minkowski Hasse theorem]
Let $(V,Q)$ and $(W,Q')$ be regular quadratic spaces over $F$ and
assume that $(W_v,Q'_v)$ is represented by $(V_v,Q_v)$  for all $v \in
\Sigma_F$, i.e., that there exist isometric embeddings $\phi_v:W_v\to V_v$ for
all $v \in \Sigma_F$. 

Then $(W,Q')$ is represented by $(V,Q)$.

In particular, if 
$(V_v,Q)$ and $(W_v,Q')$ are isometric for all $v\in
\Sigma_F$,the quadratic spaces $(V,Q)$ and $(W,Q')$ are isometric.
\end{theorem}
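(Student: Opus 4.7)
My plan is to deduce the representation statement from the Strong Minkowski--Hasse theorem by induction on $n := \dim W$; the isometry assertion follows at once, since if in addition $\dim V = \dim W$ then any isometric embedding between regular spaces of equal finite dimension is automatically an isomorphism.

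For the base of the induction, $n = 1$, I would write $W = Fw$ with $a := Q'(w) \in F^\times$ and introduce the auxiliary regular space $V' := V \perp Fy$ with $Q(y) = -a$. At each place $v$, a representation of $a$ by $V_v$ is the same thing as an isotropic vector in $V'_v$: from $Q(x) + c^2(-a) = 0$ with $(x, cy)$ isotropic one gets $Q(x/c) = a$ when $c \ne 0$, and when $c = 0$ one has $V_v$ regular and isotropic of dimension $\ge 2$, hence universal by Theorem \ref{isotropic_hyperbolic}. The hypothesis makes $V'_v$ isotropic at every $v$, and the Strong Minkowski--Hasse theorem then produces an isotropic vector of $V'$, and thereby a representation of $a$ by $V$ over $F$.

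For the inductive step $n > 1$, I would split off a regular one-dimensional summand $W = Fw \perp W_1$ with $a := Q'(w) \ne 0$. The base case, applied to $Fw$ (whose local hypothesis is a restriction of that for $W$), yields $v \in V$ with $Q(v) = a$, together with a decomposition $V = Fv \perp V_1$ in which $V_1$ is again regular. What remains is to show that $W_1$ is locally represented by $V_1$ at every place, so that the inductive hypothesis applies. For each $v \in \Sigma_F$ the given embedding $\phi_v : W_v \hookrightarrow V_v$ sends $F_v w$ onto a regular one-dimensional subspace of $V_v$ isometric to $F_v v$; by Witt's extension theorem (Theorem \ref{extension_theorem}) over the field $F_v$ there exists $\sigma_v \in O(V_v)$ carrying $\phi_v(F_v w)$ onto $F_v v$. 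Then $\sigma_v \circ \phi_v$ is an isometric embedding $W_v \hookrightarrow V_v$ matching up $F_v w$ with $F_v v$, and by orthogonality it restricts to an isometric embedding $W_{1,v} \hookrightarrow V_{1,v}$, as required.

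The main technical point is not so much a hard step as the coordination of the two main tools: the Strong Minkowski--Hasse theorem supplies global isotropy in the base case, and the \emph{local} Witt extension theorem is what allows the one-dimensional summand to be peeled off simultaneously at all completions in the inductive step. One must only check at each stage that the orthogonal complements remain regular so that Witt's theorem is applicable; this is automatic from the regularity of $V$ and $W$ and from the fact that the chosen summand $Fw$ is anisotropic.
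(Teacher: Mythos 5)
Your proof is correct and takes essentially the same route as the paper's: induction on $\dim W$, with the base case reduced to isotropy of $V\perp[-a]$ via the Strong Minkowski--Hasse theorem, and the inductive step peeling off a commonly represented anisotropic line using the local Witt extension theorem before invoking the inductive hypothesis. Your additional care with the $c=0$ case in the base step (where the isotropic vector lands in $V$ itself and one appeals to universality of an isotropic regular space) and with the regularity of the orthogonal complements only fills in details the paper leaves implicit.
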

\begin{proof}
We prove this by induction on $n=\dim(W)$. For $n=1$ we have $W=Fx$
with $Q(x)=a$ and  $a \in
Q(V_v)$ for all $v$, so the space $V\perp Fy$  with $Q(y)=-a$ is
isotropic over all $F_v$, hence isotropic over $F$ by the previous
theorem, and it follows that $V$ represents $a$.

Let $n>1$ and assume that the assertion is proved for all $W$ with
$\dim(W)<n$. Let $0 \ne a \in Q(W)$. Then $a$ is represented  by all
$(V_v,Q)$, hence represented by $(V,Q)$ by our result for 
the case $n=1$. We can then write $V=Fx\perp V'$ and $W=Fy\perp W'$
with $Q(x)=Q(y)=a$. If $\phi_v:W_v\to V_v$ is an isometric embedding,
we can find by Witt's theorem a $\psi_v\in O(V_v,Q)$ with $\psi_v\circ
\phi_v(y)=x$ and hence $\psi_v\circ\phi_v(W') \subseteq V'_v$. Thus $W'$
is represented by all $(V'_v,Q)$ and therefore by $(V',Q)$ by the
inductive assumption, and $W=Fy\perp W'$ is represented by $V=Fx\perp V'$.
\end{proof}

\begin{corollary}[Theorem of Meyer]
Let $(V,Q)$ be a quadratic space over $\Q$ which is indefinite (i.e.,
isotropic over $\R$). Then $(V,Q)$ is isotropic. 
\end{corollary}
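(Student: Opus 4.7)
The plan is to apply the strong Minkowski–Hasse theorem, which reduces the problem to verifying isotropy of $(V,Q)$ at every completion of $\Q$. Note that the statement implicitly requires $\dim V \ge 5$; in lower dimensions one has classical counterexamples (e.g.\ the form $x^2 - 2y^2 + 7z^2$ type constructions are anisotropic over $\Q_p$ for suitable $p$ while being indefinite over $\R$), so without this restriction the conclusion fails.

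First, I would record what is given: the completion $V_\R = \R\otimes_\Q V$ is isotropic, directly by hypothesis that $(V,Q)$ is indefinite. It remains only to verify isotropy of $V_{\Q_p}$ for every prime $p$. Here the key input is Theorem \ref{anisotropic_local}: over a complete discrete valuation field with finite residue field (which $\Q_p$ is, for every prime $p$), every regular anisotropic quadratic space has dimension at most $4$. Since we assume $\dim V \ge 5$, the completion $V_{\Q_p}$ cannot be anisotropic, so it is isotropic.

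Having verified that $V_v$ is isotropic at every $v \in \Sigma_\Q$ (archimedean and non-archimedean alike), the strong Minkowski–Hasse theorem applies and yields isotropy of $(V,Q)$ over $\Q$ itself.

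There is no real obstacle in this argument — it is essentially a packaging of two earlier results. The only subtle point is that the hypothesis $\dim V \ge 5$ is necessary and should be either added to the statement or acknowledged as an implicit assumption; the proof as sketched only produces an isotropic vector in that range. In the remaining dimensions ($2\le \dim V\le 4$) the $p$-adic local obstructions genuinely prevent passage from real indefiniteness to global isotropy, and the theorem must be replaced by more refined statements involving Hilbert symbols (for $\dim V = 3, 4$) or the square-class of the discriminant (for $\dim V = 2$).
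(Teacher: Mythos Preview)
Your proof is correct and follows essentially the same approach as the paper: verify isotropy at every finite place via Theorem \ref{anisotropic_local} (which forces $\dim V \ge 5$), combine with the hypothesis of isotropy over $\R$, and apply the strong Minkowski--Hasse theorem. You are in fact more careful than the paper's own proof, which asserts that ``every (regular) quadratic space over $\Q_p$ \ldots\ is isotropic'' without making the dimension restriction explicit; as you rightly observe, the classical theorem of Meyer is a statement about forms in at least five variables, and the corollary as stated should carry that hypothesis.
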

\begin{proof}
Since every (regular) quadratic space over $\Q_p$ (for a prime $p$) is
isotropic, this follows from the strong Minkowski-Hasse theorem.  
\end{proof}
\begin{corollary}
 There is a well defined injective group homomorphism $L:W(F)-to \prod_{v \in \Sigma_F}W(F_v)$
 satisfying $L([(V,Q)]=([(V_v,Q)])_{v\in \Sigma_F}$ for regular
 quadratic spaces $(V,Q)$ over $F$.  
\end{corollary}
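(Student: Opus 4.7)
The plan is to verify the three required properties---well-definedness, the homomorphism property, and injectivity---in turn, with injectivity being the only part that uses real content.

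For well-definedness, suppose $(V,Q)$ and $(V',Q')$ represent the same class in $W(F)$, so that $(V,Q)\perp H_1 \cong (V',Q')\perp H_2$ for hyperbolic spaces $H_1,H_2$. First I would observe that tensoring with $F_v$ (extension of scalars as in the basic definitions) commutes with orthogonal sums and preserves hyperbolicity, since a hyperbolic $R$-module $H(M)$ tensored with $R'$ is $H(M\otimes_R R')$. Hence $(V_v,Q)\perp H_{1,v}\cong (V'_v,Q')\perp H_{2,v}$ with both summands hyperbolic, and so $[(V_v,Q)]=[(V'_v,Q')]$ in $W(F_v)$ for each $v$. For the homomorphism property, the same observation that extension of scalars commutes with $\perp$ gives immediately that $L([(V,Q)\perp (V',Q')])=L([(V,Q)])+L([(V',Q')])$, and the zero element (the class of hyperbolic spaces) is clearly preserved.

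The main content lies in injectivity. Suppose $L([(V,Q)])=0$, so $(V_v,Q)$ is hyperbolic (i.e.\ Witt-trivial) for every $v\in\Sigma_F$. By the Witt decomposition theorem (Theorem \ref{Witt_decomposition}) applied over $F$, write $V=V_{\rm an}\perp H_r$ with $V_{\rm an}$ anisotropic regular; I want to show $V_{\rm an}=0$. Since $[(V,Q)]=[(V_{\rm an},Q)]$ in $W(F)$, and since by well-definedness the local class $[(V_{v,{\rm an}},Q)]$ equals $L([(V,Q)])_v=0$ in $W(F_v)$ for each $v$, the uniqueness of the anisotropic kernel over $F_v$ (again Theorem \ref{Witt_decomposition}) forces $(V_{v,{\rm an}},Q)$ to be a hyperbolic space over $F_v$. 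In particular $\dim V_{\rm an}$ is even, say $2m$, and every completion of $V_{\rm an}$ is isometric to the (up to isometry unique) hyperbolic space $H_m$ of dimension $2m$.

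The key step is now the weak Minkowski--Hasse theorem just proven: since $(V_{\rm an},Q)$ and $H_m$ are regular quadratic spaces over $F$ whose completions at every place are isometric, they are themselves isometric over $F$. But $H_m$ is isotropic whenever $m\geq 1$, while $V_{\rm an}$ is by construction anisotropic; this forces $m=0$, i.e.\ $V_{\rm an}=0$, so $[(V,Q)]=0$ in $W(F)$. The expected obstacle is not in the algebraic manipulation but in the appeal to the weak Minkowski--Hasse theorem, which of course rests on the strong Minkowski--Hasse isotropy theorem and hence ultimately on the deep class field theoretic inputs (local-global principle for squares and for norms from quadratic extensions) invoked at the beginning of the section.
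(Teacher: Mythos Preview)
Your proof is correct and follows essentially the same approach as the paper: well-definedness and the homomorphism property are routine, and injectivity is deduced from the weak Minkowski--Hasse theorem. The paper's proof is extremely terse (it says only that ``the kernel is $\{0\}$ follows from the weak Minkowski--Hasse theorem''), whereas you have spelled out the argument via the anisotropic kernel; one could also apply weak Minkowski--Hasse directly to $V$ itself (since $V_v$ hyperbolic for all $v$ forces $V\cong H_{\dim V/2}$ over $F$), but this is the same idea.
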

\begin{proof}
Obviously, if $(V,Q)$ and $(W,Q')$ are Witt equivalent, their
completions at$v \in \Sigma_v$ are Witt equivalent as well, so we can
define the map as given above. It is, again obviously, a group
homomorphism, and that the kernel is $\{0\}$ follows from the weak
Minkowski-Hasse theorem.  
\end{proof}
\begin{remark}
  \begin{enumerate}
  \item One can show that a tuple $([(V_v,Q_v)])_{v \in \Sigma_F}$ of Witt classes of quadratic spaces of equal
dimension on the right hand side is in the image of our map $L$ if and
only if 
\begin{itemize}
\item There exists $d \in F^\times$ with $d
  (F_v^\times)^2=\det(V_v,Q_v)$ for all $v \in \Sigma_F$
\item Almost all of the Hasse invariants of the $(V_v,Q_v)$ are $1$
  and their product equals $1$.
\end{itemize}
The necessity of these conditions follows from the Hilbert reciprocity
law, for their sufficiency see O'Meara's book \cite{omeara}.
\item A precise description of the Witt group of $\Q$ along with a
 different and more elementary proof of the weak Hasse-Minkowski
 theorem can be found in the books by Cassels, Kneser, Lam,
 W. Scharlau \cite{cassels,kneserbook,lam_qf,scharlaubook}.
  \end{enumerate}
\end{remark}

\section{Lattices over $\Z$}\label{Z-lattice_section}

\begin{definition}
Let $\Lambda_1, \Lambda_2$ be $R$-lattices on regular quadratic spaces
$(V_1,Q_1)$, $(V_2,Q_2)$ over $F$.

One says that $(\Lambda_2,Q_2)$ is in the genus of $(\Lambda_1,Q_1)$
if their completions $\Lambda_i\otimes R_v$ are isometric as quadratic
$R_v$-modules for all $v \in \Sigma_F$, one writes then
$(\Lambda_2,Q_2) \in \gen((\Lambda_1,Q_1))$ (or vice versa).   
\end{definition}
\begin{remark}
  \begin{enumerate}
  \item 
If $(\Lambda_1,Q_1), (\Lambda_2,Q_2)$ are in the same genus, their
underlying quadratic spaces are isometric by the weak Hasse-Minkowski
theorem. We will therefore in general assume that lattices in the same
genus have the same underlying quadratic space. 
\item An integral local-global principle is not true: Lattices may be
  in the same genus without being isometric.
A simple example for this are the $\Z$-lattices with Gram matrix
$\bigl(\begin{smallmatrix}2 &0\\0&110\end{smallmatrix}\bigr)$ 
respectively 
$\bigl(\begin{smallmatrix}10 &0\\0&22\end{smallmatrix}\bigr)$.
\item A genus of quadratic lattices consists of full isometry
  classes. 
\item The adelic orthogonal group $O_{(V,Q)}(\A_F)$ of the quadratic
  space $(V,Q)$ operates transitively on the set of lattices in the
  genus of a given lattice $\Lambda$  on $V$: A given
  $\phi=(\phi_v)_{v\in \Sigma_F}\in O_{(V,Q)}(\A_F)$ satisfies (by
  definition of the adele group)
  $\phi_v(\Lambda_v)=\Lambda_v$ for almost all $v \in \Sigma_F$, so
  there exists (by Lemma \ref{lattices_localglobal_dedekind}) a unique lattice $\Lambda'=\phi(\Lambda)$ on $V$
  with $\Lambda'_v=\phi_v(\Lambda_v)$ for all $v \in \Sigma_F$, which
  is then in the genus of $\Lambda$. By definition, all lattices on $V$ in the
  genus of $\Lambda$ can be obtained is this way. The stabilizer of
  the isometry class of $\Lambda$ under this action is the set
  $O_{(V,Q)}(F)O_{V,Q}(\A_F,\Lambda)$, where  $O_{V,Q}(\A_F,\Lambda)$ denotes the
  set of all adeles $(\phi_v)_v$ with $\phi_v(\Lambda_v)=\Lambda_v$
  for all $v \in \Sigma_F$, i.e., the stabilizer of the lattice
  $\Lambda$ under this group action. The set of isometry classes in
  the genus of $\Lambda$ is then in bijection with the double cosets
  $O_{(V,Q)}(F)\psi O_{V,Q}(\A_F,\Lambda)$ in $O_{(V,Q)}(\A_F)$.
  \end{enumerate}
\end{remark}
\begin{lemma}
  Let $\Lambda_1,\Lambda_2$ be lattices on the regular quadratic space
  $(V,Q)$ over $F$ in the same genus. Then $(d_1R^\times)^2:=\det(\Lambda_1,Q)=\det(\Lambda_2,Q)=:d_2(R^\times)^2$.
\end{lemma}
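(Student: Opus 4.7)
The plan is to reduce the global statement on square classes to a combination of local comparisons and the Hasse principle for squares.

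First I would unravel the hypothesis. If $(\Lambda_2,Q) \in \gen((\Lambda_1,Q))$, then for every place $v\in\Sigma_F$ there is an $R_v$-linear isometry $\phi_v\colon \Lambda_{1,v}\to \Lambda_{2,v}$. Picking $R_v$-bases of the two local lattices, $\phi_v$ is represented by a matrix $T_v\in GL_n(R_v)$ transforming one Gram matrix into the other, so by Lemma \ref{modules_matrixversion}(a) we have $\det(\Lambda_{2,v},Q)=\det(T_v)^2\det(\Lambda_{1,v},Q)$. Hence $d_1/d_2\in (R_v^\times)^2$ for every $v\in \Sigma_F$. (Here I am using that, at the archimedean places and at the places $v\in T$ where $R_v=F_v$, the statement $\det(T_v)^2\in (R_v^\times)^2$ is automatic; at the remaining $v$ the congruence of Gram matrices is by $T_v\in GL_n(R_v)$.)

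Next I would invoke the global square theorem recalled in the proof of the strong Minkowski--Hasse theorem: an element of $F^\times$ that is a square in every completion $F_v$ is a square in $F$. Since $d_1/d_2\in (R_v^\times)^2\subseteq (F_v^\times)^2$ for every $v$, there is $c\in F^\times$ with $d_1=c^2 d_2$.

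The last step, which I regard as the only slightly subtle point, is to upgrade $c\in F^\times$ to $c\in R^\times$. For $v\notin T$ the valuation $v$ is $\Z$-valued, and from $c^2=d_1/d_2\in R_v^\times$ we get $2v(c)=v(c^2)=0$, hence $v(c)=0$, i.e.\ $c\in R_v^\times$. By the definition $R=R_T=\{a\in F^\times\mid |a|_v\le 1 \text{ for all } v\notin T\}\cup\{0\}$, an element of $F^\times$ whose $v$-value is zero for every $v\notin T$ is a unit of $R$. Therefore $c\in R^\times$, and $d_1(R^\times)^2=c^2 d_2(R^\times)^2=d_2(R^\times)^2$, which is the claim.

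The main obstacle, as indicated, is the passage from a global square to a global unit-square: one has to be careful that the conditions at the places in $T$ impose nothing while the conditions outside $T$ force integrality of $c$; both together give $c\in R^\times$, which is exactly what the square class modulo $(R^\times)^2$ requires. If one prefers to avoid the global square theorem, an alternative is to verify the equality of square classes locally at each $v\notin T$ (which we already have) and then note that the natural map $F^\times/(R^\times)^2 \to \prod_{v\notin T} F_v^\times/(R_v^\times)^2 \times \prod_{v\in T} F_v^\times/(F_v^\times)^2$ is injective, which is again a consequence of the same global square principle applied in $F$.
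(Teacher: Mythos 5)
Your proof is correct, but it reaches the key intermediate fact --- that $d_1=c^2d_2$ for some $c\in F^\times$ --- by a heavier route than necessary. Since $\Lambda_1$ and $\Lambda_2$ are by hypothesis lattices on the \emph{same} space $(V,Q)$, bases (or full-rank free sublattices) of the two are related by a matrix $T\in GL_n(F)$, so their Gram determinants differ by $\det(T)^2$ and the existence of $c\in F^\times$ with $d_1=c^2d_2$ is immediate linear algebra; this is what the paper uses. You instead derive the same fact from the local data via the global square theorem (the Hasse principle for squares, a class field theory input quoted in the proof of the strong Minkowski--Hasse theorem). That works, and your subsequent upgrade from $c\in F^\times$ to $c\in R^\times$ --- using $2v(c)=v(c^2)=0$ at every $v\notin T$ together with the definition of $R_T$ --- is exactly the (implicit) final step of the paper's argument, where the paper phrases it as $d_1R=d_2R$. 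So the only real difference is that you invoke a deep global theorem where an elementary observation suffices; your version does have the mild virtue of not needing the convention that lattices in a genus are taken on the same underlying space, since the Hasse principle would still produce the global square from purely local isometries.
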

\begin{proof}
By definition of the genus we must have
$d_1(R_v^\times)^2=d_2(R_v^\times)^2$ for all $v \in \Sigma_F$, so in
particular $d_1R_v^\times =d_2R_v^\times$ for all $v \in \Sigma_F$,
which implies $d_1R=d_2R$. Since $d_1,d_2$ differ by the  square of a $c
\in F^\times$, we must in fact have $d_1(R^\times)^2=d_2/R^\times)^2$
as asserted.
\end{proof}
We turn now attention to the special case $R=\Z, F=\Q$.
The following theorem generalizes a result which we already obtained
for positive definite lattices in chapter $3$.
\begin{theorem}[Hermite]
 Let $\Lambda$ be a $\Z$-lattice on the regular quadratic space
 $(V,Q)$ over $\Q$ of dimension $n$ with associated symmetric bilinear
 forms $b, B=\frac{1}{2}b$, let $d=\det_B(\Lambda)$,  let 
\begin{equation*}
\mu:=\mu(\Lambda):=\min\{\vert Q(x)\vert \mid x \in \Lambda
 \setminus \{0\}.
\end{equation*}
Then 
\begin{equation*}
\mu(\Lambda)\le \bigl(\frac{4}{3}\bigr)^{\frac{n-1}{2}} \vert d \vert^{\frac{1}{n}}.   
\end{equation*}
\end{theorem}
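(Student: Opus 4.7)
The plan is to bootstrap from the positive definite Hermite bound (Theorem \ref{hermites_theorem_definite}) by interposing a Hermite majorant, exactly in the spirit of the reduction theory developed in Chapter 4.

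\textbf{Setup.} First, I would fix a $\Z$-basis of $\Lambda$, identify $\Lambda$ with $\Z^n$, and let $A\in M_n^{\sym}(\Q)$ be the Gram matrix of $b$ with respect to this basis, so that $Q(\x)=\tfrac12\,{}^t\x A\x$ and $\det(A)=2^n d$. Since $(V,Q)$ is regular, $A$ is non-singular of some signature $(a,b)$.

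\textbf{Majorant step.} Next, invoke the theory of Hermite majorants: by the characterization lemma in Section ``The space of majorants'', there exists $P\in\mfH(A)$ and $T\in GL_n(\R)$ with $A[T]=E_{a,b}$ and $P[T]=E_n$. Taking determinants gives $\det(A)\det(T)^2=(-1)^b$ and $\det(P)\det(T)^2=1$, hence $\det(P)=|\det(A)|=2^n|d|$. By construction, $P$ is positive definite and satisfies the key pointwise inequality
\begin{equation*}
|{}^t\x A\x|\le {}^t\x P\x \qquad\text{for all }\x\in\R^n.
\end{equation*}

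\textbf{Applying the definite bound.} Now view $P$ as a positive definite symmetric bilinear form $b_P$ on $\Z^n$, with associated $B_P=P/2$ and quadratic form $Q_P(\x)=\tfrac12\,{}^t\x P\x$. Its $B$-determinant equals $\det(P/2)=2^{-n}\det(P)=|d|$. Theorem \ref{hermites_theorem_definite} (with $\gamma_n=(4/3)^{(n-1)/2}$) then provides a non-zero $\x_0\in\Z^n$ with
\begin{equation*}
Q_P(\x_0)\ \le\ \gamma_n\,\bigl(\det{}_{B_P}(\Z^n)\bigr)^{1/n}\ =\ \Bigl(\tfrac{4}{3}\Bigr)^{(n-1)/2}|d|^{1/n}.
\end{equation*}

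\textbf{Pulling back.} Finally, apply the majorant inequality at $\x_0$: since $|Q(\x_0)|=\tfrac12\,|{}^t\x_0 A\x_0|\le \tfrac12\,{}^t\x_0 P\x_0=Q_P(\x_0)$, the vector $\x_0\in\Lambda\setminus\{0\}$ witnesses
\begin{equation*}
\mu(\Lambda)\ \le\ |Q(\x_0)|\ \le\ \Bigl(\tfrac{4}{3}\Bigr)^{(n-1)/2}|d|^{1/n},
\end{equation*}
which is the claimed bound. There is really no ``hard'' step here once Chapter 4 is in hand; the only point that needs a little care is the factor-of-two bookkeeping between $b$ and $B=b/2$ in comparing determinants and in translating the majorant inequality into one for $|Q|$.
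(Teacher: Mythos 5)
Your proof is correct, but it takes a genuinely different route from the one in the text. The paper's proof is a two-line reduction to the \emph{proof} (not the statement) of Theorem \ref{hermites_theorem_definite}: one first scales $Q$ by an integer so that $Q(\Lambda)\subseteq\Z$ (which makes the minimum in the definition of $\mu$ actually attained and leaves the inequality unchanged, since both sides scale by $|c|$), disposes of the isotropic case $\mu=0$, and then reruns the induction of the definite case with $|Q|$ in place of $Q$ — projecting onto $e_1^\perp$ for an anisotropic minimum vector $e_1$, using $|\det_B(L')|=|d|/\mu$ and $|Q(\alpha e_1+x')|\le \alpha^2\mu+|Q(x')|$. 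You instead invoke the definite theorem as a black box after interposing a minimal Hermite majorant $P$ of the Gram matrix $A$, which is exactly Hermite's original device and is the philosophy of the chapter on indefinite reduction; your determinant computation $\det(P)=|\det(A)|=2^n|d|$ and the factor-of-two bookkeeping between $b$ and $B$ are both right. Your version buys a cleaner, induction-free and case-free argument (and in fact bounds the infimum by exhibiting an explicit $\x_0$, so it does not need the minimum to be attained), at the cost of importing the majorant machinery; the paper's version is self-contained within the definite proof but requires one to check that each step there survives the passage to absolute values. The only cosmetic point you might add is a remark that the minimum in the definition of $\mu(\Lambda)$ is indeed attained (e.g. by the scaling observation above), so that the statement's ``$\min$'' is legitimate; your argument as written only needs the infimum.
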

\begin{proof}
By scaling the quadratic form with a suitable integer we may assume
$Q(\Lambda) \in \Z$ and see that the minimum $\mu$ is indeed assumed.
If $\mu=0$, we are done, otherwise we can proceed as in the proof of
Theorem \ref{hermites_theorem_definite}.  
\end{proof}
\begin{theorem}\label{finiteness_classnumber}
 There are only finitely many isometry classes of integral quadratic
 $\Z$-lattices $(\Lambda,Q)$ of rank $n$ and fixed determinant $d$. 
\end{theorem}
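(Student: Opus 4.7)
The plan is to reduce this statement to the finiteness theorem for integral Hermite reduced matrices with fixed determinant that was established in Chapter 4. Since an isometry class of integral $\Z$-lattices of rank $n$ corresponds bijectively (after fixing a basis) to a $GL_n(\Z)$-congruence class of symmetric integer matrices—and since the determinant of a Gram matrix is a $GL_n(\Z)$-invariant (well-defined as an integer for $R=\Z$ because $(\Z^\times)^2 = \{1\}$)—it suffices to show that the set of $GL_n(\Z)$-congruence classes of matrices in $M_n^{\sym}(\Z)$ with a given nonzero determinant is finite.

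To do this, I would start with any Gram matrix $A = (b(v_i,v_j)) \in M_n^{\sym}(\Z)$ of $(\Lambda,Q)$ with respect to some $\Z$-basis, and invoke the lemma from Chapter 4 which says that every non-degenerate real symmetric matrix is integrally equivalent to a Hermite reduced matrix: choose $T \in GL_n(\Z)$ so that $A[T] = {}^t T A T$ lies in $\mfH$-reduced form. Since $\det(A[T]) = \det(A)^2\det(T)^2 / \det(A) = \det(A)$ (equivalently $\det T = \pm 1$), the matrix $A[T]$ is an integral Hermite reduced symmetric matrix with the same determinant as $A$. Thus every $GL_n(\Z)$-congruence class of nonzero-determinant integral symmetric matrices of rank $n$ meets the set of integral Hermite reduced matrices of that same determinant. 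By the theorem in Chapter 4 (first assertion), the latter set is finite, so the number of congruence classes is bounded by its cardinality, and the theorem follows.

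There is essentially no obstacle left: the heavy lifting is already done by the reduction theory in Chapters 3 and 4. The only subtlety is that one must appeal to the Hermite version of the reduction theorem rather than Minkowski's in order to treat indefinite forms uniformly with definite ones. In the positive definite case, any positive definite $A$ is tautologically its own minimal majorant, so Hermite reduced coincides with Minkowski reduced and the finiteness reduces to the classical Minkowski statement. In the indefinite case—whether $Q_A$ is anisotropic or isotropic over $\Q$—the proof in Chapter 4 handles both subcases by passing to a Minkowski reduced minimal majorant $P \in \mfH(A)$ and exploiting Hermite's bound together with the dual lattice trick involving $P^{-1}[J]$. Both subcases having been dispatched there, the present theorem is immediate.
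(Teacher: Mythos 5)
Your argument is correct, but it follows a genuinely different route from the one in the text. You reduce the statement to the Chapter 4 finiteness theorem for integral Hermite reduced matrices of fixed determinant, via the lemma that every non-degenerate real symmetric matrix is integrally equivalent to a Hermite reduced one; since $GL_n(\Z)$-congruence preserves both integrality and the determinant, every congruence class of integral Gram matrices of determinant $d\neq 0$ meets the finite set of integral Hermite reduced matrices of that determinant, and the claim follows. (The only points to keep straight are normalization -- whether one works with $M_\cB(b)$ or $M_\cB(B)=\frac12 M_\cB(b)$, which changes the determinant by the fixed factor $2^n$ and is harmless -- and the implicit hypothesis $d\neq 0$, which both arguments need.) The text instead argues by induction on the rank: using the indefinite version of Hermite's bound proved immediately beforehand, it picks a primitive vector $x$ with $|Q(x)|$ minimal, and either splits off $\Z x$ (when $Q(x)\neq 0$), bounding $\det(N)$ for the orthogonal complement $N$ via Theorem \ref{duals_and_indices} and using that $\Lambda$ lies between $M\perp N$ and $M^\#\perp N^\#$, or (when $Q(x)=0$) constructs a binary sublattice $\Z x+\Z y$ with bounded Gram matrix and proceeds likewise. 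Your route is shorter once the majorant/Hermite-reduction machinery of Chapter 4 is granted, but it is tied to $\Z$ and to the real reduction theory; the text's inductive splitting argument is what carries over (as the remark following the theorem notes, after Humbert) to rings of integers of number fields, where one cannot simply reduce to a finite set of reduced integral matrices.
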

\begin{proof}
The assertion is trivial for $n=1$. We let $n>1$ and assume the assertion to be
proven for lattices of rank $<n$. Let $0\ne x\in
\Lambda$ be a primitive vector with $\vert Q(x) \vert =\mu$ and $B(x,
\Lambda)=a\Z$.
If $\mu \ne 0$ we put $M=Rx$ and $N=(\Z x)^\perp=(Fx)^\perp \cap \Lambda$.
By Theorem \ref{duals_and_indices} b) we have $\det(N,Q)a^2=\mu d$, in
particular $\det(N,Q)$ is a divisor of $\mu d$ and hence bounded in
absolute value by a constant times a power of $d$. By the inductive
assumption, there is only a finite number of possible isometry classes
of $(N,Q)$. Since we have $M\perp N \subseteq \Lambda \subseteq
M^\#\perp N^\#$ and $M\perp N$ has finite  index in $M^\#\perp N^\#$,
there are only finitely many possibilities for the isometry class of
$\Lambda$.

In the case $\mu=0$ we look at the Gram matrix of $(\Lambda,Q)$ with
respect to a basis of $\Lambda$  beginning with the vector $x$ and see
that we can divide both the first column and the first row by $a$, so
that $a^2 \mid d$ must hold.

We find then $y' \in \Lambda$ with $B(x,y')=a$ with
$B(x,\Lambda)=a\Z$.  Replacing $y'$ by a suitable $y=y'-cx$
we can achieve $\vert Q(y)\vert=\vert Q(y')-2ca\vert \le a$, so the
rank $2$-sublattice $M=\Z x+\Z y$ has  a Gram matrix with entries
bounded by $d$, so that there are only finitely many possible isometry
classes for $M$. Again by b) of Theorem \ref{duals_and_indices} we see
that $N=(Fx+Fy)^\perp\cap \Lambda$ has determinant and hence by the
inductive assumption isometry class from a finite set.
We obtain as above that there are only finitely many possibilities
for the isometry class of $\Lambda$. 
\end{proof}
\begin{remark}
  By the work of Humbert \cite{humbert} the Theorem remains valid if
  one replaces $\Z$ by the ring of integers $\fo$ of a number field
  and the determinant by the norm of the volume ideal of an
  $\fo$-lattice  Humbert's proof is for classes of symmetric matrices
  (equivalently, for free lattices) but carries over to the more
  general situation. 
\end{remark}
\begin{corollary}
The number of isometry classes in a fixed genus of $\Z$-lattices is finite.
\end{corollary}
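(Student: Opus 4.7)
The plan is to derive this as a direct consequence of Theorem \ref{finiteness_classnumber}. First I would reduce to the case of an integral lattice: by scaling the quadratic form $Q$ by a suitable nonzero integer $c$ (namely, one which clears all denominators in a Gram matrix of $\Lambda$), one obtains an integral lattice $(\Lambda,cQ)$ without changing the isometry relation or the genus relation, so it suffices to prove the corollary under the assumption $b(\Lambda,\Lambda)\subseteq\Z$.

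Next I would observe two local-to-global transfer facts for any $\Lambda'\in\gen(\Lambda)$. Integrality is a local property, since $b(\Lambda'_p,\Lambda'_p)=b(\Lambda_p,\Lambda_p)\subseteq\Z_p$ for every prime $p$ forces $b(\Lambda',\Lambda')\subseteq\bigcap_p(\Q\cap\Z_p)=\Z$. Also, by the lemma preceding Theorem \ref{finiteness_classnumber}, all lattices in $\gen(\Lambda)$ have the same determinant $d=\det_B(\Lambda)$ as square class in $\Q^\times/(\Z^\times)^2$. Finally, every lattice in the genus lies on a quadratic space isometric to $(V,Q)$ (by the weak Minkowski--Hasse theorem, as noted in the remark on genera), hence has the same rank $n=\dim(V)$.

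Putting these together, the isometry classes in $\gen(\Lambda)$ inject into the set of isometry classes of integral $\Z$-lattices of rank $n$ and determinant $d$, which is finite by Theorem \ref{finiteness_classnumber}. This completes the argument. There is no genuine obstacle: the whole content is packaged in the previous theorem, and the corollary is essentially a bookkeeping statement that a genus preserves the invariants (rank, determinant, integrality) which bound the number of isometry classes.
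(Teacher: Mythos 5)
Your argument is correct and follows essentially the same route as the paper: both reduce the claim to Theorem \ref{finiteness_classnumber} by noting that all lattices in a genus lie on isometric spaces and share the same determinant (the paper deduces this by observing the scaling factor is a unit at every prime, you invoke the preceding lemma — the same fact). Your explicit scaling step to reach the integral case is a reasonable piece of bookkeeping that the paper leaves implicit.
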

\begin{proof}
If $\Lambda_1, \Lambda_2$ are in the same genus, we assume them to be
on the same space and have
$\det((\Lambda_1,Q))=c^2\det((\Lambda_2,Q))$ for some $c \in
\Q$. Since the completions are isometric over all $\Z_p$, the number
$c$ is a unit in all $\Z_p$, hence must be $\pm 1$, so that the
lattices have the same determinant.
By the previous theorem there can be only finitely many isometry
classes of this determinant.   
\end{proof}
\begin{remark}
  By the previous remark the assertion of the Corollary  remains valid if
  one replaces $\Z$ by the ring of integers $\fo$ of a number field.
\end{remark}
\begin{example} For simplicity of notation we will write $\langle
  a_1,\ldots,a_n \rangle$ for a lattice with diagonal Gram matrix with
  entries $a_1, \ldots, a_n$ in the diagonal and $I_n=\langle
  1,\ldots,1\rangle, -I_n=\langle -1,\ldots,-1\rangle$.
  \begin{enumerate}
  \item for $2\le n \le 5$ our bound for the minimum of $\Lambda$ is
    $<2$ for $d=1$.

For $\mu=1$, the one dimensional sublattice $\Z x$ generated by a
minimal vector splits off orthogonally, and its complement has again
determinant $\pm 1$ with respect to $B$, so that $\Lambda$ has an
orthgonal basis of vectors $x_i$ with $Q(x_i)=\pm 1$. If both signs
occurred here, the lattice was isotropic and had minimum $0$, so
$(\Lambda,Q)$ is up to multiplication of the quadratic form by $-1$
isometric to the cube lattice $I_n$ generated by the standard basis
vectors of $\R^n$ equipped with the standard scalar product as $B$.

For $\mu=0$ we see that the lattice $M=\Z x+\Z y$ in the proof of the theorem is
a regular hyperbolic plane $H$ or has Gram matrix $\bigl(
\begin{smallmatrix}
  0&1\\1&1
\end{smallmatrix}\bigr)$, which is equivalent to $\langle 1,-1
\rangle$. In both cases it splits off orthogonally.  For $n=2$ this
finishes the 
discussion, for $n=3$ the orthogonal complement is spanned by a vector
$z$ with $Q(z)=\pm 1$. Since $H \perp \langle \pm 1 \rangle$ is
isometric to $\langle 1,-1,\pm 1\rangle$ we have $\Lambda$ isometric to
$\langle 1,1,-1\rangle$ or to $\langle 1,-1,-1\rangle$. For $n=4 $ we
can have $\Lambda $ isometric to $H \perp H$ or to $I_r \perp -I_s$
with $r+s=4$ and $0 \ne r \ne s$. For $n=5$ we get $I_r \perp -I_s$
with $r+s=5, 0\ne r\ne s$. We see that $H$ and $H\perp H$ are
distinguished from the other possibilities by the $2$-adic behaviour,
namely by $Q(x)=B(x,x)$ having only even values, and that the
remaining cases are distinguished by rank and signature at the real
place. In any case we see that the $2$-adic information and the
signature determine the genus and that each genus consists of a single
isometry class.
\item For $2 \le n \le 4$ and $d=2$ the bound for the minimum is again
  $1$.
By similar arguments as above we obtain the possibilities $\langle
\pm 1,\pm 2\rangle$ with $\langle 1,-2\rangle$ isometric to $\langle
-1,2\rangle$,  $H \perp \langle \pm 2 \rangle,
\langle \pm 1, \pm 1, \pm 2\rangle, \langle \pm 1, \pm 1,
\pm 1, \pm 2\rangle$, where the isometry class depends only on the
number of $+$-signs and the number of $-$-signs but not  on whether the sign
occurs in front of a $1$ or a $2$. This shows that the genus of the
lattice is determined by the signature at the real place and the
$2$-adic class (even values for $Q(x)$ or odd and even values) and that each
genus consists  of a single isometry class.
  \end{enumerate}
\end{example}
\section{Representations}
We recall that an isometric embedding $\phi:(M,Q') \to (\Lambda, Q)$
of quadratic modules over the ring $R$
is also called a representation of $(M,Q')$ by $(\Lambda,Q)$, it is a
primitive representation if the image of $\phi$ is a direct summand in
$\Lambda$. If $R$ is a principal ideal domain with field of quotients
$F$ and $V,W$ are the $F$-vector spaces obtained from $\Lambda, M$
respectively by extending scalars to $F$, a representation $\phi$ as
above is primitive if and only if $\phi(M)=\Lambda \cap \phi(W) $
holds. The same is true if $R$ is the ring of integers of the global
field $F$, since then the $R$-module $\phi(M)$ is a direct summand in
$\Lambda$ if and only if its completion at $v \in \Sigma_F$ is a
direct summand in $\Lambda_v$ for all $v\in \Sigma_F$ and the
intersection condition localizes as well. We say that $(M,Q')$ is
represented (primitively) by $(\Lambda,Q)$ if there exists a
representation $\phi:(M,Q') \to (\Lambda, Q)$.

\begin{theorem}\label{representations_localglobal}
Let $(M,Q'), (\Lambda,Q)$ be $R$-lattices on the quadratic spaces $W,V$ and assume that for all $v
\in \Sigma_F$ there is a representation $\phi_v:M_v \to
\Lambda_v$. Then there is a representation $\phi:M \to \Lambda'$,
where $\Lambda'$ is a lattice on the space $V$ of $\Lambda$ in the
genus of $\Lambda$. If all the representations $\phi_v$ are primitive,
the representation $\phi$ can also be chosen to be primitive.
\end{theorem}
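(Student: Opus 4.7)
The plan is to first obtain a global isometric embedding of $W$ into $V$ via the weak Minkowski--Hasse theorem, and then to use Witt's theorem (over $F_v$ and over the local rings $R_v$) to twist the completions of $\Lambda$ into a single genus-mate $\Lambda'$ that contains the image of $M$.

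First, each local representation $\phi_v: M_v \to \Lambda_v$ extends $F_v$-linearly to an isometric embedding $W_v \to V_v$, so by the weak Minkowski--Hasse theorem there exists a global isometric embedding $\psi: W \to V$; its restriction yields an $R$-linear injection $\psi: M \to V$. At each $v$ the two embeddings $\psi$ and $\phi_v$ of $W_v$ into the regular space $V_v$ differ by an element $\tau_v \in O(V_v, Q)$ with $\tau_v \circ \psi = \phi_v$ on $M_v$, by Witt's extension theorem (Theorem \ref{extension_theorem}). Setting $\Lambda'_v := \tau_v^{-1}(\Lambda_v)$, we obtain $\psi(M_v) = \tau_v^{-1}(\phi_v(M_v)) \subseteq \Lambda'_v$ at every $v$, and $\Lambda'_v \cong \Lambda_v$ as quadratic $R_v$-modules.

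The key step is to arrange $\Lambda'_v = \Lambda_v$ for almost all $v$, so that Lemma \ref{lattices_localglobal_dedekind} produces a well-defined global lattice $\Lambda'$ on $V$ with the prescribed completions. Since $M$ and $\Lambda$ both have nonzero volume ideals, there is a finite set $T$ of places of $F$ outside of which $M_v$ and $\Lambda_v$ are unimodular; enlarging $T$ (still finitely) one may also assume $\psi(M_v) \subseteq \Lambda_v$ for $v \notin T$, by comparing an $R$-generating set of $\psi(M)$ against a basis of $\Lambda_v$. For $v \notin T$, both $\psi(M_v)$ and $\phi_v(M_v)$ are then unimodular sublattices of the unimodular lattice $\Lambda_v$, hence split off orthogonally by Theorem \ref{orthogonalcomplements}(c) and are in particular regularly embedded. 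The Witt theorem over the local ring $R_v$ (Theorem \ref{extension_theorem_localring}) then extends the isometry $\psi(M_v) \to \phi_v(M_v)$ to an element of the stabilizer $O(\Lambda_v, Q)$ of $\Lambda_v$ in $O(V_v, Q)$; taking this as our $\tau_v$ gives $\Lambda'_v = \Lambda_v$. Gluing via Lemma \ref{lattices_localglobal_dedekind} produces the required $\Lambda' \in \gen(\Lambda)$ with $\psi(M) \subseteq \Lambda'$.

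For the primitivity addendum, assume each $\phi_v$ is primitive, i.e., $\phi_v(M_v) = \phi_v(W_v) \cap \Lambda_v$. Applying $\tau_v^{-1}$ to both sides yields $\psi(M_v) = \psi(W_v) \cap \Lambda'_v$ at every $v$, and localizing the intersection characterization of primitivity gives $\psi(M) = \psi(W) \cap \Lambda'$ globally, so $\psi(M)$ is a primitive sublattice of $\Lambda'$. The main obstacle throughout is the regularity verification outside $T$, and in particular the handling of places above $2$ where Theorem \ref{extension_theorem_localring} has additional hypotheses; such places are absorbed into $T$, where $\Lambda'_v$ is defined directly as $\tau_v^{-1}(\Lambda_v)$ with no further constraint on $\tau_v$.
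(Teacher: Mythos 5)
Your proof is correct and follows the same skeleton as the paper's: the weak Minkowski--Hasse theorem to obtain a global isometric embedding $\psi:W\to V$, Witt's extension theorem over $F_v$ at the finitely many bad places to replace $\Lambda_v$ by an isometric lattice containing $\psi(M_v)$, and Lemma \ref{lattices_localglobal_dedekind} to glue the local data into a lattice $\Lambda'\in\gen(\Lambda)$. The one point where you diverge is at the places outside $T$: the paper simply sets $\Lambda'_v=\Lambda_v$ there, because $\psi(M_v)\subseteq\Lambda_v$ already holds, and for the primitivity addendum $\psi(M_v)=\psi(W_v)\cap\Lambda_v$ holds for almost all $v$ automatically, since $\psi(M)$ and $\psi(W)\cap\Lambda$ are two lattices on the same subspace and therefore have equal completions at almost all places. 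You instead insist on matching $\psi$ with the given $\phi_v$ at \emph{every} place, which forces you to show that $\tau_v$ can be chosen in the stabilizer of $\Lambda_v$ for almost all $v$, via unimodularity, orthogonal splitting, and the local-ring Witt theorem (Theorem \ref{extension_theorem_localring}). That argument is valid --- regular submodules of a regular $\Lambda_v$ split off and are regularly embedded, so the theorem applies --- but it is dispensable extra machinery: outside $T$ nothing needs to be adjusted, and the only information the theorem actually extracts from $\phi_v$ is the existence of some isometric (respectively primitive) copy of $M_v$ inside $\Lambda_v$.
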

\begin{proof}  By the Minkowski-Hasse theorem there exists an isometric embedding (a representation)
  $\phi:W\to V$ so that we can assume that $M$ is a lattice in $V$
  (but in general not on $V$). For almost all $v\in \Sigma_F$ we have
  $M_v \subseteq \Lambda_v$, so the set $T$ of places $v$ where $M_v
  \not\subseteq \Lambda_v$ is finite. By assumption, for each such $v$
  there exists a lattice $N_v\subseteq \Lambda_v$ which is isometric
  to $M_v$, and by Witt's extension theorem there an isometry can be
  extended to a map
  $\psi_v \in O_V(F_v)$ with $\psi_v(N_v)=M_v$. We let $\Lambda'$ be the
  lattice on $V$ with $\Lambda_v'=\Lambda_v$ for all $v \not\in T$
  and $\Lambda_v'=\psi_v(\Lambda)$ for all $v \in T$ and have
  $M_v\subseteq \Lambda'_v$ for all $v \in \Sigma_F$, hence
  $M\subseteq \Lambda$.

If all the $\phi_v$ are primitive, we enlarge $T$ to a still finite
set $T^\ast$ by demanding $M_v$
to be a primitive sublattice of $\Lambda_v$ for all $v \not\in T$ and
choose primitive sublattices $N_v$ of $\Lambda_v$ for all $v \in T^*$,
leaving the rest of the proof unchanged.
\end{proof}

\begin{corollary}
  Let $(\Lambda,Q)$ be an $R$-lattice for which the genus contains only
  one isometry class. Then $\Lambda$ represents (primitively) all
  $R$-lattices $(M,Q')$ which are represented (primitively) locally
  everywhere by $\Lambda$.

In particular:
\begin{enumerate}
\item (Theorem of Euler) The integer $a \in \Z$ is a sum of two coprime integral squares
  if and only if all its prime factors are congruent to $1$ mod $4$
  and it is not divisible by $4$. It is a sum of two arbitrary
  integral squares if and only if all primes $\equiv 3 \bmod 4$ divide
  $a$ to an even power.
\item (Theorem of Gauß-Legendre) The integer $a \in \Z$ is a sum of three integral squares if and
  only if it is not of the form $4^\nu(8k+7)$ with $\nu, k \in
  \N_0$. It is a sum of three coprime integers if in addition it is
  not divisible by $4$.
\item (Theorem of Lagrange) All positive integers $a$ are sums of four
  integral squares. A representation by relatively prime integers
  exists if and only if $a$ is not divisible by $8$.
\end{enumerate}
\end{corollary}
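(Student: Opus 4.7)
The general assertion is an immediate consequence of Theorem \ref{representations_localglobal}. By hypothesis $(M,Q')$ is (primitively) represented at every place by $(\Lambda,Q)$; that theorem then produces a (primitive) representation of $M$ into \emph{some} lattice $\Lambda'$ in the genus of $\Lambda$. Since the genus consists of a single class by assumption, $\Lambda' \cong \Lambda$, and composing with any such isometry gives the desired (primitive) representation into $\Lambda$ itself.

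To derive the three classical theorems, the plan is to apply the general statement to $(\Lambda,Q) = I_n$ for $n = 2, 3, 4$ and to the rank-one lattice $(M,Q') = \langle a \rangle$. The required class-number input is that the genera of $I_2$, $I_3$, $I_4$ each consist of a single isometry class; this was established in the examples computed in Section \ref{Z-lattice_section}, where unimodular $\Z$-lattices of rank $\le 5$ were classified by signature and $2$-adic type and shown in each case to fill out a single genus. The theorems then reduce to translating the stated arithmetic conditions on $a$ into local representation (and primitive representation) conditions at every place of $\Q$.

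At the archimedean place, representability of $a$ by $I_n$ requires $a \ge 0$. At an odd prime $p$, when $n \ge 3$ the lattice $I_n \otimes \Z_p$ is unimodular of rank $\ge 3$, so its mod-$p$ reduction is an isotropic regular quadratic $\F_p$-space by Corollary \ref{isotropy_finitefields}, hence universal by Corollary \ref{universality_finitefields}; Corollary \ref{hensel_for_evenunimodular} (or Theorem \ref{hensellemma_kneser}) then lifts these representations, showing that $I_n \otimes \Z_p$ represents every element of $\Z_p$, and primitively represents every element of $\Z_p \setminus p^2 \Z_p$. For $n=2$ and $p$ odd, the form $x^2 + y^2$ over $\Z_p$ is isotropic iff $-1 \in \F_p^{\times 2}$, i.e.\ $p \equiv 1 \bmod 4$; in that case it is universal, and otherwise the analysis of the anisotropic case (Theorem \ref{anisotropic_local} and the filtration by the $M_k$) shows that the values taken are exactly those $a$ with $v_p(a)$ even. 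At $p=2$ the explicit classification of unimodular $\Z_2$-lattices of rank $\le 4$ gives that $I_4 \otimes \Z_2$ is universal (no obstruction for Lagrange), that $I_3 \otimes \Z_2$ represents $a$ iff $a \notin 4^\nu(8k+7)\Z_2^\times$, and that the primitivity refinements correspond to excluding the stated powers of $2$ (and, for (a), the primes $p \equiv 3 \bmod 4$).

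The main obstacle is the $2$-adic bookkeeping, most notably the three-squares obstruction in (b). This reduces to determining the image of $x^2 + y^2 + z^2$ in $\Z/8\Z$ (where $7$ never occurs) and iteratively applying Hensel's lemma to lift mod-$8$ representations to representations over $\Z_2$; the same kind of calculation, via the version of Hensel's lemma for squares stated after Corollary \ref{hensel_for_evenunimodular}, handles the universality of $I_4 \otimes \Z_2$ and the primitivity conditions in (a) and (c). The odd-prime and archimedean analyses require no new ingredients beyond the results already cited.
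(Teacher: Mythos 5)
Your proposal matches the paper's proof: the general statement is exactly the combination of Theorem \ref{representations_localglobal} with the class-number-one hypothesis, and the three classical theorems are reduced, as in the paper, to local computations via Hensel's lemma (mod $8$ at $2$, mod $p$ at odd primes) together with the single-class computation for $I_n$ from the examples of Section \ref{Z-lattice_section}; the paper merely calls these computations ``routine'' without carrying them out. One small imprecision to fix: for $n\ge 3$ and odd $p$ the lattice $I_n\otimes\Z_p$ primitively represents \emph{every} element of $\Z_p$ (the regular space $I_n/pI_n$ of rank $\ge 3$ over $\F_p$ represents every element of $\F_p$, including $0$, by a nonzero vector, and Hensel lifts this to a primitive vector), which you need for the primitivity assertions in (b) and (c) when $p^2\mid a$ — your weaker claim restricted to $\Z_p\setminus p^2\Z_p$ would leave that case uncovered.
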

\begin{proof}
 The first statement is a trivial consequence of the theorem. For the
 classical results about representation of integers as sums of squares
 we have to check that the conditions given are equivalent to the
 condition that $a$ is represented (primitively) locally everywhere.
 This is reduced by Hensel's lemma to routine computations modulo $8$
 for representation over $\Z_2$ and modulo $p$ for $\Z_p$ with $p$ odd. 
\end{proof}
\begin{remark}
 We can obtain analogous results for representation of integers in the form
 $x^2+2y^2$ or as $x^2+y^2+2z^2$. If the class number of the genus of
 the representing lattice is bigger than $1$ it is possible to obtain
 results on representation of sufficiently large numbers by analytic
 methods.
\end{remark}
\section{Lattices over $\Z$, continued}\label{Z-lattice_section_cont}
\begin{definition}
  A positive definite $\Z$-lattice $(\Lambda,Q)$ is called
  decomposable if it contains nontrivial sublattices $M,N$ with $\Lambda=M\perp
  N$, indecomposable otherwise.
\end{definition}
\begin{theorem}[Eichler, Kneser]
Any positive definite $\Z$-lattice $(\Lambda,Q)$ has a unique
(up to order) decomposition into an orthogonal sum of indecomposable sublattices.   
\end{theorem}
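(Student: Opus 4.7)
The plan is to follow the classical Eichler-Kneser approach via indecomposable vectors. Call $0 \ne x \in \Lambda$ \emph{$\Lambda$-indecomposable} if it cannot be written as $x = y + z$ with $y, z \in \Lambda \setminus \{0\}$ and $b(y,z) = 0$. The first lemma to prove is: every $0 \ne x \in \Lambda$ is a mutually orthogonal sum $x = x^{(1)} + \cdots + x^{(r)}$ of $\Lambda$-indecomposable vectors. This is shown by induction on $Q(x)$: if $x$ is decomposable, write $x = y + z$ with $y \perp z$ and $y,z$ nonzero; positive definiteness yields $0 < Q(y), Q(z) < Q(x)$, and since $Q(\Lambda \setminus \{0\})$ is a discrete subset of $\R_{>0}$ (the intersection of $\Lambda$ with any ball being finite), the induction terminates.

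Next, I would introduce the equivalence relation on the set of $\Lambda$-indecomposable vectors: $x \sim y$ iff there exists a finite chain $x = x_0, x_1, \ldots, x_n = y$ of $\Lambda$-indecomposables with $b(x_{i-1}, x_i) \neq 0$ for $1 \le i \le n$. Reflexivity follows by taking $n = 0$, symmetry from reversing the chain, transitivity from concatenation. For each equivalence class $C$ let $\Lambda_C$ be the sublattice of $\Lambda$ generated by the vectors of $C$. By the definition of $\sim$, any two representatives of distinct classes $C, C'$ satisfy $b(C, C') = 0$, so $\Lambda_C \perp \Lambda_{C'}$. Combined with the first lemma, this gives $\Lambda = \perp_C \Lambda_C$. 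Each $\Lambda_C$ is itself indecomposable: if $\Lambda_C = M' \perp N'$, then each indecomposable $x \in C$ lies in $M'$ or in $N'$ (write $x = x_{M'} + x_{N'}$ and invoke indecomposability), and the chain structure of $\sim$ then forces all of $C$ into one of the two summands, whence the other is zero.

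For uniqueness, let $\Lambda = M_1 \perp \cdots \perp M_k$ be any orthogonal decomposition into indecomposable sublattices. Every $\Lambda$-indecomposable $x \in \Lambda$ lies entirely in a single $M_i$: projecting gives an orthogonal decomposition $x = x_1 + \cdots + x_k$ with $x_j \in M_j$, and indecomposability of $x$ forces all but one $x_j$ to vanish. Two $\Lambda$-indecomposables in the same class $C$ must lie in the same $M_i$, since a chain with $b(x_{l-1}, x_l) \neq 0$ cannot step between mutually orthogonal summands. Moreover, for any $w \in M_i$ the first lemma yields an orthogonal decomposition $w = \sum_l x^{(l)}$ into $\Lambda$-indecomposables; projecting onto $M_j$ for $j \ne i$ gives $0 = \sum_{l:\, x^{(l)} \in M_j} x^{(l)}$, and since the $x^{(l)}$ are mutually orthogonal and $Q$ is positive definite, every $x^{(l)}$ with $l$ in that index set vanishes, so every $x^{(l)}$ in fact lies in $M_i$. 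Thus $M_i$ is generated by the $\Lambda$-indecomposable vectors it contains. Since $M_i$ is indecomposable, these vectors form a single equivalence class $C$, so $M_i = \Lambda_C$, matching the two decompositions up to order.

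The main obstacle, and the only place where positive definiteness is essential, is the first lemma and the projection argument in the last step: both rely on the fact that an orthogonal sum of nonzero lattice vectors has strictly larger $Q$-value than any of its summands, so that the decomposition process terminates and that projections of orthogonal sums cannot cancel. The equivalence-class description of the canonical decomposition then formalizes why no ambiguity in the splitting into indecomposable summands can occur.
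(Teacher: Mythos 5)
Your proposal is correct and follows essentially the same route as the paper: indecomposable vectors, the connectedness equivalence relation, and the observation that in any orthogonal decomposition each summand is generated by a full union of equivalence classes, hence by exactly one if it is indecomposable. You supply somewhat more detail than the paper at two points it treats as obvious — the induction on $Q(x)$ showing every vector is an orthogonal sum of indecomposables, and the projection argument showing each summand $M_i$ is generated by the indecomposable vectors it contains — but the underlying argument is the same.
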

\begin{proof}
The existence of such a decomposition is trivial, we have to prove the
uniqueness. We call a vector $x\in \Lambda$ indecomposable if it can
not be written as $x=y+z$ with $b(x,y)=0, x,y \in \Lambda, x\ne 0 \ne
y$. The lattice $\Lambda$ is obviously generated by its indecomposable
vectors. We call two 
indecomposable vectors $x,y$ connected, if there exists  a chain
$x=x_1,\ldots,x_r$ of indecomposable vectors of $\Lambda$ with
$b(x_i,x_{i+1})\ne 0$ for $1\le i<r$. Being connected defines an
equivalence relation among the indecomposable vectors of $\Lambda$ and
any two equivalence classes are mutually orthogonal.

If we have a decomposition $\Lambda=\perp_{i=1}^t L_i$, any indecomposable
vector lies in some $L_j$ and and if $x,y$ are
connected indecomposable vectors of $\Lambda$ they lie in the same 
$L_j$, so for each $j$ the set of indecomposable vectors in $L_j$ consists of
full equivalence classes of connected indecomposable vectors of
$\Lambda$. If $L_j$ contained more than one equivalence class it could
not be indecomposable.

This implies that in the
decomposition $\Lambda=\perp_{i=1}^t L_i$, each $L_j$ contains a
unique equivalence class $S_j$ of connected indecomposable vectors of
$\Lambda$ which generates it. So the $L_j$ are the sublattices
generated by the equivalence classes of connected indecomposable
vectors of $\Lambda$ and therefore uniquely determined. 
\end{proof}

\begin{example}
  \begin{enumerate}
  \item Denote by $A_n$ the lattice $\{\x \in \Z^n \mid
    \sum_{i=0}^nx_i=0\}$ with the standard scalar product of
    $\Q^{n+1}$ as symmetric bilinear form $b$ and
    $Q(\x)=\frac{1}{2}\sum_{i=0}^nx_i^2$. Since it is the orthogonal
    complement of $(1,\ldots,1)$ in $\Z^{n+1}$ it has determinant
    $n+1$ by Theorem \ref{duals_and_indices}. The vectors $\e_i-\e_j$
    with $i \ne j$, where $(\e_1,\ldots,\e_{n+1})$ is 
    the standard basis of $\Q^{n+1}$, are indecomposable and connected
    and generate
    $A_n$, so this lattice is indecomposable. It occurs as a root
    lattice in the theory of Lie algebras.
\item Denote by $D_n$ the lattice $\{\x \in \Z^n\mid \sum_{i=0}^n x_i
  \equiv 0 \bmod 2\}$, again with quadratic form
  $Q(\x)=\frac{1}{2}\sum_{i=0}^nx_i^2$ so that the standard scalar
  product is the associate symmetric bilinear form. The lattice is of
  index $2$ in $\Z^n$, so $\det_b(A_n)=4$. It is generated by the
  indecomposable vectors $\pm \e_i \pm \e_j$ with $i \ne j$ which  generate
  the lattice and are connected for $n>2$, so for $n>2$ it is also indecomposable. 
It occurs as a root
    lattice in the theory of Lie algebras. For $n=3$ it is isometric
    to the lattice $A_3$ above.
\item for $n \equiv 0 \bmod 8$ denote by $D_n^+=\Gamma_n$ the lattice
  $D_n+\Z\frac{1}{2}\sum_{i=1}^n\e_i$. One has $Q(D_n^+)\subseteq \Z$
  and $(D_n^+:D_n)=2$, so $D_n^+$ is an even unimodular lattice in
  $\Q^n$. The indecomposable vectors      $\pm \e_i \pm \e_j$ with $i
  \ne j$ generate
  a sublattice of full rank and are connected, so this lattice is indecomposable
  too. For $n=8$ this lattice is usually denoted by $E_8$ because it
  occurs with this notation as a root lattice in the theory of Lie
  algebras. It has the indecomposable sublattices $E_7:=\{\x \in E_8
  \mid x_7=x_8\}$ of rank $7$ and determinant $2$ and $E_6:=\{\x \in
  E_8\mid x_6=x_7=x_8\}$ of rank $6$ and determinant $3$ which occur
  as root lattices too and together with $E_8$ and the lattices $D_n$
  and $A_n$ from above exhaust the list of indecomposable root
  lattices.

 For $n=16$ we see that we know two even unimodular
  lattices, one of them ($D_{16^+}$) indecomposable and one of them
    ($E_8 \perp E_8$) decomposable, so in particular they are not
    isometric.

It can be shown that these two represent all isometry classes of even
unimodular positive definite $\Z$-lattices of rank $16$. It can also
be shown that even unimodular positive definite $\Z$-lattices can occur
only with ranks which are divisible by $8$.
  \end{enumerate}
\end{example}

\chapter{Clifford Algebra, Invariants, and Classification}

To a quadratic module one can construct a natural associative algebra,
the Clifford algebra, 
which provides important insights and is a useful tool for
classification.

For our purpose it will be sufficient to treat the case of quadratic
spaces over fields of characteristic zero. 
Slightly more general, in this chapter $F$ is a field of characteristic different from
$2$. Modifications that include the case of even characteristic can be
found in \cite{kneserbook,knus,scharlaubook}; we omit the details
since we are mainly interested in the number theoretic aspects of the
theory of quadratic forms. As earlier we write $(V,Q)\cong
[a_1,\ldots,a_m]$ for a quadratic space $(V,Q)$ with an orthogonal
basis $(v_1,\ldots,v_m)$ satisfying $Q(v_i)=a_i$ for $1\le i \le m$.

\section{Quaternion Algebras and Brauer group}
\begin{definition}
  A central simple algebra of dimension $4$ over $F$ is called a
  quaternion algebra over $F$.
\end{definition}
\begin{theorem}
  \begin{enumerate}
  \item If $A$ is a quaternion algebra over $F$ there exist linearly
    independent elements
    $x,y\in A$ with $x^2\in F^\times, y^2\in F^\times, xy=-yx$. The elements
    $1,x,y,xy$ form then a basis of $A$ as $F$-vector space.
\item Conversely, let $a,b \in F^\times$ be given. Then there exist a
  quaternion algebra over $F$ and linearly independent vectors $x,y
  \in A$ with $x^2=a, y^2=b, xy=-yx$.
  \end{enumerate}
\end{theorem}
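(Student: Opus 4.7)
The plan is to exploit the reduced trace $t$ and reduced norm $n$ on $A$: every $z \in A$ satisfies the reduced characteristic polynomial $z^2 - t(z)z + n(z) = 0$ with $t, n : A \to F$. Hence $A^0 := \ker(t)$ is a $3$-dimensional $F$-subspace on which $z^2 = -n(z) \in F$, so $Q(z) := z^2$ is an $F$-valued quadratic form on $A^0$. Its associated symmetric bilinear form is $b(u,v) = (u+v)^2 - u^2 - v^2 = uv + vu$, so two elements of $A^0$ anticommute precisely when they are $b$-orthogonal. It is standard that $Q$ is non-degenerate: the reduced norm form on $A$ is non-degenerate by central simplicity, and $A = F\cdot 1 \perp A^0$ with $F \cdot 1$ anisotropic.

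For part (a), I would first pick $x \in A^0$ with $a := x^2 \in F^\times$; such an anisotropic vector exists since $Q$ is a non-degenerate ternary quadratic form over a field of characteristic $\ne 2$. Then I would pick $y$ in the $b$-orthogonal complement of $Fx$ inside $A^0$ with $b := y^2 \in F^\times$; that complement is $2$-dimensional with non-degenerate induced form (Theorem \ref{orthogonalcomplements}), so contains an anisotropic vector. By construction $xy + yx = b(x,y) = 0$. For linear independence of $\{1, x, y, xy\}$, the inner automorphism $\sigma_x : z \mapsto x z x^{-1}$ has order $2$ (since $x^2 \in F^\times$), fixing $1$ and $x$ while negating $y$ and $xy$; any relation $\alpha + \beta x + \gamma y + \delta xy = 0$ therefore splits into $\alpha + \beta x = 0$ and $(\gamma + \delta x)\,y = 0$, which force $\alpha = \beta = 0$ (as $x \notin F$) and then $\gamma = \delta = 0$ (as $y$ is invertible).

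For part (b), I would construct $A$ as the $F$-vector space on basis $\{1, x, y, z\}$ with multiplication determined by $x^2 = a$, $y^2 = b$, $xy = z = -yx$; the remaining products $xz = ay$, $zx = -ay$, $yz = -bx$, $zy = bx$, $z^2 = -ab$ are then dictated by associativity, which need only be verified on the finitely many basis triples. Checking that elements commuting with both $x$ and $y$ must lie in $F \cdot 1$ gives centrality. The main obstacle is simplicity: I would use the involution $\overline{w} := \alpha - \beta x - \gamma y - \delta z$ for $w = \alpha + \beta x + \gamma y + \delta z$, for which a direct expansion gives $w\overline{w} = \alpha^2 - a\beta^2 - b\gamma^2 + ab\delta^2 \in F$, the reduced norm of $w$; hence any $w$ with $w\overline w \ne 0$ is a unit. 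For a nonzero two-sided ideal $I$ containing some $w \ne 0$, the four conjugates $w,\, xwx^{-1},\, ywy^{-1},\, (xy)w(xy)^{-1}$ all lie in $I$, and since each of $x, y, xy$ acts on $\{1, x, y, z\}$ by fixing two basis vectors and negating the other two via distinct sign patterns, suitable $F$-linear combinations isolate each coordinate $\alpha, \beta x, \gamma y, \delta z$ of $w$ in $I$; at least one is nonzero, placing one of the units $1, x, y, z$ (with respective norms $1, -a, -b, ab$, all in $F^\times$) into $I$, and forcing $I = A$.
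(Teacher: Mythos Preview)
Your proof is correct. For part (b) both you and the paper construct the algebra by its multiplication table; you go further by giving an explicit simplicity argument via conjugation by $x,y,xy$, where the paper simply asserts that central simplicity ``is easily checked by explicit calculation.''

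For part (a), however, your route is genuinely different. The paper invokes Wedderburn's theorem to split into the cases $A\cong M_2(F)$ (handled by explicit matrices) and $A$ a division algebra; in the latter case it picks any $x,y\in A^0$ with $1,x,y$ linearly independent, argues that $1,x,y,xy$ must span, observes that $xy+yx$ is central and hence scalar, and then adjusts $y$ by a multiple of $x$ to force anticommutation. You instead import the reduced trace and norm from general CSA theory to identify $A^0=\ker t$ as a $3$-dimensional space carrying the non-degenerate quadratic form $Q(z)=z^2=-n(z)$, with bilinear form $b(u,v)=uv+vu$; anticommutation then becomes $b$-orthogonality, and you can select an orthogonal anisotropic pair directly via Theorem~\ref{orthogonalcomplements}. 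This is cleaner and avoids the case split entirely, at the cost of assuming the existence and basic properties of the reduced trace form for CSAs (material that the paper itself develops only \emph{after} this theorem, in the specific setting of quaternion algebras). Your linear-independence argument via the eigenspaces of $\sigma_x$ is also different from, and more transparent than, the paper's, which in the division-algebra case relies on the non-existence of a $3$-dimensional division subalgebra.
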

\begin{proof} Wedderburn's theorem about central simple algebras
  implies that $A$ is either a division algebra or isomorphic to
  $M_2(F)$.
In the latter case, the matrices $x=\bigl(
\begin{smallmatrix}
  1&0\\0&-1
\end{smallmatrix}\bigr), y=\bigl(
\begin{smallmatrix}
  0&1\\1&0
\end{smallmatrix}\bigr)$ are as requested. 

  If $A$ is a division algebra, 
every element $x$ of $A\setminus
  F$ has an irreducible minimal polynomial of degree $2$ over $F$, and
  changing $x$ by an element of $F$ we can assume the minimal
  polynomial to be of the form $X^2-a$ with $a \ne 0$, i.e., $x^2=a \in F^\times$.

With $A^0:= \{0\}\cup    \{x\in A\setminus F\mid x^2\in F\}$ we have
then $A=F+A^0$, in particular we can find $x,y\in A^0$ such that
$1,x,y$ are linearly independent. Since $1,x,y$ can not generate a
(division) subalgebra of dimension $3$ over $F$, the vectors
$1,x,y,xy$ must then be an $F$-basis of $A$, and $xy+yx$ commutes with
all basis vectors and must hence be in the center $F$ of $A$. From
this we see that we can subtract a suitable multiple of $x$ from $y$
in order to obtain $0\ne y_1\in A^0$ with $xy_1+y_1x=0$, so that $x$
and $y_1$ are as requested.  We notice in passing that $xy+yx\in F$
implies $(x+y)^2\in F$, i.e., $x+y \in A^0$.

Conversely, if $a,b\in F^\times $ are given one can define a
$4$-dimensional $F$-algebra $A$ generated by linearly independent vectors
$1=e_0,e_1,e_2,e_3$   by prescribing a multiplication table with
$e_0e_j=e_je_0=e_j, e_0^2=1, e_1^2=a,e_3^2=-ab, e_1e_2=-e_1e_2=e_3,
e_2e_3=-be_1=-e_3e_2, e_3e_1=-ae2=-e_1e_3$ and extending this
multiplication distributively to the whole vector space; it is then
easily checked by explicit calculation that $A$ with this law of
multiplication satisfies the axioms for an $F$-algebra and is central simple.
\end{proof}

\begin{lemma}
  Let $A$ be a quaternion algebra over $F$ with basis
  $1=e_0,e_1,e_2,e_3=e_1e_2$ satisfying $e_1^2,e_2^2 \in F^\times,
  e_2e_1=-e_3$ as above. Then 
  \begin{equation*}
\{0\}\cup    \{x\in A\setminus F\mid x^2\in F\}=Fe_1+Fe_2+Fe_3=:A^0,
  \end{equation*}
and the map given by $a+x\mapsto \overline{a+x}:=a-x$ for $a\in F, x\in A^0$ is an
involution of the first kind on $A$ satisfying $x\bar{x}=:n(x)\in F,
x+\bar{x}=\tr(x)$ for all $x \in A$.
\end{lemma}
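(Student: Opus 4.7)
The plan is to first nail down the multiplication table of the basis $e_0=1, e_1, e_2, e_3=e_1e_2$, then verify the description of $A^0$ by a direct computation, and finally check the involution property on basis vectors.

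First I would record the consequences of the hypotheses $e_1^2=a, e_2^2=b, e_2e_1=-e_3$ for the remaining products. From $e_3=e_1e_2$ one gets $e_3^2 = e_1(e_2e_1)e_2 = -e_1e_3e_2\cdot\ldots$; more directly, $e_3^2 = (e_1e_2)(e_1e_2) = -(e_1e_2)(e_2e_1) = -e_1(e_2^2)e_1 = -ab$. The pairwise anti-commutation $e_ie_j=-e_je_i$ for $i\ne j$ follows at once: the case $\{i,j\}=\{1,2\}$ is given, and $e_1e_3 = e_1(e_1e_2)=ae_2$ while $e_3e_1 = (e_1e_2)e_1 = -e_1^2 e_2 = -ae_2$, similarly for $e_2,e_3$.

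For the characterization of $A^0$, I compute $y^2$ for a general $y=\alpha_1 e_1+\alpha_2 e_2+\alpha_3 e_3$ using these anti-commutation relations; all cross-terms vanish and one is left with
\begin{equation*}
  y^2 = \alpha_1^2 a + \alpha_2^2 b - \alpha_3^2 ab \in F,
\end{equation*}
which shows the inclusion $Fe_1+Fe_2+Fe_3\subseteq A^0$. For the reverse inclusion, write an arbitrary $x\in A$ as $x=\alpha_0+y$ with $y$ in this subspace, and compute $x^2 = \alpha_0^2 + 2\alpha_0 y + y^2$. Since $y^2\in F$ and $\cha(F)\ne 2$, the condition $x^2\in F$ forces $\alpha_0 y = 0$; as $(1,e_1,e_2,e_3)$ is a basis, either $\alpha_0=0$ (so $x\in Fe_1+Fe_2+Fe_3$) or $y=0$ (so $x\in F$, hence $x=0$ by the definition of $A^0$).

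For the involution statement, $F$-linearity, $\bar{\bar x}=x$ and fixing $F$ pointwise are immediate from the formula $\overline{\alpha_0+y}=\alpha_0-y$. The only substantive point is the anti-multiplicativity $\overline{xy}=\bar y\bar x$; by $F$-bilinearity it suffices to verify this for pairs of basis vectors, and the cases involving $e_0=1$ are trivial. For $i=j$ both sides equal $e_i^2\in F$, and for $i\ne j$ the anti-commutation gives
\begin{equation*}
  \overline{e_ie_j} = -e_ie_j = e_je_i = (-e_j)(-e_i) = \bar{e_j}\bar{e_i},
\end{equation*}
finishing the check. The last two identities follow by direct substitution: writing $x=\alpha_0+y$ with $y\in A^0$ yields $x+\bar x=2\alpha_0\in F$ and $x\bar x=(\alpha_0+y)(\alpha_0-y)=\alpha_0^2-y^2\in F$, matching the coefficients of the quadratic relation $x^2-(x+\bar x)x+x\bar x=0$ satisfied by $x$.

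There is no real obstacle here beyond the bookkeeping of signs when checking the anti-commutation rules for pairs involving $e_3$; once those are in hand, every remaining step is a one-line calculation or a use of $\cha(F)\ne 2$.
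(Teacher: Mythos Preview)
Your proof is correct and is precisely the explicit calculation the paper has in mind; the paper's own proof simply reads ``This is checked by explicit calculation'' and adds the side remark that for a division algebra the description of $A^0$ can alternatively be read off from the proof of the preceding theorem, while for $M_2(F)$ it is the space of trace-zero matrices. Your write-up fills in exactly the details the paper omits, with only a slight awkwardness in the phrasing of the case $y=0$ (where the cleanest way to say it is that $y=0$ forces $x\in F$, contradicting $x\in A\setminus F$ unless $x=0$).
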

\begin{proof}
  This is checked by explicit calculation. The fact that $A^0$ is a
  $3$-dimensional vector space over $F$ can also be seen from the
  proof of the theorem above if $A$ is a division algebra, if $A$ is
  the matrix ring $M_2(F)$  it is obvious that $A^0$ is the space of
  matrices of trace $0$.
\end{proof}
\begin{definition}
  With notations as in the previous lemma we call $A^0$ the space of
  pure quaternions, the map $x \mapsto \bar{x}$ the (quaternionic)
  conjugation on $A$ and the maps $x \to n(x), x\mapsto \tr(x)$ the
  quaternion norm resp.\ trace. The quaternion algebra with generators
  $e_0=1,e_1,e_2,e_3=e_1e_2=-e_2e_1$ satisfying $e_1^2=a, e_2^2=b$ as above is
  denoted by $(a,b)_F=\bigl(\frac{a,b}{F}\bigr)$ and the vectors
  $e_0,\ldots,e_4$ are called a standard basis of $(a,b)_F$.
\end{definition}
\begin{remark}
  \begin{enumerate}
  \item  In the terminology of the theory of (central simple) algebras the quaternion norm and
  trace are the reduced norm and trace in the algebra $A$.
\item For the matrix algebra $M_2(F)$ the quaternion norm is the
  determinant, the quaternion trace the matrix trace and the
  conjugation the map
  \begin{equation*}
    \begin{pmatrix}
      a&b\\c&d
    \end{pmatrix}\mapsto
    \begin{pmatrix}
      d&-b\\-c&a
    \end{pmatrix}.  
\end{equation*}
\item If $A=(a,b)_F$ is a quaternion division algebra and $\delta \in A^0$
  with $\delta^2=a\in F^\times$ one can embed $A$ into the matrix ring
  $M_2(F(\delta))$ over the quadratic extension
  $E=F(\delta)=F(\sqrt{a})$ as the subgroup generated by the matrices
  \begin{equation*}
    \tilde{e_0}=1_2, \tilde{e_1}=
    \begin{pmatrix}
      \delta&0\\0&-\delta
    \end{pmatrix},\quad
    \tilde{e_2}=\begin{pmatrix}
    0&b\\1&0
    \end{pmatrix}.
  \end{equation*}
The quaternion norm and trace coincide then with the determinant and
the matrix trace in this representation.
In particular for the Hamilton quaternions ${\mathbb H}=(-1,-1)_\R$
over $R$ one obtains the usual identification of ${\mathbb H}^1:=\{x
\in {\mathbb H}\mid n(x)=1\}$ with the special unitary group $SU_2(\C)$. 
\end{enumerate}
\end{remark}
We recall that the Brauer group $Br(F)$ of $F$ is by definition the
set of isomorphism classes of central simple $F$-algebras modulo the
equivalence relation $A\otimes M_r(F)\sim B\otimes M_s(F)$ (similarity
of algebras), where the
group multiplication is induced by the tensor product of
algebras. Each such class is represented by a unique division algebra
with center $F$.
\begin{lemma}
  The class of a quaternion division algebra over $F$ in the Brauer group of
  $F$ has order $2$.
\end{lemma}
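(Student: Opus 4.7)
The plan is to show that $[A]^2 = 1$ in $Br(F)$ while $[A] \neq 1$, which together force the order of $[A]$ to be exactly $2$.

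First I would observe that the quaternionic conjugation $x \mapsto \bar{x}$ defined in the preceding lemma is an $F$-linear anti-automorphism of $A$. This can be checked on the standard basis $1, e_1, e_2, e_3 = e_1 e_2$: conjugation fixes $1$ and negates $e_1, e_2, e_3$, while for $i \neq j$ in $\{1,2,3\}$ one has $e_i e_j = -e_j e_i$, so $\overline{e_i e_j} = -e_i e_j = (-e_j)(-e_i) = \bar{e}_j\bar{e}_i$; bilinear extension gives $\overline{xy} = \bar{y}\bar{x}$ for all $x, y \in A$. Consequently $x \mapsto \bar{x}$ defines an $F$-algebra isomorphism $A \xrightarrow{\sim} A^{\mathrm{op}}$.

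Next, I would invoke (or briefly establish) the standard fact that for any central simple $F$-algebra $B$ of dimension $n$, the $F$-linear map
\begin{equation*}
  \Phi : B \otimes_F B^{\mathrm{op}} \longrightarrow \End_F(B), \qquad \Phi(x \otimes y)(z) = xzy,
\end{equation*}
is an $F$-algebra isomorphism. Both sides have dimension $n^2$, and $\Phi$ is nonzero, so since $B \otimes_F B^{\mathrm{op}}$ is simple (a tensor product of central simple algebras over $F$ is central simple), $\Phi$ is injective and hence an isomorphism. Applying this with $B = A$ and combining with the previous step gives
\begin{equation*}
  A \otimes_F A \;\cong\; A \otimes_F A^{\mathrm{op}} \;\cong\; \End_F(A) \;\cong\; M_4(F),
\end{equation*}
so $[A]^2 = 1$ in $Br(F)$ and the order of $[A]$ divides $2$.

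Finally, since $A$ is assumed to be a division algebra of dimension $4 > 1$, Wedderburn's theorem (that each Brauer class has a unique division algebra representative up to isomorphism) shows $[A] \neq [F]$ in $Br(F)$. Therefore the order of $[A]$ is exactly $2$. The only real obstacle is the isomorphism $A \otimes_F A^{\mathrm{op}} \cong \End_F(A)$; if we wish to avoid citing the general theory, we can either construct $\Phi$ and check it is injective on the generators $1, e_1, e_2, e_3$ (using that $A$ is central simple, so the centralizer of $F$ in $A$ is just $F$), or else exhibit an explicit surjection onto $M_4(F)$ directly from the multiplication table of $(a,b)_F$ and use dimension count.
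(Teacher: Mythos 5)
Your proof is correct and follows essentially the same route as the paper: the quaternionic conjugation gives $A\cong A^{\mathrm{op}}$, and the standard isomorphism $A\otimes_F A^{\mathrm{op}}\cong\End_F(A)$ then shows $[A]^2=1$, with $[A]\neq 1$ because $A$ is a division algebra. You simply spell out the details (the anti-automorphism check and the map $\Phi$) that the paper cites as well known.
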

\begin{proof}
  Denoting by $A^{op}$ the opposed algebra of $F$, i. e., $A$ with
  multiplication in opposite order, it is well known that $A\otimes
  A^{op}$ is isomorphic to a matrix ring over $F$ for any central
  simple algebra $A$ over $F$. The properties of the quaternionic
  conjugation show that a quaternion algebra is isomorphic to its
  opposed algebra, which implies the assertion.
\end{proof}
\begin{lemma}
 Let $A$ be a quaternion algebra over $F$.
The quaternion norm $n$ defines a quadratic form on $A$ with
associated symmetric bilinear form $b(x,y)=\tr(x\bar{y})$, and the
quadratic space $(A,n)$ over $F$ has square determinant. It is isotropic if and only if $A \cong
M_2(F)$ holds.  
\end{lemma}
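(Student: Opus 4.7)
My plan is to use the explicit standard basis of a quaternion algebra provided just before the lemma, which allows a direct computation of the quaternion norm as a quadratic form.

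First I would verify that the conjugation $x \mapsto \bar x$ is an anti-involution, i.e.\ $\overline{xy} = \bar y \bar x$. This reduces, by linearity and the fact that scalars are fixed, to the case of pure quaternions $x,y \in A^0$. From the proof of the first theorem one has $xy + yx \in F$, and a direct check (e.g.\ on the generators $e_1,e_2$) shows $xy - yx \in A^0$. Decomposing $xy$ into its scalar and pure parts then gives $\overline{xy} = yx = (-\bar y)(-\bar x) = \bar y\bar x$.

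Using this, $n(x+y) - n(x) - n(y) = x\bar y + y\bar x = x\bar y + \overline{x\bar y} = \tr(x\bar y)$, which is symmetric and bilinear in $x,y$. Together with $n(\lambda x) = \lambda x \bar{\lambda x} = \lambda^2 n(x)$ for $\lambda \in F$ this shows $n$ is a quadratic form with the asserted associated bilinear form. To compute the determinant, I take the standard basis $1,e_1,e_2,e_3$ with $e_1^2=a$, $e_2^2=b$, $e_3=e_1e_2$, $e_3^2=-ab$; since the basis is orthogonal with respect to $b$ (as $e_ie_j+e_je_i=0$ for $i\ne j$ in $\{1,2,3\}$ and scalars are orthogonal to pure quaternions), one gets $(A,n)\cong[1,-a,-b,ab]$, so $\det(A,n) = a^2b^2 \cdot (F^\times)^2 = (F^\times)^2$.

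For the final equivalence, note that $(A,n)$ is isotropic iff there exists $0\ne x\in A$ with $x\bar x = 0$, iff $A$ contains a zero divisor, iff $A$ is not a division algebra. By Wedderburn's theorem (invoked in the proof of the first theorem of this section) a central simple $F$-algebra of dimension $4$ that is not a division algebra must be isomorphic to $M_2(F)$; conversely $M_2(F)$ plainly contains zero divisors (any nonzero singular matrix). The main step whose verification is a little delicate is the anti-involution property of conjugation, but once this is set up the rest is routine computation with the explicit basis.
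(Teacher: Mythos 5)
Your proof is correct and follows essentially the same route as the paper: the isotropy criterion comes down to the fact that $n(x)=x\bar{x}\ne 0$ exactly when $x$ is invertible, so $(A,n)$ is anisotropic iff $A$ is a division algebra, and Wedderburn then identifies the non-division case with $M_2(F)$. The paper declares the remaining assertions (quadratic form, bilinear form $\tr(x\bar y)$, square determinant) obvious, and your computation with the standard basis, giving $(A,n)\cong[1,-a,-b,ab]$, is just the expected filling-in of those details.
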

\begin{proof}
  Since the norm is multiplicative a quaternion $x$ is invertible if
  and only if its norm is nonzero, so $A$ is a division algebra if and
  only if $(A,n)$ is anisotropic. The rest of the assertion is obvious. 
\end{proof}
\begin{remark}
  Since over a finite field $F$ every quadratic space of dimension
  $\ge 3$ is isotropic we see that the only quaternion algebra over a
  finite field $F$ is the matrix ring $M_2(F)$ (in fact,  by a well
  known theorem of
  Wedderburn there exists  no non trivial division algebra with center $F$).  
\end{remark}
\begin{lemma}
Let $a,b,c,d \in F^\times$.
then the following are equivalent:
\begin{enumerate}
\item $[1,-a,-b,ab]\cong [1,-c,-d,cd]$
\item $[-a,-b, ab]\cong  [-c,-d,cd]$
\item The quaternion algebras $(a,b)_F, (c,d)_F$ are isomorphic.
\end{enumerate}
\end{lemma}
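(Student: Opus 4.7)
The plan rests on the observation that $[1,-a,-b,ab]$ is precisely the Gram matrix of the quaternion norm $n$ on $A=(a,b)_F$ with respect to the standard basis $1, e_1, e_2, e_3 = e_1 e_2$. Indeed $n(e_i) = e_i \bar{e}_i = -e_i^2$ gives $-a$ and $-b$ for $i=1,2$, while $n(e_3) = -e_3^2 = ab$, and pairwise orthogonality with respect to the bilinear form $\tr(x\bar{y})$ is a direct consequence of $\bar{1}=1$, $\bar{e}_i=-e_i$ together with the anti-commutation relations. Consequently $[-a,-b,ab]$ realises the restriction of $n$ to the pure quaternions $A^0$, and the corresponding statement holds for $B=(c,d)_F$.

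Granting this identification, the equivalence of (a) and (b) is immediate from Witt's cancellation theorem, since the orthogonal summand $[1]$ is common to both sides. For (c)$\Rightarrow$(a) I would observe that any $F$-algebra isomorphism $\psi:(a,b)_F \to (c,d)_F$ preserves the reduced trace, hence also the quaternionic conjugation $\bar{x}=\tr(x)-x$ and the norm $n(x)=x\bar{x}$; thus $\psi$ is automatically an isometry of the norm forms, which yields~(a).

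The core of the argument is (b)$\Rightarrow$(c). Given an isometry $\phi: A^0 \to B^0$, I take the standard generators $e_1, e_2 \in A^0$ with $e_1^2=a$, $e_2^2=b$, $e_1 e_2 = -e_2 e_1$. Since $\bar{x}=-x$ for $x \in A^0$, we have $x^2=-n(x)$ on $A^0$, and the same identity holds on $B^0$; applying it to the images yields $\phi(e_1)^2 = -n(\phi(e_1)) = -n(e_1) = a$ and $\phi(e_2)^2 = b$. Moreover, the orthogonality of $e_1$ and $e_2$ together with the isometry property translates via $0 = \tr(\phi(e_1)\overline{\phi(e_2)}) = -(\phi(e_1)\phi(e_2)+\phi(e_2)\phi(e_1))$ into $\phi(e_1)\phi(e_2) = -\phi(e_2)\phi(e_1)$. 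Since the defining relations of $(a,b)_F$ are thereby satisfied by the pair $\phi(e_1), \phi(e_2)$ in $B$, the assignment $e_i \mapsto \phi(e_i)$ extends to an $F$-algebra homomorphism $(a,b)_F \to B$. Simplicity of $(a,b)_F$ forces it to be injective, and equality of dimensions forces surjectivity, giving $(a,b)_F \cong (c,d)_F$.

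The main obstacle is recognising that everything needed about the algebra structure of the image is already encoded in the isometry of the pure quaternion spaces. The crucial device is the purely algebraic description of $n|_{A^0}$ and its bilinear form via squaring and anti-commutator, namely $x^2=-n(x)$ and $\tr(x\bar{y}) = -(xy+yx)$ on $A^0$. Once these translations are in hand, the reduction to the universal presentation of $(a,b)_F$ by two generators and relations closes the argument with no further structural work.
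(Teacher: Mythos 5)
Your proposal is correct and follows essentially the same route as the paper: Witt cancellation for (a)$\Leftrightarrow$(b), the intrinsic nature of trace/conjugation/norm for (c)$\Rightarrow$(a), and for (b)$\Rightarrow$(c) the verification that the images $\phi(e_1),\phi(e_2)$ of the standard generators satisfy the defining relations (via $x^2=-n(x)$ and the anticommutator identity on pure quaternions), followed by simplicity and a dimension count. The paper's proof is the same argument, phrased with the isometry extended to the full four-dimensional spaces by sending $1$ to $1$.
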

\begin{proof}
  The equivalence of a) and b) follows from Witt's cancellation law,
  and c) obviously implies b). For the reverse direction denote by
  $e_0=1,e_1,e_2,e_3$ a standard basis of $(a,b)_F$. By assumption
  there exists 
  a linear isometry $\tau$ from $(a,b)_F$ to $(c,d)_F$ with $\tau(1)=1
  and \tau((a,b)_F^0)=(c,d)_F^0$. With $f_1:=\tau(e_1),
  f_2:=\tau(e_2), f_3=f_1f_2$ we have $\tr
  (f_1\bar{f_2})=\tr(e_1\bar{e_2})=0$ and hence $f_1f_2=-f_2f_1$. With
  $f_i^2=-n(f_i)=n(e_i)=-e_i^2$ for $i=1,2$ we see that
  $1,f_1,f_2,f_3$ is a standard basis of $(c,d)_F$ with $f_1^2=a,
  f_2^2=b$ and hence $(c,d)_F\cong (a,b)_F$.
\end{proof}
\begin{lemma}\label{quaternion_clifford_dim2}
  Let $a,b,c,d \in F^\times, ab \in cd(F^\times)^2$.

Then $[a,b]\cong [c,d]$ (as quadratic spaces) is equivalent to
$(a,b)_F\cong (c,d)_F$ (as algebras). 
\end{lemma}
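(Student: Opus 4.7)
The plan is to reduce this to the previous lemma (the one stating $[1,-a,-b,ab]\cong[1,-c,-d,cd]$ iff $(a,b)_F\cong(c,d)_F$) by Witt-cancellation arguments, exploiting the fact that the hypothesis $ab\in cd(F^\times)^2$ forces the one-dimensional spaces $[ab]$ and $[cd]$ to be isometric.

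For the direction $[a,b]\cong[c,d]\Rightarrow(a,b)_F\cong(c,d)_F$: first observe that if $\phi$ is an isometry of $(V,Q)$ with $(W,Q')$, then $\phi$ is also an isometry of $(V,-Q)$ with $(W,-Q')$; thus $[a,b]\cong[c,d]$ immediately gives $[-a,-b]\cong[-c,-d]$. Since $ab$ and $cd$ differ by a nonzero square in $F^\times$, the one-dimensional spaces $[ab]$ and $[cd]$ are isometric, so forming orthogonal sums yields $[-a,-b,ab]\cong[-c,-d,cd]$, and the previous lemma then delivers $(a,b)_F\cong(c,d)_F$.

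For the converse: assuming $(a,b)_F\cong(c,d)_F$, the previous lemma produces $[-a,-b,ab]\cong[-c,-d,cd]$. The summands $[ab]$ and $[cd]$ are isometric by the square-class hypothesis, and since the ambient forms are non-degenerate we may invoke Witt's cancellation theorem (Corollary after Theorem \ref{extension_theorem}, applicable since $\cha(F)\ne 2$) to conclude $[-a,-b]\cong[-c,-d]$. Scaling the quadratic form by $-1$ gives $[a,b]\cong[c,d]$.

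Both directions are essentially one-line reductions once the correct form of the previous lemma is in hand, and there is no real obstacle beyond the bookkeeping of signs and square classes; the only subtle point is to notice at the outset that isometry is preserved under multiplying $Q$ by $-1$, so that the $4$-dimensional statement of the previous lemma can be linked to a $2$-dimensional statement about $[a,b]$ rather than about $[-a,-b]$.
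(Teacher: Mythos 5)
Your proof is correct and follows essentially the same route as the paper: the square-class hypothesis makes $[ab]\cong[cd]$, so adding (resp.\ cancelling, via Witt) this one-dimensional regular summand converts the two-dimensional isometry into the three-dimensional one $[-a,-b,ab]\cong[-c,-d,cd]$ handled by the preceding lemma. The paper compresses this into one line, but your bookkeeping of the sign change and the appeal to Witt cancellation is exactly what that line tacitly uses.
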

\begin{proof}
  By assumption the isometry of quadratic spaces is equivalent to the
  isometry $[-a,-b,-cd]\cong [-c,-d,cd]$, which is equivalent to the
  algebras being isomorphic.
\end{proof}
\begin{lemma}\label{quaternion_clifford_dim3}
  Let $a,b,c,\alpha,\beta,\gamma \in F^\times$ with $\alpha\beta\gamma
  \in abc (F^\times)^2$.
Then $[a,b,c]\cong[\alpha,\beta,\gamma]$ (as quadratic spaces)  is
equivalent to $(-bc,-ac)_F\cong (-\beta\gamma,-\alpha\gamma)_F$ (as
algebras).
\end{lemma}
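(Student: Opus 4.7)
The plan is to reduce the statement to the preceding lemma by replacing each of the two ternary forms with an equivalent one that is manifestly the pure quaternion space of the quaternion algebra appearing on the right hand side. There are two steps: a multiplicative-scaling trick that converts $[a,b,c]$ to $[bc,ac,ab]$ (and similarly on the other side), and the recognition of $[bc,ac,ab]$ as the restriction of the norm form of $(-bc,-ac)_F$ to its pure quaternions.

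First I would check that for any $d\in F^\times$ the scaled form $d\cdot[a,b,c]=[da,db,dc]$ is isometric to $[a,b,c]$ whenever $d$ is a square, and then observe that
\[
abc\cdot[a,b,c]=[a^2bc,\,ab^2c,\,abc^2]\cong[bc,\,ac,\,ab],
\]
simply because each diagonal entry on the left differs from the corresponding one on the right by a square. The hypothesis $\alpha\beta\gamma\in abc(F^\times)^2$ means that $abc$ and $\alpha\beta\gamma$ scale any form in the same way up to isometry. Therefore an isometry $[a,b,c]\cong[\alpha,\beta,\gamma]$ transports, after multiplying the form by $abc$ on the left and by $\alpha\beta\gamma$ on the right, into the isometry $[bc,ac,ab]\cong[\beta\gamma,\alpha\gamma,\alpha\beta]$; and the argument runs equally in reverse (multiply by $abc$ again and note $(abc)^2$ is a square). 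So the ternary isometry in the statement is equivalent to
\[
[bc,\,ac,\,ab]\cong[\beta\gamma,\,\alpha\gamma,\,\alpha\beta].
\]

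Second, I would compute the norm form of $(-bc,-ac)_F$ directly from the standard basis $1,e_1,e_2,e_3$ satisfying $e_1^2=-bc$, $e_2^2=-ac$, $e_3=e_1e_2$: this gives norm form $\langle 1,bc,ac,(-bc)(-ac)\rangle=\langle 1,bc,ac,abc^2\rangle\cong\langle 1,bc,ac,ab\rangle$, whose restriction to the pure quaternions is $[bc,ac,ab]$. The analogous computation shows that the pure quaternion space of $(-\beta\gamma,-\alpha\gamma)_F$ is $[\beta\gamma,\alpha\gamma,\alpha\beta]$. By the lemma immediately preceding Lemma~\ref{quaternion_clifford_dim2}, the isomorphism of two quaternion algebras is equivalent to the isometry of their pure quaternion spaces (or, by Witt cancellation applied to the summand $\langle 1\rangle$, of their full norm forms). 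Combining this with step one yields the claimed equivalence.

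The computations are all routine; the only point that needs care is the scaling step, where one must keep track of the hypothesis $\alpha\beta\gamma\in abc(F^\times)^2$ to be sure that one is scaling both forms by the same factor modulo squares. Once that is in place, the argument is essentially a three-dimensional repackaging of Lemma~\ref{quaternion_clifford_dim2}, with Witt's cancellation theorem doing the bookkeeping.
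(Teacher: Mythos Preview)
Your proof is correct and follows essentially the same route as the paper's: scale $[a,b,c]$ by $abc$ (and $[\alpha,\beta,\gamma]$ by $\alpha\beta\gamma$, which is the same scaling modulo squares by hypothesis) to reduce to $[bc,ac,ab]\cong[\beta\gamma,\alpha\gamma,\alpha\beta]$, then invoke the preceding lemma identifying pure-quaternion norm spaces with algebra isomorphism. The paper compresses step two into a single clause, whereas you spell out the norm-form computation explicitly, but the argument is the same.
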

\begin{proof}
  Scaling the quadratic forms with $abc$ resp.\ $\alpha,\beta,\gamma$
  shows that the isometry of quadratic spaces is equivalent to the
  isometry $[bc,ac,ab]\cong[\beta\gamma,\alpha\gamma,\alpha\beta]$,
  which in turn is equivalent to the isomorphy of algebras is question.
\end{proof}
\begin{lemma}
Let $a,b,\alpha,\beta \in F^\times$.  The map sending the pair $(a,b)$ to the
quaternion algebra $(a,b)_F$ has the following properties:
\begin{enumerate}
\item $(a,b)_F\cong (a\alpha^2,b\beta^2)_F$.
\item $(a,b)_F\cong (b,a)_F$.
\item $M_2(F)\cong (1,a)_F\cong (a,-a)_F\cong (a,1-a)_F$ (with the
  last isomorphy supposing $a\ne 1$).
\item $(a,a)_F\cong(a,-1)_F$.
\end{enumerate}
\end{lemma}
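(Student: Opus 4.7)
The plan is to verify each of the four claims by explicit manipulation of standard bases, using the characterization that a quaternion algebra is determined up to isomorphism by the existence of two anticommuting generators whose squares lie in $F^\times$, together with the isotropy criterion for splitting into $M_2(F)$ furnished by the earlier lemma.

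For a) and b), I would start from a standard basis $1, e_1, e_2, e_1 e_2$ of $(a,b)_F$ with $e_1^2 = a$, $e_2^2 = b$, $e_1 e_2 = -e_2 e_1$. For a), set $f_1 = \alpha e_1$ and $f_2 = \beta e_2$; then $f_1^2 = \alpha^2 a$, $f_2^2 = \beta^2 b$, and anticommutativity is preserved because scalars are central, so $1, f_1, f_2, f_1 f_2$ is a standard basis showing $(a\alpha^2, b\beta^2)_F \cong (a,b)_F$. For b), simply interchange the roles of $e_1$ and $e_2$: $f_1 := e_2$, $f_2 := e_1$ satisfy $f_1^2 = b$, $f_2^2 = a$, and $f_1 f_2 = e_2 e_1 = -e_1 e_2 = -f_2 f_1$.

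For c), by the earlier lemma the quaternion algebra $(x,y)_F$ is isomorphic to $M_2(F)$ precisely when its norm form $\langle 1, -x, -y, xy\rangle$ is isotropic, so it suffices to exhibit isotropic vectors. In $(1,a)_F$ the form $\langle 1, -1, -a, a\rangle$ is obviously isotropic (take the vector $(1,1,0,0)$). In $(a,-a)_F$ with form $\langle 1, -a, a, -a^2\rangle$ the vector $(0,1,1,0)$ is isotropic. In $(a,1-a)_F$ with form $\langle 1, -a, -(1-a), a(1-a)\rangle$ the vector $(1,1,1,0)$ gives $1 - a + (a-1) = 0$. Each case thus forces the algebra to split.

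For d), I would construct a new standard basis inside $(a,a)_F$ of the shape required for $(a,-1)_F$. With $e_1^2 = e_2^2 = a$ and $e_1 e_2 = -e_2 e_1$, set $f_1 := e_1$ and $f_2 := a^{-1} e_1 e_2$. A direct computation gives $(e_1 e_2)^2 = e_1 e_2 e_1 e_2 = -e_1^2 e_2^2 = -a^2$, hence $f_2^2 = -1$, while $e_1 (e_1 e_2) = a e_2$ and $(e_1 e_2) e_1 = -a e_2$ show $f_1 f_2 = -f_2 f_1$. Thus $1, f_1, f_2, f_1 f_2$ is a standard basis exhibiting an isomorphism $(a,a)_F \cong (a,-1)_F$.

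No step is genuinely difficult; the main care needed is bookkeeping to ensure that the proposed new generators anticommute and that their squares lie in the prescribed square classes, so that one really has a standard basis in the sense of the definition. Once this is arranged, the explicit isomorphisms follow from the universal description of $(a,b)_F$ by generators and relations.
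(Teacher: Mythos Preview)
Your proof is correct. The paper's own proof is the single sentence ``All assertions follow from a consideration of the associated quadratic spaces,'' which points to the earlier lemma identifying $(a,b)_F\cong(c,d)_F$ with the isometry $[-a,-b,ab]\cong[-c,-d,cd]$; one then checks each item as an isometry of ternary forms (or, for c), via the isotropy criterion for the norm form, exactly as you do).

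Your argument for c) is thus essentially the paper's intended one. For a), b), and d) you take a slightly different route: rather than reducing to isometries of the associated quadratic forms, you construct the algebra isomorphisms directly by exhibiting new standard bases. This is perfectly fine and arguably more transparent, since it produces the isomorphism explicitly rather than inferring its existence from the classification lemma. The quadratic-form approach has the minor advantage of being uniform (one lemma handles all four cases), while your approach avoids invoking that lemma at all for three of the four parts.
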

\begin{proof}
  All assertions follow from a consideration of the associated
  quadratic spaces.
\end{proof}
\begin{lemma}
  Let $a,b,c \in F^\times$, denote by $\sim$ the equivalence relation (similarity)
  between central simple algebras defining the Brauer group.
  
Then
\begin{equation*}
  (a,bc)_F\sim (a,b)_F\otimes (a,c)_F,\quad (ab,c)_F\sim
  (a,c)_F\otimes ((b,c)_F.
\end{equation*}
In particular, the map sending a pair $(a,b)\in
F^\times\times F^\times$ to the class of the quaternion algebra
$(a,b)_F$ in the $2-$ torsion subgroup of the Brauer group $Br(F)$ is
a Steinberg symbol.
\end{lemma}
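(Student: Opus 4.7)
The plan is to establish $(a,b)_F \otimes_F (a,c)_F \sim (a,bc)_F$; the second identity then follows by symmetry from $(x,y)_F \cong (y,x)_F$. The Steinberg symbol assertion in the ``In particular'' clause is then immediate: bimultiplicativity is exactly the identity we are about to prove (the target lies in the $2$-torsion subgroup of $Br(F)$ by the lemma on quaternionic conjugation), and the Steinberg relation $(a,1-a)_F \sim 1$ is provided by part c) of the previous lemma.

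\emph{Step 1: Exhibit $(a,bc)_F$ inside the tensor product.} Take standard bases $1,i,j,ij$ of $A := (a,b)_F$ with $i^2=a$, $j^2=b$, and $1,I,J,IJ$ of $B := (a,c)_F$ with $I^2=a$, $J^2=c$. Set
\begin{equation*}
  P := i\otimes 1, \qquad Q := j\otimes J.
\end{equation*}
A direct computation gives $P^2 = a$, $Q^2 = j^2\otimes J^2 = bc$, and $PQ = ij\otimes J = -(ji)\otimes J = -QP$. Hence the $F$-subalgebra $C := F\langle P,Q\rangle \subseteq A\otimes_F B$ is a homomorphic image of $(a,bc)_F$; since the latter is central simple of dimension $4$, $C$ is a central simple subalgebra isomorphic to $(a,bc)_F$.

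\emph{Step 2: Apply the double centralizer theorem and identify the centralizer.} Because $A$ and $B$ are central simple over $F$, the tensor product $A\otimes_F B$ is central simple of dimension $16$. The double centralizer theorem applied to the central simple subalgebra $C$ yields a central simple $F$-algebra $Z := C_{A\otimes_F B}(C)$ of dimension $16/4 = 4$ together with an $F$-algebra isomorphism $A\otimes_F B \cong C \otimes_F Z$. I compute $Z$ by exhibiting
\begin{equation*}
  \xi := 1\otimes J, \qquad \eta := i\otimes I.
\end{equation*}
Both commute with $P$ trivially (no action on the first tensor slot is obstructed). For $\xi Q = Q\xi$ one uses that $J$ commutes with itself; for $\eta Q = Q\eta$ the two sign changes $ji = -ij$ and $JI = -IJ$ cancel. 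A linear-independence check in the standard tensor basis of $A\otimes_F B$ shows $\{1,\xi,\eta,\xi\eta\}$ are independent, hence form a basis of the $4$-dimensional algebra $Z$.

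\emph{Step 3: Identify $Z$ and conclude.} The relations $\xi^2 = c$, $\eta^2 = a^2$, $\xi\eta = i\otimes JI = -i\otimes IJ = -\eta\xi$ show that $Z \cong (a^2,c)_F$. Parts a) and c) of the previous lemma give $(a^2,c)_F \cong (1,c)_F \cong M_2(F)$. Substituting into the double centralizer isomorphism,
\begin{equation*}
  (a,b)_F \otimes_F (a,c)_F \;\cong\; (a,bc)_F \otimes_F M_2(F),
\end{equation*}
which yields the desired similarity. The main obstacle is the explicit identification of $Z$: one has to guess the right four commuting elements and execute the tensor bookkeeping without sign slips. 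Once that is done, everything else is a formal application of the double centralizer theorem together with the already-established isomorphisms $(a\alpha^2,b)_F \cong (a,b)_F$ and $(1,c)_F \cong M_2(F)$.
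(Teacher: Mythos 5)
Your proof is correct and follows essentially the same route as the paper: both exhibit the commuting quaternion subalgebras spanned by $i\otimes 1,\ j\otimes J$ (isomorphic to $(a,bc)_F$) and by $1\otimes J,\ i\otimes I$ (isomorphic to the split algebra $(a^2,c)_F\cong M_2(F)$) inside $(a,b)_F\otimes(a,c)_F$, and conclude by simplicity/dimension count. Your packaging via the double centralizer theorem versus the paper's direct observation that a homomorphism out of the simple algebra $A\otimes B$ must be an isomorphism is only a cosmetic difference.
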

\begin{proof}
  We show the assertion by proving 
  \begin{equation*}
    (a,b)_F\otimes (a,c)_F\cong (a,bc)_F\otimes (c,-ac^2)_F.
  \end{equation*}
We let $e_0=1,e_1,e_2,e_3$ resp.\ $f_0=1,f_1,f_2,f_3$ be standard
bases for the quaternion algebras on the left hand side and put $A=F
1\otimes 1+F e_1\otimes 1+F e_2\otimes f_2+Fe_3\otimes f_2$ and $B=
F1\otimes 1 +F1\otimes f_2+F\otimes e_1\otimes f_3+F(e_1\otimes f_1)$.
Then $A,B$ are isomorphic to the quaternion algebras on the right hand
side and commute. From this we obtain an algebra homomorphism
$A\otimes B \to (a,b)_F\otimes (a,c)_F$, which must be an isomorphism
since $A\otimes B$ is simple. The definition of Steinberg symbols
shows that we have indeed defined such a symbol. 
\end{proof}
\section{The Clifford Algebra}
\begin{deftheorem}
  Let $(V,Q)$ be a quadratic space over $F$. Then there is a unique up
  to unique isomorphism $F$- algebra $C(V,Q)=C(V)=C_V$ with a linear
  map $i:V\to C(V,Q)$ such that
  \begin{enumerate}
  \item $i(v)^2=Q(v)1\in C(V,Q)$ for all $v \in V$.
\item If $C'$ is any $F$-algebra with an $F$-linear map $i':V\to C'$
  satisfying $(i'(v))^2=Q(v)1 \in C'$ for all $v \in V$, there is a
  unique algebra homomorphism $\phi:C(V,Q)\to C'$ with $i'=\phi\circ
  i$.

The algebra $C(V,Q)$ is called the Clifford algebra of $(V,Q)$, it is
generated by the elements $i(v)$ for $v \in V$. It has
a natural $\Z/2\Z$-grading $C(V,Q)=C_0(V,Q)+C_1(V,Q)$, where the even
Clifford algebra 
$C_0(V,Q)$ is generated by products of even length of the $i(v)$ and
$C_1(V,Q)$ is generated (as vector space over $F$) by the products of
odd length and is a module over $C_0(V,Q)$.   

In particular one has $C_1(V,Q)C_1(V,Q) \subseteq C_0(V,Q)$.
  \end{enumerate}
\end{deftheorem}
\begin{proof}
  Let $I$ be the two sided ideal of the Tensor algebra
  $T(V)=\oplus_{j=0}^\infty V^{\otimes j}$ of $V$
  generated by the elements $x\otimes x -Q(x)1$. Then it is easily
  checked that $C(V,Q):=T(V)/I$ with the map $i:v \mapsto v+I$ has the
  desired property. From this it is clear that the $i(V)$ generate
  $C(V,Q)$. We let $T_0(V),T_1(V)$ respectively be the sum of the
  $V^{\otimes j}$ for even resp.\ odd $j$ and obtain a
  $\Z/2\Z$-grading of $T(V)$. Then we have $I=I\cap
  T_0(V)\oplus I\cap T_1(V)$, so that $C_0(V,Q)=T_0(V)/I\cap T_0(V),
  C_1(V,Q)=T_1(V)/I\cap T_1(V)$ give the desired decomposition of $C(V,Q)$.
\end{proof}
\begin{lemma}
  Identifying $x\in V$ with its image $i(x)\in C(V,Q)$ we have
  \begin{enumerate}
  \item $xy+yx =b(x,y)1 \in C(V,Q)$.
\item $x\in V$ is invertible in $C(V,Q)$ if and only if $Q(x)\ne 0$
  holds.
\item If $\{x_1,\ldots,x_r\}$ is a generating set of the vector space
  $V$, the $x_1^{\epsilon_1}\dots x_r^{\epsilon_r}$ with $\epsilon_i
  \in \{0,1\}$ generate $C(V,Q)$ as vector space over $F$. 
  \end{enumerate}
\end{lemma}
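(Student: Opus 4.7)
The plan is to derive all three parts directly from the defining relation $x^2 = Q(x)1$ in $C(V,Q)$, with no appeal to structure theorems.

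For (a), I would polarize: applying the defining identity to $x+y$ yields
\[
 Q(x+y)1 = (x+y)^2 = x^2 + xy + yx + y^2 = Q(x)1 + Q(y)1 + xy + yx.
\]
Since $Q(x+y) - Q(x) - Q(y) = b(x,y)$ by definition of the associated bilinear form, we obtain $xy + yx = b(x,y)1$.

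For (b), if $Q(x) \ne 0$, then $x \cdot (Q(x)^{-1}x) = Q(x)^{-1}\, x^2 = 1$, so $Q(x)^{-1}x$ is a two-sided inverse of $x$. Conversely, suppose $Q(x) = 0$. Then $x^2 = 0$ in $C(V,Q)$. If moreover $x$ (viewed in $C(V,Q)$) were invertible with inverse $y$, we would get $x = y x^2 = 0$, and then $1 = yx = 0$, contradicting the fact that $F \hookrightarrow C(V,Q)$ is nonzero (this embedding is visible directly from the universal property, since the zero map $V \to F$ factors through $C(V,Q)$ and must send $1$ to $1$).

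For (c), every element of $C(V,Q)$ is an $F$-linear combination of products of elements of $i(V)$ (since the $i(V)$ generate $C(V,Q)$ as an algebra), so by linearity in each factor it suffices to span such products $x_{i_1}x_{i_2}\cdots x_{i_k}$ with each $x_{i_j}$ in the given generating set $\{x_1,\ldots,x_r\}$. I proceed by induction on the length $k$. The relation $x_ix_j = -x_jx_i + b(x_i,x_j)1$ from (a) lets us swap two adjacent factors at the cost of a term of strictly smaller length, so by the standard bubble-sort argument we may reduce to sums of products in which the indices appear in non-decreasing order. Finally, the relation $x_i^2 = Q(x_i)1$ reduces any repeated index to exponent $0$ or $1$, producing an expression as an $F$-linear combination of the monomials $x_1^{\epsilon_1}\cdots x_r^{\epsilon_r}$ with $\epsilon_i\in\{0,1\}$. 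The only subtlety is to organise the induction cleanly (on the pair (length, number of inversions) under lexicographic order), but no deeper obstacle arises.
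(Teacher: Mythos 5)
Parts (a) and (c), and the forward direction of (b), are correct: your polarization identity and the bubble-sort reduction using $xy+yx=b(x,y)1$ and $x^2=Q(x)1$ are exactly the standard arguments (the paper dismisses the whole lemma as obvious, so there is no competing approach to compare against). The one genuine flaw is your justification that $1\ne 0$ in $C(V,Q)$, which the converse in (b) really does require. You appeal to the universal property via "the zero map $V\to F$", but the zero map $i'\colon V\to F$ satisfies $i'(v)^2=Q(v)\cdot 1_F$ only when $Q$ is identically zero; more generally, any algebra homomorphism $\phi\colon C(V,Q)\to F$ would give a linear functional $\lambda=\phi\circ i$ with $\lambda(v)^2=Q(v)$, i.e.\ it would exhibit $Q$ as the square of a linear form, which is false for essentially every $Q$ of interest. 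So the parenthetical argument proves nothing, and nontriviality of $C(V,Q)$ is in fact the one non-formal point here: it amounts to a lower bound on $\dim_F C(V,Q)$, which part (c) cannot supply since it only bounds the dimension from above by $2^r$. (For all you have shown so far, the ideal $I$ in the construction $C(V,Q)=T(V)/I$ could be all of $T(V)$.)

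To close the gap you must exhibit a nonzero algebra $C'$ with a linear map $i'\colon V\to C'$ satisfying $i'(v)^2=Q(v)1$. Since $\cha(F)\ne 2$ one can take $C'=\End_F(\bigwedge V)$ with $i'(v)=L_v+D_v$, where $L_v$ is exterior left multiplication by $v$ and $D_v$ is contraction against the functional $\tilde{B}(v)\in V^*$; then $L_v^2=D_v^2=0$ and $L_vD_v+D_vL_v=B(v,v)\id=Q(v)\id$, so $C(V,Q)$ maps onto a nonzero subalgebra of $\End_F(\bigwedge V)$ and in particular $1\ne 0$ and $i$ is injective. Alternatively, forward-reference the corollary later in this chapter computing $\dim_F C(V,Q)=2^{\dim V}$ from the graded tensor product decomposition, since that is where the paper itself settles nontriviality. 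Everything else in your write-up is fine.
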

\begin{proof}
  Obvious.
\end{proof}
\begin{lemma}[Functoriality of the Clifford algebra]\label{clifford_functoriality}
  Let $(V,Q)$ be a quadratic space over $F$.
  \begin{enumerate}
  \item To   $\phi \in O(V)$ there is a unique algebra automorphism
    $C(\phi)$ of $C(V)$ with $C(\phi)(i(v))=i(\phi(v))$ for all $v
    \in V$, and one has $C(\psi\circ\phi)=C(\psi)\circ C(\phi)$, for
    $\phi,\psi \in O(V)$, $C(\id_V)=\id_{C(V)}$.

The map $C(-\id_V)$ is the identity map on $C_0(V)$ and $-\id$ on
$C_1(V)$.
\item On $C(V)$ one has an involution $x\mapsto \bar{x}$ with
  $\overline{i(v)}=-i({v})$ for all $v \in V$ and $\overline{v_1\dots
    v_r}=(-1)^rv_r\dots v_1$ for $v_1,\ldots,v_r \in V$ (identifying $v$
  with $i(v)\in C(V)$). 
  \end{enumerate}
\end{lemma}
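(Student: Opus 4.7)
Both parts are applications of the universal property of $C(V,Q)$, applied to well-chosen target algebras.

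For part (a), given $\phi\in O(V)$ I would consider the composition $i\circ\phi:V\to C(V)$. Because $\phi$ is an isometry one has $(i\circ\phi)(v)^2=i(\phi(v))^2=Q(\phi(v))\cdot 1=Q(v)\cdot 1$ in $C(V)$, so the universal property supplies a unique algebra homomorphism $C(\phi):C(V)\to C(V)$ extending it. Uniqueness then forces $C(\psi\circ\phi)=C(\psi)\circ C(\phi)$ and $C(\id_V)=\id_{C(V)}$ (both sides satisfy the defining property), and applying this to $\phi\circ\phi^{-1}=\id_V$ shows $C(\phi)$ is an automorphism with inverse $C(\phi^{-1})$. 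The description of $C(-\id_V)$ is immediate: since $i(v)\mapsto -i(v)$, a product $v_1\cdots v_r$ (with $v_j\in V$ identified with $i(v_j)$) is sent to $(-1)^r v_1\cdots v_r$, and as such products generate the two grading summands this yields $\id$ on $C_0(V)$ and $-\id$ on $C_1(V)$.

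For part (b), the cleanest way to produce the anti-involution is via the opposite algebra $C(V)^{op}$. I would define the $F$-linear map $j:V\to C(V)^{op}$ by $j(v):=-i(v)$. In $C(V)^{op}$ one checks $j(v)\cdot_{op}j(v)=(-i(v))\cdot_{op}(-i(v))=(-i(v))\cdot_{C(V)}(-i(v))=i(v)^2=Q(v)\cdot 1$, so the universal property yields a unique algebra homomorphism $\beta:C(V)\to C(V)^{op}$, which is by construction an algebra anti-homomorphism of $C(V)$ satisfying $\beta(i(v))=-i(v)$. Writing $\bar{x}:=\beta(x)$, the anti-multiplicativity then gives
\begin{equation*}
\overline{v_1\cdots v_r}=\overline{v_r}\cdots\overline{v_1}=(-1)^r v_r\cdots v_1,
\end{equation*}
as asserted. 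That $\beta$ is an involution follows by the same uniqueness trick: $\beta^2(i(v))=\beta(-i(v))=i(v)$, so $\beta^2$ and $\id_{C(V)}$ agree on the generating set $i(V)$, hence agree on all of $C(V)$.

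The only mild subtlety is bookkeeping: one must remember that a homomorphism into $C(V)^{op}$ is an anti-homomorphism of $C(V)$, and that the defining relation $i'(v)^2=Q(v)\cdot 1$ is a relation on a single element, so it is invariant under passage to the opposite multiplication. No nontrivial computation beyond this is needed, so I expect no real obstacle.
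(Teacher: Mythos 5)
Your proof is correct and follows essentially the same route as the paper: both parts are read off from the universal property, with part (b) obtained by mapping into the opposite algebra. The only cosmetic difference is that the paper first builds the anti-automorphism $\iota$ fixing the $i(v)$ and then composes with $C(-\id_V)$, whereas you build the map $i(v)\mapsto -i(v)$ in a single application of the universal property; the resulting involution is the same.
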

\begin{proof}
  Existence and properties of $C(\phi)$ are direct consequences of the
  universal property of the Clifford algebra.

For b) the universal property gives us an algebra homomorphism  
$\iota: C(V) \to C(V)^{op}$ leaving the $i(v)$ fixed, it becomes an
anti-automorphism of order $2$ if we view it as a map from $C(V)$ to
itself.
Putting $\bar{x}=\iota(C(-\id_V)(x))$ we obtain the desired map. 
\end{proof}
\begin{example}\label{clifford_lowdimensions}
  \begin{enumerate}
\item Let $Q$ be identically zero. Then the Clifford algebra $C(V,Q)$
  is isomorphic to the exterior algebra $\bigwedge V$ because in this
  case the universal property of the Clifford algebra is the same as that used in the
  definition of the exterior algebra by a universal property.
  \item Let $V=Fx\cong [a]$ with $a \in F$. Then $C(V,Q)\cong
    F[X]/(X^2-a)$. In particular, if $a$ is not a square in $F$, the
    Clifford algebra is isomorphic to the quadratic extension field
    $F(\sqrt{a})$. The involution $x \mapsto \bar{x}$ from Lemma
    \ref{clifford_functoriality} of the Clifford algebra maps to the non trivial
    $F$-automorphism of the quadratic extension. We will also use the notation $(a)_F$ for this algebra.
\item $V=Fx\perp Fy\cong [a,b]$ with $a,b \in F^\times$. Then $C(V,Q)$ is generated by $1,x,y,
  xy$ with $x^2=a, y^2=b, xy=-yx$ and hence $(xy)^2=-ab$. 
Therefore, we have  an algebra homomorphism from $(a,b)_F$ to $C(V,Q)$
sending the standard basis vectors to $1,x,y,xy$. Since the quaternion
algebra $(a,b)_F$ is simple this must be an isomorphism and we have
that $C(V,Q)$ is isomorphic to the 
  quaternion algebra $(a,b)_F$ over $F$.  The involution $x \mapsto \bar{x}$ from Lemma
    \ref{clifford_functoriality} of the Clifford algebra maps to the quaternionic conjugation.
\item Let $(V,Q)\cong [a,b,c]$ with $a,b,c \in F^\times$. We claim
  that one has then
  \begin{eqnarray*}
    C_0(V)&\cong& (-ac,-bc)_F\\
C(V)&\cong&C_0(V)\otimes (-abc)_F,
  \end{eqnarray*}
where $(-abc)_F$ denotes (as above) the algebra $F[X]/(X^2+abc)$.

To prove this, let $x_1,x_2,x_3$ be an orthogonal basis of $(V,Q)$
with $Q(x_1)=a,Q(x_2)=b,Q(x_3)=c$. The even Clifford algebra
$C_0(V,Q)$ is then generated as $F$-vector space by $1, f_1=x_1x_3,
f_2=x_3x_2,f_3=c x_1x_2$ with $f_1^2=-ac, f_2^2=-bc,
f_1f_2=f_3=-f_2f_1$. As above this gives us an algebra homomorphism
from $(-ac,-bc)_F$ onto $C_0(V,Q)$ which must be an isomorphism.
The involution $x \mapsto \bar{x}$ from Lemma
    \ref{clifford_functoriality} of the Clifford algebra corresponds
    to the quaternionic conjugation under this isomorphism.

The subalgebra $F1+Fx_1x_2x_3 \cong (-abc)_F\subseteq C(V,Q)$ commutes
with $C_0(V)$ and generates $C(V,Q)$ together with $C_0(V,Q)$. The
universal property of the tensor product gives us an algebra
homomorphism from $(-ac,-bc)_F\otimes (-abc)_F$ onto $C(V,Q)$ which is an
isomorphism since $(-ac,-bc)_F\otimes (-abc)_F$ is simple.
\end{enumerate}
\end{example}
\begin{theorem}
 Let $A,B$ be finite dimensional $F$-algebras with $\Z/2\Z$ gradings
 $A=A_0\oplus A_1, B=B_0\oplus B_1$.

Then the tensor product of $F$-vector spaces $A\otimes B$ has  a
unique structure as an $F$-algebra denoted by $A\widehat{\otimes} B$ with the property
\begin{equation*}
  (a_i\otimes b_j)(a'_k\otimes b'_l)=(-1)^{jk}a_ia'_k \otimes b_jb'_l
  \end{equation*}
for  all $a_i\in A_i,a'_k\in A_k,b_j\in B_j, b'_l\in B_l,
  i,j,k,l\in \{0,1\}$.
The algebra  $A\widehat{\otimes} B$ is graded  by setting 
\begin{equation*}
   (A\widehat{\otimes} B)_0=(A_0\otimes B_0) \oplus (A_1\otimes B_1),\quad
   (A\widehat{\otimes} B)_1=(A_0\otimes B_1) \oplus (A_1\otimes B_0) 
\end{equation*} as  vector spaces and is called the {\em graded tensor product} of $A$
and $B$.
It is characterized up to unique isomorphism by the following universal
 property:

If $f:A\to C, g:B\to C$  are graded algebra homomorphisms of $A,B$ to the
$\Z/2\Z$ graded $F$-algebra $C$ satisfying 
\begin{equation*}
  f(a_i)g(b_j)=(-1)^{ij}g(b_j)f(a_i) \text{ for } a_i\in A_i, b_j \in
  B_j, i,j \in \{0,1\}
\end{equation*}
there is a unique graded algebra homomorphism $\phi:A\widehat{\otimes}B\to
C$ satisfying
\begin{equation*}
  \phi (a\otimes 1_B)=f(a),\quad \phi(1_A\otimes b)=g(b) \text{ for
    all } a\in A,b \in B.
\end{equation*}
\end{theorem}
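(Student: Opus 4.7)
The plan is to first construct the multiplication, then verify its algebra axioms, and finally derive the universal property from which uniqueness is automatic.

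To construct $A\widehat{\otimes}B$, I would take the underlying $F$-vector space $A\otimes_F B$ and define a bilinear multiplication by first prescribing it on homogeneous pure tensors via the formula $(a_i\otimes b_j)\cdot(a'_k\otimes b'_l)=(-1)^{jk}a_ia'_k\otimes b_jb'_l$ and then extending bilinearly. Since $A=A_0\oplus A_1$ and $B=B_0\oplus B_1$, every element of $A\otimes B$ is a unique sum of elements in the four summands $A_i\otimes B_j$, so the extension is well-defined. The unit is $1_A\otimes 1_B$, for which the sign $(-1)^{jk}$ collapses to $+1$ in all relevant cases because $1_A\in A_0$ and $1_B\in B_0$.

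The main step, and the one which will require the most care, is checking associativity. On homogeneous tensors $a_i\otimes b_j$, $a'_k\otimes b'_l$, $a''_m\otimes b''_n$, both $((\cdot)(\cdot))(\cdot)$ and $(\cdot)((\cdot)(\cdot))$ yield $\pm a_ia'_ka''_m\otimes b_jb'_lb''_n$, and the question is whether the two signs agree. Grouping on the left picks up the exponent $jk+(j+l)m$, while grouping on the right picks up $lm+j(k+m)$; both equal $jk+jm+lm$ modulo $2$, so associativity holds on homogeneous tensors and hence everywhere by bilinearity. The grading $(A\widehat{\otimes}B)_0=(A_0\otimes B_0)\oplus(A_1\otimes B_1)$, $(A\widehat{\otimes}B)_1=(A_0\otimes B_1)\oplus(A_1\otimes B_0)$ is then immediate from inspection of the defining formula, since the parity of $i+j$ adds in each product.

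For the universal property, given graded homomorphisms $f\colon A\to C$ and $g\colon B\to C$ with the stated supercommutation relation, I would define $\phi\colon A\widehat{\otimes}B\to C$ on pure tensors by $\phi(a\otimes b):=f(a)g(b)$ and extend bilinearly. This is clearly $F$-linear and grade-preserving; multiplicativity on homogeneous tensors reduces to
\begin{equation*}
 f(a_i)g(b_j)f(a'_k)g(b'_l)=(-1)^{jk}f(a_i)f(a'_k)g(b_j)g(b'_l),
\end{equation*}
which is precisely the supercommutation hypothesis applied to $g(b_j)$ and $f(a'_k)$. Uniqueness of $\phi$ is forced because the pure tensors $a\otimes 1_B$ and $1_A\otimes b$ generate $A\widehat{\otimes}B$ as an algebra, and their images are pinned down. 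The uniqueness of $A\widehat{\otimes}B$ up to unique isomorphism is then the standard formal consequence of the universal property: any two objects both solving it admit mutually inverse canonical maps.

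The only real obstacle is the sign bookkeeping in the associativity check; once that is in place the remaining verifications are formal and fit the same pattern as the ordinary (ungraded) tensor product of algebras.
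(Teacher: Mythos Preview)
Your proposal is correct and is exactly the standard construction; the paper itself does not spell out any details, saying only that it ``is an easy consequence of the universal property of the tensor product of vector spaces'' and referring to Lam's book. One small point worth tightening: when you say ``the extension is well-defined'' because $A\otimes B$ decomposes into the four summands $A_i\otimes B_j$, you should also note that within each summand the multiplication map is well-defined (pure tensors do not form a basis); this is where the universal property of the vector-space tensor product is actually invoked, and is presumably what the paper's one-line proof is pointing at.
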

\begin{proof}
  This an easy consequence of the universal property of the tensor
  product of vector spaces, see e. g. \cite{lam_qf}.
\end{proof}
\begin{theorem}\label{cliffordalgebra_decomposition}
  Let $(V,Q)$ be a quadratic space over $F$ with an orthogonal
  decomposition $V=U_1\perp U_2$. Then $C(V,Q)$ is isomorphic to the
  graded product $C(U_1,Q\vert_{U_1})\widehat{\otimes}C(U_2,Q\vert_{U_2})$.
\end{theorem}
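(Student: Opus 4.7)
The plan is to construct mutually inverse graded algebra homomorphisms between $C(V,Q)$ and $C(U_1)\widehat{\otimes} C(U_2)$ using the universal properties of both constructions.

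First I would define an $F$-linear map $j: V \to C(U_1)\widehat{\otimes} C(U_2)$ by
\begin{equation*}
 j(u_1+u_2) = i_1(u_1)\otimes 1 + 1\otimes i_2(u_2),
\end{equation*}
where $i_1, i_2$ are the canonical maps into the respective Clifford algebras. The key verification is that $j(v)^2 = Q(v)\cdot 1$ for $v=u_1+u_2$. Expanding in $C(U_1)\widehat{\otimes} C(U_2)$, the sign rule gives
\begin{equation*}
 (1\otimes i_2(u_2))(i_1(u_1)\otimes 1) = (-1)^{1\cdot 1}\, i_1(u_1)\otimes i_2(u_2) = -\,i_1(u_1)\otimes i_2(u_2),
\end{equation*}
while $(i_1(u_1)\otimes 1)(1\otimes i_2(u_2)) = i_1(u_1)\otimes i_2(u_2)$, so the cross terms cancel. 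What remains is $i_1(u_1)^2\otimes 1 + 1\otimes i_2(u_2)^2 = Q(u_1)\cdot 1 + Q(u_2)\cdot 1 = Q(v)\cdot 1$, using orthogonality $b(u_1,u_2)=0$ to get $Q(v)=Q(u_1)+Q(u_2)$. By the universal property of $C(V,Q)$ this produces a unique algebra homomorphism $\phi: C(V,Q) \to C(U_1)\widehat{\otimes} C(U_2)$ extending $j$, and it is clearly graded since the generators $i(v)$ go to odd elements.

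Conversely, the inclusions $U_k \hookrightarrow V$ followed by $i:V\to C(V,Q)$ induce graded algebra homomorphisms $f_k: C(U_k) \to C(V,Q)$ by the universal property of each Clifford algebra. To invoke the universal property of the graded tensor product it suffices to check the graded commutation
\begin{equation*}
  f_1(a_p)\,f_2(b_q) = (-1)^{pq}\,f_2(b_q)\,f_1(a_p) \quad (a_p\in C(U_1)_p,\ b_q\in C(U_2)_q).
\end{equation*}
By multiplicativity and grading this reduces to the case where $a_p, b_q$ are products of vectors in $U_1$ respectively $U_2$, and then further to the single generator case $p=q=1$. For $u_1\in U_1, u_2\in U_2$ one computes in $C(V,Q)$ that $i(u_1)i(u_2)+i(u_2)i(u_1) = b(u_1,u_2)\cdot 1 = 0$, so the generators anticommute; the general sign rule follows by induction on the total length. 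Thus one obtains a graded algebra homomorphism $\psi: C(U_1)\widehat{\otimes} C(U_2) \to C(V,Q)$ with $\psi(a\otimes 1)=f_1(a)$ and $\psi(1\otimes b)=f_2(b)$.

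Finally I would check $\phi\circ\psi = \mathrm{id}$ and $\psi\circ\phi = \mathrm{id}$ by evaluation on generators: $\phi(i(u_1+u_2)) = i_1(u_1)\otimes 1 + 1\otimes i_2(u_2)$, and $\psi$ applied to the right-hand side returns $i(u_1)+i(u_2)=i(u_1+u_2)$; the other composition is analogous. Since the respective generating sets generate the algebras, both compositions are the identity, proving the desired isomorphism. The only mildly delicate point is the sign bookkeeping in verifying $j(v)^2=Q(v)\cdot 1$ and the anticommutation of generators of odd degree, but these are clean once the graded structure is used consistently.
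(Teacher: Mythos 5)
Your proposal is correct and follows essentially the same route as the paper: both directions are obtained from the respective universal properties, with the map $v=u_1+u_2\mapsto u_1\otimes 1+1\otimes u_2$ checked to square to $Q(v)\cdot 1$ via the graded sign rule and orthogonality, and the maps $C(U_k)\to C(V,Q)$ checked to satisfy the graded commutation rule because orthogonal vectors anticommute in $C(V,Q)$. The sign bookkeeping and the verification on generators that the two homomorphisms are mutually inverse are exactly as in the paper's argument.
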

\begin{proof}
  We have natural homomorphisms  $\alpha_j:C(U_j,Q\vert_{U_j})\to C(V,Q), j=1,2$.
Since $U_1,U_2$ are orthogonal these satisfy $\alpha_1(u_1)\alpha_2(u_2)=-\alpha_2(u_2)\alpha_1(u_1)$
for $u_1\in U_1,u_2\in U_2$ (identifying vectors with their images in
the Clifford algebra as usual), which implies
$\alpha_1(x_i)\alpha_2(y_j)=(-1)^{ij}\alpha_2)(y_j)\alpha_1(x_i)$  for
 $x_i\in C_i(U_1,Q\vert_{U_1}), y_j \in C_j(U_2,Q\vert_{U-2})$ with
$i,j \in \{0,1\}$.
By the universal property of the graded tensor product we obtain a
graded algebra homomorphism
$\phi:C(U_1,Q\vert_{U_1})\widehat{\otimes}C(U_2,Q\vert_{U_2})\to
C(V,Q)$ satisfying $\phi(x\otimes 1)=\alpha_1(x), \phi(1\otimes
y)=\alpha_2(y)$ for $x \in C(U_1,Q\vert_{U_1}),y\in
C(U_2,Q\vert_{U_2})$.

On the other hand, we have an $F$-linear map $j:V\to
C(U_1,Q\vert_{U_1})\widehat{\otimes}C(U_2,Q\vert_{U_2})$ given by
$j(u_1+u_2):=u_1\otimes 1+1\otimes u_2$ for $u_1 \in U_1, u_2\in U_2$,
and $j$ satisfies $j(u_1+u_2)^2=(Q(u_1)+Q(u_2))1$.

The universal property of the Clifford algebra gives us an $F$-algebra
homomorphism $\psi:C(V,Q) \to
C(U_1,Q\vert_{U_1})\widehat{\otimes}C(U_2,Q\vert_{U_2})$  satisfying 
$\psi(u_1+u_2)=j(u_1+u_2)=u_1\otimes 1+1\otimes u_2$ for $u_1 \in
U_1, u_2\in U_2$, and it is easily checked that $\phi,\psi$ are
inverses of each other. 
\end{proof}
\begin{corollary}
  Let $(V,Q)$ be a quadratic space over $F$ of dimension $n$. Then
  $C(V,Q)$ has dimension $2^n$ and $C_0(V,Q)$ has dimension $2^{n-1}$
  as vector space over $F$.

In particular the generators $x_1^{\epsilon_1}\dots x_n^{\epsilon_n}$
with $\epsilon_j\in \{0,1\}$ 
of $C(V,Q)$ for a basis $(x_1,\ldots x_n)$ of $V$ over $F$ constitute a
basis of $C(V,Q)$ as a vector space over $F$ and the mapping $i:V\to
C(V,Q)$ is injective.
\end{corollary}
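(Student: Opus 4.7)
The plan is to proceed by induction on $n=\dim(V)$, bounding the dimension from above by the spanning statement already in hand and from below via the graded tensor product decomposition of Theorem \ref{cliffordalgebra_decomposition}. The assumption $\cha(F)\neq 2$ guarantees an orthogonal basis of $V$, which is what makes the inductive step clean.

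First I would note that the previous lemma already gives the upper bound $\dim_F C(V,Q)\le 2^n$: any basis $(x_1,\ldots,x_n)$ of $V$ generates $V$, and so the $2^n$ products $x_1^{\epsilon_1}\cdots x_n^{\epsilon_n}$ with $\epsilon_j\in\{0,1\}$ span $C(V,Q)$ as an $F$-vector space. The whole game is therefore to show that these products are linearly independent for at least one well-chosen basis.

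For the lower bound I would fix an orthogonal basis $(x_1,\ldots,x_n)$ of $V$, which exists since $\cha(F)\neq 2$, and run induction on $n$. The base case $n=1$ is Example \ref{clifford_lowdimensions}(b): $C(Fx_1,Q)\cong F[X]/(X^2-Q(x_1))$ has dimension $2$ with even part $F\cdot 1$ of dimension $1=2^{0}$ and odd part $F\cdot x_1$ of dimension $1$. For the step, write $V=U_1\perp U_2$ with $U_1=Fx_1\perp\cdots\perp Fx_{n-1}$ and $U_2=Fx_n$, apply Theorem \ref{cliffordalgebra_decomposition} to get
\begin{equation*}
C(V,Q)\;\cong\;C(U_1,Q|_{U_1})\,\widehat{\otimes}\,C(U_2,Q|_{U_2}),
\end{equation*}
and use that as an $F$-vector space the graded tensor product is just the ordinary tensor product. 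The inductive hypothesis gives $\dim C(U_1)=2^{n-1}$, so $\dim C(V,Q)=2^{n-1}\cdot 2=2^n$. Combined with the upper bound, equality holds, and moreover the asserted $2^n$ products form a basis for \emph{every} basis of $V$, since any spanning set of cardinality equal to the dimension is a basis.

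For the even part I would use the explicit grading of the graded tensor product:
\begin{equation*}
C_0(V,Q)\;\cong\;\bigl(C_0(U_1)\otimes C_0(U_2)\bigr)\oplus\bigl(C_1(U_1)\otimes C_1(U_2)\bigr).
\end{equation*}
By induction $\dim C_0(U_1)=2^{n-2}=\dim C_1(U_1)$, while $\dim C_0(U_2)=\dim C_1(U_2)=1$, giving $\dim C_0(V,Q)=2^{n-2}+2^{n-2}=2^{n-1}$. Finally, injectivity of $i\colon V\to C(V,Q)$ is immediate from the basis statement: taking any basis $(x_1,\ldots,x_n)$ of $V$, the images $i(x_1),\ldots,i(x_n)$ appear among the basis vectors of $C(V,Q)$ exhibited above (as the monomials with exactly one $\epsilon_j=1$) and are therefore linearly independent in $C(V,Q)$. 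There is no real obstacle here; the only point where care is needed is invoking Theorem \ref{cliffordalgebra_decomposition} with the \emph{graded} tensor product rather than the ordinary one, since only the former recovers the correct multiplicative structure on $C(V,Q)$—but this distinction is irrelevant for the dimension count since the two agree as vector spaces.
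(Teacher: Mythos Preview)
Your proof is correct and follows essentially the same route as the paper: induction on $n$ via Theorem \ref{cliffordalgebra_decomposition}, using that the one-dimensional Clifford algebra has dimension $2$. The paper's proof is simply the one-line ``Since the Clifford algebra of a one dimensional space has dimension $2$ over $F$ this follows from the theorem,'' and you have written out the details it leaves implicit---the upper bound from the spanning set, the graded decomposition for $C_0$, and the injectivity of $i$.
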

\begin{proof}
  Since the Clifford algebra of a one dimensional space has dimension
  $2$ over $F$ this follows from the theorem. 
\end{proof}
\begin{remark}
It can be shown that in the more general context of Clifford algebras
of finitely generated projective quadratic modules over a commutative ring the mapping $j$
from the module to its Clifford algebra is injective, see
\cite{kneserbook, knus}. In fact, it is enough to require that the
quadratic form $Q$ can be written as $Q(v)=\beta(v,v)$ for a not
necessarily symmetric bilinear form $\beta$. It seems to be hard to
find a counterexample if this condition is violated. 
\end{remark}
\begin{theorem}\label{clifford_centralsimple}
 Let $(V,Q)$ be a non degenerate quadratic space over $F$ of dimension
 $n$, put $m=\lfloor \frac{n}{2}\rfloor$ and $\delta
 (F^\times)^2=(-1)^m \det_B(V)$.
 \begin{enumerate}
 \item If $n$ is even, $C(V,Q)$ is central simple over $F$ and
   isomorphic to a tensor product of quaternion algebras. The second
   Clifford algebra $C_0(V,Q)$ has center isomorphic to
   $F[X]/(X^2-\delta)$ and is central simple over $F(\sqrt{\delta})$ if
   $\delta$ is not a square in $F$.
\item Let $n$ be odd.
Then $C_0(V,Q)$ is central simple over $F$ and isomorphic to a tensor product
of  quaternion algebras. The center of $C(V,Q)$ is isomorphic to
   $F[X]/(X^2-\delta)\cong (\delta)_F$ and $C(V,Q)$ is central simple over $F(\sqrt{\delta})$ if
   $\delta$ is not a square in $F$. Moreover, one has $C(V,Q)\cong
   C_0(V,Q)\widehat{\otimes}(\delta)_F$ as graded algebras.  
 \end{enumerate}
\end{theorem}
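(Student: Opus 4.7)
The plan is an induction on $n=\dim V$, peeling off two-dimensional regular summands via Theorem \ref{cliffordalgebra_decomposition}, with the base cases $n=1,2,3$ furnished by Example \ref{clifford_lowdimensions}. The technical heart is a ``conversion lemma'' that replaces the graded tensor product by an ordinary one: for $V=U\perp W$ with $U=Fe_1\perp Fe_2$, $Q(e_i)=a_i\in F^\times$, the element $z:=e_1e_2\in C(V)$ satisfies $z^2=-a_1a_2$ and \emph{commutes} with every $w\in W$ (because $e_iw=-we_i$ for $i=1,2$ produces an even number of sign flips). Hence $w\mapsto zw$ embeds $W$ as a subspace generating a subalgebra of $C(V)$ which commutes with $C(U)\cong(a_1,a_2)_F$ and carries the rescaled form $-a_1a_2\,Q|_W$. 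The universal properties of the Clifford algebra and of the tensor product of commuting subalgebras produce an $F$-algebra map
\begin{equation*}
(a_1,a_2)_F\otimes_F C\bigl(W,-a_1a_2\,Q|_W\bigr)\longrightarrow C(V),
\end{equation*}
surjective on generators and bijective by the dimension count $4\cdot 2^{n-2}=2^n$.

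Iterating this conversion lemma pairwise along an orthogonal basis $(x_1,\ldots,x_n)$ with $Q(x_i)=a_i$ exhibits $C(V)$, when $n=2m$, as an ordinary tensor product of $m$ explicit quaternion algebras over $F$ (hence central simple of $F$-dimension $2^n$), and when $n=2m+1$ as such a product tensored with one factor $F[X]/(X^2-c)$. To identify $\delta$ and the centers, set $y:=x_1\cdots x_n\in C(V)$. A straightforward sign count yields
\begin{equation*}
y^2=(-1)^{n(n-1)/2}a_1\cdots a_n=(-1)^m{\det}_B(V)\equiv\delta\pmod{(F^\times)^2}
\end{equation*}
and $yx_k=(-1)^{n-1}x_ky$, so $y\in Z(C(V))$ exactly when $n$ is odd, while $y\in Z(C_0(V))$ in both parities. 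In the odd case, sending $1\otimes X\mapsto y$ defines an $F$-algebra map $C_0(V)\widehat{\otimes}(\delta)_F\to C(V)$ (with $X$ placed in degree $1$; since $C_0(V)$ sits in degree $0$ the graded and ordinary tensor products coincide here), well defined because $y$ is central with $y^2=\delta$, surjective since its image contains $C_0(V)$ and $y$, and an isomorphism by dimension.

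For the central simplicity of $C_0(V)$ in the odd case and the identification $Z(C_0(V))=F[y]$ in the even case, I would use the auxiliary isomorphism
\begin{equation*}
C_0(V,Q)\xrightarrow{\ \sim\ }C\bigl(V',-a_1\,Q|_{V'}\bigr),\qquad V':=Fx_2\perp\cdots\perp Fx_n,
\end{equation*}
given by $w\mapsto x_1w$ (one checks $(x_1w)^2=-a_1\,Q(w)$), reducing the claim about $C_0$ at dimension $n$ to the claim about $C$ at dimension $n-1$ in the opposite parity. A short square-class computation confirms that the discriminant of $(V',-a_1\,Q|_{V'})$ coincides with $\delta$ modulo $(F^\times)^2$, so the centers align under this isomorphism. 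The induction therefore interleaves cleanly: the even case at $n$ uses the odd case at $n-1$ for the $C_0$-assertion, and the odd case at $n$ uses the even case at $n-1$, terminating at the base cases.

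The principal obstacle is the conversion lemma, whose correctness hinges on the subtle observation that $z=e_1e_2$ commutes (rather than anticommutes) with $W$; once this is in place, every remaining step is bookkeeping with dimensions, square classes, and the standard fact that tensor products of central simple $F$-algebras are central simple. A second delicate point is the consistency of the $\Z/2\Z$-grading in the odd-case graded-tensor isomorphism, but this becomes transparent once one observes that the $\widehat{\otimes}$-signs vanish whenever one of the factors is purely in degree $0$.
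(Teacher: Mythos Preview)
Your argument is correct and complete, but it follows a different path from the paper's.  The paper reduces dimension by~$2$ in a single stroke: splitting off a \emph{three}-dimensional summand $U$, it writes $C(U)\cong A_U\widehat{\otimes}(-{\det}_B U)_F$ with $A_U=C_0(U)$ a trivially graded quaternion algebra, then reabsorbs the one-dimensional graded factor into the complement via $(-{\det}_B U)_F\widehat{\otimes}C(W)\cong C(W_1)$ with $\dim W_1=n-2$.  This gives $C(V)\cong A_U\otimes C(W_1)$ and, since $A_U$ is in degree~$0$, simultaneously $C_0(V)\cong A_U\otimes C_0(W_1)$; a single induction in the same parity (with $\delta(W_1)=\delta(V)$) then handles both $C$ and $C_0$ at once.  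Your route instead uses two separate reductions: the two-dimensional conversion lemma $C(V)\cong(a_1,a_2)_F\otimes C(W,-a_1a_2\,Q|_W)$ to treat $C(V)$, and the auxiliary isomorphism $C_0(V)\cong C(V',-a_1Q|_{V'})$, $\dim V'=n-1$, to treat $C_0(V)$ by flipping parity.  Your approach buys explicitness---you get the central element $y=x_1\cdots x_n$ and concrete quaternion factors---while the paper's buys economy, handling $C$ and $C_0$ in one uniform step without a separate parity-interleaved induction.

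One small slip: the phrase ``$y\in Z(C_0(V))$ in both parities'' is literally false for odd $n$, since then $y\in C_1(V)\setminus C_0(V)$.  What is true in both parities is that $y$ \emph{commutes} with $C_0(V)$.  Since you actually handle the odd-$n$ structure of $C_0(V)$ via the auxiliary isomorphism rather than via $y$, this does not damage the proof.
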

\begin{proof}
  We have already proven the assertion for $n=1,2,3$ and proceed by
  induction for general $n>3$, assuming the assertion to be proven for
  smaller dimensions.

We split $V$ into a sum $V=U \perp W$ with non
degenerate $U,W, \dim(U)=3$ and have $C(V,Q)\cong C(U,Q) \widehat{\otimes}
C(W,Q)$.
By Example \ref{clifford_lowdimensions}  and Theorem
\ref{cliffordalgebra_decomposition} we have $C(U,Q)\cong A_U\otimes 
(-\det_B(U))_F$, where $A_U$ is a trivially graded quaternion algebra
over $F$ and is isomorphic to the second Clifford algebra
$C_0(U,Q)$.  Again by Theorem
\ref{cliffordalgebra_decomposition} we have $(-\det_B(U))_F\widehat{\otimes}
C(W,Q)\cong C(W_1,Q_1)$, where the $(n-2)$ dimensional space
$(W_1,Q_1)$ is the orthogonal sum of 
$(W,Q)$ and a one dimensional space of determinant $-\det_B(U)$, so
that $C(V,Q)\cong A_U\otimes C(W_1,Q)$ (where the tensor product may
be taken ungraded since $A_U$ is trivially graded). 
By the
inductive assumption the assertion 
follows.
\end{proof}
\begin{remark}\label{brauerwall}
The graded center $\hat{Z}(A)$ of a $\Z/2\Z$-graded algebra $A$ is
defined as $ \hat{Z}(A)=\hat{Z}_0(A) \oplus \hat{Z}_1(A)$ where
$z\in \hat{Z}_i(A)\subseteq A_i$ satisfies $za=(-1)^{ij}az$ for all $a \in
A_j(A)$, see \cite{wall,lam_qf}. A graded algebra is then called a central simple graded
algebra if it is simple and has trivial graded center. As in the
theory of central simple algebras, the tensor product of  central simple graded
algebras is again a central simple graded
algebra, and one can define the Brauer-Wall group of central simple
graded algebras, see again \cite{wall,lam_qf}. 
In particular, one proves as above by induction that the
Clifford algebra of a non degenerate quadratic space is a central simple graded
algebra and one may consider its class in the Brauer-Wall
group. Notice that \cite{wall} treats the case that $F$ has
characteristic $2$ as well, with slightly different definitions.
\end{remark}
\section{The invariants of Clifford-Witt and Hasse}
\begin{definition}
  Let $(V,Q)$ be a non degenerate quadratic space over the field $F$.
  \begin{enumerate}
  \item The Clifford-Witt invariant $cw(V)=cw(V,Q)$ is the class of $C(V,Q)$ in
    the Brauer group $Br(F)$ if $n=\dim(V)$ is even and the class of
    $C_0(V,Q)$ in $Br(F)$ if $n$ is odd.
\item Let $(V,Q) \cong [a_1,\ldots,a_n]$. Then the Hasse invariant
  $s(V)=s(V,Q)$ of
  $(V,Q)$ is the class  of
  \begin{equation*}
    \bigotimes_{1\le i<j\le n}(a_i,a_j)_F
  \end{equation*}
in $Br(F)$.
  \end{enumerate}
\end{definition}
\begin{remark}
  \begin{enumerate}
  \item   Since both the Clifford-Witt and the Hasse invariant are a product of classes
  of quaternion algebras in the Brauer group, they are in fact
  elements of the $2$-torsion subgroup $Br_2(F)$ of the Brauer group
  of $F$.
\item The definition of the Hasse invariant seems to depend on the
  choice of an orthogonal basis of $V$. We will show that this is not
  the case.
\item The name Clifford-Witt invariant is a combination of the name Witt
  invariant that has been T. Y. Lam in \cite{lam_qf}
   and the names Clifford invariant and Witt invariant used by W. Scharlau, who
defines a (more general) Witt invariant related to $K$-theory by its relation to the Hasse
invariant and a Clifford invariant in the same way as we do and then
shows that they coincide when both are defined.
 The invariant in this form seems to have first been
  used by C.\ T.\ C.\ Wall in his article ``Graded Brauer algebras''
  of 1964, without giving it a name there (it is called the invariant
  $D$ of the Clifford algebra).  Witt used a related but somewhat
  different invariant.
  \end{enumerate}
\end{remark}
\begin{theorem}
 Let $(V,Q),(W,Q')$ be non degenerate quadratic spaces over $F$ and
 write $\disc_B(V)=(-1)^{\lfloor \dim(V)/2\rfloor}\det_B(V)$.
Then 
\begin{equation*}
  cw(V\perp W)=cw(V)cw(W)\cdot
  \begin{cases}
    (\disc_B(V),\disc_{B'}(W))_F &\dim(W)\equiv \dim(V)\bmod 2\\
(-\disc_B(V),\disc_{B'}(W))_F& \dim(V)\text{ odd}, \dim(W)\text{ even}
  \end{cases}.
\end{equation*}
\end{theorem}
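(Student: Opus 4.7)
The plan is to apply Theorem \ref{cliffordalgebra_decomposition}, which gives $C(V\perp W)\cong C(V)\widehat{\otimes}C(W)$ as $\Z/2\Z$-graded algebras, and to extract the Brauer class of the relevant (full or even) Clifford algebra from this graded tensor product. The key tool is the ``volume element'' $z_U:=x_1\cdots x_n\in C(U)$ associated to an orthogonal basis $(x_1,\ldots,x_n)$ of $(U,Q)$; a direct reordering computation gives $z_U^2=(-1)^{n(n-1)/2}\det_B(U)=\disc_B(U)$ and $z_U\,x=(-1)^{n+1}x\,z_U$ for $x\in U$. For $\dim U$ even, $z_U\in C_0(U)$ and conjugation by $z_U$ is the grading involution, so $z_U$ is a grading element of $C(U)$ in the sense of the Brauer-Wall theory; for $\dim U$ odd, $z_U\in C_1(U)$ is central in $C(U)$, giving the decomposition $C(U)\cong C_0(U)\widehat{\otimes}(\disc_B(U))_F$ already used in Theorem \ref{clifford_centralsimple}.

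I would then split into cases by the parities of $\dim V$ and $\dim W$. In the case where both dimensions are odd, $\dim(V\perp W)$ is even, so $cw(V\perp W)=[C(V\perp W)]$. Substituting $C(V)\cong C_0(V)\widehat{\otimes}(\disc_B(V))_F$ and similarly for $W$, and using that $C_0(V),C_0(W)$ are concentrated in degree $0$ while $z_V,z_W$ are in degree $1$, I reassemble the product as $(C_0(V)\otimes C_0(W))\otimes\bigl((\disc_B(V))_F\widehat{\otimes}(\disc_B(W))_F\bigr)$. The last factor is a $4$-dimensional graded algebra in which $z_V^2=\disc_B(V)$, $z_W^2=\disc_B(W)$, and $z_Vz_W=-z_Wz_V$ (from the anticommutation rule for odd-degree elements in the graded tensor), hence it is the quaternion algebra $(\disc_B(V),\disc_B(W))_F$. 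Since $\disc_B(V\perp W)=\disc_B(V)\disc_B(W)$ when both parities are equal, this yields the claimed formula.

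In the case where both dimensions are even, $C(V)$ and $C(W)$ are central simple with grading elements $z_V,z_W$ of squares $\disc_B(V),\disc_B(W)$. Here I would prove the following key lemma: if $A,B$ are central simple $\Z/2\Z$-graded $F$-algebras admitting grading elements $u,v$ with $u^2=c$, $v^2=d\in F^\times$, then $[A\widehat{\otimes}B]=[A][B](c,d)_F$ in $Br(F)$. The proof exhibits inside $A\widehat{\otimes}B$ a subalgebra isomorphic to the quaternion algebra $(c,d)_F$ (generated by $u\otimes 1$ and $1\otimes v$, which have squares $c,d$ and anticommute by the graded rule) together with a commuting copy of $A\otimes B$ spanned by twisted elements of the form $a\otimes b$ corrected by appropriate powers of $u\otimes 1$ and $1\otimes v$ so as to kill the graded sign; a dimension count promotes the induced surjection $(A\otimes B)\otimes (c,d)_F\to A\widehat{\otimes}B$ to an isomorphism. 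For the mixed-parity case, say $\dim V$ odd and $\dim W$ even, $\dim(V\perp W)$ is odd so $cw(V\perp W)=[C_0(V\perp W)]$. Writing $a:=\disc_B(V)$, a direct computation of the even part of $C(V)\widehat{\otimes}C(W)$ using $z_V\in C_1(V)$ and the generators of $W$ shows that $C_0(V\perp W)\cong C_0(V)\otimes C(W^{(-a)})$, where $W^{(-a)}$ denotes $W$ with its quadratic form rescaled by $-a$, since the elements $z_V\otimes w$ for $w\in W$ satisfy $(z_V\otimes w)^2=-aQ_W(w)$ along with the correct pairwise anticommutation. One then invokes the scaling identity $[C(\lambda W)]=[C(W)](\lambda,\disc_B(W))_F$ for even-dimensional $W$, provable by induction from the base case $(\lambda b_1,\lambda b_2)_F\cong (b_1,b_2)_F\otimes(\lambda,-b_1b_2)_F$ and the already-proven both-even formula. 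Applied with $\lambda=-a$ this produces the required factor $(-\disc_B(V),\disc_B(W))_F$.

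The main obstacle will be proving the key lemma for the both-even case cleanly, with correct sign bookkeeping, and checking that the identification $C_0(V\perp W)\cong C_0(V)\otimes C(W^{(-a)})$ in the mixed case really realises the asymmetric minus sign in the theorem. A secondary but important consistency check is matching $\disc_B(V\perp W)$ with $\disc_B(V)\disc_B(W)$: the floor function in the definition of $\disc_B$ makes these equal when the parities of $\dim V$ and $\dim W$ agree, but they differ by a sign otherwise, and absorbing this sign into the first slot of the quaternion symbol is precisely the origin of the asymmetric $-\disc_B(V)$ in the statement.
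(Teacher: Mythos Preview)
Your both-odd case works: with $C_0(U)$ concentrated in degree~$0$, the graded tensor reassembles as claimed, and $(\disc_B(V))_F\widehat{\otimes}(\disc_B(W))_F$ is precisely the Clifford algebra of $[\disc_B(V),\disc_B(W)]$, i.e.\ the quaternion algebra $(\disc_B(V),\disc_B(W))_F$. The both-even case, however, has a genuine error. You correctly note that for even $\dim U$ the volume element $z_U$ lies in $C_0(U)$; but then in $A\widehat{\otimes}B$ the elements $u\otimes 1$ and $1\otimes v$ both have degree~$0$, and the graded multiplication rule gives $(u\otimes 1)(1\otimes v)=(-1)^{0\cdot 0}(u\otimes v)=(1\otimes v)(u\otimes 1)$. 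They \emph{commute}, and the subalgebra they generate is the commutative algebra $F[X,Y]/(X^2-c,Y^2-d)$, not $(c,d)_F$. Your key lemma is true, but isolating the quaternion factor inside $A\widehat{\otimes}B$ requires mixing odd-degree elements of $A$ and $B$ with the grading elements in a subtler way than you indicate. Since your proposed proof of the scaling identity $[C(\lambda W)]=[C(W)](\lambda,\disc_B(W))_F$ invokes the both-even formula, the mixed-parity case is also left incomplete as written.

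The paper sidesteps this by not attempting a direct structural decomposition at all: it first treats $\dim V=1$ explicitly using the building blocks of Example~\ref{clifford_lowdimensions} and Theorem~\ref{clifford_centralsimple}, and then inducts on $\dim V+\dim W$ by peeling off a one-dimensional summand $V=V_1\perp[d]$ and tracking the quaternion-symbol identities. Your structural route can in fact be repaired---the scaling identity already follows from your both-odd case alone, via the elementary isomorphism $C_0(\lambda U)\cong C_0(U)$ for odd-dimensional $U$, after which the mixed case and then the both-even case can be deduced in that order---but this reordering ends up being essentially the same induction. (A small aside: your closing parity remark is reversed; the floors satisfy $\lfloor(m+n)/2\rfloor\equiv\lfloor m/2\rfloor+\lfloor n/2\rfloor\pmod 2$ in the both-even and mixed cases, and disagree precisely when both dimensions are odd.)
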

\begin{proof}
Let first $V\cong [d]$ have dimension $1$.
If $\dim(W)$ is odd, we have
\begin{eqnarray*}
  C(V\perp W)&\cong& (d)_F\widehat{\otimes}C(W)\\
&\cong& (d)\widehat{\otimes}(\disc_{B'}(W))\widehat{\otimes}C_0(W)\\
&\cong& (\disc_B(V),\disc_{B'}(W))_F\otimes C_0(W),
\end{eqnarray*}
which implies $cw(V\perp W)= (\disc_B(V),\disc_{B'}(W))_Fcw(V)cw(W)$.

If $\dim(W)\ge 2$ is even
we set $W=W_1\perp [a]$ and have,
 using Example \ref{clifford_lowdimensions},
 \begin{eqnarray*}
 C(V\perp  W)&=&C(W_1\perp [a,d])\\
&\cong&C_0(W_1)\widehat{\otimes}(\disc_{B'}(W_1)_F\widehat{\otimes}
    (a)_F\widehat{\otimes}(d)_F\\
&\cong&C_0(W_1)\widehat{\otimes}(-d\cdot \disc_{B'}(W_1),-ad)_F\widehat{\otimes}(-ad\cdot
    \disc_{B'}(W_1))_F
\end{eqnarray*} 
and hence $C_0(W\perp [d])\cong C_0(W_1)\otimes (-d\cdot
\disc_{B'}(W_1),-ad)_F$.

On the right hand side of the assertion we compute
\begin{align*}
  cw(W)cw(V)&(-d,
  -\disc_{B'}(W))_F\\
&=C_0(W_1)\widehat{\otimes}(\disc_{B'}(W_1))_F\widehat{\otimes}(a)_F(-d,
                  -\disc_{B'}(W))_F\\
&= cw(W_1)\cdot(\disc_{B'}(W_1),a)_F\cdot(-d, -a\cdot \disc_{B'}(W_1))_F\\
&=cw(W_1)(a,\disc_{B'}(W_1))_F(-d,\disc_{B'}(W_1)_F(-d,-a)_F\\
&=cw(W_1)(-ad,\disc_{B'}(W_1))_F(-d,-ad)_F\\
&=cw(W_1)(-d\disc_{B'}(W_1),-ad)_F,
\end{align*}
and we are done with the case $\dim(V)=1$.

We use now induction on $\dim(V)+\dim(W)$. If both dimensions have
opposite parity we may assume that $\dim(V)$ is odd.  We write $V=V_1\perp[d]$ and have
\begin{align*}
 c&w(V\perp W)\\&= cw(V_1\perp W \perp [d])\\
&=cw(V_1\perp W)(-d, \disc_B(V_1)\disc_{B'}(W))_F\\
&=cw(V_1)cw(W)(\disc_B(V_1),\disc_{B'}(W))_F(-d,  \disc_B(V_1)\disc_{B'}(W))_F\\
&=cw(V)(-d,\disc_B(V_1))_Fcw(W)(\disc_B(V_1),\disc_{B'}(W))_F(-d,
    (\disc_B(V_1)\disc_{B'}(W))_F\\
&=cw(V)cw(W)(\disc_B(V_1),-d)_F(\disc_B(V_1),\disc_{B'}(W))_F(-d, \disc_B(V_1)\disc_{B'}(W))_F\\
&=cw(V)cw(W)(\disc_B(V_1),\disc_{B'}(W))_F(-d,\disc_{B'}(W))_F\\ 
&=cw(V)cw(W)(-\disc_B(V),\disc_{B'}(W))_F
\end{align*}
as asserted.

If both dimensions have the same parity we assume first that both are
even and write 
$V=V_1\perp[d]$ again and obtain 
\begin{align*}
  c&w(V\perp W)\\
&=cw(V_1\perp W)(d,\disc_B(V)\disc_{B'}(W))_F\\
&=cw(V_1)cw(W)(-\disc_B(V_1),\disc_{B'}(W))_F(d,\disc_B(V)\disc_{B'}(W))_F\\
&=cw(V)cw(W)(d,\disc_B(V_1))_F)(-\disc_B(V_1),\disc_{B'}(W))_F(d,\disc_B(V)\disc_{B'}(W))_F\\
&=cw(V)cw(W)(-d \disc_B(V_1),\disc_{B'}(W))_F\\
&=cw(V)cw(W)(\disc_B(V),\disc_{B'}(W))_F
\end{align*}
as asserted. If finally both dimensions are odd, we obtain in
the second step above a factor
$(\disc_B(V_1),-\disc_{B'}(W))_F$ instead of $(-\disc_B(V_1),\disc_{B'}(W))_F$  and proceed analogously.
\end{proof}
\begin{corollary}
 The Clifford-Witt invariant $cw(V,Q)$ depends only on the Witt class of
 $(V,Q)$. In particular, if $(V,Q)$ is hyperbolic or the orthogonal
 sum of a hyperbolic space and a one dimensional space, the Clifford-Witt
 invariant $cw(V,Q)$ is trivial.
\end{corollary}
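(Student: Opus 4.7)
My plan is to reduce everything to the single observation that adjoining an orthogonal hyperbolic plane $H \cong [1,-1]$ does not change the Clifford-Witt invariant. First I will compute $cw(H)$: by Example~\ref{clifford_lowdimensions}, $C(H) \cong (1,-1)_F$, which is isomorphic to $M_2(F)$ and hence trivial in $Br(F)$, so $cw(H) = 1$. I also record that $\det_B(H) = -1$ and $\dim H = 2$, whence
\begin{equation*}
\disc_B(H) \;=\; (-1)^{1}\det_B(H) \;=\; 1 \in (F^\times)^2.
\end{equation*}

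Next I apply the preceding theorem to the orthogonal sum $V \perp H$. Since $\dim H = 2$ is even, the quaternion correction factor in each case of the formula has the shape $(\pm\disc_B(V),\disc_B(H))_F = (\pm\disc_B(V),1)_F$, and this is trivial because $(a,1)_F \cong M_2(F)$ for every $a \in F^\times$. Combined with $cw(H) = 1$, the theorem yields $cw(V \perp H) = cw(V)$; iterating gives $cw(V \perp rH) = cw(V)$ for every $r \geq 0$.

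By the Witt decomposition theorem (Theorem~\ref{Witt_decomposition}), two non-degenerate quadratic spaces are Witt equivalent precisely when they become isometric after adjoining suitable orthogonal sums of hyperbolic planes; and isometric spaces obviously share the same Clifford-Witt invariant, via the functoriality recorded in Lemma~\ref{clifford_functoriality}. Combining, $cw$ descends to a well-defined map on $W(F)$.

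For the ``in particular'' part, a hyperbolic space is isometric to $rH$ for some $r \geq 1$, and iteration of the invariance under adjoining $H$ reduces it to the zero space, whose Clifford algebra is $F$; hence $cw(rH) = 1$. For a hyperbolic space plus a one-dimensional summand $[d]$, the same iteration gives $cw(rH \perp [d]) = cw([d])$, and $C_0([d])$ has dimension $2^{0} = 1$, so $C_0([d]) \cong F$ and $cw([d]) = 1$. The only mildly delicate point in the whole argument is the observation that $\disc_B(H)$ is a square, which trivializes every quaternion correction term produced by the theorem; no genuine obstacle arises.
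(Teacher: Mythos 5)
Your proof is correct and follows essentially the same route as the paper: both rest on the preceding theorem together with the observations that a hyperbolic plane has trivial Clifford--Witt invariant (via Example \ref{clifford_lowdimensions}) and square discriminant, so that adjoining hyperbolic planes leaves $cw$ unchanged. You simply spell out the details (the computation $\disc_B(H)=1$, the iteration, and the reduction of the ``in particular'' cases) that the paper leaves implicit.
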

\begin{proof}


From Example \ref{clifford_lowdimensions} we see that a hyperbolic
plane has trivial Clifford-Witt invariant.
The corollary then  follows from the theorem since hyperbolic spaces have
discriminant $1$.
\end{proof}
\begin{proposition}
The Clifford-Witt invariant and the Hasse invariant are related by
  \begin{equation*}
    s(V)=cw(V)\cdot
    \begin{cases}
      1& \dim(V)\equiv 1,2 \bmod 8\\
(-1,-{\det}_B(V))_F &\dim(V)\equiv 3,4 \bmod 8\\
(-1,-1)_F&\dim(V)\equiv 5,6 \bmod 8\\
(-1,{\det}_B(V))_F&\dim(V)\equiv 0,7 \bmod 8\\
    \end{cases}.
  \end{equation*}
In particular, the Hasse invariant is independent of the choice of an
orthogonal basis of $(V,Q)$.
\end{proposition}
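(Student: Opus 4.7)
The plan is to prove the identity by induction on $n=\dim(V)$. The base cases $n=1,2$ are immediate from Example \ref{clifford_lowdimensions}: for $V\cong[d]$ one has $C_0(V)\cong F$, so $cw(V)=1$ and $s(V)=1$ (empty product over pairs), matching the case $n\equiv 1\bmod 8$; for $V\cong[a,b]$ one has $C(V)\cong(a,b)_F$, so $cw(V)=[(a,b)_F]=s(V)$, matching $n\equiv 2\bmod 8$.

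For the inductive step I would write $V=V_1\perp[a]$ with $\dim(V_1)=n-1$ and combine two recursions. From the definition of the Hasse invariant together with the Steinberg-symbol identity $(a_1\cdots a_{n-1},a)_F=\prod_i(a_i,a)_F$ in the $2$-torsion of $Br(F)$, one obtains
\begin{equation*}
 s(V) \;=\; s(V_1)\cdot(\det_B(V_1),a)_F.
\end{equation*}
On the other hand, the theorem immediately preceding this proposition provides a closed formula for $cw(V)/cw(V_1)$ as a quaternion-algebra class in $(\disc_B(V_1),\pm a)_F$, where the sign depends on the parity of $n-1$ (since $cw([a])=1$ as $C_0([a])=F$). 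Writing the conjectural correction factor on the right-hand side of the proposition as $c_n$, so that the asserted identity becomes $s(V)=cw(V)\cdot c_n$, dividing the two recursions gives an expression for $c_n/c_{n-1}$ in terms of $\det_B(V_1)$, $\disc_B(V_1)$, and $a$; substituting $\disc_B(V_1)=(-1)^{\lfloor(n-1)/2\rfloor}\det_B(V_1)$ and $\det_B(V)=a\det_B(V_1)$ reduces the verification to a single identity in $Br_2(F)$ per residue class of $n-1\bmod 8$.

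The only technical point will be keeping track, in each of the eight residue classes, of the sign $(-1)^{\lfloor n/2\rfloor}$ that distinguishes $\disc_B$ from $\det_B$ as $n$ grows by one, in parallel with the sign distinguishing the two branches of the formula for $cw(V\perp W)$. In every case the identity collapses, by repeated use of $(x,y)_F(x,z)_F=(x,yz)_F$ together with $(x,-x)_F=1$ and $(x,x)_F=(x,-1)_F$, to a short symbolic computation, so no serious obstacle is anticipated.

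For the final assertion, the Clifford-Witt invariant $cw(V)$ is defined purely in terms of the (second) Clifford algebra and is manifestly intrinsic, while the correction factor $c_n$ depends only on $n=\dim(V)$ and on the square class $\det_B(V)\in F^{\times}/(F^{\times})^{2}$, which is itself independent of the choice of basis by the definition-and-lemma of Section~1.1. Hence $s(V)=cw(V)\cdot c_n$ is basis-independent, which justifies calling $s(V)$ an invariant of $(V,Q)$.
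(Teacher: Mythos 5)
Your proposal is correct and follows essentially the same route as the paper's proof: induction on $\dim(V)$ via $V=V_1\perp[a]$, comparing the recursion $s(V)=s(V_1)(\det_B(V_1),a)_F$ with the formula for $cw(V_1\perp[a])$ supplied by the preceding theorem, and reducing to a symbol identity in $Br_2(F)$ in each residue class of $\dim(V)\bmod 8$. The paper likewise leaves that final case-by-case verification as a routine comparison of factors, and your remark on basis-independence is exactly the intended conclusion.
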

\begin{proof}
We write $V=V_1\perp[a]$ and use induction on $\dim(V)$; the assertion
is clear for $\dim(V)=1,2$. Assume $\dim(V)>2$ and the assertion to be
proven for all smaller dimensions. We have always $s(V)=s(V_1)(a,
{\det}_B(V_1))_F$ by the multiplicative properties of the symbol
$(\quad,\quad)_F$.
If $\dim(V_1)$ is even, we have $cw(V)=cw(V_1)(-a,
(-1)^{\dim(V)/2}\det_B(V))_F$, if $\dim(V_1)$ is odd, we have $cw(V)=cw(V_1)(a,
(-1)^{(\dim(V)-1)/2}\det_B(V))_F$.  Upon inserting the relations given
for $V_1$ by the inductive assumption and comparing the factors we
obtain the assertion.
\end{proof}
\begin{remark}
  Let $(V,Q)$ be a non degenerate quadratic space of dimension $n=2m$ or
  $n=2m+1$ over $F$ and $W$ a hyperbolic space of dimension $2m$ over
  $F$, put $\tilde{V}=W$ if $\dim(V)=2m$ is even and $\tilde{V}\cong
  W\perp [1]$ if $\dim(V)$ is odd. Then $cw(V)=cw(V\perp \tilde{V})$
  and $cw(V\perp \tilde{V})$ is the Brauer class of the Clifford
  algebra of $V\perp \tilde{V}$. One could use this construction to
  define the Clifford-Witt invariant without a distinction of cases
  between even and odd dimensions. Witt defined his invariant in \cite{witt}
  similarly but used a space $V'\cong [-1]\perp\dots\perp[-1]$ of dimension
  $n$ instead of our $\tilde{V}$. 
\end{remark}
\section{Classifying invariants over local and global fields}
\begin{lemma}
   Let $F=F_v$ be a local field of characteristic $\ne 2$ with
   valuation $v$. The subgroup of
   the Brauer group $Br(F)$ generated by the classes of quaternion
   algebras is isomorphic to $\{\pm 1\}$ if $F\not\cong \C$ and is
 trivial for $F=\C$.

Under this isomorphism the Brauer class of the algebra $(a,b)_F$ is mapped to
the Hilbert symbol $(a,b)_v$.
\end{lemma}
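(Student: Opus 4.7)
The plan is to reduce the lemma to the statement that there is, up to isomorphism, at most one quaternion division algebra over $F$, then to dispose of the archimedean cases by a direct computation, and to handle the remaining non-archimedean case by invoking Theorem~\ref{anisotropic_local}. First I would note that every Brauer class of a quaternion algebra has order dividing $2$ (proved earlier in this chapter) and that each $(a,b)_F$ is either isomorphic to $M_2(F)$, giving the trivial class, or is a division algebra. Consequently, the subgroup of $Br(F)$ generated by quaternion classes equals the set $\{[M_2(F)]\} \cup \{[D] : D \text{ a quaternion division algebra over } F\}$, and has cardinality at most $2$ exactly when there is at most one isomorphism class of quaternion division algebra over $F$. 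The identification with the Hilbert symbol is then automatic: $(a,b)_v$ is $+1$ precisely when $z^2 = ax^2 + by^2$ has a nontrivial solution in $F$, equivalently when the norm form $[1,-a,-b,ab]$ of $(a,b)_F$ is isotropic, equivalently when $(a,b)_F$ splits. Bimultiplicativity in $(a,b)$ of the assignment $[(a,b)_F]$, and thus the group-homomorphism property, follows from the Steinberg symbol relation $(a,bc)_F \sim (a,b)_F \otimes (a,c)_F$ already established in the preceding section.

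For $F = \C$ every element is a square, so $(a,b)_\C \cong (1,1)_\C$, whose norm form $[1,-1,-1,1]$ is hyperbolic; hence $(a,b)_\C \cong M_2(\C)$ always, and the subgroup is trivial. For $F = \R$ Sylvester's theorem classifies nondegenerate real quadratic forms by signature, and the norm form $[1,-a,-b,ab]$ is anisotropic if and only if all four diagonal entries have the same sign, which up to squares forces $a = b = -1$; the only real quaternion division algebra is thus Hamilton's $\mathbb{H} = (-1,-1)_\R$ and the subgroup of $Br(\R)$ generated by quaternion classes is $\{[M_2(\R)], [\mathbb{H}]\} \cong \{\pm 1\}$, with $[\mathbb{H}]$ matching $(-1,-1)_v = -1$.

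The main obstacle is the non-archimedean case, which I would dispatch by invoking Theorem~\ref{anisotropic_local}: over a non-archimedean local field $F$ there is, up to isometry, exactly one anisotropic regular four-dimensional quadratic space over $F$, and this space has determinant $1$. Now the norm form of any $(a,b)_F$ has square determinant $a^2b^2$, and $(a,b)_F$ is a division algebra precisely when its norm form is anisotropic; hence any two quaternion division algebras over $F$ have isometric norm forms. Lemma~\ref{quaternion_clifford_dim2}, applied after choosing representatives with $ab \equiv cd \pmod{(F^\times)^2}$, then shows that any two such algebras are in fact mutually isomorphic. Combined with the observations of the first paragraph this completes the proof. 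The delicate point is therefore entirely concentrated in Theorem~\ref{anisotropic_local}, whose own proof rests on the Jordan decomposition and the reduction-modulo-$P$ techniques developed earlier in Chapter~5; no further input is needed here.
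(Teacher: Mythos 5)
Your proposal is correct and follows essentially the same route as the paper: the archimedean cases are handled directly, and the non-archimedean case is reduced via Theorem \ref{anisotropic_local} to the uniqueness of the four-dimensional anisotropic space, which forces any two quaternion division algebras to have isometric norm forms and hence to be isomorphic. The only quibble is the citation: the step from isometric norm forms to isomorphic algebras is the earlier (unnumbered) lemma asserting the equivalence of $[1,-a,-b,ab]\cong[1,-c,-d,cd]$ with $(a,b)_F\cong(c,d)_F$, not Lemma \ref{quaternion_clifford_dim2}, which concerns binary forms.
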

\begin{proof} The case $F=\C$ is trivial, for $F=\R$ it is well known
  that the Hamilton quaternions are up to isomorphy the only
  quaternion division algebra over $F$.  We therefore assume that $F$
  is not archimedean in the sequel.

By Theorem \ref{anisotropic_local} there is a unique isometry class of anisotropic
quadratic spaces of dimension $4$ over $F$,and this space has
determinant $1$ and is universal. It is therefore isometric to
$[1]\perp[-a]\perp[-b]\perp[ab]$ for some $a,b \in F^\times$ and is
hence the quadratic space given by the quaternion algebra $(a,b)_F$
with the quaternion norm as quadratic form. This quaternion algebra is
therefore the unique quaternion division algebra over $F$.  
\end{proof}
\begin{remark}
For a local field $F$ we identify the the Brauer class of the
quaternion algebra $(a,b)_F$ with the Hilbert symbol $(a,b)_v\in \{\pm
1\}$.
\end{remark}
\begin{theorem}\label{classification_local}
 Over a local field $F$ of characteristic $\ne 2$ regular quadratic
 spaces $(V,Q)$ are classified up to isometry by dimension,
 determinant, and Clifford-Witt (or Hasse) invariant.

In particular, for given dimension and determinant   there exist at
most two isometry classes of regular quadratic spaces over $F$.
\end{theorem}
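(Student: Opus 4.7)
The plan is to reduce to classifying anisotropic regular quadratic spaces via the Witt decomposition, and then to enumerate the anisotropic classes using Theorem \ref{anisotropic_local}. Write $V = V_{\text{an}} \perp H_r$ with $H_r$ hyperbolic of dimension $2r$. Hyperbolic planes are isometric to $[1,-1]$, have determinant $-1$ modulo squares, and by the corollary to the multiplicativity formula have trivial Clifford-Witt invariant. That multiplicativity formula then shows that $\det(V)$ and $cw(V)$ are determined by $\det(V_{\text{an}})$, $cw(V_{\text{an}})$, and $r$, and conversely the triple $(\dim V, \det V, cw(V))$ together with $\dim V_{\text{an}}$ pins down $(\det V_{\text{an}}, cw(V_{\text{an}}))$. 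Combined with Witt's cancellation theorem, this reduces the problem to classifying anisotropic regular spaces by $(\dim, \det, cw)$.

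For the anisotropic classification, the cases $F = \C$ (quadratically closed, so only dimension matters) and $F = \R$ (Sylvester's law, with signature extracted from the three invariants within the scope of the theorem) are dispatched directly. For $F$ non-archimedean, Theorem \ref{anisotropic_local} is the essential input: anisotropic forms have dimension at most $4$, and the unique anisotropic form of dimension $4$ has determinant $1$, so dimension-four anisotropic forms are classified by dimension alone. Dimensions $0$ and $1$ are immediate. The substantive cases are dimensions $2$ and $3$: Lemma \ref{quaternion_clifford_dim2} and Lemma \ref{quaternion_clifford_dim3} convert the isometry question for $[a,b]$ versus $[a',b']$ of equal determinant (respectively $[a,b,c]$ versus $[a',b',c']$ of equal determinant) into an isomorphism question between explicit quaternion algebras over $F$. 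The preceding lemma identifies the subgroup of $Br(F)$ generated by quaternion classes with $\{\pm 1\}$, so these isomorphism classes are distinguished by a single Hilbert symbol, which is encoded in the Hasse invariant (up to the correction term depending only on $\det V$ recorded in the preceding proposition relating $s$ to $cw$).

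The main obstacle I anticipate is the bookkeeping step that identifies the distinguishing quaternion algebra appearing in Lemmas \ref{quaternion_clifford_dim2} and \ref{quaternion_clifford_dim3} with the one whose Brauer class is recorded by the Hasse (or Clifford-Witt) invariant. This is a finite computation but requires care with the correction terms built into the formula $s(V) = cw(V) \cdot (\cdot,\cdot)_F$ from the preceding proposition, and with the parity cases of $\dim V$ which affect where the correction appears. Once that correspondence is established, the bound of at most two isometry classes per $(\dim,\det)$ is immediate, since the only remaining invariant lies in the two-element subgroup of $Br(F)$ generated by quaternion algebras.
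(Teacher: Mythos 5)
Your route — Witt decomposition plus the classification of anisotropic spaces from Theorem \ref{anisotropic_local} — is genuinely different from the paper's, which argues by induction on the dimension: for $n>3$ the space $V\perp[-1]$ has dimension $\ge 5$ and is therefore isotropic over a non-archimedean local field, so $V$ represents $1$; one splits $[1]$ off both spaces and applies the inductive hypothesis, the cases $n\le 3$ being handled exactly as you handle them via Lemmas \ref{quaternion_clifford_dim2} and \ref{quaternion_clifford_dim3}. Your plan can be made to work, but as written it has a gap at the reduction step, and it is not the bookkeeping issue you flag.

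The gap is this: classifying \emph{anisotropic} spaces by $(\dim,\det,cw)$ only lets you compare $V_{\rm an}$ and $W_{\rm an}$ once you know they have the same dimension, and you explicitly feed $\dim V_{\rm an}$ into your argument as an extra datum without showing it is determined by $(\dim V,\det V,cw(V))$. Witt cancellation does not supply this: two spaces with identical invariants could a priori have anisotropic kernels of dimensions $0$ and $4$, or $1$ and $3$. The determinant does not always resolve the ambiguity. For instance, the hyperbolic space of dimension $n$ and the space $H_{(n-4)/2}\perp V_4$ (with $V_4$ the unique $4$-dimensional anisotropic space) have the same dimension and the same determinant $(-1)^{n/2}$, since $\det V_4=1$; they are separated only by $cw$, which is a Witt-class invariant, trivial on hyperbolic spaces and equal to the class of the quaternion division algebra on $V_4$. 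Likewise a $1$-dimensional kernel gives trivial $cw$ while a $3$-dimensional anisotropic kernel forces nontrivial $cw$ (of the two ternary spaces of a given determinant, the one with trivial $cw$ is the isotropic one), whereas the $0$-versus-$2$ and $2$-versus-$4$ conflicts are excluded by the determinant because a binary anisotropic space cannot have determinant $-1$. What you actually need, and should state, is that the anisotropic spaces are separated by the Witt-invariant triple $(\dim\bmod 2,\ (-1)^{\lfloor\dim/2\rfloor}\det,\ cw)$; with that finite check the argument closes. A further caution: the statement fails over $F=\R$ (the spaces $[1,1,1,1]$ and $[-1,-1,-1,-1]$ share all three invariants), so "extracting the signature from the invariants" is not possible; the paper's own proof tacitly restricts to non-archimedean $F$ by invoking isotropy in dimension $\ge 5$, and your proof should do the same.
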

\begin{proof}
  Dimension $1$ is trivial, dimension 2 follows from Lemma
  \ref{quaternion_clifford_dim2} and Example
  \ref{clifford_lowdimensions}, dimension 3 from Lemma
  \ref{quaternion_clifford_dim3} and Example
  \ref{clifford_lowdimensions}, both without any assumption on $F$.



Assume now that $\dim(V)=\dim(W)=n>3$ and that the assertion is true
for spaces of dimension smaller than $n$. The spaces $V\perp [-1],
W\perp [-1]$ are isotropic, having dimension $\ge 5$,  hence $V,$
represent $1$ and can be written as $W\cong [1]\perp V', W\cong
[1]\perp W'$ with $s(V')=s(W'), \det(V')=\det(W')$ the inductive
assumption implies $V'\cong W'$, and the assertion follows.   
\end{proof}
\begin{theorem}
  Let $F=F_v$ be a non archimedean local field of characteristic $\ne
  2$, let $(V,Q)$ be a non degenerate quadratic space over $F$ of
  dimension $\ge 2$, not isometric to a hyperbolic plane.

Then there exists a non degenerate quadratic space $(W,Q')$ over $F$
of the same dimension and determinant as $(V,Q)$ with $s(W)=-s(V)$
(hence $cw(W)=-cw(W)$). 
\end{theorem}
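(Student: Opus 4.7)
The plan is to exhibit $W$ by an explicit construction, using the non-degeneracy of the Hilbert symbol $(\,,\,)_v$ on $F^\times/(F^\times)^2$ over a non-archimedean local field of characteristic $\ne 2$. Since the Hasse invariant $s$ and the Clifford-Witt invariant $cw$ differ by a factor depending only on $\dim V$ and $\det_B(V)$ (Proposition relating $s$ and $cw$), any $W$ of the same dimension and determinant satisfying $s(W)=-s(V)$ automatically satisfies $cw(W)=-cw(V)$, so it suffices to realize the opposite Hasse invariant.

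First I would handle the case $n=\dim V=2$. Writing $V\cong[a,b]$ with $ab=d$ modulo squares, the Hasse invariant is $(a,b)_v$. For any $a'\in F^\times$ and $b'=d/a'$ one computes
\begin{equation*}
(a',b')_v=(a',d)_v(a',1/a')_v=(a',d)_v(a',-1)_v=(a',-d)_v,
\end{equation*}
so the task reduces to finding $c\in F^\times$ with $(c,-d)_v=-1$ and setting $a'=ac$. The hypothesis that $V$ is not a hyperbolic plane means exactly that $-d=-ab$ is not a square in $F$, so by the non-degeneracy of the Hilbert symbol on $F^\times/(F^\times)^2$ (recalled in the theory of local fields and implicit in Theorem~\ref{classification_local}) such a $c$ exists.

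For $n\ge 3$ I would take $W=[c_1,c_2,d/(c_1c_2),1,\ldots,1]$ with $n-3$ trailing entries equal to $1$, so that $\det_B(W)\equiv d$ and only the three nontrivial entries contribute to $s(W)$. A direct Hilbert-symbol computation (analogous to the one above, exploiting $(c_i,c_i)_v=(c_i,-1)_v$) reduces $s(W)$ to
\begin{equation*}
s(W)=(c_1,c_2)_v\,(c_1,-d)_v\,(c_2,-d)_v.
\end{equation*}
If $-d$ is not a square, choose $c_2=1$ and pick $c_1$ with $(c_1,-d)_v=-s(V)\cdot(\text{value when }c_1=1)$, using non-degeneracy of the symbol. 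If $-d$ is a square, the formula collapses to $s(W)=(c_1,c_2)_v$, and since $F$ is a non-archimedean local field of characteristic $\ne 2$ the symbol is surjective onto $\{\pm 1\}$, so one may pick $c_1,c_2$ realizing either value.

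The only place the hypothesis is used is in the step $(c,-d)_v=-1$ is solvable, which requires $-d\notin(F^\times)^2$; this is the sole obstruction, and it corresponds exactly to the excluded case $n=2$, $V\cong H$. The main point where care is needed is the bookkeeping of the Hilbert-symbol identities, especially in the $n\ge 3$ case when $-d$ is a square, where one must avoid choosing $c_1=c_2=1$ and instead use the symbol's non-triviality on a pair; but this is pure symbol algebra with no genuine difficulty.
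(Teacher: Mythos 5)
Your argument is correct. For $\dim V=2$ it is essentially the paper's proof: both reduce to finding $c$ with $(c,-d)_v=-1$, which exists precisely because $-d$ is a non-square (the excluded hyperbolic case); the paper merely exhibits $c$ explicitly (a prime element when $-d$ is a unit, a unit generating the unramified quadratic extension when $-d$ is a prime) instead of quoting non-degeneracy of the Hilbert symbol. For $\dim V\ge 3$ your route is genuinely different. The paper glues a binary lattice-free space $U\cong[1,\pi]$ onto $V$, uses that spaces of dimension $\ge 5$ over $F_v$ are isotropic and hence universal to re-split $U\perp V\cong U'\perp V'$ with $U'\cong[c,c\pi]$ of the same determinant but opposite invariant, and reads off the invariants of $V'$ from the additivity formula for $cw$ together with Witt cancellation; no diagonalization of $V$ or $W$ is needed. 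You instead write down the explicit diagonal candidate $[c_1,c_2,d/(c_1c_2),1,\ldots,1]$ and solve for $c_1,c_2$ by symbol algebra; your identity $s(W)=(c_1,c_2)_v(c_1,-d)_v(c_2,-d)_v$ checks out, and your case split on whether $-d$ is a square is handled correctly (in particular, when $-d$ is a square you rightly fall back on a pair with $(c_1,c_2)_v=-1$, which exists because $F_v$ carries a quaternion division algebra). Your approach buys explicitness, producing a Gram matrix for $W$ directly at the cost of some bookkeeping; the paper's buys a basis-free argument that additionally shows $V$ and $W$ become isometric after adding a fixed binary space, which is the form of the statement reused in the subsequent corollary and in the global existence theorem.
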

\begin{proof}
  If $\dim(V)=2$ we put $(V,Q)\cong [a,b]$ and have
  $s(V)=cw(V)=(a,b)_v$, by assumption $-ab$ is not a square in $F$.
If $-ab$ is a unit choose $c$ to be a prime element $\pi$ in $F$, if
$-ab$ is (up to squares) a prime element, let $c\in F$ be a unit for
which $F(\sqrt{c})$ is the (unique)  unramified quadratic extension of
$F$.

In both cases we have $(ca,cb)_v=(ca,-ab)_v=(c,-ab)_v(a,b)_v$ with
$(c,-ab)_v=-1$, so $(W,Q')\cong [ca,cb]$ is as desired.

Let now $\dim(V)\ge 3$ and let $c$ be the non square unit of $F$ given
above, let $U\cong [1,\pi], U'\cong [c,c\pi]$, where $\pi$ is a prime
element, we have $\det(U)=\det(U')$ and $cw(U)=-cw(U')$.

The space $V\perp U$ has dimension $\ge 5$ and is hence isotropic and
universal, so it represents $c$ and can be written as $V_1\perp [c]$,
where $V_1$ has dimension $4$ and is hence universal as well, so it
represents $c\pi$. we can therefore write $U\perp V\cong U'\perp V'$,
where $V'$ has the same dimension and determinant as $V$ but opposite
Clifford-Witt invariant.  
\end{proof}
\begin{corollary}
  For $n\ge 3$ there exists a (up to isometry unique) regular
  quadratic space over the non archimedean local field $F$ with
  $\cha(F)\ne 2$ with given determinant and Clifford-Witt invariant.
\end{corollary}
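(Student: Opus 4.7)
The plan is to combine the classification result of Theorem \ref{classification_local} (which provides uniqueness) with the preceding theorem (which provides the key existence step).

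First, uniqueness is immediate: by Theorem \ref{classification_local}, two regular quadratic spaces over $F$ sharing dimension, determinant, and Clifford--Witt invariant are isometric. So it suffices to show that for every $n\ge 3$, every square class $d\in F^\times/(F^\times)^2$, and every value $\epsilon\in\{\pm 1\}$, some regular quadratic space over $F$ realizes the triple $(n,d,\epsilon)$.

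For existence, I would first exhibit a regular space $V_1$ with the prescribed dimension and determinant without worrying about the Clifford--Witt invariant: take
\begin{equation*}
V_1\;\cong\;[\,1,\ldots,1,d\,]
\end{equation*}
with $n-1$ entries equal to $1$. This is clearly regular, has dimension $n$, and has determinant $d$. Since $\dim(V_1)=n\ge 3$, the space $V_1$ is not isometric to a hyperbolic plane, so the preceding theorem applies and yields a regular $V_2$ of the same dimension and determinant but with $s(V_2)=-s(V_1)$. By the proposition relating $s$ and $cw$, the correction factor depends only on $\dim(V)$ and $\det_B(V)$; hence the same relation holds for $V_1$ and $V_2$, and negating the Hasse invariant negates the Clifford--Witt invariant as well: $cw(V_2)=-cw(V_1)$. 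Over the non-archimedean local field $F$ the Clifford--Witt invariant lies in $\{\pm 1\}$, so $\{V_1,V_2\}$ exhausts the two possible values, and one of them realizes the prescribed $\epsilon$.

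The proof is therefore essentially a bookkeeping argument combining two earlier results; there is no serious obstacle. The only point that requires a moment's care is that the preceding theorem is phrased in terms of $s$, while the corollary is phrased in terms of $cw$, but the explicit relation between the two invariants (identical for any two spaces of equal dimension and determinant) makes the transition immediate.
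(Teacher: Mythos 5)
Your proof is correct and is exactly the argument the paper intends (the paper's own proof is simply ``Obvious''): realize dimension and determinant by a diagonal space, invoke the preceding theorem to flip the Hasse invariant, and note that the $s$--$cw$ correction factor depends only on dimension and determinant. No gaps.
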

\begin{proof}
  Obvious.
\end{proof}
\begin{theorem}
  Let $F$ be a number field, fix $n \in \N$ and let non degenerate
  quadratic spaces $(V_v,Q_v)$ of dimension $n$ over $F_v$ be given
  for all places $v$ 
  of $F$, denote by $S_v$ resp.\ $cw_v$ the Hasse resp.\ Clifford-Witt
  invariants for the completion $F_v$.

Then a quadratic space $(V,Q)$ over $F$ with all completions isometric
to the given spaces  $(V_v,Q_v)$ exists if and only if one has
\begin{enumerate}
\item There exists $d\in F$ with $d(F_v^\times)^2=\det(V_v)$ for all
  places $v$ of $F$.
\item The Hasse invariants $s_v(V_v)$ (equivalently: The Clifford-Witt
  invariants) are equal to $1$ for almost all
  places $v$.
\item The product of the Hasse (or the Clifford-Witt) invariants
  $s_v(V_v)$ (resp.\  $cw_v(V_v)$ over all places $v$ is $1$.
\end{enumerate}
\end{theorem}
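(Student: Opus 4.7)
Choose an orthogonal basis of $(V,Q)$, yielding $(V,Q)\cong[a_1,\ldots,a_n]$ with $a_i\in F^\times$. Then $d:=a_1\cdots a_n$ realises (a). Since $s_v(V_v)=\prod_{i<j}(a_i,a_j)_v$, and at every non-archimedean $v$ of odd residue characteristic where all $a_i\in R_v^\times$ every Hilbert symbol $(a_i,a_j)_v$ is $1$, assertion (b) follows. Hilbert reciprocity gives $\prod_v(a_i,a_j)_v=1$ for each pair, and multiplying over $i<j$ yields (c).

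\textbf{Sufficiency: low dimensions.} For $n=1$ take $(V,Q):=[d]$; the completions match by Theorem~\ref{classification_local}. For $n=2$, writing the desired space as $[a,b]$ with $ab\equiv d\bmod(F^\times)^2$, i.e.\ $b=dc^2/a$, reduces the matching of Hasse invariants to the single condition $(a,-d)_v=s_v$ for every $v$ (using $(a,d/a)_v=(a,d)_v(a,-1)_v=(a,-d)_v$). Producing such an $a\in F^\times$, given that the $s_v$ are almost all $+1$ with product $+1$, is the classical realisation theorem for Hilbert symbols, a standard consequence of global class field theory (or, for $F=\Q$, of quadratic reciprocity together with Dirichlet's theorem on primes in arithmetic progressions).

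\textbf{Sufficiency: induction on $n\ge 3$.} The plan is to find $a\in F^\times$ represented by $(V_v,Q_v)$ at every place $v$, split each $V_v\cong[a]\perp V_v'$, verify that the data $(V_v')_v$ satisfies (a)--(c), apply induction to obtain $V'/F$ with completions $V_v'$, and form $(V,Q):=[a]\perp V'$. At any non-archimedean $v$ the space $V_v$ has dimension $\ge 3$, hence is isotropic and therefore universal (it contains a hyperbolic plane by Theorem~\ref{isotropic_hyperbolic}), so any $a\in F_v^\times$ is represented locally. At archimedean places where $V_v$ is definite, the constraint is only that $a$ have the correct sign. A suitable $a\in F^\times$ is produced by Artin--Whaples weak approximation. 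From the multiplicativity $s([a]\perp V_v')=s(V_v')(a,\det V_v')_v$ one obtains that $\det V_v'\equiv d/a$ in $F_v^\times/(F_v^\times)^2$ and $s_v(V_v')=s_v(V_v)(a,-d)_v$; conditions (a) with $d':=d/a$, (b) (the correction $(a,-d)_v$ is $1$ outside finitely many $v$), and (c) (by Hilbert reciprocity applied to $(a,-d)_v$ together with the hypothesis $\prod_v s_v(V_v)=1$) then all hold, completing the induction via Theorem~\ref{classification_local}.

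\textbf{Main obstacle.} The crux is the $n=2$ realisation step: that any family of signs $(s_v)_v$ satisfying (b) and (c) arises as a family of Hilbert symbols $((a,x)_v)_v$ for fixed $x$ and a suitable $a\in F^\times$. This is the substantive input from global class field theory. Everything else is bookkeeping via Theorem~\ref{classification_local}, Witt cancellation, and Hilbert reciprocity; in particular the induction for $n\ge 3$ goes through cleanly because isotropy (and hence universality) is automatic at all non-archimedean places, so the only local constraints on the splitting variable $a$ are the archimedean sign conditions handled by weak approximation.
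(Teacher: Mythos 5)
Your induction contains one step that is false as stated: over a non-archimedean local field it is \emph{not} true that a non-degenerate space of dimension $\ge 3$ is isotropic. Anisotropic spaces exist in dimensions $3$ and $4$ (Theorem \ref{anisotropic_local} gives the bound $\dim\le 4$, attained by the quaternion norm form). Consequently, for $n=3$ your assertion that ``any $a\in F_v^\times$ is represented locally'' fails at the places $v$ where $V_v$ is anisotropic: an anisotropic ternary space over $F_v$ misses exactly one square class (namely the class $t$ for which $V_v\perp[-t]$ becomes the unique anisotropic quaternary space), and if your weak-approximation choice of $a$ lands in that class the splitting $V_v\cong[a]\perp V_v'$ does not exist. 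For $n=4$ your conclusion survives even though the stated reason is wrong, because the unique anisotropic quaternary space is universal; for $n\ge 5$ isotropy does hold. The repair is routine but must be made: the set of non-archimedean $v$ with $V_v$ anisotropic is finite (at almost all $v$ the local space is unimodular of rank $\ge 3$, hence isotropic by Chevalley--Warning and Hensel), and since square classes are open, weak approximation lets you impose at each such $v$ the additional open condition that $a$ avoid the one excluded square class, alongside the archimedean sign conditions. In the $n=2$ base case you should also record the local-solvability hypothesis of the realisation theorem for Hilbert symbols --- that $s_v=+1$ whenever $-d\in(F_v^\times)^2$ --- which holds here automatically because a binary space of determinant $d$ with $-d$ a local square is a hyperbolic plane and has trivial Hasse invariant.

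Apart from this, your route is genuinely different from the one in the text. There, one uses weak approximation to build a single global space $W\cong[a_1,\ldots,a_{n-1},da_1\cdots a_{n-1}]$ matching the data on a finite set $T$ of bad places, observes that the set $S$ of remaining places where the Hasse invariants disagree has even cardinality, and corrects $W$ on $S$ in one stroke by exchanging a binary summand: binary spaces $U,U'$ are constructed with $U_v\cong U_v'$ exactly off $S$, one checks that $U'$ is represented by $U\perp W$ everywhere locally, and Minkowski--Hasse yields $U\perp W\cong U'\perp W'$ with $W'$ the desired space. Both arguments consume the same class-field-theoretic input (finding $\alpha$ with $(\alpha,\beta)_v=-1$ exactly on a prescribed finite even set is the same existence theorem you invoke for $n=2$); your approach trades the single binary correction for a dimension induction, at the cost of having to control local representability of the splitting scalar $a$ in the low-dimensional anisotropic cases discussed above.
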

\begin{proof}
  The necessity of the conditions follows immediately from the
  reciprocity theorem for the Hilbert symbol.
We have to prove the existence of the global space if the conditions for the
local spaces are satisfied; this is trivial for $n=1$.

Let $n\ge 2$ and write $(V_v,Q_v)\cong [a_1^{(v)},\ldots,a_n^{(v)}]$
for all $v$ in the set of places $\Sigma_F$  of $F$.
Let $T\subseteq \Sigma_F$ be a finite set containing all archimedean
places and all places $v$ with $s_v(V_v)=-1$.
By the weak approximation theorem of algebraic number theory we find
$a_1,\ldots,a_{n-1} \in F$ such that $a_i$ is in the square class of
$a_i^{(v)}$ at all $v\in T$ for $1\le i \le n-1$.

The space $W\cong [a_1,\ldots,a_{n-1},da_1a_2\dots a_{n-1}]$ over $F$
of determinant $d(F^\times)^2$ 
satisfies $W_v\cong V_v$ for all $v \in T$ and hence
$s_v(W_v)=s_v(W_v)$ for all $v \in T$.

The set $S=\{v \in \Sigma_F\mid s_v(W_v)=-s_v(V_v)\}$ is therefore a
finite subset of $\Sigma_F\setminus T$ satisfying $W_v\cong V_v$ for
all $v\not\in S$, since for these $v$ both spaces have Hasse invariant
$1$ (and the same determinant). On the other hand, for $v\in S$ the
spaces $V_v$ and $W_v$ have the same determinant and are not
isometric, so that in particular neither of these spaces over $F_v$ can be a
hyperbolic plane. 
 
If $S\ne \emptyset$ we have $S=\{v\in \Sigma_F\setminus T\mid
s_v(W_v)=-1\}$, and 
one sees
\begin{equation*}
  \prod_{v\in S}s_v(W_v)=\prod_{v\in S}s_v(W_v)\prod_{v\notin
    S}s_v(V_v)=\prod_{v\in \Sigma_F}s_v(W_v)=1,
\end{equation*}
so that $S$ has even cardinality.

Again by weak approximation we find $\beta\in F^\times$ which is a
square at all archimedean places and not a square at all $v \in S$,
and by the Hilbert reciprocity law we can find $\alpha \in F^\times$
satisfying $(\alpha,\beta)_v=-1$ if and only if $v \in S$.

Let $U,U'$ be binary quadratic spaces over $F$ with $U\cong
[1,-\beta], U'\cong [\alpha,\-alpha\beta]$. We have $s_v(U') = (\alpha,
-\alpha
\beta)_v=(\alpha,-1)_v(\alpha,-\beta)_v=(\alpha,\beta)_v$
for $v \notin S$, hence $U_v \cong U'_v$ for $v \notin S$ and  $U_v
\not\cong U'_v$ for $v \in S$.

For $n\ge 3$ and $v \in S$ we obtain as in the proof of Theorem
\ref{classification_local} that $U_v'$ is represented by $U_v\perp
W_v$. If $n=2$ holds, the space $U_v \perp W_v$ represents $\alpha$
since it is universal, and we can write $U_v\perp W_v\cong [\alpha,
c_1,c_2,c_3]$.
If $-\alpha\beta$  were not represented by $ [c_1,c_2,c_3]$, the space $[
c_1,c_2,c_3,\alpha\beta]$ would be anisotropic, hence of square
determinant, so we had $\alpha\beta \in c_1c_2c_3(F^\times)^2$. From
this we find $\det(U_v\perp W_v)=\alpha c_1c_2c_3
(F^\times)^2=\beta(F^\times)^2=-\det(U_v)$, which implies that $W_v$
is a hyperbolic plane, a contradiction. So $U'_v$ is represented by
$U_v\perp W_v$ in the case $n=2$ too. Since $U_v\cong U'_v$ for all
$v\notin S$ we see that $U'$ is represented by $U\perp W$ locally
everywhere, and by the Minkowski Hasse local global principle we can
write $U\perp W\cong U'\perp W'$ with some space $W'$ over $F$
of dimension $n$ and determinant $d$  which satisfies
$s_v(W_v')=s_v(V_v)$ for all $v \in \Sigma_F$ as desired. 
\end{proof}
\begin{corollary}
  For any given signature $(n_+,n_-)$ there exists at most one genus
  of even unimodular $\Z$-lattices. Such a genus exists if and only if
  one has $n_+\equiv n_- \bmod 8$.
\end{corollary}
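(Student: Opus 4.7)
The plan is to reduce the classification of genera of even unimodular $\Z$-lattices to the local classification at every place, using both the local isometry theory over $\Z_p$ developed in the previous chapter and the theorem immediately preceding this corollary on existence of global quadratic spaces with prescribed local invariants.

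First I would establish uniqueness. A genus is determined by the isometry classes of the completions $L\otimes\R$ and $L_p=L\otimes\Z_p$ at every place. For an even unimodular $\Z$-lattice $L$, Sylvester's law forces $\det_b(L)=(-1)^{n_-}$, so the global determinant is pinned down by the signature. At $v=\infty$ the real completion $L\otimes\R\cong I_{n_+}\perp(-I_{n_-})$ is determined by $(n_+,n_-)$. At an odd prime $p$, being even unimodular is the same as being regular (since $2\in\Z_p^\times$), and by Corollary \ref{isometry_mod_P} combined with the classification of regular quadratic spaces over $\F_p$, such an $L_p$ is determined up to isometry by its rank together with the square class of its determinant in $\Z_p^\times/(\Z_p^\times)^2$; this class is forced to $(-1)^{n_-}$ by the global determinant. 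At $p=2$ one invokes the Jordan decomposition (Corollary \ref{jordan_uniqueness}) together with the fact that even unimodular $\Z_2$-lattices are orthogonal sums of copies of the hyperbolic plane $H$ and of the anisotropic plane $A$ with Gram matrix $\left(\begin{smallmatrix}2&1\\1&2\end{smallmatrix}\right)$; rank, determinant, and Hasse invariant together classify such lattices, and all three are forced by $(n_+,n_-)$ (the Hasse invariant via Hilbert reciprocity, as computed next).

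Second I would extract the necessary congruence. At $v=\infty$, direct evaluation gives $s_\infty=\prod_{i<j}(a_i,a_j)_\infty=(-1)^{n_-(n_--1)/2}$, since only pairs $(-1,-1)$ contribute a nontrivial Hilbert symbol. At odd $p$, the space $V_p=L_p\otimes\Q_p$ diagonalizes over $\Q_p$ with unit entries, and Hilbert symbols of units are trivial at odd primes, so $s_p=1$. Hilbert reciprocity $\prod_v s_v=1$ therefore forces $s_2=(-1)^{n_-(n_--1)/2}$. One then checks, by inspecting the (finitely many) isometry classes of even unimodular $\Z_2$-lattices of each even rank built from the blocks $H$ and $A$, that the required combination of rank $n_++n_-$, determinant $(-1)^{n_-}$, and Hasse invariant $(-1)^{n_-(n_--1)/2}$ is realizable at $p=2$ if and only if $n_+-n_-\equiv 0\pmod 8$. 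Combined with the theorem preceding our corollary, this also confirms that the underlying quadratic space $V=L\otimes\Q$ exists globally precisely under this congruence.

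Third, for the converse construction, it suffices to exhibit an even unimodular $\Z$-lattice in every admissible signature. The lattice $E_8$ from Chapter \ref{globalchapter} is even unimodular positive definite of rank $8$ and thus has signature $(8,0)$; its negative $-E_8$ (same lattice with $-Q$) has signature $(0,8)$. The integral hyperbolic plane $H_\Z$ with Gram matrix $\left(\begin{smallmatrix}0&1\\1&0\end{smallmatrix}\right)$ is even unimodular of signature $(1,1)$. For $n_+-n_-=8k\ge0$ the orthogonal sum $(kE_8)\perp(n_- H_\Z)$ is even unimodular of signature $(n_+,n_-)$; for $n_+-n_-=-8k\le 0$ one uses $(k(-E_8))\perp(n_+ H_\Z)$. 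The main obstacle is the detailed local analysis at the prime $2$: classifying even unimodular $\Z_2$-lattices of prescribed rank, determinant, and Hasse invariant, and verifying that the reciprocity-imposed value of $s_2$ is achievable exactly when $n_+\equiv n_-\pmod 8$. This is the step where the mysterious divisibility by $8$ enters, and carrying it out requires the explicit structure of $\Z_2$-lattices built from $H$ and $A$, in particular the relations they satisfy in rank $8$.
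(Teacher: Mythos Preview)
Your approach is correct and reaches the same conclusion, but the paper takes a cleaner route at $p=2$. Rather than classifying even unimodular $\Z_2$-lattices via $H$/$A$ blocks, the paper observes that an even unimodular $\Z_p$-lattice is automatically $\Z_p$-maximal; since all maximal lattices on a fixed local space are isometric, uniqueness of the genus reduces immediately to uniqueness of the ambient space $V$, which follows from Minkowski--Hasse once reciprocity pins down $s_2$. For the congruence, the paper simply notes that $V_2$ supports an even unimodular lattice of determinant $(-1)^{n_-}$ if and only if $n=2m$ is even and $V_2$ is hyperbolic, in which case $s_2(V_2)=(-1)^{m(m-1)/2}$; equating this with the forced value $(-1)^{n_-(n_--1)/2}$, together with the determinant parity $m\equiv n_-\bmod 2$, yields $m\equiv n_-\bmod 4$, i.e.\ $n_+\equiv n_-\bmod 8$. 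This sidesteps the obstacle you flag: you need neither a rank-$8$ identity nor the relation $A\perp A\cong H\perp H$, only the remark that the block $A$ has $\det_b=3\not\equiv\pm 1$ in $\Q_2^\times/(\Q_2^\times)^2$, so the determinant constraint alone already forces $L_2\cong H^{\perp m}$, after which the Hasse comparison is a one-line calculation. Your explicit construction of lattices from $E_8$, $-E_8$, and $H_\Z$ for sufficiency is a legitimate alternative to the paper's abstract local-global argument and is in fact acknowledged as such in the remark following the corollary.
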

\begin{proof}
An even unimodular $\Z$-lattice $\Lambda$ of signature $(n_+,n_-)$ has
determinant $(-1)^{n_{-}}$. The isometry class of
$\Lambda_p$  is by Hensel's lemma determined by the isometry class of
its reduction $\Lambda_p/p\Lambda_p$ as (regular) quadratic space over $\F_p$,
which for odd $p$ is determined by the determinant. This shows that
the isometry class of the completion of the space $V$ supporting $\Lambda$ is determined
by the signature at all places of $\Q$ except perhaps at the prime
$p=2$. But then the Hilbert reciprocity law implies that the Hasse invariant
at $2$ is also determined, which by Theorem \ref{classification_local}
implies  that there is only one possibility for the isometry class of
the completion $V_2$ at the prime $2$, and by the Minkowski-Hasse
theorem the class of $V$ is determined by the fixed signature.
Finally, an even unimodular lattice has $\Z_p$-maximal completions at
all primes $p$, and since all $\Z_p$-maximal lattices on the same
space $V_p$ are isometric, the genus of such a lattice is uniquely
determined, if it exists.

For the existence question we have by the Hilbert reciprocity law
that $s_2(V_2)=(-1)^{\frac{n_-(n_--1)}{2}}$. Moreover, the space $V_2$
supports an even unimodular lattice of determinant $(-1)^{n_-}$ and
dimension $n_-+n_+=n$ if and
only if $n=2m$ is even and $V_2$ is hyperbolic, in which case its
Hasse invariant is $(-1)^{\frac{m(m -1)}{2}}$. This is equivalent to  $n_-\equiv
m \bmod 4$, hence to $n_++n_-=2m\equiv 2n_-\bmod 8$, which is finally
equivalent to $n_+\equiv n_- \bmod 8$ as asserted.
\end{proof}
\begin{remark}
  There exist several different proofs of the last corollary. The
  existence of such a lattice also follows from the explicit
  construction of the positive definite $E_8$ root-lattice.
\end{remark}

\chapter{The Maßformel of Smith, Minkowski and Siegel}
In this chapter we continue the study of quadratic lattices over a
ring of integers $R$  in a global field $F$ of characteristic different from
$2$, in particular over the ring of integers of a number field. We
keep the notations of chapter \ref{globalchapter}. If $R=R_T$ for some
fixed set $T$ of places of $F$ we will usually omit $T$ in the notations.  
A (quadratic) $R$-module $\Lambda$ for such an $R$ can always be
viewed as  a lattice on the vector space $V=\Lambda \otimes_R F$ over
$F$, with the natural extension of the quadratic form $Q$ and its
associated symmetric bilinear form $b$ to $V$. 

\smallskip
As a consequence of the Minkowski-Hasse theorem we had seen in Theorem
\ref{representations_localglobal} that a lattice $M$ which is
represented locally everywhere by the lattice $\Lambda$ is represented
globally by some lattice in the genus of $\Lambda$. Siegel's Maßformel
(or mass formula or measure formula), also called Siegel's main
theorem for (integral) quadratic forms, gives a quantitative version
of this statement, expressing a weighted average over the genus of
$\Lambda$ of the numbers or measures of representations of $M$ by the
lattices in the genus as a product of local data. This extended
earlier work of Smith and of Minkowski on the average of the inverses of the
numbers of isometric automorphisms of the lattices in a genus.

We take the occasion to add a comment on the naming of the formula: In
Siegel's three fundamental articles ``Über die analytische Theorie der
quadratischen Formen'' \cite{siegel1,siegel2,siegel3} written in german in Annals of Mathematics,
Siegel calls it the Massformel, following earlier terminology of
Eisenstein und Minkowski, who defined and investigated  the ``Maß
eines Genus'' (english: the measure of a genus). Since the printer of
the Annals apparently had no type for the special german letter ß, the
word was not printed as Maßformel but as above as Massformel, which in
later english texts 
changed to mass formula. It is, however, concerned with measure
(german: Maß), and not with mass (german: Masse) and would better be
translated as measure formula if one doesn't want to use the original
german word.   
\section{Class and genus of a representation}     
\begin{definition}
 Let $R$ be a commutative ring and $\phi_i:(M,Q')\to (\Lambda_i,Q_i)$ for $i=1,2$ be
 representations of quadratic $R$-modules. One says that
 $\phi_1,\phi_2$ belong to the same {\em class of representations} or that
 they are equivalent over $R$ if there is an isometry
 $\psi:(\Lambda_1, Q_1) \to (\Lambda_2,Q_2)$ with $\phi_2=\psi \circ \phi_1$; one writes then
 $\phi_2 \in \cl(\phi_1)$.

If $R$ is a ring of integers of the global field $F$ the representations
$\phi_1,\phi_2$ are said to belong to the same {\em genus (more precisely:
$R$-genus or $T$-genus for $R=R_T$) of
representations} if their coefficient extensions to the completions $R_v$ are
equivalent over $R_v$ for all $v \in \Sigma_F$, one writes then
$\phi_2 \in \gen(\phi_1)$ or $\phi_2 \in \gen_T(\phi_1)$.
\end{definition}
Obviously, genera of representations consist of full classes of
representations and the relations of belonging to the same class or to
the same genus of representations are equivalence relations. If $\phi$
is a primitive representation then all representations in its class
and in its genus are primitive.

Representations in the same class are by isometric quadratic modules.
Similarly, representations in
the same genus are representations by lattices in the same genus of
lattices.

If $\phi:(M,Q')\to (\Lambda,Q)$ is a representation we can instead
consider the representation $i=i_\phi:(\phi(M),Q\vert_{\phi(M)})
\to (\Lambda,Q)$ of the lattice $\phi(M)$ in the class of $M$, where
$i$ is the inclusion mapping.
The following definition and lemma allow us to restrict attention to
inclusions (sometimes also called special representations) of
quadratic $R$-modules whenever that is convenient:
\begin{deflemma}
Let $M_1,M_2$ be submodules of the quadratic $R$-modules $(\Lambda_1,Q_1),(\Lambda_2,Q_2)$ and
denote by $i_1,i_2$ their respective inclusions into $\Lambda_1,\Lambda_2$.

The inclusions $i_1,i_2$ are in the same class (or equivalent over
$R$) if there exists an
isometry $\psi:\Lambda_1 \to \Lambda_2$ with $\psi \circ
i_1=i_2$. If $R$ is a ring of integers in a global field, they
are in the same genus if they are equivalent over all $R_v$.

\smallskip 
Two representations $\phi_1:(M,Q')\to (\Lambda_1,Q_1)$, $\phi_2:(M,Q')\to
(\Lambda_2,Q_2)$ of the quadratic $R$-module $(M,Q')$ are in the same class
if and only if the inclusions $i_{\phi_1},i_{\phi_2}$ of the
$\phi_j(M)$ into $\Lambda_j$ are in the same class of inclusions as
defined above, and analogously for the genus of representations resp.\ inclusions.   
\end{deflemma}
\begin{proof}
Obvious.
\end{proof}

\begin{lemma} Let $(M,Q')$ and $(\Lambda_j, Q_j)$ for $j=1,2$ be
  quadratic lattices on the quadratic spaces $(W,
  Q'),(V_1,Q_1),(V_2,Q_2)$ over $F$. 
  Let $\phi_j:M \to \Lambda_j$ for $j=1,2$ be representations in the
  same genus of representations.

Then there is a lattice $\Lambda'$ on $V_1$ and a representation
$\phi':M\to \Lambda'$ in the class of the representation $\phi_2$.

If one has here $M\subseteq \Lambda_1$ and $\phi_1$ is the inclusion
mapping from $M$ into $\Lambda_1$, the representation $\phi'$ above
can also be chosen to be the inclusion of $M$ into $\Lambda'$. 
\end{lemma}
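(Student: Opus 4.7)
The plan is to reduce to a common ambient space via the weak Minkowski--Hasse theorem and then convert $\phi_2$ into a representation into a lattice on $V_1$ using Witt's extension theorem, pulling back $\Lambda_2$ along the resulting isometry.

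First, since $\phi_1$ and $\phi_2$ lie in the same genus, for each $v\in\Sigma_F$ there is an isometric isomorphism $\psi_v:\Lambda_{1,v}\to\Lambda_{2,v}$. Extending scalars to $F_v$ these induce isometries $V_{1,v}\to V_{2,v}$ at every place, so the weak Minkowski--Hasse theorem yields a global isometry $\tau_0:V_1\to V_2$. Write $\tilde\phi_j:W\to V_j$ for the $F$-linear extension of $\phi_j$; each is an isometric embedding. The two subspaces $\tilde\phi_1(W)$ and $\tau_0^{-1}\tilde\phi_2(W)$ of $V_1$ are both isometric to $W$, and they are regular since the quadratic space $(W,Q')$ underlying the lattice $(M,Q')$ is non degenerate. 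By Witt's extension theorem (Theorem \ref{extension_theorem}) applied inside $V_1$, the isometry $\tilde\phi_1(W)\to\tau_0^{-1}\tilde\phi_2(W)$ given by $\tilde\phi_1(w)\mapsto\tau_0^{-1}\tilde\phi_2(w)$ extends to some $\rho\in O(V_1)$. Setting $\sigma:=\tau_0\circ\rho:V_1\to V_2$ produces an isometry satisfying $\sigma\circ\tilde\phi_1=\tilde\phi_2$, and in particular $\sigma\circ\phi_1=\phi_2$ on $M$.

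Now put $\Lambda':=\sigma^{-1}(\Lambda_2)\subseteq V_1$, a lattice on $V_1$, and define $\phi':=\sigma^{-1}\circ\phi_2:M\to\Lambda'$. Then $\sigma$ restricts to an isometry $\Lambda'\to\Lambda_2$ with $\sigma\circ\phi'=\phi_2$, so $\phi'$ lies in the class of $\phi_2$, which proves the first assertion. For the second assertion, assume $M\subseteq\Lambda_1$ and $\phi_1$ is the inclusion; then $\tilde\phi_1$ is the inclusion $W\hookrightarrow V_1$, so $\sigma\circ\phi_1=\phi_2$ reads $\sigma(m)=\phi_2(m)$ for all $m\in M$. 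Consequently $\phi'(m)=\sigma^{-1}\phi_2(m)=m$, which means $M\subseteq\Lambda'$ and $\phi'$ is the inclusion of $M$ into $\Lambda'$.

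The only genuinely non trivial input is the weak Minkowski--Hasse theorem, which guarantees that $V_1$ and $V_2$ are globally isometric; everything else is Witt's theorem together with formal manipulation. The main point to verify is that the hypotheses of Witt's extension theorem are met, that is, that the image of $W$ inside $V_1$ is regular, which is the implicit non degeneracy assumption on the space underlying $(M,Q')$.
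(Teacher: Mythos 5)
Your proof is correct and follows essentially the same route as the paper: weak Minkowski--Hasse to identify $V_1$ with $V_2$, then Witt's extension theorem to arrange that the resulting isometry intertwines $\phi_1$ and $\phi_2$ before pulling back $\Lambda_2$. The only difference is organizational (you build the corrected isometry in one step, while the paper first proves the first assertion with Hasse--Minkowski alone and only invokes Witt for the inclusion refinement), and note that instead of assuming $(W,Q')$ regular you could appeal to the version of Witt's theorem for regularly embedded subspaces, since every subspace of the regular space $V_1$ is regularly embedded.
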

\begin{proof}
By assumption $(V_1,Q_1)$ is isometric to $(V_2,Q_2)$ over all
completions $F_v$, by the Hasse-Minkowski theorem there exists an
isometry $\sigma:V_2\to V_1$ over $F$. Then $\phi':=\sigma \circ \phi_2:M \to
\Lambda':=\sigma(\Lambda_2)$ is a representation in the class of
$\phi_2$ as requested.   

If we assume now that $M$ is a sublattice of $\Lambda_1$ and that
$\phi_1$ is the inclusion mapping, we choose $\sigma:V_2\to V_1$ and
$\phi'=\sigma \circ \phi_2$ as
above. The isometric map $\phi':M\to \Lambda'\subseteq V_1$ extends to
an isometry from the vector space $U\subseteq V_1$ over $F$ generated
by $M$ onto the space $U'$ generated by $\phi'(M)\subseteq V_1$. We
denote by $\rho'$ its inverse and find by the theorem of Witt an
isometry $\rho \in O(V_1,Q_1)$ extending $\rho'$. The map
$\phi''=\rho\circ \phi':M \to \Lambda'':=\rho(\Lambda')\subseteq V_1$
is then the identity on $U$, so that $\phi''$ and $\Lambda''$ are as requested.  
\end{proof}
The next theorem transfers the study of the classes in the genus of
a fixed representation into the study of double cosets in adelic
orthogonal groups.
\begin{theorem}
  Let $R$ be a ring of integers in the global field $F$, let $\Lambda$
  be an $R$-lattice on the non degenerate quadratic space $(V,Q)$ over
  $F$. Let $V=W\perp U$ be an orthogonal splitting of $V$ into non
  degenerate subspaces $W,U$, write $O_U(F),O_V(F),O_W(F)$ for the
  orthogonal groups of the spaces $(U,Q),(V,Q), (W,Q)$ and
  $O_U(\A_F),O_V(\A_F),O_W(\A_F)$ for their adelizations, consider the
  (adelic) orthogonal groups of $U,W$ as subgroups of the (adelic)
  orthogonal group of $V$ (acting trivially on the respective
  orthogonal complement).

Let $M\subseteq \Lambda$ be a lattice on $W$ and denote
  by  $i_{\Lambda'}=i_{M,\Lambda'}$ the inclusion map of $M$ into
  any lattice $\Lambda'$ on $V$ containing $M$.

Then $O_U(\A_F)$ acts transitively on the set of inclusions
$i_{M,\Lambda'}$ in the genus of the inclusion $i_{M,\Lambda}$ by
$i_{M,\Lambda}\mapsto i_{M,\phi(\Lambda)}$ for $\phi \in O_U(\A_F)$.
This action induces a transitive action  on the set of classes of
representations in the genus of  $i_{M,\Lambda}$. 

The stabilizer of $i_{M,\Lambda}$ is the group
\begin{equation*}
 O_{U}(\A_F;\Lambda):=O_U(\A_F)\cap O_\Lambda(\A_F)=\{\phi \in O_U(\A_F) \mid \phi(\Lambda)=\Lambda\}.
\end{equation*}
For $\phi,\phi' \in O_U(\A_F)$ the inclusions
$i_{M,\phi(\Lambda)},i_{M,\phi'(\Lambda)}$ are in the same class if
and only if 
\begin{equation*}
O_U(F)\phi O_{U}(\A_F;\Lambda)=O_U(F)\phi'
O_{U}(\A_F;\Lambda),
\end{equation*}
 i.e., one has a bijection between classes of
representations in the genus of $i_{M,\Lambda}$ and double cosets
$O_U(F)\phi O_{U,\Lambda}(\A_F)$ with $\phi \in O_U(\A_F)$.
\end{theorem}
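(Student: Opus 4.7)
The plan is to verify the four assertions in turn: the action of $O_U(\A_F)$ is well-defined and lands in the genus of $i_{M,\Lambda}$; it is transitive on that set of inclusions; the stabilizer of $i_{M,\Lambda}$ is exactly $O_U(\A_F;\Lambda)$; and two orbit representatives give equivalent inclusions iff they lie in the same $(O_U(F),O_U(\A_F;\Lambda))$-double coset.

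For the first point, any $\phi=(\phi_v)_v\in O_U(\A_F)$ satisfies $\phi_v(\Lambda_v)=\Lambda_v$ at almost all $v$, so by Lemma \ref{lattices_localglobal_dedekind}b) there is a unique $R$-lattice $\phi(\Lambda)$ on $V$ whose completion at each $v$ is $\phi_v(\Lambda_v)$. Because each $\phi_v$ acts trivially on $W_v\supseteq M_v$, the lattice $\phi(\Lambda)$ contains $M$, and the local isometries $\phi_v\colon\Lambda_v\to\phi_v(\Lambda_v)$ restrict to the identity on $M_v$; this shows $i_{M,\phi(\Lambda)}\in\gen(i_{M,\Lambda})$.

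The key step is transitivity. Given $i_{M,\Lambda'}\in\gen(i_{M,\Lambda})$, the defining local equivalences provide $R_v$-isometries $\psi_v\colon\Lambda_v\to \Lambda'_v$ with $\psi_v\vert_{M_v}=\id$. Extending $\psi_v$ by $F_v$-linearity to an element of $O(V_v,Q_v)$ and using that $M$ is a lattice \emph{on} $W$, so that $F_vM_v=W_v$, one sees that $\psi_v$ fixes $W_v$ pointwise; since $V=W\perp U$ with both summands non-degenerate, a short isometry-and-orthogonality computation shows that $\psi_v$ preserves $U_v$ and therefore belongs to $O_{U_v}(F_v)$. At the cofinitely many places where $\Lambda_v=\Lambda'_v$ I may simply take $\psi_v=\id_{V_v}$, so $\psi=(\psi_v)_v$ is a bona fide adele in $O_U(\A_F)$ with $\psi(\Lambda)=\Lambda'$, again by the uniqueness in Lemma \ref{lattices_localglobal_dedekind}. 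The passage from ``isometry fixing $M_v$'' to ``element of $O_{U_v}(F_v)$'' is the one place where the hypothesis on the splitting is genuinely used and is the main obstacle, though it is a routine linear-algebra argument once $F_vM_v=W_v$ is invoked.

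For the stabilizer assertion, since $\phi\in O_U(\A_F)$ fixes every element of $W\supseteq M$, the equality $i_{M,\phi(\Lambda)}=i_{M,\Lambda}$ of inclusions is equivalent to $\phi(\Lambda)=\Lambda$, i.e.\ $\phi\in O_\Lambda(\A_F)$, yielding the asserted stabilizer $O_U(\A_F)\cap O_\Lambda(\A_F)=O_U(\A_F;\Lambda)$. Finally, the inclusions $i_{M,\phi(\Lambda)}$ and $i_{M,\phi'(\Lambda)}$ lie in the same class iff there is $\sigma\in O_V(F)$ with $\sigma\vert_M=\id_M$ and $\sigma(\phi(\Lambda))=\phi'(\Lambda)$; exactly the spanning argument from the transitivity step forces $\sigma\in O_U(F)$, while the second condition reads $(\phi')^{-1}\sigma\phi\in O_U(\A_F;\Lambda)$, equivalently $\phi\in O_U(F)\phi'O_U(\A_F;\Lambda)$. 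Combined with transitivity this produces the claimed bijection between classes of representations in $\gen(i_{M,\Lambda})$ and double cosets $O_U(F)\backslash O_U(\A_F)/O_U(\A_F;\Lambda)$.
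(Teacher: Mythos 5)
Your proof is correct and is exactly the unwinding of the definitions that the paper has in mind when it dismisses this theorem with the single word ``Obvious'': the action is constructed via Lemma \ref{lattices_localglobal_dedekind}, transitivity comes from extending the local isometries fixing $M_v$ pointwise to elements of $O_{U_v}(F_v)$ (using $F_vM_v=W_v$ and the orthogonality of the splitting), and the stabilizer and double-coset statements follow by the standard translation. No gaps; this is the intended argument, just written out.
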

\begin{proof}
  Obvious.
\end{proof}
\begin{remark}
The theorem implies that we have a bijection between classes of
representations in the genus of $i_{M,\Lambda}$ and double cosets
$O_U(F)\phi O_{U}(\A_F;\Lambda)$ in $O_U(\A)$.
\end{remark}
\begin{theorem}
  Let $R$ be the ring of integers of the number field $F$, let $M$ be
  an $R$- lattice on the non degenerate quadratic space $(W,Q')$
  over $F$. Then for any given dimension $n$ and fixed $d\in \N$ there
  are only finitely many classes of representations of $(M,Q')$ into
  $(\Lambda,Q)$, where $\Lambda$ is an integral $R$-lattice on a non
  degenerate quadratic space $(V,Q)$ of dimension $n$ over $F$ such
  that the norm of the volume ideal of $(\Lambda,Q)$ equals $d$. 
\end{theorem}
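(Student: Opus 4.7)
The approach will follow closely the inductive proof of Theorem \ref{finiteness_classnumber}. By the extension of that theorem from $\Z$ to the ring of integers $R$ noted in the remark following it (based on Humbert's work), there are only finitely many $R$-isometry classes of integral $R$-lattices of rank $n$ whose volume ideal has norm $d$. Two representations into non-isometric targets cannot be equivalent, so it suffices to fix an isometry class of $\Lambda$ and show that the $O_{\Lambda}(R)$-orbits on representations $\phi\colon M \to \Lambda$ form a finite set.

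To each such $\phi$ I would associate the orthogonal complement lattice $N := \phi(M)^{\perp}\cap \Lambda$. This is an integral $R$-lattice on the non-degenerate subspace $(\phi(M)\otimes_R F)^{\perp}\subseteq V$, and by the number-ring analogue of Theorem \ref{duals_and_indices}(b) (which follows from the $\Z$-version via localization at each maximal ideal) we have
\begin{equation*}
\vol(\phi(M))\,\vol(N) \;=\; \vol(\Lambda)\,\bigl(\mathrm{ind}_R(\pi(\Lambda)/N)\bigr)^{2},
\end{equation*}
with $\pi$ the orthogonal projection onto the complement of $\phi(M)\otimes_R F$. The index ideal on the right makes $\vol(N)$ a divisor of a quantity controlled by $\vol(\Lambda)$ and $\vol(M)$, so the norm of $\vol(N)$ is bounded in terms of $d$ and $M$ alone. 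Applying the generalized Theorem \ref{finiteness_classnumber} once more, the possible complements $N$ belong to only finitely many $R$-isometry classes.

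Since $M$ is fixed, this confines $\phi(M)\perp N$ to finitely many isometry classes. Integrality of $\Lambda$ yields the sandwich
\begin{equation*}
\phi(M)\perp N \;\subseteq\; \Lambda \;\subseteq\; \Lambda^{\#} \;\subseteq\; (\phi(M)\perp N)^{\#} \;=\; \phi(M)^{\#}\perp N^{\#},
\end{equation*}
and the outer quotient is a finite $R$-module whose order is bounded in terms of $\vol(M)\,\vol(N)$ and hence ultimately in terms of $d$ and $M$. Thus for each choice of $\phi(M)\perp N$ only finitely many $R$-superlattices can occur as $\Lambda$.

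The main obstacle lies in the last step: converting this discrete data into finiteness of actual $O_{\Lambda}(R)$-orbits on representations. An equivalence class of $\phi$ is determined by the isometry type of the triple $\bigl(\Lambda,\;\phi(M)\perp N\subseteq \Lambda,\;\phi\colon M\to \phi(M)\bigr)$; since the ambient module $(\phi(M)\perp N)^{\#}/(\phi(M)\perp N)$ is finite, the positions of $\Lambda$ in the sandwich form a finite set, and the residual ambiguity in the identification $M\cong \phi(M)$ is controlled by the action of $O(M)\times O(N)$ on this finite set, which a fortiori has finitely many orbits. Putting together the finite choices for (i) the isometry class of $\Lambda$, (ii) the isometry class of $N$, (iii) the position of $\Lambda$ in the sandwich, and (iv) the identification $M\cong \phi(M)$ up to this finite group action, one obtains only finitely many classes of representations, as claimed.
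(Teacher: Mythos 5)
Your proposal is correct and follows essentially the same route as the paper: restrict to inclusions, take the orthogonal complement $K=\Lambda\cap U$, bound its volume via Theorem \ref{duals_and_indices}(b), invoke the number-ring version of Theorem \ref{finiteness_classnumber} to confine $K$ to finitely many isometry classes, and then trap $\Lambda$ among the finitely many lattices between $M\perp K_i$ and $M^{\#}\perp K_i^{\#}$. Your preliminary reduction to finitely many isometry classes of $\Lambda$ is harmless but not needed, and your final bookkeeping is handled more directly in the paper by transporting the given inclusion via an isometry $K\to K_i$ to one of finitely many explicit inclusions.
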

\begin{proof}
  We can restrict attention to inclusions $M\to \Lambda$. With
  $V=W\perp U$ we put $K=\Lambda\cap U$. By Theorem
  \ref{duals_and_indices} (which, as noticed in Section
  \ref{dedekindsection}) stays valid for lattices over $\fo$ we have
  $\vol_{b} K \mid \vol_b(M) \vol_b\Lambda$ by b) of that Theorem. By
  the version of Theorem \ref{finiteness_classnumber} for number rings
  (see the remark after that theorem) there are only finitely many
  possibilities for the isometry class of $K$, denote
  representatives by $(K_i,Q_i)$ and by $U_i$ the $F$-space generated
  by $K_i$, put $V_i=W\perp U_i$. If $\psi:K \to K_i$ is an isometry,
  the inclusion of $M$ into the lattice
  $(\id_W,\psi)(\Lambda)\subseteq M^\#\perp K_i^\#\subseteq V_i$
  (where the duals are taken with regard to the spaces of the lattices
  $M,K_i$) is in the class of the inclusion $M\to\Lambda$ and has
  image in one of the finitely many lattices lying between $M\perp
  K_i$ and  $M^\#\perp K_i^\#$. Taken together, the given inclusion
  $M\to \Lambda$ is in the class of one of finitely many inclusions,
  as asserted.   
\end{proof}
It is of interest to study representations of degenerate lattices as
well, e.g. representations of $0$. Indeed with a suitable primitivity
condition we can extend the finiteness statement above to that
situation.
\begin{definition}
Let $\phi:M\to \Lambda$ be a representation of lattices on quadratic
spaces $(W,Q'), (V,Q)$ and extend $\phi$ to an isometric map (also denoted by
$\phi$) from $W$ to $V$.

The representation $\phi$ is said to be of {\em imprimitivity bounded by
$c\in \N$} if one has $(\phi(W)\cap\Lambda:\phi(M))\le c$. We also call
the index  $(\phi(W)\cap\Lambda:\phi(M))$ the {\em imprimitivity index} of
the representation $\phi$.
\end{definition}
\begin{remark}
  Obviously $\phi$ is primitive if and only if it is of imprimitivity
  bounded by $1$. Since representations in the same class have the
  same imprimitivity index and the imprimitivity index can assume arbitrarily
  large values for representations of a degenerate lattice, it is
  clear that a finiteness statement as in the theorem above can not
  hold for representations of degenerate lattices unless one imposes a
  bound on the imprimitivity index.
\end{remark}
\begin{theorem}
 Let $M$ be a lattice on the quadratic space $(W,Q')$
  over the number field $F$. Then for any given dimension $n$ and fixed $c,d\in \N$ there
  are only finitely many classes of representations of $(M,Q')$ into
  $(\Lambda,Q)$ of imprimitivity bounded by $c$, where $\Lambda$ is an integral lattice on a non
  degenerate quadratic space $(V,Q)$ of dimension $n$ over $F$ such
  that the norm of the volume ideal of $(\Lambda,Q)$ equals $d$.    
\end{theorem}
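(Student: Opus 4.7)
The strategy is to extend the argument of the preceding theorem by first bounding the \emph{imprimitivity} of each representation and then handling the possible degeneracy of $(W,Q')$ via a hyperbolic splitting of the radical.

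First, given any representation $\phi:M\to\Lambda$ of imprimitivity at most $c$, extend $\phi$ to an isometric $F$-linear embedding $W\hookrightarrow V$ and set $\tilde M:=\phi^{-1}(\phi(W)\cap\Lambda)$. Then $M\subseteq \tilde M$ with $[\tilde M:M]\le c$, so $\tilde M$ is sandwiched between $M$ and a fixed overlattice of $M$ determined by $c$; in particular there are only finitely many possibilities for $\tilde M$, since the finite torsion $R$-module $\{x\in W: c!\,x\in M\}/M$ has finitely many submodules. The induced map $\tilde\phi:\tilde M\to\Lambda$ is primitive, and an easy diagram chase shows that the sublattice $\tilde M\subseteq W$ and the class of $\tilde\phi$ depend only on the class of $\phi$ and together determine it. Hence it suffices to fix $\tilde M$ and show that there are only finitely many classes of primitive representations $\tilde M\to\Lambda$ into integral $\Lambda$ of the prescribed dimension $n$ and volume norm $d$.

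If $W=F\tilde M$ is non-degenerate, we are done by the preceding theorem applied to $\tilde M$ in place of $M$, noting that the volume ideal of $\tilde M$ is controlled by that of $M$ and $c$ via Lemma \ref{determinantrelation_lattices}. Otherwise, let $\rho:=\rad(W)$ and $N:=\tilde M\cap\rho$. Using the projectivity of the torsion-free $R$-module $\tilde M/N$ over the Dedekind ring $R$, choose a splitting $\tilde M=N\oplus \tilde M_0$ where $\tilde M_0$ is a lattice on a non-degenerate complement $W_0$ of $\rho$ in $W$. For any primitive embedding $\tilde\phi:\tilde M\to\Lambda$ the image $\tilde\phi(N)$ is a primitive totally isotropic sublattice of $\Lambda$. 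Localising at each place $v$ and invoking Theorem \ref{isotropic_hyperbolic}, one produces a hyperbolic sublattice $H_v\subseteq\Lambda_v$ of rank $2\rk(N)$ containing $\tilde\phi(N)_v$ which splits off orthogonally by Lemma \ref{modular_split}; these local splittings patch by Lemma \ref{lattices_localglobal_dedekind} into a global orthogonal decomposition $\Lambda=H\perp\Lambda'$, with $H$ a regular hyperbolic lattice and $\Lambda'$ an integral lattice of rank $n-2\rk(N)$ on a non-degenerate subspace whose volume ideal has norm bounded in terms of $d$, $c$, and $M$.

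To conclude, the projection of $\tilde\phi(\tilde M_0)$ into $\Lambda'$ is a primitive representation of the non-degenerate lattice $\tilde M_0$ into $\Lambda'$, for which the preceding theorem yields finitely many classes. The remaining data determining $\tilde\phi$---namely the embedding $N\hookrightarrow H$ and the gluing between $\tilde\phi(\tilde M_0)$ and $H$---live in the finite $R$-modules $H/\tilde\phi(N)$ and a suitable finite quotient of $H$, and so contribute only finitely many possibilities. The main technical obstacle is to verify that $\tilde\phi(N)_v$ is regularly embedded in $\Lambda_v$, the condition needed to apply Theorem \ref{isotropic_hyperbolic} locally: at the cofinitely many places $v$ where $\Lambda_v$ is unimodular this holds automatically for any primitive sublattice, while at the finitely many places dividing the volume ideal of $\Lambda$ a careful local analysis using the Jordan decomposition (Corollary \ref{jordan_decomposition}) of $\Lambda_v$ is required, and one has to check that this too leaves only finitely many local options.
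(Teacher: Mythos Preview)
Your reduction to primitive representations in the first paragraph is correct and matches the paper's opening move. The gap is in your treatment of the radical: you attempt to enclose $\tilde\phi(N)$ in a regular hyperbolic sublattice $H\subseteq\Lambda$ that splits off orthogonally, and you yourself flag regular embedding at the bad places as ``the main technical obstacle'' without resolving it. This obstacle is fatal to the approach as stated, not merely technical. Take the rank-two $\Z$-lattice $\Lambda=\Z x\oplus\Z y$ with $Q(x)=Q(y)=0$ and $b(x,y)=p$ for a prime $p$. The sublattice $N=\Z x$ is primitive and totally isotropic, but $\tilde b^{(N)}(\Lambda)=p\,N^*\subsetneq N^*$, so $N$ is not regularly embedded and Theorem~\ref{isotropic_hyperbolic} does not apply; indeed $\Lambda$ contains no unimodular rank-two sublattice at all since ${\det}_b(\Lambda)=-p^2$, so no hyperbolic $H$ containing $N$ exists, locally at $p$ or globally. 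The same phenomenon occurs inside any non-unimodular Jordan component, and ``a careful local analysis using the Jordan decomposition'' cannot manufacture a global orthogonal splitting $\Lambda=H\perp\Lambda'$ that simply does not exist. (A secondary issue: even granting such a splitting, you have not justified why the projection of $\tilde\phi(\tilde M_0)$ to $\Lambda'$ is primitive, nor bounded the gluing data uniformly over all admissible $\Lambda$.)

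The paper's argument never asks for a hyperbolic complement. In the totally isotropic case it fixes a basis $(x_i)$ of $N$, reads off the elementary divisors $c_i$ of the sublattice $\tilde b^{(N)}(\Lambda)\subseteq N^*$, and shows that $c_1\cdots c_r$ divides ${\det}_b(\Lambda)$, so only finitely many tuples $(c_i)$ occur. Choosing $z_i\in c_i^{-1}\Lambda$ with $b(x_j,z_i)=\delta_{ij}$ and translating each $c_iz_i$ by a vector of $N$ so that $0\le b(c_iz_i,c_jz_j)\le c_i$ for $j\le i$, one obtains a \emph{non-degenerate} sublattice $M'=N\oplus\sum_i R\,c_iz_i\subseteq\Lambda$ whose Gram matrix lies in a finite list. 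The previous theorem (non-degenerate case) applied to the inclusion $M'\hookrightarrow\Lambda$, together with the finitely many inclusions $N\hookrightarrow M'$, finishes the totally isotropic case; the general case is then handled by writing $M=\rad(M)\perp M_1$ and combining the two pieces. The point is that by allowing the pairing $b(x_i,c_jz_j)=c_j\delta_{ij}$ rather than insisting on $\delta_{ij}$, one sidesteps the regular-embedding hypothesis entirely.
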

\begin{proof} Again we have to consider only inclusions $i_{M,\Lambda}:M\to \Lambda$.
  We can moreover restrict attention to primitive representations: If
  $i_{M\Lambda}:M\to \Lambda$ is (without loss of generality) an
  inclusion of imprimitivity index   bounded by $c$ and $M'=W\cap
  \Lambda$, then $M$ is one of the finitely many sublattices of $M'$
  of index $\le c$, so if there are only finitely many possibilities
  for the class of the inclusion $i_{M',\Lambda}$, the same holds for
  the class of $i_{M,\Lambda}$.

For simplicity we treat the case $\R=\Z$ first and indicate the
necessary changes for general $R$ in the end.

We assume first that $\rad(M)=M$, i.e., $Q\vert_W=0$, $(M,Q)$ is
totally isotropic. With
$\tilde{b}^{(W)}:V\to W^*$ as usual $\tilde{b}^{(M)}(\Lambda)$ is a
sublattice of full rank of  $M^*$ (which we identify with $\{f\in W^* \mid
f(M)\subseteq R\}$), so there are a basis $(y_1^*,\ldots,y_r^*)$ of
$M^*$ and $c_1,\ldots c_r \in R$ with $c_i\mid c_{i+1}$ such that the
$c_iy_i^*$ form a basis of $\tilde{b}^{(M)}(\Lambda)$. We choose $z_i
\in c_i^{-1}\Lambda\subseteq V$ with $\tilde{b}^{(M)}(z_i)=y_i^*$ and denote by
$(x_1,\ldots,x_r)$ the basis of $M$ dual to $(y_1^*,\ldots,y_r^*)$, by
primitivity of $M$ it can be extended to a basis of $\Lambda$. We
have then $b(x_i,\Lambda)=c_i\Z$ and see by looking at the Gram matrix
of this basis of $\Lambda$ that $c_1\dots c_r$ divides
$\det(\Lambda)$. Since $\det(\Lambda)$ is fixed this leaves only
finitely many possibilities for the $c_i$.
 
The $c_iz_i$ are determined only modulo elements of the orthogonal
complement $K\supseteq M$ of $M$ in $\Lambda$, so changing $c_iz_i$ by
vector in $\sum_{j=1}^i\Z x_j\subseteq M$ for $1\le i \le r$ we can achieve
$0\le b(c_iz_i,c_jz_j)\le c_i$ for $j \le i$. The lattice $M'=M+\sum_{i=1}^r
\Z c_iz_i\subseteq \Lambda $ is then a non degenerate sublattice of
$\Lambda$ of rank $2r$ whose Gram matrix with respect to the basis
$(x_1,\ldots,x_r,z_1,\ldots,z_r$ is from a finite supply, so there are
only finitely many possible isometry classes for $M'$. Moreover, for
each of these $M'$ there are only finitely many possible classes of
inclusions $M\to M'$. By the previous
theorem each of the possible non degenerate $M'$  has only finitely
many classes of inclusions  into lattices $\Lambda$ of rank $n$ and determinant $d$.

To sum up, each inclusion $M\to \Lambda$ is in the class of one of the
finitely many composite inclusions $M\to M'\to \Lambda$ constructed
above, which proves the assertion in the case of totally isotropic $M$
over $\Z$. 

For general $M$ we write $M=\rad(M)\perp M_1$ with non degenerate $(M_1,Q)$.
By the previous theorem there are only finitely many classes of
inclusions $i_{M_1,\Lambda}:M_1 \to \Lambda$ with $\Lambda$ integral of rank $n$ and
determinant $d$, these lead to finitely many possible isometry classes
for the orthogonal complement $K$ of $M_1$ in $\Lambda$. Our result in
the totally isotropic case implies that we have to consider only
finitely many classes of inclusions $\rad(M)\to K$. Finally, any
inclusion $i_{M,\Lambda}:M\to \Lambda$ can be written as
$(i_{\rad(M),K},i_{M_1,\Lambda})$ as above, so there are only finitely
many classes of such inclusions, as asserted.

The case of general $R$ requires some modifications in the argument
for the totally isotropic case since we have used the existence of
bases at several points.

We have now $M^*=\oplus \mfa_iy_i^*, \tilde{b}^{(M)}(\Lambda)= \oplus
\mfc_i \mfa_i y_i^*$ with linearly independent $y_i*\in W^*$, fractional ideals
$\mfa_i$ from a fixed set of representatives of the ideal classes in
$R$ and integral ideals $\mfc_i\subseteq R$. We choose 
again $z_i\in (\mfa_i\mfc_i)^{-1}\Lambda \subseteq V$ with
$\tilde{b}^{(W)}(z_i)=y_i^*$ and $x_1,\ldots,x_r\in W$ 
with $y_i^*(x_j)=\delta_{ij}=b(z_i,x_j)$. We have then
$M=\oplus_{i=1}^r \mfa_i^{-1}x_i$ and
$b(x_i,\Lambda)=\mfa_i\mfc_i$. Over each completion all the ideals are
principal, so we can argue locally as in the case $R=\Z$ to see that
$\mfc_1\dots\mfc_r$ divides the volume ideal of $\Lambda$, thus there
are again only finitely many possibilities for these ideals.  Changing
the $z_i$ as above modulo $\mfc_i^{-1}\mfa_i^{-1}\sum_{j=1}^i \mfa_j^{-1}x_j\subseteq  $ we
can change $b(z_i,z_j)\in (\mfa_i\mfa_j\mfc_i\mfc_j)^{-1}$ modulo
$(\mfc_i\mfa_i\mfa_j)^{-1}$ for $j \le i$, which allows us to reduce
the $b(z_i,z_j)$ to a finite set of representatives of
$(\mfa_i\mfa_j\mfc_i\mfc_j)^{-1}$ modulo
$(\mfc_i\mfa_i\mfa_j)^{-1}$. There are hence only finitely many
possibilities for the isometry class of $M \oplus \sum_{i=1}^r
\mfa_i\mfc_i z_i \subseteq \Lambda$, and the rest of the proof
proceeds as above.      
\end{proof}
\begin{theorem}
 Let $R=R_v$  be the ring of integers in the local non archimedean
 field $F=F_v$, let $M$ be a lattice on the quadratic space $(W,Q')$
 over $F$, let $c \in \N$. Then there are only finitely many classes of
 representations of  imprimitivity bounded by $c$ into non degenerate
 quadratic lattices
 $(\Lambda,Q)$ of fixed rank $n$ and determinant $d$.
\end{theorem}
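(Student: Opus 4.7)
The plan is to transfer the strategy of the preceding global theorem to the local non-archimedean setting, exploiting the fact that $R_v$ is a discrete valuation ring so that all ideals are powers of a uniformizer $\pi$ and all finitely generated torsion-free $R_v$-modules are free. Throughout I replace representations by inclusions via the Definition-Lemma on special representations.

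First I would reduce to primitive inclusions. If $i_{M,\Lambda}$ has imprimitivity index $\le c$, set $M' := W\cap\Lambda$; then $M\subseteq M'\subseteq \pi^{-k}M$ for the bounded integer $k=v_v(c!)$, and since $\pi^{-k}M/M$ has only finitely many $R_v$-submodules, only finitely many intermediate lattices $M'$ occur. For each such $M'$, the class of $i_{M,\Lambda}$ is determined by the pair $(M\subseteq M',\; \cls(i_{M',\Lambda}))$ with $i_{M',\Lambda}$ primitive, so it suffices to bound the classes of primitive inclusions.

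For primitive inclusions with $(W,Q')$ non-degenerate, the orthogonal complement $K=M^{\perp}\cap\Lambda$ is a lattice on $U:=W^{\perp}$, and by the local form of Theorem \ref{duals_and_indices} (b) its volume ideal divides $\vol_b(M)\vol_b(\Lambda)$, so $v_v(\det_b K)$ is bounded. By the Jordan theory of chapter 5 (Deflemma \ref{jordan_decomposition} and Corollary \ref{jordan_uniqueness}), lattices over $R_v$ of fixed rank and bounded determinant valuation fall into finitely many isometry classes: in a Jordan decomposition the ideals $J_i=(\pi^{e_i})$ with $e_1<\dots<e_s$ satisfy $\sum_i \rk(K_i)\, e_i \le v_v(\det_b K)$, and the modular constituents of each rank admit only finitely many classes. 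Hence $K$ lies in one of finitely many classes, and $\Lambda$ sits in the finite chain of $R_v$-lattices between $M\perp K$ and $M^{\#}\perp K^{\#}$, yielding finitely many classes of $i_{M,\Lambda}$. For primitive totally isotropic $M$ (so $Q'\vert_M=0$), I would mimic the global argument: take a basis $(x_1,\dots,x_r)$ of $M$ and the invariant factors $\pi^{e_1}\mid\dots\mid\pi^{e_r}$ of $\tilde b^{(M)}(\Lambda)\subseteq M^{*}$; inspection of the Gram matrix built from $(x_1,\dots,x_r)$ completed to a basis of $\Lambda$ shows that $\pi^{e_1+\dots+e_r}\mid\det(\Lambda)$, giving finitely many possible exponent tuples. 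Choose $z_i\in\pi^{-e_i}\Lambda$ dual to $x_i$, and by replacing $z_i$ with $z_i$ plus an element of $\sum_{j\le i}R_v x_j\subseteq M$ reduce the bilinear values $b(\pi^{e_i}z_i,\pi^{e_j}z_j)\in\pi^{-(e_i+e_j)}R_v$ modulo $\pi^{-(e_i+e_j)+\min(e_i,e_j)}R_v$. The resulting non-degenerate sublattice $M'=M\oplus\sum_i R_v\pi^{e_i}z_i\subseteq\Lambda$ of rank $2r$ then admits only finitely many Gram matrices, hence finitely many isometry classes, and the already established non-degenerate case bounds the classes of inclusions $M'\hookrightarrow\Lambda$. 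The general case follows by splitting $M=\rad(M)\perp M_1$ and pairing an inclusion $i_{M_1,\Lambda}$ with an inclusion $i_{\rad(M),K}$ where $K=M_1^{\perp}\cap\Lambda$.

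The main obstacle will be the totally isotropic step, specifically the clean bookkeeping that guarantees only finitely many isometry classes for the auxiliary non-degenerate lattice $M'$ after the off-diagonal reductions, together with a precise formulation of the finiteness of isometry classes of non-degenerate $R_v$-lattices of bounded determinant valuation that is genuinely a corollary of the Jordan decomposition theorems rather than requiring a fresh \emph{ad hoc} argument; everything else is routine adaptation of the global proof to the simpler local setting where all fractional ideals are principal.
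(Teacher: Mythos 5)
Your proposal is correct and follows essentially the same route as the paper, whose own proof is just the remark that one restricts to primitive inclusions, observes via Jordan decompositions that local lattices of fixed rank and determinant fall into finitely many isometry classes, and then repeats the global argument verbatim; you have simply written out that global argument (bounded complement $K$, the finite chain between $M\perp K$ and $M^{\#}\perp K^{\#}$, the dual-basis construction for totally isotropic $M$, and the splitting off of the radical) in the local setting. The details you flag as delicate (finiteness of classes of modular constituents, hence of $R_v$-lattices of bounded rank and determinant) are exactly the facts the paper invokes with the same brevity, so there is no gap.
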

\begin{proof}
  As in the global case we can restrict attention to primitive
  inclusions $i_{M,\Lambda}:M\to\Lambda$. By considering the possible
  Jordan decompositions of $\Lambda$ of fixed rank and determinant one
  sees that there are only finitely many possibilities for the
  isometry class of $\Lambda$. The rest of the proof  goes through as in the global case.  
\end{proof}
\begin{corollary}
  For $M$ as in the theorem and $\Lambda$ fixed there are only
  finitely many classes of representations of $M$ by $\Lambda$.

All representations of $M$ by $\Lambda$ for which the image of $M$ is
regularly embedded into $\Lambda$ are in the
same $R=R_v$-class, in particular, if $M$ is regular, there is only
one class of representations of $M$ by $\Lambda$. 
\end{corollary}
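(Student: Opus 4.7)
The plan is to handle the two parts of the corollary separately. For the finiteness assertion I would reduce to the previous theorem by showing that, once $\Lambda$ is fixed, the imprimitivity index of any representation $\phi:M\to \Lambda$ is automatically bounded. For the regularly embedded assertion I would invoke Witt's extension theorem over a local ring (Theorem~\ref{extension_theorem_localring}).

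For the finiteness, fix any representation $\phi:M\to\Lambda$ and set $M':=\phi(W)\cap\Lambda$, the primitive closure of $\phi(M)$ in $\Lambda$. Both $M'$ and $\phi(M)$ are $R_v$-lattices on the same subspace $\phi(W)\subseteq V$, so the imprimitivity index $j:=v((\phi(W)\cap\Lambda:\phi(M)))$ satisfies
\begin{equation*}
  v(\vol(\phi(M)))\;=\;2j+v(\vol(M'))
\end{equation*}
(this is the local form of Lemma~\ref{determinantrelation_lattices}). Since $\phi$ is an isometry, $v(\vol(\phi(M)))=v(\vol(M))$ is a constant depending only on $M$, and since $M'\subseteq\Lambda$ inherits integrality from $\Lambda$, we have $v(\vol(M'))\ge 0$. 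Hence $j\le \tfrac{1}{2}v(\vol(M))$, which is a constant $c$ depending only on $M$ and $\Lambda$. Applying the previous theorem with this $c$ and with $n=\rk(\Lambda),\ d=\vol(\Lambda)$ fixed gives the desired finiteness.

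For the second assertion, let $\phi,\phi':M\to\Lambda$ be two representations whose images $N_1:=\phi(M)$ and $N_2:=\phi'(M)$ are regularly embedded in $\Lambda$. Since $R_v$ is a discrete valuation ring, $M$ and hence $N_1,N_2$ are free of finite rank. The composite $\sigma:=\phi'\circ\phi^{-1}:N_1\to N_2$ is an isometry of regularly embedded free submodules, so Theorem~\ref{extension_theorem_localring} extends it to some $\widetilde{\sigma}\in O(\Lambda,Q)$. Then $\widetilde{\sigma}\circ\phi=\phi'$, putting $\phi$ and $\phi'$ in the same class. Finally, if $M$ is regular then so is $\phi(M)$ for every representation $\phi$, and any regular submodule $N\subseteq\Lambda$ is automatically regularly embedded: the map $\tilde{b}^{(N)}:N\to N^*$ is already an isomorphism, so $\tilde{b}^{(N)}(\Lambda)\supseteq\tilde{b}^{(N)}(N)=N^*$. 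Hence every representation of a regular $M$ has regularly embedded image, and by the previous step they all lie in a single class.

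The main obstacle is the bounding argument in the first step; the key point is that once one knows $\phi(M)$ sits between the primitive closure $M'$ and itself inside an integral lattice $\Lambda$, the relation among the three volume ideals forces $j$ to lie in a fixed finite range even though $M'$ itself is not fixed. Once this is established, both assertions reduce cleanly to results already proved in the text.
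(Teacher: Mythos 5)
Your second assertion is proved exactly as in the paper: the paper's entire justification is "follows from Theorem \ref{extension_theorem_localring}", and your argument (freeness over the DVR, extend $\phi'\circ\phi^{-1}$ by Witt's theorem for local rings, observe that a regular submodule is automatically regularly embedded) is just the expansion of that citation, correctly carried out. For the finiteness assertion the paper says only "consequence of the theorem", and here you do something genuinely more: you explain \emph{why} the imprimitivity bound in the theorem's hypothesis can be dropped once $\Lambda$ is fixed, by comparing volumes of $\phi(M)$ and its primitive closure $M'$ via $v(\vol(\phi(M)))=2j+v(\vol(M'))$ and using $v(\vol(M'))\ge 0$. That is a clean and correct reduction, and it is the step the paper leaves implicit.

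One caveat you should make explicit: your bound $j\le\frac12 v(\vol(M))$ is only meaningful when $M$ is non-degenerate, since for degenerate $M$ one has $\vol(M)=0$ and the inequality is vacuous. This is not a repairable omission in your argument but a genuine restriction on the statement itself: for a totally isotropic rank-one $M=Rx$ and $\Lambda$ a hyperbolic plane, the representations $x\mapsto\pi^k e$ lie in pairwise distinct classes, so the unrestricted finiteness claim fails for degenerate $M$ — consistent with the paper's own earlier remark that for degenerate lattices one must retain the bound on the imprimitivity index. So you should either state that $M$ is assumed non-degenerate (equivalently $(W,Q')$ non-degenerate), or keep the imprimitivity bound from the theorem in the degenerate case; with that said, your proof is complete. (A second, purely cosmetic point: both the integrality of $M'$ used in part one and the hypothesis $Q(\Lambda)\subseteq R$ needed to apply Theorem \ref{extension_theorem_localring} in part two require $\Lambda$ integral, which is harmless since one may always rescale $Q$, but is worth a sentence.)
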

\begin{proof}
  The finiteness assertion is a consequence of the theorem. The
  assertion for regularly embedded  $M$ follows from Theorem \ref{extension_theorem_localring}.
\end{proof}
\begin{theorem}
  Let $R$ be the ring of integers in the number field $F$, let $M$ be
  an $R$ lattice on the quadratic space $(W,Q')$ over $F$. Then the
  number of genera of representations of $M$ by lattices $\Lambda'$ in
  the genus of a fixed non degenerate quadratic lattice $(\Lambda,Q)$
  with index of imprimitivity bounded by some fixed integer $c$ is finite.
\end{theorem}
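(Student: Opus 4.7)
The plan is to combine a reduction to primitive representations with the local finiteness theorem just proved and Witt's extension theorem over a local ring (Theorem \ref{extension_theorem_localring}), the last of which will force uniqueness of the local class at almost every place.

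Using the Hasse--Minkowski theorem (as in the preceding lemmas of this section) I would work within a fixed ambient space $V$ containing $W$, viewing representations as honest inclusions $M \hookrightarrow \Lambda'$ with $\Lambda'$ a lattice on $V$ in $\gen(\Lambda)$. Given such an inclusion of imprimitivity index $k \le c$, set $M_i := W \cap \Lambda' \supseteq M$; then $c!\, M_i \subseteq M$, so $M_i$ lies between $M$ and the fixed $R$-lattice $\{x \in W \mid c!\, x \in M\}$. The quotient of the latter by $M$ is a finite $R$-module, so there are only finitely many $R$-lattices $M_i$ to consider. Moreover, if $M \hookrightarrow \Lambda'_1$ and $M \hookrightarrow \Lambda'_2$ lie in the same genus via local isometries $\psi_v \in O_V(F_v)$ with $\psi_v\vert_{M_v} = \id$, then $\psi_v$ restricts to the identity on $W_v$ (since $M_v$ spans $W_v$), so $W_v \cap \Lambda'_{1,v} = W_v \cap \Lambda'_{2,v}$ and therefore $M_{i_1} = M_{i_2}$ after collecting over all $v$. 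Hence each genus of the given sort has a well-defined overlattice $M' = M_i$ and consists of primitive inclusions $M' \hookrightarrow \Lambda'$, so it suffices to bound, for each of the finitely many admissible $M'$, the number of genera of primitive inclusions $M' \hookrightarrow \Lambda'$ with $\Lambda' \in \gen(\Lambda)$.

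For the primitive case, two primitive inclusions lie in the same genus iff they give the same $R_v$-class at every place $v$. The previous theorem furnishes, for each $v$, a finite set $X_v$ of $R_v$-classes of primitive representations of $M'_v$ into $R_v$-lattices of the fixed rank and volume ideal of $\Lambda_v$, and the assignment (genus $\mapsto$ tuple of local classes) injects the set of genera into $\prod_v X_v$. The crucial claim is that $|X_v| = 1$ for almost all $v$. Let $S \subset \Sigma_F$ consist of the archimedean places, the places dividing $2$, and the finitely many remaining places where $M'_v$ or $\Lambda_v$ fails to be even unimodular. For $v \notin S$ both $M'_v$ and $\Lambda'_v = \Lambda_v$ are regular (even unimodular) free $R_v$-lattices; since $\Lambda_v$ is regular, any primitive embedding of $M'_v$ into $\Lambda_v$ is a regularly embedded submodule by Remark \ref{regularly_embedded_remark}, and Theorem \ref{extension_theorem_localring} produces for any two such embeddings an element of $O_{\Lambda_v}(R_v)$ intertwining them, so $|X_v| = 1$. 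The image of the set of genera then lies in the finite product $\prod_{v \in S} X_v$, proving the assertion.

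The main obstacle I anticipate is the third step: choosing $S$ large enough to pin down the good behaviour at every $v \notin S$, and verifying that at such places any primitive embedding is automatically regularly embedded (using the regularity of $\Lambda_v$) and that the free/regularly-embedded hypotheses of Theorem \ref{extension_theorem_localring} are met. A subsidiary subtlety to record is that $\Lambda' \in \gen(\Lambda)$ forces $\Lambda'_v \cong \Lambda_v$ for every $v$, so $S$ depends only on the fixed data $M$, $\Lambda$, $c$ and not on the varying $\Lambda'$ in the genus.
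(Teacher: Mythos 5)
Your overall strategy is exactly the paper's (the paper's own proof is a two-sentence sketch: the genus of a representation is determined by the tuple of its local classes, these local class numbers are finite by the preceding theorem, and they equal $1$ at almost all places because there the image is regularly embedded and Theorem \ref{extension_theorem_localring} forces a single local class). Your reductions — to inclusions into lattices on a fixed $V$, to a finite list of overlattices $M'$ with $M\subseteq M'\subseteq (c!)^{-1}M$, and the observation that a genus determines $M'$ because the local isometries fix $W_v$ pointwise — are all correct and fill in details the paper omits.

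There is, however, one genuine defect: your set $S$ includes "the places where $M'_v$ \dots fails to be even unimodular". If $(W,Q')$ is degenerate — which is precisely the case this theorem is designed for, since for non-degenerate $M$ the imprimitivity index is automatically bounded by the determinant and the explicit bound $c$ would be superfluous — then $M'_v$ is regular at \emph{no} place, so your $S$ is all of $\Sigma_F$ and the final step collapses. The repair is immediate, because regularity of $M'_v$ is never used in your own argument: what Theorem \ref{extension_theorem_localring} (equivalently, the corollary preceding the theorem) needs is only that the two copies of $M'_v$ are free of finite rank and \emph{regularly embedded} in $\Lambda'_v$, and regular embedding already follows from primitivity of $M'_v$ in $\Lambda'_v$ together with regularity of $\Lambda'_v\cong\Lambda_v$ (Remark \ref{regularly_embedded_remark}); a regularly embedded submodule need not itself be regular (think of a maximal totally isotropic piece of a hyperbolic plane). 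So define $S$ to consist of the archimedean places, the dyadic places, and the finitely many $v$ at which $\Lambda_v$ fails to be even unimodular — dropping every condition on $M'_v$ — and the rest of your proof goes through verbatim for degenerate $M$ as well.
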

\begin{proof}
  It is easily seen that for almost all places $v$ of $F$ the
  representations of $M_v$ by $\Lambda_v$ are regularly embedded and
  that the number of genera of representations is the product over all
  $v$ of the numbers $c_v(M_v,\Lambda_v)$ of representations of $M_v$
  by $\Lambda_v$ (with the boundedness condition on the
  imprimitivity). Since almost all of these numbers are $1$ the
  assertion follows. 
\end{proof}
\section{Representation and measures in the adelic orthogonal group}
Let $(V,Q)$ be a quadratic space over the number field $F$, let $\Lambda$ be
a lattice on $V$. The adelic (special) orthogonal group 
$O_V(\A)=\{ \phi=(\phi_v)_{v\in \Sigma_F}\mid \phi_v \in O_V(F_v),
\phi_v(\Lambda)=\Lambda \text{ for almost all }v\}$ (resp. $SO_V(\A)$) of $(V,Q)$
carries the restricted direct product topology coming from the
topological groups $SO_V(F_v)$, i.e., one has a basis of open sets of
the form $\prod_vU_v$, where each $U_v$ is open in $O(F_v)$ and
almost all $U_v$ are equal to $O_{V}(F_v;\Lambda)=\{\phi \in
O(F_v)\mid \phi(\Lambda)=\Lambda\}$. With respect to this topology   $O_V(\A)$
is known to be a locally compact group and as such carries an up to
scalar multiplication unique left invariant measure or {\em Haar
measure} \cite{bourb_int}. Moreover, if $(V,Q)$ is non degenerate
(which we assume in the sequel), the group is
semisimple and the Haar measure is biinvariant. We normalize the Haar
measure on the adele group $\A=\A_F$ by requiring that $\A_F/F$ has
measure $1$. Using some algebraic
geometry a Haar measure on $G(\A)$  can be defined (for any linear algebraic group $G$ defined
over $F$)  as a converging product  $\mu=\prod_v\mu_v$ over all places
of $F$ of local Haar measures, using
a left differential form $\omega$ ( a so 
called gauge form) on $G$ defined over $F$;
a gauge form  is unique up to $F$-multiples. The local integrals and
measures are then defined by identifying  coordinate neighborhoods in
$G(F_v)$ with subsets of $F_v^{\dim G}$ and the Haar
measure on $\A_F^{\dim G}$ induced from the Haar measure on $\A_F$
fixed above. Changing $\omega$ by a factor $c$
changes the local measures $\mu_v$ by the factor $\vert c \vert_v$, so that the product
formula of algebraic number theory implies that the Haar measure on
the adelic orthogonal group attached to a gauge form $\omega$ defined
over $F$ is independent of the choice of the gauge
form $\omega$.   This unique Haar measure $\mu$ on $G$ attached to a gauge
form is then called the 
{\em Tamagawa measure} $\mu$, and it makes sense to define the {\em Tamagawa
number} of $G$ as $\tau(G)=\mu(G(F)\backslash G(\A))$. It is known that
$\tau(G)=1$ for connected simple simply connected groups and that
$\tau(SO_V)=2$ for non degenerate $(V,Q)$ independent of the (non
degenerate) quadratic space $(V,Q)$. We will take this for granted in
the sequel, details can be found in
\cite{weil_aag,kottwitz_tamagawa}. 
It will be convenient to write 
 \begin{equation*}
  O_V(\A_f)=\{ \phi=(\phi_v)_{v\in (\Sigma_F\setminus\infty}\mid \phi_v \in O_V(F_v),
\phi_v(\Lambda)=\Lambda \text{ for almost all }v\} 
\end{equation*} for the subgroup of {\em finite adeles} in the adelic
orthogonal group and  $O_V(F_\infty)=\prod_{v \in \infty}O(F_v)$ for
the archimedean component of $O_V(\A)$, we have then
$O_V(\A)=O_V(F_\infty) \times O_V(\A_f)$. We may also view both
$O_V(F_\infty)$ and $O_V(\A_f)$ as subgroups of $O_V(\A)$ by setting
the respective other components equal to $1$. Analogous notations are
used for the special orthogonal group.

\medskip
We have to explain in more detail what we mean by $\mu(G(F)\backslash
G(\A))$.
\begin{definition}
Let $G$ be a group acting transitively on the topological space $S$ which carries a
$G$-invariant Borel measure. A Borel set $\mcF\subseteq S$ is called a
{\em fundamental  domain}  for the action of $G$ if $S$ is the disjoint
union of its translates $g\mcF$ for $g \in G$. The measure of $\mcF$
is then also called the measure of the homogeneous space $G\backslash S$.
\end{definition}
\begin{remark}
  \begin{enumerate}
  \item    Frequently one requires  a fundamental domain to be connected and to
  satisfy some smoothness condition on the boundary. This will play no
  role in our case, so we omit such conditions.
\item It is known that two fundamental domains for the action of $G$
  on $S$ have the same measure, so the measure of $G\backslash S$ is a
  well defined quantity.
  \end{enumerate}
\end{remark}
\begin{definition}
Let $(V,Q)$ be a quadratic space over $F$  and let $\Lambda$ be an
$R$-lattice on $V$, let $N\in \N$. 
  
Then 
\begin{align*}
  H&:=O_{V}(F;\Lambda,N\Lambda)\\
&:=\{\phi \in O_V(F)\mid \phi(x)\equiv x \bmod
  N\Lambda \text{ for all } x\in \Lambda \}\subseteq O_V(F;\Lambda) 
\end{align*}
is called a {\em principal congruence subgroup} in $O_V(F)$.

Principal congruence subgroups in $O_V(F_v), O_V(\A), O_V(\A_f)$ (where $v$ is a
non archimedean place of $F$) and in the
corresponding special orthogonal groups are defined analogously.

A subgroup $H$ of $O_V(F)$ (resp.  $O_V(F_v), O_V(\A),O_V(\A_f)$ or the respective
special orthogonal groups) is called a
congruence subgroup if it contains a principal congruence subgroup
with finite index.
\end{definition}
\begin{remark}
  \begin{enumerate}
\item Since for $\phi \in O_V(F)$ one has $O_{V}(F;\phi\Lambda,N\phi\Lambda)=\phi
  O_{V}(F;\Lambda,N\Lambda)\phi^{-1}$, all conjugates of congruence subgroups
  are congruence subgroups (globally, locally, adelically).
  \item  In the non archimedean local and in the adelic case (restricted to
 the finite adeles $\A_f$) the congruence subgroups are
 precisely the compact open subgroups of $O_V(F_v), O_V(\A_f)$, they
 form a basis of open neighborhoods of the identity element. 

\item A congruence subgroup $H$ of $O_V(\A)$ is of the shape $H=O_V(F_\infty)
\times H_f$, where $H_f$ is a congruence subgroup of $O_V(\A_f)$.
  \end{enumerate}
\end{remark}
\begin{lemma} Let $(V,Q)$ be a non degenerate quadratic space over the
  number field $F$ with ring of integers $R$, let $\Lambda$ be a
  lattice on $V$. Let $V=U\perp W$ be an orthogonal splitting of $V$
  into non degenerate subspaces of $(V,Q)$.

Then $O_{U}(F;\Lambda)$ is a congruence subgroup of $O_U(F)$. The
analogous assertion is true for the non archimedean local and adelic (special)
orthogonal groups.  
\end{lemma}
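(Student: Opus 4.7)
The plan is to introduce the projection lattice on $U$ and sandwich $O_U(F;\Lambda)$ between a principal congruence subgroup and the stabilizer of a lattice, then check finiteness of the index.

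Let $\pi_U\colon V\to U$ denote the orthogonal projection with respect to the splitting $V=U\perp W$, and set
\[
M:=\pi_U(\Lambda),\qquad L:=\Lambda\cap U.
\]
By Lemma \ref{ortho_decompositions} (part b), $M/L\cong \pi_W(\Lambda)/(\Lambda\cap W)$ as $R$-modules, and this quotient is a finitely generated torsion $R$-module. Hence I may choose $0\ne N\in R$ with $NM\subseteq L$. Both $M$ and $L$ are $R$-lattices on $U$, so the principal congruence subgroup $O_U(F;M,NM)$ is defined.

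First I would establish the chain of inclusions
\[
O_U(F;M,NM)\ \subseteq\ O_U(F;\Lambda)\ \subseteq\ O_U(F;M).
\]
For the right inclusion, any $\phi\in O_U(F)$ (extended by the identity on $W$) commutes with $\pi_U$, so if $\phi(\Lambda)=\Lambda$ then $\phi(M)=\phi(\pi_U(\Lambda))=\pi_U(\Lambda)=M$. For the left inclusion, take $\phi$ in the principal congruence subgroup and $x\in\Lambda$; write $x=y+z$ with $y=\pi_U(x)\in M$, $z\in W$. Then $\phi(x)-x=\phi(y)-y\in NM\subseteq L\subseteq\Lambda$, so $\phi(\Lambda)\subseteq\Lambda$; applying the same to $\phi^{-1}$ gives equality.

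Next, I would verify that $[O_U(F;M):O_U(F;M,NM)]<\infty$, from which $O_U(F;\Lambda)$ inherits finite index over the principal congruence subgroup. The group $O_U(F;M)$ acts $R$-linearly on $M/NM$, which is a finitely generated module over the finite quotient ring $R/NR$ (here I use that $R$ is a ring of integers, so $R/NR$ is finite), hence $M/NM$ is finite. Its automorphism group is therefore finite, the kernel of the induced homomorphism $O_U(F;M)\to\mathrm{Aut}_R(M/NM)$ is exactly $O_U(F;M,NM)$, and finiteness of the index follows. This proves the global statement.

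For the non-archimedean local statement, the identical argument applies verbatim to $\Lambda_v$, $M_v=\pi_U(\Lambda_v)$, $L_v=\Lambda_v\cap U_v$ and any $N\in R_v$ with $NM_v\subseteq L_v$; the residue ring $R_v/NR_v$ is still finite, so the index calculation carries over. The adelic statement follows by assembling the local ones: at almost all finite $v$ the lattice $\Lambda_v$ is already an orthogonal direct sum of $L_v$ and $\Lambda_v\cap W_v$, so $M_v=L_v$ and $N$ may be chosen as a unit at those $v$, making the local principal congruence subgroup equal to the full local stabilizer $O_U(F_v;\Lambda_v)$. Thus the global $N$ above yields a principal congruence subgroup of $O_U(\A)$ differing from $O_U(\A;\Lambda)$ only at finitely many finite places, where each factor has finite index by the local case; the $O_U(F_\infty)$ component appears trivially in both subgroups. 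The main subtlety to take care of is precisely this last point—ensuring that the local finite-index estimates combine to a single finite adelic index—but the restricted-product topology on $O_U(\A_f)$ and the equality $M_v=L_v$ at almost all $v$ make this automatic.
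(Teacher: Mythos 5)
Your proof is correct and follows essentially the same route as the paper's: project $\Lambda$ to $U$, choose $N$ with $N\pi_U(\Lambda)\subseteq \Lambda\cap U$, and sandwich $O_U(F;\Lambda)$ between $O_U(F;\pi_U(\Lambda),N\pi_U(\Lambda))$ and $O_U(F;\pi_U(\Lambda))$. You merely supply the finite-index verification and the adelic bookkeeping explicitly, which the paper leaves implicit; the only quibble is that the existence of $N$ needs nothing more than Lemma \ref{lattice_compare} (both $\pi_U(\Lambda)$ and $\Lambda\cap U$ are lattices on $U$), rather than Lemma \ref{ortho_decompositions}, whose hypotheses are not assumed here.
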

\begin{proof}
 Let $K_1=U\cap \Lambda$ and
  denote by $K_2\supseteq K_1$ the orthogonal projection $\pr_U(\Lambda)$ of $\Lambda$
  to $U$, let $N \in \N$ satisfy $NK_2\subseteq N_1$. For $\phi \in
  O_{U}(F;K_2,NK_2)$ we have then $\phi(z)-z \in \Lambda$ for all $z\in
  K_2$.
We consider $x=y+z \in \Lambda$ with $y\in W, z \in K_2$ and extend
$\phi$ to all of $V$ by letting it act trivially on $W$, obtaining
$\phi(x)=x+(\phi(z)-z)\in \Lambda$. This gives $O_{U}(F;K_2,NK_2)\subseteq
O_{U}(F;\Lambda)\subseteq O_{U}(F;K_2)$, so  $O_{U}(F;\Lambda)$  is a
congruence group. The proof carries over to the non archimedean local and to the
adelic situation without change.
\end{proof}
\begin{lemma}\label{commensurability}
  Let $\Lambda_1,\Lambda_2$ be lattices on $V$ and $N_1,N_2 \in
  \N$. Then the principal congruence subgroups
  $O_{V}(F;\Lambda_1,N_1\Lambda_1),O_{V}(F;\Lambda_2,N_2\Lambda_2)$ are commensurable, i.e., their
  intersection has finite index in both of them. 

The same assertion holds for non archimedean local and adelic congruence subgroups of the
(special) orthogonal group.
\end{lemma}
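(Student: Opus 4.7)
The plan is to deduce the lemma from a topological fact: each principal congruence subgroup $O_V(F;\Lambda,N\Lambda)$ is the intersection of the diagonally embedded $O_V(F)$ with a compact open subgroup of $O_V(\A_f)$, and any two compact open subgroups of a topological group are automatically commensurable. So the whole statement reduces to verifying the compact-openness claim and transferring commensurability from the adelic to the global level.

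First I would treat the non-archimedean local case. Over the complete discrete valuation ring $R_v$, the stabilizer $O_V(F_v;\Lambda_v)$ is a compact open subgroup of $O_V(F_v)$, namely the $R_v$-points of the orthogonal group scheme attached to $\Lambda_v$. The principal congruence subgroup $O_V(F_v;\Lambda_v,N\Lambda_v)$ is the kernel of the continuous reduction map $O_V(F_v;\Lambda_v)\to \Aut_{R_v}(\Lambda_v/N\Lambda_v)$; since the residue field of $R_v$ is finite, $\Lambda_v/N\Lambda_v$ is finite whenever $NR_v\neq 0$, so this kernel is itself compact open and of finite index in $O_V(F_v;\Lambda_v)$. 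For any two compact open subgroups $U_1,U_2$ of a topological group, $U_1\cap U_2$ is compact and open, and $U_i/(U_1\cap U_2)$ is a discrete quotient of a compact space, hence finite; this gives the local case.

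For the adelic case I would write $O_V(\A;\Lambda,N\Lambda)=O_V(F_\infty)\times\prod_v O_V(F_v;\Lambda_v,N\Lambda_v)$ as a restricted direct product over the non-archimedean places. For almost every $v$ one has $\Lambda_{1,v}=\Lambda_{2,v}$ and $O_V(F_v;\Lambda_v,N_i\Lambda_v)=O_V(F_v;\Lambda_v)$, so the finite-adele factor is a compact open subgroup of $O_V(\A_f)$ and the previous observation gives commensurability of two such adelic congruence subgroups (whose archimedean factors already coincide). The global version follows from the local-global identity $\Lambda=V\cap\prod_v\Lambda_v$, which yields $O_V(F;\Lambda,N\Lambda)=O_V(F)\cap O_V(\A;\Lambda,N\Lambda)$. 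Intersecting a commensurable pair of adelic subgroups with $O_V(F)$ preserves the finite-index relation, since the global index is bounded by the adelic one, so the two principal congruence subgroups $O_V(F;\Lambda_i,N_i\Lambda_i)$ are commensurable. The same proof applies to $SO_V$ in place of $O_V$.

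The main obstacle — really the only non-formal step — is justifying that $O_V(F_v;\Lambda_v)$ is compact open in $O_V(F_v)$ and that almost every local factor $O_V(F_v;\Lambda_v,N\Lambda_v)$ coincides with the full stabilizer, so that the restricted product is genuinely compact open in $O_V(\A_f)$. This uses the description of the stabilizer as integer points of the orthogonal group scheme, its smoothness at almost all places, and the finiteness of residue fields in the number-field (or function-field) setting; once these standard facts are granted, everything else is soft group-theoretic bookkeeping.
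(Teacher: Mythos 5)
Your proof is correct, but it follows a genuinely different route from the paper's. The paper argues directly and uniformly in all three settings: it first reduces to $N_1=N_2=1$ (since $O_V(F;\Lambda,N\Lambda)$ has finite index in the stabilizer $O_V(F;\Lambda)$, the quotient injecting into the finite group $\Aut(\Lambda/N\Lambda)$), and then chooses nonzero $a,b\in\Z$ with $ab\Lambda_1\subseteq a\Lambda_2\subseteq\Lambda_1$; any $\phi$ congruent to the identity modulo $ab\Lambda_1$ then maps $a\Lambda_2$ into $a\Lambda_2$ and hence stabilizes $\Lambda_2$, so $O_V(F;\Lambda_1,ab\Lambda_1)$ is a finite-index subgroup of $O_V(F;\Lambda_1)$ contained in the intersection, and symmetrically with the roles of $\Lambda_1,\Lambda_2$ exchanged. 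You instead prove the local statement from the soft fact that any two compact open subgroups of a topological group are commensurable, assemble the adelic case as a restricted product, and descend to the global case via $O_V(F;\Lambda,N\Lambda)=O_V(F)\cap O_V(\A;\Lambda,N\Lambda)$ together with the standard bound $(\Gamma\cap H_1:\Gamma\cap H_1\cap H_2)\le (H_1:H_1\cap H_2)$. Both arguments rest on the same finiteness inputs (comparability of any two lattices, and finiteness of $\Lambda/N\Lambda$ respectively compactness of $M_n(R_v)$), but the paper's version is more elementary: it uses no topology and no group schemes (your appeal to smoothness is in fact unnecessary — for almost all $v$ one only needs $N_i\in R_v^\times$ and $\Lambda_{1,v}=\Lambda_{2,v}$), and one and the same computation handles the global, local and adelic cases. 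What your approach buys is the structural observation that these congruence subgroups are precisely the traces on $O_V(F)$ of compact open subgroups of $O_V(\A_f)$, which is the viewpoint the surrounding sections adopt anyway.
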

\begin{proof}
It is enough to prove the assertion for $N_1=N_2=1$ since
$O_{V}(F;\Lambda,N\Lambda) $ has finite index in $O_{V}(F;\Lambda)$ for any $N$.

One can find nonzero $a,b \in \Z$ with $ab\Lambda_1\subseteq
a\Lambda_2\subseteq \Lambda_1$. Then $O_{V}(F;\Lambda_1,ab\Lambda_1)$ is contained
in 
$O_{V}(F;\Lambda_2)\cap
O_{V}(F;\Lambda_1)$ and has finite index 
in $O_{V}(F;\Lambda_1)$. In the same way one sees that the intersection
has finite index in $O_{V}(F;\Lambda_2)$. 

It is clear that the proof works for the non archimedean local and the adelic case as well.
\end{proof}
\begin{remark}\label{commensurability_remark}
  For the applications towards Siegel's theorem the Haar measure on
  local and adelic (special) orthogonal groups is actually needed only
  for measurable sets which are finite unions of translates of
  congruence subgroups (often called {\em congruence sets}).  Since
  all congruence subgroups have finite index over  intersections
  of finitely many of them, any congruence set is a finite  union of cosets of a single
  congruence subgroup $H$, i.e., it is of the shape $\cup_{i=1}^r
  \phi_i H$ and has measure $r \mu(H)$. In \cite{kneserbook} this is
  used to define an invariant measure (or more correctly: an invariant
  measuring function) on congruence sets in the non archimedean local
  situation by setting
  $\mu(H_0)=1$ for some fixed congruence subgroup $H_0$, defining 
  \begin{equation*}
    \mu(H):=\frac{(H:H_0\cap H)}{(H_0:H_0\cap H)}
  \end{equation*}
for congruence subgroups $H$ and $\mu(\cup_{i=1}^r
  \phi_i H)=r\mu(H)$ for an arbitrary congruence set; the latter
  number is easily seen to be independent of the representation of the
  congruence set as a finite union of cosets.  It is also clear that
  on congruence sets  it agrees with the Haar 
  measure normalized to $\mu(H_0)=1$ and that any such left invariant
  measuring function must be a scalar multiple of this $\mu$.

Moreover, it is not difficult to
  prove that it is also right invariant without using this property
  for the Haar measure:
For $\phi \in  O_V(F)$ the function $\mu'$ on congruence sets given by
$\mu'(X):=\mu(X\phi)$  is a left invariant measuring function as well,
hence $\mu'=c(\phi)\mu$ with $c(\phi\circ \psi)=c(\phi)c(\psi)$. Since
the orthogonal group is generated by reflections $\tau$ which satisfy
$\tau^2=\id_V$, one must have $c(\phi)$=1 for all $\phi$. The problem is then to choose
  the normalizations of the local measures in such a way that their
  product over all places (including the archimedean ones, where the
  measure has to be defined by analytic means) is the (uniquely
  normalized) Tamagawa measure. 

It should be noted that in this way one can also define a biinvariant
measuring function on the set of all finite unions of cosets of
congruence subgroups of the global orthogonal group $O_V(F)$ for a
number field $F$. For the congruence subgroups this is then up to a
constant multiple the inverse of the measure of a fundamental domain
for the action of the congruence subgroup on $O_V(F_\infty)$ by left translations. 
\end{remark}
\begin{theorem}
Let $(V,Q)$ be a non degenerate quadratic space over the number field
$F$ with ring of integers $R$, let $H$ be  a congruence subgroup of
$O_V(\A)$, assume that a fundamental domain $\mcF_\infty$ for the action of
$O_V(F)\cap H$ by left translations on  $O_V(F_\infty)$ is given,
write $H_f=H\cap O_V(\A_f)$ for the finite part of $H$. 

Then $\mcF_\infty \times H_f$ is a fundamental domain both for the
action of $O_V(F)$ on $O_V(F)H$ and for the action of $O_V(F)\cap H$ on $H$.
\end{theorem}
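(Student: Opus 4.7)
The plan is to exploit the product decomposition $H=O_V(F_\infty)\times H_f$ noted in the preceding remark, together with the diagonal embedding of $O_V(F)$ into $O_V(\A)$, in order to reduce both claims to the given fundamental domain property of $\mcF_\infty$. Set $\Gamma := O_V(F)\cap H$; note that $\Gamma$ is exactly the set of elements of $O_V(F)$ whose finite-adelic image lies in $H_f$, and that $\Gamma$ acts on $H=O_V(F_\infty)\times H_f$ componentwise by $\gamma\cdot(g_\infty,h_f)=(\gamma g_\infty,\gamma h_f)$.

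First I would prove the second fundamental-domain assertion: given $(g_\infty,h_f)\in H$, the hypothesis on $\mcF_\infty$ yields a unique $\gamma\in\Gamma$ with $\gamma g_\infty\in\mcF_\infty$; since $\gamma$ viewed at the finite places lies in $H_f$ and $H_f$ is a subgroup, one gets $\gamma h_f\in H_f$ automatically, so $\gamma\cdot(g_\infty,h_f)\in\mcF_\infty\times H_f$. Conversely, if two elements $\gamma_1,\gamma_2\in\Gamma$ both carry $(g_\infty,h_f)$ into $\mcF_\infty\times H_f$, then $\gamma_1g_\infty$ and $\gamma_2g_\infty$ both lie in $\mcF_\infty$, hence coincide by the fundamental-domain property of $\mcF_\infty$, which forces $\gamma_1=\gamma_2$.

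For the first assertion, let $x\in O_V(F)H$ and write $x=\alpha\eta$ with $\alpha\in O_V(F)$ and $\eta\in H$; then $\alpha^{-1}x=\eta\in H$, and applying the case just established produces a unique $\gamma\in\Gamma$ with $\gamma\eta\in\mcF_\infty\times H_f$, so $(\gamma\alpha^{-1})x\in\mcF_\infty\times H_f$ gives existence. For uniqueness, suppose $\beta_1,\beta_2\in O_V(F)$ satisfy $\beta_ix\in\mcF_\infty\times H_f\subseteq H$. Then $\beta_2\beta_1^{-1}=(\beta_2x)(\beta_1x)^{-1}\in HH^{-1}=H$ and also belongs to $O_V(F)$, hence to $\Gamma$. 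Since $\beta_1x$ and $\beta_2x=(\beta_2\beta_1^{-1})(\beta_1x)$ lie in the same $\Gamma$-orbit on $H$ and both lie in $\mcF_\infty\times H_f$, the uniqueness already proven for $\Gamma\backslash H$ forces $\beta_2\beta_1^{-1}=1$, i.e., $\beta_1=\beta_2$.

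No serious obstacle arises here; the argument is a routine unwinding of the product structure $H=O_V(F_\infty)\times H_f$ and of the diagonal embedding $O_V(F)\hookrightarrow O_V(\A)$. The only point worth emphasizing is that $\Gamma$ preserves $H_f$ under left translation (because $\Gamma\subseteq H_f$ componentwise), which is precisely what makes the $\Gamma$-action on $H$ respect the decomposition and permits the direct reduction to the hypothesis on $\mcF_\infty$.
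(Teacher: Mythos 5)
Your proof is correct and follows essentially the same route as the paper's: both reduce the disjointness of the translates $\sigma(\mcF_\infty\times H_f)$ to the fact that an intersection would produce an element of $O_V(F)\cap H$ moving $\mcF_\infty$ into itself, and both get the covering from the product decomposition $H=O_V(F_\infty)\times H_f$. The only quibble is the phrase ``hence coincide'' in your uniqueness step: the two points $\gamma_1 g_\infty,\gamma_2 g_\infty$ need not coincide a priori; rather $g_\infty$ lies in $\gamma_1^{-1}\mcF_\infty\cap\gamma_2^{-1}\mcF_\infty$, and the disjointness of translates forces $\gamma_1=\gamma_2$.
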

\begin{proof}
 For $\sigma_1,\sigma_2 \in O_V(F)$ one has $\sigma_1(\mcF_\infty \times
 H_f)\cap   \sigma_2(\mcF_\infty \times  H_f)=\emptyset$ since
 otherwise one had $\sigma_2^{-1}\sigma_1 \in O_V(F) \cap H $ 
with $\sigma_2^{-1}\sigma_1 \mcF_\infty \cap \mcF_\infty \ne
\emptyset$, in contradiction to the assumption that $\mcF_\infty$ is a
fundamental domain for the action of $O_V(F)\cap H$ on
$O_V(F_\infty)$.

Since one has 
\begin{eqnarray*}
  H=O_V(F_\infty) \times H_f&=&\cupdot_{\sigma \in H\cap
                                O_V(F)}\sigma(\mcF_\infty \times
                                H_f)\\
O_V(F) H&=&\cup_{\sigma \in O_V(F)}\sigma(\mcF_\infty \times H_f),
\end{eqnarray*}
the assertion follows.
\end{proof}
\begin{remark}
  If $F$ is totally real and $(V,Q)$ totally definite, the group
  $O_V(F_\infty)$ is compact and $O_V(F)\cap H$ is finite, so that it
  is not difficult to find a fundamental domain $\mcF_\infty$ of
  finite measure as  assumed above. One has then
  $\mu(O_V(F_\infty))=\vert H\cap O_V(F)\vert \cdot \mu(\mcF_\infty)$.

Siegel has shown \cite{siegel_einheiten} that in the indefinite case  a fundamental domain of
finite volume exists too, a more general version of this statement for
arithmetic groups is due to Borel and Harish-Chandra, see \cite{borel_harish}.
\end{remark}
\begin{corollary}
 In the situation of the theorem let $\phi=(\phi_\infty,\phi_f)\in O_V(\A)$ and assume that
 $\mcF_{\infty,\phi}$ is a fundamental domain for the action of
 $O_V(F)\cap \phi H\phi^{-1}$ on $O_V(F_\infty)$ by left translation. 

Then
 $\mcF_{\infty,\phi}\times \phi_f H_f$ is a fundamental domain for the action of $\phi H
 \phi^{-1} \cap O_V(F)$  on $O_V(F_\infty)\times \phi_f H_f$ and for the
 action of $O_V(F)$ on the double coset $O_V(F)\phi H\subseteq O_V(\A)$. 
\end{corollary}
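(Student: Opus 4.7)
The plan is to deduce the corollary from the preceding theorem by applying it to the conjugate congruence subgroup $\phi H \phi^{-1}$ and then transferring the resulting fundamental domain via right multiplication by $\phi$.

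First I would record a useful identification. Since $\phi_\infty\in O_V(F_\infty)$ and $H=O_V(F_\infty)\times H_f$, one has
\begin{equation*}
  \phi H=(\phi_\infty O_V(F_\infty))\times \phi_f H_f=O_V(F_\infty)\times \phi_f H_f,
\end{equation*}
and similarly $\phi H\phi^{-1}=O_V(F_\infty)\times \phi_f H_f\phi_f^{-1}$, a congruence subgroup of $O_V(\A)$ (using the remark after the definition of congruence subgroup) with finite part $\phi_f H_f\phi_f^{-1}$. Thus the preceding theorem applies to $\phi H \phi^{-1}$, and the hypothesis of the corollary tells us exactly that $\mcF_{\infty,\phi}$ is a fundamental domain for $O_V(F)\cap \phi H\phi^{-1}$ on $O_V(F_\infty)$. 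The preceding theorem then yields that $\mcF_{\infty,\phi}\times \phi_f H_f\phi_f^{-1}$ is a fundamental domain both for the action of $O_V(F)\cap \phi H\phi^{-1}$ on $\phi H\phi^{-1}$ and for the action of $O_V(F)$ on $O_V(F)\phi H\phi^{-1}$.

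Next I would transfer along the right multiplication map $R_\phi:\psi\mapsto\psi\phi$ on $O_V(\A)$. This map is a homeomorphism sending $\phi H\phi^{-1}$ to $\phi H$, sending $O_V(F)\phi H\phi^{-1}$ to $O_V(F)\phi H$, and, acting only on the finite component by $\phi_f$, sending $\mcF_{\infty,\phi}\times \phi_f H_f\phi_f^{-1}$ to $\mcF_{\infty,\phi}\times \phi_f H_f$. Crucially, $R_\phi$ commutes with the left actions of $O_V(F)$ and of $O_V(F)\cap \phi H\phi^{-1}$, so it carries fundamental domains for these left actions to fundamental domains for the same left actions on the right-translated spaces. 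This immediately gives both assertions.

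For readers preferring a direct verification, the key check underlying the transport is that for $\sigma\in O_V(F)\cap \phi H\phi^{-1}$ one has $\phi_f^{-1}\sigma\phi_f\in H_f$, hence $\sigma\cdot \phi_f H_f=\phi_f H_f$; this shows that the left action of $O_V(F)\cap\phi H\phi^{-1}$ on $\phi H=O_V(F_\infty)\times \phi_f H_f$ preserves the second factor and reduces to its action on the first factor, where the disjointness and covering properties follow from the corresponding properties of $\mcF_{\infty,\phi}$ in $O_V(F_\infty)$. The second assertion then reduces to the first exactly as in the proof of the preceding theorem: if $\sigma_1 x_1=\sigma_2 x_2$ with $\sigma_i\in O_V(F)$ and $x_i\in \mcF_{\infty,\phi}\times \phi_f H_f=\phi H$, then $\sigma_2^{-1}\sigma_1\in \phi H\phi^{-1}\cap O_V(F)$, and the first part forces $\sigma_1=\sigma_2$.

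The main obstacle is essentially bookkeeping: one must carefully distinguish the left action (under which fundamental domains are being taken) from the right multiplication by $\phi$ (used to convert between $\phi H\phi^{-1}$ and $\phi H$), and verify that the hypothesis $\phi_\infty\in O_V(F_\infty)$ makes the infinite component harmless so that $\mcF_{\infty,\phi}$ itself—rather than $\mcF_{\infty,\phi}\cdot\phi_\infty$—appears in the final fundamental domain.
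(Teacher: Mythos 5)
Your argument is correct, and the direct verification in your third paragraph is exactly the intended (one-line) proof: for $\sigma\in O_V(F)\cap\phi H\phi^{-1}$ one has $\sigma_f\phi_fH_f=\phi_fH_f$, so the action preserves the finite factor and everything reduces to the hypothesis on $\mcF_{\infty,\phi}$, with the double-coset statement following verbatim as in the theorem. One small caution on your transport argument: right multiplication by the full $\phi=(\phi_\infty,\phi_f)$ carries $\mcF_{\infty,\phi}\times\phi_fH_f\phi_f^{-1}$ to $\mcF_{\infty,\phi}\phi_\infty\times\phi_fH_f$, not to $\mcF_{\infty,\phi}\times\phi_fH_f$ (these are different sets unless $\phi_\infty=1$, even though both are fundamental domains), so to get the set asserted in the corollary you should right-translate by the finite adele $(1,\phi_f)$ only — which still maps $\phi H\phi^{-1}$ onto $\phi H=O_V(F_\infty)\times\phi_fH_f$ and commutes with the left actions; your direct verification already sidesteps this entirely.
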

\begin{proof}
  Obvious.
\end{proof}
\begin{theorem}[Siegel's main theorem on quadratic forms, first version]
Let $(V,Q)$ be a non degenerate quadratic space over the number field
$F$ with ring of integers $R$, let $H$ be  a congruence subgroup of
$SO_V(\A)$ and let 
\begin{equation*}
  SO_V(\A)=\cupdot_{i=1}^r SO_v(F) \phi_i H
\end{equation*}
be a finite disjoint coset decomposition.
Assume that  fundamental domains $\mcF_{\infty,\phi_i}$ for the action of
$SO_V(F)\cap \phi_i H\phi_i^{-1}$ by left translations on
$SO_V(F_\infty)$ are given for $1\le i \le r$ and let $\mu$ be the
Tamagawa measure on $SO_V(\A)$, factored as $\mu=\mu_\infty \cdot
\mu_f$ with Haar measures $\mu_\infty$ on $SO_V(F_\infty)$ and
$\mu_f=\prod_{v\not\in \infty}\mu_v$
on $SO_V(\A_f)$.

Then one has 
\begin{equation*}
  \sum_{i=1}^r \mu_\infty(\mcF_{\infty,i}) =\frac{2}{\mu_f(H_f)}.
\end{equation*}
 In particular, if $F$ is totally real and $(V,Q)$ totally definite
 and if in addition  $H_f=\prod_{v \not\in \infty}H_v$ one has
 \begin{equation*}
   \sum_{i=1}^r \frac{1}{\vert SO_V(F)\cap \phi_i H\phi_i^{-1}
     \vert}=\frac{2}{\mu_\infty(SO_V(F_\infty))\prod_{v\not\in \infty}\mu_v(H_v)}.
 \end{equation*}
\end{theorem}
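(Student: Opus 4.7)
The plan is to deduce this from the fact that the Tamagawa number $\tau(SO_V)=2$, i.e., $\mu(SO_V(F)\backslash SO_V(\A))=2$, which is stated in the paragraph before the theorem as a known result. The strategy is simply to compute the measure of a fundamental domain for the left action of $SO_V(F)$ on $SO_V(\A)$ in two ways: abstractly it is $2$, and concretely it can be read off from the given double coset decomposition together with the preceding corollary.

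First I would use the disjoint decomposition $SO_V(\A)=\bigsqcup_{i=1}^r SO_V(F)\phi_i H$ to write
\begin{equation*}
  \mu(SO_V(F)\backslash SO_V(\A))=\sum_{i=1}^r \mu\bigl(SO_V(F)\backslash SO_V(F)\phi_i H\bigr),
\end{equation*}
where the $i$-th summand is the Tamagawa measure of any fundamental domain in $SO_V(\A)$ for the action of $SO_V(F)$ on the double coset $SO_V(F)\phi_i H$. By the corollary immediately preceding the theorem, one such fundamental domain is $\mcF_{\infty,\phi_i}\times\phi_{i,f}H_f$, which is measurable and a congruence set in the finite part, so its measure with respect to $\mu=\mu_\infty\cdot\mu_f$ equals $\mu_\infty(\mcF_{\infty,\phi_i})\cdot\mu_f(\phi_{i,f}H_f)$.

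Next, since $\mu_f$ is a Haar (hence left-invariant) measure on $SO_V(\A_f)$, we have $\mu_f(\phi_{i,f}H_f)=\mu_f(H_f)$ for every $i$. Summing over $i$ and equating with the Tamagawa number gives
\begin{equation*}
  \mu_f(H_f)\sum_{i=1}^r \mu_\infty(\mcF_{\infty,\phi_i})=\mu(SO_V(F)\backslash SO_V(\A))=2,
\end{equation*}
which is exactly the asserted identity after dividing by $\mu_f(H_f)>0$.

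For the totally definite case, $SO_V(F_\infty)$ is compact, the discrete group $\Gamma_i:=SO_V(F)\cap \phi_i H\phi_i^{-1}$ is finite, and one may take $\mcF_{\infty,\phi_i}$ to be any Borel transversal for $\Gamma_i\backslash SO_V(F_\infty)$, of measure $\mu_\infty(SO_V(F_\infty))/|\Gamma_i|$. Substituting this and using the product decomposition $H_f=\prod_{v\notin\infty}H_v$, which yields $\mu_f(H_f)=\prod_{v\notin\infty}\mu_v(H_v)$, converts the general formula into the stated explicit mass formula. The only potential subtlety I anticipate is bookkeeping: one must verify that $\mcF_{\infty,\phi_i}\times\phi_{i,f}H_f$ is genuinely measurable as a congruence set (see Remark \ref{commensurability_remark}) and that the $\mu$-value of a product of a fundamental domain at infinity with a left translate of a compact open subgroup at the finite places really does factor as claimed — but both are immediate from the product structure $\mu=\mu_\infty\cdot\mu_f$ and left invariance.
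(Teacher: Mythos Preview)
Your proposal is correct and is exactly the argument the paper has in mind: the theorem is stated without a separate proof environment because it is the immediate consequence of the preceding corollary (giving $\mcF_{\infty,\phi_i}\times\phi_{i,f}H_f$ as a fundamental domain for $SO_V(F)$ acting on $SO_V(F)\phi_i H$) together with the fact $\tau(SO_V)=2$ recalled just before. Your assembly of these ingredients---summing the measures of the fundamental domains over the double cosets, using left invariance of $\mu_f$ to replace $\mu_f(\phi_{i,f}H_f)$ by $\mu_f(H_f)$, and specializing to the totally definite case via $\mu_\infty(\mcF_{\infty,\phi_i})=\mu_\infty(SO_V(F_\infty))/\lvert\Gamma_i\rvert$---is precisely the intended derivation.
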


\section{Computation of the local measures, non-archimedean
  places}\label{nonarchimedean_haarmeasure}
In this section $F$ is a non archimedean local field with ring of
integers $R$, prime element $\pi$, maximal ideal $P=(\pi)$ and residue
field $R/P=\F_q$.

We want to express the local measures in terms of numbers of solutions
of certain congruences. An important tool will be a quantitative refinement of
Theorem \ref{hensellemma_kneser} (Hensel's Lemma):
\begin{lemma}\label{hensel_quantitative}
  Let $(V,Q)$ and $(W,Q')$ be  finite dimensional quadratic spaces
 over $F$ with associated symmetric bilinear forms $b,b'$ and let
 $L,M$ be lattices of ranks $\ell,m$ in  $V,W$ respectively. 
Let $k \in \N$ be such that $P^kQ'(M)\subseteq P$.  

Let $f:L\to W$ be an $R$-linear map with $b(f(L),M)\subseteq R$ and write $\wtbp_f$ for the linear
map from $W$ to $\Hom_R(L,F)$ with $\wtbp_f(z)(x)=b'(f(x),z)$ for all
$x \in L$.

Assume that one has 
\begin{enumerate}
\item $L^*=\wtbp_f(M)+PL^*$
\item $Q'(f(x))\equiv Q(x)\bmod P^k$
for all $x \in L$
\end{enumerate}

Then the number of modulo $P^{k+1}M$ distinct  $R$- linear maps $f':L\to W$ with $f'(x) \equiv f(x) \bmod
P^kM$ and $Q'(f'(x))\equiv Q(x) \bmod P^{k+1}$ for 
all $x \in L$ is equal to $q^{\ell m-\frac{\ell (\ell+1)}{2}}$. 

All of these 
also satisfy the condition 
$L^*=\wtbp_{f'}(M)+PL^*$
\end{lemma}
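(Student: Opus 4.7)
The plan is to imitate the proof of Theorem \ref{hensellemma_kneser} and keep careful track of the freedom at each step. First I would write any candidate $f'$ as $f' = f + g$ for an $R$-linear $g \colon L \to P^k M$; since $P^k M = \pi^k M$ and $L$ is free, $g = \pi^k h$ for a unique $R$-linear $h \colon L \to M$, and specifying $f'$ modulo $P^{k+1} M$ amounts to specifying the reduction $\bar h \colon \bar L \to \bar M$, where $\bar L = L/PL$ and $\bar M = M/PM$. A priori this leaves $q^{\ell m}$ classes to sift through.

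Expanding $Q'(f(x) + g(x)) = Q'(f(x)) + b'(f(x), g(x)) + Q'(g(x))$ and noting that $Q'(g(x)) \in P^{2k} Q'(M) \subseteq P^{k+1}$ (which follows from $P^k Q'(M) \subseteq P$), the congruence $Q'(f'(x)) \equiv Q(x) \pmod{P^{k+1}}$ becomes, after dividing by $\pi^k$, the linear condition
\[
\bar b'(\bar f(\bar x), \bar h(\bar x)) = \bar\alpha(\bar x) \qquad \text{for all } \bar x \in \bar L,
\]
where $\bar\alpha$ is the reduction of the quadratic form $\pi^{-k}(Q - Q' \circ f)$ on $L$. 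I would then invoke the hypothesis $L^* = \wtbp_f(M) + PL^*$: by Nakayama the map $\wtbp_f \colon M \to L^*$ is surjective, so $M = N \oplus C$ with $N = \ker(\wtbp_f|_M)$ free of rank $m - \ell$ and $C$ mapped isomorphically onto $L^*$. Writing $\bar h = \bar h^N + \bar h^C$ accordingly, the component $\bar h^N$ drops out of the constraint and contributes a free factor of $q^{\ell(m-\ell)}$, while $\bar h^C$ corresponds under $\wtbp_f$ to a bilinear form $\bar B$ on $\bar L$ subject to $\bar B(\bar x, \bar x) = \bar\alpha(\bar x)$. Since Theorem \ref{hensellemma_kneser} already produces at least one solution, the set of admissible $\bar B$ is a torsor under the group of alternating bilinear forms on $\bar L$, of cardinality $q^{\ell(\ell-1)/2}$ in every characteristic. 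Multiplying gives the stated total $q^{\ell(m-\ell) + \ell(\ell-1)/2} = q^{\ell m - \ell(\ell+1)/2}$.

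The preservation of the surjectivity condition will follow from the direct observation that for $z \in M$ one has $b'(g(x), z) \in P^k b'(M, M) \subseteq P$ (since $b'(M, M)$ lies in the $\mathbb{Z}$-span of $Q'(M)$), so $\wtbp_{f'}(z) \equiv \wtbp_f(z) \pmod{P L^*}$ for every $z \in M$, and the image modulo $PL^*$ is unchanged. The main obstacle I anticipate is organizing the identification of $\Hom_{\F_q}(\bar L, \bar C)$ with bilinear forms on $\bar L$ cleanly enough that the count of alternating forms falls out uniformly, in particular in residue characteristic $2$ where ``alternating'' is strictly stronger than ``antisymmetric''; fixing a basis of $\bar L$ and counting strictly upper triangular parameters circumvents this complication.
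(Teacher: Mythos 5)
Your argument is correct and follows essentially the same route as the paper's proof: write $f'=f+\pi^k h$, observe that the term $Q'(g(x))$ is absorbed into $P^{k+1}$, and reduce to a linear condition on $\bar h$ whose solution set is a fiber of a surjective linear map from $\Hom_{\F_q}(L/PL,M/PM)$ onto the $\frac{\ell(\ell+1)}{2}$-dimensional space of $\F_q$-valued quadratic forms on $L/PL$. The paper counts that fiber directly from the dimensions of source and target, whereas you make the kernel explicit via the splitting $M=N\oplus C$ and the torsor under alternating forms; the two counts are identical, and your verification that $f'$ again satisfies $L^*=\wtbp_{f'}(M)+PL^*$ matches the paper's.
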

\begin{proof}
As in the proof of Theorem \ref{hensellemma_kneser} we write $f'=f+g$
where $g$ maps $L$ into $P^kM$  and satisfies
\begin{equation*}
  \wtbp_f(g(y))(x)\equiv -\beta(x,y)
\bmod P^{k+1}
\end{equation*}
 for all $x,y \in L$.

Now, for any $\F_q$-linear map $\bar{h}:L/PL \to M/PM$ we can define an
$\F_q$-valued quadratic form $\bar{Q}_{\bar{h}}$ on $L/PL$ by
\begin{equation*}
  \bar{Q}_{\bar{h}}(\bar{x}):=-\bar{b}((\overline{f(x)},\bar{h}(x)),
\end{equation*}
and the modulo $P^{k+1}$ distinct maps $g$ as above can be obtained as
$g=\pi^kh$, where $h:L\to M$ is considered modulo $P$. 
As in the proof of Theorem \ref{hensellemma_kneser} we see that the
linear 
map $\bar{h} \mapsto \bar{Q}_{\bar{h}}$ from $\Hom_{\F_q}(L/PL,M/PM)$ to the space
$X$ of $\F_q$-valued quadratic forms on $L/PL$ is surjective. Since $\Hom_{\F_q}(L/PL,M/PM)$
has dimension $\ell m$ and the space $X$ has dimension
$\frac{\ell(\ell +1)}{2}$, the assertion follows.
\end{proof}
\begin{corollary}
  Let $\Lambda$ be a lattice on the non degenerate quadratic space
  $(V,Q)$ over $F$ of dimension $n$, let $k\in \N$ be such that
  $P^kQ(\Lambda^\#)\subseteq P$ holds.

Then with $O_V(F;\Lambda,P^k\Lambda^\#)=\{\phi \in O_V(F;\Lambda) \mid
\phi(x)-x \in P^k\Lambda^\# \text{ for all } x \in \Lambda\}$ and
$A(\Lambda, \Lambda\bmod P^k\Lambda^\#):=\vert\{\phi\in \Hom_R(\Lambda,
  \Lambda/P^k\Lambda^\#) \mid Q(\phi(x))\equiv Q(x)\bmod P^k \text{ for
  all } x \in \Lambda\}\vert$
one has
\begin{equation*}
  (O_V(F;\Lambda,P^k
  \Lambda^\#):O_V(F;\Lambda,P^{k+1}\Lambda^\#))=q^{n\frac{n-1}{2}}
\end{equation*}
and 
\begin{equation*}
(O_V(F;\Lambda):O_V(F;\Lambda, P^k\Lambda^\#))=A(\Lambda,\Lambda \bmod P^k\Lambda^\#).
\end{equation*}
If $\mu$ is a Haar measure
  on $O_V(F)$ one has 
\begin{equation*}
  \frac{\mu(O_V(F;\Lambda,P^k\Lambda^\#))}{\mu(O_V(F;\Lambda,P^{k+1}\Lambda^\#))}=q^{\frac{n(n-1)}{2}}.
\end{equation*}
\end{corollary}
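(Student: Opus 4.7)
The three index and measure equalities in the corollary all reduce to applications of the quantitative Hensel lemma (Lemma~\ref{hensel_quantitative}) with the data $L=\Lambda$, $M=\Lambda^\#$, $W=V$, $Q'=Q$, so that $\ell=m=n$ and the lemma's count $q^{\ell m-\ell(\ell+1)/2}=q^{n(n-1)/2}$ is exactly the index to be shown. I first verify the lemma's hypotheses once and for all in this setting: the pairing condition $b(\Lambda,\Lambda^\#)\subseteq R$ is the definition of the dual, $P^kQ(\Lambda^\#)\subseteq P$ is assumed, and the surjectivity condition $L^*=\wtbp_f(\Lambda^\#)+PL^*$ reduces to $\wtbp_f \bmod P$ being surjective onto $\Lambda^*/P\Lambda^*$. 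Since $\tilde b^{(\Lambda)}:\Lambda^\#\to \Lambda^*$ is an isomorphism by nondegeneracy of $b$ and reflexivity of $\Lambda$ over the discrete valuation ring, and since $\wtbp_f$ is obtained from it by composing with (the transpose of) $f$, this surjectivity holds as soon as $f$ induces an automorphism of $\Lambda$ modulo~$P$.

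For statement~(a), fix $\phi\in O_V(F;\Lambda,P^k\Lambda^\#)$ and apply the quantitative Hensel lemma to $f=\phi$: there are exactly $q^{n(n-1)/2}$ maps $f':\Lambda\to V$ modulo $P^{k+1}\Lambda^\#$ satisfying $f'\equiv\phi\bmod P^k\Lambda^\#$ and $Q(f'(x))\equiv Q(x)\bmod P^{k+1}$. Each such $f'$ inherits the surjectivity hypothesis, so the iterated Hensel lemma together with completeness of $R$ (cf.\ Theorem~\ref{hensellemma_kneser}) lifts $f'$ to a genuine isometry $\tilde\phi\in O_V(F;\Lambda,P^k\Lambda^\#)$ with $\tilde\phi\equiv f'\bmod P^{k+1}\Lambda^\#$. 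Two genuine isometries represent the same coset of $O_V(F;\Lambda,P^{k+1}\Lambda^\#)$ if and only if their reductions modulo $P^{k+1}\Lambda^\#$ coincide, so these liftings give a bijection between the coset space $O_V(F;\Lambda,P^k\Lambda^\#)/O_V(F;\Lambda,P^{k+1}\Lambda^\#)$ and the set of admissible~$f'$, proving the index equals $q^{n(n-1)/2}$.

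For statement~(b), consider the reduction homomorphism $\rho:O_V(F;\Lambda)\to \Hom_R(\Lambda,\Lambda/P^k\Lambda^\#)$; its kernel is by definition $O_V(F;\Lambda,P^k\Lambda^\#)$ and its image lies in $A(\Lambda,\Lambda\bmod P^k\Lambda^\#)$. Surjectivity onto $A$ is shown by lifting: given $\bar\phi\in A$, choose any $R$-linear lift $f:\Lambda\to\Lambda$; the relation $Q\circ f\equiv Q\bmod P^k$ yields the Gram-matrix congruence ${}^tAGA\equiv G\bmod P^k$ in any basis of $\Lambda$, from which together with $P^kQ(\Lambda^\#)\subseteq P$ one deduces that $\det f$ is a unit of $R$. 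Hence the surjectivity hypothesis of Lemma~\ref{hensel_quantitative} holds for $f$, and iterating the lemma starting at level $k$ produces a genuine isometry $\tilde\phi$ with $\rho(\tilde\phi)=\bar\phi$. Statement~(c) is then immediate from~(a): $O_V(F;\Lambda,P^{k+1}\Lambda^\#)$ is an open subgroup of finite index $q^{n(n-1)/2}$ in the compact open group $O_V(F;\Lambda,P^k\Lambda^\#)$, and for finite-index open subgroups of a locally compact group the ratio of Haar measures equals the group-theoretic index. The main obstacle will be the determinant step in~(b): passing from the purely quadratic congruence $Q\circ f\equiv Q\bmod P^k$ to the invertibility of $f$ modulo~$P$ requires a careful comparison of the $P$-valuation of $\det G$ against the exponent~$k$, and it is precisely the hypothesis $P^kQ(\Lambda^\#)\subseteq P$, which controls the level of the lattice, that makes this comparison work.
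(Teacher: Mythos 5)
Your argument is correct and is exactly the expansion of the paper's one-line proof, which simply cites Kneser's Hensel lemma (Theorem \ref{hensellemma_kneser}) and the quantitative count of Lemma \ref{hensel_quantitative}. The invertibility step you flag in (b) does go through as you indicate: writing ${}^tAGA=G(1+\pi^kG^{-1}C)$ and noting that $\pi^kG^{-1}$ has entries in $P$ because $G^{-1}$ is the Gram matrix of $\Lambda^\#$ and $P^kb(\Lambda^\#,\Lambda^\#)\subseteq P$, one gets $(\det A)^2\in R^\times(1+P)$, hence $\det A\in R^\times$.
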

\begin{proof}
This follows directly from Theorem \ref{hensellemma_kneser} and the lemma.  
\end{proof}
\begin{proposition}
Let $(V,Q)$ be a non degenerate quadratic space over $F$ of dimension
$n$, let $\Lambda$ be an $R$- lattice on $V$ with $Q(\Lambda)\subseteq
R$. Identify $\End_F(V)$
with $M_n(F)$ and $\End_R(\Lambda)$ with $M_n(R)$ with respect to some fixed
$R$-basis of $\Lambda$, equip $F$ with the (additive) Haar measure $\mu_F$
under which $R$ has measure $1$. 
 
\medskip
Let $\omega_\mcX=\bigwedge_{i,j}dx_{ij}$ denote the gauge form on
$\mcX:=\End_F(V)$ which induces (with the given choice of $\mu_F$ and
the given coordinate mappings) the  
Haar measure $\mu_\mcX$ on $\mcX$ with $\mu_\mcX(\End_R(\Lambda))=1$.

Let $\mu_{\mcS}$
denote the Haar measure on $\mcS:=M_n^{\rm sym}(F)$ with
$\mu(M_n^{\rm sym}(R))=1$, denote by $\omega_\mcS$ the gauge form on $\mcS$
inducing $\mu_\mcS$ and write $\mcS^*=\{S\in \mcS\mid S={}^tXS_0X
\text{ for an } X \in GL_n(F)\}$.   

Denote by $S_0\in \mcS$ the Gram matrix of
$\Lambda$ with respect to the given basis of $\Lambda$ and put
$\mcX_S^{(S_0)}:=\{X\in \mcX \mid {}^tXS_0X=S\}$ for $S \in \mcS$.

Then there exists a differential form $\omega$ of degree
$\frac{n(n-1)}{2}$ on $\mcX$ such that for
the measure $\mu_S^{(S_0)}$ associated to the restriction of $\omega$ to
$\mcX_S^{(S_0)}$ (for $S\in \mcS^*$) one has 
\begin{equation*}
\int_{\mcX}f(X)d\mu_\mcX=\int_{\mcS^*}\bigl(\int_{\mcX_S^{(S_0)}}f(X) d\mu_S^{(S_0)}\bigr)d\mu_{\mcS}
\end{equation*}
for any integrable function $f$ on $\mcX$.

The measure $\mu_{S_0}:=\mu_{S_0}^{(S_0)}$ is then a Haar measure on
$\mcX_{S_0}^{(S_0)}=O_V(F)$, induced by the gauge form
$\omega\vert_{\mcX_{S_0}^{(S_0)}}$ on $O_V(F)$.
\end{proposition}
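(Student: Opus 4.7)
The plan is to realize $\omega$ as the relative top-degree form of the algebraic submersion $\Phi\colon\mcX\to\mcS$, $\Phi(X)={}^tX S_0 X$, invoke a $p$-adic coarea/Fubini formula for the disintegration, and then check left-invariance on the group fiber. First I would verify that $\Phi$ is a submersion on the locus $\Phi^{-1}(\mcS^*)$: the differential at $X$ is
\begin{equation*}
  d\Phi_X(Y)={}^tY\,S_0X+{}^tX\,S_0Y,
\end{equation*}
and for any symmetric $T$ the matrix $Y=\tfrac{1}{2}S_0^{-1}X^{-t}T$ satisfies $d\Phi_X(Y)=T$ whenever $X\in GL_n(F)$. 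Hence $d\Phi_X$ is surjective with kernel of dimension $n^2-\tfrac{n(n+1)}{2}=\tfrac{n(n-1)}{2}$, and the fibers $\mcX_S^{(S_0)}$ for $S\in\mcS^*$ are smooth $F$-analytic submanifolds of this expected dimension.

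Next I would construct $\omega$ via the $p$-adic implicit function theorem. Because $\Phi^*\omega_\mcS$ annihilates fiber-tangent vectors, a $\tfrac{n(n-1)}{2}$-form $\omega$ on any local chart of $\mcX$ meeting $\Phi^{-1}(\mcS^*)$ is determined, up to a summand vanishing on the fibers, by the relation
\begin{equation*}
  \omega\wedge\Phi^*\omega_\mcS=\omega_\mcX.
\end{equation*}
The ambiguous summand disappears under restriction to any fiber, so $\omega|_{\mcX_S^{(S_0)}}$ is a globally well-defined top-degree form on $\mcX_S^{(S_0)}$, producing the claimed measure $\mu_S^{(S_0)}$; local choices of $\omega$ can be glued by a partition of unity (or handled chart by chart) to obtain a global form on $\Phi^{-1}(\mcS^*)$, extended arbitrarily elsewhere.

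The disintegration identity is then the infinitesimal relation above integrated by the $p$-adic change-of-variables formula in coordinates adapted to $\Phi$ (where $\Phi$ looks locally like a coordinate projection), together with an exhaustion argument; this gives
\begin{equation*}
  \int_{\mcX}f(X)\,d\mu_\mcX=\int_{\mcS^*}\Bigl(\int_{\mcX_S^{(S_0)}}f(X)\,d\mu_S^{(S_0)}\Bigr)\,d\mu_\mcS
\end{equation*}
for every integrable $f$. Finally, for $\sigma\in O_V(F)=\mcX_{S_0}^{(S_0)}$ the left-translation $L_\sigma\colon X\mapsto\sigma X$ preserves $\Phi$, since ${}^t(\sigma X)S_0(\sigma X)={}^tX S_0 X$, and satisfies $L_\sigma^*\omega_\mcX=(\det\sigma)^n\,\omega_\mcX=\pm\omega_\mcX$; pulling back the defining relation gives $L_\sigma^*\omega=\pm\omega$ modulo forms vanishing on the fiber $\mcX_{S_0}^{(S_0)}$, so the induced measure $\mu_{S_0}$ is left-invariant, and since $O_V$ is reductive hence unimodular this makes $\mu_{S_0}$ a Haar measure on $O_V(F)$. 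The main obstacle I anticipate is the middle step: turning the infinitesimal relation $\omega\wedge\Phi^*\omega_\mcS=\omega_\mcX$ into a clean global integration identity in the $p$-adic analytic category, which requires a careful coordinate-patch-and-exhaustion argument and verification that the local fiber measures assemble consistently into the claimed Radon measures $\mu_S^{(S_0)}$; the invariance check at the end is then comparatively routine.
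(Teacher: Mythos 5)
Your argument is correct and follows essentially the same route the paper intends: the paper's own "proof" merely appeals to the general theory of gauge forms and fibrations of algebraic varieties (Weil's formalism) and cites Böge for an explicit construction, which is precisely the machinery you spell out (submersion property of $X\mapsto{}^tXS_0X$, the relation $\omega\wedge\Phi^*\omega_{\mcS}=\omega_\mcX$ defining the fiber forms, the coarea/Fubini identity, and the invariance check via $\det\sigma=\pm1$ and $\Phi\circ L_\sigma=\Phi$). Your write-up supplies the details the paper omits, and the one step you flag as delicate (globalizing the local fibration identity) is exactly what the cited general theory provides.
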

\begin{proof}
 This follows from the theory of differential forms on algebraic
 varieties and their associated measures. An explicit construction is
 given in \cite{boege}.
\end{proof}
\begin{remark}
  If we replace $Q$ by $cQ$ and hence $S_0$ by $cS_0$, we change
  $\mu_{S_0}$ to $\mu_{cS_0}=\vert c
  \vert_v^{-\frac{n(n+1)}{2}}\mu_{S_0}$. Similarly, changing the Gram matrix
  $S_0$ to $S_0'={}^tUS_0 U$ with $U \in GL_n(F)$ changes the measure
  by a factor of $\vert \det(U)\vert_v^{-n-1}$ (the map sending $S \in
  M_n^{\sym}(F)$ to ${}^t U S U$ is easily seen to have determinant $(\det(U))^{n+1}$). 
\end{remark}
\begin{proposition}
  With notations as above one has for $k\in \N$ satisfying
  $P^kQ(\Lambda^\#)\subseteq P, P^k\Lambda^\#\subseteq \Lambda$:
  \begin{eqnarray*}
    \mu_{S_0}(O_V(F;\Lambda))&=&\vert 2 \vert_v^{-n}q^{-kn\frac{n-1}{2}}A(\Lambda,\Lambda
    \bmod P^k\Lambda)\\
&=&\vert 2 \vert_v^{-n}q^{-kn\frac{n-1}{2}}\vert{\det}_b(\Lambda)\vert_v^{-n}A(\Lambda,\Lambda
    \bmod P^k\Lambda^\#).
  \end{eqnarray*}
\end{proposition}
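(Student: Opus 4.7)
The strategy is to evaluate the measure of a suitable thickening of $O_V(F;\Lambda)$ inside $\End_R(\Lambda)$ in two different ways. I would set
\begin{equation*}
Y_k := \{X \in \End_R(\Lambda) \mid Q(Xv)\equiv Q(v)\bmod P^k \text{ for all } v \in \Lambda\},
\end{equation*}
which by polarization is equivalent to ${}^tXS_0X \in S_0 + T_k$, where $T_k \subset M_n^{\rm sym}(F)$ consists of symmetric matrices with diagonal entries in $2P^k$ and off-diagonal entries in $P^k$. Expanding ${}^t(X+Y)S_0(X+Y)$ for $Y \in P^k\End_R(\Lambda)$ shows that $Y_k$ is a union of cosets of $P^k\End_R(\Lambda)$, one coset per element of the finite set enumerated by $A(\Lambda, \Lambda\bmod P^k\Lambda)$. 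Since $\mu_\mcX(P^k\End_R(\Lambda)) = q^{-kn^2}$, this gives the direct count
\begin{equation*}
\mu_\mcX(Y_k) = q^{-kn^2}\, A(\Lambda, \Lambda\bmod P^k\Lambda).
\end{equation*}

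For the second evaluation, I would apply the fiber-integration formula from the preceding proposition. The base measure is
\begin{equation*}
\mu_\mcS(T_k) = (|2|_v q^{-k})^n \cdot (q^{-k})^{n(n-1)/2} = |2|_v^n q^{-kn(n+1)/2}.
\end{equation*}
It remains to show that the integrand $\mu_S^{(S_0)}(\End_R(\Lambda)\cap \mcX_S^{(S_0)})$ is constant in $S$ on $S_0+T_k$ and equal to $\mu_{S_0}(O_V(F;\Lambda))$. For $S \in S_0+T_k$, Corollary \ref{isometry_mod_P} (applied with $J = P^{k-1}$, legitimized by $P^kQ(\Lambda^\#)\subseteq P$ and $P^k\Lambda^\#\subseteq \Lambda$) produces $X_S \in \End_R(\Lambda)\cap GL_n(R)$ with ${}^tX_SS_0X_S = S$. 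Right-translation $X\mapsto XX_S^{-1}$ maps $\mcX_S^{(S_0)}$ bijectively onto $\mcX_{S_0}^{(S_0)} = O_V(F)$, sends $\End_R(\Lambda)\cap \mcX_S^{(S_0)}$ onto $O_V(F;\Lambda)$, and has Jacobian of $v$-absolute value $|\det X_S|_v^n=1$ on $\mcX$; the corresponding map $S'\mapsto {}^t(X_S^{-1})S'X_S^{-1}$ on $\mcS$ has Jacobian $|\det X_S|_v^{-(n+1)}=1$ by the remark following the previous proposition. Both $\mu_\mcX$ and $\mu_\mcS$ are therefore preserved, forcing preservation of the fiber measure, and we conclude $\mu_\mcX(Y_k) = \mu_{S_0}(O_V(F;\Lambda))\cdot |2|_v^n q^{-kn(n+1)/2}$.

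Equating the two expressions and using $-kn^2 + kn(n+1)/2 = -kn(n-1)/2$ yields the first stated formula. The second follows from the identity
\begin{equation*}
A(\Lambda, \Lambda\bmod P^k\Lambda) = [\Lambda^\#:\Lambda]^n \cdot A(\Lambda, \Lambda\bmod P^k\Lambda^\#),
\end{equation*}
which I would prove by lifting each $\phi:\Lambda \to \Lambda/P^k\Lambda^\#$ along the projection $\Lambda/P^k\Lambda \twoheadrightarrow \Lambda/P^k\Lambda^\#$: the fiber has size $|\Hom_R(\Lambda, P^k\Lambda^\#/P^k\Lambda)| = [\Lambda^\#:\Lambda]^n$, and every such lift remains an isometry mod $P^k$ because the correction $\psi$ takes values in $P^k\Lambda^\#$, so that $Q(\psi(x))\in P^{2k}Q(\Lambda^\#)\subseteq P^{k+1}$ and $b(\phi_0(x),\psi(x))\in P^k b(\Lambda,\Lambda^\#)\subseteq P^k$. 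Since $[\Lambda^\#:\Lambda] = |\det_b(\Lambda)|_v^{-1}$, substitution yields the second formula.

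The main obstacle will be the rigorous verification that the fiber measure $\mu_S^{(S_0)}$ is locally constant on $S_0 + T_k$. This reduces to tracking how the gauge form transforms under simultaneous right multiplication on $\mcX$ and the induced congruence on $\mcS$; the delicate cancellation between the Jacobian contributions (powers $n$ and $n+1$ of $\det X_S$) is exactly what makes the argument succeed and crucially requires that Hensel's lemma lifts to $X_S \in GL_n(R)$, not merely to $GL_n(F)$.
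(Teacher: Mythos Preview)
Your proof is correct and follows essentially the same strategy as the paper: both evaluate $\mu_\mcX$ of the set $Y_k$ directly as $q^{-kn^2}A(\Lambda,\Lambda\bmod P^k\Lambda)$ and via the fiber-integration formula over $\{S\equiv_{\rm ev}S_0\bmod P^k\}$, then equate. You supply more detail than the paper at two points it leaves implicit---the constancy of the fiber measure on $S_0+T_k$ (which you justify via Hensel's lemma producing $X_S\in GL_n(R)$ and the Jacobian cancellation) and the lifting identity relating $A(\Lambda,\Lambda\bmod P^k\Lambda)$ to $A(\Lambda,\Lambda\bmod P^k\Lambda^\#)$---but the architecture is identical.
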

\begin{proof}
  We prove the first equality, the second one is then a trivial
  consequence.

We take for $f$ the characteristic function of $\{\phi
\in \End_R(\Lambda)\mid Q(\phi(x))\equiv Q(x) \bmod P^k \text{ for
  all } x \in \Lambda\} $ and notice that this property depends only
on $\phi$ modulo $P^k\Lambda$ (in fact, only on $\phi$ modulo
$P^k\Lambda^\#$). If $X$ is the matrix associated to $\phi$ 
this is equivalent to ${}^tXS_0X \equiv S_0 \bmod P^k$, where the
 diagonal entries, which are even,  are congruent modulo $2P^k$; we write ${}^tXS_0X
 \equiv_{\rm ev} S_0$ for this property in the sequel. 

We have then (omitting superscripts $(S_0)$ in the sequel) for $S\in \mcS^*$
\begin{equation*}
  \int_{\mcX_S}f(X) d\mu_{\mcX_S}=
  \begin{cases}
    0 & S\not\equiv_{\rm ev} S_0 \bmod P^k\\
\mu_{S_0}(O_V(F;\Lambda)) & S\equiv_{\rm ev} S_0 \bmod P^k
  \end{cases}.
\end{equation*}
 From the previous proposition we get then 
\begin{equation*}
  \int_{\mcX}f(X)d\mu_{\mcX}=\mu_{S_0}(O_V(F;\Lambda)) 
\int_{S \equiv_{\rm ev}  S_0\bmod P^k} d\mu_{\mcS}.
\end{equation*}
The integral on the right hand side without the extra congruence
condition on the diagonal elements equals $q^{-kn\frac{n+1}{2}}$, the
extra congruence condition gives an additional factor of $\vert 2
\vert_v^{n}$, so that we arrive at 
\begin{equation*}
  \int_{\mcX}f(X)d\mu_{\mcX}=\mu_{S_0}(O_V(F;\Lambda)) \vert 2 \vert_v^{n}q^{-kn\frac{n+1}{2}}. 
\end{equation*}
We compare this with 
\begin{eqnarray*}
  \int_{\mcX}f(X)d\mu_{\mcX}&=&\mu_{\mcX}(\{X\in M_n(R)\mid {}^t X
  S_0X\equiv_{\rm ev} S_0 \bmod P^k\}\\
&=&q^{-kn^2}A(\Lambda, \Lambda \bmod P^k\Lambda),
\end{eqnarray*}
using that the validity of the congruence here depends only on $X$
modulo $P^kM_n(R)$, 
and obtain the assertion.
\end{proof}
\begin{remark}
  \begin{enumerate}
  \item 
The formula above is valid for any $k$ satisfying the
  assumptions. The independence of the right hand side of $k$ (as
  long as $k$ satisfies the assumptions) can also be obtained as a consequence of our
  quantitative form of Hensel's lemma, 
\item The proof shows that the factor $\vert 2 \vert_v^{-n}$ does not
  occur if one replaces the condition $Q(\phi(x))\equiv Q(x) \bmod
  P^k$ in the definition of $A(\Lambda, \Lambda \bmod P^k\Lambda^\#)$
  by the condition ${}^tXS_0X\equiv S_0\bmod P^k$, where $X$ is the
  matrix of $\phi$ with respect to the given basis. This power of $2$
  occurring for dyadic places $v$ therefore does not occur in
  treatments working with the symmetric bilinear form only instead of the
  quadratic form.
  \end{enumerate}
\end{remark}
\begin{corollary}\label{local_measure_computed}
  With notations as above let $\Lambda_0$ be a fixed lattice on $V$
  with Gram matrix $S_0$, write $\mu=\mu_{S_0}$ for the Haar measure
  on $O_v(F)$ as above. Then we have for any lattice $\Lambda$ on $V$ with
  $\Lambda \subseteq \Lambda^\#$ and $k\in \N$ with
  $P^kQ(\Lambda^\#)\subseteq P, P^k\Lambda^\#\subseteq \Lambda$
\begin{equation*}
  \mu(O_V(F;\Lambda))=\vert 2 \vert_v^{-n} (\Lambda:P^k\Lambda^\#)^{\frac{1-n}{2}}
  A(\Lambda, \Lambda \bmod P^k\Lambda^\#)    \vert \det S_0
  \vert_v^{-\frac{n+1}{2}} 
\end{equation*}
and
  \begin{equation*}
 \mu(O_V(F, \Lambda,
 P^k\Lambda^\#))(\Lambda:P^k\Lambda^\#)^{\frac{n-1}{2}}= \vert 2 \vert_v^{-n}  \vert \det S_0 \vert_v^{-\frac{n+1}{2}}
  \end{equation*}
independent of the choice of $\Lambda$. 
\end{corollary}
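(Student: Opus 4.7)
The plan is to reduce everything to the preceding proposition, applied with $\Lambda$ itself (so that its Gram matrix serves as ``$S_0$'' in that proposition), and then to convert the resulting measure $\mu_S$ into the fixed measure $\mu=\mu_{S_0}$ using the scaling rule recorded in the remark after the proposition. Concretely, let $S$ denote a Gram matrix of $\Lambda$ in some $R$-basis. Pick $U\in GL_n(F)$ such that $S={}^tUS_0U$, equivalently such that $U$ expresses the chosen basis of $\Lambda$ in terms of the fixed basis of $\Lambda_0$. The remark gives
\begin{equation*}
\mu_S=\vert\det(U)\vert_v^{-(n+1)}\mu_{S_0},
\quad\text{hence}\quad
\mu_{S_0}=\vert\det(U)\vert_v^{\,n+1}\,\mu_S,
\end{equation*}
and $\vert\det S\vert_v=\vert\det U\vert_v^2\,\vert\det S_0\vert_v$ yields $\vert\det U\vert_v^{n+1}=\vert{\det}_b(\Lambda)\vert_v^{(n+1)/2}\vert\det S_0\vert_v^{-(n+1)/2}$.

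Next I would apply the previous proposition directly to $\Lambda$ to get
\begin{equation*}
\mu_S(O_V(F;\Lambda))=\vert 2\vert_v^{-n}\,q^{-kn(n-1)/2}\,\vert{\det}_b(\Lambda)\vert_v^{-n}\,A(\Lambda,\Lambda\bmod P^k\Lambda^\#),
\end{equation*}
and substitute. Combining the two lines produces
\begin{equation*}
\mu_{S_0}(O_V(F;\Lambda))=\vert 2\vert_v^{-n}\,q^{-kn(n-1)/2}\,\vert{\det}_b(\Lambda)\vert_v^{(1-n)/2}\,\vert\det S_0\vert_v^{-(n+1)/2}\,A(\Lambda,\Lambda\bmod P^k\Lambda^\#).
\end{equation*}
To match the statement I would then rewrite the two factors involving $q$ and $\vert{\det}_b(\Lambda)\vert_v$ as a single index. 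Since $P^k\Lambda^\#\subseteq\Lambda\subseteq\Lambda^\#$, the multiplicativity of indices gives
\begin{equation*}
(\Lambda:P^k\Lambda^\#)=\frac{(\Lambda^\#:P^k\Lambda^\#)}{(\Lambda^\#:\Lambda)}=\frac{q^{kn}}{(\Lambda^\#:\Lambda)}=q^{kn}\,\vert{\det}_b(\Lambda)\vert_v,
\end{equation*}
using $(\Lambda^\#:\Lambda)=\vert{\det}_b(\Lambda)\vert_v^{-1}$ (from Lemma \ref{determinantrelation_lattices} together with $(\Lambda^\#)^\#=\Lambda$). Raising to the $(1-n)/2$-th power and substituting turns the two factors above into $(\Lambda:P^k\Lambda^\#)^{(1-n)/2}$, which gives the first claimed identity.

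For the second identity I would combine the first with the index formula $(O_V(F;\Lambda):O_V(F;\Lambda,P^k\Lambda^\#))=A(\Lambda,\Lambda\bmod P^k\Lambda^\#)$ already recorded in the corollary to Lemma \ref{hensel_quantitative}: dividing $\mu(O_V(F;\Lambda))$ by this index and multiplying by $(\Lambda:P^k\Lambda^\#)^{(n-1)/2}$ cancels both the index of admissible isometries and the remaining $\Lambda$-dependent factor, leaving $\vert 2\vert_v^{-n}\,\vert\det S_0\vert_v^{-(n+1)/2}$, which is manifestly independent of $\Lambda$ and of $k$. The main (very mild) obstacle is just bookkeeping: carefully tracking the distinction between $\det_b(\Lambda)$, the indices $(\Lambda^\#:\Lambda)$ and $(\Lambda:P^k\Lambda^\#)$, and the transformation factor $\vert\det U\vert_v^{n+1}$, and verifying that the algebraic simplification actually collapses to a single index raised to the advertised exponent.
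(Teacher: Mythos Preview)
Your proof is correct and follows essentially the same route as the paper: write the Gram matrix of $\Lambda$ as ${}^tUS_0U$, convert $\mu_S$ to $\mu_{S_0}$ via the scaling factor $\vert\det U\vert_v^{n+1}$, apply the preceding proposition, and use $(\Lambda:P^k\Lambda^\#)=q^{kn}\vert{\det}_b(\Lambda)\vert_v$ to collapse the powers of $q$ and $\vert{\det}_b(\Lambda)\vert_v$ into the single index factor. The paper's proof is terser but performs exactly these steps in a slightly different order; your derivation of the second identity from the first via the index formula $(O_V(F;\Lambda):O_V(F;\Lambda,P^k\Lambda^\#))=A(\Lambda,\Lambda\bmod P^k\Lambda^\#)$ also matches the paper.
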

\begin{proof}
 We fix a basis of $\Lambda$ and write the Gram matrix of $\Lambda$ as
 ${}^tUS_0S$ with $U \in GL_n(F)$.
Using $\mu=\vert \det(U) \vert_v^{n+1} \mu_S$ and
$(\Lambda:P^k\Lambda^\#)=q^{kn}\vert {\det}_b(\Lambda)\vert_v$ we can
then rewrite the formula from the proposition as 
\begin{eqnarray*}
 \mu(O_V(F;\Lambda))&=&\vert 2 \vert_v^{-n}\vert
                        \det(U)\vert_v^{n+1}(\Lambda:P^k\Lambda^\#)^{\frac{1-n}{2}}\vert
                        {\det}_b(\Lambda)\vert_v^{-\frac{n+1}{2}}A(\Lambda,\Lambda
                        \bmod P^k\Lambda^\#)\\
&=&\vert 2\vert_v^{-n} (\Lambda:P^k\Lambda^\#)^{\frac{1-n}{2}}\vert
    \det(S_0) \vert_v^{-\frac{n+1}{2}} A(\Lambda,\Lambda \bmod
    P^k\Lambda^\#). 
\end{eqnarray*}
The second formula follows from this upon inserting
\begin{equation*}
(O_V(F;\Lambda):O_V(F;\Lambda, P^k\Lambda^\#))=A(\Lambda,\Lambda \bmod P^k\Lambda^\#).
\end{equation*}



\end{proof}

\section{Computation of local measures, definite archimedean places}
We postpone the discussion of the local measure on the orthogonal
group $O_{(V,Q)}(F)$ in the case that that $F=\C$ or that $F=\R$ and $Q$
is indefinite and assume in this section that $F=F_v=\R$ and that
$(V,Q)$ is a positive definite quadratic space over $F$. Since $Q$ and
$-Q$ lead to the same orthogonal group this covers the negative
definite case as well.

We choose as in the previous section a basis of $V$ and denote by
$S_0$ the Gram matrix of $(V,Q)$ with respect to this basis. As in the
non archimedean case we construct a Haar measure $\mu=\mu_{S_0}$ on
$O_V(F)$. For this, we first take on $F=\R$ the Lebesgue measure. We identify $\mcS=M_n^{\sym}(F)$ with
$\R^{\frac{n(n+1)}{2}}$ and obtain  as measure $\mu_{\mcS}$ on $\mcS$
induced by the differential form $\bigwedge_{i\le j} ds_{ij}$
the Lebesgue measure on $\R^{\frac{n(n+1)}{2}}$.  From this we obtain
as in the non-archimedean case  measures $\mu_S^{(S_0)}$ on $\mcX_S^{(S_0)}$
for each $S \in \mcS$. Identifying $O_V(F)$ with $\mcX_{S_0}^{(S_0)}$ we set
$\mu =\mu_{S_0}=\mu_{S_0}^{(S_0)}$, this Haar measure on $O_V(F)$ is induced (with
notations as in the non-archimedean case) by the gauge form
$\omega\vert_{\mcX_{S_0}^{(S_0)}}$. Again 
  as there we see that  $S'_0={}^t U S_0U$ for $U\in GL_n(F)$ leads to
  $\mu_{S'_0}=\vert \det U\vert_v^{-n-1}\mu_{S_0}$. In particular, if we write
  $S_0={}^t U U$ with $U \in GL_n(F)$  we obtain $\mu_{S_0}=\vert
  \det(S_0)\vert_v^{-\frac{n+1}{2}}\mu_{1_n}$, where $1_n$ denotes the
  identity matrix.

Fixing $S_0=1_n$ we omit superscripts $(S_0)$ in the sequel. To proceed
similarly as in the non-archimedean case we choose now
$f(X)=\exp(-\tr({}^tX X)$ as our test function $f$
on $\End(V)=M_n(\R)$ and have
$\int_{\mcX}f(X)d\mu_{\mcX}=\pi^{\frac{n^2}{2}}$, using
$\int_{-\infty}^{\infty}\exp(-x^2)dx=\sqrt{\pi}$.  

In order to make the measure $\mu_\mcS$ on $\mcS^*$ explicit we
parametrize $\mcS$ by the set of lower triangular matrices
$X=(x_{ij})_{i,j}$, mapping $X$ to $S={}^tX X$. We have then by a
straightforward calculation 
\begin{equation*}
\omega_\mcS:=  \bigwedge_{i\le j}ds_{ij}=2^n\prod_{i=1}^n x_{ii}^i \bigwedge_{i\ge j} dx_{ij},
\end{equation*}
and $\mu_{\mcS}$ is the measure induced by this differential form. The
equation $\omega_\mcX=\omega\wedge \omega_\mcS$ gives then 
\begin{equation*}
  \omega=2^{-n}\prod_{i=1}^nx_{ii}^{-i}\bigwedge_{i<j}dx_{ij}
\end{equation*}
up to summands containing one of the $dx_{ij}$ with $i\ge j$.
For the computation of
\begin{equation*}
\int_{\mcX}f(X)d\mu_\mcX=\int_{\mcS^*}\bigl(\int_{\mcX_S^{(S_0)}}f(X) d\mu_S^{(S_0)}\bigr)d\mu_{\mcS}
\end{equation*}
we notice that for $S={}^tXX\in \mcS^*$ we have $\mcX_S=O_V(F)X$. The
integrand in the inner integral above is constant on $\mcX_S$, and the
volume of $O_V(F)X$ with respect to the restriction of $\omega$ to
$\mcX_S$ is $\det(X)^{-1}$ times the volume of $O_V(F)$. This can be
checked by looking at $X=\lambda 1_n$ for some $\lambda \ne 0$ since
the quotient of these volumes is some character of $GL_n(F)$ and hence
a function of the determinant only.
We arrive at 
\begin{eqnarray*}
  \pi^{\frac{n^2}{2}}&=&\int_{M_n(F)} f(X)\\
&=&
\int_{\mcS^*}\bigl(\int_{\mcX_S^{(S_0)}}f(X)
d\mu_S^{(S_0)}\bigr)d\mu_{\mcS}\\
&=&\mu_{1_n}(O_V(F))\int\prod_{i=1}^n2^n x_{ii}^{i-1} 
\exp(-\sum_{i\le j}x_{ij}^2)\bigwedge_{i\ge j}dx_{ij}. 
\end{eqnarray*}
We use again $\int_{-\infty}^{\infty}\exp(-x^2)dx=\sqrt{\pi}$ for the
integration with respect to the variables $x_{ij}$ with $i>j$, for the
integration with respect to the $x_{ii}$ we use $\int_{-\infty}^\infty
2x^i\exp(-x^2)dx=\Gamma(\frac{i+1}{2})$
and obtain 
\begin{equation*}
\mu_{1_n}(O_V(F))=\pi^{\frac{n(n+1)}{4}}\prod_{i=1}^n(\Gamma(\frac{i}{2}))^{-1}
\end{equation*}
and hence 
\begin{equation*}
\mu_{1_n}(SO_V(F))=\frac{1}{2}\pi^{\frac{n(n+1)}{4}}\prod_{i=1}^n(\Gamma(\frac{i}{2}))^{-1}.
\end{equation*}
\begin{remark}
  \begin{enumerate}
\item The differential forms considered in this section are the same
  as in the non-archimedean situation of the previous section.
  \item  By the well known formula ${\rm
   vol}(S^{n-1})=2\pi^{\frac{n}{2}}(\Gamma(\frac{n}{2}))^{-1}$ for the volume of
 the $(n-1)$-dimensional unit sphere we can express the measure of
 $SO_V(F)$ also as
 \begin{equation*}
 \mu_{1_n}(SO_V(F))=2^{-n}\prod_{i=2}^n{\rm vol}(S^{i-1}).
\end{equation*}
 \end{enumerate}
\end{remark}
\section{The global Tamagawa measure}

We now  put together the local computations. For this, let $F$ be a
totally real number field of degree $r$ over $\Q$ with discriminant
$D_F$ and different $\mcD_F$, let 
$\mu_{F_\A}=\prod_{v\in \Sigma_F}\mu_{F_v}$ be the invariant measure on
$\A_F$ with $\mu_{F_\A}(\A_F/F)=1$. We choose the usual normalization of
the $\mu_{F_v}$ with $\mu_{F_v}(R_v)=\vert \mcD_F\vert_v^{\frac{1}{2}}$ for all non
archimedean $v$ and $\mu_{F_v}$ equal to the Lebesgue measure for the real
places of $F$. In particular, for non archimedean $v$ the measure on
$F_v$ used
now is $\vert \mcD_F\vert_v^{\frac{1}{2}}$ times the measure used in
Section \ref{nonarchimedean_haarmeasure}, and since $SO_V(F_v)$ has
dimension $n(n-1)/2$, the measure on $SO_V(F_V)$ is changed by a
factor $\vert \mcD_F\vert_v^{\frac{n(n-1)}{4}}$.

By the product formula
the product of these factors over all non archimedean places of $F$ is
then  $\vert D_F\vert^{-\frac{n(n-1)}{4}}$, where  $\vert D_F\vert$
is the ordinary absolute value of the field discriminant.

Let $(V,Q)$ be a
totally definite quadratic space over $F$, let $\Lambda_0$ be a fixed
free lattice on $V$ with Gram matrix $S_0$ with respect to a fixed
basis. Using our present choice of the measures $\mu_{F_v}$ on the $F_v$ we define
local measures $\mu_v=(\mu_{S_0})_v$ on $SO_V(F_v)$ and obtain the
Tamagawa measure $\mu_{\A}$ on $SO_V(\A)$. 

Combining our local results with the first version of the
Minkowski-Siegel theorem we obtain:
\begin{theorem}[Siegel's main theorem on quadratic forms, second version]\label{massformel}
Let $\Lambda$ be an integral lattice on $V$, denote by
$\Lambda_1,\ldots,\Lambda_h$ representatives (on $V$) of the proper (with
respect to $SO(V)$) integral equivalence classes in the genus of
$(\Lambda,Q)$.

For non archimedean $v$ and $k=k_v$ satisfying
$P_ v^kQ(\Lambda_v)\subseteq P_v, P_v^k\Lambda_v^\#\subseteq \Lambda_v$
put
\begin{equation*}
  \alpha_v(\Lambda)=\frac{1}{2}\vert 2 \vert_v^{-n}q_v^{-k\frac{n(n-1)}{2}}\vert
  {\det}_b(\Lambda_v)\vert_v^{\frac{1-n}{2}}A(\Lambda_v,\Lambda_v\bmod P_v^k\Lambda_v^\#)
\end{equation*}
where $q_v$ is the order of the residue field $R_v/P_v$.
Then one has
\begin{equation*}
  \sum_{i=1}^h\frac{1}{\vert SO_V(F;\Lambda_i)}\vert=\vert
  D_F\vert^{\frac{n(n-1)}{4}}2^{r+1}\pi^{-r\frac{n(n+1)}{4}}\prod_{i=1}^n(\Gamma(\frac{i}{2}))^{r}\prod_{v\not\in \infty}(\alpha_v(\Lambda))^{-1}.
\end{equation*}
\end{theorem}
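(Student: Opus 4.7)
The plan is to deduce the second version from the first version of Siegel's main theorem by computing each local Tamagawa measure explicitly and invoking the product formula. I apply the first version with the congruence subgroup
\begin{equation*}
H=SO_V(F_\infty)\times\prod_{v\not\in\infty}SO_V(F_v;\Lambda_v),
\end{equation*}
so that the double cosets $SO_V(F)\phi_iH$ correspond to the proper equivalence classes in the genus of $\Lambda$, with stabilizers $SO_V(F)\cap \phi_iH\phi_i^{-1}=SO_V(F;\Lambda_i)$. Total definiteness makes $SO_V(F_\infty)$ compact and the stabilizers finite, so I can take each archimedean fundamental domain to have measure $\mu_\infty(SO_V(F_\infty))/|SO_V(F;\Lambda_i)|$, reducing the first version to
\begin{equation*}
\sum_{i=1}^h\frac{1}{|SO_V(F;\Lambda_i)|}\cdot\mu_\infty(SO_V(F_\infty))\cdot\prod_{v\not\in\infty}\mu_v(SO_V(F_v;\Lambda_v))=2.
\end{equation*}

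For the archimedean factor, Section 4 gives at each real place $\mu_{1_n}(SO(n))=\tfrac12\pi^{n(n+1)/4}\prod_{i=1}^n\Gamma(i/2)^{-1}$, and the transformation rule $\mu_{S_0}=|\det S_0|_v^{-(n+1)/2}\mu_{1_n}$ accounts for the Gram matrix of $(V_v,Q_v)$. Taking the product over all $r$ real places yields
\begin{equation*}
\mu_\infty(SO_V(F_\infty))=\Bigl(\prod_{v\in\infty}|\det S_0|_v^{-(n+1)/2}\Bigr)\cdot 2^{-r}\pi^{rn(n+1)/4}\prod_{i=1}^n\Gamma(i/2)^{-r}.
\end{equation*}

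For each non-archimedean $v$, I start from Corollary \ref{local_measure_computed} with $k=k_v$, giving the measure of $O_V(F_v;\Lambda_v)$ in the normalization of Section \ref{nonarchimedean_haarmeasure} (where $\mu_{F_v}(R_v)=1$). Three adjustments are then required: (i) passing from $O_V$ to $SO_V$ contributes a factor $1/2$, since any reflection $\tau_z$ attached to a primitive anisotropic vector in a modular component of a Jordan decomposition of $\Lambda_v$ witnesses $[O_V(F_v;\Lambda_v):SO_V(F_v;\Lambda_v)]=2$; (ii) converting to the Tamagawa normalization $\mu_{F_v}(R_v)=|\mathcal{D}_F|_v^{1/2}$ multiplies the measure of an $n(n-1)/2$-dimensional group by $|\mathcal{D}_F|_v^{n(n-1)/4}$; and (iii) the identity $(\Lambda_v:P_v^k\Lambda_v^\#)=q_v^{kn}|\det_b(\Lambda_v)|_v$, coming from $(\Lambda_v^\#:\Lambda_v)=|\det_b(\Lambda_v)|_v^{-1}$, lets me rewrite the right-hand side of Corollary \ref{local_measure_computed} and recognize the defined quantity $\alpha_v(\Lambda)$. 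The upshot is
\begin{equation*}
\mu_v(SO_V(F_v;\Lambda_v))=|\mathcal{D}_F|_v^{n(n-1)/4}\cdot|\det S_0|_v^{-(n+1)/2}\cdot\alpha_v(\Lambda).
\end{equation*}

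Combining the two computations, the product formula $\prod_v|\det S_0|_v=1$ cancels all Gram-determinant factors, and $\prod_{v\not\in\infty}|\mathcal{D}_F|_v=|D_F|^{-1}$ (the archimedean factors being $1$) contributes $|D_F|^{-n(n-1)/4}$. Solving the displayed identity for the mass gives precisely the asserted formula. The main obstacle is the bookkeeping of the three non-archimedean corrections, in particular confirming that the Tamagawa adjustment on an $n(n-1)/2$-dimensional group is $|\mathcal{D}_F|_v^{n(n-1)/4}$ and that all local $|\det S_0|_v$ factors cancel globally; the $SO_V$-versus-$O_V$ factor must also be handled uniformly at all finite places, using the existence of a reflection in $O_V(F_v;\Lambda_v)$ via the Jordan splitting of Definition and Corollary \ref{jordan_decomposition}.
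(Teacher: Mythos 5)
Your proof is correct and follows exactly the route of the paper: apply the first version with $H=SO_V(\A_F;\Lambda)$, insert the archimedean and non-archimedean measure computations of the two preceding sections, account for the Tamagawa normalization via $\vert\mcD_F\vert_v^{n(n-1)/4}$, and cancel the $\vert\det S_0\vert_v$ factors by the product formula. The only difference is that you spell out the index-$2$ step $[O_V(F_v;\Lambda_v):SO_V(F_v;\Lambda_v)]=2$ explaining the factor $\tfrac12$ in $\alpha_v$, which the paper leaves implicit.
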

\begin{proof}
  In our first version of Siegel's main theorem we use
  $H=SO_V(\A_F;\Lambda)$ and have $SO_V(F)\cap
  \phi_iH\phi_i^{-1}=SO(F;\Lambda_i)$ with
  $\phi_i\Lambda=\Lambda_i$. The local measures occurring there have
  been computed in 
  the two preceding sections, with the normalization of the
  Haar measure discussed above introducing  the extra factor $\vert
  D_F\vert^{\frac{n(n-1)}{4}}$. We notice that the product of the
  factors $\vert \det(S_0)\vert_v^{-\frac{n+1}{2}}$ over all places
  $v$ is equal to $1$ by the product formula and obtain the assertion.   
\end{proof}
\begin{remark}
  \begin{enumerate}
\item The $\alpha_v(\Lambda)$ are called the {\em local densities} for $\Lambda$.
  \item The factors $\vert 2\vert_v^{-n}$ occurring non trivially in the $\alpha_v$
    for the dyadic places $v$ of $F$ (i.e., the places where the
    $v$-value of $2$ is not $1$) combine in the product to a total
    factor of $2^{nr}$. It is therefore not unusual to omit these
    factors in the definition of the local densities at the non
    archimedean places and instead to insert a factor of $2^n$ for each
    of the real places.
\item   It is easily checked that $ \sum_{i=1}^h\frac{1}{\vert
    SO_V(F;\Lambda_i)}\vert=2\sum_{i=1}^{h'}\frac{1}{\vert
    O_V(F;\Lambda_i')}\vert$,  where $\Lambda_1',\ldots,
  \Lambda_{h'}'$ are a set of representatives of the isometry classes
  in the genus of $\Lambda$ (with respect to $O_V(F)$), since for any
  $\Lambda_i$ admitting no 
  automorphism of determinant $-1$ the isometry class of $\Lambda_i$
  splits into two proper isometry classes.  The latter sum is called
  the {\em measure} or {\em Maß}, often also written as ``mass'',
  $m(\Lambda)=m(\gen(\Lambda))$ of the genus of  $\Lambda$. We have
  then
\begin{equation*}
m(\gen(\Lambda))=\vert
  D_F\vert^{\frac{n(n-1)}{2}}2^{r}\pi^{-r\frac{n(n+1)}{4}}\prod_{i=1}^n(\Gamma(\frac{i}{2}))^{r}\prod_{v\not\in \infty}(\alpha_v(\Lambda))^{-1}.
\end{equation*}
Another way to check this is to consider  the subgroup
\begin{equation*}
SO_V(\A_F)\cup \{\phi=(\phi_v)_{v \in \Sigma_F}\in O_v(\A_F) \mid
\det(\phi_v)=-1 \text{ for all } v \in \Sigma_F\}
\end{equation*} 
of $O_V(\A_F)$ in which $SO_V(\A_F)$ has index $2$. A fundamental
domain for the action of $SO_V(F)$ on $SO_V(\A_F)$  is then also a
fundamental domain for the action of $O_V(F)$ on this group.
  \end{enumerate}
\end{remark}
If $H\subseteq SO_V(\A_F)$ is a congruence subgroup of
$SO_V(\A_F;\Lambda)$ for a lattice $\Lambda$ as above and we have a
double coset decomposition $SO_V(\A_F)=\cup_{j=1}^tSO_V(F)\psi_jH$, we
can use our two versions of Siegel's theorem to express the sum of the
inverses of the $\vert SO_v(F)\cap \psi_jH\psi_j^{-1}$ by the left
hand side of the theorem above and the index of $H$ in
$SO_V(\A_F;\Lambda)$. 

For example we get the following result first proven by van der Blij
in classical matrix notation:
\begin{corollary}
 Let $\Lambda$ be a lattice as above and fix $x_0\in \Lambda^\#$,
 write $SO_V(F, x_0+\Lambda)=\{\phi \in SO_V\mid
 \phi)x_0+\Lambda)=x_0+\Lambda\}$ and define $SO_V(\A_F,x_0+\Lambda)$
 analogously.
Then $SO_V(\A_F,x_0+\Lambda)$ is a congruence subgroup of
$SO_V(\A_F,\Lambda)$.

For a lattice $\Lambda' \in \gen(\Lambda)$ and $x\in (\Lambda')^\#$
say that $x+\Lambda'$ is in the proper class of $x_0+\Lambda$ if there
exists $\phi \in SO_V(F)$ with $\phi(x_0+\Lambda)=x+\Lambda'$ and that   
 that $x+\Lambda'$ is in the proper genus of $x_0+\Lambda$ if there
exists $\phi \in SO_V(\A_F)$ with $\phi_v(x_0+\Lambda)=x+\Lambda'_v$
for all $v \in \Sigma_F$. 

Let $\{x_j+\Lambda'_j\}$ be a set of
representatives of the (finitely many, say $t$) classes in the genus
of $x_0+\Lambda$ and write $s_v$ for the index of
$SO_V(F_v,x_0+\Lambda)$in $SO_V(F;\Lambda)$.

Then $s_v=1$ for almost all $v$, and with $s=\prod_vs_v$ we have
\begin{equation*}
  \sum_{j=1}^t\frac{1}{\vert SO_V(F;x_j+\Lambda'_j)}\vert= s
  \sum_{i=1}^h\frac{1}{\vert SO_V(F;\Lambda_i)\vert}.
\end{equation*}
 \end{corollary}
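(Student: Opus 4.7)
The plan is to recognize this as a direct quotient application of the first version of Siegel's main theorem (Theorem~7.8 in the excerpt), applied once to $H = SO_V(\A_F;\Lambda)$ and once to $H' = SO_V(\A_F, x_0+\Lambda)$, and then to take the ratio of the resulting mass identities.

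First I would verify the preliminary claim that $H'$ is a congruence subgroup of $SO_V(\A_F;\Lambda)$ and that $s_v = 1$ for almost all $v$. Because $\Lambda \subseteq \Lambda^\#$ and the quotient $\Lambda^\#/\Lambda$ is a finitely generated torsion $R$-module, we have $\Lambda_v = \Lambda_v^\#$ for almost all $v$; at any such $v$ the element $x_0 \in \Lambda^\#$ lies in $\Lambda_v$, so $x_0 + \Lambda_v = \Lambda_v$, whose full $SO_V(F_v;\Lambda_v)$-stabilizer is $SO_V(F_v;\Lambda_v)$ itself, giving $s_v = 1$. At the remaining finite set $T$ of non-archimedean places, $SO_V(F_v, x_0+\Lambda)$ contains the principal congruence subgroup stabilizing $\Lambda_v/P_v^k\Lambda_v^\#$ pointwise for $k$ large enough to make $x_0 \in P_v^{-k}\Lambda_v^\#$; hence it is open of finite index $s_v < \infty$ in $SO_V(F_v;\Lambda_v)$. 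At archimedean $v$, $\Lambda_v = V_v$ so both groups equal $SO_V(F_v)$ and $s_v=1$. Thus $s = \prod_v s_v$ is a finite product.

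Next I would set up the bijection between proper classes of $x_0+\Lambda$ in the genus and the double coset space $SO_V(F)\backslash SO_V(\A_F)/H'$. The adele group $SO_V(\A_F)$ acts on the set of all cosets $x + \Lambda'$ with $\Lambda' \in \gen(\Lambda)$ by $\phi \cdot (x_0+\Lambda) := x + \phi(\Lambda)$, where $x \in V$ is chosen (using Lemma~\ref{lattices_localglobal_dedekind} applied to cosets) so that $x \equiv \phi_v(x_0) \bmod \phi_v(\Lambda_v)$ for every $v$; such an $x$ exists since for almost all $v$ one already has $\phi_v(x_0) \in \phi_v(\Lambda_v)$, and for the finitely many exceptional places the approximation theorem (or weak approximation) supplies a suitable global representative modulo $\phi(\Lambda)$. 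By construction this action has orbit equal to the genus of $x_0+\Lambda$, and the stabilizer of $x_0+\Lambda$ is $H'$; restricting to the $SO_V(F)$-action in the source and quotienting by it yields the bijection between proper classes $\{x_j + \Lambda_j'\}$ and $SO_V(F)\backslash SO_V(\A_F)/H'$, with stabilizers $SO_V(F;x_j+\Lambda_j') = SO_V(F) \cap \psi_j H' \psi_j^{-1}$.

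Now I would apply the totally definite case of Siegel's main theorem (the displayed formula at the end of Theorem~7.8) separately to $H$ and to $H'$. Since both are products $\prod_v H_v$ resp.\ $\prod_v H_v'$ over the places of $F$, the formula yields
\begin{equation*}
\sum_{i=1}^h \frac{1}{|SO_V(F;\Lambda_i)|} = \frac{2}{\mu_\infty(SO_V(F_\infty))\prod_{v\not\in\infty}\mu_v(H_v)}
\end{equation*}
and analogously for $H'$ with the right-hand side having $\mu_v(H_v')$ in place of $\mu_v(H_v)$. Dividing the second identity by the first, the archimedean factor cancels and on the right-hand side we are left with
\begin{equation*}
\prod_{v\not\in\infty} \frac{\mu_v(H_v)}{\mu_v(H_v')} = \prod_{v\not\in\infty}(H_v:H_v') = \prod_{v\not\in\infty} s_v = s,
\end{equation*}
using that the $H_v'$ are open subgroups of finite index $s_v$ in $H_v$ and that $s_v=1$ at the archimedean places. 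Rearranging gives the claimed identity.

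The main obstacle is the bookkeeping in the second paragraph: making precise that the adelic action on cosets $x+\Lambda'$ is well defined, transitive on the genus with stabilizer exactly $H'$, and that proper classes correspond cleanly to double cosets with the stated stabilizer on the global side. Once this bijection is in place, the remainder is a purely mechanical comparison of the two mass formulas via the index multiplicativity of Haar measure, as sketched above.
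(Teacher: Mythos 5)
Your proposal is correct and follows exactly the route the paper intends: the paragraph preceding the corollary says that for any congruence subgroup $H'$ of $SO_V(\A_F;\Lambda)$ one applies the first version of Siegel's main theorem to both $H=SO_V(\A_F;\Lambda)$ and $H'$ and compares, the ratio of the right-hand sides being the index $\prod_v(H_v:H_v')=s$; the paper then dismisses the corollary itself as ``Obvious.'' Your filling-in of the preliminary facts (that $x_0\in\Lambda_v^\#=\Lambda_v$ for almost all $v$ so $s_v=1$ there, that $H_v'$ contains a principal congruence subgroup at the remaining places, and the double-coset parametrization of the classes of $x_0+\Lambda$) is accurate and is precisely the bookkeeping the paper leaves implicit.
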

 \begin{proof}
   Obvious.
 \end{proof}
\begin{example}
Let $F=\Q$ and let $\Lambda$ be  an even unimodular $\Z$-lattice on the
positive definite quadratic space $(V,Q)$ over $\Q$ (i.e., we have
$Q(\Lambda)\subseteq \Z$ and ${\det}_b(\Lambda)=1$).
It is known that such lattices only occur with rank $n=2m$ divisible by
$8$ and that the $E_8$ root lattice represents the unique isometry
class of such lattices in dimension $8$.

For the computation of the local factors we can take $k=k_p=1$ for all
primes $p$, which gives 
\begin{equation*}
  \alpha_p(\Lambda)=\vert 2
  \vert_p^{-n}q^{-\frac{(n(n-1)}{2}}\vert SO_{\Lambda/p\Lambda}(\F_p)\vert,
\end{equation*}
where the factor $\vert 2
  \vert_p^{-n}$ is equal to $2^n$
  for $p=2$ and equal to $1$ for the
  odd primes $p$.

Since the quadratic space
$\Lambda/p\Lambda$ over $\F_p$ is an
orthogonal sum of $m=\frac{n}{2}$
hyperbolic planes  we have (see
Section 13 of \cite{kneserbook})
\begin{equation*}
  \vert
  SO_{\Lambda/p\Lambda}(\F_p)\vert =p^{\frac{n(n-1)}{2}}(1-p^{-m})\prod_{i=1}^{m-1}(1-p^{-2i}),
\end{equation*}
which gives 
\begin{equation*}
  \prod_p(\alpha_p(\Lambda))^{-1}=2^{-n}\zeta(m)\prod_{i=1}^{m-1}\zeta(2i).
\end{equation*}
For $n=8$ this gives
\begin{equation*}
  \prod_p(\alpha_p(\Lambda))^{-1}=\frac{\pi^{16}}{2^{11}3^{8}5^37}
\end{equation*}
 using the well
known values
$\zeta(2)=\frac{\pi^2}{6},
\zeta(4)=\frac{\pi^4}{90}, \zeta(6)=\frac{\pi^6}{945}$.
The remaining factor
$2^2\pi^{-\frac{n(n+1)}{4}}\prod_{i=1}^n\Gamma(\frac{i}{2})$
evaluates to 
\begin{equation*}
  \frac{3^3 5}{2^2 \pi^{16}},
\end{equation*}
and we obtain $\vert
SO_V(\Q;\Lambda)\vert=\vert SO(E_8)\vert = 2^{13}3^55^27$,
which is indeed in agreement with
the known value for the order of the
special orthogonal group of the
$E_8$ root lattice.

The same computation in rank $16$
can be used to prove that there are
only two isometry classes of even
unimodular lattices of rank $16$,
represented by the lattices
$E_8\perp E_8$ and $D_{16}^+$
discussed in Section
\ref{Z-lattice_section_cont}.

The measure (or Maß) of the unique genus of even unimodular lattices 
then grows rapidly with the rank and is of the order of magnitude of $
4\cdot 10^7$ already in rank $32$, which implies that the number of
isometry classes is at least $80$ million in this rank, so that an
explicit classification is no longer possible or useful. The even
unimodular lattices of rank $24$ have been classified, there are
exactly $24$ isometry classes.    
\end{example}

Having established the maßformel for the measure of a genus we now
need some preparations for the formula for representation measures
(Darstellungsmaße).

\begin{lemma}\label{firstlemma_representationmeasures} Let $(V,Q)$ be a non degenerate quadratic space over the
  non-archimedean local field $F=F_v$ with an orthogonal splitting
  $V=U\perp W$ into nonzero non degenerate subspaces $U,W$, write $n,u,w$ for
  their respective dimensions. Let $\Lambda$ be an integral
  lattice on $V$ and $M=W\cap \Lambda$ a primitive sublattice of
  $\Lambda$ on $W$, denote by $i_{M,\Lambda}:M\to\Lambda$ the inclusion map of
  $M$ into $\Lambda$. Let $k\in\N$ satisfy $P^kQ(\Lambda^\#)\subseteq P,
  P^k\Lambda^\#\subseteq \Lambda$. Then 
  \begin{equation*}
    \frac{(O_V(F;\Lambda):O_V(F;\Lambda, P^k\Lambda^\#))}
    {(O_U(F;\Lambda):O_U(F;\Lambda,P^k\Lambda^\#))}=A(i_{M,\Lambda},\Lambda
    \bmod P^k\Lambda^\#),
  \end{equation*} where $A(i_{M,\Lambda},\Lambda \bmod P^k\Lambda^\#) $ denotes  the number of $R$-linear maps 
$\phi:M\to \Lambda/P^k\Lambda^\#$ with $\phi =\psi \bmod
P^k\Lambda^\#$  for an isometry $\psi:M\to\Lambda$ in the
  class of $i_{M,\Lambda}$. 
\end{lemma}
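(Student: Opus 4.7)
The plan is to analyze the restriction map
\[
\rho\colon O_V(F;\Lambda)\longrightarrow \Hom_R(M,\Lambda/P^k\Lambda^\#),\qquad \psi\longmapsto \psi|_M \bmod P^k\Lambda^\#.
\]
By the very definition of $A(i_{M,\Lambda},\Lambda\bmod P^k\Lambda^\#)$ --- a representation $\psi\colon M\to\Lambda$ lies in the class of $i_{M,\Lambda}$ precisely when $\psi=\alpha|_M$ for some $\alpha\in O_V(F;\Lambda)$ --- the image of $\rho$ consists of exactly $A(i_{M,\Lambda},\Lambda\bmod P^k\Lambda^\#)$ elements. The asserted identity therefore reduces to showing that each fiber of $\rho$, taken modulo the principal congruence subgroup $N:=O_V(F;\Lambda,P^k\Lambda^\#)$, has cardinality $(O_U(F;\Lambda):O_U(F;\Lambda,P^k\Lambda^\#))$.

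Set $H_M:=\{\sigma\in O_V(F;\Lambda)\mid \sigma(m)\equiv m\bmod P^k\Lambda^\# \text{ for all } m\in M\}$, and for $\psi_1\in O_V(F;\Lambda)$ set $H_{\psi_1(M)}$ analogously. The fiber of $\rho$ through $\psi_1$ is then the left coset $H_{\psi_1(M)}\psi_1$, and because $\psi_1$ stabilizes both $\Lambda$ and $\Lambda^\#$, conjugation by $\psi_1$ gives a bijection $H_M\cong H_{\psi_1(M)}$ that carries $N$ to itself. Hence every fiber of the induced map $O_V(F;\Lambda)/N\to\Hom_R(M,\Lambda/P^k\Lambda^\#)$ has the common cardinality $[H_M:N]$, and it is enough to prove $H_M=O_U(F;\Lambda)\cdot N$, because then $[H_M:N]=(O_U(F;\Lambda):O_U(F;\Lambda)\cap N)=(O_U(F;\Lambda):O_U(F;\Lambda,P^k\Lambda^\#))$.

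The inclusion $O_U(F;\Lambda)\cdot N\subseteq H_M$ is immediate since elements of $O_U(F;\Lambda)$ fix $W\supseteq M$ pointwise. For the converse, take $\sigma\in H_M$ and view $\sigma|_M\colon M\to\sigma(M)$ as an isometry between regularly embedded submodules which agrees with $\mathrm{id}_M$ modulo $P^k\Lambda^\#$. Applying Theorem \ref{extension_theorem_localring} (Witt's extension theorem for local rings, in its precise form with the window $X$) should produce an isometry $\tau\in O_V(F;\Lambda)$ satisfying $\tau|_M=\sigma|_M$ and $\tau\equiv\mathrm{id}\bmod P^k\Lambda^\#$, i.e., $\tau\in N$. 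The composite $\sigma\tau^{-1}$ then fixes $M$ pointwise, hence fixes $W=FM$ pointwise by $F$-linearity, so $\sigma\tau^{-1}\in O_U(F;\Lambda)$ and $\sigma=(\sigma\tau^{-1})\cdot\tau\in O_U(F;\Lambda)\cdot N$, as required.

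The delicate point is the application of Theorem \ref{extension_theorem_localring}: the window hypothesis $\tilde b^{(M)}(X)=M^*$ together with the congruence $\sigma(v)-v\in X$ for $v\in M$ cannot both be satisfied by the naive choice $X=P^k\Lambda^\#$, because pairing $P^k\Lambda^\#$ with $M\subseteq\Lambda$ only lands in $P^k$ rather than in $R$. The way around this will be to iterate the quantitative Kneser--Hensel lemma (Theorem \ref{hensellemma_kneser} together with Lemma \ref{hensel_quantitative}) in the spirit of the proof of Corollary \ref{isometry_mod_P}: one improves the approximation by one factor of $P$ per step and uses the completeness of $R$ to pass to the limit, the bounds $P^kQ(\Lambda^\#)\subseteq P$ and $P^k\Lambda^\#\subseteq\Lambda$ ensuring both that the initial congruence $\sigma|_M\equiv \mathrm{id}_M \bmod P^k\Lambda^\#$ is strong enough for the iteration to start and that the resulting $\tau$ respects the modulus $P^k\Lambda^\#$ on all of $\Lambda$. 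This is the main technical obstacle in the argument.
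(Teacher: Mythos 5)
Your argument follows the same route as the paper's: both reduce the claim to the identity $H_M=O_U(F;\Lambda)\cdot N$, where $N=O_V(F;\Lambda,P^k\Lambda^\#)$ and $H_M=\{\sigma\in O_V(F;\Lambda)\mid \sigma(m)\equiv m \bmod P^k\Lambda^\#\ \text{for } m\in M\}$, and your fibre count for the restriction map is the paper's second-isomorphism-theorem computation in different dress. The counting half is fine (modulo the small slip that $\sigma\tau^{-1}$ fixes $\sigma(M)$, not $M$, pointwise; you want $\tau^{-1}\sigma$, and normality of $N$ still gives $\sigma\in O_U(F;\Lambda)\cdot N$).

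The step you leave as ``the main technical obstacle'' is, however, the entire content of the lemma, and it does not resolve itself by merely iterating Lemma \ref{hensel_quantitative}: a bare Hensel iteration only returns some $\tau\in N$ with $\tau|_M\equiv\sigma|_M\bmod P^k\Lambda^\#$, which is useless because $N\subseteq H_M$ already and $\tau^{-1}\sigma$ would again merely lie in $H_M$. Two ingredients close the gap. First, the primitivity of $M$ (which your write-up never uses): write $\Lambda=M\oplus K$ and take as starting point the linear, generally non-isometric, map $f=\sigma|_M\oplus\id_K$; the hypotheses $\sigma(m)-m\in P^k\Lambda^\#$, $P^kQ(\Lambda^\#)\subseteq P$ and $P^k\Lambda^\#\subseteq\Lambda$ give $Q(f(x))\equiv Q(x)\bmod P^k$ for all $x\in\Lambda$ and $f\equiv\id_\Lambda\bmod P^k\Lambda^\#$, so Theorem \ref{hensellemma_kneser} applies with $L=\Lambda$ and the lattice called $M$ there equal to $\Lambda^\#$. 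Second, and crucially, the restriction to $M$ must stay \emph{exactly} equal to $\sigma|_M$ through the iteration: for $x=y+z$ with $y\in M$, $z\in K$ one has $Q(f(x))-Q(x)=b(\sigma(y)-y,z)$, so the bilinear form $\beta$ in the proof of Theorem \ref{hensellemma_kneser} can be chosen to vanish whenever its second argument lies in $M$, whence every correction $g$ can be taken with $g|_M=0$. The limit $\tau$ then lies in $N$ and satisfies $\tau|_M=\sigma|_M$ on the nose, which is exactly what makes $\tau^{-1}\sigma\in O_U(F;\Lambda)$.
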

\begin{proof}
By the isomorphism theorem of group theory we have 
\begin{equation*}
  (O_U(F;\Lambda):O_U(F;\Lambda,P^k\Lambda^\#))=(O_U(F;\Lambda)O_V(F;\Lambda,P^k\Lambda^\#):O_V(F;\Lambda,P^k\Lambda^\#)),
\end{equation*} and get
\begin{equation*}
    \frac{(O_V(F;\Lambda):O_V(F;\Lambda, P^k\Lambda^\#)}
    {(O_U(F;\Lambda):O_U(F;\Lambda,P^k\Lambda^\#))}=(O_V(F;\Lambda):O_U(F;\Lambda)O_V(F;\Lambda,P^k\Lambda^\#)).
  \end{equation*}
We put $O_V(F;\Lambda,(M,P^k\Lambda^\#)):=\{\phi \in O_V(F;\Lambda)\mid
\phi(x)-x \in P^k\Lambda^\# \text{ for all } x\in M\}$ and claim that 
$O_U(F;\Lambda)O_V(F;\Lambda,P^k\Lambda^\#)=O_V(F;\Lambda,M,P^k\Lambda^\#)$
holds.

\medskip
The inclusion from left to right is trivial, for the other direction
let $\phi \in O_V(F;\Lambda,(M,P^k\Lambda^\#))$ be given. Since $M$ is
primitive in $\Lambda$ there exists a primitive sublattice $K\subseteq
\Lambda$ with $\Lambda=M\oplus K$, write $\sigma=\phi\oplus \id_K$
which is a linear map, but not necessarily an isometry. For $x=y+z\in
\Lambda$ with $y\in M, z\in K$ we have $y':=\sigma(y)-y=\phi(y)-y\in
P^k\Lambda^\#$ and hence $Q(\sigma(x))=Q(y+y'+z)\equiv Q(x) \bmod
P^k$, and for $x\in M$ we even have $Q(\sigma(x))=Q(\phi(x))$.

 By Hensel's Lemma there exists an isometric map $\rho:
\Lambda \to \Lambda^\#$ with $\rho(x)\equiv \sigma(x) \bmod
P^k\Lambda^\#$ for all $x \in \Lambda$, from which we see that 
$\rho \in O_V(F;\Lambda,M,P^k\Lambda^\#)$ holds. 
More precisely, going through the proof of Theorem
\ref{hensellemma_kneser} we see that the bilinear form $\beta$ used
there can be chosen to satisfy $\beta(x,y)=0$ for all $y \in M$. If
$(e_1,\ldots,e_n)$ is a basis of $\Lambda$ for which the first $w$
vectors form a basis of $M$ and the remaining $u$ vectors form a
basis of $K$, we can then choose the linear map $g$ in that proof to
satisfy $g(e_1)=\dots=g(e_w)=0$ and hence $g\vert_M=0$, i.e., the
isometric map
$\rho$ mentioned above can be constructed in such a way that it
satisfies $\rho\vert_M=\phi\vert_M$. 

We have therefore $\phi\rho^{-1} \in O_U(F;\Lambda)$, which proves the
claimed equality.

\medskip
The quotient of group indices in the assertion of the Lemma is
therefore equal to
$(O_V(F;\Lambda):O_V(F;\Lambda,(M,P^k\Lambda^\#)))$.
But obviously $\phi,\psi \in O_V(F;\Lambda)$ are in the same coset
modulo $ O_V(F;\Lambda,(M,P^k\Lambda^\#))$ if and only if they induce
the same map $M\to \Lambda/P^k\Lambda^\#$, which proves the assertion.
\end{proof}
\begin{lemma}
 With notations as before let $S_0$ be the Gram matrix of $(V,Q)$ with
 respect to some fixed basis of $V$   and analogously $T_0$ a Gram
 matrix of $(W,Q)$, denote by $\mu_V, \mu_U$ the Haar measures on
 $O_V(F), O_U(F)$ constructed with respect to these Gram matrices as
 in Section \ref{nonarchimedean_haarmeasure}.

Then one has
\begin{equation*}
  \frac{\mu_V(O_V(F;\Lambda,P^k\Lambda^\#))}{\mu_U(O_U(F;\Lambda,P^k\Lambda^\#))}=
q^{kw\frac{1-u-n}{2}}\vert{\det}_b(M)\vert_v^{\frac{1-u}{2}}\vert{\det}_b(\Lambda)
  \vert_v^{-\frac{w}{2}}\frac{\vert
    \det(S_0)\vert_v^{-\frac{n+1}{2}}\vert 2 \vert_v^{-n}}{\vert
    \det(T_0)\vert_v^{-\frac{u+1}{2}}\vert 2 \vert_v^{-u}}.
\end{equation*}
\end{lemma}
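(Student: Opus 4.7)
The plan is to apply the method of Corollary \ref{local_measure_computed} separately to $V$ (with the lattice $\Lambda$ and Gram matrix $S_0$) and to $U$ (with a suitable lattice $L$ on $U$ and Gram matrix $T_0$), and then to take the quotient. For $V$ the corollary yields immediately
\[\mu_V(O_V(F;\Lambda, P^k\Lambda^\#)) = |2|_v^{-n}|\det S_0|_v^{-(n+1)/2}(\Lambda:P^k\Lambda^\#)^{-(n-1)/2},\]
and using $(\Lambda:P^k\Lambda^\#)=q^{kn}|{\det}_b\Lambda|_v$ this rewrites as an explicit expression in $|{\det}_b\Lambda|_v$, $|\det S_0|_v$ and powers of $q$.

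On the $U$-side I first recognize $O_U(F;\Lambda,P^k\Lambda^\#)$ as a congruence subgroup intrinsic to $U$. For $\phi\in O_U(F)$ extended by $\id_W$ to $V$ and any $x\in\Lambda$ one has $\phi(x)-x=\phi(\pi_U(x))-\pi_U(x)\in U$, so $\phi(x)\equiv x \bmod P^k\Lambda^\#$ is equivalent to $\phi(y)-y\in P^k\Lambda^\#\cap U$ for every $y\in L:=\pi_U(\Lambda)$. Part a) of Lemma \ref{ortho_decompositions} identifies the dual of $L$ inside $U$ as $L^\#=U\cap\Lambda^\#$, so $P^k\Lambda^\#\cap U=P^kL^\#$ and consequently
\[O_U(F;\Lambda,P^k\Lambda^\#)=O_U(F;L,P^kL^\#).\]

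Next I would replay the proof of Corollary \ref{local_measure_computed} inside $(U,Q|_U)$ with $L$ and $L^\#$ in place of $\Lambda$ and $\Lambda^\#$. One applies Lemma \ref{hensel_quantitative} to the pair $(L,L^\#)$ inside $U$ with $f=\id$: the required surjectivity on the map $L^\#\to L^*$, $z\mapsto b(\cdot,z)$, is automatic because the nondegenerate form $b$ identifies $L^\#$ with $L^*=\Hom_R(L,R)$; the condition $P^kQ(L^\#)\subseteq P$ follows from $L^\#\subseteq\Lambda^\#$ and the inherited hypothesis on $\Lambda^\#$; and $P^kL^\#=U\cap P^k\Lambda^\#\subseteq U\cap\Lambda\subseteq L$. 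This produces
\[\mu_U(O_U(F;\Lambda,P^k\Lambda^\#))=|2|_v^{-u}|\det T_0|_v^{-(u+1)/2}(L:P^kL^\#)^{-(u-1)/2}\]
together with $(L:P^kL^\#)=q^{ku}|{\det}_bL|_v$ by the same discriminant/index calculation as for $\Lambda$.

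Finally I relate $|{\det}_bL|_v$ to the given data: part b) of Theorem \ref{duals_and_indices}, applied to the splitting $V=U\perp W$ with $L_1=W\cap\Lambda=M$ and $\pi_2=\pi_U$, gives $\vol(\pi_U(\Lambda))\cdot\vol(M)=\vol(\Lambda)$, hence locally $|{\det}_bL|_v=|{\det}_b\Lambda|_v\cdot|{\det}_bM|_v^{-1}$. Substituting into $\mu_V/\mu_U$ and collecting exponents — the $q$-powers combine through $n(n-1)-u(u-1)=w(n+u-1)$ to $q^{kw(1-u-n)/2}$, the $|{\det}_b\Lambda|_v$-exponents $-(n-1)/2+(u-1)/2$ give $|{\det}_b\Lambda|_v^{-w/2}$, and the $|{\det}_bM|_v$-contribution comes out as $|{\det}_bM|_v^{(1-u)/2}$ — reproduces the stated formula. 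The delicate point is that Corollary \ref{local_measure_computed} is formally stated under $\Lambda\subseteq\Lambda^\#$, which typically fails for $L=\pi_U(\Lambda)$ (one usually finds $L^\#\subseteq L$ instead); one must therefore reinspect its proof and verify that the only integrality property actually used is $b(L,L^\#)\subseteq R$, which holds by the definition of $L^\#$, so that the formula carries over once $(L:P^kL^\#)$ is interpreted as an ordinary group index, well-defined because $P^kL^\#\subseteq L$.
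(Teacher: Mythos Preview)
Your proof is correct and follows essentially the same route as the paper: identify $O_U(F;\Lambda,P^k\Lambda^\#)$ with $O_U(F;L,P^kL^\#)$ for $L=\pi_U(\Lambda)$ via Lemma~\ref{ortho_decompositions}, apply Corollary~\ref{local_measure_computed} to $\Lambda$ and to $L$, and combine using the determinant relation $\vol(L)\vol(M)=\vol(\Lambda)$ from Theorem~\ref{duals_and_indices}. You are in fact slightly more careful than the paper in flagging that the hypothesis $L\subseteq L^\#$ of Corollary~\ref{local_measure_computed} may fail and checking that only $P^kL^\#\subseteq L$ and $b(L,L^\#)\subseteq R$ are really needed.
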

\begin{proof}
 We denote by $L$ the orthogonal projection of $\Lambda$  onto $U$. By
Lemma \ref{ortho_decompositions} 
we have $L^\#=\Lambda^\# \cap U$ and by Theorem \ref{duals_and_indices}
 ${\det}_b(\Lambda)={\det}_b(M){\det}_b(L)$, which implies
 $(\Lambda:P^k\Lambda^\#)=(M:P^kM^\#)(L:P^kL^\#)$.   

We claim that
\begin{equation*}
  O_U(F;\Lambda, P^k\Lambda^\#)=O_U(F;L, P^kL^\#).
\end{equation*}
The inclusion from left to right is trivial, for the other direction
let $\phi \in O_U(F;L,P^kL^\#)$ and $x=y+z\in \Lambda$ with $y\in W, z
\in L$. We see that $\phi(x)=y+\phi(z)=y+z+z'$ with $z'\in
P^kL^\#=P^k\Lambda^\#\cap U$, which implies $\phi(x)-x\in
P^k\Lambda^\#$ for all $x \in \Lambda$ and proves our claim.

\medskip
We can now apply Corollary \ref{local_measure_computed} and obtain 
\begin{eqnarray*}
  \frac{\mu_V(O_V(F;\Lambda,P^k\Lambda^\#))}{\mu_U(O_U(F;\Lambda,P^k\Lambda^\#))}&=&
\frac{(\Lambda:P^k\Lambda^\#)^{\frac{1-n}{2}}}{(L:P^kL^\#)^{\frac{1-u}{2}}}
 \frac{\vert 2\vert_v^{-n}\vert \det(S_0)\vert^{-\frac{n+1}{2}}}{\vert
 2\vert_v^{-u} \vert \det(T_0)^{-\frac{u+1}{2}}}\\
&=&(\Lambda:P^k\Lambda^\#)^{-\frac{w}{2}}(M:P^kM^\#)^{\frac{1-u}{2}}\frac{\vert 2\vert_v^{-n}\vert \det(S_0)\vert^{-\frac{n+1}{2}}}{\vert
 2\vert_v^{-u} \vert \det(T_0)^{-\frac{u+1}{2}}}\\
&=&q^{kw\frac{1-u-n}{2}}\vert{\det}_b(M)\vert_v^{\frac{1-u}{2}}\vert{\det}_b(\Lambda)\vert_v^{-\frac{w}{2}}\frac{\vert 2\vert_v^{-n}\vert \det(S_0)\vert^{-\frac{n+1}{2}}}{\vert
 2\vert_v^{-u} \vert \det(T_0)^{-\frac{u+1}{2}}}.
\end{eqnarray*}
\end{proof}
\begin{proposition}
  With notations as above 
let 
\begin{equation*}
 \alpha_v(i_{M,\Lambda}):=\frac{\vert
   \det(S_0)\vert_v^{\frac{n+1}{2}}\mu_V(O_V(F;\Lambda))}{\vert \det(T_0)\vert_v^{\frac{u+1}{2}}\mu_U(O_U(F;\Lambda))}
\end{equation*}
and write $\alpha_v^*(\Lambda,M)$ for the sum of the
$\alpha_v(i_{M,\Lambda_i)})$, where 
the inclusions $i_{M,\Lambda_i}$ run through a set 
  of representatives of the classes of primitive representations of
  $M$ by lattices $\Lambda_i$ on $V$ in the class of
  $\Lambda$. Similarly, write $A^*(M,\Lambda \bmod P^k\Lambda)$ for the
  same sum of the $A(i_{M,\Lambda_i},\Lambda \bmod P^k\Lambda)$.
Then $A^*(M, \Lambda/P^k\Lambda^\#)$ is equal to the number of $R$-linear maps
$\phi:M\to \Lambda/P^k\Lambda^\#$ satisfying $Q(\phi(x))\equiv
Q(x)\bmod P^k$ for all $x \in \Lambda$ for which $\phi(M)$ is a direct
summand in $\Lambda/P^k\Lambda^\#$
and we have 
\begin{equation*}
  \alpha_v(i_{M,\Lambda})=  q^{kw\frac{1-u-n}{2}}\vert{\det}_b(M)
\vert_v^{\frac{1-u}{2}} \vert{\det}_b(\Lambda)\vert_v^{-\frac{w}{2}}
\vert 2\vert_v^{u-n}
A(i_{M,\Lambda},
\Lambda \bmod P^k\Lambda^\#),
\end{equation*}
\begin{equation*}
\alpha_v^*(\Lambda,M)=  q^{kw\frac{1-u-n}{2}}\vert{\det}_b(M)
\vert_v^{\frac{1-u}{2}} \vert{\det}_b(\Lambda)\vert_v^{-\frac{w}{2}}
\vert 2\vert_v^{u-n}
A^*(M, \Lambda \bmod P^k\Lambda^\#).
\end{equation*}
\end{proposition}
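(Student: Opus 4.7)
The plan is to derive the two displayed formulas by combining the two preceding lemmas with the definition of $\alpha_v(i_{M,\Lambda})$, and then to identify the sum $A^*(M,\Lambda\bmod P^k\Lambda^\#)$ with a count of linear maps via a Hensel-type lifting argument.

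First I would write
\begin{equation*}
  \mu_V(O_V(F;\Lambda))=(O_V(F;\Lambda):O_V(F;\Lambda,P^k\Lambda^\#))\,\mu_V(O_V(F;\Lambda,P^k\Lambda^\#))
\end{equation*}
and the analogous decomposition for $U$, then insert both into the definition
\begin{equation*}
 \alpha_v(i_{M,\Lambda})=\frac{\vert\det(S_0)\vert_v^{(n+1)/2}\mu_V(O_V(F;\Lambda))}{\vert\det(T_0)\vert_v^{(u+1)/2}\mu_U(O_U(F;\Lambda))}.
\end{equation*}
The ratio of indices is, by Lemma \ref{firstlemma_representationmeasures}, exactly $A(i_{M,\Lambda},\Lambda\bmod P^k\Lambda^\#)$; the ratio of measures on the congruence subgroups is supplied by the second lemma. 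Multiplying and cancelling the factors $\vert\det(S_0)\vert_v^{(n+1)/2}$ and $\vert\det(T_0)\vert_v^{(u+1)/2}$ (which appear with opposite signs), and simplifying $\vert 2\vert_v^{-n}/\vert 2\vert_v^{-u}=\vert 2\vert_v^{u-n}$, yields the first displayed formula. Summing over a set of representatives $i_{M,\Lambda_i}$ of classes of primitive representations by lattices in the class of $\Lambda$ gives the second.

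The remaining task is to interpret $A^*(M,\Lambda\bmod P^k\Lambda^\#)$ as the number of $\phi:M\to \Lambda/P^k\Lambda^\#$ with $Q(\phi(x))\equiv Q(x)\bmod P^k$ for all $x\in M$ and $\phi(M)$ a direct summand. By definition each summand $A(i_{M,\Lambda_i},\Lambda\bmod P^k\Lambda^\#)$ counts those $\phi$ which are reductions of isometries in the class of $i_{M,\Lambda_i}$; thus $A^*$ counts those $\phi$ which arise as reductions of primitive isometries $\psi:M\to\Lambda_i$ for some $i$, each counted once (since $\Lambda$ and $\Lambda_i$ are locally equal, we may transport everything into $\Lambda/P^k\Lambda^\#$, and distinct classes contribute disjoint sets of reductions). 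To see that every $\phi$ of the stated kind arises in this way, apply Theorem \ref{hensellemma_kneser} (Kneser's Hensel lemma) to lift $\phi$ to an isometric $R$-embedding $\psi:M\to\Lambda$: the congruence $Q(\phi(x))\equiv Q(x)\bmod P^k$ and the primitivity of $\phi(M)$ in $\Lambda/P^k\Lambda^\#$ together supply the required hypotheses $Q'(f(x))\equiv Q(x)\bmod P^k$ and $L^*=\widetilde{b'}_f(M)+PL^*$ of that theorem, as is checked by pulling back a splitting of $\Lambda/P^k\Lambda^\#=\phi(M)\oplus C$ to produce the needed surjectivity in $\Hom_R(M,R)$.

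I expect the genuinely delicate step to be this last verification: ensuring that ``$\phi(M)$ is a direct summand of $\Lambda/P^k\Lambda^\#$'' plus the approximate isometry property translate precisely into the regularity hypothesis of Hensel's lemma, and that the lift landed in $\Lambda$ itself rather than in $\Lambda^\#$ (the role of $P^kQ(\Lambda^\#)\subseteq P$ and $P^k\Lambda^\#\subseteq\Lambda$). Everything else reduces to bookkeeping with the indices and determinants, where the main point is simply to track that the two lemmas are stated in exactly the form needed for the substitution to make the non-intrinsic factors $\vert\det(S_0)\vert_v,\vert\det(T_0)\vert_v$ drop out, leaving the intrinsic invariants $\vert\det_b(M)\vert_v,\vert\det_b(\Lambda)\vert_v$ and the power of $q$.
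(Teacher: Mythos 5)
Your argument is correct and follows essentially the same route as the paper: substitute the index ratio from Lemma \ref{firstlemma_representationmeasures} and the measure ratio of the congruence subgroups from the second lemma into the definition of $\alpha_v(i_{M,\Lambda})$, cancel the $\vert\det(S_0)\vert_v$ and $\vert\det(T_0)\vert_v$ factors, and then sum over class representatives using that the prefactor is independent of the class. Your additional Hensel-lifting discussion of why $A^*(M,\Lambda\bmod P^k\Lambda^\#)$ counts exactly the approximate isometries with primitive image is more explicit than the paper, which treats that identification as immediate; it is the right justification.
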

\begin{proof}
The first assertion is a direct consequence of the two lemmas above.

For the second assertion we observe that the quotient in the second lemma
above is independent of the class of the representation at hand. The
assertion therefore follows upon summation over the classes of
primitive representations.
\end{proof}
\begin{remark}
The $\alpha_v^*(\Lambda,M)$ are called the {\em primitive local
representation densities}.  
\end{remark}
\begin{theorem}[Siegel's main theorem for representations, definite case]
  Let $F$ be a totally real number field of degree $r$ over $\Q$ and
  $(V,Q), (W,Q')$  totally definite
  quadratic spaces over $F$ with $\dim(W)=w<\dim(V)=n$. Let $\Lambda$
  be a lattice on $V$ and $M$ 
  a lattice on $W'$ that is represented locally everywhere primitively
  by $\Lambda$, let (without loss of generality) $\phi:M\to \Lambda$
  be a primitive representation 
  of $M$ by $\Lambda$,
  let $\Lambda_1,\ldots,\Lambda_h$ be a set of representatives of the
proper   classes in the genus of $\Lambda$.
  \begin{enumerate}
  \item For a lattice $\Lambda_j\in \gen(\Lambda)$  let
    $r(\gen(\phi),\Lambda_j)$ denote the number of 
    representations of $M$ by $\Lambda_j$ which are in the genus of
    the representation $\phi$ (all of which are primitive), let 
    \begin{equation*}
r(\gen(\phi)) =
\dfrac{\sum_{j=1}^h\dfrac{r(\gen(\phi),\Lambda_j)}{\vert
    SO_V(F;\Lambda_j)\vert}}{\sum_{j=1}^h\dfrac{1}{\vert
    SO_V(F;\Lambda_j)\vert}}.
\end{equation*}  
Then
\begin{equation*}
  r(\gen(\phi))=\vert
  D_F\vert^{-\frac{(n+u-1)(n-u)}{4}}\pi^{\frac{r(n+u+1)(n-u)}{4}}\prod_{j=u+1}^n
  (\Gamma(\frac{j}{2}))^{-r}\prod_{v\not\in \infty}\alpha_v(\phi).
\end{equation*}
\item Let $r^*(\Lambda_j,M)$ denote the number of primitive
  representations of $M$ by $\Lambda_j$, let 
  \begin{equation*}
    r^*(\gen(\Lambda),M)=\dfrac{\sum_{j=1}^h\dfrac{r^*(\Lambda_j,M)}{\vert
    SO_V(F;\Lambda_j)\vert}}{\sum_{j=1}^h\dfrac{1}{\vert
    SO_V(F;\Lambda_j)\vert}}.
\end{equation*}
Then 
\begin{equation*}
 r^*(\gen(\Lambda),M)=\vert
  D_F\vert^{-\frac{(n+u-1)(n-u)}{4}}\pi^{\frac{r(n+u+1)(n-u)}{4}}\prod_{j=u+1}^n
  (\Gamma(\frac{j}{2}))^{-r}\prod_{v\not\in \infty}\alpha_v^*(\Lambda,M).  
\end{equation*}
\end{enumerate}
If one replaces proper classes by classes and $SO_V$ by $O_V$ in the
definitions of $ r^*(\gen(\phi)),r^*(\gen(\Lambda),M)$, these numbers
remain unchanged.
\end{theorem}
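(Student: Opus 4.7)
The plan is to deduce both formulas by applying Siegel's main theorem in its first version (the adelic double coset statement) to the stabilizer group of a fixed primitive representation, and then dividing by the maßformel (Theorem~\ref{massformel}) for the genus of $\Lambda$ itself. Concretely, fix a primitive representation $\phi : M \to \Lambda$ and set $W = F\phi(M)$, $U = W^\perp$, so $V = W \perp U$. The genus of $\phi$ corresponds to the $O_U(\A_F)$-orbit of $\phi$ on inclusions, and classes of representations in this genus biject with double cosets in $O_U(F) \backslash O_U(\A_F) / O_U(\A_F;\Lambda,\phi)$, where $O_U(\A_F;\Lambda,\phi)$ is the stabilizer of the pair $(\phi,\Lambda)$. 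In each class, the number of representations in a given $\Lambda_j$ in the genus of $\Lambda$ equals $|SO_V(F;\Lambda_j)|$ divided by the order of the stabilizer of $\phi$ inside $SO_V(F;\Lambda_j)$; that stabilizer is (canonically identified with) the group $SO_U(F) \cap \psi H \psi^{-1}$ appearing in the first version of Siegel's theorem applied to $SO_U$ with the congruence subgroup $H = SO_U(\A_F;\Lambda,\phi)$.

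First I would apply Siegel's first version to $SO_U$ with the congruence subgroup $H$ above: this gives $\sum_i \mu_\infty(\mcF_{\infty,i}) = 2/\mu_f(H_f)$, where the sum is over the double cosets parametrizing classes of primitive representations in the genus of $\phi$. In the totally definite case $\mu_\infty(\mcF_{\infty,i}) = \mu_\infty(SO_U(F_\infty)) / |SO_U(F) \cap \psi_i H \psi_i^{-1}|$, so after reorganizing, the numerator $\sum_j r(\gen(\phi),\Lambda_j) / |SO_V(F;\Lambda_j)|$ is expressed as $\mu_\infty(SO_V(F_\infty)) \cdot 2 /(\mu_\infty(SO_U(F_\infty)) \cdot \mu_f(H_f))$, using the group-theoretic identity relating $|SO_V(F;\Lambda_j)|$ to $|SO_U(F) \cap \psi_j H \psi_j^{-1}|$ via the orbit-stabilizer formula on the set of representations in a fixed class.

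Next I would divide by Siegel's second version applied to $SO_V$ with congruence subgroup $SO_V(\A_F;\Lambda)$, which expresses the denominator $\sum_j 1/|SO_V(F;\Lambda_j)|$ as $2/(\mu_\infty(SO_V(F_\infty)) \cdot \mu_f(SO_V(\A_f;\Lambda)))$. The $\mu_\infty(SO_V(F_\infty))$ factors cancel in the ratio, leaving an archimedean piece $\mu_\infty(SO_V(F_\infty))/\mu_\infty(SO_U(F_\infty))$ and a finite-adelic piece which is the product over non-archimedean $v$ of $\mu_v(SO_V(F_v;\Lambda))/\mu_v(SO_U(F_v;\Lambda,\phi))$. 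The finite local ratios are, by the proposition preceding the theorem, precisely $\alpha_v(\phi)$ (after the correct normalization factors involving $|\det(S_0)|_v$ and $|\det(T_0)|_v$ whose global products equal $1$ by the product formula). The archimedean quotient is computed using the explicit formula $\mu_{1_n}(SO_V(F_v)) = \tfrac{1}{2}\pi^{n(n+1)/4}\prod_{i=1}^n \Gamma(i/2)^{-1}$ at each real place, yielding the factor $\pi^{r(n+u+1)(n-u)/4} \prod_{j=u+1}^n \Gamma(j/2)^{-r}$, and the discriminant factor $|D_F|^{-(n+u-1)(n-u)/4}$ arises from the change of normalization of the local Haar measures (the non-archimedean normalization producing the extra $|\mcD_F|_v^{(n(n-1)-u(u-1))/4}$ from the dimension difference $\dim SO_V - \dim SO_U = (n(n-1)-u(u-1))/2$).

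The second formula for $r^*(\gen(\Lambda),M)$ follows by summing the first over a set of representatives of the classes of primitive representations in each genus of representations of $M$ into the genus of $\Lambda$: the local ratio $\alpha_v(\phi)$ summed over classes at $v$ becomes $\alpha_v^*(\Lambda,M)$ by definition, and the archimedean factor is unchanged. Finally, the statement about replacing proper classes by classes and $SO_V$ by $O_V$ follows because each isometry class either equals one proper class (if it admits an improper automorphism) or splits into two proper classes of equal size, so the weighted averages coincide. The main obstacle I expect is Step 1 and Step 3: carefully verifying the orbit-stabilizer identity that relates $r(\gen(\phi),\Lambda_j)/|SO_V(F;\Lambda_j)|$ to $1/|SO_U(F) \cap \psi_j H \psi_j^{-1}|$ for the correct stabilizer group, in particular ensuring that the stabilizer in $SO_V(F;\Lambda_j)$ of a particular representation is identified with $SO_U(F;\Lambda_j,\phi_j)$ rather than the larger $SO_U(F;K_j)$ (where $K_j = \Lambda_j \cap U$), as these can differ when $M$ is not sharply primitive in $\Lambda_j$; this is where Lemma~\ref{firstlemma_representationmeasures} and its adelic analogue supply the bridge.
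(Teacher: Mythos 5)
Your proposal follows the paper's proof essentially verbatim: the orbit--stabilizer identity rewriting $\sum_j r(\gen(\phi),\Lambda_j)/\vert SO_V(F;\Lambda_j)\vert$ as a sum of $1/\vert SO_U(F;\Lambda_j)\vert$ over classes in the genus of the representation, the first version of Siegel's theorem applied to $SO_U$ with congruence subgroup $SO_U(\A_F;\Lambda)$ divided by the same statement for $SO_V(\A_F;\Lambda)$, identification of the finite local ratios with the $\alpha_v$ via the preceding proposition together with cancellation of the $\vert\det(S_0)\vert_v,\vert\det(T_0)\vert_v$ factors by the product formula, and summation over genera of primitive representations for part b). The stabilizer you worry about at the end is indeed $SO_U(F;\Lambda_j)=SO_U(F)\cap SO_V(F;\Lambda_j)$ rather than $SO_U(F;K_j)$, exactly as the paper takes it, and apart from a harmless bookkeeping slip (a spurious factor $\mu_\infty(SO_V(F_\infty))$ in your intermediate expression for the numerator, which disappears in the final ratio) your argument coincides with the paper's.
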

\begin{proof} We know that we can restrict attention to inclusion
  mappings $i_{M,\Lambda_j}$. Let such an $i_{M,\Lambda_j}$ be
  given. Then $SO_V(F;\Lambda)$ operates transitively on the set of
  representations of $M$ by $\Lambda_j$ which are in the proper class
  of $i_{M,\Lambda_j}$, and the stabilizer of $i_{M,\Lambda_j}$ is
  $SO_U(F;\Lambda_j)$. If we sum here over all classes of
  representations of $M$ by $\Lambda_j$ which are in the genus of
  $i_{M,\Lambda_j}$ (representing each such class by an inclusion) we find  
\begin{equation*}
\dfrac{r(\gen(\phi),\Lambda_j)}{\vert SO_V(F;\Lambda_j)\vert}=\frac{1}{SO_U(F;\Lambda_j)}.  
\end{equation*}
With a double coset decomposition 
\begin{equation*}
  SO_U(\A_F)=\bigcup_{j=1}^t SO_U(F) \phi_j SO_U(\A_F;\Lambda)
\end{equation*}
we can write here $SO_U(F;\phi_j(\Lambda))=SO_U(F)\cap \phi_j
SO_U(\A_F;\Lambda) \phi_j^{-1}$, and our first version of Siegel's
main theorem, applied to the congruence subgroup
$H=SO_U(\A_F;\Lambda)$ of $SO_U(\A_F)$ gives
\begin{equation*}
  \sum_{j=1}^h\dfrac{r(\gen(\phi),\Lambda_j)}{\vert
    SO_V(F;\Lambda_j)\vert}=\frac{2}{\mu_{U,\infty}(SO_U(F_\infty))\prod_{v\not\in\infty}\mu_{U,v}(SO_U(F_v;\Lambda))},
\end{equation*} where the $\mu_{U,v}$ are the local factors of the
Tamagawa measure for $SO_U(\A_F)$. We divide this equation by the
equation given by the first version of Siegel's main theorem for the
congruence subgroup $SO_V(\A_F,\Lambda)$ of $SO_V(\A_F)$ and obtain,
in view of the definition of the $\alpha_v(i_{M,\Lambda})$ and
inserting the measures of the infinity components already computed, the
assertion in a). 
Notice that the factors $\vert \det(S_0)\vert_v^{-\frac{n+1}{2}},\vert
\det(T_0)\vert_v^{-\frac{u+1}{2}}$ occurring in the local measures for
all places $v$ including the archimedean ones cancel out because of
the product formula. 

The assertion in b) then follows upon summation over
the genera of primitive representations.

That changing from $SO$ to $O$ doesn't change anything is obvious
since numerator and denominator both change by a factor of $2$.
\end{proof}
\begin{remark} As in Theorem \ref{massformel}
  the factors $\vert 2\vert_v^{u-n}$ occurring non trivially in the $\alpha_v$
    for the dyadic places $v$ of $F$ (i.e., the places where the
    $v$-value of $2$ is not $1$) combine in the product to a total
    factor of $2^{r(n-u)}=2^{rw}$. It is again  not unusual to omit these
    factors in the definition of the local densities at the non
    archimedean places and instead to insert a factor of $2^{n-u}=2^{w}$ for each
    of the real places.
\end{remark}
\begin{example}
  We let $(\Lambda,Q)=I_4$ be the four dimensional cube lattice whose Gram
  matrix is diagonal with diagonal entries $2$. The lattice is
  unimodular over $\Z_p$ for all odd primes $p$, for $p=2$ we have
  $\Lambda_2^\#=\frac{1}{2}\Lambda$ with
  $Q(\Lambda_2^\#)\Z=\frac{1}{4}\Z$, so that we can choose $k=k_2=3$
  at the prime $2$. For $M$ we choose the $1$-dimensional lattice with
  Gram matrix $2t$ for some $t\in\N$. For odd primes $p$ we have to
  count the primitive solutions
  modulo $p$ of $\sum_{i=1}^4 x_i^2=t$, where primitive is here the
  same as nonzero modulo $p$. This is most easily done writing the
  completion $\Lambda_p$ as a sum of two hyperbolic planes, we find (see also Section 13
  of \cite{kneserbook}) $p^3-p=p(p^2-1)$ for $p\nmid t$ and $p^3+
  p^2-p-1=(p+1)(p^2-1)$.

At the prime $2$ we have to count the number of solutions of
$\sum_{i=1}^{4}x_i^2 \equiv t \bmod 8$ where the $x_i$ are considered
modulo $4$ and where at least one of the $x_i$ is odd. For $t=1$ we
have $\sum_{i=1}^{4}x_i^2 \equiv 1 \bmod 8$ if and only if precisely
one of the $x_i$ is odd and either one or all of the remaining
variables are zero modulo $4$, which gives a total of $4 \cdot 2 \cdot
(3+1)=32$ solutions. Since the lattice $(\Lambda_2, tQ)$ is easily seen
to  be isometric to $(\Lambda_2,Q)$ over $\Z_2$ for all odd $t$, one
obtains the same result for all odd $t$, alternatively one can count
the solutions for those $t$ directly as well. For $t$ congruent to $2$
or $6$ modulo $8$ precisely $2$ of the $x_i$ have to be odd and we
obtain $48$ solutions, for $t\equiv 4 \bmod 8$ all $x_i$ have to be
odd and we obtain $16$ solutions, for $t\equiv 0 \bmod 8$ there are no
solutions.

For $t=1$ our formula gives then with $\alpha_2(I_4,t):=\alpha_2(\Lambda,M)=1$ and
$\alpha_p(I_4,t):=\alpha_p(\Lambda,M)=(1-p^{-2})$ for $p\ne 2$ and using $\zeta(2)=\frac{\pi^2}{6}$
\begin{eqnarray*}
  r(1,I_4)&=&\pi^2 \prod_p\alpha_p(\Lambda,M)\\
&=&\pi^2 \frac{4}{3}\zeta(2)^{-1}\\
&=&8,
\end{eqnarray*}
in agreement with the obvious count for this number.

For $p\nmid t$ we have the same values of $\alpha_p$ as above, for
$p\mid t$ and $2\ne p$ we have $\alpha_p^*(I_4,t)=\frac{p+1}{p}\vert t\vert_p^{-1}\alpha_p(I_4,1)$,
for $p=2$ we obtain
\begin{equation*}
  \alpha_2^*(I_4,t)=\begin{cases}
3 & t\equiv 2,6 \bmod 8\\
2& t \equiv 4 \bmod 8
\end{cases}
\end{equation*}
and hence with $t=2^s t', s \nmid t'$ 
\begin{equation*}
  r^*(I_4,t)=t'\prod_{2\ne p \mid t}\frac{p+1}{p}
  \begin{cases}
  8 &2 \nmid t\\
24& t\equiv 2,6\bmod 8\\
16&t\equiv 4 \bmod 8  
  \end{cases}.
\end{equation*}
The formula for the number $r(I_4,t)=\sum_{d^2\mid t} r^*(I_4,\frac{t}{d^2})$ of all representations of $t$ by
$I_4$ looks smoother, we obtain
\begin{equation*}
  r(I_4,t)=8 \sum_{d\mid t, 4\nmid d}d,
\end{equation*}
a formula which was first obtained by Jacobi with the help of a study
of the
analytic properties of the theta series $\sum_{t=0}^\infty
r(I_4,t)\exp(\pi i t z)$.
\end{example}

So far  we have only considered primitive representations, let us now
turn towards imprimitive representations.
\begin{lemma}
With notations as before let $M'$ be an integral lattice on $W$ and
denote by $M_1,\ldots,M_t$ the integral overlattices of $M'$. 

Then $r(\Lambda,M')=\sum_{i=1}^t r^*(\Lambda,M_i)$.
\end{lemma}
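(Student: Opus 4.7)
The plan is to establish a bijection between the set of representations $\phi:M'\to\Lambda$ counted by $r(\Lambda,M')$ and the disjoint union over $i$ of the sets of primitive representations $\psi:M_i\to\Lambda$ counted by $r^*(\Lambda,M_i)$. The idea is that every representation factors uniquely as an inclusion $M'\subseteq M_i$ followed by a primitive representation of that overlattice.

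First I would argue that the list $M_1,\ldots,M_t$ of integral overlattices of $M'$ is indeed finite. Any integral overlattice $M_i$ satisfies $b'(M_i,M')\subseteq R$, hence $M_i\subseteq (M')^{\#}$ (the dual taken in $W$ with respect to the non-degenerate form $b'$), and since $(M')^{\#}/M'$ is a finitely generated torsion $R$-module there are only finitely many such $M_i$.

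Next, given any representation $\phi:M'\to\Lambda$, extend $\phi$ uniquely to an isometric embedding $\tilde\phi:W\to V$ (using that $W=FM'$ and $\phi$ preserves $Q'$), and set
\begin{equation*}
N_\phi := \tilde\phi^{-1}\bigl(\tilde\phi(W)\cap\Lambda\bigr).
\end{equation*}
Then $N_\phi$ is a lattice on $W$ containing $M'$, and since $\Lambda$ is integral and $\tilde\phi$ is an isometry we have $Q'(N_\phi)=Q(\tilde\phi(N_\phi))\subseteq R$, so $N_\phi$ is an integral overlattice of $M'$; hence $N_\phi=M_{i(\phi)}$ for a unique index $i(\phi)$. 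By construction $\tilde\phi(N_\phi)=\tilde\phi(W)\cap\Lambda$ is a primitive sublattice of $\Lambda$, so the restriction $\psi_\phi:=\tilde\phi|_{N_\phi}:M_{i(\phi)}\to\Lambda$ is a primitive representation, and $\phi$ is the restriction of $\psi_\phi$ to $M'$.

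Conversely, given $i\in\{1,\ldots,t\}$ and a primitive representation $\psi:M_i\to\Lambda$, the restriction $\psi|_{M'}:M'\to\Lambda$ is a representation of $M'$ by $\Lambda$. It remains to check that these two assignments are mutually inverse: starting from $\psi:M_i\to\Lambda$ primitive and setting $\phi=\psi|_{M'}$, the extension $\tilde\phi$ agrees with $\tilde\psi$ on $W$, so $\tilde\phi(W)\cap\Lambda=\tilde\psi(W)\cap\Lambda=\tilde\psi(M_i)$ by primitivity of $\psi$, which gives $N_\phi=M_i$ and $\psi_\phi=\psi$. The other composition is immediate from the definitions. Summing the resulting bijection over $i$ yields $r(\Lambda,M')=\sum_{i=1}^t r^*(\Lambda,M_i)$. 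The only genuinely substantive point is the primitivity of $\tilde\phi(N_\phi)$ in $\Lambda$, but this is built into the definition of $N_\phi$ as the full intersection of $\tilde\phi(W)$ with $\Lambda$.
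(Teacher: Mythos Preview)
Your proof is correct and is exactly the argument the paper has in mind; the paper's own proof consists of the single word ``Obvious.'' You have simply written out the natural bijection (every representation factors uniquely through the primitive inclusion of its image's saturation) that justifies that word.
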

\begin{proof}
  Obvious.
\end{proof}
\begin{lemma}
With notations as above let $v$ be a non archimedean place of $F$ and
write $A(M',M;\Lambda \bmod P^k\Lambda^\#)$ for the number of $R$-linear
maps   $\phi: M'\to \Lambda /P^k\Lambda^\#$ satisfying
$\phi(M)=\phi(W)\cap \Lambda$ and $Q(\phi(x))\equiv Q(x)\bmod P^k$ for
all $x\in M'$, let $\alpha_v(\Lambda;M',M)$ be obtained from
$A(M',M;\Lambda \bmod P^k\Lambda^\#)$ as we obtained
$\alpha_v^*(\Lambda;M)$ from $A^*(M;\Lambda\bmod P^k\Lambda^\#)$ above.

Then for large enough $k$ 
\begin{equation*}
  A(M',M;\Lambda \bmod P^k\Lambda^\#)=(M:M')^{1-u}A^*(M;\Lambda/P^k\Lambda^\#),
\end{equation*} and one has 
\begin{equation*}
 \alpha_v(\Lambda;M',M )=\alpha_v^*(\Lambda;M).
\end{equation*}
\end{lemma}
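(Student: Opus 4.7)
The plan is to establish the counting identity $A(M',M;\Lambda\bmod P^k\Lambda^\#) = (M:M')^{1-u}A^*(M;\Lambda/P^k\Lambda^\#)$ by a fibre-cardinality analysis of the natural restriction map
\begin{equation*}
R:A^*(M;\Lambda\bmod P^k\Lambda^\#) \longrightarrow A(M',M;\Lambda\bmod P^k\Lambda^\#),\quad \psi\mapsto\psi|_{M'},
\end{equation*}
and then to deduce the density identity $\alpha_v(\Lambda;M',M)=\alpha_v^*(\Lambda;M)$ as a formal consequence of the discriminant relation $\det_b(M')=(M:M')^2\det_b(M)$. For $k$ large enough -- for instance exceeding twice the maximum elementary-divisor exponent $\max a_i$ of $M/M'$, together with $P^kQ(\Lambda^\#)\subseteq P$ and $P^k\Lambda^\#\subseteq\Lambda$ -- Lemma \ref{hensel_quantitative} ensures that every primitive isometric $\psi\in A^*(M;\Lambda\bmod P^k\Lambda^\#)$ lifts to an actual primitive isometry $\tilde\psi:M\to\Lambda$, so that $R$ is well defined and surjective onto $A(M',M;\cdot)$ with fibres of equal cardinality by translation.

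The core of the proof is the fibre computation. Fixing a base point $\psi_0$, other elements of the fibre over $\psi_0|_{M'}$ are of the form $\psi_0+g$ with $g\in\Hom_R(M,\Lambda/P^k\Lambda^\#)$ vanishing on $M'$ and hence factoring through the finite torsion module $M/M'\cong\bigoplus_i R/\pi^{a_i}R$. Choosing a basis $(f_i)$ of $M$ with $(\pi^{a_i}f_i)$ a basis of $M'$ yields $g(f_i)\in P^{k-a_i}\Lambda^\#/P^k\Lambda^\#$ and an unconstrained count $\prod_iq^{a_in}=(M:M')^n$. The isometry condition $b(\psi_0(y),g(y))+Q(g(y))\equiv 0\bmod P^k$ decomposes, via the dual basis of $\Lambda^\#$ adapted to a primitive basis of $\Lambda$ extending $(\psi_0(f_i))$, into linear congruences on the dual coordinates of each $g(f_i)$; for $k\gg 2\max a_i$ the quadratic term $Q(g(y))$ drops out modulo $P^k$ and the rank of the remaining system per cyclic factor cuts the count from $q^{a_in}$ down to $q^{a_i(u-1)}$. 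The total fibre cardinality is thus $(M:M')^{u-1}$, and primitivity of $\psi_0+g$ is automatic in this regime.

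The main obstacle is a careful analysis of the rank of the linear system per elementary-divisor factor: the coefficients of the expanded quadratic $Q((\psi_0+g)(y))-Q(y)$ in the basis $(f_i)$ furnish up to $w+1$ non-trivial linear constraints on $g(f_i)$, and one must verify that a compatibility stemming from the symmetry of $b$ together with the primitivity of $\psi_0$ makes one of them redundant modulo the appropriate power of $P$, leaving the residual exponent $u-1=n-w-1$ rather than the naive $u$. Once the counting identity is proved, the density identity follows by direct substitution into the defining formulas: $\alpha_v(\Lambda;M',M)$ and $\alpha_v^*(\Lambda;M)$ share all factors except $|\det_b(\cdot)|_v^{(1-u)/2}$ and the $A$-count, and the ratio $(|\det_b(M')|_v/|\det_b(M)|_v)^{(1-u)/2}=(M:M')^{u-1}$ exactly cancels the factor $(M:M')^{1-u}$ from the counting identity, yielding $\alpha_v(\Lambda;M',M)=\alpha_v^*(\Lambda;M)$.
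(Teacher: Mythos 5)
Your overall skeleton is sound and is essentially a reorganization of the paper's own argument: both proofs choose an elementary-divisor basis $f_1,\ldots,f_w$ of $M$ with $\pi^{s_i}f_i$ a basis of $M'$, both reduce to a counting statement about maps modulo $P^k\Lambda^\#$, and both ultimately rest on the equidistribution supplied by the quantitative Hensel lemma. The difference is organizational: you count fibres of the restriction map $R\colon\psi\mapsto\psi\vert_{M'}$ on the set counted by $A^*(M;\Lambda\bmod P^k\Lambda^\#)$, whereas the paper passes through the larger intermediate set of primitive $\psi$ whose Gram matrix satisfies only the weakened congruences $t^{(\psi,M)}_{ij}\equiv_{ev}t_{ij}\bmod P^{k-(s_i+s_j)}$, counts that set as $(M:M')^{w+1}A^*$, and divides by the full fibre size $(M:M')^n$. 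Your closing deduction of $\alpha_v(\Lambda;M',M)=\alpha_v^*(\Lambda;M)$ from the counting identity via $\vert{\det}_b(M')\vert_v^{(1-u)/2}=(M:M')^{u-1}\vert{\det}_b(M)\vert_v^{(1-u)/2}$ is correct and is exactly the paper's one-line remark.

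However, the two load-bearing steps of the counting identity are not established. First, surjectivity of $R$: your justification (that elements of $A^*(M;\cdot)$ lift to genuine isometries) is a non sequitur — it says nothing about whether a given $\phi$ counted in $A(M',M;\cdot)$ arises as a restriction. The canonical $F$-linear extension $\psi_0$ of such a $\phi$ to $M$ does land primitively in $\Lambda$, but it is a priori an isometry only modulo $P^{k-(s_i+s_j)}$ in the $(i,j)$-entry of its Gram matrix; one must show it can be corrected by a perturbation $g$ with $g(f_i)\in P^{k-s_i}\Lambda^\#$ to an exact isometry $\bmod\, P^k$, and this is precisely the (graded) surjectivity of the map $\bar{h}\mapsto\bar{Q}_{\bar{h}}$ from the proof of Lemma \ref{hensel_quantitative}. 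Second, your fibre computation: to pass from the unconstrained count $q^{ns_i}$ to $q^{(u-1)s_i}$ per elementary divisor, the linear system must have \emph{full} rank $w+1$ per graded piece (the diagonal condition contributes $2$ because it is a congruence modulo $P^k$ on a quantity a priori only in $P^{k-2s_i}$, and each of the $w-1$ off-diagonal conditions contributes $1$). Your proposed mechanism — that a compatibility "makes one of them redundant, leaving $u-1$ rather than the naive $u$" — runs backwards: a redundancy would \emph{raise} the residual exponent to $u$, not lower it to $u-1$. What must be verified is the absence of any redundancy, and this is again the surjectivity of $\bar{h}\mapsto\bar{Q}_{\bar{h}}$ under the duality hypothesis $L^*=\widetilde{b'}_f(M)+PL^*$ of Theorem \ref{hensellemma_kneser}. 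As written, the decisive step is both deferred and misdescribed, so the proof is incomplete.
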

\begin{proof}
  The second formula follows from the first one.
By the elementary divisor theorem there is a basis $f_1, \ldots, f_w$
of $M$ such that $\pi_v^{s_1}f_1,\ldots,\pi_v^{s_w}f_w$ with some $0\le
  s_1\le\dots\le s_w$ is a basis  of $M'$, where $\pi_v$ is a prime
  element of $R_v$. Let $T$ denote the Gram matrix of $Q$ with respect
  to the $f_i$.

Let  $\phi:W\to V$ be an $R$-linear (hence $F$-linear) map such that
$\phi(M)\subseteq \Lambda$ is primitive.
 Then $\phi$ restricted to $M'$
is an isometry modulo $P^k$ if and only if the  Gram matrix
$T^{(\phi,M')}$ of $(M',Q)$ with respect to the $\phi(\pi_v^{s_i}f_i)$ satisfies 
$t^{(\phi,M')}_{ij}\equiv_{ev} \pi_v^{s_i+s_j}t_{ij} \bmod P^k$, where
$\equiv_{ev}$ denotes as earlier that the congruence is modulo $2P^k$
for $i=j$.
Equivalently, we may write 
$t^{(\phi,M)}_{ij}\equiv_{ev} t_{ij} \bmod P^{k-(s_i+s_j)}$, where
$T^{(\phi,M)}$ denotes the Gram matrix of $(M,Q)$ with respect to the $\phi(f_i)$.
 The number of modulo $P^k$ different such Gram matrices  $T_\ell$ is
 $\prod_{i\le j}q^{s_i}q^{s_j}=(M:M')^{w+1}$. 

The proof of
 Hensel's Lemma (Theorem \ref{hensellemma_kneser}) shows that for
 large enough $k$  the number of modulo $P^k\Lambda^\#$ distinct $R$-linear
 maps $\psi:M\to \Lambda$ for which $\psi(M)$ is primitive in
 $\Lambda$ and $T^{(\psi,M)}\equiv_{ev}T_\ell\bmod P^k$ for one of
 these $T_\ell$ holds is $(M:M')^{w+1}A^*(M;\Lambda \bmod
 P^k\Lambda^\#)$. In other words,  all these matrices occur equally
 often among the Gram matrices modulo $P^k$ associated to refinements
 modulo $P^k\Lambda^\#$ of the maps  counted by $A^*(M;\Lambda \bmod 
 P^{k-2s_w}\Lambda^\#)$. 
Furthermore, each of these $\psi$ yields upon
 restriction to $M'$ one of the maps $\phi$ counted in $ A(M',M;\Lambda \bmod
 P^k\Lambda^\#)$, and each of those $\phi$ occurs as the restriction of
 $(M':M)^n$ different $\psi$. Summing up and noticing $1-u=1+w-n$ we see that indeed
\begin{equation*}
  A(M',M;\Lambda \bmod P^k\Lambda^\#)=(M:M')^{1-u}A^*(M;\Lambda/P^k\Lambda^\#),
\end{equation*} holds.

Inserting this result into the formula relating the $\alpha_v(\quad),\alpha_v^*(\quad)$
with the $A(\quad), A^*(\quad)$ one obtains the second part of the assertion.
\end{proof}
\begin{remark}
For both lemmas  the argument above is valid for the case $W=V$ as well.
\end{remark}
\begin{theorem}
  With notations as above one has 
  \begin{equation*}
    r(\gen(\Lambda),M')=\dfrac{\sum_{j=1}^h\dfrac{r(\Lambda_j,M')}{\vert
    SO_V(F;\Lambda_j)\vert}}{\sum_{j=1}^h\dfrac{1}{\vert
    SO_V(F;\Lambda_j)\vert}}    
  \end{equation*}
satisfies
\begin{equation*}
 r(\gen(\Lambda),M')=\vert
  D_F\vert^{-\frac{(n+u-1)(n-u)}{4}}\pi^{\frac{r(n+u+1)(n-u)}{4}}\prod_{j=u+1}^n
  (\Gamma(\frac{j}{2}))^{-r}\prod_{v\not\in \infty}\alpha_v(\Lambda,M').  
\end{equation*}
In the case $V=W$ (i.e, $u=0$ in the formula above) one has to adjust
here the definition of $\alpha_v(\Lambda,M')$ by a factor $\frac{1}{2}$
on the right hand side.
\end{theorem}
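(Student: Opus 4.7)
The plan is to reduce to the primitive case already established, by stratifying each representation of $M'$ according to its primitive hull. First I would fix an integral $R$-lattice $M'$ on $W$ and let $M_1,\ldots,M_t$ denote its integral overlattices on $W$. For every representative $\Lambda_j$ of the proper classes in $\gen(\Lambda)$ the first of the two preceding lemmas gives the pointwise identity $r(\Lambda_j,M')=\sum_{i=1}^t r^*(\Lambda_j,M_i)$; dividing by $|SO_V(F;\Lambda_j)|$, summing over $j$, and normalizing by $\sum_j |SO_V(F;\Lambda_j)|^{-1}$ yields the averaged identity
\begin{equation*}
  r(\gen(\Lambda),M')=\sum_{i=1}^t r^*(\gen(\Lambda),M_i).
\end{equation*}

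Next I would apply the primitive representation theorem to each summand. Because every $M_i$ lies on the same space $W$ of dimension $u$, the archimedean/discriminant prefactor $|D_F|^{-(n+u-1)(n-u)/4}\,\pi^{r(n+u+1)(n-u)/4}\prod_{j=u+1}^n(\Gamma(j/2))^{-r}$ is the same for every $i$; call it $C(F,n,u)$. Thus
\begin{equation*}
  r(\gen(\Lambda),M')=C(F,n,u)\sum_{i=1}^t\prod_{v\notin\infty}\alpha_v^*(\Lambda,M_i).
\end{equation*}
The remaining task is to identify $\sum_{i}\prod_{v}\alpha_v^*(\Lambda,M_i)$ with $\prod_v\alpha_v(\Lambda,M')$, where $\alpha_v(\Lambda,M')$ is the local density built in the obvious non-primitive way from $A(M';\Lambda\bmod P_v^k\Lambda^\#):=\#\{\phi:M'\to\Lambda/P_v^k\Lambda^\#\mid Q(\phi(x))\equiv Q(x)\bmod P_v^k\}$. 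The second preceding lemma delivers the key local equality $\alpha_v^*(\Lambda,M_i)=\alpha_v(\Lambda,M',M_i)$, so it suffices to check locally that $\alpha_v(\Lambda,M')=\sum_{M_{i,v}}\alpha_v(\Lambda,M',M_{i,v})$, the sum running over local integral overlattices of $M'_v$. This local decomposition is genuine: given $\phi:M'\to\Lambda/P_v^k\Lambda^\#$ with $Q(\phi(x))\equiv Q(x)\bmod P_v^k$, the primitive hull $\phi(W_v)\cap\Lambda_v$ is one such overlattice and counting $\phi$'s according to this hull gives precisely $\sum_{M_{i,v}}A(M',M_{i,v};\Lambda\bmod P_v^k\Lambda^\#)$.

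Finally I would use the bijection between global and local integral overlattices (for each finite $v$, any local overlattice $K_v\supseteq M'_v$ in $W_v$ is attained by a unique global overlattice $M_i$ with $(M_i)_w=M'_w$ for $w\ne v$, by Lemma \ref{lattices_localglobal_dedekind}) to factor the sum over $i$ as the Euler product $\prod_v\bigl(\sum_{M_{i,v}}\alpha_v^*(\Lambda,M_{i,v})\bigr)$; almost all factors equal $\alpha_v(\Lambda,M')$ with only the trivial overlattice contributing, so the product converges and equals $\prod_v\alpha_v(\Lambda,M')$. Combining the three displays gives the formula. The case $V=W$ is where the main subtlety appears: then $U=\{0\}$, so $SO_U(F_\infty)$ is trivial and the quotient of Tamagawa numbers $\tau(SO_V)/\tau(SO_U)$ in the derivation of the primitive theorem degenerates from $2/2=1$ to $2/1=2$; compensating requires inserting the extra factor $\tfrac12$ in the statement. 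The main obstacle, apart from this degenerate case, is justifying the global-to-local overlattice correspondence together with the finiteness of the product — specifically, verifying that for almost all $v$ the only local overlattice of $M'_v$ contributing nontrivially is $M'_v$ itself, which follows from $M'$ being $R_v$-maximal (and in fact primitively embeddable into $\Lambda_v$) at almost all $v$.
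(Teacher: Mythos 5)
Your proposal is correct and follows essentially the same route as the paper: decompose $r(\Lambda_j,M')$ into primitive representation numbers of the integral overlattices via the first lemma, average over the genus, apply the primitive version of the main theorem, and convert back with the second lemma's identity $\alpha_v^*(\Lambda,M_i)=\alpha_v(\Lambda;M',M_i)$, the sum over overlattices factoring as an Euler product into $\prod_v\alpha_v(\Lambda,M')$. Your explanation of the factor $\tfrac12$ for $V=W$ (the trivial group $SO_U$ no longer contributing a Tamagawa number $2$ in the denominator) is the same reason the paper alludes to.
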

\begin{proof}
  The first of the above lemmas expresses the number of representations
  of $M'$ by $\Lambda$ in terms of the primitive representation
  numbers of the integral overlattices of $M'$. The resulting sum of
  numbers of primitive representations can then be expressed using the
  primitive local densities $\alpha_v^*$ for these overlattices, and
  application of the second lemma finishes the proof.
Notice that the factor $\frac{1}{2}$ already occurred in the formula
  for the Maß (measure) of a genus, it occurs here for the same reason.
\end{proof}

\chapter{Spin Group and Strong Approximation}
In this chapter $F$ is a field of characteristic not $2$ and $(V,Q)$
is a non degenerate quadratic space over $F$. In the definition of the
Clifford group we will follow the by now standard twisted approach introduced by
Atiyah and Bott, modifying the original definition of Chevalley.

\section{Clifford group and spin group}
\begin{definition}
  The Clifford group $\Gamma(V,Q)=\Gamma_V$ is defined by
  \begin{equation*}
    \Gamma_V:=\{x \in C(V,Q)^\times \mid C(-\id)(x) v x^{-1} \in V
    \text{ for all } v\in V\}.
  \end{equation*}
\end{definition}
\begin{lemma}
 $\Gamma_V$ acts on $V$ by $x.v=C(-\id)(x) v x^{-1}$, and the
 homomorphism $\rho:\Gamma_V\to GL(V)$ defined by this group action has
 kernel $F^\times\subseteq \Gamma_V$.
\end{lemma}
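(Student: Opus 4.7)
The plan is to verify the action and homomorphism properties by direct computation, and then identify the kernel by exploiting the $\mathbb{Z}/2\mathbb{Z}$-grading on $C(V,Q)$ together with the description of the center of the Clifford algebra from Theorem \ref{clifford_centralsimple}.

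First, since $C(-\id)$ is an algebra automorphism (Lemma \ref{clifford_functoriality}) fixing $F$ pointwise and equal to the identity on $C_0(V,Q)$, one checks that $F^\times$ lies in $\Gamma_V$ and acts trivially. The identity $(xy).v = C(-\id)(x)C(-\id)(y)\,v\,y^{-1}x^{-1} = x.(y.v)$ gives the action axiom, and since $\rho(x)$ is injective on the finite-dimensional space $V$ (as $C(-\id)(x)$ and $x^{-1}$ are units in $C(V,Q)$), it is automatically invertible, so $\rho$ lands in $GL(V)$. Closure of $\Gamma_V$ under inversion follows by applying $\rho(x)^{-1}$ to an arbitrary $v\in V$ and reading off that $\rho(x^{-1})=\rho(x)^{-1}$ preserves $V$.

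The heart of the argument is the kernel computation. For $x \in \ker\rho$ write $x = x_0 + x_1$ with $x_i \in C_i(V,Q)$; then $C(-\id)(x) = x_0 - x_1$. The equation $(x_0-x_1)v = v(x_0+x_1)$ for all $v\in V$ splits along the grading (since $x_0v, vx_0\in C_1$ and $x_1v, vx_1\in C_0$) into
\begin{equation*}
x_0 v = v x_0 \quad\text{and}\quad x_1 v = -v x_1 \quad\text{for all } v\in V.
\end{equation*}
The first equation says $x_0$ commutes with a generating set of $C(V,Q)$, hence lies in the center of $C(V,Q)$. By Theorem \ref{clifford_centralsimple}, this center equals $F$ when $\dim V$ is even, and equals $F \oplus F\cdot e_1\cdots e_n$ when $\dim V=n$ is odd (for any orthogonal basis $e_1,\ldots,e_n$, which exists because $\cha(F)\ne 2$). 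Since the element $e_1\cdots e_n$ lies in $C_1(V,Q)$, the even part of the center is $F$ in both cases, forcing $x_0 \in F$.

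The main obstacle is ruling out $x_1$. Here I fix an orthogonal basis $e_1,\dots,e_n$ with $e_i^2=a_i\ne 0$ and expand $x_1=\sum_{|I|\text{ odd}} c_I e_I$ in the standard basis $\{e_I\}_{I\subseteq\{1,\dots,n\}}$ of $C(V,Q)$. A direct anticommutation calculation gives, for each $j$ and each odd-cardinality $I$,
\begin{equation*}
e_j e_I + e_I e_j =
\begin{cases}
0 & \text{if } j\notin I,\\
\pm 2 a_j\, e_{I\setminus\{j\}} & \text{if } j\in I,
\end{cases}
\end{equation*}
since $|I|$ is odd. As $j$ varies over $I$ the sets $I\setminus\{j\}$ are distinct even-length subsets, so the condition $x_1 e_j + e_j x_1 = 0$, read off coefficient by coefficient, yields $c_I=0$ whenever $j\in I$. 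Letting $j$ range over $\{1,\dots,n\}$ kills every $c_I$ with $I\ne\emptyset$; since $|I|$ odd precludes $I=\emptyset$, this gives $x_1=0$. Combined with $x_0\in F$ and the invertibility of $x$, this yields $x = x_0 \in F^\times$, completing the proof.
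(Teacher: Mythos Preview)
Your argument is correct. The grading split $x=x_0+x_1$ and the resulting equations $x_0v=vx_0$, $x_1v=-vx_1$ are exactly what the paper does. The difference lies only in how the last step is closed off: the paper observes that these equations say precisely that $x$ lies in the \emph{graded} center of $C(V,Q)$ (Remark~\ref{brauerwall}) and invokes the fact that this graded center is $F$, whereas you treat the two pieces separately, using Theorem~\ref{clifford_centralsimple} to force $x_0\in F$ and an explicit basis computation with the anticommutation relations to kill $x_1$. Your route is more hands-on and self-contained (it does not rely on the graded Brauer formalism sketched only in the remark), at the cost of a small combinatorial check; the paper's route is shorter but leans on a structural fact stated without full proof. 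One phrasing nit: where you write ``as $j$ varies over $I$'' you really mean ``for fixed $j$, as $I$ varies over the odd subsets containing $j$, the $I\setminus\{j\}$ are distinct,'' which is what makes the coefficient read-off legitimate.
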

\begin{proof}
  It is obvious that  $x.v=C(-\id)(x) v x^{-1}$ defines a group action
  of $\Gamma_V$ on $V$ If $\x \in \Gamma_V$ is in the kernel of $\rho$
  we write $x=x_0+x_1$ with $x_i \in C_i(V,Q)$ and have
  $x_0v-x_1v=vx_0+vx_1$ and isolating the components in the grading we
  see $x_0v=vx_0, x_1v=-vx_1$. This implies $x_0z=zx_0$ for all $z\in
  C(V,Q)$ and $x_1z_j=(-1)^ju_jx_1$ for $z_j\in C_j(V,Q)$. We see that
  $x$ is in the graded center of $C(V,Q)$ as defined in Remark
  \ref{brauerwall}, and since this graded center is trivial we obtain
  the assertion.
\end{proof}
\begin{deflemma}
  For $x\in \Gamma_V$ one has 
  \begin{equation*}
    n(x):=\bar{x}x\in F
  \end{equation*}
and $\bar{x}x=x\bar{x}$. Moreover,  $n$ is a group homomorphism from
$\Gamma_V$ to $F^\times$ satisfying $n(x)=-Q(x)$ for $x\in V$ with $Q(x)\ne 0$.

We call $n(x)$ the norm or the Clifford norm of $x$.
\begin{proof}
We apply the standard involution $x\mapsto \bar{x}$ of
$C(V,Q)$ to the equation $C(-\id)(x) v x^{-1}=w\in V$ 
 to get
$\bar{x}^{-1}vC(-\id)\bar{x}=w$.
From this we get $v=C(-\id)(\bar{x}x)v (\bar{x}x)^{-1})$, i.e.,
$\bar{x}x$ is in the kernel of the homomorphism $\rho$ above and
therefore in $F^\times$.

We see then  $x(\bar{x}x)=(\bar{x}x)x$ and thus $\bar{x}x=x\bar{x}$,
which easily gives that $n$ is a group homomorphism. The definition of
$\bar{x}$ finally implies $n(x)=-Q(x)$ for $x\in V$ with $Q(x)\ne 0$.
\end{proof}
\end{deflemma}
\begin{lemma}
  For $x\in V$ with $Q(x)\ne 0$ we have $x\in \Gamma_V$ and
  $\rho(x)(v)=\tau_x(v)$ for all $v \in V$. 
\end{lemma}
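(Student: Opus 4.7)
The plan is to verify directly the two claims from the definitions, using the fundamental Clifford relation $xv + vx = b(x,v)\cdot 1$ (which follows from $(x+v)^2 = Q(x+v)$ and the identification of $V$ with its image in $C(V,Q)$).

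First I would check that $x$ is invertible in $C(V,Q)$: since $x^2 = Q(x) \in F^\times$, the element $x^{-1} = Q(x)^{-1}x$ exists. Next, because $x \in V \subseteq C_1(V,Q)$, the grading automorphism $C(-\id)$ from Lemma \ref{clifford_functoriality} sends $x$ to $-x$. Consequently, for any $v \in V$,
\begin{equation*}
\rho(x)(v) = C(-\id)(x)\, v\, x^{-1} = -x v \cdot Q(x)^{-1} x = -Q(x)^{-1}\, x v x.
\end{equation*}

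The key step is to rewrite $xvx$ using the Clifford relation $xv = b(x,v)\cdot 1 - vx$, which gives
\begin{equation*}
xvx = (b(x,v) - vx)x = b(x,v)\, x - v x^2 = b(x,v)\, x - Q(x)\, v.
\end{equation*}
Substituting this into the previous display yields
\begin{equation*}
\rho(x)(v) = -Q(x)^{-1}\bigl(b(x,v)\, x - Q(x)\, v\bigr) = v - \frac{b(x,v)}{Q(x)}\, x = \tau_x(v),
\end{equation*}
which is precisely the reflection formula from the Deflemma defining $\tau_x$. In particular $\rho(x)(v) \in V$ for all $v \in V$, so $x$ satisfies the defining condition for membership in $\Gamma_V$, and $\rho(x) = \tau_x$ as claimed.

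No step presents a real obstacle: once the grading $C(-\id)(x) = -x$ for $x \in V$ is noted and the identity $xv + vx = b(x,v)$ is invoked, the computation is a two-line manipulation. The only point that requires a moment's care is getting the sign right — the twisted definition $C(-\id)(x)\, v\, x^{-1}$ cancels the extra minus sign that would otherwise appear, producing exactly the formula for $\tau_x$ rather than $-\tau_x$.
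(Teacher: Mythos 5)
Your proof is correct and follows essentially the same computation as the paper: after noting $C(-\id)(x)=-x$ and $x^{-1}=Q(x)^{-1}x$, both arguments reduce $-Q(x)^{-1}xvx$ to $\tau_x(v)$ via the relation $xv+vx=b(x,v)$. The extra remarks on invertibility and the grading are implicit in the paper's version but harmless to spell out.
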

\begin{proof}
  We have
  \begin{eqnarray*}
    C(-\id)(x) v x^{-1}&=&-xv \frac{x}{Q(x)}\\
&=&\frac{(vx-b(x,v))x}{Q(x)}\\
&=&v-\frac{b(x,v)}{Q(x)}x\\
&=&\tau_x(v)
  \end{eqnarray*} for all $v\in V$.
\end{proof}
\begin{theorem}
  The homomorphism $\Gamma_V/F^\times \to GL(V)$ induced by $\rho$ is an
  isomorphism onto $O_V(F)$. 
\end{theorem}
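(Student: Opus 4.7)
The plan is to combine the two preceding lemmas with Witt's generation theorem (Theorem \ref{SatzvonWitt}). The first lemma already tells us that $\ker \rho = F^\times$, so the induced map $\bar\rho\colon \Gamma_V/F^\times \to GL(V)$ is injective. It remains to identify its image with $O_V(F)$, and for this I would proceed in two independent steps: first show $\rho(\Gamma_V)\subseteq O_V(F)$, then show $O_V(F)\subseteq \rho(\Gamma_V)$.

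Surjectivity onto $O_V(F)$ is the easy direction. The second lemma says that for every anisotropic $x\in V$ one has $x\in\Gamma_V$ and $\rho(x)=\tau_x$. Since $(V,Q)$ is non-degenerate and $\mathrm{char}(F)\ne 2$, Witt's generation theorem asserts that such reflections generate $O_V(F)$, hence every element of $O_V(F)$ lies in the image of $\rho$.

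The other inclusion is the main obstacle: one must verify that $\rho(x)$ is an isometry for an arbitrary $x\in\Gamma_V$, not just for anisotropic vectors. My plan is to exploit the Clifford norm $n(x)=\bar x\,x$ introduced in the preceding Definition/Lemma, using that $n\colon\Gamma_V\to F^\times$ is a group homomorphism, that $n(v)=-Q(v)$ for $v\in V$, and that $n(x)$ is central in $C(V,Q)$ and fixed by $C(-\mathrm{id})$. For $v\in V$ one has $w:=\rho(x)(v)\in V$ by the very definition of $\Gamma_V$, so it suffices to compute $n(w)$ and show $n(w)=n(v)$. Applying the standard involution, which commutes with $C(-\mathrm{id})$ (both act as $(-1)^r$ times reversal on products of length $r$, as follows from Lemma \ref{clifford_functoriality}), and pushing $C(-\mathrm{id})(\bar x)$ against $C(-\mathrm{id})(x)$ to produce $C(-\mathrm{id})(n(x))=n(x)$, a short manipulation collapses $\bar w\,w$ to $-n(x)Q(v)n(x)^{-1}=-Q(v)$. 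Thus $Q(\rho(x)v)=Q(v)$ for all $v\in V$, giving $\rho(x)\in O_V(F)$.

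The delicate bookkeeping step is the compatibility of $C(-\mathrm{id})$ with the standard involution $\bar{\phantom x}$; once that is in hand, everything reduces to the centrality of $n(x)$ and the relation $v^2=Q(v)$ in $C(V,Q)$. Together with the kernel computation, the two inclusions above yield the asserted isomorphism $\Gamma_V/F^\times \xrightarrow{\sim} O_V(F)$.
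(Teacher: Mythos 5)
Your proposal is correct and follows essentially the same route as the paper: the kernel lemma gives injectivity, Witt's generation theorem together with $\rho(x)=\tau_x$ gives surjectivity, and the inclusion $\rho(\Gamma_V)\subseteq O_V(F)$ comes down to the identity $\overline{C(-\id)(x)}\,C(-\id)(x)=n(x)$. The one small difference is that your direct computation of $\bar w\,w$ for $w=C(-\id)(x)vx^{-1}$ works uniformly for all $v\in V$, whereas the paper applies the multiplicativity of $n$ on $\Gamma_V$ (which forces $Q(v)\ne 0$) and then disposes of isotropic $v$ by a separate argument via $v=\rho(x^{-1})(w)$; your version avoids that case split at the cost of verifying that the standard involution commutes with $C(-\id)$, which is immediate on monomials.
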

\begin{proof}
 Let $v\in V$ with $Q(v)\ne 0$. We have 
 \begin{equation*}
  -Q(C(-\id)(x) v x^{-1})= n(C(-\id)(x) v x^{-1})=-n(C(-\id)(x))Q(v)n(x)^{-1}
 \end{equation*}
since $n$ is a
   group homomorphism and $v\in \Gamma_V$ by the previous lemma. But
   $\overline{C(-\id)(x)}=C(-\id)(x)$ implies $n(C(-\id)(x))=n(x)$ and
   we obtain $Q(\rho(x)(v))=Q(v)$ for all anisotropic $v$. If $v \in V$
   is isotropic with $w=\rho(x)(v)$, we have $v=\rho(x^{-1})(w)$, so $w$ has to be
   isotropic as well, and we see that indeed $\rho(x) \in O_V(F)$.

To see that $\rho$ is surjective we recall that by Witt's generation
theorem \ref{SatzvonWitt} $O_V(F)$ is generated by symmetries $\tau_x$
with anisotropic $x$, which are in the image of $\rho$ by the previous lemma.
\end{proof}
\begin{corollary}
Any $x \in \Gamma_V$ is homogeneous, i.\ e., it is either in
$C_0(V,Q)$ or in $C_1(V,Q)$. More precisely, it can be written as a
monomial $v_1\dots v_r$ with $v_i \in V$ and  $\det(\rho(x))=(-1)^r$.  
\end{corollary}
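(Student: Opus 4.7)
The plan is to deduce the corollary directly from the theorem just proven together with Witt's generation theorem \ref{SatzvonWitt} and the computation $\rho(v)=\tau_v$ for anisotropic $v$ from the preceding lemma.

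First I would take $x\in\Gamma_V$ and pass to $\rho(x)\in O_V(F)$. By Witt's generation theorem, $\rho(x)$ can be expressed as a product $\tau_{v_1}\circ\cdots\circ\tau_{v_r}$ of symmetries with anisotropic $v_i\in V$. By the previous lemma each $v_i$ lies in $\Gamma_V$ with $\rho(v_i)=\tau_{v_i}$, so $\rho(v_1\cdots v_r)=\rho(x)$. The theorem asserts that $\ker\rho=F^\times$, hence there exists $\lambda\in F^\times$ with $x=\lambda v_1\cdots v_r$. Absorbing $\lambda$ into the first factor (replacing $v_1$ by $\lambda v_1\in V$, which remains anisotropic) exhibits $x$ as a monomial $v_1'\, v_2\cdots v_r$ of length $r$ in elements of $V$.

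Next I would verify homogeneity. Since $V\subseteq C_1(V,Q)$ and $C_1\cdot C_1\subseteq C_0$, an $r$-fold product of vectors lies in $C_0(V,Q)$ if $r$ is even and in $C_1(V,Q)$ if $r$ is odd. In particular, $x$ is homogeneous, with its grading parity matching the parity of $r$. For the determinant assertion, each symmetry $\tau_{v_i}$ has determinant $-1$ (it fixes the hyperplane $(Fv_i)^\perp$ and sends $v_i\mapsto -v_i$), whence $\det(\rho(x))=\prod_{i=1}^r\det(\tau_{v_i})=(-1)^r$. In particular, this confirms that the parity of $r$ — and hence the homogeneous component in which $x$ lies — is intrinsically determined by $x$, even though the individual factors $v_i$ are not.

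There is no real obstacle: every ingredient has been assembled in the preceding lemmas and the theorem. The only point requiring a moment's attention is the legitimacy of absorbing the scalar $\lambda$ into a single vector factor so as to turn the expression $\lambda v_1\cdots v_r$ into a genuine monomial of length $r$ in $V$; this works because $F^\times V = V\setminus\{0\}$ and $Q(\lambda v_1)=\lambda^2 Q(v_1)\neq 0$, so anisotropy is preserved.
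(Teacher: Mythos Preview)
Your argument is correct and is exactly the (unwritten) proof the paper has in mind when it says ``Obvious'': combine Witt's generation theorem \ref{SatzvonWitt}, the lemma $\rho(v)=\tau_v$, and the theorem $\ker\rho=F^\times$, then absorb the scalar into a vector factor.

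One small edge case you should address explicitly: if $\rho(x)=\id_V$ and you take $r=0$ in Witt's theorem, there is no first factor into which to absorb $\lambda$. The easy fix is to write $\id_V=\tau_v\circ\tau_v$ for any anisotropic $v$, or equivalently $x=\lambda=(\lambda Q(v)^{-1}v)\cdot v$, so that $r=2$ and the rest of your argument applies verbatim.
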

\begin{proof}
  Obvious.
\end{proof}
\begin{definition}
The Spin group $\spin(V,Q)=\spin_V(F)$ of $(V,Q)$ is $\{x \in
\Gamma_V\cap C_0(V,Q)\mid n(x)=1\}$. The group $\pin_V(F)$ is   $\{x \in
\Gamma_V\mid n(x)=1\}$.

The group $O_V'(F)\subseteq SO_V(F)$ is the image of $\spin_V(F)$
under $\rho$.
\end{definition}
\begin{lemma}
 The group $O_V'(F)$ contains the commutator subgroup of $O_V(F)$. 
\end{lemma}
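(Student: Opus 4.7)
The plan is to exhibit $O_V'(F)$ as the kernel of a homomorphism from $O_V(F)$ to an abelian group, from which the inclusion of the commutator subgroup is automatic.

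First I would construct the \emph{spinor norm} $\theta: O_V(F) \to F^\times/(F^\times)^2$. Given $\sigma \in O_V(F)$, Witt's generation theorem (Theorem \ref{SatzvonWitt}) lets us write $\sigma = \tau_{v_1}\cdots\tau_{v_r}$ with anisotropic $v_i$, and I would set $\theta(\sigma) := Q(v_1)\cdots Q(v_r)(F^\times)^2$. To see this is well-defined, suppose $\tau_{v_1}\cdots\tau_{v_r} = \tau_{w_1}\cdots\tau_{w_s}$. Since the determinant of a symmetry is $-1$, we have $r \equiv s \bmod 2$. The element $v_1\cdots v_r\cdot w_s^{-1}\cdots w_1^{-1} \in \Gamma_V$ maps under $\rho$ to $\id_V$ and hence, by the kernel computation, equals some $c \in F^\times$. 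Applying the Clifford norm $n$, which satisfies $n(v) = -Q(v)$ for $v \in V$ and is multiplicative, gives $\prod(-Q(v_i)) = c^2\prod(-Q(w_j))$, and the parity condition makes $\prod Q(v_i) \equiv \prod Q(w_j) \bmod (F^\times)^2$. Multiplicativity of $\theta$ is then immediate from concatenating factorizations.

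Next I would identify $O_V'(F)$ as $\ker(\det)\cap \ker(\theta)$. If $\sigma \in O_V'(F)$ then $\sigma = \rho(y)$ for some $y \in \spin_V(F) \subseteq C_0(V,Q)\cap \Gamma_V$ with $n(y)=1$, so $\sigma \in SO_V(F)$; writing $y = c\cdot v_1\cdots v_{2r}$ (using that $\ker\rho = F^\times$ and the corollary that elements of $\Gamma_V$ are monomials of a definite parity), $n(y)=1$ forces $c^2\prod Q(v_i) = 1$, hence $\theta(\sigma)=1$. Conversely, if $\sigma \in SO_V(F)$ with $\theta(\sigma)=1$, pick any factorization $\sigma=\tau_{v_1}\cdots\tau_{v_{2r}}$ and $c \in F^\times$ with $c^2 = \prod Q(v_i)$; then $y := c^{-1}v_1\cdots v_{2r} \in C_0(V,Q)\cap \Gamma_V$ has $n(y)=1$ and $\rho(y)=\sigma$, so $\sigma \in O_V'(F)$.

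Combining these, the homomorphism $(\det,\theta): O_V(F) \to \{\pm 1\}\times F^\times/(F^\times)^2$ has kernel exactly $O_V'(F)$, so $O_V(F)/O_V'(F)$ embeds into an abelian group. Hence the commutator subgroup of $O_V(F)$ is contained in $O_V'(F)$, as claimed. The main technical point, and the only place where any real work is needed, is the well-definedness of $\theta$; once that is in hand, the remaining verifications are purely formal manipulations with the Clifford norm and the kernel of $\rho$.
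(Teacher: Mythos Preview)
Your argument is correct and in fact proves more than the lemma asks: you identify $O_V'(F)$ precisely as the kernel of the homomorphism $(\det,\theta)$ into an abelian group, from which containment of the commutator subgroup follows automatically. The well-definedness verification for $\theta$ is carried out carefully and is valid.

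The paper, however, takes a much shorter and more direct route that avoids constructing the spinor norm. Given a commutator $[\sigma,\tau]=\sigma\tau\sigma^{-1}\tau^{-1}$, write $\sigma=\tau_{y_1}\cdots\tau_{y_r}$ and $\tau=\tau_{z_1}\cdots\tau_{z_s}$; since each reflection is an involution one has $\sigma^{-1}=\tau_{y_r}\cdots\tau_{y_1}$ and similarly for $\tau^{-1}$. The lift $y_1\cdots y_r\,z_1\cdots z_s\,y_r\cdots y_1\,z_s\cdots z_1\in\Gamma_V$ then lies in $C_0(V,Q)$ (even length) and has Clifford norm $\bigl(\prod_i Q(y_i)\prod_j Q(z_j)\bigr)^{2}$, a perfect square; dividing by its square root produces an element of $\spin_V(F)$ mapping to $[\sigma,\tau]$. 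This is the content of the paper's one-line remark that the assertion is obvious because $(\tau_{y_1}\circ\dots\circ\tau_{y_r})^{-1}=\tau_{y_r}\circ\dots\circ\tau_{y_1}$.

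Your approach has the merit of setting up the spinor norm, which is exactly what the paper does in the immediately following section anyway, so the extra work is not wasted. The paper's approach buys brevity by observing directly that in a commutator each reflection's norm contribution occurs twice, so no well-definedness issue needs to be addressed at this stage.
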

\begin{proof}
  This is obvious since $(\tau_{y_1}\circ \dots \circ
  \tau_{y_r})^{-1}=\tau_{y_r}\circ \dots \circ \tau_{y_1}$ holds.
\end{proof}
\section{Spinor norm, spinor genus, and strong approximation}
\begin{definition}
Let $\phi \in SO_V(F), \phi=\rho(x)$ with $x \in
\Gamma_V\cap C_0(V,Q)$.
The {\em spinor norm} $\theta(\phi)$ of $\phi$ is the square class $n(x)(F^\times)^2$.
\end{definition}
\begin{lemma}
  Let $\phi =\tau_{y_1}\circ \dots \circ \tau_{y_r}\in SO_V(F)$ with
  anisotropic vectors $y_i \in V$. Then $\theta(\phi)=Q(y_1)\dots
  Q(y_r)(F^\times)^2$.

In particular, the square class $Q(y_1)\dots
  Q(y_r)(F^\times)^2$ is independent of the choice of a product
  decomposition $\phi =\tau_{y_1}\circ \dots \circ \tau_{y_r}\in SO_V(F)$.
\end{lemma}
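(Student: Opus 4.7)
The plan is to package the symmetries into a single preimage of $\phi$ under $\rho$ and exploit multiplicativity of the Clifford norm. Concretely, set $x := y_1 y_2 \cdots y_r \in C(V,Q)$. By the previous lemma each anisotropic $y_i$ lies in $\Gamma_V$ with $\rho(y_i) = \tau_{y_i}$, so $x \in \Gamma_V$ and $\rho(x) = \tau_{y_1} \circ \cdots \circ \tau_{y_r} = \phi$.

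Next I would use that $\phi \in SO_V(F)$, so $\det \phi = 1$. By the corollary on homogeneity of elements of the Clifford group, $\det(\rho(x)) = (-1)^r$, forcing $r$ to be even; consequently $x \in C_0(V,Q)$, which makes $x$ an admissible representative for the definition of $\theta(\phi)$.

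Now I compute $n(x)$ using that $n\colon \Gamma_V \to F^\times$ is a group homomorphism with $n(v) = -Q(v)$ for anisotropic $v \in V$:
\begin{equation*}
n(x) \;=\; \prod_{i=1}^r n(y_i) \;=\; (-1)^r \prod_{i=1}^r Q(y_i) \;=\; \prod_{i=1}^r Q(y_i),
\end{equation*}
since $r$ is even. Therefore $\theta(\phi) = n(x)(F^\times)^2 = Q(y_1)\cdots Q(y_r)(F^\times)^2$, as claimed.

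For the ``in particular'' statement about independence of the decomposition, I would note that this is just the well-definedness of $\theta$: if $x' \in \Gamma_V \cap C_0(V,Q)$ is any other preimage of $\phi$ under $\rho$, then $x'x^{-1} \in \ker \rho = F^\times$ by the earlier lemma, so $x' = cx$ for some $c \in F^\times$, giving $n(x') = c^2 n(x)$, which lies in the same square class. There is essentially no obstacle in this argument; the only subtle point is recognising that the decomposition $\phi = \tau_{y_1}\circ\cdots\circ\tau_{y_r}$ automatically forces $r$ to be even (so that $x \in C_0$) once we are inside the special orthogonal group.
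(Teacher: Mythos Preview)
Your argument is correct and is precisely the computation the paper has in mind; the paper's own proof is simply the word ``Obvious,'' and what you have written is the natural unpacking of that word using the previously established facts that $\rho(y_i)=\tau_{y_i}$, that $n$ is multiplicative with $n(y_i)=-Q(y_i)$, and that $\ker\rho=F^\times$.
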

\begin{proof}
  Obvious.
\end{proof}
\begin{remark}
  If one scales the quadratic form $Q$ by a factor $c\in F^\times$,
  the orthogonal group obviously doesn't change, and the spinor norm
  of a $\phi \in SO_V(F)$ doesn't change either. It is of course
  possible to define the spinor norm for orthogonal transformations of
  determinant $-1$ as well, but it will then not be invariant under
  scaling of the quadratic form.
\end{remark}
\begin{example}
  \begin{enumerate}
  \item Let $F=\R$ and $(V,Q)$ be positive definite. Then
    $\theta(\phi)=(\R^\times)^2$ for all $\phi \in SO_V(F)$ since all
    the $Q(v)$ are positive, and the
    restriction of the homomorphism $\rho$ to $\spin_V(\R)$ is
    surjective. Since its kernel is $\{\pm 1\}$, the spin group is in
    this case a double cover of the special orthogonal group.
\item  If $F$ is $\C$ or another algebraically closed field, the
  surjectivity of $\rho\vert_{\spin_V(F)}$ is obvious and we have
  again a double covering of $SO_V(F)$ by $\spin_V(F)$. In particular,
  in the sense of the theory of algebraic groups, the spin group (as
  an algebraic group) is a double cover of the special orthogonal
  group. If $F$ is a field that is not algebraically closed, e.\ g.\
  $F=\Q$, the image  $O_V'(F)$ of the $F$-points $\spin_V(F)$ can
  nevertheless be smaller than $SO_V(F)$.  
  \end{enumerate}
\end{example}
\begin{definition}
  Let $F$ be a number field, let $\Lambda,\Lambda'$ be lattices on
  $V$.

$\Lambda'$ is said to be in the {\em spinor genus} of $\Lambda$ and one
writes $\Lambda'\in \spn(\Lambda)$ if there are $\sigma \in O_V(F)$
and $\phi=(\phi_v)_{v\in \Sigma_F}\in O'_V(\A_F)$ with
$\Lambda'=\sigma(\phi(\Lambda))$.

If here $\sigma$ can be chosen in $SO_V(F)$ we say that $\Lambda'$ is
in the proper spinor genus $\spn^+(\Lambda)$.
\end{definition}
\begin{remark}
 It is clear that lattices in the same spinor genus are a fortiori in
 the same genus and that a spinor genus consists of full isometry
 classes. It turns out that in some cases the concepts of spinor genus
 and isometry class coincide.
\end{remark}
\begin{theorem}[Strong approximation]\label{strong-approximation}
Let $F$ be a global field and $(V,Q)$ be a non degenerate quadratic
space over $F$, let $S\subseteq \Sigma_F$ be a finite set
of places such that $(V_v,Q_v)$ is isotropic for at least one $v\in
S$. 

Then $\spin_V$ admits strong approximation with respect to $S$, i,\
e.,\\ $\spin_V(F)\prod_{v\in S}\spin_V(F_v)$ is dense in $\spin_V(\A_F)$. 
\end{theorem}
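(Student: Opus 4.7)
My plan is to follow the classical Eichler-Kneser strategy, reducing non-abelian strong approximation for $\spin_V$ to the elementary strong approximation for a vector group. First I would reformulate the claim: since any element of $\prod_{v \in S}\spin_V(F_v)$ may be prepended freely at the places in $S$, the density of $\spin_V(F)\prod_{v \in S}\spin_V(F_v)$ in $\spin_V(\A_F)$ is equivalent to the density of the diagonal image of $\spin_V(F)$ in the restricted adelic group $\spin_V(\A_F^S)$ (the restricted product of the $\spin_V(F_v)$ for $v \notin S$). Thus I must show that for any compact open subgroup $K = \prod_{v \notin S}K_v$ and any $g \in \spin_V(\A_F^S)$ there exists $\gamma \in \spin_V(F)$ with $\gamma \in gK$.

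The key tool will be the Eichler transvections. For an isotropic vector $u \in V$ and a vector $y \in V$ orthogonal to $u$ with respect to $b$, I consider the element $1 + uy \in C_0(V,Q)$ formed with the Clifford product. Using $u^2 = Q(u)\cdot 1 = 0$ together with $uy + yu = b(u,y)\cdot 1 = 0$, a direct computation gives $\overline{1+uy}(1+uy) = (1+yu)(1+uy) = 1$, so $1+uy$ lies in $\spin_V$, and one checks that $\rho(1+uy)$ is the transvection $v \mapsto v + b(v,u)y - b(v,y)u$ on $V$. The map $y \mapsto 1 + uy$ descends to an injective algebraic-group homomorphism from the vector group $u^\perp/Fu$ into $\spin_V$; I denote its image by $U_u$. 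The advantage is that $U_u$ is isomorphic to an additive vector group of dimension $\dim V - 2$, and strong approximation for vector groups relative to any nonempty $S$ is elementary: it reduces to density of $F$ in the restricted adele ring $\A_F^S$, which is standard (essentially weak approximation combined with the assumption $S \ne \emptyset$).

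The crux is now to establish a generation statement: there exist finitely many isotropic vectors $u_1, \ldots, u_r \in V$ defined over $F$ such that the product $U_{u_1}(F_v)\cdots U_{u_r}(F_v)$ contains an open neighborhood of the identity in $\spin_V(F_v)$ for every $v \notin S$, with uniformity in $v$ for almost all such $v$. This will come from the Bruhat decomposition of the semisimple simply-connected algebraic group $\spin_V$, whose big cell is an open dense subset expressible as a product of unipotent root subgroups, each of the form $U_u$ for an appropriate $u$. The required $F$-rational isotropic vectors are produced by combining the local isotropy at $v_0 \in S$ (which guarantees non-trivial $F_{v_0}$-split rank of $\spin_V$) with weak approximation to globalize such vectors. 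Given the generation statement, any element of $\spin_V(\A_F^S)$ may be factored modulo $K$ into a product of local elements in the $U_{u_i}(F_v)$, and applying strong approximation separately to each $U_{u_i}$ yields the desired global $\gamma \in \spin_V(F)$.

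The hardest step will be the generation statement itself, i.e., producing enough $F$-rational isotropic vectors whose Eichler subgroups fill out, in a product of bounded length, a neighborhood of the identity in $\spin_V(F_v)$ uniformly for almost all $v \notin S$. The hypothesis of an isotropic place $v_0 \in S$ is used precisely here: it guarantees non-trivial $F_{v_0}$-rational split rank for $\spin_V$, hence a non-trivial Bruhat decomposition with a sufficiently large unipotent cell that can be built up from Eichler $U_u$'s. Low-dimensional anisotropic configurations in which $V$ fails to be globally isotropic must be handled separately, typically by base-changing to a suitable quadratic extension of $F$ inert at $v_0$ and then descending via a norm-one construction. The simple-connectedness of $\spin_V$ is essential throughout: it is precisely what makes the analogous statement fail for $SO_V$, with the defect measured by the spinor genus.
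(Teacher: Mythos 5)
The paper does not actually prove this theorem: its ``proof'' is a pointer to Kneser's paper \cite{kneserapprox}, to Eichler \cite{eichler_qfog}, and to \cite{platonov_rapinchuk}. So there is no argument in the text to compare yours against; what can be assessed is whether your outline would, if completed, constitute a proof. The architecture you describe (Eichler transvections $1+uy$ for isotropic $u$ and $y\perp u$, strong approximation for the resulting vector groups $U_u\cong u^\perp/Fu$, and a generation statement coming from the big cell) is indeed the classical Eichler--Kneser route, and your verification that $1+uy\in\spin_V$ is correct. One local slip: $\rho(1+uy)$ is $v\mapsto v+b(v,y)u-b(v,u)y-Q(y)b(v,u)u$ (up to sign conventions); the extra term $-Q(y)b(v,u)u$ is absent only when $y$ is itself isotropic, so the map is not the transvection you wrote unless you restrict $y$ further.

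The genuine gap is that the two steps carrying all the weight are asserted rather than proved, and one of them fails as stated. First, your construction requires $F$-rational isotropic vectors $u_1,\dots,u_r$, but the hypothesis only gives isotropy of $V_{v_0}$ for one $v_0\in S$; the space can perfectly well be anisotropic over $F$ (e.g.\ a positive definite quaternary form over $\Q$ with $S=\{p\}$ for a prime $p$ where it is isotropic). In that case there are no $F$-rational isotropic vectors at all, hence no $F$-rational subgroups $U_u$, and the entire mechanism of globalizing via weak approximation collapses; ``base-changing to a quadratic extension and descending via a norm-one construction'' is not a proof sketch, since strong approximation does not descend along field extensions in any automatic way (this anisotropic case is where Kneser works hardest, via three-dimensional subspaces and $SL_1$ of quaternion algebras). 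Second, even granting global isotropy, the passage from ``the $U_{u_i}(F_v)$ fill an open neighborhood of $1$ for almost all $v$'' to ``every element of $\spin_V(\A_F^S)$ factors modulo $K$ into local elements of the $U_{u_i}$'' is not immediate: an open neighborhood of the identity only generates an open subgroup, and one still needs that $\spin_V(F_v)$ is generated by its unipotents (Kneser--Tits/Eichler--Dieudonn\'e) and that the closure of $\spin_V(F)$ is normalized by enough of the adelic group to force it to be everything. As it stands your text is a correct map of where the proof lives, with the decisive lemmas --- uniform generation and the globally anisotropic case --- left unproved.
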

\begin{proof}
  This has been proven for all almost simple simply connected
  classical groups over number fields by Kneser in
  \cite{kneserapprox}, a first form of the result is due to Eichler
  \cite{eichler_qfog}. For the general proof and historical remarks
  see \cite{platonov_rapinchuk}.
\end{proof}
\begin{remark}
 The group $\spin_V(\A_F)$ of adelic points of the spin group is as
 usual the restricted direct product of the $\spin_V(F_v)$ with
 respect to a family of compact subgroups $H_v$ for the non
 archimedean places $v$ of $F$, which can be taken (equivalently) as
 \begin{equation*} H_v=\spin_V(F_v;\Lambda):=\{x\in \spin_V(F_v)\mid
   \rho(x)(\Lambda) \subseteq \Lambda\}\end{equation*} 
 for a fixed (but arbitrary) lattice $\Lambda$ on $V$ or as
 $H_v=\{\sum_j\alpha_jz_j\in \spin_V(F_v)\mid \alpha_j \in R_v\}$, where the $z_j$ comprise
 some fixed basis of $C_0(V,Q)$ as vector space over $F$ and $R_v$ is
 the valuation ring of $F_v$. A basis of open neighborhoods of $1$ in
 $\spin(\A_F)$ is then given by the $\prod_vU_v$, where each $U_v$ is
 open in $\spin_v(F_v)$ and may be chosen to be a principal congruence
 subgroup of  $\spin_v(F_v)$ for non archimedean $v$, and where
 $U_v=H_v$ for almost all $v$.
\end{remark}
To give the strong approximation property a more concrete shape we
formulate the following corollary:
\begin{corollary}\label{strongapprox_corollary}
Let $F$ and $S\subseteq \Sigma_F$ be as in the theorem, assume that
$S$ contains all archimedean places of $F$ and let $v_1,\ldots,v_r \in
\Sigma_F\setminus S$ be given.

Let moreover $g_j \in \spin_V(F_{v_j})$, $\phi_j \in O_V'(\F_{v_j})$
and  $s_j \in \N$ be given for
$1\le j\le r$. Let  $\spin_V(F_v;\Lambda, \pi^{s_j}\Lambda)\subseteq
\spin_V(F_v, \Lambda)$ for a
lattice $\Lambda$ on $V$ denote the  congruence subgroup
consisting  of all $g\in \spin_V(F_v)$
satisfying $\rho(x)-x \in \pi^{s_j}\Lambda$ for all $x \in \Lambda$.
Then
\begin{enumerate}
\item There exists $g \in\spin_V(F)$ with $g \in
  g_j\spin_V(F_{v_j};\Lambda,\pi_v^{s_j}\Lambda)$ for $1\le j\le r$
  and $g \in \spin_V(F_v;\Lambda)$ for all $v \notin
  (S\cup\{v_1,\ldots,v_r\})$.
\item There exists $\sigma \in O_V'(F)$ satisfying  
$\sigma \in \phi_j(O_V'(F_v)\cap SO_V(F_v;\Lambda,\pi^{s_j}\Lambda)$
for $1\le j\le r$ and $\sigma \in SO_V(F_v;\Lambda)$ for all $v\in
\Sigma_F\setminus (S\cap\{v_1,\ldots,v_r\})$.
\end{enumerate}
\end{corollary}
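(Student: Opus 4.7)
The plan is to deduce both assertions directly from Theorem \ref{strong-approximation} by translating the given data into the statement that a suitable open set in $\spin_V(\A_F)$ meets the dense subgroup $\spin_V(F)\prod_{v\in S}\spin_V(F_v)$. The hypothesis on $S$ (containing a place at which $V$ is isotropic) will be in force throughout.

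For (a), I would first note that for each $v_j$ the set $W_j := \spin_V(F_{v_j};\Lambda,\pi^{s_j}\Lambda)$ is an open subgroup of $\spin_V(F_{v_j};\Lambda)$: indeed, any element $h$ of $\spin_V(F_{v_j})$ sufficiently close to $1$ in the subspace topology coming from $C_0(V,Q)_{v_j}$ has $\rho(h)$ arbitrarily close to $\id_V$ on the finitely generated $R_v$-module $\Lambda$, so $\rho(h)-\id$ sends $\Lambda$ into $\pi^{s_j}\Lambda$. Consider the open neighbourhood $U=\prod_v U_v \subseteq \spin_V(\A_F)$ with $U_{v_j}=g_j W_j$, $U_v=\spin_V(F_v;\Lambda)$ for all other finite $v\notin S$, and $U_v=\spin_V(F_v)$ for $v\in S$. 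The adele defined by $h_{v_j}=g_j$ and $h_v=1$ for $v\neq v_j$ lies in $U$, so $U$ is non-empty and open; by Theorem \ref{strong-approximation} we can find $g\in \spin_V(F)$ and $(t_v)_{v\in S}$ with $gt\in U$ (where $t_v=1$ for $v\notin S$). Reading off the components at $v\notin S$ gives the two required inclusions.

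For (b), I would lift the $\phi_j$ to the spin group. Since $\rho:\spin_V(F_{v_j})\to O'_V(F_{v_j})$ is surjective by the definition of $O'_V$, choose $g_j\in\spin_V(F_{v_j})$ with $\rho(g_j)=\phi_j$. Apply part (a) to these $g_j$ and the same integers $s_j$ to obtain $g\in\spin_V(F)$, and set $\sigma:=\rho(g)\in O'_V(F)$. At $v=v_j$ we may write $g=g_j h_j$ with $h_j\in W_j$; by the defining property of $W_j$, the image $\rho(h_j)$ lies in $O'_V(F_{v_j})\cap SO_V(F_{v_j};\Lambda,\pi^{s_j}\Lambda)$, and hence $\sigma=\phi_j\rho(h_j)$ is of the desired form. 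At the remaining finite $v\notin S\cup\{v_1,\dots,v_r\}$, $g\in\spin_V(F_v;\Lambda)$ forces $\rho(g)(\Lambda)\subseteq\Lambda$, i.e.\ $\sigma\in SO_V(F_v;\Lambda)$.

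The only point requiring genuine care is the verification that the $W_j$ are open in $\spin_V(F_{v_j})$ and that the neighbourhood $U$ is of the restricted-product form to which strong approximation applies; the remaining bookkeeping is straightforward. No additional ingredient beyond Theorem \ref{strong-approximation}, the surjectivity of $\rho$ on local points, and the definition of the local congruence subgroups is needed.
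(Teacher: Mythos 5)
Your proof is correct and is exactly the route the paper takes: its own proof consists of the single sentence that the corollary "is obtained by writing out the statement of the theorem in terms of the topology of $\spin_V(\A_F)$," and your construction of the restricted-product open set $U$, the application of density, and the descent to $O_V'$ via surjectivity of $\rho$ onto $O_V'(F_{v_j})$ is precisely that unpacking, carried out in full detail.
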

\begin{proof}
  This is obtained by writing out the statement of the theorem in
  terms of the topology of $\spin_V(\A_F)$.
\end{proof}
\begin{theorem}[Eichler]\label{spinorgenus_indefinite}
  Let $F$ be a number field and $\Lambda$ a lattice on the non
  degenerate quadratic space $(V,Q)$ over $F$, assume that $(V_v,Q_v)$
  is indefinite for at least one archimedean place of $F$.

Then the (proper) spinor genus of $\Lambda$ consists of only one
(proper) class.
\end{theorem}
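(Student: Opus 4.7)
The plan is to reduce the assertion to the strong approximation theorem for $\spin_V$. Let $\Lambda'$ lie in the proper spinor genus of $\Lambda$; by definition there exist $\sigma \in SO_V(F)$ and $\phi = (\phi_v)_{v \in \Sigma_F} \in O_V'(\A_F)$ with $\Lambda' = \sigma(\phi(\Lambda))$. It suffices to produce $\sigma_1 \in O_V'(F) \subseteq SO_V(F)$ with $\sigma_1(\Lambda_v) = \phi_v(\Lambda_v)$ at every non-archimedean place $v$: since a lattice on $V$ is determined by its non-archimedean localizations, this would give $\sigma_1(\Lambda) = \phi(\Lambda)$, whence $\Lambda' = \sigma\sigma_1(\Lambda)$ with $\sigma\sigma_1 \in SO_V(F)$. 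The analogous construction starting from $\sigma \in O_V(F)$ handles the (non-proper) spinor genus.

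Let $S$ be the set of archimedean places of $F$; assuming $\dim V \ge 2$ (otherwise the theorem is trivial, as $O_V$ is at most of order $2$), the indefiniteness hypothesis at some $v \in S$ is equivalent to $(V_v,Q_v)$ being isotropic there, so Theorem \ref{strong-approximation} yields strong approximation for $\spin_V$ relative to $S$. Let $T \subseteq \Sigma_F \setminus S$ denote the finite set of non-archimedean places at which $\phi_v(\Lambda_v) \ne \Lambda_v$, and choose any integer $s_v \ge 1$ for each $v \in T$. Applying part (b) of Corollary \ref{strongapprox_corollary} with this data produces $\sigma_1 \in O_V'(F)$ lying in $\phi_v \cdot \bigl(O_V'(F_v) \cap SO_V(F_v;\Lambda,\pi_v^{s_v}\Lambda)\bigr)$ for each $v \in T$ and in $SO_V(F_v;\Lambda)$ at every other non-archimedean place. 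Since $SO_V(F_v;\Lambda,\pi_v^{s_v}\Lambda) \subseteq SO_V(F_v;\Lambda)$ fixes $\Lambda_v$, this $\sigma_1$ satisfies $\sigma_1(\Lambda_v) = \phi_v(\Lambda_v)$ at every finite place, completing the argument.

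The only nontrivial ingredient is the strong approximation theorem itself; all remaining steps are a direct translation via the covering $\rho:\spin_V \to O_V'$. The delicate point to verify is that the congruence approximation is strong enough to force equality of the finitely many local lattices, which is handled by taking any $s_v \ge 1$ so that each local congruence subgroup stabilizes $\Lambda_v$; the indefiniteness hypothesis enters solely to license the application of strong approximation at the archimedean set $S$, which is precisely why the theorem fails in the totally definite case.
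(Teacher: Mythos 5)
Your proof is correct and follows essentially the same route as the paper: reduce to $\Lambda'=\phi(\Lambda)$ with $\phi\in O_V'(\A_F)$, then use part b) of Corollary \ref{strongapprox_corollary} (with $S$ the archimedean places, where indefiniteness gives the isotropy needed for strong approximation) to find $\sigma_1\in O_V'(F)$ matching $\phi_v$ on the finitely many places where the local lattices differ and stabilizing $\Lambda_v$ elsewhere. The only cosmetic difference is that you carry the global factor $\sigma\in SO_V(F)$ along and compose at the end, whereas the paper absorbs it at the outset.
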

\begin{proof}
  Let $\Lambda'$ be a lattice on $V$ in the spinor genus of $\Lambda$,
  we may assume that $\Lambda'=\phi(\Lambda)$ for some
  $\phi=(\phi_v)_v\in O_V'(\A_F)$. Let $S=\infty$ denote the set of
  archimedean places of $F$ and denote by $v_1,\ldots,v_r$ the
  finitely many $v \in \Sigma_F\setminus S$ with $\Lambda_v'\ne
  \Lambda$.

By Corollary \ref{strongapprox_corollary} there exists $\sigma \in
O_V'(F)$satisfying $\sigma \in \phi_{v_j}SO_V(F_v;\Lambda)$ for $1\le
j \le r$ and $\phi \in SO_V(F_v;\Lambda)$ for all non archimedean $v
\notin \{v_1,\ldots,v_r\}$. We have then
$\sigma(\Lambda_v)=\Lambda_v'$ for all non archimedean $v$, hence
$\sigma(\Lambda)=\Lambda'$, and $\Lambda'$ is in the class of
$\Lambda$. The argument for the proper spinor genus is identical.
\end{proof}
The main advantage of the concept of spinor genus is that questions
about spinor genera can essentially be decided on a local  level. 
We want to show next  that the number of spinor genera in a given genus of
lattices can be explicitly computed as an index of idele groups. For
this we need some lemmas.
\begin{lemma}
Let $F=F_v$ be a non archimedean local field for which $2$ is a unit
in the valuation ring $R=R_v$ (a nondyadic local field). Let $\Lambda$
be a unimodular lattice on the non degenerate quadratic space $(V,Q)$
over $F$ of dimension $\ge 2$.
Then $\theta(SO_V(F;\Lambda))=R^\times(F^\times)^2$. 
\end{lemma}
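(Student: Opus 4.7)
The plan is to establish the two inclusions separately, using as the main structural input that $2\in R^\times$ forces $\Lambda$ to admit an orthogonal basis $(v_1,\ldots,v_n)$ with $Q(v_i)\in R^\times$ (unimodularity together with diagonalisability, available in the nondyadic case by the Jordan decomposition of Definition and Corollary \ref{jordan_decomposition}), and that the reduction $\bar\Lambda:=\Lambda/P\Lambda$ is a regular quadratic space over $k=R/P$ of dimension $\ge 2$, hence universal by Corollary \ref{universality_finitefields}.

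For the inclusion $\theta(SO_V(F;\Lambda))\subseteq R^\times(F^\times)^2$ I will prove, by induction on $n=\dim V$, that every $\phi\in O_V(F;\Lambda)$ is a product of reflections $\tau_y$ with $y\in\Lambda$ and $Q(y)\in R^\times$; taking spinor norms then gives the inclusion. Fix $v:=v_1$ and let $w:=\phi(v)\in\Lambda$, so $Q(w)=Q(v)\in R^\times$. The key identity
\begin{equation*}
 Q(w-v)+Q(w+v)=4Q(v)\in R^\times
\end{equation*}
(which uses $2\in R^\times$) forces at least one of $Q(w-v),Q(w+v)$ to be a unit. If $Q(w-v)\in R^\times$ then $\tau_{w-v}\in O_V(F;\Lambda)$ and $\tau_{w-v}(w)=v$; otherwise $\tau_{v}\tau_{w+v}$ is a product of two unit-norm reflections in $\Lambda$ that sends $w$ to $v$. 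In either case there exists $\psi$ in the subgroup generated by such reflections with $\psi\phi(v)=v$, so $\psi\phi$ preserves $\Lambda_0:=\Lambda\cap(Fv)^\perp$; this $\Lambda_0$ is again a unimodular lattice on a nondegenerate space of dimension $n-1$, and the induction closes.

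For the reverse inclusion $R^\times(F^\times)^2\subseteq\theta(SO_V(F;\Lambda))$, given $u\in R^\times$ I will exhibit a product of two reflections with spinor norm $u$ modulo squares. Since $\bar\Lambda$ is regular of dimension $\ge 2$ over $k$, it is universal, so it represents $\overline{a_1 u}$ where $a_1=Q(v_1)$; picking any lift $w\in\Lambda$ of such a vector gives $Q(w)\in a_1u+P\subseteq a_1u(1+P)\subseteq a_1u(R^\times)^2$ (the last step being Hensel for squares in a nondyadic field). Since $Q(w)\in R^\times$, the reflection $\tau_w$ preserves $\Lambda$, and
\begin{equation*}
 \theta(\tau_{v_1}\tau_w)=Q(v_1)Q(w)\equiv a_1\cdot a_1u\equiv u\pmod{(F^\times)^2},
\end{equation*}
with $\tau_{v_1}\tau_w\in SO_V(F;\Lambda)$.

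The main obstacle is the first inclusion, namely producing the reflections inside $\Lambda$ with unit norm: all the arithmetic of the argument relies on the identity $Q(w\pm v)$ totalling a unit, which is exactly the dichotomy that nondyadicity $2\in R^\times$ makes available. In the reverse direction the only subtlety is the passage from a congruence mod $P$ to a genuine congruence mod $(F^\times)^2$, which again uses $2\in R^\times$ via Hensel.
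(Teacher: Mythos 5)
Your proof is correct and follows essentially the same route as the paper: the paper also establishes the inclusion $\theta(SO_V(F;\Lambda))\subseteq R^\times(F^\times)^2$ by adapting the proof of Witt's generation theorem (Theorem \ref{SatzvonWitt}) to show $O_V(F;\Lambda)$ is generated by reflections $\tau_x$ with $x\in\Lambda$, $Q(x)\in R^\times$, using exactly the dichotomy coming from $Q(x-\sigma(x))+Q(x+\sigma(x))=4Q(x)\in R^\times$, and obtains the reverse inclusion from the fact that a unimodular lattice of rank $\ge 2$ represents all units. Your explicit Hensel argument for the second inclusion is just an unwinding of that representability statement, so no substantive difference.
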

\begin{proof}
As in the proof of Theorem \ref{SatzvonWitt} one proves by induction
on $\dim(V)$ that
$O_V(F;\Lambda)$ is generated by symmetries $\tau_x$ with respect to
vectors $x \in \Lambda$ with $Q(x)\in R^\times$; indeed in all places
where we had the condition that $Q(z)\ne 0$ for some vector $z$ we can
replace it here by $Q(z)\in R^\times$, the fact that $2$ and hence $4$
is a unit in $R$ is crucial for this. Since a unimodular $R$-lattice
of rank $\ge 2$ represents all units (here we use again that $2$ is a
unit), the assertion follows.   
\end{proof}
\begin{deflemma}
 Let $(V,Q)$ be a non degenerate quadratic space over the number field
 $F$, let $\phi=(\phi_v)_v \in SO_V(\A_F)$. Then $\theta(\phi_v)\cap
 R_v^\times\ne \emptyset$ for almost all places $v$ of $F$, and there
 is an idele $\alpha=(\alpha_v)_v\in J_F$ with $\alpha_v \in
 \theta(\phi_v)$ for all $v \in \Sigma_F$, and all such ideles form a
 square class modulo $J_F^2$.

This square class $\alpha J_F^2\subseteq J_F$ is denoted by
$\theta(\phi)$ and is called the idele spinor norm of $\phi$. 
\end{deflemma}
\begin{proof}
 Let $\Lambda$ be a lattice on $V$. Since $\phi_v \in
 SO_V(F_v;\Lambda_v)$ holds 
 for almost all $v$ and $\Lambda_v$ is unimodular for almost all
 $v$, the assertion follows from the previous lemma.  
\end{proof}
\begin{theorem}\label{spinorgenera_number}
 Let $(V,Q)$ and $F$ be as before, $\Lambda$ a lattice on $V$, let
 $\phi=(\phi_v)_v\in SO_V(\A_F)$.

Then  $\phi(\Lambda)$ is in the proper spinor genus of $\Lambda$
if and only if 
\begin{equation*}
  \theta(\phi)\in \theta(SO_V(F))\theta(SO_V(\A_F;\Lambda))
\end{equation*}
holds.

The number of proper spinor genera in the genus of $\Lambda$ is 
equal to the group index
\begin{equation*}
  (J_F^V:\theta(SO_V(F))\theta(SO_V(\A_F;\Lambda))),
\end{equation*}
where $J_F^V=\theta(SO_V(\A_F))$.
\end{theorem}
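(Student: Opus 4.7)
The plan is to convert the statement about spinor genera into one about double cosets in $SO_V(\A_F)$, then apply the (multiplicative) spinor norm to pass to an abelian computation in $J_F^V$. First I would note that by definition $\phi(\Lambda)\in\spn^+(\Lambda)$ if and only if $\phi\in SO_V(F)\cdot O'_V(\A_F)\cdot SO_V(\A_F;\Lambda)$, since $SO_V(\A_F;\Lambda)$ is the stabilizer of $\Lambda$ under the natural transitive action of $SO_V(\A_F)$ on the proper genus.

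The key structural point I would then verify is that the idele spinor norm $\theta:SO_V(\A_F)\to J_F/J_F^2$ is a group homomorphism with image $J_F^V$ and kernel exactly $O'_V(\A_F)$. Multiplicativity at each place comes from $\rho(xy)=\rho(x)\rho(y)$ together with $n(xy)=\overline{xy}\,xy=\bar y\bar xxy=n(x)n(y)$ in the Clifford group. The inclusion $O'_V(F_v)\subseteq\ker(\theta_v)$ holds by definition of $\spin_V$, and the reverse follows by lifting any $\sigma=\rho(x)\in SO_V(F_v)$ satisfying $n(x)=c^2$ to $x/c\in\spin_V(F_v)$; at the level of $SO_V(\A_F)$ the kernel is then precisely $O'_V(\A_F)$ (using the restricted-product description of the adelic group).

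Armed with these two reductions, the first assertion becomes formal. Necessity: write $\phi=\sigma\psi\tau$ with $\sigma\in SO_V(F)$, $\psi\in O'_V(\A_F)$, $\tau\in SO_V(\A_F;\Lambda)$, apply $\theta$, and use $\theta(\psi)=1$. Sufficiency: if $\theta(\phi)=\theta(\sigma)\theta(\tau)$ with $\sigma\in SO_V(F)$ and $\tau\in SO_V(\A_F;\Lambda)$, then $\psi:=\sigma^{-1}\phi\tau^{-1}$ lies in $\ker(\theta)=O'_V(\A_F)$, giving the required factorization $\phi=\sigma\psi\tau$.

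For the second assertion, $SO_V(\A_F)$ acts transitively on the proper genus of $\Lambda$ via $\phi\mapsto \phi(\Lambda)$, so proper spinor genera in the proper genus are in bijection with the double coset space $SO_V(F)O'_V(\A_F)\backslash SO_V(\A_F)/SO_V(\A_F;\Lambda)$; since $O'_V(\A_F)$ is normal in $SO_V(\A_F)$, the left factor is a genuine subgroup. Applying $\theta$, which is a surjection onto $J_F^V$ with kernel $O'_V(\A_F)$, identifies this double coset space with $J_F^V/\theta(SO_V(F))\theta(SO_V(\A_F;\Lambda))$, yielding the asserted index. The one delicate bookkeeping point I expect to need is the passage from ``in the genus'' to ``in the proper genus'': one must verify that any element of $O_V(\A_F)\setminus SO_V(\A_F)$ translating $\Lambda$ within its genus does not merge distinct proper spinor genera and hence does not alter the count; this is routine once one notes that reflections $\tau_x$ with $Q(x)\ne 0$ generate $O_V$ modulo $SO_V$ and that their spinor behaviour is compatible with the quotient being computed. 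Strong approximation itself is not needed here; its role is rather to collapse each proper spinor genus to a single proper class in the indefinite setting, as in Theorem~\ref{spinorgenus_indefinite}.
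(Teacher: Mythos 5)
Your argument is essentially the paper's: both reduce membership in the proper spinor genus to the factorization $\phi\in SO_V(F)\,O'_V(\A_F)\,SO_V(\A_F;\Lambda)$ and then push the double coset count through the idele spinor norm, you by identifying $O'_V(\A_F)$ as the kernel of $\theta$ on $SO_V(\A_F)$, the paper by noting that $O'_V(\A_F)$ contains all commutators so the factors may be permuted — two phrasings of the same normality fact. Your closing remark on reconciling the genus with the proper genus is a point the paper leaves implicit, but it does not change the method.
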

\begin{proof}
  $\phi(\Lambda)\in \spn^+(\Lambda)$ is equivalent to $\phi \in
  SO_V(F)O_V'(\A_F)SO_V(\A_F;\Lambda)$. This is again equivalent to
  $\theta(\phi)\in \theta(SO_V(F))\theta(SO_V(\A_F;\Lambda))$, notice
  for this that in $SO_V(F)O_V'(\A_F)SO_V(\A_F;\Lambda)$ the order of
  the factors can be exchanged since $ O_V'(\A_F)$ contains all
  commutators in $SO_V(\A_F)$.

The number of proper spinor genera is equal to the number of double
cosets in the decomposition
\begin{equation*}
  SO_V(\A_F)=\bigcup_j SO_V(F)\phi_j O_V'(\A_F)SO_V(\A_F;\Lambda).
\end{equation*}
By the same argument as above, the map associating to the double coset
$SO_V(F)\phi O_V'(\A_F)SO_V(\A_F;\Lambda)$  the coset of
$\theta(\phi)$ in the factor group
\begin{equation*}
J_F^V/\theta(SO_V(F))\theta(SO_V(\A_F;\Lambda))
\end{equation*}
 is a bijection.
\end{proof}
We can make the last formula a little more explicit:
\begin{lemma}
 Let $J_F^V$ be as in the theorem and assume $\dim(V)\ge 3$. Then
 \begin{equation*}
   J_F^V=\{\alpha=(\alpha_v)_v \in J_F \mid \alpha_v>0 \text{ for all
     real } v \text{ with } (V_v,Q_v) \text{ definite}\}
\end{equation*}
and $\theta(SO_V(F))=F^\times\cap J_F^V$.
\end{lemma}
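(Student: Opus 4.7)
The plan is to analyze the spinor norm place by place for the first equality, and then to combine a Hasse principle with the Minkowski--Hasse representation theorem for the second.

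First, for the description of $J_F^V = \theta(SO_V(\A_F))$, I would compute the local image $\theta_v(SO_V(F_v)) \subseteq F_v^\times/(F_v^\times)^2$ at each place. Using the formula $\theta(\tau_{y_1}\circ\cdots\circ\tau_{y_r}) = Q(y_1)\cdots Q(y_r) \,(F_v^\times)^2$, one sees: at a real $v$ with $(V_v,Q_v)$ definite, every $Q(y_i)$ has the same sign, so an even product is positive and the image is $\R_{>0}(\R^\times)^2/(\R^\times)^2$; at an indefinite real $v$ of dimension $\ge 2$, $Q_v$ takes both signs and the image is all of $\R^\times/(\R^\times)^2$; at a complex $v$ the image is trivial since $(\C^\times)^2 = \C^\times$; at a non-archimedean $v$, since $\dim V_v \ge 3$, either $V_v$ is isotropic and contains a hyperbolic plane $F_v e + F_v f$ in which the vector $e+cf$ satisfies $Q(e+cf)=c$ for every $c \in F_v^\times$ (giving all square classes), or $V_v$ is anisotropic (which by Theorem~\ref{anisotropic_local} forces $\dim V_v \in \{3,4\}$) and a direct computation using the structure of the norm form of the local quaternion division algebra shows the image is again all of $F_v^\times/(F_v^\times)^2$. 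Combining with the previous lemma, which gives $\theta(SO_V(F_v;\Lambda_v)) = R_v^\times(F_v^\times)^2/(F_v^\times)^2$ at almost all non-archimedean $v$, one then glues these local images via the restricted product structure of $SO_V(\A_F)$ to obtain the asserted description of $J_F^V$.

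For $\theta(SO_V(F)) = F^\times \cap J_F^V$, the inclusion $\subseteq$ follows at once: a global spinor norm is automatically a local spinor norm at every place via the natural embedding $F^\times \hookrightarrow J_F$, and the local analysis above forces positivity at the definite real places. For the converse, fix $c \in F^\times$ that is positive at every definite real place. Using weak approximation in $F^\times$, I would choose $a \in F^\times$ such that both $a$ and $c/a$ are positive at every definite real place and such that $a$ is represented by $V_v$ (and hence automatically $c/a$ too, up to some finite list of adjustments) at every place. By the weak Minkowski--Hasse theorem applied to $V \perp \langle -a\rangle$, which is locally isotropic at every place, $a = Q(v_1)$ for some anisotropic $v_1 \in V$; analogously $c/a = Q(v_2)$ for some anisotropic $v_2\in V$. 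Then $\tau_{v_1}\circ\tau_{v_2}\in SO_V(F)$ and its spinor norm is $Q(v_1)Q(v_2)(F^\times)^2 = c(F^\times)^2$, showing $c\in \theta(SO_V(F))$.

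The main obstacle will be handling the possibly anisotropic non-archimedean local components in low dimension: verifying local surjectivity at such places, and, more seriously, ensuring via weak approximation that a single $a \in F^\times$ can be chosen so that both $a$ and $c/a$ are simultaneously represented by $V$ at every place where $V_v$ is anisotropic of dimension $3$, since there $V_v$ need not be universal. For $\dim V \ge 4$ both difficulties evaporate, because then $V_v$ is universal at every non-archimedean place by Theorem~\ref{anisotropic_local}, and the argument is entirely formal.
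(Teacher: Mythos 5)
Your route is the paper's: compute $\theta(SO_V(F_v))$ place by place and glue along the restricted product for the first equality, then realize a given $c\in F^\times\cap J_F^V$ as $Q(y_1)Q(y_2)$ with $y_1,y_2\in V$ supplied by the Minkowski--Hasse theorem. The first equality is essentially in order; for the anisotropic ternary non-archimedean case the paper argues a little differently (after scaling so that ${\det}_B(V_v)$ is the class of a prime element it checks that $Q(V_v)$ contains all units and at least one prime element), but your observation that such a space represents all square classes but one, and that products of pairs of represented classes then exhaust $F_v^\times/(F_v^\times)^2$, also works.

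The gap is the one you flag yourself and do not close, and it sits at the crux of the second assertion when $\dim(V)=3$: you never show that $a$ can be chosen so that $a$ and $c/a$ are simultaneously represented by $V_v$ at the finitely many non-archimedean $v$ where $V_v$ is anisotropic --- your parenthetical ``up to some finite list of adjustments'' is exactly what has to be proved. (There is also a small sign slip at the archimedean places: at a negative definite real $v$ you need $a$ and $c/a$ negative rather than positive; this is harmless since $c>0$ there forces them to have the same sign.) The paper closes the real gap as follows. Scale $Q$ so that ${\det}_B(V)=(F^\times)^2$. Since an anisotropic four-dimensional space over a non-archimedean local field has square determinant (Theorem \ref{anisotropic_local}), the space $V_v\perp[-b]$, which has determinant $-b(F_v^\times)^2$, is isotropic --- hence $b\in Q(V_v)$ --- whenever $-b\notin(F_v^\times)^2$. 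With $S$ the finite set of non-archimedean $v$ at which $V_v$ is anisotropic and $T$ the set of definite real places, weak approximation yields $b\in F^\times$ with $-b\notin(F_v^\times)^2$ and $-cb\notin(F_v^\times)^2$ for all $v\in S\cup T$; this is possible because at non-archimedean $v$ one must avoid only two of the at least four square classes, while at $v\in T$ the two conditions coincide because $c>0$ there. Then $b$ and $cb$ lie in $Q(V_v)$ for every $v$, hence in $Q(V)$ by Minkowski--Hasse, and $\theta(\tau_{y_1}\circ\tau_{y_2})=b\cdot cb\,(F^\times)^2=c(F^\times)^2$. You need this (or an equivalent) argument to make the ternary case go through; for $\dim(V)\ge4$ your argument is complete, as you say, because every nonzero local value is represented at the non-archimedean places.
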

\begin{proof}
For archimedean $v$ it is obvious that $\theta(SO_V(F_v))=F^\times$ if
$v$ is complex or $(V_v,Q_v)$ indefinite and that
$\theta(SO_V(F_v))$ consists of the positive elements of
$F_v^\times=\R^\times$ if $v$ is real and  $(V_v,Q_v)$ definite.

Let now $v$ be non archimedean.  If $\dim(V)\ge 4$ holds, the
completion $(V_v,Q_v)$ is universal and hence
$\theta(SO_V(F_v))=F^\times$. Let $\dim(V)=3$ and $a \in
F_v^\times$, by scaling the quadratic form we may assume that
${\det}_B(V)$ is the square class of a prime element.  If $a$ is a
unit in $R_v$, we have $-a\notin
{\det}_B(V_v)$, hence  the $4$-dimensional space $V_v \perp [-a]$ over $F_v$
has determinant $\ne (F_v^\times)^2$ and must be isotropic,
which implies $a \in Q(V_v)$. On the other hand, ${\det}_B(V)=\pi_v
(F_v^\times)^2$ for a prime element $\pi_v$ of $R_v$ implies that at
least one prime element is in $Q(V_v)$, and we see that
$\theta(SO_V(F_v))=F_v^\times$.

For the statement about the global spinor norm group $\theta(SO_V(F))$
the inclusion  $\theta(SO_V(F))\subseteq F^\times\cap J_F^V$ is clear.

Denote by $T$ the set of all real places $v$ for which $(V_v,Q_v)$ is
definite.
Let $\dim(V)\ge 4$ and let $a\in F^\times$ be positive at all $v \in
T$, let $0\ne b\in Q(V)$. Then $ab$ is positive and
hence in $Q(V_v)$ at all $v\in T$, and $ab \in Q(V_v)$ for the
remaining $v\in \Sigma_F$ since $(V_v,Q_v$ is universal for these
$v$. By the Minkowski-Hasse Theorem we have $ab\in Q(V)$, hence $ab^2
\in \theta(SO_V(F))$ and $a\in  \theta(SO_V(F))$. 

Let now $\dim(V)=3$ and $a$ be as above, by scaling the quadratic form
we may assume ${\det}_B(V)=(F^\times)^2$.  Denote by $S$ the finite set of
all non archimedean $v$ for which $(V_v,Q_v)$ is anisotropic. 
Using weak approximation we choose $b \in F^\times$  such that $\-b
\notin (F_v^\times)^2, -ab  \notin (F_v^\times)^2$ hold for all $v
\in S\cup T$, in particular, $-b$ and hence $-ab$ are negative at all
$v\in T$. The $4$-dimensional spaces $W_1=V\perp[-b], W_2=V\perp[-ab]$
are then isotropic over all completions $F_v$, and the Minkowski-Hasse
Theorem implies $b\in Q(V), ab\in Q(V)$, from which we obtain $a\in
\theta(SO_V(F))$.   
\end{proof}
\begin{lemma}
  Let $F=F_v$ be a non archimedean local field, $\cha(F)\ne 2$, let
  $(V,Q)$ be a non degenerate quadratic space of dimension $\ge 3$
  over $F$ and $\Lambda$   be a maximal lattice on $V$.

Then $\theta(SO_V(F_v;\Lambda))\supseteq R_v^\times$.
\end{lemma}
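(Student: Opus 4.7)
Given $u\in R_v^\times$, the plan is to exhibit $\sigma\in SO_V(F_v;\Lambda)$ with $\theta(\sigma)=u(F_v^\times)^2$, treating the isotropic and anisotropic cases separately. Suppose first that $(V,Q)$ is isotropic. By Theorem \ref{hyperbolic_maximal}, the maximal lattice splits as $\Lambda=H\perp\Lambda_1$, where $H=R_ve\oplus R_vf$ is a regular hyperbolic plane with $Q(e)=Q(f)=0$ and $b(e,f)=1$. The vectors $e+f$ and $ue+f$ of $H$ have $Q$-values $1$ and $u$, both units, so the corresponding reflections $\tau_{e+f},\tau_{ue+f}$ preserve $H$ (since $b(H,H)\subseteq R_v$ and $Q(z)\in R_v^\times$ makes the coefficient $b(y,z)/Q(z)$ integral); extended by the identity on $\Lambda_1$ they preserve $\Lambda$. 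Their product $\sigma:=\tau_{ue+f}\tau_{e+f}$ therefore lies in $SO_V(F_v;\Lambda)$, and the definition of the spinor norm gives $\theta(\sigma)=Q(e+f)\,Q(ue+f)=u\pmod{(F_v^\times)^2}$; this dispatches the isotropic case uniformly, dyadic or not.

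If instead $(V,Q)$ is anisotropic, Theorem \ref{anisotropic_local} forces $\dim V\in\{3,4\}$ and Theorem \ref{anisotropic_maximal} identifies $\Lambda=\{x\in V:Q(x)\in R_v\}$. The arguments in the proof of Theorem \ref{anisotropic_local} exhibit a Jordan component of $\Lambda$ that is unimodular of rank $r=\dim_{R_v/P}(M_0/M_1)\in\{1,2\}$, with $r=2$ forced whenever $\dim V=4$. When $r=2$ I would take a rank-$2$ unimodular sublattice $L\subseteq\Lambda$ and apply Corollary \ref{universality_finitefields} to the nondegenerate reduction $L/PL$ over the residue field, which is universal; lifting by Corollary \ref{isometry_mod_P} then produces $x,y\in L$ with $Q(x)=1$ and $Q(y)=u$. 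Both reflections $\tau_x,\tau_y$ preserve $\Lambda$ (because $Q(x),Q(y)\in R_v^\times$ and $b(\Lambda,\Lambda)\subseteq R_v$), so $\sigma:=\tau_x\tau_y\in SO_V(F_v;\Lambda)$ realises spinor norm $u$.

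The remaining situation, and the main technical obstacle, is $\dim V=3$, $V$ anisotropic, $r=1$: here $\Lambda=R_vx\perp L'$ with $Q(x)\in R_v^\times$ and $L'$ a $\pi$-modular rank-$2$ sublattice, and no rank-$2$ unimodular Jordan component is available for the Hensel argument. I would dispose of it by identifying $V$ with the space of pure quaternions in the unique quaternion division algebra $D$ over $F_v$ (so that $Q$ becomes the restriction of the reduced norm $N$), under which $\Lambda$ corresponds to the pure quaternions in the unique maximal order $\mathcal{O}\subseteq D$. For $q\in\mathcal{O}^\times$ the inner automorphism $\sigma_q:v\mapsto qvq^{-1}$ preserves $\mathcal{O}$ and hence $\Lambda$, lies in $SO_V$, and a short computation (reducing to pure $q$, where $\sigma_q=-\tau_q$, and then using multiplicativity of both the spinor norm and the reduced norm) yields $\theta(\sigma_q)\equiv N(q)\pmod{(F_v^\times)^2}$. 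Since $N:\mathcal{O}^\times\twoheadrightarrow R_v^\times$ for a local division quaternion algebra, every unit square class is attained, which completes the proof.
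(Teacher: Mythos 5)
Your isotropic case is exactly the paper's: split off a hyperbolic plane from the maximal lattice (Theorem \ref{hyperbolic_maximal}) and realise each unit square class as $\theta(\tau_{ue+f}\circ\tau_{e+f})$; nothing to add there. In the anisotropic case, however, you miss the observation that does all the work in the paper: by Theorem \ref{anisotropic_maximal} the maximal lattice is $\Lambda=\{x\in V\mid Q(x)\in R_v\}$, which is intrinsically defined and hence preserved by \emph{every} isometry, so $SO_V(F_v;\Lambda)=SO_V(F_v)$, and the lemma immediately preceding this one already gives $\theta(SO_V(F_v))=F_v^\times\supseteq R_v^\times$ for $\dim(V)\ge 3$. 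You quote exactly the right theorem but use it only to identify $\Lambda$, not to identify its stabiliser; as a result the entire second half of your argument (Jordan components, Hensel lifting, the maximal order in $D$) is machinery spent on a one-line consequence of results you already have in hand. The machinery is essentially sound --- the quaternion branch in particular is a correct and instructive computation, once one notes that $Q$ is only a scalar multiple of the reduced norm on $D^0$ (harmless, since the spinor norm is invariant under scaling) --- but the $r=2$ branch has a genuine soft spot at dyadic places: for Corollary \ref{universality_finitefields} and the Hensel lifting you need the rank-$2$ component to be \emph{even} unimodular, i.e.\ regular as a quadratic $R_v$-module, so that its reduction mod $P$ is a regular quadratic space over the residue field; unimodularity of the bilinear form alone does not guarantee this when $2\in P$, and you would have to argue separately that the relevant Jordan component of a maximal lattice has this property.
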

\begin{proof}
  If $(V,Q)$ is anisotropic $\Lambda$ is the set of all $x \in V$ with
  $Q(x)\in R$ by Theorem \ref{anisotropic_maximal} and hence
  $SO_V(F_v;\Lambda)=SO_V(F_v)$, which implies
  $\theta(SO_V(F_v;\Lambda))=F_v^\times$.
 Otherwise we can write $\lambda=H\perp \Lambda'$ with a hyperbolic
 plane $H$ by Theorem \ref{hyperbolic_maximal} and we see
 $\theta(SO_V(F_v;\Lambda))\supseteq R_v^\times$ as asserted since
 already the orthogonal group of $H$ has all units as spinor norms.
\end{proof}
\begin{theorem}
 Let $(V,Q)$ be a non degenerate quadratic space over $\Q$.

Then all $\Z$-maximal lattices on $V$ belong to the same proper spinor genus. 

In particular, for any signature $(n_+,n_-)$ with $n_+\ne 0 \ne n_-$
there exists at most one class of even unimodular $\Z$-lattices of
this signature.
\end{theorem}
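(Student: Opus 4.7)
The plan is to apply Theorem \ref{spinorgenera_number}, which represents the number of proper spinor genera in the genus of a lattice $\Lambda$ as the idele-group index $(J_\Q^V : \theta(SO_V(\Q))\,\theta(SO_V(\A_\Q;\Lambda)))$. I would first observe that all $\Z$-maximal lattices on $V$ lie in a single genus: their $p$-adic completions are $\Z_p$-maximal, and the corollary to Theorem \ref{anisotropic_maximal} asserts uniqueness up to isometry of $\Z_p$-maximal lattices on $V_{\Q_p}$. It then suffices to show, for one fixed such $\Lambda$, that the product of global and local spinor-norm images exhausts $J_\Q^V$.

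Next, assuming $\dim V \ge 3$, I would compute this product explicitly. By the lemma immediately preceding the theorem, maximality of $\Lambda_p$ gives $\theta(SO_V(\Q_p;\Lambda_p))\supseteq\Z_p^\times$ at every finite place; at the archimedean place the stabilizer is all of $SO_V(\R)$, whose spinor-norm image is $\R^\times$ or $\R_{>0}$ according as $V$ is indefinite or definite. Triviality of the narrow class number of $\Q$ yields $J_\Q = \Q^\times\cdot(\R_{>0}\times\prod_p\Z_p^\times)$, and the lemma above the theorem describes $J_\Q^V$ and asserts $\theta(SO_V(\Q))=\Q^\times\cap J_\Q^V$. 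Putting these inputs together, one gets $\theta(SO_V(\Q))\,\theta(SO_V(\A_\Q;\Lambda))=J_\Q^V$, so the genus of $\Lambda$ consists of a single proper spinor genus. The residual cases $\dim V\le 2$ are handled directly: dimension one is vacuous, and an indefinite regular $\Q$-space of dimension two is a hyperbolic plane carrying a unique $\Z$-maximal lattice.

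For the corollary about even unimodular lattices I would note that any even unimodular $\Z$-lattice $\Lambda$ is automatically $\Z$-maximal: an overlattice $\Lambda'\supseteq\Lambda$ with $Q(\Lambda')\subseteq\Z$ satisfies $\Lambda'\subseteq(\Lambda')^\#\subseteq\Lambda^\#=\Lambda$. The corollary following Theorem \ref{classification_local} asserts that even unimodular $\Z$-lattices of a fixed signature $(n_+,n_-)$ (necessarily with $n_+\equiv n_-\bmod 8$) form at most one genus, so by the first part they occupy at most one proper spinor genus. The hypothesis $n_+\ne 0\ne n_-$ forces $V_\R$ to be indefinite, whence Eichler's Theorem \ref{spinorgenus_indefinite} collapses each proper spinor genus to a single proper class. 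This yields at most one proper class, and hence at most one class, of even unimodular $\Z$-lattices of the prescribed signature.

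The main obstacle I expect is the idele-group computation in the second paragraph. Each individual ingredient (the local spinor norm of a maximal lattice containing all units, narrow class number one for $\Q$, and the description of $J_\Q^V$ as the group of ideles that are positive at the real places where $V$ is definite) is already available from earlier results; the care needed lies in fitting them together so that $\theta(SO_V(\Q))\,\theta(SO_V(\A_\Q;\Lambda))$ exhausts $J_\Q^V$, tracking the real component separately in the definite and indefinite cases.
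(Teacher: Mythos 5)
Your argument is the same as the paper's: the paper's entire proof is the one-line remark that the claim follows from Theorem \ref{spinorgenera_number}, the two lemmas following it, and the fact that $\Q$ has class number $1$; you have simply written out the idele computation those references encode, and that computation is correct for $\dim V\ge 3$. One caveat: your disposal of the two-dimensional case is wrong as stated --- an indefinite regular binary space over $\Q$ need not be a hyperbolic plane (e.g.\ $[1,-2]$ is indefinite over $\R$ but anisotropic over $\Q$), and the definite binary case is not addressed at all; note, though, that the paper's own proof has the same limitation, since the lemmas it invokes all assume $\dim V\ge 3$, and the ``in particular'' clause only ever needs the signature $(1,1)$ case in dimension two, where the space really is hyperbolic and the even unimodular lattice is unique.
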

\begin{proof}
Since $\Q$ has class number $1$ this follows from Theorem
\ref{spinorgenera_number} and the lemmas following it.  
\end{proof}
\begin{remark}\label{spinornorm_computations}
The spinor norm groups of the automorphism groups of local lattices
have been computed, following the initial work of  Eichler and Kneser, in great detail by
Hsia, Earnest, and Beli.  In particular, it has been proven that the spinor norm group of a
unimodular $\Z_2$-lattice of rank $\ge 3$ contains (as in the case of
maximal lattice treated above) all units, so that a genus of
unimodular $\Z$-lattices (odd or even) contains only one proper spinor genus.
\end{remark}

\newpage
\section{Neighboring lattices and anzahlmatrices} 
\begin{theorem}\label{arithmetically_indefinite}
  Let $F$ be a number field and $(V,Q)$ a non degenerate quadratic
  space. Let $v_0 \in \Sigma_F$ be a non archimedean place such for
  which $(V_{v_0},Q)$ is isotropic, let $\Lambda_1, \Lambda_2$ be
  lattices on $V$ in the same spinor genus.

Then there exists a lattice $\Lambda_2'$ in the isometry class of
$\Lambda_2$  with $(\Lambda_2)'_v=(\Lambda_1)_v$ for all $v \in \Sigma_F\setminus\{v_0\}$. 
\end{theorem}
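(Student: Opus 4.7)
The plan is to invoke strong approximation in $\spin_V$ with respect to $S=\{v_0\}$, which is applicable precisely because $(V_{v_0},Q)$ is isotropic (Theorem \ref{strong-approximation}).

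First I would unpack the spinor-genus hypothesis. By definition there exist $\sigma \in O_V(F)$ and $\phi=(\phi_v)_v \in O'_V(\A_F)$ with $\Lambda_2=\sigma\phi(\Lambda_1)$, so $(\Lambda_2)_v=\sigma\phi_v((\Lambda_1)_v)$ for every $v$. Writing $K_v := O_V(F_v;(\Lambda_1)_v)$ for the stabilizer of the local lattice, the desired $\Lambda_2'=\tau(\Lambda_2)$ exists as soon as we find $\tau' := \tau\sigma \in O_V(F)$ with $\tau'\phi_v \in K_v$ for every non-archimedean $v \neq v_0$, for then $\tau((\Lambda_2)_v)=\tau'\phi_v((\Lambda_1)_v)=(\Lambda_1)_v$; archimedean completions impose no lattice condition.

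Next I would lift $\phi$ to the spin group. Each $\phi_v$ lies in $O'_V(F_v)=\rho(\spin_V(F_v))$, so choose $g_v \in \spin_V(F_v)$ with $\rho(g_v)=\phi_v$. Since $\phi_v \in K_v$ for almost all $v$, and $\rho^{-1}(K_v)$ is a compact open subgroup of $\spin_V(F_v)$ (containing the kernel $\{\pm 1\}$), the $g_v$ may be chosen so that $g_v \in \rho^{-1}(K_v)$ for almost all $v$. Thus $g := (g_v)_v$ defines an element of $\spin_V(\A_F)$.

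Now I would apply strong approximation to $g^{-1}$. Put
\begin{equation*}
U_v := \rho^{-1}(K_v)\,g_v^{-1} \quad (v \neq v_0), \qquad U_{v_0} := \spin_V(F_{v_0}),
\end{equation*}
and $U := \prod_v U_v$. For almost all $v$ one has $g_v^{-1} \in \rho^{-1}(K_v)$, hence $U_v=\rho^{-1}(K_v)$; so $U$ is an open neighborhood of $g^{-1}$ in $\spin_V(\A_F)$. By Theorem \ref{strong-approximation}, $\spin_V(F) \cdot \spin_V(F_{v_0})$ is dense in $\spin_V(\A_F)$, so there exist $\gamma \in \spin_V(F)$ and $s \in \spin_V(F_{v_0})$ with $\gamma s \in U$. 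Comparing components at $v \neq v_0$ (where $s_v=1$) gives $\gamma_v g_v \in \rho^{-1}(K_v)$, i.e. $\rho(\gamma)\phi_v = \rho(\gamma_v g_v) \in K_v$. Hence $\tau':=\rho(\gamma)\in O'_V(F)\subseteq O_V(F)$ has the required property, and $\tau := \tau'\sigma^{-1} \in O_V(F)$ produces $\Lambda_2' := \tau(\Lambda_2)$ with $(\Lambda_2')_v=(\Lambda_1)_v$ for all $v \neq v_0$.

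The main obstacle is the careful passage from the adelic $O'_V$-element $\phi$ to an adelic spin-group element: one has to verify (using the kernel $\{\pm 1\}$ of $\rho$ and the fact that $\rho^{-1}(K_v)$ is a compact open subgroup of $\spin_V(F_v)$) that a coherent global lift $g=(g_v)_v \in \spin_V(\A_F)$ can be chosen in the restricted product, and that the resulting approximating set $U$ is indeed open. Once this bookkeeping is in place, the statement is a direct consequence of strong approximation and the definition of spinor genus.
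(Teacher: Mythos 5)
Your proof is correct and follows essentially the same route as the paper: both arguments reduce the claim to producing a global element of $O_V'(F)$ that agrees with $\phi$ modulo the local stabilizers at every place outside $v_0$, which is exactly what strong approximation for $\spin_V$ with an isotropic place in $S$ delivers (the paper invokes its Corollary \ref{strongapprox_corollary} with $S=\infty\cup\{v_0\}$, while you apply Theorem \ref{strong-approximation} with $S=\{v_0\}$ and unpack the restricted-product topology yourself). The only cosmetic slip is that the formula $U_v=\rho^{-1}(K_v)g_v^{-1}$ should read $U_v=\spin_V(F_v)$ at the archimedean places, where $K_v$ is undefined; this does not affect the openness of $U$ or the rest of the argument.
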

\begin{proof}
  without loss of generality we can assume that there exist $\phi \in
  O_V'(\A_F)$ with $\Lambda_2=\phi(\Lambda_1)$.  Let $S:=\infty \cup
  \{v_0\}\subseteq \Sigma_F$, denote by $v_1,\ldots,v_n$ the finitely
many places $v\in \Sigma_F$ with $(\Lambda_1)_v\ne (\Lambda_2)_v$.
By Corollary \ref{strongapprox_corollary} there exist $\sigma \in
O_V'(F)$ with $\sigma \in \phi_{v_j}^{-1} O_V'(F_{v_j};\Lambda_2)$ for
$1\le j \le n$ and $\sigma \in O_V'(F_v;\lambda_2)$ for all $v \not\in
S\cup\{v_1,\ldots,v_n\}$.

Then $\sigma(\Lambda_2)=:\Lambda_2'$ is as required. 
\end{proof}
\begin{remark}
The lattices $\Lambda_1,\Lambda_2$ or the space $(V,Q)$ supporting
them are called arithmetically indefinite (``arithmetisch indefinit'')
following Eichler, who introduced this idea in \cite{eichler_spin}.   
\end{remark}
\begin{definition}[Kneser]
  Let $(V,Q)$ be a non degenerate quadratic space over the number
  field $F$ and let $v\in \Sigma_F$ be a non archimedean place of $F$
  and $P$ the associated maximal ideal in the ring of integers $R$
  of $F$.

Lattices $\Lambda,\Lambda'$ on $V$ are called $v$-neighbors or
$P$-neighbors if one has 
\begin{equation*}
  \Lambda/(\Lambda\cap \Lambda')\cong R/P \cong
  \Lambda'/(\Lambda\cap \Lambda').
\end{equation*}
 If $\Lambda=\Lambda_1,\ldots,\Lambda_h$ is a set of representatives of the
isometry classes of  lattices in the genus of $\Lambda$ denote by
$N_v(\Lambda_i,\Lambda_j) =N_P(\Lambda_i,\Lambda_j)$ the number of
lattices in the isometry class of $\Lambda_j$ which are $P$-neighbors
of $\Lambda_i$.

The $h\times h$ matrix $(N_P(\Lambda_i,\Lambda_j))_{i,j}$ is called
$P$-neighborhood matrix
of the genus of $\Lambda$.   
\end{definition}
\begin{remark}
   The $P$-neighborhood matrix defined above is a special case of the
   anzahlmatrix associated to a fixed system of elementary divisors
   for an ideal complex as defined by Eichler in \cite{eichler_qfog}.  
\end{remark}
As noticed by Eichler the anzahlmatrices satisfy a certain symmetry
relation. We formulate and prove this in a slightly more general context:
\begin{proposition}[Eichler,\cite{eichler_qfog}]
 Let $F$ be a number field or a non archimedean local field with ring
 of integers $R$, let $(V,Q)$ be a regular quadratic space over $F$.  

Let $\sim_1,\sim_2$ be two relations on the set of lattices on $V$
such that for all $\phi \in O_V(F)$ and all lattices
$\Lambda_1,\Lambda_2$ on $V$ one has $\phi(\Lambda_2)\sim_1 \Lambda_1$
if and only if $\phi^{-1}(\Lambda_1) \sim_2 \Lambda_2$ holds. Assume
moreover that for all lattices $\Lambda$ on $V$ there are only
finitely many lattices $\Lambda'$ on $V$ satisfying $\Lambda'\sim_1
\Lambda$ or $\Lambda' \sim_2 \Lambda$.

For $i=1,2$ let $N_i(\Lambda_1,\Lambda_2)$ denote the number of
lattices $M$ on $V$ which are isometric to $\Lambda_2$ and satisfy
$M \sim_i \Lambda_1$.

Then $N_1(\Lambda_1,\Lambda_2)=0$ if and only if
$N_2(\Lambda_2,\Lambda_1)=0$, and if both are nonzero one has  
\begin{equation*}
\frac{ N_1(\Lambda_1,\Lambda_2)}{N_2(\Lambda_2,\Lambda_1)}=
\frac{ (O_V(F;\Lambda_1):O_V(F;\Lambda_1)\cap
 O_V(F;\Lambda_2))}{(O_V(F;\Lambda_2):O_V(F;\Lambda_1)\cap
 O_V(F;\Lambda_2)) }.
\end{equation*}
\end{proposition}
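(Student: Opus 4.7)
The plan is to write everything in terms of the group $G=O_V(F)$ acting on lattices. Put $G_i=O_V(F;\Lambda_i)$ for $i=1,2$ and let
\begin{equation*}
T=\{\phi\in G\mid \phi(\Lambda_2)\sim_1\Lambda_1\},\qquad T'=\{\psi\in G\mid \psi(\Lambda_1)\sim_2\Lambda_2\}.
\end{equation*}
Since the isometry class of $\Lambda_2$ is the $G$-orbit of $\Lambda_2$ and the lattice $\phi(\Lambda_2)$ depends only on the coset $\phi G_2$, the first of the two sets is a (finite, by the finiteness hypothesis) union of right $G_2$-cosets, and the canonical map $\phi G_2\mapsto\phi(\Lambda_2)$ yields $|T/G_2|=N_1(\Lambda_1,\Lambda_2)$. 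Symmetrically $|T'/G_1|=N_2(\Lambda_2,\Lambda_1)$.

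The first key observation is that the compatibility hypothesis translates exactly to $T^{-1}=T'$: by definition $\phi\in T$ iff $\phi^{-1}(\Lambda_1)\sim_2\Lambda_2$, i.e.\ $\phi^{-1}\in T'$. This immediately gives the ``zero iff zero'' statement and, moreover, shows that $T$ is also stable on the left under $G_1$. Indeed, for $\sigma\in G_1$ and $\phi\in T$, the relation $\sigma\phi\in T$ amounts (via the hypothesis) to $\phi^{-1}\sigma^{-1}(\Lambda_1)\sim_2\Lambda_2$, which holds because $\sigma^{-1}(\Lambda_1)=\Lambda_1$ and $\phi\in T$. Hence $T$ is a union of double cosets $G_1\phi G_2$.

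Next, let $H=G_1\cap G_2$. By Lemma \ref{commensurability}, applied to principal congruence subgroups contained with finite index in $G_1$ and $G_2$ respectively, $H$ has finite index in both $G_1$ and $G_2$. I now double-count $|T/H|$. Refining the decomposition $T=\bigsqcup \phi G_2$ (with $N_1(\Lambda_1,\Lambda_2)$ summands) by the subgroup $H\subseteq G_2$ gives
\begin{equation*}
|T/H|=N_1(\Lambda_1,\Lambda_2)\cdot (G_2:H).
\end{equation*}
On the other hand, the bijection $\phi H\leftrightarrow H\phi^{-1}$ identifies $T/H$ with $H\backslash T^{-1}=H\backslash T'$; refining the decomposition of $T'$ into left $G_1$-cosets by $H\subseteq G_1$ yields
\begin{equation*}
|H\backslash T'|=N_2(\Lambda_2,\Lambda_1)\cdot(G_1:H).
\end{equation*}
Equating the two expressions for $|T/H|$ gives the claimed formula.

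The main technical point is the finite-index step: one must verify that the double-counting is finite, so that the ratio of indices on the right-hand side is legitimate. This is precisely what Lemma \ref{commensurability} provides once one notes that $G_i$ contains a principal congruence subgroup with finite index; after that, all remaining manipulations are bookkeeping with cosets.
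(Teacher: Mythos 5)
Your setup is fine: the translation into the sets $T$ and $T'=T^{-1}$, the identity $|T/H|=N_1(\Lambda_1,\Lambda_2)\,(G_2:H)$, the left $G_1$-stability of $T$, the commensurability of $G_1$ and $G_2$, and the inversion bijection $T/H\leftrightarrow H\backslash T'$ are all correct. The gap is in the last count, $|H\backslash T'|=N_2(\Lambda_2,\Lambda_1)\,(G_1:H)$. The decomposition $T'=\bigsqcup_k\psi_kG_1$ is into \emph{right} $G_1$-cosets, and the partition of $T'$ into \emph{left} $H$-cosets does not refine it: $\psi_kG_1$ is a left coset of the conjugate $\psi_kG_1\psi_k^{-1}=O_V(F;\psi_k(\Lambda_1))$, not of $G_1$, and since $\psi_k^{-1}H\psi_k\not\subseteq G_1$ in general, a single left $H$-coset can meet several of the $\psi_kG_1$. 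What the refinement argument actually gives for free is $|T'/H|=N_2\,(G_1:H)$ (right $H$-cosets), so the missing step is precisely that the number of left $H$-cosets in $T'$ equals the number of right $H$-cosets in $T'$. That equality is equivalent to the proposition itself, so as written the argument is circular at its decisive point.

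This is not a cosmetic issue: for a single double coset $T=G_1\phi G_2$ the claimed identity reads $(G_1:G_1\cap\phi G_2\phi^{-1})(G_2:H)=(G_2:G_2\cap\phi^{-1}G_1\phi)(G_1:H)$, which fails in general groups (it is a unimodularity statement; e.g.\ it fails for commensurable subgroups of $BS(1,2)$). The paper closes exactly this hole via Remark \ref{commensurability_remark}: the left-invariant coset-counting measure on congruence sets is also right-invariant because any left-invariant such measure transforms under right translation by a homomorphism $c:O_V(F)\to\Q_{>0}$, and $O_V(F)$ is generated by reflections, which are involutions, forcing $c\equiv 1$. If you insert that remark to justify that left and right $H$-coset counts of the congruence set $T'$ agree (equivalently, to justify your $|H\backslash T'|$ computation), your argument becomes a clean variant of the paper's proof, which performs the same double count with a smaller common subgroup and invokes the same biinvariance.
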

\begin{proof}
 Let lattices $\Lambda_1,\Lambda_2$ be given with
 $\Lambda_2\sim_1\Lambda_2$, hence (setting $\phi=\id_V$ in the
 assumption) $\Lambda_1\sim_2 \Lambda_2$ and let
 $\phi_1(\Lambda_2),\ldots \phi_q(\Lambda_2)$ with $\phi_i \in O_V(F)$
 denote the different lattices $M$ in the class of $\Lambda_2$ with
 $M\sim_1 \Lambda_1$. The cosets $\phi_iO_V(F;\Lambda_2)$ are then
 distinct and uniquely determined and their union is equal to
 $X:=\{\phi \in O_V(F)\mid \phi(\Lambda_2)\sim_1 \Lambda_1$. 
Let $\psi_1(\Lambda_1),\ldots,\psi_r(\Lambda_1)$ denote the different
lattices $K$ in the class of $\Lambda_1$ with $K\sim_2 \Lambda_2$. The
cosets $\psi_kO_V(F;\Lambda_1)=O_V(F;\psi_k(\Lambda_1))\psi_k$ are
then also distinct and uniquely determined. and their union is equal
to $Y=\{\psi \in O_V(F)\mid \psi(\Lambda_1)\sim_2
\Lambda_2\}$. Moreover, by our assumption on the relations
$\sim_1,\sim_2$ we see that $Y$  is just the set of inverses
of the elements of $X$. 
Let 
\begin{equation*}
G=O_V(F;\Lambda_2)\cap
 O_V(F;\phi_1^{-1}(\Lambda_2))\cap\dots \cap O_V(F;\phi_r^{-1} (\Lambda_2))
\end{equation*}
and  consider coset decompositions
\begin{eqnarray*}
  O_V(F;\Lambda_2)&=&\bigcup_\mu ^s\sigma_\mu G\\
O_V(F;\psi_k\Lambda_1)&=&\bigcup_\nu^t G\rho_{k\nu}.
\end{eqnarray*}
Notice that remark \ref{commensurability_remark} on the elementary
construction of a biinvariant Haar measure on congruence sets implies
$(O_V(F;\psi_k\Lambda_1):G)=(O_V(F;\psi_{k'}\Lambda_1):G)$ for 
all $k,k'$, so that the index $\nu$ above indeed runs over the same
set $1\le \nu \le t$ for
all $k$.
We have $\phi_i\sigma_\mu G=\phi_{i'}\sigma_{\mu'}G$  if and only if
$i=i', \mu=\mu'$ hold and $G\rho_{k\nu}\psi_k=G\rho_{k'\nu'}\psi_{k'}$
if and only if $k=k', \nu=\nu'$. 
Since $\phi \mapsto \phi^{-1}$ defines a bijection of
$X=\bigcup_{i,\mu}\phi_i\sigma_\mu G$ onto
$Y=\bigcup_{k,\nu}G\rho_{k\nu}\psi_k$ we obtain $qs=rt$, which is the assertion.
\end{proof}
\begin{corollary}\label{neighbor_symmetry}
With notations as above write $m_j$ for the measure\\
$\mu_\infty( O_V(F;\Lambda_j) \backslash (O_V(F_\infty))) $
of the volume of a fundamental domain in
$O_V(F_\infty)$ under the action of  the arithmetic subgroup
$O_V(F;\Lambda_j)$,
where
$F_\infty=\prod_{v\in \infty}F_v$ and $\mu_\infty$ is a (fixed) Haar
measure  on $O_V(F_\infty)$ .

Then one has 
\begin{equation*}
  m_iN_P(\Lambda_i,\Lambda_j) =m_jN_P(\Lambda_i,\Lambda_j).
\end{equation*}
In particular, if $(V,Q)$ is totally definite, one has
\begin{equation*}
  \vert O_V(F;\Lambda_j)\vert N_P(\Lambda_i,\Lambda_j)=\vert
  O_V(F;\Lambda_i)\vert N_P(\Lambda_i,\Lambda_j).
\end{equation*}
Moreover, the $P$-neighborhood matrices for different maximal ideals
commute pairwise and  the algebra generated by the $P$-neighborhood matrices
for all maximal ideals $P$ can be diagonalized simultaneously.
\end{corollary}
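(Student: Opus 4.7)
The first assertion will follow by applying the preceding Proposition to $\sim_1 = \sim_2 = {}\sim_P$, the relation of being $P$-neighbors. This relation is manifestly symmetric from its defining condition $\Lambda/(\Lambda\cap\Lambda')\cong R/P\cong\Lambda'/(\Lambda\cap\Lambda')$, and it is preserved by any $\phi\in O_V(F)$ since $\phi$ carries intersections to intersections; together these give the compatibility hypothesis $\phi(\Lambda_2)\sim_1\Lambda_1 \iff \phi^{-1}(\Lambda_1)\sim_2\Lambda_2$. The finiteness of the $P$-neighbor set of a given lattice is standard (the $P$-neighbors correspond to isotropic lines in the finite residue space $\Lambda/P\Lambda$). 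The Proposition then yields
\[
\frac{N_P(\Lambda_i,\Lambda_j)}{N_P(\Lambda_j,\Lambda_i)} \;=\; \frac{(O_V(F;\Lambda_i):H)}{(O_V(F;\Lambda_j):H)}, \qquad H := O_V(F;\Lambda_i)\cap O_V(F;\Lambda_j),
\]
where $H$ has finite index in both groups by Lemma \ref{commensurability}. To convert this ratio of indices into the ratio $m_j/m_i$, I will use the elementary fact that if $H\subseteq G$ is a finite-index subgroup acting by left translation on a space carrying an invariant measure, then a fundamental domain for $H$ has measure $(G:H)$ times that for $G$. Applied to the two inclusions $H\subseteq O_V(F;\Lambda_i)$ and $H\subseteq O_V(F;\Lambda_j)$ acting on $O_V(F_\infty)$, this gives
\[
(O_V(F;\Lambda_i):H)\,m_i \;=\; \mu_\infty(H\backslash O_V(F_\infty)) \;=\; (O_V(F;\Lambda_j):H)\,m_j,
\]
and combining with the previous display produces $m_i N_P(\Lambda_i,\Lambda_j) = m_j N_P(\Lambda_j,\Lambda_i)$. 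The totally definite reformulation is immediate because compactness of $O_V(F_\infty)$ and finiteness of $O_V(F;\Lambda_j)$ force $m_j = \mu_\infty(O_V(F_\infty))/|O_V(F;\Lambda_j)|$, so the common factor $\mu_\infty(O_V(F_\infty))$ cancels.

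For commutativity at distinct maximal ideals $P,P'$ corresponding to non-archimedean places $v\ne v'$, the strategy is a bijective counting argument. The entry $(N_PN_{P'})_{ij}$ equals
\[
\bigl|\{(M,M') : M \text{ is a } P\text{-neighbor of } \Lambda_i,\ M' \text{ is a } P'\text{-neighbor of } M,\ M'\cong\Lambda_j\}\bigr|.
\]
The key observation is that a $P$-neighbor of a lattice differs from it only at the completion at $v$ (leaving all other completions unchanged), and analogously for $P'$ at $v'$; since $v\ne v'$ these two local modifications are completely independent. Given such a pair $(M,M')$, define a new lattice $\tilde M$ locally by $\tilde M_w = \Lambda_{i,w}$ for $w\ne v'$ and $\tilde M_{v'} = M'_{v'}$. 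A place-by-place check verifies that $\tilde M$ is a $P'$-neighbor of $\Lambda_i$ (using that $M_{v'}=\Lambda_{i,v'}$, so the quotient defining the $P'$-neighbor relation between $M$ and $M'$ at $v'$ transfers verbatim to $\Lambda_i$ and $\tilde M$), and that $M'$ is in turn a $P$-neighbor of $\tilde M$ (using that the $P$-neighbor relation between $\Lambda_i$ and $M$ at $v$ is unchanged since $M'_v = M_v$ and $\tilde M_v = \Lambda_{i,v}$). The assignment $(M,M')\mapsto(\tilde M,M')$ is its own inverse under the exchange of the roles of $P$ and $P'$, giving the desired bijection with the set counted by $(N_{P'}N_P)_{ij}$. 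The isometry constraint $M'\cong\Lambda_j$ is preserved because $M'$ itself is unchanged.

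Finally, write $D=\mathrm{diag}(m_1,\dots,m_h)$ with positive diagonal entries; the symmetry proved above says precisely that $DN_P$ is a symmetric matrix for every $P$, hence $D^{1/2}N_PD^{-1/2}$ is a real symmetric matrix. Since the $N_P$ commute pairwise, so do their conjugates $D^{1/2}N_PD^{-1/2}$, and a commuting family of real symmetric matrices is simultaneously orthogonally diagonalizable; conjugating back by $D^{1/2}$ transfers this to a simultaneous diagonalization of the $N_P$ themselves. The main obstacle in this plan is the verification underlying the bijection for commutativity: one must carefully check, at both $v$ and $v'$ separately, that all four neighbor conditions (on $(\Lambda_i,M)$, $(M,M')$, $(\Lambda_i,\tilde M)$, $(\tilde M,M')$) hold, which hinges on the fact that the local change at one prime leaves the lattice completions at all other primes rigidly fixed.
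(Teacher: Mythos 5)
Your proof of the symmetry relation is correct and follows the same route as the paper: apply the preceding proposition with $\sim_1=\sim_2$ the $P$-neighbor relation and convert the resulting ratio of indices $(O_V(F;\Lambda_i):H)/(O_V(F;\Lambda_j):H)$ into $m_j/m_i$ via the standard multiplicativity of fundamental-domain volumes over the common finite-index subgroup $H$; the totally definite specialization is handled identically. (You also correctly state the conclusion as $m_iN_P(\Lambda_i,\Lambda_j)=m_jN_P(\Lambda_j,\Lambda_i)$; the corollary as printed has a typo on its right-hand side.)

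Where you genuinely diverge is the commutativity of the $N_P$ for distinct maximal ideals. The paper's proof disposes of the entire last sentence with the remark that conjugation by $\mathrm{diag}(m_1,\dots,m_h)^{1/2}$ makes all the matrices symmetric — which justifies simultaneous diagonalizability \emph{once commutativity is known}, but does not itself prove that the matrices commute, since a family of symmetric matrices need not commute. Your bijective counting argument — exchanging the order of the two local modifications at $v\neq v'$ by gluing $\tilde M_{v'}=M'_{v'}$ onto $\Lambda_{i,w}$ for $w\neq v'$, and checking the four neighbor conditions place by place — supplies exactly the missing ingredient, and the verification that $\tilde M$ stays in the genus (because $\tilde M_{v'}\cong\Lambda_{j,v'}$) makes the bijection compatible with the matrix-product interpretation of $(N_PN_{P'})_{ij}$. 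So your write-up is not only correct but more complete than the paper's on this point; the only cost is the somewhat lengthy local bookkeeping, which is unavoidable if one wants an honest proof of commutativity rather than an appeal to its being ``obvious.''
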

\begin{proof}
Upon letting
$\sim_1=\sim_2$ be the relation ``is $P$-neighbor of'',
the first assertion follows from the proposition  because of
\begin{equation*}
  \frac{m_i}{m_j}=\frac{(O_V(F;\Lambda_j):O_V(F;\Lambda_i)\cap
    O_V(F;\Lambda_j)}{(O_V(F;\Lambda_i):O_V(F;\Lambda_i)\cap
    O_V(F;\Lambda_j)}. 
\end{equation*}
The assertion in the totally definite case is just a
reformulation using the finiteness of the automorphism groups of the
lattices in that case.

The last part of the assertion is obvious since by the first part all
matrices in question can be simultaneously transformed into symmetric
matrices by conjugation with a diagonal matrix.
\end{proof}
\begin{corollary}\label{venkov_cor}
With notations as above let $\Lambda_1\subseteq \Lambda_2$. Let $q$ be the number of sublattices of $\Lambda_1$
which are isometric to $\Lambda_2$ and $r$ be the number of
overlattices of $\Lambda_2$ which are isometric to $\Lambda_1$.
Then
\begin{equation*}
\frac{q}{r}=
\frac{ (O_V(F;\Lambda_1):O_V(F;\Lambda_1)\cap
 O_V(F;\Lambda_2))}{(O_V(F;\Lambda_2):O_V(F;\Lambda_1)\cap
 O_V(F;\Lambda_2)) }.
\end{equation*}
\end{corollary}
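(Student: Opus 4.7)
The approach is to derive the corollary as a specialization of the preceding Proposition, choosing $\sim_1$ and $\sim_2$ to be the natural ``sublattice'' and ``overlattice'' relations, refined by the elementary-divisor type of the inclusion $\Lambda_1\subseteq\Lambda_2$. Concretely, I would let $\sim_1$ denote the relation ``$M\subseteq L$ with the quotient $L/M$ isomorphic as $R$-module to $\Lambda_2/\Lambda_1$'', and let $\sim_2$ denote its transpose, ``$L\subseteq M$ with $M/L\cong\Lambda_2/\Lambda_1$''.

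The first step is to verify the hypothesis of the Proposition. For $\phi\in O_V(F)$ the condition $\phi(\Lambda_2)\sim_1\Lambda_1$ says that $\phi(\Lambda_2)\subseteq\Lambda_1$ with $\Lambda_1/\phi(\Lambda_2)\cong\Lambda_2/\Lambda_1$; applying the $R$-linear isomorphism $\phi^{-1}$ to both sides transports the inclusion to $\Lambda_2\subseteq\phi^{-1}(\Lambda_1)$ and carries the quotient isomorphism to $\phi^{-1}(\Lambda_1)/\Lambda_2\cong\Lambda_2/\Lambda_1$, which is precisely $\phi^{-1}(\Lambda_1)\sim_2\Lambda_2$. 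The required finiteness holds because a given lattice has only finitely many sublattices (respectively overlattices) of a fixed finite index.

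The second step is the identification of the counts $N_1(\Lambda_1,\Lambda_2)$ and $N_2(\Lambda_2,\Lambda_1)$ of the Proposition with $q$ and $r$ of the corollary. A sublattice $M$ of the appropriate lattice that is isometric to $\Lambda_1$ (resp.\ $\Lambda_2$) has the same volume ideal as $\Lambda_1$ (resp.\ $\Lambda_2$) and hence the prescribed index; in the situations where this corollary is applied (notably to $P$-neighbors, where $\Lambda_2/\Lambda_1$ is cyclic of prime order, cf.\ Corollary \ref{neighbor_symmetry}) the elementary-divisor type is then forced by the index alone, so the type-refinement built into the definition of $\sim_i$ is automatic. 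Substituting into the formula of the Proposition yields exactly the ratio of indices claimed.

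The main obstacle is this last bookkeeping point: equality of volume ideals does not in general force equality of elementary divisors, so the type refinement in $\sim_1,\sim_2$ is genuinely needed, and one must check place by place that every sublattice (or overlattice) being counted does belong to the correct type. This is a local verification at each completion, where it reduces to the elementary-divisor theorem for modules over a discrete valuation ring; it is routine but is the only step that is not a purely formal quotation of the Proposition.
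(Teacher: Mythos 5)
Your reduction to the Proposition is the right idea, and your verification of the compatibility condition ($\phi(\Lambda_2)\sim_1\Lambda_1 \Leftrightarrow \phi^{-1}(\Lambda_1)\sim_2\Lambda_2$) is correct. But the refinement of $\sim_1,\sim_2$ by the $R$-module isomorphism type of the quotient introduces a genuine gap: with your relations, $N_1(\Lambda_1,\Lambda_2)$ counts only those sublattices $M\subseteq\Lambda_1$ isometric to $\Lambda_2$ for which $\Lambda_1/M\cong\Lambda_1/\Lambda_2$, whereas the corollary's $q$ counts \emph{all} sublattices isometric to $\Lambda_2$. These counts do not coincide in general, and the ``routine local verification'' you defer to cannot succeed: equality of determinants forces equality of the index, but not of the elementary divisors of the embedding. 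For a concrete failure, take $\Lambda_1$ a hyperbolic plane $\Z e\oplus\Z f$ with $Q(xe+yf)=xy$; the sublattices $p\Lambda_1$ and $\Z e\oplus\Z p^2f$ both have Gram matrix $\bigl(\begin{smallmatrix}0&p^2\\p^2&0\end{smallmatrix}\bigr)$, hence are isometric, yet their quotients are $(\Z/p)^2$ and $\Z/p^2$ respectively. So your argument proves a ratio formula for a type-restricted count, not the statement of the corollary.

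The paper's proof simply takes $\sim_1$ to be plain containment and $\sim_2$ its transpose, with no refinement at all; then $N_1(\Lambda_1,\Lambda_2)=q$ and $N_2(\Lambda_2,\Lambda_1)=r$ on the nose. Your instinct that the Proposition's literal finiteness hypothesis (``only finitely many $\Lambda'$ with $\Lambda'\sim_i\Lambda$'') fails for unrestricted containment is a fair observation, but the proof of the Proposition only uses finiteness of the counts $N_i$ themselves, which holds because every sublattice of $\Lambda_1$ isometric to $\Lambda_2$ has the determinant of $\Lambda_2$ and hence a fixed index in $\Lambda_1$. If you want relations literally satisfying the stated hypothesis, refine by the index (equivalently the $R$-index ideal) only --- that restriction is automatically met by every lattice being counted --- rather than by the module structure of the quotient, which is not.
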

\begin{proof}
With 
$M\sim_1 L$ if and only if $M\subseteq L$, $L\sim_2 M$ if and only if
$L\supseteq M$
  this follows directly from the proposition.
\end{proof}
\begin{remark}
 Eichler proves in \cite{eichler_qfog} analogous results for
 anzahlmatrices of ideal complexes with fixed elementary divisor
 systems. These can be 
 obtained from our proposition in the same way as Corollary
 \ref{neighbor_symmetry} using similitude groups instead of groups of
 isometries. The assertion of Corollary \ref{venkov_cor} 
 can be obtained from Eichler's results, it has been formulated and
 used for classification results about positive definite lattices by
 B.\ B.\ Venkov and is sometimes called Venkov's theorem. From our
 general setting it is clear that analogous results (with unitary
 isometry groups instead of orthogonal groups) hold for hermitian lattices.  
\end{remark} 
\begin{theorem}
 Let $(V,Q)$ be a positive definite quadratic space over $\Q$ of
 dimension $\ge 5$, let
 $\Lambda, \Lambda'$ be unimodular $\Z$-lattices on $V$ which are in
 the same genus, let $p\in \N$
 be a prime.

Then there exists a chain
$\Lambda=\Lambda_0,\Lambda_1,\ldots,\Lambda_r$ of unimodular lattices
on $V$ such that $\Lambda_i,\Lambda_{i+1}$ are $p$-neighbors for $0\le
i<r$ and $\Lambda_r$ is in the isometry class of $\Lambda'$.

If $p=2$ the condition that $\Lambda,\Lambda'$ should be in the same
genus can be omitted. 
\end{theorem}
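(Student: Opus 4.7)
The plan is to reduce the global statement to a purely local question at the prime $p$ by exploiting strong approximation together with the computation of local spinor norms of unimodular lattice automorphism groups.

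First I would note that because $V$ is positive definite over $\Q$ with $\dim V \ge 5$, every completion $V_p$ is isotropic (a regular quadratic form in $\ge 5$ variables over $\Q_p$ is always isotropic, by the local classification of non-degenerate spaces). Next, by Remark~\ref{spinornorm_computations}, the spinor norm group of $O_V(\Q_v;\Lambda_v)$ contains all units of $\Z_v$ for every prime $v$, including $v=2$. Combined with Theorem~\ref{spinorgenera_number} and the description of $\theta(SO_V(F))$ in the positive definite case, this implies that a genus of unimodular $\Z$-lattices on $V$ consists of a single proper spinor genus. Hence if $\Lambda'\in\gen(\Lambda)$, then $\Lambda'\in\spn^{+}(\Lambda)$, and Theorem~\ref{arithmetically_indefinite} applied with $v_0=p$ (which is admissible since $V_p$ is isotropic) yields a lattice $\Lambda''\in\cls(\Lambda')$ satisfying $\Lambda''_v=\Lambda_v$ for every prime $v\ne p$.

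The problem thereby reduces to the purely local task of constructing a chain $\Lambda_p=L_0,L_1,\dots,L_r=\Lambda''_p$ of unimodular $\Z_p$-lattices on $V_p$ in which consecutive pairs are local $p$-neighbors. Once such a local chain is found, Lemma~\ref{lattices_localglobal_dedekind} produces unique global lattices $\Lambda_i$ on $V$ with $(\Lambda_i)_p=L_i$ and $(\Lambda_i)_v=\Lambda_v$ for $v\ne p$; successive $\Lambda_i$ are then global $p$-neighbors, and the chain terminates at a lattice isometric to $\Lambda'$.

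For the local connectedness I plan to induct on the index $[L : L\cap L']$, where $L,L'$ are the two unimodular $\Z_p$-lattices to be joined. The inductive step amounts to producing, for any two distinct unimodular lattices $L,L'$ on $V_p$, a unimodular $p$-neighbor $L_1$ of $L$ with $[L_1 : L_1\cap L']<[L : L\cap L']$. The key tools are the existence of an isotropic vector in a suitable reduction modulo $p$ of $L\cap L'$ (guaranteed by Corollary~\ref{isotropy_finitefields} once the rank is $\ge 3$) and Witt's theorem for local rings (Theorem~\ref{extension_theorem_localring}), which allows one to transport isotropic structures while preserving congruence conditions. This inductive construction is routine at odd $p$, where a unimodular $\Z_p$-lattice on a fixed space is unique up to isometry.

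Finally, the stronger statement at $p=2$ (dropping the genus hypothesis) requires one additional observation and one additional argument. The observation is that at every odd prime $v$ a unimodular $\Z_v$-lattice on $V_v$ is unique up to isometry, so two unimodular $\Z$-lattices $\Lambda,\Lambda'$ on $V$ lie in the same genus if and only if $\Lambda_2\cong\Lambda'_2$; thus the only obstacle to applying the first part of the theorem is a possible mismatch of $\Z_2$-isometry types at the dyadic place. The additional argument, which is the main obstacle of the whole proof, is to verify that the local $2$-neighbor graph on the \emph{full} set of unimodular $\Z_2$-lattices on $V_2$ is connected, i.e.\ that a sequence of local $2$-neighbor moves can cross between inequivalent $\Z_2$-isometry classes (e.g.\ between even and odd unimodular lattices). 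I would do this by an explicit dyadic construction, producing, from a Jordan decomposition of a unimodular $\Z_2$-lattice, an isotropic vector modulo $2$ whose associated $2$-neighbor changes the local type; iterating until the $\Z_2$-type of $\Lambda'_2$ is reached brings us into the genus of $\Lambda'$, and then the first part of the theorem (applied at $p=2$) furnishes the remaining $2$-neighbor chain inside this common genus.
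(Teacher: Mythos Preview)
Your reduction via strong approximation and Theorem~\ref{arithmetically_indefinite} matches the paper exactly, and your plan to induct on the index $[\Lambda:\Lambda\cap\Lambda']$ is also the paper's strategy. Where you diverge is in the execution of the inductive step, and here you are making things much harder than necessary.

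The paper does not pass to the local situation, does not search for isotropic vectors in a reduction of $L\cap L'$, and does not invoke Witt's theorem for local rings. Instead, once $\Lambda_\ell=\Lambda'_\ell$ for all $\ell\ne p$, it simply picks any $x\in\Lambda'\setminus\Lambda$ with $px\in\Lambda$ (such $x$ exists because $\Lambda'/(\Lambda\cap\Lambda')$ is a nontrivial finite $p$-group). The vector $px$ is then primitive in $\Lambda$, so $\Lambda_x:=\{y\in\Lambda:b(x,y)\in\Z\}$ has index $p$ in $\Lambda$, and $\Lambda_1:=\Z x+\Lambda_x$ is automatically an integral unimodular $p$-neighbour of $\Lambda$ (integrality uses only $b(x,x)\in\Z$, which holds since $x\in\Lambda'$). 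Since $\Lambda\cap\Lambda'\subseteq\Lambda_x\subseteq\Lambda_1$ and $x\in\Lambda_1\cap\Lambda'$ but $x\notin\Lambda\cap\Lambda'$, one has $\Lambda\cap\Lambda'\subsetneq\Lambda_1\cap\Lambda'$, and the index drops. That is the entire inductive step.

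This elementary construction also dissolves what you call ``the main obstacle of the whole proof'' at $p=2$. The neighbour $\Lambda_1$ above is unimodular but need not be in the genus of $\Lambda$; nothing in the argument requires it to be. So once you have arranged that $\Lambda$ and some representative of $\cls(\Lambda')$ differ only at the prime $2$ (which, as you correctly note, is easy: take the global lattice with completion $\Lambda_\ell$ at odd $\ell$ and $\Lambda'_2$ at $2$, observe it lies in $\gen(\Lambda')$, and apply the already-proved genus case there), the same three-line construction marches from one to the other, crossing between even and odd types as needed without any separate analysis of the dyadic Jordan decomposition. Your proposed route via isotropic vectors in $\overline{L\cap L'}$ and Witt extension is not wrong in spirit, but as sketched it is vague about why the resulting neighbour is actually closer to $L'$; the paper sidesteps this entirely by letting $\Lambda'$ itself supply the vector.
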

\begin{proof} From Remark \ref{spinornorm_computations} it follows that
  the genus of $\lambda$ consists of only one spinor genus.
By theorem \ref{arithmetically_indefinite} we can therefore assume that
$\Lambda_\ell=\Lambda'_{\ell}$ for all primes $\ell \ne p$.  
If $\Lambda \ne \Lambda'$ there exists $x\in \Lambda', x \not\in
\Lambda$ with $px \in \Lambda$. Since $px$ is then primitive in
$\Lambda$ we have $b(px, \Lambda)\not\subseteq p\Z_p$. It follows that
$\Lambda_x:=\{y\in \Lambda \mid b(x,y)\in \Z\}$ is a sublattice of
$\Lambda$ of index $p$, and setting $\Lambda_1:=\Z x+\Lambda_x$ we
obtain another unimodular $\Z$-lattice $\Lambda_1$ on $V$ which is a
$p$-neighbor of $\Lambda$.

From $\Lambda \cap \Lambda'\subseteq \Lambda_x\subseteq
\Lambda_1$ we see $\Lambda \cap \Lambda'\subseteq \Lambda'\subseteq
\Lambda_1$, and this inclusion is proper since $x \not\in \Lambda\cap
\Lambda', x\in \Lambda' \Lambda_1$. By induction on
$(\Lambda:\Lambda\cap \Lambda')=(\Lambda':\Lambda\cap \Lambda')$
the assertion follows.
\end{proof}
\begin{remark}
  \begin{enumerate}
  \item If $p=2$ and $\Lambda,\Lambda'$ are both even all members of
    the chain constructed above are even as well.
\item Starting from a given unimodular $\Z$-lattice $\Lambda$ one can
  algorithmically determine all $p$-neighbors of it. Continuing this
  until representatives of all classes in the genus of $\Lambda$  have
  been found one can classify genera of positive definite
  lattices. This is Kneser's neighboring lattice method from
  \cite{kneser_klassenzahlen_definit}. It has been generalized by
  various authors and is now part of some computer algebra packages,
  in particular MAGMA.  
  \end{enumerate}
\end{remark}


 \backmatter


\end{document}